\newtheorem{theorem}{Theorem}
\newtheorem{lemma}{Lemma}[chapter]
\newtheorem{corollary}[lemma]{Corollary}
\newtheorem{proposition}[lemma]{Proposition}
\newtheorem{conjecture}{Conjecture}[chapter]
\newtheorem*{claim}{Claim}
\numberwithin{equation}{chapter}
\providecommand{\norm}[1]{\lVert#1\rVert}
\def \proof{\medskip\noindent{\em Proof. }}
\def \qed{\hfill$\square$}
\def \Quad{{\quad\quad\quad\quad\quad\quad\quad\quad\quad\quad\quad
\quad\quad\quad\quad\quad\quad\quad\quad\quad\quad\quad\quad\quad
\quad\quad\quad\quad\quad\quad\quad\quad\quad\quad\quad\quad\quad
\quad\quad\quad}}
\def\dist{{\rm dist}}
\def\eps{{\varepsilon}}
\def\length{{\rm length}}
\def\mes{{\rm mes}}
\def\Area{{\rm Area}}
\def\Const{{\rm Const}}
\def\Div{{\rm div}}
\def\EXP{{\mathbb{E}}}
\def\BAN{\mathbb{B}}
\def\bbH{\mathbb{H}}
\def\naturals{\mathbb{N}}
\def\Tor{\mathbb{T}}
\def\reals{\mathbb{R}}
\def\integers{\mathbb{Z}}
\def\RmII{{I\!\!I}}
\def\RmIII{{I\!\!I\!\!I}}
\def\RmIV{{I\!V}}
\def\bA{\mathbf{A}}
\def\bB{\mathbf{B}}
\def\bL{\mathbf{L}}
\def\bM{\mathbf{M}}
\def\bP{\mathbf{P}}
\def\bQ{\mathbf{Q}}
\def\bR{\mathbf{R}}
\def\bS{\mathbf{S}}
\def\bT{\mathbf{T}}
\def\bV{\mathbf{V}}
\def\bb{\mathbf{b}}
\def\be{\mathbf{e}}
\def\bk{\mathbf{k}}
\def\bn{\mathbf{n}}
\def\br{\mathbf{r}}
\def\bs{\mathbf{s}}
\def\bv{\mathbf{v}}
\def\bw{\mathbf{w}}
\def\bz{\mathbf{z}}
\def\btheta{{\boldsymbol{\theta}}}
\def\bTheta{{\boldsymbol{\Theta}}}
\def\btau{{\boldsymbol{\tau}}}
\def\bchi{{\boldsymbol{\chi}}}
\def\bvarepsilon{{\boldsymbol{\varepsilon}}}
\def\brA{{\bar A}}
\def\brB{{\bar B}}
\def\brC{{\bar C}}
\def\brD{{\bar D}}
\def\brJ{{\bar J}}
\def\brL{{\bar L}}
\def\brM{{\bar M}}
\def\brQ{{\bar Q}}
\def\brV{{\bar V}}
\def\brc{{\bar c}}
\def\brw{{\bar w}}
\def\brx{{\bar x}}
\def\bralpha{{\bar \alpha}}
\def\brrho{{\bar\rho}}
\def\brtau{{\bar\tau}}
\def\brsigma{{\bar \sigma}}
\def\cA{\mathcal{A}}
\def\cB{\mathcal{B}}
\def\cC{\mathcal{C}}
\def\cD{\mathcal{D}}
\def\cI{\mathcal{I}}
\def\cJ{\mathcal{J}}
\def\cF{\mathcal{F}}
\def\cG{\mathcal{G}}
\def\cH{\mathcal{H}}
\def\cE{\mathcal{E}}
\def\cK{\mathcal{K}}
\def\cL{\mathcal{L}}
\def\cM{\mathcal{M}}
\def\cN{\mathcal{N}}
\def\cO{\mathcal{O}}
\def\cP{\mathcal{P}}
\def\cQ{\mathcal{Q}}
\def\cR{\mathcal{R}}
\def\cS{\mathcal{S}}
\def\cT{\mathcal{T}}
\def\cU{\mathcal{U}}
\def\cV{\mathcal{V}}
\def\cW{\mathcal{W}}
\def\cX{\mathcal{X}}
\def\dcD{{\partial\cD}}
\def\dcP{{\partial\cP}}
\def\fM{\mathfrak{M}}
\def\fR{\mathfrak{R}}
\def\fS{\mathfrak{S}}
\def\fU{\mathfrak{U}}
\def\hA{{\hat A}}
\def\hB{{\hat B}}
\def\hD{{\hat D}}
\def\hE{{\hat E}}
\def\hL{{\hat L}}
\def\hQ{{\hat Q}}
\def\hbQ{{\hat{\bQ}}}
\def\hS{{\hat S}}
\def\hcS{{\hat \cS}}
\def\hV{{\hat V}}
\def\hW{{\hat W}}
\def\hbV{{\hat{\bV}}}
\def\hd{{\hat d}}
\def\hht{{\hat t}}
\def\halpha{{\hat\alpha}}
\def\hgamma{{\hat\gamma}}
\def\hrho{{\hat\rho}}
\def\tA{{\tilde A}}
\def\tcA{{\tilde \cA}}
\def\ttcA{{\tilde{\tilde{\mathcal{A}}}}}
\def\tF{{\tilde F}}
\def\tcF{{\tilde \cF}}
\def\tcG{{\tilde \cG}}
\def\tR{{\tilde R}}
\def\tQ{{\tilde Q}}
\def\tS{{\tilde S}}
\def\tV{{\tilde V}}
\def\tW{{\tilde W}}
\def\tY{{\tilde Y}}
\def\tc{{\tilde c}}
\def\tr{{\tilde r}}
\def\tu{{\tilde u}}
\def\ttt{{\tilde t}}
\def\tbeta{{\tilde\beta}}
\def\tgamma{{\tilde\gamma}}
\def\ttgamma{{\tilde{\tilde{\gamma}}}}
\def\tdelta{{\tilde\delta}}
\def\trho{{\tilde\rho}}
\def\ttheta{{\tilde\theta}}
\def\tOmega{{\tilde{\Omega}}}
\def\tcB{{\tilde{\mathcal{B}}}}
\def\tcR{\tilde{\mathcal{R}}}
\def\tPi{{\tilde \Pi}}
\def\ttPi{{\tilde{\tilde{\Pi}}}}
\def\ttu{{\tilde{\tilde{u}}}}
\def\la{\langle}
\def\ra{\rangle}
\def\beq{\begin{equation}}
\def\eeq{\end{equation}}
\begin{document}

\title{Brownian Brownian Motion -- I}

\author{N. Chernov and D. Dolgopyat}
\date{}

\maketitle

\renewcommand{\thepage}{\roman{page}}

\setcounter{page}{4}

\setcounter{tocdepth}{2}

\tableofcontents

\thispagestyle{plain}

\centerline{\textbf{Abstract}}\bigskip

A classical model of Brownian motion consists of a heavy molecule
submerged into a gas of light atoms in a closed container. In this
work we study a 2D version of this model, where the molecule is a
heavy disk of mass $M \gg 1$ and the gas is represented by just one
point particle of mass $m=1$, which interacts with the disk and the
walls of the container via elastic collisions. Chaotic behavior of
the particles is ensured by convex (scattering) walls of the
container. We prove that the position and velocity of the disk, in
an appropriate time scale, converge, as $M\to\infty$, to a Brownian
motion (possibly, inhomogeneous); the scaling regime and the
structure of the limit process depend on the initial conditions. Our
proofs are based on strong hyperbolicity of the underlying dynamics,
fast decay of correlations in systems with elastic collisions
(billiards), and methods of averaging theory.
%\end{abstract}

\footnotetext[1]{Received by editor May 15, 2005; and in revised
form September 18, 2006.} \footnotetext[2]{Key words: dispersing
billiards, averaging, shadowing, diffusion processes.}
\footnotetext[3]{2000 Mathematics Subject Classification: 37D50;
34C29, 60F17.} \footnotetext[4]{Nikolai Chernov: Department of
Mathematics, University of Alabama at Birmingham, Birmingham, AL
35294.} \footnotetext[5]{Dmitry Dolgopyat: Department of
Mathematics, University of Maryland, College Park 20742.}

\newpage

\renewcommand{\thepage}{\arabic{page}}

\setcounter{page}{1}

\chapter{Introduction} \label{SecI}
\setcounter{section}{1} \setcounter{subsection}{0}

\subsection{The model}
We study a dynamical system of two particles -- a hard disk of
radius $\br>0$ and mass $M\gg 1$ and a point particle of mass
$m=1$. Our particles move freely in a two-dimensional container
$\cD$ with concave boundaries and collide elastically with each other and with the walls
(boundary) of $\cD$. Our assumptions on the shape of the container
$\cD$ are stated in Section~\ref{subsecDass}.

Let $Q(t)$ denote the center and $V(t)$ the velocity of the heavy
disk at time $t$. Similarly, let $q(t)$ denote the position of the
light particle and $v(t)$ its velocity. When a particle collides
with a scatterer, \index{Scatterer} the normal component of its
velocity reverses. When the two particles collide with each other,
the normal components of their velocities change by the rules
\begin{equation}
  \label{CR1} v_{\rm new}^\perp=-\frac{M-1}{M+1}\, v_{\rm
  old}^\perp +\frac{2M}{M+1}\, V_{\rm old}^\perp
\end{equation}
and
\begin{equation}
  \label{CR2}  V_{\rm new}^\perp=\frac{M-1}{M+1}\, V_{\rm
  old}^\perp +\frac{2}{M+1}\, v_{\rm old}^\perp ,
\end{equation}
while the tangential components remain unchanged. The total kinetic
energy is conserved, and we fix it so that
\begin{equation}
   \label{Energy}  \|v\|^2 + M\|V\|^2 = 1.
\end{equation}
This implies $\|v\|\leq 1$ and $\|V\|\leq 1/\sqrt{M}$.

This is a Hamiltonian system, and it preserves Liouville measure
on its phase space. Systems of hard disks in closed containers are
proven to be completely hyperbolic and ergodic under various
conditions \cite{BLPS,Sm1,Sm2,Sm3}. These results do not cover our
particular model, but we have little doubt that it is hyperbolic
and ergodic, too. In this paper, though, we do not study ergodic
properties.

We are interested in the evolution of the system during the initial
period of time before the heavy disk experiences its first collision
with the border $\dcD$. This condition restricts our analysis to an
interval of time $(0,cM^a)$, where $c,a>0$ depend on $Q(0)$ and
$V(0)$, see Chapter~\ref{SecSR}. During this initial period, the
system does not exhibit its ergodic behavior, but it does exhibit a
diffusive behavior in the following sense. As
(\ref{CR1})--(\ref{CR2}) imply,
\beq \label{vvVV}
   \bigl| \|v_{\rm new}\| -\|v_{\rm old}\|
   \bigr| \leq 2 / \sqrt{M}
   \quad\text{and}\quad
   \|V_{\rm new}-V_{\rm old}\| \leq 2/M,
\eeq
hence the changes in $\|v\|$ and $V$ at each collision are much
smaller than their typical values, which are $\|v\| =\cO(1)$ and
$\|V\| =\cO (1/\sqrt{M})$. Thus, the speed of the light particle,
$\|v(t)\|$, remains almost constant, and the heavy particle not only
moves slowly but its velocity $V(t)$ changes slowly as well (it has
{\em inertia}). We will show that, in the limit $M\to\infty$, the
velocity $V(t)$ can be approximated by a Brownian motion, and the
position $Q(t)$ by an integral of the Brownian motion.

Our system is one of the simplest models of a particle moving in a
fluid. This is what scientists called Brownian motion about one
hundred years ago. Now this term has a more narrow technical
meaning, namely a Gaussian process with zero mean and stationary
independent increments. Our paper is motivated by the Brownian
motion in its original sense, and this is why we call it
``Brownian Brownian motion''.
% As we shall see, our simple model captures some features
% of this physical process, but there are certain specifics.

Even though this paper only covers a two particle system (where the
``fluid'' is represented by a single light particle), we believe
that our methods can extend to more realistic fluids of many
particles. We consider this paper as a first step in our studies
(thus the numeral one in its title) and plan to investigate more
complex models in the future, see our discussion of open problems in
Chapter~\ref{SecOP}.
% In the rest of this paper, the term Brownian
% motion has its modern meaning -- the standard Gaussian model.

\subsection{The container}\label{subsecDass}
In this paper we assume that $\cD$ is a dispersing \index{Dispersing
billiards} billiard table with finite horizon \index{Finite horizon}
and smooth boundary:
\medskip

Assumption A1: $\cD$ is a dispersing \index{Dispersing billiards}
billiard table, i.e.\ its boundary $\dcD$ is concave; this
guarantees chaotic motion of the particle colliding with $\dcD$.
\medskip

Assumption A2: $\cD$ has finite \index{Finite horizon} horizon,
which means the point particle cannot travel longer than a certain
finite distance $L_{\max} < \infty$ without collisions (even if we
remove the hard disk from $\cD$); this prevents superdiffusive
(ballistic) motion of the particle \cite{Bl}.
\medskip

Assumption A3: $\cD$ has $C^3$ smooth boundary (without corner
points).
\medskip

Containers satisfying all these assumptions can be constructed as
follows. Let $\Tor^2$ be the unit torus and $\BAN_1, \dots, \BAN_r
\subset \Tor^2$ some disjoint convex regions with $C^3$ smooth
boundaries, whose curvature never vanishes. Then
$$
          \cD = \Tor^2 \setminus \cup_{i=1}^r \BAN_i.
$$
The obstacles $\BAN_1, \dots, \BAN_r$ act as scatterers,
\index{Scatterer} our light particle bounces between them (like in a
pinball machine). They must also block all collision-free flights of
the particle to ensure the finite horizon \index{Finite horizon}
assumption.

The motion of a single particle in such domains $\cD$ has been
studied by Ya.~Sinai \cite{S2}, and this model is now known as
dispersing \index{Dispersing billiards} billiard system. It is
always hyperbolic and ergodic \cite{S2}, and has strong statistical
properties \cite{BSC2,Y}.
%, in particular, exponential mixing rates and a central limit theorem.

Our assumptions on $\cD$ are fairly restrictive. It would be
tempting to cover simpler containers -- just a rectangular box, for
example. We believe that most of our results would carry over to
rectangular boxes (perhaps, with certain adjustments). However, a
two-particle system in a rectangular container, despite its apparent
simplicity, would be much more difficult to analyze, because the
corresponding billiard system is not chaotic. For this reason
rectangular containers are currently out of reach. On the other hand
if the boundary of $\cD$ is convex then with positive probability
the particles will never meet (see \cite{La}), so some assumptions
on the shape of $\cD$ are necessary.

\subsection{Billiard approximations}
\label{subsecBA} We denote phase points by $x=(Q,V,q,v)$ and the
phase space by $\cM$. Due to the energy conservation
(\ref{Energy}), dim$\,\cM=7$. The dynamics $\Phi^t\colon\cM\to\cM$
can be reduced, in a standard way, to a discrete time system -- a
collision map -- as follows.

We call $\Omega = \partial\cM$ the {\em collision space}. Let
$\cP(Q)$ denote the disk of radius $\br$ centered on $Q$, then
$\Omega=\{ (Q,V,q,v)\in\cM\colon\ q \in \dcD \cup \, \dcP (Q) \}$.
At each collision, we identify the precollisional and
postcollisional velocity vectors. Technically, we will only
include the {\em postcollisional} vector in $\Omega$, so that
\begin{align*}
  \Omega &= \Omega_\cD \cup \Omega_\cP, \\
 \Omega_\cD &= \bigl\{(Q,V,q,v)\in\cM\colon\ q\in\dcD,\ \
   \la v, n \ra \geq 0 \bigr\}, \\
   \Omega_\cP &= \bigl\{(Q,V,q,v)\in\cM\colon\ q\in\dcP(Q),\ \
   \la v-V, n \ra \geq 0 \bigr\}.
\end{align*}
where $\la\cdot,\cdot\ra$ stands for the scalar product of vectors
and $n$ denotes a normal vector to $\dcD\cup\dcP(Q)$ at $q$
pointing into $\cD\setminus\cP(Q)$. The first return map
$\cF\colon\Omega\to\Omega$ is called the {\em collision map}. It
preserves a smooth probability measure $\mu$ on $\Omega$ induced
by the Liouville measure on $\cM$.

It will be convenient to denote points of $\Omega$ by $(Q,V,q,w)$,
where
\beq
        w = \left \{ \begin{array}{ccl}
        v & {\rm for} & q\in\dcD\\
        v-V & {\rm for} & q\in\dcP(Q)
        \end{array} \right .
          \label{w}
\eeq
so that $\Omega$ can be represented by a unified formula
\beq
   \Omega = \bigl\{(Q,V,q,w)\colon\ q\in\dcD\cup\dcP(Q),\ \
   \la w, n \ra \geq 0 \bigr\}
      \label{Omegaw}
\eeq
For every point $(Q,V,q,w)\in\Omega$ we put
\beq
    \brw=\frac{w}{\|w\|}\,\bs_V, \qquad
    \bs_V = \sqrt{1-M\|V\|^2}
       \label{brw}
\eeq
(note that $\|\brw\| = \bs_V = \|v\|$ due to (\ref{Energy}), hence
$\bs_V$ only depends on $\|V\|$). For each pair $(Q,V)$ we denote
by $\Omega_{Q,V}$ the cross-section of $\Omega$ obtained by fixing
$Q$ and $V$. By using (\ref{brw}) we can write
\beq
   \Omega_{Q,V} = \bigl\{(q,\brw)\colon\ q\in\dcD\cup\dcP(Q),\ \
   \la \brw, n \ra \geq 0,\ \
   \|\brw\|= \bs_V \bigr\}
      \label{OmegaQVbrw}
\eeq

Now let us pick $t_0\geq 0$ and fix the center of the heavy disk at
$Q=Q(t_0)\in\cD$ and set $M=\infty$. Then the light particle would
move with a constant speed $\|v(t)\|=\bs_V$, where $V=V(t_0)$, in
the domain $\cD\setminus\cP(Q)$ with specular reflections at
$\dcD\cup\dcP(Q)$. Thus we get a billiard-type dynamics, which
approximates our system during a relatively short interval of time,
until our heavy disk moves a considerable distance. We may treat our
system then as a small perturbation of this billiard-type dynamics,
and in fact our entire analysis is based on this approximation.

The collision map $\cF_{Q,V}$ of the above billiard system acts on
the space (\ref{OmegaQVbrw}), where $\brw$ denotes the
postcollisional velocity of the moving particle. The map
$\cF_{Q,V}\colon\Omega_{Q,V}\to\Omega_{Q,V}$ preserves a smooth
probability measure $\mu_{Q,V}$ as described in
Chapter~\ref{SecSPE}.

The billiard-type system $(\Omega_{Q,V},\cF_{Q,V},\mu_{Q,V})$ is
essentially independent of $V$. By a simple rescaling (i.e.\
renormalizing) of $\brw$ we can identify it with
$(\Omega_{Q,0},\cF_{Q,0},\mu_{Q,0})$, which we denote, for brevity,
by $(\Omega_{Q},\cF_{Q},\mu_{Q})$, and it becomes a standard
billiard system, where the particle moves at unit speed, on the
table $\cD\setminus\dcP (Q)$. This is a dispersing \index{Dispersing
billiards} (Sinai) table, hence the map $\cF_Q$ is hyperbolic,
ergodic and has strong statistical properties, including exponential
decay of correlations and the central limit theorem \cite{S2,Y}.

Equations (\ref{CR1})--(\ref{CR2}) imply that the change of the
velocity of the disk due to a collision with the light particle is
\begin{equation}
  \label{CR2a}
  V_{\rm new}- V_{\rm old}
  = -\frac{2\bigl( v_{\rm new}^\perp
  - V_{\rm new}^\perp\bigr)}{M+1}
  = -\frac{2\, w^\perp}{M+1}
\end{equation}
Since $w=\brw\,\|v-V\|/\|v\|$, we have
\beq
  \label{CR2b}
  V_{\rm new}- V_{\rm old}
  = -\frac{2\, \brw^\perp}{M} + \delta
\eeq
where
\beq
     |\delta|\leq\,\Const\,\biggl(\frac{\norm{V}}{M\norm{v}}
        +\frac{1}{M^2}\biggr)
          \label{chi}
\eeq
is a relatively small term. Define a vector function on $\Omega$ by
\beq
\label{MEx}
   \cA = \left \{ \begin{array}{ccl}
   -2\brw^\perp & {\rm for} & q\in\dcP(Q) \\
   0 & {\rm for} & q\in\dcD\setminus\dcP(Q)
   \end{array} \right .
\eeq
Obviously, $\cA$ is a smooth function, and due to a rotational
symmetry
$$
    \int \cA\, d\mu_{Q,V}=0
$$
for every $Q,V$. Hence the central limit theorem for dispersing
\index{Dispersing billiards} billiards \cite{BS,Y} implies the
convergence in distribution
\begin{equation}
\label{EqVar}
    \frac{1}{\sqrt n}\sum_{j=0}^{n-1} \cA\circ\cF_{Q,V}^j\to
     \cN(0, \brsigma^2_{Q,V}(\cA)).
\end{equation}
as $n\to\infty$, where $\brsigma^2_{Q,V}(\cA)$ a symmetric
positive semidefinite matrix given by the Green-Kubo formula
\index{Green-Kubo formula}
\begin{equation}  \label{EqSigmabar}
  \brsigma^2_{Q,V}(\cA) = \sum_{j=-\infty}^{\infty}
  \int_{\Omega_{Q,V}}
  \cA\, \left (\cA\circ\cF_{Q,V}^j\right )^T\,d\mu_{Q,V}.
\end{equation}
(this series converges because its terms decay exponentially fast
as $|j|\to\infty$, see \cite{Y,C2}). By setting $V=0$ we define a
matrix $\brsigma^2_{Q}(\cA)\colon=\brsigma^2_{Q,0}(\cA)$. Since
the restriction of $\cA$ to the sets $\Omega_{Q,V}$ and
$\Omega_{Q,0}=\Omega_Q$ only differ by a scaling factor $\bs_V$,
see (\ref{brw}), we have a simple relation
\beq
  \label{EqSigmabar2}
  \brsigma^2_{Q,V}(\cA) = (1-M\norm{V}^2)\,\, \brsigma^2_Q(\cA).
\eeq
Define another matrix
\beq
\label{EqSigma}
     \sigma^2_Q(\cA)=\brsigma^2_Q(\cA)/\brL,
\eeq
where
\begin{equation}
  \label{FPL}
   \brL=\pi\, \frac{\Area(\cD)-\Area(\cP)}
   {\length(\dcD)+\length(\dcP)}
\end{equation}
is the mean free path of the light particle in the billiard
dynamics $\cF_Q$, see \cite{C1} (observe that $\brL$ does not
depend on $Q$). Lastly, let $\sigma_Q(\cA)$ be the symmetric
positive semidefinite square root of $\sigma^2_Q(\cA).$

Let us draw some conclusions, which will be entirely heuristic at
this point (they will be formalized later). In view of
(\ref{CR2a})--(\ref{EqVar}), we may expect that the total change
of the disk velocity $V$ in the course of $n$ consecutive
collisions of the light particle with $\dcD\cup\dcP$ can be
approximated by a normal random variable $\cN\Big (0,
n\brsigma^2_{Q,V}(\cA)/M^2\Big )$. During an interval $(t_0,t_1)$,
the light particle experiences $n\approx \brL^{-1} \norm{v}
(t_1-t_0)$ collisions, hence for the total change of the disk
velocity we expect another normal approximation
\begin{equation}
   \label{Vt1t2}
   V(t_1)-V(t_0) \sim \cN\Big (
     0,\norm{v} (t_1-t_0)\,\sigma^2_{Q,V}(\cA)/M^2\Big )
\end{equation}
(due to the inertia of the heavy disk, we expect $Q(t)\approx
Q(t_0)$, $\|V(t)\| \approx \|V(t_0)\|$, and thus $\norm{v(t)}
\approx\, \bs_V(t_0)$ for all $t_0<t<t_1$). A large part of our
paper is devoted to making the heuristic approximation
(\ref{Vt1t2}) precise. \newpage

\chapter{Statement of results} \label{SecSR}
\setcounter{section}{2}\setcounter{subsection}{0}

Suppose the initial position $Q(0)=Q_0$ and velocity $V(0)=V_0$ of
the heavy particle are fixed, and the initial state of the light
particle $q(0),v(0)$ is selected randomly, according to a smooth
distribution in the direct product of the domain
$\cD\setminus\cP(Q_0)$ and the circle $\norm{v(0)}^2 =
1-M\norm{V_0}^2$ (alternatively, $q(0)$ may be chosen from
$\dcD\cup\dcP(Q_0)$ and $v(0)$ from the semicircle containing all
the postcollisional velocity vectors). The shape of the initial
distribution will not affect our results.

We consider the trajectory of the heavy particle $Q(t),V(t)$ during
a time interval $(0,cM^a)$ with some $c,a>0$ selected below. We
scale time by $\tau = t/M^a$ and, sometimes, scale space in a way
specified below, to convert $\{Q(t), V(t)\}$ to a pair of functions
$\{\cQ(\tau), \cV(\tau)\}$ on the interval $0<\tau<c$. The random
choice of $q(0),v(0)$ induces a probability measure on the space of
functions $\cQ(\tau), \cV(\tau)$, and we are interested in the
convergence of this probability measure, as $M\to\infty$, to a
stochastic process $\{\bQ(\tau), \bV(\tau)\}$. We prove three major
results in this direction corresponding to three different regimes
in the dynamics of the massive disk.

\subsection{Heavy disk in `equilibrium' (linear motion)} \label{subsecSR1}
First, let the initial velocity of the heavy particle be of order
$1/\sqrt{M}$. Specifically, let us fix a unit vector $u_0 \in S^1$
and $\chi\in (0,1)$, set
\beq \label{iniTm1}
        V_0= M^{-1/2} \chi\, u_0
\eeq
and fix $Q_0\in\cD$ arbitrarily (but so that dist$(Q_0, \dcD) >
\br$). Note that if the heavy disk moved with a constant velocity,
$V_0$, without colliding with the light particle, it would hit
$\dcD$ at a certain moment $c_0 M^{1/2}$, where $c_0>0$ is
determined by $Q_0$, $u_0$ and $\chi$. We restrict our analysis to
a time interval $(0,cM^{1/2})$ with some $c<c_0$. During this
period of time we expect, due to (\ref{Vt1t2}), that the overall
fluctuations of the disk velocity will be $\cO(M^{-3/4})$. Hence
we expect $V(t)= V_0 +\cO (M^{-3/4})$ and
$Q(t)= Q_0 +tV_0 +\cO(t M^{-3/4})=Q_0 +tV_0 +\cO(M^{-1/4})$
for $0<t<cM^{1/2}$. This leads us to a time scale
\beq \label{timescaleTm1}
     \tau =tM^{-1/2}
\eeq
and a space scale
\beq \label{spacescaleTm1}
    \cQ(\tau)=
    M^{1/4} \left[Q(\tau M^{1/2})-Q_0-\tau M^{1/2}V_0 \right]
\eeq
and, respectively,
\beq \label{velscaleTm1}
   \cV(\tau)=
    M^{3/4} \left[V(\tau M^{1/2})-V_0\right]
\eeq
We can find an asymptotic distribution of $\cV(\tau)$ by using our
heuristic normal approximation (\ref{Vt1t2}). Let
$$
    Q^\dag (\tau) = Q_0+\tau M^{1/2}V_0 = Q_0 +\tau \chi\, u_0
$$
Then for any $\tau\in(0,c)$ we have $Q(\tau M^{1/2})\to
Q^\dag(\tau)$, as $M\to\infty$, hence
\beq
    \label{sigmaappr}
   \sigma^2_{Q(\tau M^{1/2})}(\cA) \to
   \sigma^2_{Q^\dag(\tau)}(\cA).
\eeq
Anticipating that $\norm{v(t)}\approx \sqrt{1-\chi^2}$ for all
$0<t<cM^{1/2}$ we can expect that for small $d\tau>0$ the number
of collisions $N(d\tau)$ is approximately equal to $d\tau
\brL^{-1} \sqrt{1-\chi^2}$ and the momenta exchange during each
collision is close to $\sqrt{1-\chi^2}\cA$, see (\ref{MEx}). This
should give us
$$
  \cV(\tau+d\tau)-\cV(\tau) \sim
  \cN\bigl(0, D(d\tau)\bigr)
$$
where
\begin{align}
 \label{Vtaud}
   D(d\tau) &\approx N(d\tau)\, (1-\chi^2)\,
   \brsigma^2_{Q^\dag(\tau)}(\cA)\nonumber\\
   &= d\tau\,\sqrt{(1-\chi^2)^3}\,
   \sigma^2_{Q^\dag(\tau)}(\cA)
\end{align}
Integrating (\ref{Vtaud}) over $(0,\tau)$ yields
\beq
     \label{Vtauint}
   \cV(\tau) \sim
  \cN\Big (0,\sqrt{(1-\chi^2)^3} \,\int_0^\tau
  \sigma^2_{Q^\dag(s)}(\cA)\, ds\Big )
\eeq
The following theorem (proved in this paper) makes this conclusion
precise:

\begin{theorem}
\label{Root}
Under the conditions A1--A3 and (\ref{iniTm1})
the random process $\cV(\tau)$ defined on the interval
$0\leq \tau \leq c$ by (\ref{timescaleTm1}), (\ref{velscaleTm1})
weakly converges, as
$M\to\infty$, to a Gaussian Markov random process $\bV(\tau)$ with
independent increments, zero mean and covariance matrix
\beq
      \label{CovV}
    {\rm Cov}(\bV(\tau)) = \sqrt{(1-\chi^2)^3}
    \int_0^\tau \sigma^2_{Q^\dag(s)}(\cA)\, ds
\eeq
The process $\bV(\tau)$ can be, equivalently, defined by
\begin{equation}
    \label{DblIntBM}
   \bV(\tau)=\sqrt[4]{(1-\chi^2)^3}\,\int_0^\tau
   \sigma_{Q^\dag(s)}(\cA)\, d\bw(s)
\end{equation}
where $\bw(s)$ denotes the standard two dimensional Brownian
motion. Accordingly, the function $\cQ(\tau)$ defined by
(\ref{spacescaleTm1}) converges weakly to a Gaussian random
process $\bQ(\tau)=\int_0^\tau \bV(s)\, ds$, which has zero mean
and covariance matrix
\beq
      \label{CovQ}
    {\rm Cov}(\bQ(\tau)) = \sqrt{(1-\chi^2)^3}
    \int_0^\tau (\tau-s)^2\,\sigma^2_{Q^\dag(s)}(\cA)\, ds
\eeq
\end{theorem}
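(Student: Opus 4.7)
The plan is to exploit the separation between the fast billiard dynamics and the slow evolution of $(Q,V)$. Starting from the telescoping identity $V(\tau M^{1/2}) - V_0 = \sum_k \Delta V_k$ over collisions up to time $\tau M^{1/2}$, and using (\ref{CR2b})--(\ref{chi}), the per-collision error $\delta_k$ is $O(M^{-3/2})$ (since $\norm{V}=O(M^{-1/2})$ and $\norm{v}\approx\sqrt{1-\chi^2}$ throughout the window), so after summing over $N=O(M^{1/2})$ collisions and multiplying by $M^{3/4}$ the cumulative $\delta$-contribution to $\cV(\tau)$ is $O(M^{-1/4})\to 0$ uniformly in $\tau$. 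Thus proving the theorem for $\cV$ reduces to a functional CLT for
\[
 S_M(\tau) \;=\; \frac{1}{M^{1/4}}\sum_{k=1}^{N(\tau M^{1/2})} \cA(x_k),
\]
where $x_k\in\Omega$ is the $k$-th collision in the true (perturbed) dynamics; since $\cA$ vanishes on wall collisions, summing over disk collisions only or over all collisions is equivalent.

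I will then partition $[0,\tau]$ into $K_M$ subintervals $[\tau_j,\tau_{j+1})$ of length $\Delta_M$, with $\Delta_M\to 0$ and $\Delta_M M^{1/2}\to\infty$ as $M\to\infty$. Within each block the slow pair $(Q(t),V(t))$ varies by $o(1)$ in the scaled coordinates, so I freeze them at $(Q^\dag(\tau_j),V_0)$ and shadow the actual orbit by a genuine orbit of $\cF_{Q^\dag(\tau_j)}$; hyperbolicity of dispersing billiards should ensure that the shadow and true orbit stay close throughout the block's $n_j\approx \Delta_M\sqrt{1-\chi^2}\brL^{-1}M^{1/2}$ iterations. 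Applying (\ref{EqVar}) together with (\ref{EqSigmabar2})--(\ref{FPL}) then yields, block by block, an approximately Gaussian increment
\[
 S_M(\tau_{j+1})-S_M(\tau_j) \;\approx\; \cN\bigl(0,\; \sqrt{(1-\chi^2)^3}\,\sigma^2_{Q^\dag(\tau_j)}(\cA)\,\Delta_M\bigr),
\]
and exponential decay of correlations for dispersing billiards, applied at time lags exceeding the mixing time, makes the block increments asymptotically independent. Summing and passing to $M\to\infty$ delivers a Gaussian process with independent increments and covariance (\ref{CovV}); tightness in path space should follow from a standard fourth-moment bound obtained from the Green--Kubo expansion (\ref{EqSigmabar}).

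The main obstacle is the shadowing/averaging step: the collision map $\cF_{Q(t)}$ itself changes with each collision as $Q(t)$ drifts, so one must (i) transfer the uniform hyperbolic structure (stable/unstable cones, standard pairs, singularity-set measure bounds) across the family $\{\cF_Q\}$, (ii) quantify the closeness of the true orbit to the frozen orbit over $n_j$ steps with errors small enough to survive the $M^{1/4}$ rescaling, and (iii) establish sufficient regularity (e.g.\ H\"older continuity) of $\sigma^2_Q(\cA)$ in $Q$ to replace $\sigma^2_{Q^\dag(\tau_j)}$ by $\sigma^2_{Q^\dag(s)}$ inside each block at the cost of only $o(1)$ in the integrated covariance. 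Once $\cV\Rightarrow\bV$ is secured, the statement for $\cQ$ is immediate: a direct change of variables gives $\cQ(\tau)=\int_0^\tau\cV(u)\,du$, so the continuous mapping theorem delivers $\cQ\Rightarrow\bQ=\int_0^\tau\bV(s)\,ds$, and (\ref{CovQ}) follows from Fubini combined with the independent-increment identity $\mathrm{Cov}(\bV(s),\bV(t))=\mathrm{Cov}(\bV(\min(s,t)))$ and $\int_0^\tau\!\int_0^\tau \mathbf{1}_{\{u\le\min(s,t)\}}\,ds\,dt=(\tau-u)^2$.
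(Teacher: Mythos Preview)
Your overall architecture---reduce $\cV$ to a partial-sum process for $\cA$, block-decompose in time, freeze $(Q,V)$ on each block, obtain approximately Gaussian block increments with covariance $\sqrt{(1-\chi^2)^3}\,\sigma^2_{Q^\dag(\tau_j)}(\cA)\,\Delta_M$, then glue blocks---matches the paper's strategy (Chapters~\ref{ScME} and~\ref{ScFSP}), and your treatment of $\cQ$ via the continuous mapping theorem is correct.

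The genuine gap is your mechanism for the frozen-dynamics comparison. You propose to ``shadow the actual orbit by a genuine orbit of $\cF_{Q^\dag(\tau_j)}$'' over $n_j\sim\Delta_M M^{1/2}$ collisions, relying on hyperbolicity. This fails at the level of individual orbits: the positive Lyapunov exponent makes the true orbit and any $\cF_Q$-orbit with the same initial point diverge like $M^{-1/2}e^{\lambda n}$ (the per-step drift of $Q$ is $\cO(M^{-1/2})$), so pointwise comparison is valid only for $\cO(\ln M)$ steps---the paper says this explicitly in Section~\ref{subsecGS}. Relatedly, your appeal to (\ref{EqVar}) inside a block presupposes that the state entering the block is distributed as (or close to) $\mu_{Q,V}$, which is exactly what is at issue.

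The paper's resolution is to replace pointwise shadowing by \emph{measure-level equidistribution via standard pairs}: Propositions~\ref{PrDistEq1}--\ref{PrDistEq2} show that the $\cF^n$-image of any standard pair decomposes into standard pairs whose averages are close to $\mu_{\brQ,\brV}$-averages, with explicit error terms in $n$, $\|\brV\|$, $M$. This is what feeds the moment computations (Propositions~\ref{Pr2Tight}, \ref{Pr2Mom}). Block independence comes not from raw correlation decay but from the decoupling Lemma~\ref{INDEP} (again a standard-pair argument), and the limit is identified by characteristic functions (Proposition~\ref{Pr2Gen}) after tightness via fourth moments (Proposition~\ref{Pr3Tight}). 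One further point you do not address is the random collision count: replacing $N(\tau M^{1/2})$ by $\tau M^{1/2}\sqrt{1-\chi^2}/\brL$ requires control of intercollision-time fluctuations, which the paper handles via tightness of $\ttt(\tau)$ and the argument of Section~\ref{subsecTCT}.
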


We note that the limit velocity process is a (time inhomogeneous)
Brownian motion, while the limit position process is the integral of
that Brownian motion.

\subsection{Heavy disk at rest (slow acceleration)} \label{subsecSR2}
The next theorem deals with the more difficult case of
\beq \label{iniTm2}
         V_0=0
\eeq
(here again, $Q_0$ is chosen arbitrarily). Since the heavy disk is now
initially at rest, it takes it longer to build up speed and travel to
the border $\dcD$. We expect, due to (\ref{Vt1t2}), that the disk
velocity grows as $\norm{V(t)} =\cO(\sqrt{t}/M)$, and therefore its
displacement grows as $\norm{Q(t)-Q_0} =\cO(t^{3/2}/M)$. Hence, for
typical trajectories, it takes $\cO(M^{2/3})$ units of time for the
disk to reach $\dcD$. It is convenient to modify the dynamics of the
disk making it stop (``freeze'') when it comes too close to $\dcD$. We
pick a small $\delta_0>0$ and stop the disk at the moment
\beq
   t_\ast=\min\{t>0\colon\ \dist(Q(t),\dcD)=\br+\delta_0\},
     \label{tast}
\eeq
hence we obtain a modified dynamics $Q_\ast(t)$, $V_\ast(t)$ given
by
$$
  Q_\ast(t)=\left \{
  \begin{array}{lc} Q(t) & {\rm for}\ \ t<t_\ast\\
  Q(t_\ast) & {\rm for}\ \ t>t_\ast \end{array}\right .\qquad
  V_\ast(t)=\left \{
  \begin{array}{cc} V(t) & {\rm for}\ \ t<t_\ast\\
  0 & {\rm for}\ \ t>t_\ast \end{array}\right .
$$
With this modification, we can consider the dynamics on a time interval
$(0,cM^{2/3})$ with an arbitrary $c>0$. Our time scaling is
\beq \label{timescaleTm2}
         \tau=tM^{-2/3}
\eeq
and there is no need for any space scaling, i.e.\ we set
\beq \label{spacescaleTm2}
      \cQ(\tau)=Q_\ast(\tau M^{2/3})
       \qquad \text{and} \qquad
      \cV(\tau) =M^{2/3}V_\ast(\tau M^{2/3}).
\eeq

We note that the heavy disk, being initially at rest, can move
randomly in any direction and follow a random trajectory before
reaching $\dcD$. Thus, the matrix $\sigma^2_{Q(\tau
M^{2/3})}(\cA)$ does not have a limit (in any sense), as
$M\to\infty$, because it depends on the (random) location of the
disk. Hence, heuristic estimates of the sort
(\ref{sigmaappr})--(\ref{Vtauint}) are now impossible, and the
limit distribution of the functions $\{\cQ(\tau), \cV(\tau)\}$
cannot be found explicitly. Instead, we will show that any weak
limit of these functions, call it $\{\bQ(\tau), \bV(\tau)\}$,
satisfies two stochastic differential equations (SDE)
\beq
   \label{TIBMa}
     d\bQ = \bV\, d\tau,\qquad
     d\bV = \sigma_{\bQ}(\cA)\, d\bw(\tau)
\eeq
with initial conditions $\bQ_0 = Q_0$ and $\bV_0 = 0$. Thus, the limit
behavior of the functions $\cQ(\tau)$ and $\cV(\tau)$ will only be
described implicitly, via (\ref{TIBMa}).

In order to guarantee the convergence, though, we need to make sure
that the initial value problem \eqref{TIBMa} has a unique solution
$\{\bQ(\tau), \bV(\tau)\}$. SDE of this type have unique solutions
if the matrix $\sigma_Q(\cA)$, as a function of $Q$, is
differentiable \cite[Section IX.2]{RY} but they  may have multiple
solutions if $\sigma_Q$ is only continuous. We do not expect our
matrix $\sigma_Q(\cA)$ to be differentiable, though. Recent
numerical experiments \cite{BDL} suggest that dynamical invariants,
such as the diffusion \index{Diffusion matrix} matrix, are not
differentiable if the system has singularities.

To tackle this problem, we prove (see Section~\ref{subsecUSDE}) that
the SDE (\ref{TIBMa}) has a unique solution provided $\sigma_Q(\cA)$
is log-Lipschitz \index{Log-Lipschitz continuity} continuous in the
following sense:
\beq
        \norm{\sigma_{Q_1}(\cA)-\sigma_{Q_2}(\cA)}\leq
        \Const\,\norm{Q_1-Q_2} \,
          \Big |\ln\norm{Q_1-Q_2}\,\Big |
             \label{sigQ1Q2}
\eeq
(this condition is weaker than Lipschitz continuity but stronger than
H\"older continuity with any exponent $<1$).
%(this condition is nearly optimal -- if the logarithmic factor here was
%raised to a power $>1$, the uniqueness could no longer be guaranteed).

Thus we need to establish (\ref{sigQ1Q2}), which constitutes a novel
and rather difficult result in billiard theory. Its proof occupies a
sizable part of our paper (Chapter~\ref{SecRDM}) and requires two
additional assumptions on the scatterers \index{Scatterer} $\BAN_i$.
First, the collision map $\cF_Q\colon \Omega_Q\to\Omega_Q$ must be
$C^3$ smooth (rather than $C^2$, which is commonly assumed in the
studies of billiards), hence the boundaries $\partial\BAN_i$ must be
at least $C^4$. This additional smoothness of $\cF_Q$ allows us to
prove that
\beq
        \norm{\sigma_{Q_1}^2(\cA)-\sigma_{Q_2}^2(\cA)}\leq
        \Const\,\norm{Q_1-Q_2} \,
          \Big |\ln\norm{Q_1-Q_2}\,\Big |
             \label{sigQ1Q2add}
\eeq
which is slightly weaker than (\ref{sigQ1Q2}). To convert
(\ref{sigQ1Q2add}) to (\ref{sigQ1Q2}) we need the matrix
$\sigma^2_Q(\cA)$ be nonsingular for every $Q$, so that the function
$\sigma^2 \mapsto \sigma$ is smooth. To this end we only find a
criterion (see Section~\ref{subsecA2a}), in terms of periodic orbits
of $\cF_Q$, for the nonsingularity of $\sigma^2_Q(\cA)$. We believe
it is satisfied for typical configurations of scatterers
\index{Scatterer} $\BAN_i$, but we do not prove it here.

Thus we formulate our additional assumptions:\medskip

\noindent Assumption A3': The boundaries $\partial\BAN_i$ of all
scatterers are $C^4$ smooth; \medskip

\noindent Assumption A4: $\sigma^2_Q(\cA)>0$ for all $Q\in\cD$
such that $\dist(\cP(Q),\dcD)\geq\delta_0$. \medskip

Next we state the convergence theorem:

\begin{theorem}
\label{ThVelBM} Under the conditions A1, A2, A3', A4
the random processes $\{\cQ(\tau), \cV(\tau)\}$ defined on the interval
$0\leq \tau\leq c$ by
(\ref{iniTm2})--(\ref{spacescaleTm2})
weakly converge to a stochastic process $\{\bQ(\tau),
\bV(\tau)\}$, which constitutes a unique solution of the following
stochastic differential equations with initial conditions:
\beq
   \label{TIBM}
     \begin{array}{ll}
     d\bQ = \bV\, d\tau, & \bQ_0 = Q_0 \\
     d\bV = \sigma_{\bQ}(\cA)\, d\bw(\tau),\ \ \ \  & \bV_0 = 0
     \end{array}
\eeq
which are stopped the moment $\bQ(\tau)$ comes to within
the distance $\br+\delta_0$ from the border $\dcD$
(here again $\bw(\tau)$ is the standard two dimensional Brownian
motion).
\end{theorem}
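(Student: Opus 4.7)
The plan is to treat this as a diffusion approximation for a slow--fast Hamiltonian system. The slow variable is $(Q(t),V(t))$, which by (\ref{CR2b}) changes by at most $O(1/M)$ per collision, while the fast variable is the light particle $(q,w)$, whose dynamics over short time intervals is well approximated by the frozen billiard $(\Omega_Q,\cF_Q,\mu_Q)$. The argument splits into four stages: (i) an a priori shadowing estimate over short blocks of collisions; (ii) tightness of the family $\{(\cQ(\tau),\cV(\tau))\}_M$ in $C([0,c],\reals^4)$; (iii) identification of any subsequential weak limit with a solution of the martingale problem for (\ref{TIBM}); and (iv) uniqueness of that solution via the log-Lipschitz continuity (\ref{sigQ1Q2}).

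For stages (i)--(ii), I would partition $[0,cM^{2/3}]$ into blocks of length $M^{2/3-\eta}$ for a small $\eta>0$. On such a block the disk has moved only $o(1)$ and $V$ has changed by $o(M^{-1/3})$, yet the light particle has undergone $\sim M^{2/3-\eta}$ collisions -- many mixing times of $\cF_Q$. Standard shadowing estimates for dispersing billiards, applied away from small neighborhoods of the singular set, then let one approximate the true coupled dynamics by the frozen billiard $\cF_{Q(t_k)}$ on the $k$-th block with negligible error. Combining (\ref{CR2b}) with moment bounds $\EXP\norm{\sum_{j<N}\cA\circ\cF_Q^j}^{2p} = O(N^p)$, which follow from exponential decay of correlations for $(\Omega_Q,\cF_Q,\mu_Q)$, and summing the essentially independent contributions of successive blocks, one obtains the Kolmogorov moment bound $\EXP\norm{\cV(\tau_1)-\cV(\tau_2)}^4 \leq C|\tau_1-\tau_2|^2$, which gives tightness of $\cV$; tightness of $\cQ$ follows from $\cQ(\tau) = Q_0 + \int_0^\tau\cV(s)\,ds$.

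For stage (iii), I would adopt the martingale-problem viewpoint. Let $L$ be the generator of (\ref{TIBM}), namely $Lf(Q,V) = \la V,\nabla_Qf\ra + \tfrac12\sum_{i,j}(\sigma^2_Q(\cA))_{ij}\,\partial^2_{V_iV_j}f$, with the process stopped on the set $\dist(Q,\dcD)\leq\br+\delta_0$. The drift piece is exact because $Q(t) = Q(0) + \int_0^t V(s)\,ds$ holds pathwise. For the diffusion piece, I would Taylor-expand a test function $f\in C^2_b$ across each block and substitute $\Delta\cV_k = M^{2/3}(V(t_{k+1})-V(t_k))$. Using (\ref{CR2b}) this equals $-2M^{-1/3}\sum_j\brw_j^\perp$ plus lower-order terms, which via the CLT (\ref{EqVar}) for $\cF_Q$ is approximately Gaussian with covariance $M^{-2/3}\cdot N\brsigma^2_Q(\cA)$, where $N\approx\brL^{-1}\norm{v}(t_{k+1}-t_k)$. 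Since $\norm{v}\to 1$ as $V\to 0$ and $\brsigma^2_Q = \brL\sigma^2_Q$ by (\ref{EqSigma}), the accumulated quadratic contribution across the blocks converges to $\int_0^\tau\tfrac12\sum_{ij}(\sigma^2_{\cQ(s)}(\cA))_{ij}\partial^2_{V_iV_j}f\,ds$, which is exactly the diffusion part of $Lf$. A Stroock--Varadhan-type argument then identifies any weak limit with a solution of the martingale problem for (\ref{TIBM}) up to the stopping time.

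The log-Lipschitz bound (\ref{sigQ1Q2}), proved separately in Chapter~\ref{SecRDM} under A3' and A4, supplies pathwise uniqueness for (\ref{TIBM}) via the argument of Section~\ref{subsecUSDE}; combined with tightness from (ii), this upgrades subsequential convergence to convergence of the full family. The main technical obstacle is clearly stage (iii): one needs the CLT (\ref{EqVar}) not as a pointwise asymptotic statement but with explicit Berry--Esseen-type error rates uniform over $Q$ in the compact region $\dist(\cP(Q),\dcD)\geq\delta_0$, and one must simultaneously control the shadowing gap between the true dynamics $\Phi^t$ and the frozen billiard on each block. The billiard singularities (near-tangential collisions and the $Q$-dependent disk--scatterer geometry) enter on both fronts, so a careful excision of small-measure bad events at each block, together with an induction over blocks that prevents errors from compounding, is the heart of the argument.
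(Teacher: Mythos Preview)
Your four-stage architecture matches the paper's: the Stroock--Varadhan martingale problem in Proposition~\ref{PrGen}, tightness via fourth-moment bounds in Proposition~\ref{PrTight}, and uniqueness via the log-Lipschitz estimate in Section~\ref{subsecUSDE} are exactly the skeleton of Chapter~\ref{ScME}. The continuous-time reduction is also handled there (Corollary~\ref{CrCTMart}).

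The genuine gap is in your stages (i)--(ii). A single block of length $M^{2/3-\eta}$ with ``small $\eta$'' is too long: the equidistribution estimate that compares the coupled map $\cF$ to the frozen billiard $\cF_Q$ (Proposition~\ref{PrDistEq2}) carries an error of order $\|\brV\|n+n^2/M$, and with $n=M^{2/3-\eta}$ the term $n^2/M\sim M^{1/3-2\eta}$ diverges unless $\eta>1/6$. More seriously, your claim that block contributions are ``essentially independent'' is precisely the approach the paper isolates in Section~\ref{subsecMom} (structure of the proofs, method~(II)) and shows to be insufficient on its own: the cross terms $\EXP_\ell(U_k'P_{k+1}')$ are not small enough if one only uses the mixing gap between blocks. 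What the paper does instead is a two-scale big--small block bootstrap. First (Proposition~\ref{PrST}) it uses blocks of size $M^{1/3}$ separated by gaps $M^{\delta}$ to get moment bounds up to $n\sim M^{1/2}$; then (Proposition~\ref{PrMomA}) it uses blocks of size $\bn=M^{1/2-\delta}$ to reach $n\sim M^{2/3}$, feeding the short-term estimate $\EXP_\ell(\|V_j\|)=\cO(\sqrt{j}/M)$ back into the error term of Corollary~\ref{PrDistEq3} at each step. The cross term is controlled by Lemma~\ref{INDEP} (a multiple-correlation factorization over standard pairs) combined with this inductive bound on $\EXP_\ell|U_k'|$; neither the shadowing estimate nor the block-independence heuristic alone suffices. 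Finally, the paper does not invoke a Berry--Esseen CLT with rates; the equidistribution is formulated as convergence of \emph{expectations} of observables along standard pairs (Propositions~\ref{PrDistEq1}--\ref{PrDistEq2}), and it is this standard-pair machinery, rather than a direct frozen-billiard shadowing of trajectories, that makes the perturbative error terms tractable in the presence of singularities.
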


\subsection{Heavy disk of small size} \label{subsecSR3}
We now turn to our last major result. The problems that plagued us
in the previous theorem can be bypassed by taking the limit $\br\to
0$, in addition to $M\to\infty.$ (That is we assume that the heavy
particle is microscopically large but mactroscopically small.) In
this case the matrix $\sigma^2_Q(\cA)$ will be asymptotically
constant, as we explain next. Recall that
$\sigma^2_Q(\cA)=\brsigma^2_Q(\cA)/\brL$, where $\brL$ does not
depend on $Q$. Next, $\brsigma^2_Q(\cA)$ is given by the Green-Kubo
formula (\ref{EqSigmabar}). Its central term, corresponding to
$j=0$, can be found by a direct calculation:
\begin{equation}
    \label{j=0} \int_{\Omega_Q} \cA\,\cA^T\, d\mu_Q =
    \frac{8\pi\br}{3\,(\length(\dcD)+\length(\dcP))}\,I,
\end{equation}
see Section~\ref{subsecA2b}, and it is independent of $Q$. Hence,
the dependence of $\sigma^2_Q(\cA)$ on $Q$ only comes from the
correlation terms $j\neq 0$ in the series (\ref{EqSigmabar}). Now,
when the size of the massive disk is comparable to the size of the
domain $\cD$, the average time between its successive collisions
with the light particle is of order one, and so those collisions
are strongly correlated. By contrast, if $\br\approx 0$, the
average time between successive interparticle collisions is
$\cO(1/\br)$, and these collisions become almost independent.
Thus, in the Green-Kubo formula (\ref{EqSigmabar}), the central
term (\ref{j=0}) becomes dominant, and we arrive at
\begin{equation}
   \label{rsmall}
   \sigma^2_Q(\cA)=\frac{8\br}{3\, \Area(\cD)}\, I+o(\br).
\end{equation}
see Section~\ref{subsecA2b} for a complete proof.

Next, the time scale introduced in the previous theorem has to be
adjusted to the present case where $\br\to 0$. Due to
(\ref{Vt1t2}) and (\ref{rsmall}), we expect that the disk velocity
grows as $\norm{V(t)} =\cO(\sqrt{\br t}/M)$, and its displacement
as $\norm{Q(t)-Q_0} =\cO(t^{3/2}\br^{1/2}/M)$. Hence, for typical
trajectories, it takes $\cO(\br^{-1/3}M^{2/3})$ units of time
before the heavy disk hits $\dcD$. It is important that during
this period of time $\norm{V(t)}=\cO(r^{1/3}M^{-2/3})\approx 0$,
hence $\norm{v}$ remains close to one. We again modify the
dynamics of the disk making it stop (freeze up) the moment it
becomes $\delta_0$-close to $\dcD$ and consider the so modified
dynamics of the heavy disk $Q_\ast(t)$, $V_\ast(t)$ on time
interval $(0,c\,\br^{-1/3}M^{2/3})$ with a constant $c>0$. Our
time scale is now
\beq \label{timescaleTm3}
   \tau=t\,\br^{1/3}M^{-2/3}
\eeq
and we set
\beq \label{spacescaleTm3}
    \cQ(\tau) =Q_\ast(\tau\, \br^{-1/3}M^{2/3})
\eeq
and hence
\beq \label{velscaleTm3}
    \cV(\tau) =\br^{-1/3}M^{2/3}V_\ast(\tau\,\br^{-1/3} M^{2/3}).
\eeq

It is easy to find an asymptotic distribution of $\cV(\tau)$ by
using our heuristic normal approximation (\ref{Vt1t2}) in a way
similar to (\ref{sigmaappr})--(\ref{Vtauint}). Since the matrix
$\sigma^2_Q(\cA)$ is almost constant, due to (\ref{rsmall}), we
expect that $\cV(\tau) \to \cN(0,\sigma_0^2\tau I)$, where
\beq
        \sigma_0^2 = \frac{8}{3\,\Area(\cD)}.
\eeq
The following theorem shows that our heuristic estimate is
correct:

\begin{theorem}
\label{ThSmall} Under the conditions A1--A3
there is a function
$M_0=M_0(\br)$ such that if $\br\to 0$ and $M\to\infty$, so that
$M>M_0(\br)$, then the processes $\{\cQ(\tau), \cV(\tau)\}$
defined by (\ref{iniTm2}), (\ref{timescaleTm3})--(\ref{velscaleTm3})
converge weakly on the interval $0<\tau<c$:
\beq
      \label{Vlimsmall}  \cV(\tau) \to \sigma_0\, \bw_{\cD} (\tau)
\eeq
and
\beq
     \label{Qlimsmall}
    \cQ(\tau) \to Q_0 + \sigma_0  \int_0^\tau \bw_{\cD} (s)\, ds
\eeq
where $\bw_{\cD} (\tau)$ is a standard two dimensional Brownian
motion subjected to the following modification: we set $\bw_{\cD}
(\tau)=0$ for all $\tau>\tau_\ast$, where $\tau_\ast$ is the
earliest moment when the right hand side of (\ref{Qlimsmall})
becomes $\delta_0$-close to $\dcD$.
\end{theorem}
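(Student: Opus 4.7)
The plan is to mimic the averaging/martingale argument behind Theorem~\ref{ThVelBM}, while exploiting~(\ref{rsmall}) to bypass the state dependence of the diffusion matrix. Because $\sigma^2_Q(\cA)$ is asymptotically equal to the scalar matrix $\sigma_0^2\br\,I$ uniformly over $Q$ with $\dist(\cP(Q),\dcD)\geq\delta_0$, the limiting equation degenerates to $d\bV=\sigma_0\,d\bw$, whose unique solution is a standard Brownian motion. This is why assumptions A3' and A4 can be dropped here: there is no nontrivial SDE to solve, only a weak convergence to verify.

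First, I would partition the rescaled interval $[0,c]$ into subintervals of length $\Delta\tau$ that shrinks slowly as $\br\to 0$ and $M\to\infty$. Each corresponds to a physical window $\Delta t=\Delta\tau\,\br^{-1/3}M^{2/3}$, during which the heavy disk moves by only $\cO(\Delta\tau)$ in the unscaled geometry, so we may approximate the true flow by the frozen billiard $\cF_{Q(t_0)}$ with controlled error — the shadowing strategy already central to the previous theorems. Within such a window the number of light-particle collisions is $n\approx\Delta t/\brL$, and~(\ref{CR2b}) gives the velocity increment as $-(2/M)\sum_j\brw_j^\perp$ plus the correction~(\ref{chi}), which is negligible since $\norm{V}=\cO(\br^{1/3}M^{-2/3})$. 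Applying the billiard CLT~(\ref{EqVar})--(\ref{EqSigmabar2}) to the vector $\cA$ of~(\ref{MEx}), rescaling by $\br^{-1/3}M^{2/3}$, and simplifying, one finds the increment of $\cV$ is approximately $\cN\bigl(0,\br^{-1}\sigma^2_{Q}(\cA)\,\Delta\tau\bigr)$; substituting~(\ref{rsmall}) collapses this to $\cN(0,\sigma_0^2\,\Delta\tau\,I)+o(1)$, independently of $Q$.

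Second, to make this rigorous and global, I would (i)~prove tightness of $\{\cV\}$ via moment bounds $\EXP\norm{\cV(\tau_2)-\cV(\tau_1)}^4\leq C(\tau_2-\tau_1)^2$, derived from the exponential decay of correlations for $\cF_Q$ applied to the fourth moment of $\sum\cA\circ\cF^j_Q$, and (ii)~identify every subsequential weak limit via the martingale problem: write $\cV$ as a discrete-time martingale plus a drift of order $o(1)$ coming from~(\ref{chi}), show that its quadratic variation converges in probability to $\sigma_0^2\tau\,I$ up to the stopping time $\tau_\ast$, and invoke the martingale CLT to conclude that the only possible limit is $\sigma_0\bw_{\cD}(\tau)$; uniqueness of the constant-coefficient martingale problem is elementary. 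The convergence $\cQ(\tau)\to Q_0+\sigma_0\int_0^\tau\bw_{\cD}(s)\,ds$ then follows by continuous mapping through $\cV\mapsto Q_0+\int_0^\cdot\cV$, together with a matching of the freeze-up events for $Q_\ast$ and the limiting process, which is standard because the limit has a continuous distribution on the hitting time.

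The principal obstacle, and the source of the condition $M>M_0(\br)$, is the simultaneous control of errors across the two limiting parameters. The per-collision correction $\delta$ in~(\ref{chi}), the $o(\br)$ remainder in~(\ref{rsmall}), the correlation-decay rate, and the distortion constants of the dispersing billiard $\cF_Q$ all depend on $\br$; worse, as $\br\to 0$ narrow corridors may form between $\cP(Q)$ and the fixed scatterers $\BAN_i$, which potentially degrades hyperbolicity estimates for $\cF_Q$ in a $Q$-dependent way. Since the number of collisions per unit rescaled time is $\cO(\br^{-1/3}M^{2/3}/\brL)$, accumulated errors are controlled only when $M$ grows fast enough relative to negative powers of $\br$; pinning down the explicit growth rate $M_0(\br)$, and in particular establishing the uniformity (in the position of the small scatterer $\cP(Q)$) of the billiard CLT and of the Green--Kubo expansion~(\ref{rsmall}), is where the main technical work will lie.
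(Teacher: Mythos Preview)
Your outline is correct and follows essentially the same route as the paper: tightness via fourth moment bounds, identification of the limit through the asymptotic constancy~(\ref{rsmall}) of the diffusion matrix, and the continuous mapping for $\cQ$. A few points where the paper's execution differs from your sketch are worth noting.

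First, your worry about narrow corridors between $\cP(Q)$ and the scatterers $\BAN_i$ is misplaced: the stopping rule keeps $\dist(Q,\dcD)>\br+\delta_0$, so the gap is always at least $\delta_0$ regardless of $\br$. The genuine $\br$-dependence enters through the curvature $1/\br$ of $\partial\cP(Q)$, which affects expansion factors and distortion constants; the paper handles this in Extension~3 of Appendix~A by passing to the reduced collision space $\tOmega_{\cD}$ (skipping collisions with $\cP(Q)$) and proving a uniform one-step expansion estimate~(\ref{alpha00}) there.

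Second, the mechanism behind $M_0(\br)$ is simpler than you suggest: rather than tracking explicit growth rates, the paper first proves all the moment estimates of Chapter~\ref{ScME} for each fixed $\br$ with $\br$-dependent constants, and then simply increases $M_0(\br)$ so that terms like $C(\br)\bn^{1.9}$ in Proposition~\ref{PrMomA}(d) are dominated by $\fS_A^2\bn^2$. The key input making the rescaled fourth moment uniform is the sharp constant in Proposition~\ref{PrMomA}(d), which after inserting~(\ref{rsmall}) gives $\fS_{\cA}=\cO(\br)$ and hence $\EXP_\ell\bigl(\|\tV(\tau_2)-\tV(\tau_1)\|^4\bigr)\leq\Const\,(\tau_2-\tau_1)^2$ with a constant independent of $\br$.

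Third, for identifying the limit the paper uses characteristic functions (as in Proposition~\ref{Pr2Gen}) rather than the martingale problem, and deals with the stopping time by extending the velocity process with an independent auxiliary Brownian motion after $\bk\bn$ (the process $V_n^\Diamond$ in Chapter~\ref{ScSLP}); this restores genuine independence of increments and lets the argument of Proposition~\ref{Pr2Gen} go through verbatim. Your martingale-CLT route would also work, but you would still need a device like this to decouple the stopping from the increments.
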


\subsection{Comparison to previous works}
\label{subsecCPW} There are two directions of research which our
results are related to. The first one is the averaging
 \index{Averaging} theory of differential equations and the second is
the study of long time behavior of mechanical systems.

The averaging theory deals with systems characterized by two types
of dynamic variables, \emph{fast} and \emph{slow}. The case where
the fast variables make a Markov process, which does not depend on
the slow variables, is quite well understood \cite{FW}. The
results obtained in the Markov case have been extended to the
situation where the fast motion is made by a hyperbolic dynamical
system in \cite{K1, Pn, D2}. By contrast, if the fast variables
are coupled to the slow ones, as they are in our model, much less
is known. Even the case where the fast motion is a diffusion
process was settled quite recently \cite{PV}. Some results are
available for coupled systems where the fast motion is uniformly
hyperbolic \cite{Ba2, K2}, but they all deal with relatively short
time intervals, like the one in our relatively simple Theorem
\ref{Root}. The behavior during longer time periods, like those in
our Theorems~\ref{ThVelBM} and \ref{ThSmall}, remains virtually
unexplored. This type of behavior is hard to control, it appears
to be quite sensitive to the details of the problem at hand; for
example, the uniqueness of the limiting process in our
Theorem~\ref{ThVelBM} relies upon the smoothness of the auxiliary
function $\sigma^2_Q(\cA)$, cf.\ also Theorem~3 in \cite{PV}.

Let us now turn to the second research field. While the phenomenological theory
of the Brownian motion is more  than a hundred years old (see \cite{Ei, N} for historic
background) the mathematical understanding of how this theory can be derived from
the microscopic Hamiltonian laws is still limited. Let us describe some available results
refering the reader to the surveys \cite{CDol, Sol, ST2} for more information.

Probably, the simplest
mechanical model where one can observe a non-trivial statistical
behavior is a periodic Lorentz gas. Bunimovich and Sinai \cite{BS}
(see also an improved version in \cite{BSC2}) were the first to
obtain a Brownian motion approximation for the {\it position} of a
particle traveling in the periodic Lorentz gas with finite
\index{Finite horizon} horizon. Their results hold for arbitrarily
long intervals of time with respect to the equilibrium measure; such
approximations are fairly common for chaotic dynamical systems
\cite{DP}. On the contrary, we construct a Brownian motion
approximation for relatively short periods of time, and in the
context of Theorems~\ref{ThVelBM}--\ref{ThSmall} our system is far
from equilibrium.

On the other hand, models of Brownian motion where one massive
(tagged) particle is surrounded by an ideal gas of light particles
have been studied in many papers, see, e.g., \cite{CDK, DGL1, DGL2,
Ha, Ho, SSol, Spi, ST1}. We do not discuss these papers here
referring the reader to the surveys \cite{Sol, ST2}. We observe that
even though the methods of these papers do not play an important
role in our proofs they should be useful for possible multiparticle
extensions (see Section~\ref{subsecGNP}). The models in the above
cited papers are more realistic in explaining the actual Brownian
motion, however, there are still some unresolved questions. For
example, it is commonly assumed that the gas is (and remains) in
equilibrium, but there is no satisfactory mathematical explanation
of why and how it reaches and maintains that state of equilibrium.
(Moreover the results for out-of-equilibrium systems can differ from
physical predictions made under the equilibrium assumption. See e.g.
\cite{Sz, MS}.) We do not make (and do not need) such assumptions.

In fact, the equilibrium assumption is hard to substantiate. In
ideal gases, where no direct interaction between gas particles takes
place, equilibrium can only establish and propagate due to indirect
interaction via collisions with the heavy particle and the walls.
This process requires many collisions of each gas particle with the
heavy one, but the existing techniques are incapable of tracing the
dynamics beyond the time when each gas particle experiences just a
few collisions, cf.\ \cite{CLS,CLSlong}. Our model contains a single
light particle, but we are able to control the dynamics up to
$cM^{a}$ collisions (and actually longer, the main restriction of
our analysis is the lack of control over $\sigma_Q(\cA)$ as the
heavy disk approaches the border $\dcD$, see Chapter~\ref{SecOP}).
We hope that our method can be used to analyze systems of many
particle as well.

We refer the reader to the surveys \cite{HBS, Sp, Sol, BLRB} for
descriptions of other models where macroscopic equations have been
derived from deterministic microscopic laws. One of the main
difficulties in deriving such equations is that microscopic
equations of motion are time reversible, while the limit
macroscopic equations are not, and therefore we cannot expect the
convergence everywhere in phase space. Normally, the convergence
occurs on a set of large (and, asymptotically, full) measure,
which is said to represent ``typical'' phase trajectories of the
system. In each problem, one has to carefully identify that large
subset of the phase space and estimate its measure. If the system
has some hyperbolic behavior, that large set has quite a complex
fractal structure.

The existing approaches to the problem of convergence make use of
certain families of measures on phase space, such that the
convergence holds with probability $\approx 1$ with respect to each
of those measures. In the context of hyperbolic dynamical systems,
the natural choice is the family of measures having smooth
conditional distributions on unstable manifolds \cite{S1,R0,PsSn}
(which is the characteristic property of Sinai-Ruelle-Bowen
measures). Such measures work very well for averaging
\index{Averaging} problems when the hyperbolicity is uniform and the
dynamics is entirely smooth \cite{K1, D1, D2}. However, if the
system has discontinuities and unbounded derivatives (as it happens
in our case), the analysis of its behavior near the singularities
becomes overwhelmingly difficult. Still, we will prove here that
this general approach applies to systems with singularities.
\newpage

\chapter{Plan of the proofs}
\label{SecPP} \setcounter{section}{3}\setcounter{subsection}{0}

\subsection{General strategy} \label{subsecGS}
Our heuristic calculations of the asymptotic distribution of
$\cV(\tau)$ in Section~\ref{subsecSR1} were based on the normal
approximation (\ref{Vt1t2}), hence our main goal is to prove it. A
natural approach is to fix the heavy disk at $Q=Q(t)$ and
approximate the map $\cF\colon\Omega\to\Omega$ by the billiard map
$\cF_{Q,V}\colon\Omega_{Q,V}\to\Omega_{Q,V}$, which is known
\cite{BS,Y} to obey the central limit theorem (\ref{EqVar}). This
approach, however, has obvious limitations.

On the one hand, our map $\cF$ has positive Lyapunov exponents,
hence its nearby trajectories diverge exponentially fast, so the
above approximation (in a strict sense) only remains valid during
time intervals $\cO(\ln M)$, which are far shorter than we need.
Thus, some sort of averaging is necessary to extend the CLT to
longer time intervals. Here comes the second limitation: the central
limit theorem for dispersing \index{Dispersing billiards} billiards
(\ref{EqVar}) holds with respect to the billiard measure
$\mu_{Q(t_0),V(t_0)}$, while we have to deal with the initial
measure $\mu_{Q_0,V_0}$ and its images under our map $\cF$, the
latter might be quite different from $\mu_{Q(t_0),V(t_0)}$.

To overcome these limitations, we will show that the measures
$\cF^n (\mu_{Q_0,V_0})$ can be well approximated (in the weak
topology) by averages (convex sums) of billiard measures
$\mu_{Q,V}$:
\begin{equation}
\label{ConsLocEq}
     \cF^n (\mu_{Q_0,V_0})\sim
     \int \mu_{Q,V}\, d\lambda_{n} (Q,V)
\end{equation}
where $\lambda_{n}$ is some factor measure on the $QV$ space.
Furthermore, it is convenient to work with an even larger family of
(auxiliary) \index{Auxiliary measures} measures, which we introduce
shortly, and extend the approximation (\ref{ConsLocEq}) to each
auxiliary measure $\mu'$:
\begin{equation}
\label{ConsLocEqA}
        \cF^n (\mu')\sim \int \mu_{Q,V}\, d\lambda_{n} (Q,V)
\end{equation}
for large enough $n$. In Section~\ref{subsecMS} below we make this
approximation precise.

The proof of the \index{Equidistribution} `equidistribution'
(\ref{ConsLocEq})--(\ref{ConsLocEqA}) follows a shadowing
 \index{Shadowing} type argument developed in the theory of uniformly
hyperbolic systems without singularities \cite{Ba1, D1, KKPW, R2}.
However, a major extra effort is required to extend this argument to
systems with singularities, like ours. In fact, the largest error
terms in our approximation (\ref{ConsLocEq}) come from the orbits
passing near singularities.

There are two places where we have trouble establishing
(\ref{ConsLocEq})--(\ref{ConsLocEqA}) at all. First, if the
velocity of the light particle becomes small, $\norm{v}\approx 0$,
then our system is no longer a small perturbation of a billiard
dynamics (because the heavy disk can move a significant distance
between successive collisions with the light particle). Second, if
the heavy disk comes too close to the border $\partial\cD$, then
the mixing properties of the corresponding billiard dynamics
deteriorate dramatically (roughly speaking, because the light
particle can be trapped in a narrow tunnel between $\dcP$ and
$\dcD$ for a long time). In this case the central limit theorem
could only provide a satisfactory normal approximation to the
billiard dynamics (in which the heavy disk is fixed) over very
large times, but then the position and velocity of the heavy disk
may change too much, rendering the billiard approximation itself
useless.

Accordingly, we will fix a small $\delta_1 < \delta_0$, and most of
the time we work in the region
\beq
       \label{Upsilon}
    (Q,V)\in\Upsilon_{\delta_1}\colon=
    \{M\norm{V}^2<1-\delta_1,\ \ \
    \dist(Q, \partial\cD)>\br +\delta_1\}
\eeq
(the first inequality guarantees that $\|v\| > \delta_1^{1/2}>0$).
We will show that violation of the first restriction (i.e.\ $\|v\|
\leq \delta_1^{1/2}$) is improbable on the time scale we deal with,
but possible violations of the other restriction (i.e.\
$\dist(Q,\partial\cD) \leq \br + \delta_1$) will force us to stop
the heavy disk whenever it comes too close to the border $\dcD$.

Our paper can be divided, roughly, into two parts, nearly equal in
size but quite different in mathematical content. In the first,
``dynamical'' part (Chapters~\ref{SecSPE} and \ref{SecRDM} and
Appendices) we analyze the mechanical model of two particles,
construct auxiliary \index{Auxiliary measures} measures, prove the
\index{Equidistribution} equidistribution
(\ref{ConsLocEq})--(\ref{ConsLocEqA}) and the log-Lipschitz
\index{Log-Lipschitz continuity} continuity of the diffusion
\index{Diffusion matrix} matrix (\ref{sigQ1Q2}). In the second,
``probabilistic'' part (Chapters~\ref{ScME}--\ref{ScSLP}) we prove
the convergence to stochastic processes as claimed in Theorems
~\ref{Root}--\ref{ThSmall}; there we use various (standard and
novel) moment-type techniques\footnote{Note that the time scale in
\cite{CE} corresponds to that of our Theorem~\ref{ThVelBM}. Indeed,
in the notation of \cite{CE}, $\varepsilon=1/\sqrt{M}$ is the
typical velocity of the heavy particle at equilibrium, hence their
``4/3 law'' becomes our ``2/3 law''. Let us also mention the papers
\cite{Kh, K3} studying the exit problem from a neighborhood of a
``non-degenerate equilibrium'' perturbed by a small noise. In these
terms, our Theorem \ref{ThVelBM} deals with a ``degenerate
equilibrium'', but the heuristic argument used to determine the
correct scaling is similar to that of \cite{Kh, K3}. } \cite{IL,
CE}. The arguments in Chapters~\ref{ScME}--\ref{ScSLP} do not rely
on the specifics of the underlying dynamical systems, hence if one
establishes results similar to (\ref{ConsLocEq})--(\ref{ConsLocEqA})
and (\ref{sigQ1Q2}) for another system, one would be able to derive
analogues of our limit theorems by the same moment estimates.

The proofs of Theorems~\ref{Root}--\ref{ThSmall} follow similar
lines, but Theorem~\ref{ThVelBM} requires much more effort than the
other two (mainly, because there are no explicit formulas for the
limiting process, so we have to proceed in a roundabout way). We
divide the proof of Theorem~\ref{ThVelBM} between three sections:
the convergence to equilibrium in the sense of
(\ref{ConsLocEq})--(\ref{ConsLocEqA}) is established in
Chapter~\ref{SecSPE}, the log-Lipschitz \index{Log-Lipschitz
continuity} continuity of the diffusion \index{Diffusion matrix}
matrix (\ref{sigQ1Q2}) in Chapter~\ref{SecRDM}, and the moment
estimates specific to the scaling of Theorem~\ref{ThVelBM} are done
in Chapter~\ref{ScME}. The modifications needed to prove the easier
theorems~\ref{Root} and \ref{ThSmall} are described in
Chapters~\ref{ScFSP} and \ref{ScSLP}, respectively.

\subsection{Precise definitions} \label{subsecPD}
First we give the definition of auxiliary \index{Auxiliary measures}
measures. Recall that our primary goal is control over measures
$\cF^n (\mu_{Q,V})$ for $n\geq 1$. The measure $\mu_{Q,V}$ is
concentrated on the surface $\Omega_{Q,V}$. Let us coarse-grain this
measure by partitioning $\Omega_{Q,V}$ into small subdomains
$D\subset\Omega_{Q,V}$ and representing $\mu_{Q,V}$ as a sum of its
restrictions to those domains. The image of a small domain
$D\subset\Omega_{Q,V}$ under the map $\cF^n$ gets strongly expanded
in the unstable direction of the billiard map $\cF_{Q,V}$, strongly
contracted in the stable direction of $\cF_{Q,V}$, slightly deformed
in the transversal directions, and possibly cut by singularities of
$\cF$ into several pieces. Thus, $\cF^n(D)$ looks like a union of
one-dimensional curves that resemble unstable manifolds of the
billiard map $\cF_{Q,V}$, but may vary slightly in the transversal
directions. Thus, the measure $\cF^n(\mu_{Q,V})$ evolves as a
weighted sum of smooth measures on such curves.

Motivated by this observation we introduce our family of auxiliary
\index{Auxiliary measures} measures. A {\em standard pair} is
$\ell=(\gamma,\rho)$, where \index{Standard pair}
$\gamma\subset\Omega$ is a $C^2$ curve, which is $C^1$ close to an
unstable curve $\gamma_{Q,V} \subset \Omega_{Q,V}$ for the billiard
map $\cF_{Q,V}$ for some $Q,V$, and $\rho$ is a smooth enough
probability density on $\gamma$. The precise description of standard
pairs is given in Chapter~\ref{SecSPE}, here we only mention the
properties of standard pairs most essential to our analysis. For a
standard pair $\ell$, we denote by $\gamma_\ell$ its curve, by
$\rho_{\ell}$ its density, and by $\mes_\ell$ the measure on
$\gamma$ with the density $\rho_\ell$.

We define auxiliary \index{Auxiliary measures} measures via convex
sums of measures on standard pairs, which satisfy an additional
``length control'':
\medskip

\noindent {\bf Definition}. An \emph{auxiliary measure} is a
probability measure $m$ on $\Omega$ such that \index{Auxiliary
measures}
\beq
       m=m_1+m_2,\qquad |m_2|<M^{-50}
         \label{aux1}
\eeq
and $m_1$ is given by
\beq
   m_1 = \int \mes_{\ell_{\alpha} } \, d\lambda(\alpha)
      \label{aux2}
\eeq
where $\{\ell_{\alpha} =(\gamma_{\alpha} ,\rho_{\alpha})\}$ is a
family of standard pairs such that $\{\gamma_{\alpha}\}$ make a
measurable partition of $\Omega$ ($m_1$-mod 0), and $\lambda$ is
\index{Standard pair}
some factor measure satisfying
\beq
   \lambda \Bigl(\alpha\colon\
   \length(\gamma_{\alpha} )<M^{-100} \Bigr) =0
      \label{aux3}
\eeq
which imposes a ``length control''. We denote by $\fM$ the family of
auxiliary measures. \index{Auxiliary measures}
\medskip

It is clear that our family $\fM$ contains the initial smooth
measure $\mu_{Q_0,V_0}$, as well as every billiard measure
$\mu_{Q,V}$ for $Q,V$ satisfying $\norm{V}<1/\sqrt{M}$ and
$\dist(Q,\partial\cD)>\br+\delta_0/2$. Indeed, one can easily
represent any of these measures by its conditional distributions on
the fibers of a rather arbitrary smooth foliation of the
corresponding space $\Omega_{Q,V}$ into curves whose tangent vectors
lie in unstable cones (see precise definitions in
Chapter~\ref{SecSPE}).

Next we need to define a class of functions
$\fR=\{A\colon\Omega\to\reals\}$ satisfying two general (though
somewhat conflicting) requirements. On the one hand, the functions
$A\in\fR$ should be smooth enough on the bulk of the space
$\Omega$ to ensure a fast (in our case -- exponential) decay of
correlations under the maps $\cF_{Q,V}$. On the other hand, the
regularity of the functions $A\in\fR$ should be compatible with
that of the map $\cF$, so that for any $A\in\fR$ the function
$A\circ\cF$ would also belong to $\fR$.

We will see in Chapter~\ref{SecSPE} that our map
$\cF\colon\Omega\to\Omega$ is not smooth, its singularity set
$\cS_1= \partial\Omega \cup \cF^{-1} (\partial\Omega)$ consists of
points whose next collision is grazing. Due to our finite horizon
\index{Finite horizon} assumption, $\cS_1\subset\Omega$ is a finite
union of compact $C^2$ smooth submanifolds (with boundaries). We
note that while the map $\cF$ depends on the mass $M$ of the heavy
disk, its singularity set $\cS_1$ does not. The complement
$\Omega\setminus\cS_1$ is a finite union of open connected domains,
we call them $\Omega_k$, $1\leq k\leq k_0$. The restriction of the
map $\cF$ to each $\Omega_k$ is $C^2$. The derivatives of $\cF$ are
unbounded, but their growth satisfies the following inequality:
\beq
    \norm{D_x \cF} \leq \,L_{\cF}\cdot [\dist (x,\cS_1)]^{-1/2}
      \label{DxcF}
\eeq
where $L_{\cF}>0$ is independent of $M$, see a proof in
Section~\ref{subsecSUV}. In addition, the restriction of $\cF$ to
each $\Omega_k$ can be extended by continuity to the closure
$\bar{\Omega}_k$, it then loses smoothness but remains H\"older
continuous:
\beq
      \forall k\quad \forall x,y\in\bar{\Omega}_{k}\quad
      \norm{\cF(x)-\cF(y)}\leq K_{\cF}\,[\dist(x,y)]^{1/2}
        \label{cFHolder}
\eeq
where $K_{\cF}>0$ is independent of $M$, see a proof in
Section~\ref{subsecSUV}.

These facts lead us to the following definition of $\fR$:

\medskip\noindent{\bf Definition}. A function $A\colon\Omega\to\reals$
belongs to $\fR$ iff\\
(a) $A$ is continuous on $\Omega \setminus \cS_1$. Moreover, the
continuous extension of $A$ to the closure of each connected
component $\Omega_{k}$ of $\Omega\setminus\cS_1$ is H\"older
continuous with some exponent $\alpha_A\in (0,1]$:
$$
  \forall k\quad \forall x,y\in\bar{\Omega}_{k}\quad
   |A(x)-A(y)|\leq K_A\,[\dist(x,y)]^{\alpha_A}
$$
(b) at each point $x\in \Omega\setminus\cS_1$ the function $A$ has
a local Lipschitz constant
\beq
   {\rm Lip}_x(A) \colon= \limsup_{y\to x}
   |A(y) - A(x)|/\dist(x,y)
     \label{Ax'}
\eeq
which satisfies the restriction
$$
     {\rm Lip}_x(A) \leq\,L_A
     \, [\, \dist(x,\cS_{1})]^{-\beta_A}
$$
with some $L_A>0$ and $\beta_A<1$. The quantities $\alpha_A\leq
1$, $\beta_A<1$, and $K_A,L_A>0$ may depend on the function $A$.
\medskip

Note that the set $\cS_1$, and hence the class $\fR$, are
independent of $M$. On the contrary, the singularity set
$$
   \cS_n = \cup_{i=0}^{n} \cF^{-1} (\partial\Omega)
$$
of the map $\cF^n$ depends on $M$ for all $n\geq 2$. In Appendix~B
(Section~\ref{subsecA3}) we will prove the following:

\begin{lemma} Let $n\geq 1$ and $B_1,B_2\in\fR$. Then the function
$A=B_1\,(B_2\circ\cF^{n-1})$ has the following properties:\\ {\rm
(a)} $A$ is continuous on $\Omega \setminus \cS_{n}$. Moreover,
the continuous extension of $A$ to the closure of each connected
component $\Omega_{n,k}$ of the complement
$\Omega\setminus\cS_{n}$ is H\"older continuous with some exponent
$\alpha_A\in (0,1]$:
$$
    \forall k \qquad
   \forall x,y\in\bar{\Omega}_{n,k}\quad
   |A(x)-A(y)|\leq K_A\,[\dist(x,y)]^{\alpha_A}
$$
{\rm (b)} at each point $x\in \Omega\setminus\cS_{n}$ the local
Lipschitz constant (\ref{Ax'}) of $A$ satisfies the restriction
$$
     {\rm Lip}_x(A) \leq\,L_A
     \, [\, \dist(x,\cS_{n})]^{-\beta_A}
$$
with some $\beta_A<1$. Here $\alpha_A$, $\beta_A$, $K_A$, and
$L_A$ are determined by $n$ and $\alpha_{B_i}$, $\beta_{B_i}$,
$K_{B_i}$, $L_{B_i}$ for $i=1,2$, but they do not depend on $M$.
\label{lmfRprop}
\end{lemma}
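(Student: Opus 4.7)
The plan is to exploit the product structure $A = B_1 \cdot (B_2\circ\cF^{n-1})$, reducing the claim to three ingredients that can be controlled separately: the regularity of $B_1$ on $\Omega\setminus\cS_1$, which transfers for free to $\Omega\setminus\cS_n$ because $\cS_1\subset\cS_n$ yields $\dist(x,\cS_n)\leq\dist(x,\cS_1)$; the composite regularity of $B_2\circ\cF^{n-1}$; and a pointwise product rule.

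For part (a), first observe that by the very definition $\cS_n = \cup_{i=0}^{n}\cF^{-i}(\partial\Omega)$, the iterate $\cF^{n-1}$ maps each connected component $\bar{\Omega}_{n,k}$ continuously into a single connected component of $\Omega\setminus\cS_1$. Iterating (\ref{cFHolder}) $n-1$ times then shows that $\cF^{n-1}|_{\bar{\Omega}_{n,k}}$ is H\"older continuous with some exponent $c_n\in(0,1]$. Composing with the H\"older-$\alpha_{B_2}$ function $B_2$, and multiplying by the bounded H\"older function $B_1$, yields (a) with exponent $\alpha_A=\min(\alpha_{B_1},\alpha_{B_2}c_n)$ and a constant $K_A$ depending only on $n$ and the data of $B_1,B_2$, not on $M$.

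For part (b), I would apply the decomposition
\[
A(x)-A(y) = [B_1(x)-B_1(y)]\,(B_2\circ\cF^{n-1})(x) + B_1(y)\,[(B_2\circ\cF^{n-1})(x)-(B_2\circ\cF^{n-1})(y)],
\]
reducing the task to two local Lipschitz bounds. The $B_1$-part is handled directly using the $\fR$-hypothesis on $B_1$ together with $\dist(x,\cS_n)\leq\dist(x,\cS_1)$. The remaining part is controlled via the chain-rule inequality
\[
{\rm Lip}_x(B_2\circ\cF^{n-1}) \leq {\rm Lip}_{\cF^{n-1}(x)}(B_2)\cdot\|D_x\cF^{n-1}\|,
\]
combined with the $\fR$-bound ${\rm Lip}_{\cF^{n-1}(x)}(B_2)\leq L_{B_2}\dist(\cF^{n-1}x,\cS_1)^{-\beta_{B_2}}$ and the iterated derivative estimate $\|D_x\cF^{n-1}\|\leq L_{\cF}^{n-1}\prod_{j=0}^{n-2}\dist(\cF^j x,\cS_1)^{-1/2}$ obtained from (\ref{DxcF}) by the chain rule.

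The main obstacle is converting this product of $n$ singularity factors into a single expression $\Const\cdot\dist(x,\cS_n)^{-\beta_A}$ with $\beta_A<1$. A bare iteration of (\ref{cFHolder}) for the inverse map produces exponents that explode doubly exponentially in $n$, and is useless. The resolution relies on the specific geometry of billiard singularities: the components of $\cS_n$ are smooth curves contained in the preimages $\cF^{-j}(\cS_1)$, $j=0,\dots,n-1$, each transverse to the unstable cone field, while $\cF$ is uniformly expanding along unstable cones. Consequently, when $x$ is close to $\cS_n$, its nearest singular point lies on only one curve $\cF^{-j_0}(\cS_1)$; by unstable expansion the factor $\dist(\cF^{j_0}x,\cS_1)^{-1/2}$ is bounded by a constant times $\dist(x,\cS_n)^{-1/2}$, while the remaining $n-2$ factors $\dist(\cF^j x,\cS_1)^{-1/2}$, $j\neq j_0$, stay bounded by constants depending only on $n$. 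Together with an analogous lower bound $\dist(\cF^{n-1}x,\cS_1)\geq c_n\,\dist(x,\cS_n)^{q_n}$ obtained the same way, this yields the desired estimate for some $\beta_A<1$ depending on $n$, $\beta_{B_1}$, and $\beta_{B_2}$. The careful accounting of distances near the singularity set, and the treatment of points lying close to intersections of several singular curves, is what makes the appendix proof technical.
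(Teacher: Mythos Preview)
Your treatment of part (a) is essentially the paper's argument. The gap is in part (b).

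Your key geometric claim---that when $x$ is close to $\cS_n$ only one of the factors $\dist(\cF^{j}x,\cS_1)^{-1/2}$ is large while the remaining ones ``stay bounded by constants depending only on $n$''---is false. Nothing prevents a billiard trajectory from experiencing near-grazing collisions at several distinct times within an $n$-step window; for any $\varepsilon>0$ there exist points $x$ with $\dist(\cF^{j}x,\cS_1)<\varepsilon$ for two (or more) indices $j\in\{0,\dots,n-2\}$. So the product $\prod_{j}\dist(\cF^{j}x,\cS_1)^{-1/2}$ cannot be bounded by a single power $\dist(x,\cS_n)^{-1/2}$ times an $n$-dependent constant. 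Your closing remark about ``points lying close to intersections of several singular curves'' is precisely where the argument collapses, not merely a technicality.

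The paper avoids this by induction on $n$, handling only a single application of $\cF$ at each step. The essential input is the billiard relation
\[
   d_3:=\dist(\cF(x),\partial\Omega)\ \sim\ \sqrt{d_1},\qquad d_1:=\dist(x,\cS_1),
\]
which follows from $d_1\sim\cos^2\varphi_1$ and $d_3\sim\cos\varphi_1$. Writing $d_2=\dist(x,\cS_n\setminus\cS_1)$, $d_4=\dist(\cF(x),\cS_{n-1})$, and $d=\dist(x,\cS_n)=\min(d_1,d_2)$, a two-case analysis ($d_1<d_2$ vs.\ $d_2\le d_1$) combines the derivative blow-up $\|D_x\cF\|\sim d_1^{-1/2}$ with the inductive bound ${\rm Lip}_{\cF(x)}(B)\lesssim d_4^{-\beta_B}$ (using $d_4\sim d_2/\sqrt{d_1}$ in case (b)) to give
\[
   {\rm Lip}_x(A)\ \lesssim\ d^{-(1+\beta_B)/2}.
\]
Thus $\beta_A=(1+\beta_B)/2<1$ whenever $\beta_B<1$, and the induction closes with $\beta_A=1-(1-\beta_{B_2})/2^{\,n-1}$. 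The square-root compensation between the derivative and the image distance is the mechanism you are missing; it is what allows the exponents to combine rather than accumulate.
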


For any function $A\colon\Omega\to\reals$ and a standard pair
\index{Standard pair}
$\ell=(\gamma,\rho)$ we shall write
$$
      \EXP_\ell(A)=\int_\gamma A(x) \rho(x)\, dx.
$$
We also define a projection $\pi_1(Q,V,q,v)=(Q, V)$ from $\Omega$
to the $QV$ space.

\subsection{Key technical results}
\label{subsecMS} With the above notation we are ready to state
several propositions that give precise meaning to the heuristic
formulas \eqref{ConsLocEq} and \eqref{ConsLocEqA}. According to
(\ref{Upsilon}), we will only deal with standard pairs
$\ell=(\gamma,\rho)$ satisfying two restrictions:
\begin{equation}
     \label{Away}
     M \|\brV^2\| \leq 1-\delta_1 \quad
     \text{and} \quad
     \dist(\brQ,\partial\cD)>\br+\delta_1
\end{equation}
for some $(\brQ,\brV) \in \pi_1 (\gamma)$.

Next we fix a small $\delta_{\diamond} \ll \delta_1$ and will only
deal with $\cF^n$ where $n$ satisfies $0 \leq n\leq
\delta_\diamond\sqrt{M}$. This guarantees that the vital
restrictions (\ref{Away}) will not be grossly violated, i.e.\
$\|v\|$ will stay close enough to a positive constant, cf.\
(\ref{vvVV}), and $Q$ will stay away from the boundary $\dcD$.

In all our propositions, $K$ will denote sufficiently large
constants, i.e.\ all our statements will hold true if $K>0$ is large
enough (the value of $K$ can easily be chosen the same in all our
estimates, so we will use the same plain symbol to avoid unnecessary
indexation).

Our first proposition shows that the class of auxiliary
\index{Auxiliary measures} measures is ``almost'' invariant under
the dynamics (we can only claim ``almost'' invariance because
(\ref{Away}) will eventually be violated).

\begin{proposition}[Propagation]
\label{PrDistEq1} If $\ell=(\gamma,\rho)$ satisfies (\ref{Away}),
then for all $n$ satisfying
$$
  K\,|\ln\length(\gamma)| \leq n \leq \delta_\diamond\sqrt{M}
$$
and any integrable function $A$ we have
\begin{equation}
\label{EqProp}
      \EXP_\ell \left(A\circ\cF^n\right) = \sum_\alpha
      c_\alpha\, \EXP_{\ell_\alpha}(A)
\end{equation}
where $c_\alpha>0$, $\sum_\alpha c_\alpha=1$, and \index{Standard
pair} $\ell_\alpha=(\gamma_\alpha ,\rho_\alpha)$ are standard pairs
(the components of the image of $\ell$ under $\cF^n$ with induced
conditional measures); besides
\beq \label{Kvarepsilon}
   \sum_{\length(\gamma_\alpha)<\varepsilon}
   c_\alpha \leq K \varepsilon
\eeq
for all $\varepsilon>0$, the map $\cF^{-n}$ is smooth on each
$\gamma_\alpha$, and
\beq
   \forall y', y''\in \gamma_\alpha
   \quad \dist[\cF^{-m}(y'), \cF^{-m}(y'')]\leq
     K\vartheta^m
      \label{expclose}
\eeq
for all $1\leq m\leq n$ and some constant $\vartheta \in (0,1)$.
\end{proposition}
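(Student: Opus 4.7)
The plan is to establish the three conclusions — the decomposition into standard pairs, the length control \eqref{Kvarepsilon}, and the exponential contraction under $\cF^{-m}$ — by combining a one-step propagation lemma with the classical Chernov--Markarian growth lemma, adapted to tolerate the $O(1/M)$ perturbation that distinguishes $\cF$ from the frozen billiard map $\cF_{Q,V}$. First I would set up a \emph{one-step} result: given a standard pair $\ell=(\gamma,\rho)$, show that $\cF(\gamma)$ is a finite union of $C^2$ curves obtained by cutting $\gamma$ along $\gamma\cap\cS_1$, each component of $\cF(\gamma)$ is again close (in the $C^1$ sense) to an unstable curve of $\cF_{\brQ,\brV}$ for some nearby $(\brQ,\brV)$, and the pushed-forward conditional density remains smooth enough to form a standard pair — possibly after chopping the longest pieces to keep curvature and density distortion bounded. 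Since the unstable cone is invariant under $D\cF_{Q,V}$ with uniform expansion factor $\Lambda>1$, and since \eqref{DxcF}--\eqref{cFHolder} give quantitative control on how much singularities can distort the picture, pieces of $\cF(\gamma)$ not touching $\cS_1$ are stretched by at least $\Lambda$. This gives a recursive family $\{\ell_\alpha^{(n)}\}$ defined by iterating the chopping procedure, with $c_\alpha\ge 0$ summing to $1$ tautologically.

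For the length control I would run the standard Chernov--Markarian growth argument. Let $Z_n(\varepsilon)=\sum_{\length(\gamma_\alpha^{(n)})<\varepsilon}c_\alpha$. Because every piece of length $\ge \varepsilon_0$ produces at most a bounded number of singularity-induced cuts within a distance $\varepsilon$ of $\cS_1$, and the remaining portion is uniformly expanded, one obtains an inequality of the shape
\begin{equation*}
Z_{n+1}(\varepsilon)\le q\,Z_n(\Lambda\varepsilon)+K_0\,\varepsilon
\end{equation*}
with some $q<1$ independent of $M$, provided the curves are $C^1$ close to unstable ones (which the previous step guarantees). Iterating this bound starting from $Z_0(\varepsilon)\le \mathbf 1_{\{\varepsilon>\length(\gamma)\}}$ shows that $Z_n(\varepsilon)\le K\varepsilon$ as soon as $n\ge K|\ln\length(\gamma)|$, which is exactly \eqref{Kvarepsilon}. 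This is the point at which the hypothesis $n\ge K|\ln\length(\gamma)|$ is used — before that many iterates, the initial scale $\length(\gamma)$ has not yet been fully mixed into the invariant length distribution.

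The exponential-closeness estimate \eqref{expclose} is a direct consequence of the hyperbolic structure: each $\gamma_\alpha^{(n)}$ lies in an unstable cone by construction, so $D\cF^{-1}$ restricted to its tangent direction contracts by a factor $\vartheta:=\Lambda^{-1}<1$. The distortion bound built into the standard-pair definition, together with smoothness of $\cF^{-m}$ along $\gamma_\alpha^{(n)}$ (which holds because, by construction, no singularity of $\cF^n$ crosses the interior of $\gamma_\alpha^{(n)}$), upgrades the infinitesimal contraction to the integrated bound $\dist[\cF^{-m}y',\cF^{-m}y'']\le K\vartheta^m$. Given this, \eqref{EqProp} follows by disintegrating the push-forward measure $\cF^n_*(\mes_\ell)$ along the partition $\{\gamma_\alpha^{(n)}\}$.

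The main obstacle, and the reason the bound $n\le\delta_\diamond\sqrt M$ appears, is that $\cF$ is not a genuine hyperbolic billiard map but an $O(1/M)$-perturbation of the frozen maps $\cF_{Q,V}$; the centre $Q$ and the velocity $V$ drift, by \eqref{vvVV}, by $O(n/M)$ over $n$ iterates. I would need to verify that (i) the unstable cones of $\cF_{Q,V}$ are preserved by $D\cF$ with an expansion factor arbitrarily close to $\Lambda$, (ii) the singularity set of $\cF^n$ differs from that of $\cF_{Q_0,V_0}^n$ by only an $O(n/M)$-Hausdorff-perturbation, and (iii) the density distortion introduced per step is summable. All three require the ``Away'' condition \eqref{Away} so that the frozen billiards stay uniformly hyperbolic with uniform constants; the threshold $\delta_\diamond\sqrt M$ then guarantees the cumulative perturbation $n\cdot O(1/M)$ stays well inside the safety margin $\delta_1$, so the constants $q$, $K_0$, $\Lambda$, $\vartheta$ in the growth and contraction estimates can be chosen independently of $M$. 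This robustness of the growth lemma under small smooth/singular perturbations is the technical heart of the argument and is precisely what will be established in Chapter~\ref{SecSPE}.
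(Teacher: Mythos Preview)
Your proposal is correct and follows essentially the same route as the paper: the proof there is literally ``combine Proposition~\ref{prspair} (invariance of the class of standard pairs under $\cF$) with Lemma~\ref{prgrow} (the growth lemma)''. Your one-step propagation plus the recursive inequality $Z_{n+1}(\varepsilon)\le q\,Z_n(\Lambda\varepsilon)+K_0\varepsilon$ is exactly the content of those two results, and the contraction \eqref{expclose} is indeed immediate from uniform expansion of u-curves (Proposition~\ref{prhyper}).

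One small framing difference worth noting: you set things up as a perturbation argument, viewing $\cF$ as an $O(1/M)$-deformation of the frozen maps $\cF_{Q,V}$ and then checking that cones, singularities, and distortion survive the perturbation. The paper instead develops the hyperbolic machinery \emph{directly} for $\cF$ in Sections~\ref{subsecSUV}--\ref{subsecSP}: it identifies an invariant unstable cone in the full seven-dimensional tangent space (Proposition~\ref{prabc}), proves uniform expansion (Proposition~\ref{prhyper}), distortion and curvature bounds (Propositions~\ref{prdist}--\ref{prcurv}), and the one-step expansion estimate \eqref{alpha01} for $\cF$ itself. The two approaches are equivalent here, but the direct route avoids having to track how singularity sets and cones of $\cF_{Q,V}$ deform with $(Q,V)$ at this stage (that comparison is deferred to Section~\ref{subsecPA}, where it is genuinely needed). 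Also, a minor correction: the drift of $Q$ over $n$ steps is $O(n/\sqrt{M})$, not $O(n/M)$, since $\|V\|=O(1/\sqrt{M})$; this is precisely why the upper bound is $\delta_\diamond\sqrt{M}$ rather than $\delta_\diamond M$.
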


The condition $K\,|\ln\length(\gamma)| \leq n$ is necessary to give
short \index{Standard pair} standard pairs enough time to expand and
satisfy (\ref{Kvarepsilon}).

The next proposition basically shows that if $\ell$ satisfies
(\ref{Away}), then
$$
    \cF^n (\mes_{\ell}) \approx \mu_{\brQ, \brV}
$$
(here we do not need to consider convex combinations yet) for all
$n$ in the range
\beq \label{nnnn}
   \ln M \lesssim
    n \lesssim M^b,
   \qquad \text{where}\quad 0<b<1/2
\eeq
The lower bound on $n$ guarantees that $\cF^n_{\brQ, \brV}
(\mes_{\ell})$ is almost uniformly distributed (`equidistributed')
in $\Omega_{\brQ,\brV}$, and the \index{Equidistribution} upper
bound on $n$ prevents $Q$ and $V$ from changing significantly during
$n$ iterations (implying that $\cF_{\brQ, \brV}^n$ will be still a
good approximation to $\cF^n$). We will use functions $A\colon
\Omega\to\reals$ such that
\beq \label{ABB}
     A=B_1\, (B_2\circ\cF^{n_A-1}),
     \qquad B_1,B_2\in\fR
\eeq
for some small fixed $n_A\geq 1$ (independent of $M$). All the
constants denoted by $K$ will now depend on $n_A$ as well.

\begin{proposition}[Short term equidistribution]
\label{PrDistEq2} Let $\ell=(\gamma,\rho)$ satisfy (\ref{Away}) and
\index{Equidistribution} $A$ satisfy (\ref{ABB}). Then for all $n$
satisfying
$$
   K\, |\ln\length(\gamma)| \leq n \leq
       \delta_{\diamond}\sqrt{M}
$$
and all $m \leq \min\{ n/2,K\ln M\}$ we have
$$
   \EXP_\ell (A\circ\cF^n)=
   \mu_{\brQ,\brV}(A)+ \cO(\cR_{n,m} + \theta^m),
$$
where $\theta \in (0,1)$ is a constant and
$$
    \cR_{n,m} = \norm{\brV} (n+m^2) + (n^2+m^3)/M.
$$
\end{proposition}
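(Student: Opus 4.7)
The plan is to split $\cF^n=\cF^m\circ\cF^{n-m}$ into a long propagation stretch followed by a short mixing phase, and to reduce the mixing phase to the exponential equidistribution of unstable manifolds under the frozen billiard map $\cF_{\brQ,\brV}$. First apply Proposition~\ref{PrDistEq1} to iterate $\ell$ under $\cF^{n-m}$: since $m\leq n/2$ gives $n-m\geq (K/2)|\ln\length(\gamma)|$ (enlarging $K$ if needed),
\begin{equation*}
   \EXP_\ell(A\circ\cF^n)=\sum_\alpha c_\alpha\,\EXP_{\ell_\alpha}(A\circ\cF^m),
\end{equation*}
with standard pairs $\ell_\alpha=(\gamma_\alpha,\rho_\alpha)$ satisfying $\sum_{\length(\gamma_\alpha)<\varepsilon}c_\alpha\leq K\varepsilon$. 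Take $\varepsilon=M^{-100}$; the discarded mass is absorbed in the error. For each retained $\ell_\alpha$ pick a reference point $(\brQ_\alpha,\brV_\alpha)\in\pi_1(\gamma_\alpha)$. By (\ref{vvVV}) and $n\leq\delta_\diamond\sqrt M$, the $(Q,V)$ drift over $n-m$ steps obeys $\|\brQ_\alpha-\brQ\|=\cO(n\|\brV\|+n^2/M)$ and $\|\brV_\alpha-\brV\|=\cO(n/M)$.

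Next, compare the final $m$ iterations of the true dynamics with the frozen billiard $\cF_\alpha:=\cF_{\brQ_\alpha,\brV_\alpha}$ acting on $\Omega_{\brQ_\alpha,\brV_\alpha}$. At step $k\leq m$ the heavy disk has drifted by $\cO(k\|\brV_\alpha\|+k^2/M)$ from its frozen position and $V$ by $\cO(k/M)$, so pointwise $\|\cF(y)-\cF_\alpha(y)\|=\cO(k\|\brV_\alpha\|+k^2/M)$ for $y=\cF^k(x)$. Combine this with the telescoping identity
\begin{equation*}
  A\circ\cF^m-A\circ\cF_\alpha^m=\sum_{k=0}^{m-1}\bigl(A\circ\cF_\alpha^{m-k-1}\circ\cF\circ\cF^k-A\circ\cF_\alpha^{m-k-1}\circ\cF_\alpha\circ\cF^k\bigr),
\end{equation*}
the H\"older/Lipschitz control on $A\circ\cF_\alpha^{m-k-1}$ from Lemma~\ref{lmfRprop}, and the length-control (\ref{Kvarepsilon}) applied to each intermediate image $\cF^k(\ell_\alpha)$. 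Summing the per-step contributions $\cO(k\|\brV_\alpha\|+k^2/M)$ over $k=0,\dots,m-1$ (and replacing $\|\brV_\alpha\|$ by $\|\brV\|$ modulo terms absorbable into $\cR_{n,m}$) yields
\begin{equation*}
  \bigl|\EXP_{\ell_\alpha}(A\circ\cF^m)-\EXP_{\ell_\alpha}(A\circ\cF_\alpha^m)\bigr|=\cO\bigl(\|\brV\|\,m^2+m^3/M\bigr).
\end{equation*}

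Finally, since $\gamma_\alpha$ is $C^1$-close to an unstable curve of the dispersing billiard $\cF_\alpha$ with $\length(\gamma_\alpha)\geq M^{-100}$, the equidistribution of unstable manifolds for dispersing billiards \cite{BSC2,Y} gives
\begin{equation*}
  \EXP_{\ell_\alpha}(A\circ\cF_\alpha^m)=\mu_{\brQ_\alpha,\brV_\alpha}(A)+\cO(\theta^m)
\end{equation*}
for some $\theta\in(0,1)$, and by H\"older continuity of $(Q,V)\mapsto\mu_{Q,V}(A)$,
\begin{equation*}
  \mu_{\brQ_\alpha,\brV_\alpha}(A)-\mu_{\brQ,\brV}(A)=\cO(n\|\brV\|+n^2/M).
\end{equation*}
Summing over $\alpha$ with weights $c_\alpha$ and collecting the drift contributions $\|\brV\|n$ and $n^2/M$, the telescoping contributions $\|\brV\|m^2$ and $m^3/M$, and the mixing error $\theta^m$ yields exactly $\cR_{n,m}+\theta^m$. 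The principal obstacle is the telescoping step: both $\|\cF-\cF_\alpha\|$ and the local Lipschitz constants of $A\circ\cF_\alpha^k$ are inflated near $\cS_1$ according to (\ref{DxcF}), and intermediate images $\cF^k(\ell_\alpha)$ may concentrate there. Repeated invocations of Proposition~\ref{PrDistEq1} at intermediate times are needed to absorb the contribution of very short sub-pairs into the $\cO(M^{-100})$ remainder, so that only the bulk portion of $\ell_\alpha$---on which (\ref{DxcF}) and Lemma~\ref{lmfRprop} give usable estimates---contributes to the bound.
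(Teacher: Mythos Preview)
Your overall architecture---propagate by Proposition~\ref{PrDistEq1}, then compare the last block of iterations to the frozen billiard $\cF_\alpha$, then invoke billiard equidistribution---matches the paper. The telescoping identity you write is exactly the paper's decomposition $\cF_i=\cF_Q^{n-k-i}\circ\pi_0\circ\cF^i$ in Section~\ref{subsEP}. But the way you bound each telescoping term is the gap.

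You invoke ``H\"older/Lipschitz control on $A\circ\cF_\alpha^{m-k-1}$ from Lemma~\ref{lmfRprop}'' to get a per-step contribution $\cO(k\|\brV\|+k^2/M)$. This cannot work: the Lipschitz constant of $A\circ\cF_\alpha^{m-k-1}$ along unstable directions grows like $\vartheta^{-(m-k-1)}$ because $\cF_\alpha$ expands u-curves, and the one-step displacement $\cF(y)-\cF_\alpha(y)$ has a nontrivial unstable component. So a pointwise bound gives $\cO\bigl(\vartheta^{-(m-k-1)}(k\|\brV\|+k^2/M)\bigr)$, which is useless. Your closing paragraph identifies a difficulty near $\cS_1$, but the real obstruction is the exponential expansion everywhere, not just near singularities; Lemma~\ref{lmfRprop} gives H\"older constants depending on the iterate and does not make them uniform.

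The paper's remedy (Proposition~\ref{prEE}) is to compare \emph{measures}, not points: after one step of $\cF$ versus $\cF_Q$, the two image u-curves $W'$ and $W''$ are $\cO(\varepsilon_\gamma)$-close in the Hausdorff sense (Lemma~\ref{lmclose2}); one then builds a holonomy map $h$ along the \emph{stable} manifolds of $\cF_Q$ between $W'$ and $W''$. Under the subsequent $\cF_Q^{d}$ the paired points contract to distance $\cO(\vartheta^{d}\varepsilon_\gamma)$, which handles the value difference (term~$\RmII$), while the Jacobian of $h$ contributes $\cO(\varepsilon_\gamma)$ (term~$\RmIII$), and the mass lost to singularities is $\cO(\varepsilon_\gamma)$ (term~$I$). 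This yields the honest $\cO(\varepsilon_\gamma)$ per step. Without this shadowing/holonomy argument your telescoping does not close.

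There is also a smaller issue: the $\gamma_\alpha$ coming from Proposition~\ref{PrDistEq1} can be as short as $M^{-100}$, and billiard equidistribution (Proposition~\ref{PrDistEq0}) then needs $m\gtrsim 100K\ln M$ iterations, whereas you only have $m\le K\ln M$. The paper avoids this by inserting the ``first long curve'' stopping time $k(x)$ (based on Lemma~\ref{prgrow}(c)) between times $n_1$ and $n_2$, so that every $\gamma_j$ has length $\ge\varepsilon_0$ and equidistribution applies immediately; the cost is the extra $\cO(q^{n_2-n_1})$ term, absorbed into $\theta^m$ by choosing $n-n_2=n_2-n_1=m$.
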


Even though Proposition~\ref{PrDistEq2} is formulated for a wider
range of $n$ than given by (\ref{nnnn}), it will only be useful when
(\ref{nnnn}) holds, otherwise the error terms will be too big.

If we want to extend Proposition~\ref{PrDistEq2} to $n$ beyond the
upper bound in (\ref{nnnn}) and still keep the error terms small, we
will have to deal with possible significant variation of the
coordinates $Q$ and $V$ over the set $\cF^n (\gamma_{\ell})$. That
can be done by using convex combinations of measures $\mu_{Q,V}$ (in
the spirit of (\ref{ConsLocEq})), but it will be sufficient for us
to restrict the analysis to a simpler case of functions $A$ whose
average $\mu_{Q,V}(A)$ does not depend on $Q$ or $V$.

\begin{corollary}[Long term equidistribution]
\label{PrDistEq3} Let $\ell=(\gamma,\rho)$ satisfy (\ref{Away}), $A$
\index{Equidistribution} satisfy (\ref{ABB}), and, additionally,
$\brA=\mu_{Q,V}(A)$ be independent of $Q,V$. If
$$
   K\, |\ln\length(\gamma)| \leq n \leq
     \delta_{\diamond}\sqrt{M},
$$
then for all $j$ satisfying
$$
   K\, |\ln\length(\gamma)| \leq j \leq
   n - K\, |\ln\length(\gamma)|
$$
and $m\leq\min\{j/2,K\ln M\}$ we have
\begin{equation}
\label{ConstExp}
   \EXP_\ell ( A\circ \cF^n)
   =\brA+\cO(\cR_{n,j,m}+\theta^{m}),
\end{equation}
where
$$
    \cR_{n,j,m} = \EXP_\ell\left(\norm{V_{n-j}}\right)(j+m^2)
    + (j^2+m^3)/M,
$$
and $V_{n-j}$ denotes the $V$ component of the point
$\cF^{n-j}(x)$, $x\in\gamma$.
\end{corollary}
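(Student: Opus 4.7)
\emph{Proof plan.} The strategy is to combine Proposition~\ref{PrDistEq1} with Proposition~\ref{PrDistEq2}: first propagate $\ell$ for $n-j$ iterations, writing $\mes_\ell\circ\cF^{-(n-j)}$ as a convex combination of standard pair measures, and then apply short-term equidistribution on each resulting pair over the remaining $j$ steps. The key mechanism is that the hypothesis $\mu_{Q,V}(A)=\brA$ independent of $(Q,V)$ makes the local equilibrium values recombine into a single constant $\brA$ regardless of where the heavy disk wanders during the propagation phase; this is precisely what allows $n$ to exceed the ceiling $\lesssim M^{b}$ implicit in Proposition~\ref{PrDistEq2}.

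Concretely, the bound $j\le n-K|\ln\length(\gamma)|$ forces $K|\ln\length(\gamma)|\le n-j\le\delta_\diamond\sqrt{M}$, so Proposition~\ref{PrDistEq1} gives
\begin{equation*}
\EXP_\ell(A\circ\cF^n)=\sum_\alpha c_\alpha\,\EXP_{\ell_\alpha}(A\circ\cF^j),\qquad \sum_\alpha c_\alpha=1,
\end{equation*}
together with the length control (\ref{Kvarepsilon}). I fix a threshold $\varepsilon_\ast=e^{-j/K}$ and split the sum into ``long'' pairs ($\length(\gamma_\alpha)\ge\varepsilon_\ast$) and ``short'' pairs. By (\ref{Kvarepsilon}), the short pairs contribute at most $K\varepsilon_\ast\|A\|_\infty$, which can be absorbed into $\theta^m$ once the constant in $\varepsilon_\ast$ is chosen compatibly with the hypothesis $m\le j/2$. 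For the long pairs, the length requirement $K|\ln\length(\gamma_\alpha)|\le j$ of Proposition~\ref{PrDistEq2} is satisfied by construction.

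To invoke Proposition~\ref{PrDistEq2} on each long $\ell_\alpha$ I still need (\ref{Away}). This follows from (\ref{vvVV}): over $n\le\delta_\diamond\sqrt{M}$ iterations, $\|V\|$ drifts by at most $2n/M\ll\delta_1/\sqrt{M}$ and the disk center by at most $\cO(\delta_\diamond)$, so a mild shrinking of $\delta_1$ absorbs the drift. Proposition~\ref{PrDistEq2} then yields
\begin{equation*}
\EXP_{\ell_\alpha}(A\circ\cF^j)=\mu_{\brQ_\alpha,\brV_\alpha}(A)+\cO\bigl(\|\brV_\alpha\|(j+m^2)+(j^2+m^3)/M+\theta^m\bigr),
\end{equation*}
and the $(Q,V)$-independence of $\mu_{Q,V}(A)$ collapses the leading term to $\brA$. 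Summing the $c_\alpha$-weighted contributions, and using the exponential preimage contraction (\ref{expclose}) to identify $\sum_\alpha c_\alpha\|\brV_\alpha\|$ with $\EXP_\ell(\|V_{n-j}\|)$ up to an error $\cO(\vartheta^{n-j})$ absorbed in $\theta^m$, one arrives at (\ref{ConstExp}).

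The main obstacle is the bookkeeping around $\varepsilon_\ast$: it has to be small enough that long pairs satisfy the length hypothesis of Proposition~\ref{PrDistEq2}, yet large enough that the short pairs' contribution is controlled by (\ref{Kvarepsilon}) and then dominated by $\theta^m$; both quantities decay exponentially in $j$, so compatibility is a matter of choosing the constants $K$ and $\theta$ consistently under $m\le j/2$. Everything else is linearity of $\EXP_\ell$ combined with the constancy of $\mu_{Q,V}(A)$.
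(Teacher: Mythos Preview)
Your proposal is correct and follows the same route as the paper: apply Proposition~\ref{PrDistEq1} with $n-j$ in place of $n$, then Proposition~\ref{PrDistEq2} with $j$ in place of $n$ on each resulting $\ell_\alpha$, using the constancy of $\brA$ to recombine. The paper handles the long/short split slightly differently, invoking a remark that Proposition~\ref{PrDistEq2} extends to families satisfying the length bound~(\ref{Kvarepsilon}) directly rather than thresholding at $e^{-j/K}$, but this is the same mechanism. One minor point: the identification $\sum_\alpha c_\alpha\|\brV_\alpha\|\approx\EXP_\ell(\|V_{n-j}\|)$ comes from the fact that $V$ varies by $\cO(1/M)$ along each $\gamma_\alpha$ (Proposition~\ref{prabc}), not really from~(\ref{expclose}); the error is $\cO(1/M)$ and is absorbed into $(j^2+m^3)/M$.
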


Even though Corollary \ref{PrDistEq3} is formulated for a wide range
of $j$ and $m$, it will only be useful when
$$
   \ln M \lesssim
    j,m \lesssim M^b,
   \qquad \text{where}\quad 0<b<1/2,
$$
otherwise the error terms become too big. But the main number of
iterations, $n$, can well grow up to $\delta_{\diamond}\sqrt{M}$, in
this sense the corollary describes `long term equidistribution'. To
derive Corollary~\ref{PrDistEq3} we apply \index{Equidistribution}
Proposition~\ref{PrDistEq1} with $n-j$ in place of $n$ and then
apply Proposition~\ref{PrDistEq2} (with $j$ in place of $n$) to each
$\alpha$ in \eqref{EqProp}, see Remark in the end of
Chapter~\ref{SecSPE}.

Lastly we state one more technical proposition necessary for the
proof of Theorem~\ref{ThVelBM}. We formulate it in probabilistic
terms (however, it is known to be equivalent to the fact that
limiting factor measure $\lambda_{n}(Q,V)$ of (\ref{ConsLocEq})
satisfies an associated partial differential equation):

\begin{proposition}
\label{DLbill}
\label{Weak}
{\rm (a)} Let $M_0>0$ and $a>0$. The families of random processes
$Q_{\ast}(\tau M^{2/3})$ and $M^{2/3}V_{\ast}(\tau M^{2/3})$ such
that $M\geq M_0,$ and the initial condition $(Q_0,V_0,q(0),v(0))$
is chosen randomly with respect to a measure in $\fM$ such that
almost surely $\norm{V_0}\leq a M^{-2/3}$, are tight and any limit
process
$(\bQ(\tau), \bV(\tau))$ satisfies (\ref{TIBM}).\\
{\rm (b)} If the matrix $\sigma_Q(\cA)$ satisfies (\ref{sigQ1Q2}),
then the equations (\ref{TIBM}) are well posed in the sense that
any two solutions with the same initial conditions have the same
distribution.
\end{proposition}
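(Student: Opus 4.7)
The plan is to prove both parts by the Stroock--Varadhan martingale-problem method, using Corollary~\ref{PrDistEq3} and Proposition~\ref{PrDistEq2} as the dynamical input. The generator of the target SDE (\ref{TIBM}) is
$$ \cL f(Q,V) = V \cdot \nabla_Q f + \tfrac12 \sum_{i,j}\bigl[\sigma^2_Q(\cA)\bigr]_{ij}\,\partial_{V_i}\partial_{V_j} f, $$
stopped on $\{\dist(Q,\dcD) = \br + \delta_0\}$. Part (a) amounts to tightness of $\{(\cQ,\cV)\}$ together with the verification that any subsequential limit $(\bQ,\bV)$ solves the associated martingale problem; part (b) reduces to pathwise uniqueness for the SDE, which I would extract from a Yamada--Watanabe--Osgood argument powered by the log-Lipschitz bound (\ref{sigQ1Q2}).

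For tightness I will establish the Kolmogorov bound $\EXP|\cV(\tau_2) - \cV(\tau_1)|^{2p} \leq K_p(\tau_2-\tau_1)^p$, uniformly over $M \geq M_0$ and over initial measures in $\fM$ with $\|V_0\|\leq a M^{-2/3}$. By (\ref{CR2b})--(\ref{MEx}),
$$ M[V(t_2) - V(t_1)] = \sum_{n_1 \leq k < n_2}\cA\circ\cF^k + O\bigl((n_2-n_1)/M\bigr), $$
with $n_2 - n_1 \asymp (t_2-t_1)\|v\|/\brL$. Expanding the $2p$-th power and applying Corollary~\ref{PrDistEq3} to each expectation of a product of $2p$ shifts of $\cA$ (using $\mu_{Q,V}(\cA)\equiv 0$ and the exponential correlation decay built into the Green--Kubo sum to pair up the factors) yields the $(n_2-n_1)^p$ scaling, which after multiplying by $M^{4p/3}/M^{2p}$ produces precisely $(\tau_2-\tau_1)^p$. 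Coupled with $|\cQ(\tau_2) - \cQ(\tau_1)| \leq (\tau_2-\tau_1)\|\cV\|_\infty$, this gives tightness in $C([0,c],\reals^4)$.

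For identification, choose a subsequential limit $(\bQ,\bV)$ and a test function $f \in C_c^\infty$ supported away from $\dcD$. Partition $[0,c]$ into windows of size $\Delta\tau = M^{-\eta}$, $0 < \eta < 1/2$; each window contains $\asymp \Delta\tau\, M^{2/3}/\brL$ collisions, well within the range of Proposition~\ref{PrDistEq2}. Telescoping $f$ across windows and Taylor-expanding,
$$ \Delta f = \nabla_V f\cdot \Delta\cV + \nabla_Q f\cdot \Delta\cQ + \tfrac12 D_V^2 f(\Delta\cV,\Delta\cV) + R, $$
and applying Proposition~\ref{PrDistEq2} to the standard-pair decomposition of the distribution at the start of each window yields
$$ \EXP[\Delta\cV \mid \text{past}] = o(\Delta\tau), \qquad \EXP[\Delta\cV\otimes\Delta\cV \mid \text{past}] = \sigma^2_{\cQ}(\cA)\,\Delta\tau + o(\Delta\tau), $$
the second line encoding the Green--Kubo formula (\ref{EqSigmabar}) together with the collision-rate factor $\|v\|/\brL$ and the rescalings (\ref{EqSigmabar2})--(\ref{EqSigma}). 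Since $\Delta\cQ = \cV\Delta\tau + o(\Delta\tau)$, summing windows and passing to the limit shows that $f(\bQ(\tau),\bV(\tau)) - f(\bQ(0),\bV(0)) - \int_0^\tau \cL f\,ds$ is a martingale, and the stopping rule follows by continuity of first passage to $\{\dist(Q,\dcD) = \br+\delta_0\}$.

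Part (b) comes from coupling two solutions $(\bQ^i,\bV^i)$ to the same Brownian motion with the same initial data and writing $\Delta\bV = \bV^1 - \bV^2$, $\Delta\bQ = \bQ^1 - \bQ^2$. It\^o's isometry together with (\ref{sigQ1Q2}) gives $\EXP|\Delta\bV(\tau)|^2 \leq C\int_0^\tau \EXP[|\Delta\bQ|^2\log^2(1/|\Delta\bQ|)]\,ds$, and $|\Delta\bQ(s)|^2 \leq s\int_0^s|\Delta\bV|^2$. Setting $\phi(\tau) = \EXP\sup_{s\leq\tau}|\Delta\bV(s)|^2$ and using concavity of $u\mapsto u\log^2(1/u)$ near $0$ (Jensen) produces $\phi(\tau)\leq C\int_0^\tau \omega(\phi(s))\,ds$ with $\omega$ an Osgood modulus, forcing $\phi\equiv 0$; pathwise uniqueness plus existence gives uniqueness in law by Yamada--Watanabe. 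The principal technical hurdle is the variance matching in step (a): reconciling the Green--Kubo sum produced by Proposition~\ref{PrDistEq2} on $\asymp \Delta\tau M^{2/3}$ collisions with the macroscopic $\sigma^2_{\cQ}(\cA)\,\Delta\tau$ requires careful balancing of the collision rate $\|v\|/\brL$, the normalization (\ref{EqSigma}), and the $M^{4/3}$ velocity rescaling, while keeping the Proposition~\ref{PrDistEq2} error $\cR_{n,m} = \|V\|(n+m^2) + (n^2+m^3)/M$ negligible throughout $[0,c]$. The bound $\|V\| = O(M^{-2/3})$ from the initial assumption and the tightness moment estimate is what makes this work, and propagating it between successive windows via Proposition~\ref{PrDistEq1} is the main bookkeeping task.
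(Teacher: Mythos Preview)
Your overall architecture---Stroock--Varadhan martingale problem for (a), Osgood--Gronwall for (b)---is the paper's, and your part~(b) is essentially the argument of Section~\ref{subsecUSDE}: It\^o's isometry plus (\ref{sigQ1Q2}) yields an integral inequality for $\EXP\sup_{s\le\tau}\|\Delta\bV(s)\|^2$, Jensen applied to the concave modulus $u\mapsto u\ln^2 u$ closes it, and the divergence of $\int_0 du/(u|\ln u|)$ forces $\Delta\bV\equiv 0$.

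The genuine gap is in the moment estimates of part~(a). You propose to obtain $\EXP|\hS_n|^{2p}=O(n^p)$ for $n$ up to $\sim M^{2/3}$ by expanding the $2p$-th power and applying Corollary~\ref{PrDistEq3} to each product of shifts. The paper explains in Section~6.2 exactly why this fails. The error term $\cR_{n,j,m}$ in Corollary~\ref{PrDistEq3} carries the factor $\EXP_\ell(\|V_{n-j}\|)$, which is not controlled a~priori; and even if one grants $\|V\|=O(M^{-2/3})$, the best available bound on a single off-diagonal correlation $\EXP_\ell[\cA(x_i)\cA(x_j)]$ is $O(M^{-2/3}\ln M)$, and summing this over the $\sim M^{4/3}$ off-diagonal pairs in $\hS_n^2$ produces $O(M^{2/3}\ln M)$, which is \emph{larger} than the main term $O(M^{2/3})$. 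Your sentence ``propagating $\|V\|=O(M^{-2/3})$ between successive windows via Proposition~\ref{PrDistEq1} is the main bookkeeping task'' names the circularity but does not break it: the control of $\|V\|$ presupposes the second-moment bound you are trying to prove.

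The paper's resolution is the big--small block induction (Propositions~\ref{PrST}--\ref{GenST}, then Lemma~\ref{AP1St} and Proposition~\ref{PrMomA}). On a single block of length $\bn=M^{1/2-\delta}$---which \emph{is} within the range $n\le\delta_\diamond\sqrt M$ of Proposition~\ref{PrDistEq2}; note your constraint $0<\eta<1/2$ alone does not force $M^{2/3-\eta}\le\sqrt M$---short-term estimates apply directly. To concatenate $\sim M^{1/6+\delta}$ blocks, one bounds the cross term $\EXP_\ell(U_k'P_{k+1}')$ via the Markov-type splitting of Lemma~\ref{INDEP} by $\sqrt{\EXP_\ell[(U_k')^2]}\cdot O(M^\delta)$, and the inductive hypothesis $\EXP_\ell[(U_k')^2]\le G_2k\bn$ then closes. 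Only \emph{after} this a~priori bound does the paper prove tightness (Proposition~\ref{PrTight}); only \emph{after} tightness does it upgrade the second moment to the sharp $\brsigma^2_{\brQ}(\cA)\,\varkappa M^{-4/3}$ of Proposition~\ref{PrMom}(b) needed for the martingale step (Proposition~\ref{PrGen}). A secondary point: the paper carries out the martingale identification in discrete collision time and transfers it to continuous time afterward via the tightness of the rescaled clock $\ttt(\tau)$ (Corollary~\ref{CrCTMart}); your outline mixes the two time scales.
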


The proofs of Propositions~\ref{PrDistEq1} and \ref{PrDistEq2} and
Corollary~\ref{PrDistEq3} are given in Chapter~\ref{SecSPE}. The
heart of the proof is contained in Sections~\ref{subsecPA} and
\ref{subsEP}, whereas Sections~\ref{subsecSUV}--\ref{subsecSP}
extend some known results for classical billiards to our
two-particle model. We remark that the estimates in
Propositions~\ref{PrDistEq2} and \ref{PrDistEq3} are likely to be
less than optimal, but they suffice for our purposes because we
restrict our analysis to time periods $\cO(M^{2/3})$, which is much
shorter than the ergodization time (the latter is apparently of
order $M$, as one can see via a heuristic analysis similar to that
in Section~\ref{subsecBA}). Therefore to investigate the long time
behavior of our system, the estimates of
Propositions~\ref{PrDistEq2} and \ref{PrDistEq3} might have to be
sharpened (see Section~\ref{subsLTS}), but we do not pursue this
goal here.

Proposition~\ref{Weak} is proved in Chapter~\ref{ScME}. In Chapters
\ref{ScFSP} and \ref{ScSLP} we describe the modifications needed to
prove Theorems~\ref{Root} and \ref{ThSmall} respectively. Chapter
\ref{ScFSP} is especially short since the material there is quite
similar to \cite[Sections 13 and 14]{D2}, except that here some
additional complications are due to the fact that we have to deal
with a continuous time system.
\newpage

\chapter[Standard pairs]{Standard pairs and equidistribution}
\label{SecSPE} \setcounter{section}{4}\setcounter{subsection}{0}
\index{Equidistribution} \index{Standard pair}

The main goals of this section are the construction of standard
pairs and the proofs of Statements~\ref{PrDistEq1}, \ref{PrDistEq2}
and \ref{PrDistEq3}.

\subsection{Unstable vectors} \label{subsecSUV}
Our analysis will be restricted to the region (\ref{Upsilon}). We
first discuss the flow $\Phi^t$ in the full (seven-dimensional)
phase space $\cM$ in order to collect some preliminary estimates.

Let $x=(Q,V,q,v)\in \cM$ be an arbitrary point and
$$dx=(dQ,dV,dq,dv) \in \cT_x \cM$$ a tangent vector. Let $dx(t) =
D\Phi^t (dx)$ be the image of $dx$ at time $t$. We describe the
evolution of $dx(t)$ for $t>0$.

Between successive collisions, the velocity components $dV$ and
$dv$ remain unchanged, while the position components evolve
linearly:
\beq
   dQ(t+s) = dQ(t)+s\, dV(t),
   \ \ \ \ \ \
   dq(t+s) = dq(t)+s\, dv(t)
      \label{Qqts}
\eeq
At collisions, the tangent vector $dx(t)$ changes discontinuously,
as we describe below.

First, we need to introduce convenient notation. For any unit
vector $n\in\reals^2$ (usually, a normal vector to some curve), we
denote by $\bP_n$ the projection onto $n$, i.e.\ $\bP_{n} (u) =
\la u,n \ra \, n$, and by $\bP^{\perp}_n$ the projection onto the
line perpendicular to $n$, i.e.\ $\bP^{\perp}_{n}(u) = u -
\bP_n(u)$. Also, $\bR_n$ denotes the reflection across the line
perpendicular to $n$, that is
$$
      \bR_n(u) =
      -\bP_{n}(u) + \bP^{\perp}_{n}(u)
       = u - 2\la u,n\ra \, n
$$
For any vector $w\neq 0$, we write $\bP^{\perp}_w$ for
$\bP^{\perp}_{w/\norm{w}}$ , for brevity.

Now, consider a collision of the light particle with the wall
$\dcD$, and let $n$ denote the inward unit normal vector to $\dcD$
at the point of collision. The components $dQ$ and $dV$ remain
unchanged because the heavy disk is not involved in this event.
The basic rule of specular reflection at $\dcD$ reads $v^+ = \bR_n
(v^-)$ (the superscripts ``$+$" and ``$-$" refer to the
postcollisional and precollisional vectors, respectively). Note
that $\|v^+\| = \|v^-\|$. Accordingly, the tangent vectors $dq$
and $dv$ change by
$$
   dq^+ = \bR_n (dq^-)
$$
and
$$
    dv^+ = \bR_n \left ( dv^-\right ) + \bTheta^+(dq^+)
$$
where
$$
    \bTheta^+ =
    \frac{2\cK\,\|v^+\|^2}{\la v^+,n\ra}
    \,\bP^{\perp}_{v^+}
$$
Here $\cK > 0$ denotes the curvature of the boundary $\dcD$ at the
point of collision. Note that $\|dq^+\| = \|dq^-\|$. Also,
$\bTheta^+(dq^+) = \bTheta^-(dq^-)$ where
\beq
   \bTheta^- = \frac{2\cK\,\|v^+\|^2}{\la v^+,n\ra}
    \,\bR_n \circ \bP^{\perp}_{v^-}
      \label{Theta-0}
\eeq
Also, the geometry of reflection implies $\la v^+,n\ra > 0$.

Next, consider a collision between the two particles. At the
moment of collision we have $q\in \dcP (Q)$, i.e.\ $\|q-Q\|=\br$.
Let $n=(q-Q)/\br$ be the normalized relative position vector. Then
the laws of elastic collision (\ref{CR1})--(\ref{CR2}) can be
written as
\begin{align*}
  v^+ &= v^- - \frac{2M}{M+1}\,
  \bP_n(v^--V^-)\\
   &= \bR_n(v^-) + \frac{2M}{M+1}\,
  \left (\frac 1M\, \bP_n(v^-) + \bP_n(V^-)\right)\\
   V^+
  &= V^- + \frac{2}{M+1}\,
  \bP_n(v^--V^-)
\end{align*}
Let $w = v-V$ denote the relative velocity vector, cf.\ (\ref{w}).
Then
$$
   w^+ = w^--2\,\bP_n(w^-) = \bR_n(w^-)
$$
and hence $\|w^+\|=\|w^-\|$. The components $dq$ and $dQ$ of the
tangent vector $dx$ change according to
\begin{equation}
\label{Rdq}
  dq^+ = \bR_n(dq^-) + \frac{2M}{M+1}\,
  \left (\frac 1M\, \bP_n(dq^-) + \bP_n(dQ^-)\right)
\end{equation}
\begin{equation}
\label{RdQ}
  dQ^+= \bR_n(dQ^-) + \frac{2M}{M+1}\,
  \left ( \frac 1M\, \bP_n(dq^-) + \bP_n(dQ^-)\right )
\end{equation}
$$  = dQ^- + \frac{2}{M+1}\, \bP_n(dq^--dQ^-) $$

Note that
$$
  dq^+ - dQ^+ = \bR_n (dq^- - dQ^-)
$$
and so $\|dq^+ - dQ^+\| = \|dq^- - dQ^-\|$. Next, the components
$dv$ and $dV$ of the tangent vector $dx$ change by
\begin{align*}
  dv^+ &= \bR_n(dv^-) + \frac{2M}{M+1}\,
  \left (\frac 1M\, \bP_n(dv^-) + \bP_n(dV^-)\right)\\
  &\quad +\frac{M}{M+1}\,\bTheta^+(dq^+-dQ^+)
\end{align*}
and
\begin{align*}
  dV^+ &= dV^- + \frac{2}{M+1}\,
  \bP_n(dv^--dV^-) \\
  &\quad -\frac{1}{M+1}\,\bTheta^+(dq^+-dQ^+)
\end{align*}
where
$$
   \bTheta^+ = \frac{2\cK\,\|w^+\|^2}{\la w^+,n\ra}
   \,\bP^{\perp}_{w^+}
$$
Here $\cK=1/\br$ is the curvature of $\dcP (Q)$. Note that
$\bTheta^+(dq^+ - dQ^+) = \bTheta^-(dq^- - dQ^-)$, where
$$
   \bTheta^- = \frac{2\cK\,\|w^+\|^2}{\la w^+,n\ra}
   \, \bR_n \circ \bP^{\perp}_{w^-}
$$
Also, the geometry of collision implies $\la w^+,n\ra > 0$, since
we have chosen $n$ to point {\em toward} the light particle.

All the above equations can be verified directly. Alternatively, one
can use the fact that the system of two particles of different
masses $M\neq m$ reduces to a billiard in a four dimensional domain
by the change of variables $\tilde{Q} = Q\sqrt{M}$, $\tilde{V} =
V\sqrt{M}$, $\tilde{q} = q\sqrt{m}$, and $\tilde{v} = v\sqrt{m}$
(the latter two are trivial since $m=1$). This reduction is standard
\cite{SS}, and then the above equations can be derived from the
general theory of billiards \cite{C1,KSS,SS}. We omit the proof of
the above estimates. \index{Standard pair}

Now, since the total kinetic energy is fixed (\ref{Energy}), the
velocity components $dv$ and $dV$ of the tangent vector $dx$ satisfy
\beq
      \la v,dv\ra + M\la V,dV\ra = 0
         \label{Vort}
\eeq
In addition, the Hamiltonian character of the dynamics implies
that if the identity
\beq
      \la v,dq\ra + M\la V,dQ\ra = 0
         \label{Qort}
\eeq
holds at some time, it will be preserved at all times (future and
past). From now on, we assume that all our tangent vectors satisfy
(\ref{Qort}).

There is a class of tangent vectors, which we will call {\em
unstable vectors}, \index{Unstable vectors} that is invariant under
the dynamics. It is described in the following proposition:

\begin{proposition}
The class of tangent vectors $dx$ with the following properties
remains invariant under the forward dynamics:
\begin{itemize}
\item[\rm (a)] $\la dq,dv\ra \geq (1-C^{-1})\,\|dq\|\,\|dv\|$ \item[\rm
(b)] $\|dQ\| \leq \frac CM \, \|dq\|$ \item[\rm (c)] $\|dV\| \leq
\frac CM \, \|dv\|$ \item[\rm (d)] $\la dq,v\ra \leq C\|V\| \,
\|dq\| \leq \frac{C}{\sqrt{M}} \, \|dq\|$ \item[\rm (e)] $\la
dv,v\ra \leq C\|V\| \, \|dv\| \leq \frac{C}{\sqrt{M}} \, \|dv\|$
\item[\rm (f)] $\|dq\| \leq C\|dv\|$ \item[\rm (g)] $\|dv\| \leq
\frac{C}{|t^-(x)|} \, \|dq\|$
\item[\rm (h)] Equations (\ref{Vort}) and (\ref{Qort}) hold.
\end{itemize}
Here $C>1$ is a large constant, and $t^-(x) = \max\{t\leq 0\colon
\Phi^t(x) \in \Omega \}$ is the time of the latest collision along
the past trajectory of $x$. \label{prabc}
\end{proposition}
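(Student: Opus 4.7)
The plan is to verify the properties (a)--(h) for each of the three elementary pieces of the forward dynamics separately: a segment of free flight between collisions, a reflection of the light particle off the wall $\dcD$, and a collision between the two particles. All three transformations are written down explicitly earlier in this section.

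Before starting, two reductions shorten the work. Property (h), i.e.\ the relations \eqref{Vort} and \eqref{Qort}, are conserved quantities of the tangent dynamics: during free flight both sides are constant by the linear propagation formulas, and at each type of collision one checks by direct substitution into the collision formulas that the two scalar identities survive (they are the natural Hamiltonian constraints on $\cT_x\cM$). Given (h), conditions (d) and (e) are immediate from (b) and (c): \eqref{Qort} yields $|\la v,dq\ra| = M|\la V,dQ\ra| \le M\norm{V}\,\norm{dQ} \le C\norm{V}\,\norm{dq}$, and analogously for (e), with the second halves of (d), (e) following from $\norm{V}\le 1/\sqrt{M}$ (Eq.~\eqref{Energy}). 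So only (a), (b), (c), (f), (g) need to be verified.

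Free flight is the easiest case: since $dq(t+s)=dq(t)+s\,dv$, and $dv,dV$ are constant, the inner product $\la dq(t+s),dv\ra = \la dq(t),dv\ra + s\norm{dv}^2$ can only improve the cone condition (a); property (c) is preserved verbatim, and (b) follows from the lower bound $\norm{dq(t+s)}\ge \norm{dq(t)}+s(1-C^{-1})\norm{dv}$ obtained from (a). Properties (f) and (g) reduce to the fact that $|t^{-}(x)|$ is bounded above (finite horizon, Assumption A2) and $\norm{dq}$ grows linearly in $|t^-|$. At a wall collision the slow variables are not touched, $\norm{dq^+}=\norm{dq^-}$, and $dv^+$ gains the defocusing term $\bTheta^+(dq^+)$ which brings $dv^+$ into strict alignment with $dq^+$, reinstating (a) and resetting (g); one uses here that $\bTheta^+$ is bounded away from the grazing singularity $\cS_1$ and that the wall curvature $\cK$ is strictly positive (Assumption A1).

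The main new case is the particle--particle collision. The key observation, obtained by simplifying \eqref{Rdq}--\eqref{RdQ} and the corresponding formulas for $dv^+$, $dV^+$, is that the \emph{relative} tangent variables transform exactly as in a classical dispersing collision with curvature $1/\br$:
\[
    dq^+-dQ^+ = \bR_n(dq^- - dQ^-), \qquad
    dv^+-dV^+ = \bR_n(dv^- - dV^-) + \bTheta^+(dq^+ - dQ^+).
\]
Since (b) and (c) make $dQ^-,dV^-$ a factor $1/M$ smaller than $dq^-,dv^-$ respectively, up to relative error $O(1/M)$ these are the wall-collision laws applied to $(dq,dv)$, so (a), (f), (g) are inherited from the wall-collision analysis. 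For the slow variables themselves, \eqref{RdQ} gives $\norm{dQ^+-dQ^-}\le \frac{2}{M+1}(\norm{dq^-}+\norm{dQ^-}) = O(\norm{dq^-}/M)$, and the analogous formula for $dV^+$ gives $\norm{dV^+-dV^-}=O(\norm{dv^-}/M)$ (using that $\norm{\bTheta^+}$ is uniformly bounded away from grazing), so (b) and (c) are restored with the same $C$ provided $C$ was chosen large enough. The principal obstacle is uniformity of the constant $C$ over arbitrarily long orbit segments: each free flight and each collision can degrade the ratios in (b), (c) and the cone aperture in (a) by a factor $1+O(1/C)+O(1/M)$, and these losses must not accumulate past the prescribed bounds. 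The compensation comes from the strict expansion of the cone by $\bTheta^+$ at every collision (a feature of the dispersing geometry), which is uniform provided we stay in the region $\Upsilon_{\delta_1}$ of \eqref{Upsilon} where grazing is excluded --- this is one of the reasons for building the $\delta_1$-restriction into the framework.
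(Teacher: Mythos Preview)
Your approach is exactly what the paper intends: its entire proof reads ``based on the previous equations and some routine calculations, which we omit,'' and your case-by-case verification through free flight, wall reflections, and interparticle collisions --- together with the clean reduction of (d)--(e) to (b)--(c) via the constraint (h) --- is precisely that routine.

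One small correction worth flagging: in the interparticle case you justify $\norm{dV^+-dV^-}=O(\norm{dv}/M)$ by asserting that $\norm{\bTheta^+}$ is uniformly bounded away from grazing. It is not --- $\bTheta^+$ carries the factor $1/\la w^+,n\ra$ and blows up as $\cos\varphi\to 0$. The conclusion is nonetheless correct, but for a different reason: the formula for $dv^+$ contains $\tfrac{M}{M+1}\bTheta^+(dq^+-dQ^+)$ as a summand, so $\norm{\bTheta^+(dq^+-dQ^+)}\le\Const\,\norm{dv^+}$ automatically, whence $\norm{dV^+-dV^-}\le\Const\,\norm{dv^+}/M$ holds uniformly, including arbitrarily close to grazing. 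This is what makes (c) self-reinforcing at collisions, and the same mechanism handles (b) via the growth of $\norm{dq}$ during free flight.
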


The proof of this proposition is based on the previous equations
and some routine calculations, which we omit. \qed \medskip

We emphasize that our analysis has been done in the region
(\ref{Upsilon}) only, hence the above invariance holds as long as
the system stays in $\Upsilon_{\delta_1}$; the constant $C$ here
depends on the choice of $\delta_1>0$, and we expect $C \to \infty$
as $\delta_1 \to 0$.

We also note that though unstable \index{Unstable vectors} vectors
make a multidimensional cone in the tangent space to $\Omega$, this
cone is essentially one-dimensional, its `opening' in the $Q$ and
$V$ directions is $\cO(1/M)$. In the limit $M \to \infty$ we simply
obtain the one-dimensional unstable cone for the classical billiard
map.

Unstable \index{Unstable vectors} vectors have strong (uniform in
time) expansion property:

\begin{proposition}
Let $dx$ be an unstable tangent vector and $dx(t) = D\Phi^t (dx)$
its image at time $t>0$. Then the norm $\| dx(t) \|$ monotonically
grows with $t$. Furthermore, there is a constant $\vartheta <1$
such that for any two successive moments of collisions $t<t'$ of
the light particle with $\dcD\cup\dcP(Q)$ we have
\beq
     \|dx(t+0)\| \leq \vartheta\, \|dx(t'+0)\|
       \label{theta}
\eeq
The notation $t+0$, $t'+0$ refer to the postcollisional vectors.
\label{prhyper}
\end{proposition}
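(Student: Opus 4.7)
The plan is to decompose each time interval into free\nobreakdash-flight segments between collisions plus the discrete collision events themselves, and to show separately that (i) $\|dx(t)\|$ is nondecreasing on free flights and across collisions, and (ii) the cumulative gain from one postcollision moment to the next is bounded below by a universal factor. The key leverage throughout is Proposition \ref{prabc}: in the unstable cone the pair $(dq,dv)$ dominates $(dQ,dV)$ by a factor of $M$, and $dq,dv$ are nearly aligned with each other while nearly orthogonal to $v$.

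For free flight, equation (\ref{Qqts}) says $dv,dV$ are frozen while $dq,dQ$ grow linearly. Differentiating, $\tfrac{d}{ds}\|dq(t+s)\|^{2}=2\la dq(t+s),dv\ra$, which by property (a) of Proposition \ref{prabc} is strictly positive at $s=0$ and remains so because $\la dq(t+s),dv\ra=\la dq(t),dv\ra+s\|dv\|^{2}$ is itself increasing. The analogous identity for $\|dQ(t+s)\|^{2}$, combined with the Hamiltonian orthogonality (\ref{Qort}) and the bounds (b),(c), shows that $\|dQ\|,\|dV\|$ sit a factor $O(1/M)$ below $\|dq\|,\|dv\|$ and do not spoil monotonicity. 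Hence $\|dx(t+s)\|$ is nondecreasing on each free\nobreakdash-flight interval.

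At a collision of the light particle with $\dcD$ the reflection $dq^{+}=\bR_{n}(dq^{-})$ is an isometry, while $dv^{+}=\bR_{n}(dv^{-})+\bTheta^{+}(dq^{+})$ adds a vector in the line perpendicular to $v^{+}$ of magnitude controlled by the positive curvature $\cK$. The near\nobreakdash-alignment (a) together with (e) makes the cross term $\la \bR_{n}(dv^{-}),\bTheta^{+}(dq^{+})\ra$ nonnegative up to an error $O(M^{-1/2})\|dv^{-}\|\,\|\bTheta^{+}(dq^{+})\|$, whence $\|dv^{+}\|\geq\|dv^{-}\|$; moreover (d) says $\la dq,v\ra=O(\|V\|\,\|dq\|)=O(M^{-1/2}\|dq\|)$, so $\|\bP_{v^{+}}^{\perp}(dq^{+})\|$ is at least $\|dq^{+}\|$ up to the same small error, guaranteeing that the curvature kick is not nullified. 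An interparticle collision is handled by repeating the same argument with the relative vectors $dq-dQ$ and $dv-dV$, using (\ref{Rdq})--(\ref{RdQ}); the additional $O(1/M)$ terms are absorbed by the cone bounds of Proposition \ref{prabc}. This proves monotone growth.

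To extract the uniform factor $\vartheta<1$, restrict attention to $\Upsilon_{\delta_{1}}$, where $\|v\|$ is bounded away from $0$, assumption A1 gives $\cK\geq\cK_{\min}>0$, and assumption A2 gives $t'-t\leq L_{\max}/\|v\|$. If $t'-t$ exceeds a fixed threshold, the strict growth established for $\|dq\|$ on the flight is already bounded below by a definite factor (using property (g) to keep the ratio $\|dv\|/\|dq\|$ from degenerating, and (f) for the reverse bound). If instead the flight is very short, then the collision at time $t'$ supplies a kick with $\|\bTheta^{+}(dq^{+})\|\geq c\,\cK_{\min}\|dq\|$ for some $c>0$ independent of $M$, boosting $\|dv^{+}\|$ by a definite fraction. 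In either case one obtains $\|dx(t'+0)\|\geq(1+c_{0})\|dx(t+0)\|$ with $c_{0}>0$ universal, so $\vartheta=(1+c_{0})^{-1}$ works. The main obstacle is precisely the interplay between the $O(1/M)$ coupling of the heavy disk to the light particle and the dispersing kick in the near\nobreakdash-grazing regime: the coupling could in principle cancel the curvature boost, and this is exactly why the six quantitative conditions (a)--(h) of Proposition \ref{prabc} are needed, rather than the single one\nobreakdash-dimensional unstable cone of classical billiards.
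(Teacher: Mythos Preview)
Your argument for monotone growth (free flight via (a), collisions via the reflection/curvature kick and the near-orthogonality (d), (e)) is correct and matches the paper's intent. The paper's proof is a one-liner that appeals to ``the previous equations'' and records the explicit constant $\vartheta^{-1}=1+L_{\min}\cK_{\min}$; your write-up is essentially a fleshed-out version of the same computation.

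Where your version diverges is the case split ``long flight vs.\ very short flight.'' This dichotomy is unnecessary and, as written, the short-flight branch has a gap. It is unnecessary because in the region $\Upsilon_{\delta_1}$ the free path is bounded below: the distance between any two boundary components (including $\partial\cP(Q)$ and $\partial\cD$) is at least $L_{\min}=\min\{\delta_1,\text{dist between scatterers}\}>0$, so $t'-t\geq L_{\min}/\|v\|\geq L_{\min}$ always. The gap is that your short-flight claim ``the kick boosts $\|dv^{+}\|$ by a definite fraction'' needs $\|dq\|/\|dv\|$ bounded \emph{below} at the precollision moment $t'-0$, and neither (f) nor (g) gives that (in fact (f) gives an \emph{upper} bound on $\|dq\|/\|dv\|$, and you have the roles of (f) and (g) swapped). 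After a near-grazing collision at time $t$ one has $\|dv(t+0)\|/\|dq(t+0)\|\sim 1/\cos\varphi\gg 1$, so the kick $\propto\|dq\|$ at $t'$ is \emph{not} a definite fraction of $\|dv\|$.

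The clean route, which is what the paper's formula encodes, avoids cases entirely: the collision at $t$ gives $\|dv(t+0)\|\geq 2\cK_{\min}\|w\|\,\|dq(t+0)\|$ (from the $\bTheta^{+}$ term and the positivity of the cross term you already established), and then the free flight yields $\|dq(t'+0)\|\geq\|dq(t+0)\|\bigl(1+s\cdot 2\cK_{\min}\|w\|\bigr)=\|dq(t+0)\|(1+2L\cK_{\min})$ with $L=s\|w\|\geq L_{\min}$. A symmetric bound for $\|dv\|$ uses the kick at $t'$ together with $\|dq(t'-0)\|\geq s\|dv(t+0)\|$. Both components expand by the same universal factor, so the full norm does too.
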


The proof easily follows from the previous equations. In fact,
\beq
   \vartheta^{-1} = 1 + L_{\min}\cK_{\min}\\
     \label{thetamin}
\eeq
where $\cK_{\min}>0$ is the smaller of $1/\br$ and the minimal
curvature of $\dcD$, $L_{\min}$ is the smaller of the minimal
distance between the scatterers \index{Scatterer} and $\delta_1$,
the minimal distance from the heavy disk to the scatterers allowed
by (\ref{Upsilon}).

At the moments of collisions it is more convenient (for technical
reasons) to use the vector $w$ defined by (\ref{w}), instead of
$v$, and respectively $dw$ instead of $dv$. Then the vector $w$
changes by the same rule $w^+ = \bR_n(w^-)$ for both types of
collisions (at $\dcD$ and $\dcP(Q)$). At collisions with $\dcD$,
the vector $dw=dv$ will change by the rule
$$
    dw^+ = \bR_n \left ( dw^-\right ) + \bTheta^+(dq^+)
$$
while at collisions with the heavy disk, the vector $dw=dv-dV$
will change by a similar rule
$$
    dw^+ = \bR_n \left ( dw^-\right ) + \bTheta^+(dq^+-dQ^+)
$$
Furthermore, the expressions for $\bTheta^+$ and $\bTheta^-$ will
be identical for both types of collisions. The geometry of
collision implies $\la w,n \ra \geq 0$ for both types of
collisions.

It is easy to see that the inequalities (a)--(g) in
Proposition~\ref{prabc} remain valid if we replace $v$ by $v-V$
and $dv$ by $dv-dV$ at any phase point, hence they apply to the
vectors $w$ and $dw$ at the points of collision. The inequality
(\ref{theta}) will also hold in the norm on $\Omega$ defined by
\beq
    \|dx\|^2 = \|dQ\|^2+\|dV\|^2+\|dq\|^2+\|dw\|^2
       \label{dxnorm}
\eeq

\medskip\noindent{\em Remark}. Our equations show that the
postcollisional tangent vector $$(dQ^+, dV^+,dq^+,dw^+)$$ depends on
the precollisional vector $$(dQ^-,dV^-,dq^-,dw^-)$$ smoothly, unless
$\la w^+ ,n \ra = 0$. This is the only singularity of the dynamics,
it corresponds to grazing collisions (also, colloquially, called
``tangential collisions'').

\medskip\noindent{\em Remark}. Our equations imply that the derivative
of the collision map $\cF$ defined in Section~\ref{subsecBA} is
bounded by $\|D_x\cF\|\leq\Const/\la w^+ ,n \ra$, where $\Const$
does not depend on $M$. It is easy to see that
$\dist(x,\cS_1)=\cO(\la w^+ ,n \ra^2)$, hence we obtain
(\ref{DxcF}). Now (\ref{cFHolder}) easily follows by integrating
(\ref{DxcF}).
\medskip

\subsection{Unstable curves}\label{subsecSUC}
We call a smooth curve $\cW \subset \cM$ an {\em unstable curve} (or
a \index{u-curves (unstable curves)} {\em u-curve}, for brevity) if,
at every point $x\in\cW$, the tangent vector to $\cW$ is an unstable
\index{Unstable vectors} vector. By Propositions~\ref{prabc} and
\ref{prhyper} the future image of a u-curve is a u-curve, which may
be only piecewise smooth, due to singularities, and every
\index{u-curves (unstable curves)} u-curve is expanded by $\Phi^t$
monotonically and exponentially fast in time.

Now we extend our analysis to the collision map
$\cF\colon\Omega\to\Omega$. For every point $x \in \cM$ we denote
by $t^+(x) =\min\{ t\geq 0\colon \Phi^t(x) \in \Omega \}$ and
$t^-(x) =\max\{ t\leq0\colon\Phi^t(x) \in \Omega \}$ the first
collision times in the future and the past, respectively. Let
$\tilde{\pi}^{\pm}(x) = \Phi^{t^{\pm}(x)}(x) \in \Omega$ denote
the respective ``first collision'' projection of $\cM$ onto
$\Omega$. Note that $\cF(x)=\tilde{\pi}^+(\Phi^\varepsilon x)$ for
all $x\in\Omega$ and small $\varepsilon>0$.

For any unstable curve $\cW\subset\cM$, the projection $W =
\tilde{\pi}^- (\cW )$ is a smooth or piecewise smooth curve in
$\Omega$, whose components we also call {\em unstable curves} or
{\em u-curves}. Let $dx = (dQ, dV, dq, dw)$ be the postcollisional
tangent vector to $\cW$ at a moment of collision (we remind the
reader that dim$\, \cM =7$ and dim$\, \Omega =6$). Its projection
under the derivative $D\tilde{\pi}^-$ is a tangent vector $dx' =
(dQ', dV', dq', dw')$ to the \index{u-curves (unstable curves)}
u-curve $W = \tilde{\pi}^-(\cW) \subset \Omega$. Observe that
$dV'=dV$, $dw'=dw$, $dQ'=dQ-tV$, and $dq'=dq-tv$, where $t$ is
uniquely determined by the condition $dx'\in\cT_x\Omega$. Now some
elementary geometry and an application of Proposition~\ref{prabc}
give

\begin{proposition} \label{prdxdx'}
There is a constant $1<C<\infty$ such that $\|dQ'\|\leq
C\|dv\|/\sqrt{M}$ and $C^{-1}\|dv\|\leq\|dq'\|\leq C\|dv\|$.
Therefore,
$$
  \|dx'\|^2 = \bigl[ \|dq'\|^2 + \|dv'\|^2\bigr]\,
  \bigr [ 1 + \cO(1/\sqrt{M}) \bigr ] ,
$$
and $C^{-1} \leq \|dx'\| / \|dx\| \leq C$.
\end{proposition}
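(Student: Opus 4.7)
The formula $dx' = dx - t\xi_x$ with $\xi_x = (V, 0, v, 0)$ the flow generator is already given; the only free parameter is $t$, determined by the condition $dx' \in \cT_x\Omega$. Since $\Omega$ is cut out near $x$ by a single equation (either $\la dq', n\ra = 0$ when $q \in \partial\cD$, or $\la dq' - dQ', n\ra = 0$ when $q \in \partial\cP(Q)$ with $n = (q-Q)/\br$), solving yields $t = \la dq, n\ra / \la v, n\ra$ in the first case and $t = \la dq - dQ, n\ra / \la w, n\ra$ in the second. Proposition \ref{prabc}(b) gives $|\la dq - dQ, n\ra| \leq (1 + C/M)\|dq\|$, so in both cases $|t|$ is controlled by $\|dq\|$ divided by the denominator $\la v, n\ra$ (resp.\ $\la w, n\ra$), which is nonzero since we are away from the grazing set $\cS_1$.

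The upper bounds on the components of $dx'$ follow by combining the formula with Proposition \ref{prabc}. Part (c) gives $\|dV'\| = \|dV\| = \cO(\|dv\|/M)$. For $\|dQ'\|$, the triangle inequality together with $\|V\| \leq M^{-1/2}$ and parts (b), (f) yields $\|dQ'\| \leq \|dQ\| + |t|\|V\| \leq C\|dq\|/M + C\|dq\|/\sqrt{M} \leq C\|dv\|/\sqrt{M}$ in the bulk regime; the near-grazing case is absorbed into the more refined decomposition below. The upper bound $\|dq'\| \leq C\|dv\|$ will drop out once the lower bound is established.

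The crux is the lower bound $\|dq'\| \geq C^{-1}\|dv\|$, which must hold even near grazing incidence where both quantities can individually be large. I decompose $dq = \alpha v + \beta v^\perp$; Proposition \ref{prabc}(d) gives $|\alpha| = \cO(\|dq\|/\sqrt{M})$. Since $dq' - dq = -tv$ is parallel to $v$, the $v^\perp$-component satisfies $dq'_\perp = dq_\perp = \beta v^\perp$, and an elementary computation in the orthonormal frame $\{n, \tau\}$ at the collision point shows $\|dq'\| \asymp |\beta|\,\|v\|^2/\la v, n\ra$ up to a correction $\cO(\|dq\|/\sqrt{M})$. On the other hand, the postcollisional reflection formula $dw^+ = \bR_n dw^- + \bTheta^+(dq^+ - dQ^+)$ of Section \ref{subsecSUV}, with $\|\bTheta^+\| \sim 1/\la w, n\ra$, gives $\|dv\| \asymp \|dq\|\,\|v\|/\la v, n\ra$; the precollisional $dw^-$ lies in the unstable cone and hence adds to, rather than cancels with, the $\bTheta^+(dq^+)$ contribution. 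The matching orders then yield $\|dq'\| \asymp \|dv\|$ in both directions.

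Assembly is routine. We have $\|dx'\|^2 = \|dQ'\|^2 + \|dV'\|^2 + \|dq'\|^2 + \|dv'\|^2$; since $\|dQ'\|^2 + \|dV'\|^2 = \cO(\|dv\|^2/M)$ while $\|dq'\|^2 + \|dv'\|^2 \asymp \|dv\|^2$, we obtain $\|dx'\|^2 = (\|dq'\|^2 + \|dv'\|^2)(1 + \cO(1/\sqrt{M}))$. Both $\|dx\|$ (controlled via Proposition \ref{prabc}, using (b), (c), (f)) and $\|dx'\|$ are comparable to $\|dv\|$, yielding $C^{-1} \leq \|dx'\|/\|dx\| \leq C$. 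The main obstacle is the near-grazing bookkeeping in the third paragraph: one must verify that the singular factor $1/\la v, n\ra$ (resp.\ $1/\la w, n\ra$) appears with precisely matching order on both sides of the comparison $\|dq'\| \asymp \|dv\|$, which is where the $u$-cone invariance of Proposition \ref{prabc} enters essentially.
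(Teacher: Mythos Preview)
Your proof is correct and follows precisely the route the paper indicates: the paper gives no detailed argument, merely stating that ``some elementary geometry and an application of Proposition~\ref{prabc} give'' the result, and your decomposition $dq = \alpha v + \beta v^\perp$ together with the collision formula for $dw^+$ is exactly that elementary geometry. The one point worth tightening is your phrase ``the precollisional $dw^-$ lies in the unstable cone and hence adds to, rather than cancels with, the $\bTheta^+(dq^+)$ contribution'': this is the substance of the lower bound $\|dv\|\gtrsim \|dq\|/\cos\varphi$, and it follows cleanly from Proposition~\ref{prabc}(a) applied to both the pre- and postcollisional vectors (both $\bR_n(dv^-)$ and $\bTheta^+(dq^+)$ are nearly parallel to $dq^+$, hence to each other), so you might make that explicit rather than leaving it as a heuristic.
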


We introduce two norms (metrics) on \index{u-curves (unstable
curves)} u-curves $W\subset\Omega$. First, we denote by
$\length(\cdot)$ the norm on $W$ induced by the Euclidean norm
$\|dx'\|$ on $\cT_x(\Omega)$. Second, if $W = \tilde{\pi}^- (\cW )$,
we denote by $|\cdot|$ the norm on $W$ induced by the norm
(\ref{dxnorm}) on the postcollisional tangent vectors $dx$ to $\cW$
at the moment of collision. Due to the last proposition, these norms
are equivalent in the sense
\beq
       C^{-1} \leq \frac{\length(W)}{|W|} \leq C
         \label{length++}
\eeq

%\medskip\noindent{\em Remark} (on the lengths of unstable curves).
By (\ref{length++}), we can replace $\length(\gamma)$ with
$|\gamma|$ in the assumptions of Propositions~\ref{PrDistEq1} and
\ref{PrDistEq2}, as well as in many other estimates of our paper. We
actually prefer to work with the $|\cdot|$-metric, because it has an
important {\em uniform expansion} property: the map $\cF$ expands
every \index{u-curves (unstable curves)} u-curve in the
$|\cdot|$-metric by a factor $\geq\vartheta^{-1}>1$, see
(\ref{thetamin}) (while the $\length(\cdot)$ metric lacks this
property).\medskip

Observe that the $Q,V$ coordinates vary along \index{u-curves
(unstable curves)} u-curves $W\subset\Omega$ very slowly, so that
u-curves are almost parallel to the cross-sections $\Omega_{Q,V}$ of
$\Omega$ defined by (\ref{OmegaQVbrw}). Each $\Omega_{Q,V}$ can be
supplied with standard coordinates. Let $r$ be the arc length
parameter along $\dcD \cup \dcP (Q)$ and $\varphi\in [-\pi/2,\pi/2]$
the angle between the outgoing relative velocity vector $\brw$ and
the normal vector $n$. The orientation of $r$ and $\varphi$ is shown
in Fig.~\ref{FigOrient}. Topologically, $\Omega_{Q,V}$ is a union of
cylinders, in which the cyclic coordinate $r$ runs over the
boundaries of the scatterers \index{Scatterer} and the disk $\dcP
(Q)$, and $\varphi\in [-\pi/2, \pi/2]$. We need to fix reference
points on each scatterer and on $\dcP (Q)$ in order to define $r$,
and then the coordinate chart $r,\varphi$ in $\Omega_Q$ will
actually be the same for all $Q,V$. We denote by $\Omega_0$ that
unique $r,\varphi$ coordinate chart.

\begin{figure}[htb]
    \centering
    \psfrag{vm}{$v^-$}
    \psfrag{vp}{$v^+$}
    \psfrag{n}{$n$}
    \psfrag{f}{$\varphi$}
    \psfrag{r}{$r$}
    \includegraphics{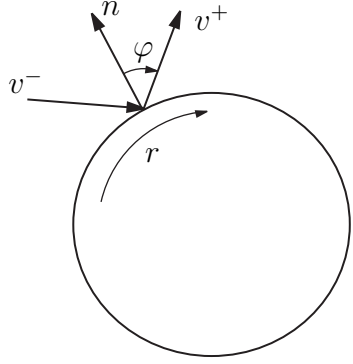}
    \caption{A collision of the light particle with a scatterer:
    the orientation of $r$ and $\varphi$}
    \label{FigOrient}
\end{figure}

Note that $r$ and $\varphi$ are defined at every point
$x\in\Omega$, hence they make two coordinates in the
(six-dimensional) space $\Omega$. Since $\cos\varphi = \la
w,n\ra/\|w\|$, the singularities of the map $\cF$ correspond to
$\cos \varphi =0$, i.e.\ to $\varphi=\pm\pi/2$ (which is the
boundary of $\Omega_0$). Let $\pi_0$ denote the natural projection
of $\Omega$ onto $\Omega_0$. Note that, for each $Q,V$ the
projection $\pi_0\colon \Omega_{Q,V} \to \Omega_0$ is one-to-one.
Then the map
$$
     \pi_{Q,V} \colon= \left ( \pi_0|_{\Omega_{Q,V}}\right )^{-1}
     \circ \pi_0
$$
defines a natural projection $\Omega \to \Omega_{Q,V}$
(geometrically, it amounts to moving the center of the heavy disk
to $Q$, setting its velocity to $V$, and rescaling the vector $w$
at points $q\in\dcP(Q)$ by the rule (\ref{brw})).

We turn back to \index{u-curves (unstable curves)} u-curves
$\cW\subset\cM$. For any such curve, $W = \pi_0 (\tilde{\pi}^- (\cW
))$ is a smooth or piecewise smooth curve in $\Omega_0$, whose
components we also call {\em u-curves}. Any such curve is described
by a smooth function $\varphi = \varphi(r)$. Let $dx = (dQ, dV, dq,
dv)$ be the postcollisional tangent vector to $\cW$ at a moment of
collision. Its projection under the derivative $D( \pi_0 \circ
\tilde{\pi}^-)$ is a tangent vector to $W$, which we denote by
$(dr,d\varphi)$.

To evaluate $(dr,d\varphi)$, we introduce two useful quantities,
$\cE$ and $\cB$, at each collision point. We set $\cE = \|
\bP^{\perp}_{w}(dq)\|$ if the light particle collides with $\dcD$,
and $\cE =\| \bP^{\perp}_{w}(dq-dQ)\|$ if it collides with the
disk. Then we set
$$
  \cB = \frac{\| \bP^{\perp}_{w}(dw)\|}
  {\cE\,\| w\|}
$$

\begin{proposition}
In the above notation,
$$
   |dr| = \cE / \cos\varphi
   \quad{\rm and}\quad
   d\varphi/dr= \cB\cos\varphi - \cK,
$$
where $\cK>0$ is the curvature of $\dcD\cup\dcP$ at the point of
collision. There is a constant $C>1$ such that for any
\index{u-curves (unstable curves)} u-curve $W\subset\Omega$ and any
point $x \in W$
$$
       \frac{2\cK}{\cos\varphi} \leq \cB
       \leq \frac{2\cK}{\cos\varphi} + C
$$
and
\beq
       C^{-1} \leq \frac{d\varphi}{dr} \leq C
          \label{CCu}
\eeq
In particular, $d\varphi/dr>0$, hence $\pi_0 (W)$ is an increasing
curve in the $r,\varphi$ coordinates. Lastly,
$$
       C^{-1} \leq \frac{(dr)^2+(d\varphi)^2}{\|dx\|^2} \leq C
$$
\label{prdrdp}
\end{proposition}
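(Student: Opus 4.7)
The plan is to verify the four assertions in sequence, using the collision formulas of Section~\ref{subsecSUV} and the unstable cone structure of Proposition~\ref{prabc}.

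First I would derive the two explicit identities. Since $q$ stays on the boundary along $W$ (with $\|q-Q\|=\br$ constant at disk collisions), the constraint forces $dq$, resp.\ $dq-dQ$, to be tangent to $\dcD\cup\dcP(Q)$, so $\|dq\|=|dr|$, resp.\ $\|dq-dQ\|=|dr|$. The boundary tangent makes angle $\pi/2-\varphi$ with $w$, hence its component perpendicular to $w$ has length $|dr|\cos\varphi$, which is precisely $\cE$; this gives $|dr|=\cE/\cos\varphi$. For the second identity, $\varphi$ is the angle between $w$ and $n$, so its differential along $W$ equals the angular rotation of $w$ minus that of $n$. The former is $\|\bP^\perp_w(dw)\|/\|w\|=\cB\,\cE$, and the latter equals $\cK|dr|$ by definition of curvature. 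With the orientations of Fig.~\ref{FigOrient} the two contributions combine as $d\varphi=\cB\,\cE-\cK\,dr=(\cB\cos\varphi-\cK)\,dr$.

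Next I would establish the bounds on $\cB$ by standard wavefront (``mirror equation'') calculus. Applying $\bP^\perp_{w^+}$ to the collision formula $dw^+=\bR_n(dw^-)+\bTheta^+(dq^+-dQ^+)$ of Section~\ref{subsecSUV} and using that $\bR_n$ is an isometry carrying $w^-$ to $w^+$, one obtains
\[
\cB^+ = \cB^- + \frac{2\cK}{\cos\varphi}.
\]
Between collisions, $w$ and $dw$ are frozen while $\cE(t)=\cE_0+t\,\|\bP^\perp_w(dv)\|$ grows monotonically (part (a) of Proposition~\ref{prabc} pins down the sign of $\la dq,dv\ra$), whence $\cB(t)=\cB_0\cE_0/\cE(t)$ decreases and stays $\ge 0$. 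Thus $\cB^-\ge 0$ at every collision, giving the lower bound $\cB\ge 2\cK/\cos\varphi$. For the upper bound one uses the inequalities of Proposition~\ref{prabc} and the bounded free-flight time enforced by finite horizon and by the restriction to $\Upsilon_{\delta_1}$ to conclude $\cB^-\le C$ uniformly, so $\cB\le 2\cK/\cos\varphi+C$.

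The bounds (\ref{CCu}) on $d\varphi/dr$ follow at once: from below, $d\varphi/dr=\cB\cos\varphi-\cK\ge\cK_{\min}$; from above, $d\varphi/dr\le\cK_{\max}+C\cos\varphi$. Positivity of $d\varphi/dr$ then gives that $\pi_0(W)$ is increasing in the $r,\varphi$ chart. For the norm equivalence, I combine $\|dq\|=|dr|$, resp.\ $\|dq-dQ\|=|dr|$, with parts (b), (c), (e) of Proposition~\ref{prabc} to get $\|dQ\|,\|dV\|=\cO(|dr|/M)$ and $\|dw\|=(1+\cO(M^{-1/2}))\|\bP^\perp_w(dw)\|=(1+\cO(M^{-1/2}))\,\cB\cos\varphi\,\|w\|\,|dr|$, so
\[
\|dx\|^2 = \bigl(1+(\cB\cos\varphi\,\|w\|)^2\bigr)(dr)^2\bigl(1+\cO(M^{-1/2})\bigr),
\]
which is comparable to $(dr)^2+(d\varphi)^2=(1+(\cB\cos\varphi-\cK)^2)(dr)^2$ because both $\cB\cos\varphi\,\|w\|$ and $\cB\cos\varphi-\cK$ are pinched in a fixed compact sub-interval of $(0,\infty)$ on $\Upsilon_{\delta_1}$.

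The main technical obstacle is the uniform upper bound $\cB\le 2\cK/\cos\varphi+C$: the mirror equation only propagates $\cB$ from one collision to the next, so one must exploit the full invariance of the unstable cone in Proposition~\ref{prabc} and the geometric restrictions of $\Upsilon_{\delta_1}$ to rule out blow-up of $\cB^-$. Everything else is essentially bookkeeping based on the two geometric identities in the first paragraph.
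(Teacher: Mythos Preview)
The paper omits the proof entirely, stating only that it ``is based on elementary geometric analysis.'' Your proposal is precisely that standard analysis---wavefront/mirror-equation calculus combined with the unstable-cone estimates of Proposition~\ref{prabc}---and is correct. One small slip: it is the projected vector $dq'$ (resp.\ $dq'-dQ'$) from Proposition~\ref{prdxdx'}, not the raw postcollisional $dq$, that is tangent to the boundary and hence has norm $|dr|$; but since $dq-dq'$ is parallel to $w$, the quantity $\cE$ is unaffected and your identities go through unchanged. Your concern about the upper bound on $\cB^-$ is well placed; the cleanest route is exactly the one you indicate, via part~(g) of Proposition~\ref{prabc} together with the lower bound on the free-flight time in $\Upsilon_{\delta_1}$ (compare the one-particle version $\cB^-\le 1/s$ recorded in Appendix~B).
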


The proof is based on elementary geometric analysis, and we omit
it. \qed \medskip

Next we study the evolution of \index{u-curves (unstable curves)}
u-curves under the map $\cF$. Let $W_0 \subset \Omega$ be a u-curve
on which $\cF^n$ is smooth for some $n \geq 1$. Then $W_i = \cF^i (
W_0 )$ for $i\leq n$ are \index{u-curves (unstable curves)}
u-curves. Pick a point $x_0 \in W_0$ and put $x_i=\cF^i (x_0)$ for
$i\leq n$. For each $i$, we denote by $r_i$, $\varphi_i$, $\cK_i$,
$\cB_i$, etc.\ the corresponding quantities, as introduced above, at
the point $x_i$.

Also, for any \index{u-curves (unstable curves)} u-curve $W \subset
\Omega$ and $k \geq 1$ we denote by $\cJ_W \cF^k (x)$ the Jacobian
of the map $\cF^k \colon W \to \cF^k (W)$ at the point $x\in W$ in
the norm $| \cdot |$, i.e.\ the local expansion factor of the curve
$W$ under $\cF^k$ in the $|\cdot|$-metric.

\begin{proposition}
There is a constant $1<C<\infty$ such that
$$
   1 + \frac{C^{-1}}{\cos\varphi_{i+1}} < \cJ_{W_i}\cF(x_i)
   < 1 + \frac{C}{\cos\varphi_{i+1}}
$$
and
\beq
     \cJ_{W_0}\cF^n(x_0)=
     \cJ_{W_0}\cF(x_0)\cdots \cJ_{W_{n-1}}\cF(x_{n-1})
     \geq \vartheta^{-n}
        \label{JJtheta}
\eeq
where $\vartheta<1$ is given by (\ref{thetamin}). \label{prJexp}
\end{proposition}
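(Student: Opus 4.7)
The plan is to split the proposition into three parts and handle them in order of increasing difficulty: first the multiplicative chain rule, then the uniform lower bound $\vartheta^{-n}$, and finally the two-sided single-step estimate on $\cJ_{W_i}\cF(x_i)$.

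The chain rule $\cJ_{W_0}\cF^n(x_0) = \prod_{i=0}^{n-1} \cJ_{W_i}\cF(x_i)$ is immediate from the usual multiplicativity of Jacobians under composition, since $\cF^n = \cF \circ \cdots \circ \cF$ and the $|\cdot|$-metric is fixed on each $W_i$. The lower bound $\cJ_{W_0}\cF^n(x_0) \geq \vartheta^{-n}$ then follows at once from Proposition \ref{prhyper}: between the successive postcollisional moments corresponding to $x_i$ and $x_{i+1}=\cF(x_i)$, the unstable tangent vector is expanded by at least $\vartheta^{-1}$ in the norm (\ref{dxnorm}), and by the definition of the $|\cdot|$-metric on u-curves this means $\cJ_{W_i}\cF(x_i)\geq \vartheta^{-1}$. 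Multiplying over $i=0,\dots,n-1$ gives $\vartheta^{-n}$.

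For the sharp two-sided bound, I would use Proposition \ref{prdrdp} to convert $|\cdot|$-norms into the $(r,\varphi)$ chart. That proposition gives $|dr|=\cE/\cos\varphi$ and $d\varphi/dr = \cB\cos\varphi - \cK$, with $\cB\cos\varphi -\cK$ bounded between two positive constants on u-curves. Consequently
$$
\|dx\|^2 \;\asymp\; (dr)^2+(d\varphi)^2 \;\asymp\; \frac{\cE^2}{\cos^2\varphi},
$$
uniformly on the region $\Upsilon_{\delta_1}$ (using also Proposition \ref{prdxdx'} to discard the $\cO(1/\sqrt M)$ contribution of the $dQ,dV$ components, which by Proposition \ref{prabc}(b)(c) are smaller than $\|dq\|,\|dw\|$ by a factor $1/M$). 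Therefore
$$
\cJ_{W_i}\cF(x_i)\;=\;\frac{|dx_{i+1}|}{|dx_i|}\;\asymp\;\frac{\cE_{i+1}}{\cE_i}\cdot\frac{\cos\varphi_i}{\cos\varphi_{i+1}}.
$$
The remaining task is to estimate $\cE_{i+1}/\cE_i$. Between the two collisions, (\ref{Qqts}) gives $dq(t)=dq^++t\,dv^+$ and $dQ(t)=dQ^++t\,dV^+$, so $\bP^\perp_w(dq-dQ)$ grows affinely in the free-flight time $\tau_i$; using Proposition \ref{prabc}(f)(g) the ratio $\|dw\|/\|dq\|$ is trapped between two positive constants determined by $\delta_1$ and the finite-horizon bound $L_{\max}$, hence $\cE_{i+1}/\cE_i$ is bounded above and below by positive constants. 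This yields
$$
\cJ_{W_i}\cF(x_i)\;\asymp\;\frac{\cos\varphi_i}{\cos\varphi_{i+1}},
$$
and since $\cos\varphi_i\in[0,1]$ while $\cos\varphi_i\geq c_0>0$ away from grazing (except possibly just after the preceding collision, where a separate comparison using Proposition \ref{prhyper} shows the same scaling), one obtains the stated form $1+C^{\pm 1}/\cos\varphi_{i+1}$ — the ``$1+$'' being forced by the uniform bound $\cJ_{W_i}\cF(x_i)\geq\vartheta^{-1}$ already established.

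The main obstacle is the bookkeeping in the last step: tracking all the extra contributions from the $(dQ,dV)$ sector and from the two distinct collision types (light particle with $\dcD$ versus with $\dcP(Q)$) simultaneously, and verifying that the constants depend only on $\delta_1$, $\br$, the curvature and geometric bounds on $\dcD$, and not on $M$. This uses exactly Proposition \ref{prabc}(b)(c) to absorb the heavy-disk components as $\cO(1/M)$ corrections, so the two-particle Jacobian inherits the classical dispersing-billiard estimate $1+C^{\pm 1}/\cos\varphi_{i+1}$ uniformly in $M\geq M_0$.
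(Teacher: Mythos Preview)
Your treatment of the chain rule and the lower bound $\vartheta^{-n}$ is fine, and your overall strategy --- reduce everything to Proposition~\ref{prdrdp} and compute --- is exactly what the paper does (its proof is the single line ``This follows from Proposition~\ref{prdrdp} by direct calculation''). The error is in the execution of that calculation.

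The claim that $\cE_{i+1}/\cE_i$ is bounded above and below by positive constants is false, and your appeal to Proposition~\ref{prabc}(f)(g) does not justify it. Part (g) reads $\|dv\|\leq C\|dq\|/|t^-(x)|$, which at the postcollisional moment ($t^-(x)=0^+$) gives no upper bound at all. In fact, at the postcollisional moment $\|dw_i^+\|/\|dq_i^+\|\asymp\cB_i^+\asymp 1/\cos\varphi_i$ by Proposition~\ref{prdrdp}, so during the free flight $dq$ grows by the factor $1+s_i\cB_i^+\asymp 1/\cos\varphi_i$ (here $s_i$ is the free path, bounded between positive constants). Since the reflection at $x_{i+1}$ preserves $\|dq\|$ up to $\cO(1/M)$, this gives
\[
\frac{\cE_{i+1}}{\cE_i}\;\asymp\;1+s_i\cB_i^+\;\asymp\;\frac{\cos\varphi_i+s_i}{\cos\varphi_i},
\]
which is unbounded when $\cos\varphi_i$ is small. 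Your intermediate conclusion $\cJ\asymp\cos\varphi_i/\cos\varphi_{i+1}$ would then permit $\cJ\ll 1$ whenever $\varphi_i$ is nearly grazing and $\varphi_{i+1}$ is not --- contradicting $\cJ\geq\vartheta^{-1}$.

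The correct computation is that the large factor $1/\cos\varphi_i$ in $\cE_{i+1}/\cE_i$ \emph{cancels} the $\cos\varphi_i$ in your formula:
\[
\cJ_{W_i}\cF(x_i)\;\asymp\;\frac{\cE_{i+1}}{\cE_i}\cdot\frac{\cos\varphi_i}{\cos\varphi_{i+1}}\;\asymp\;\frac{\cos\varphi_i+s_i}{\cos\varphi_{i+1}}.
\]
Since $s_i\in[L_{\min},L_{\max}]$ is bounded away from $0$ and $\infty$ and $\cos\varphi_i\in[0,1]$, the numerator is uniformly $\asymp 1$, and you recover $\cJ\asymp 1/\cos\varphi_{i+1}$. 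Combined with $\cJ\geq\vartheta^{-1}>1$ this is equivalent to the stated two-sided bound $1+C^{\pm1}/\cos\varphi_{i+1}$.
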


\proof This follows from Proposition~\ref{prdrdp} by direct
calculation. \qed \medskip

\noindent{\em Remark}. By setting $M=\infty$ in all our results we
obtain their analogues for the billiard-type dynamics in $\cD
\setminus \cP(Q)$, in which the disk $\cP(Q)$ is fixed and the light
particle moves at a constant speed $\|\brw\|=\bs_V$ given by
(\ref{brw}). Most of them are known in the studies of billiards. In
particular, we recover standard definitions of unstable
\index{Unstable vectors} vectors and unstable curves for the
billiard-type map $\cF_{Q,V}\colon\Omega_{Q,V}\to\Omega_{Q,V}$.
Proposition~\ref{prdrdp} implies that the $\|dx\|$ norm on
$\Omega_{Q,V}$ becomes
\begin{align}
     \|dx\|^2 &= \|dq\|^2 + \|dw\|^2\nonumber\\
     &= (dr\cos\varphi)^2 + \bs_V^2(d\varphi+\cK\, dr)^2
       \label{dxnormQV}
\end{align}
In this norm, the map $\cF_{Q,V}$ expands every unstable curve by
a factor $\geq\vartheta^{-1}>1$.

As usual, reversing the time (changing $\cF_{Q,V}$ to
$\cF^{-1}_{Q,V}$) gives the definition of stable vectors and
stable curves (or {\em s-curves} for brevity) in the space
$\Omega_{Q,V}$. Those are decreasing in the $r,\varphi$
coordinates and satisfy the bound
\beq
        -C<d\varphi/dr<-C^{-1}<0
          \label{CCs}
\eeq
The corresponding norm on stable vectors/curves is defined on {\em
precollisional} tangent vectors and is expressed by
\beq
     \|dx\|^2_{\rm stable} = (dr\cos\varphi)^2 + \bs_V^2(d\varphi-\cK\, dr)^2
       \label{dxnormQVst}
\eeq
which differs from (\ref{dxnormQV}) by the sign before $\cK$. In
the norm (\ref{dxnormQVst}), the map $\cF_{Q,V}$ contracts every
s-curve by a factor $\leq\vartheta<1$.

\subsection{Homogeneous unstable curves}\label{subsecSHUC}
To control distortions of \index{u-curves (unstable curves)}
u-curves by the map $\cF$, we need to carefully partition the
neighborhood of the singularity set $\partial\Omega = \{\cos \varphi
= 0\}$ into countably many surrounding sections (shells). This
procedure has been introduced in \cite{BSC2} and goes as follows.
Fix a large $k_0\geq 1$ and for each $k\geq k_0$ define two
``homogeneity strips'' in $\Omega_0$ \index{Homogeneity strips}
$$
    \bbH_k=\{(r,\varphi)\colon \pi/2-k^{-2}<\varphi <\pi/2-(k+1)^{-2}\}
$$
and
$$
    \bbH_{-k}=\{(r,\varphi)\colon -\pi/2+(k+1)^{-2}<\varphi < -\pi/2+k^{-2}\}
$$
We also put
\beq \label{bbH0}
    \bbH_0=\{(r,\varphi)\colon -\pi/2+k_0^{-2}<\varphi < \pi/2-k_0^{-2}\}
\eeq
Slightly abusing notation, we will also denote by $\bbH_{\pm k}$ the
preimages $\pi_0^{-1} (\bbH_{\pm k}) \subset \Omega$ and call them
\index{Homogeneity sections} {\em homogeneity sections}. A
\index{u-curves (unstable curves)} u-curve $W\subset \Omega$ is said
to be {\em weakly homogeneous} if $W$ belongs to one section
$\bbH_k$ for some $|k| \geq k_0$ or for $k=0$.

Let $W\subset\bbH_k$ be a weakly homogeneous \index{u-curves
(unstable curves)} u-curve, $x=(r,\varphi)\in W$, and $|\Delta
\varphi|$ be the projection of $W$ onto the $\varphi$ axis. Due to
(\ref{CCu}), we have
\beq
        \label{cos23}
   |W|\leq\Const\, |\Delta\varphi| \leq\Const\, (|k|+1)^{-3}
   \leq\Const\, \cos^{3/2}\varphi.
\eeq

Now let $W_0 \subset \Omega$ be a \index{u-curves (unstable curves)}
u-curve on which $\cF^n$ is smooth, and assume that the u-curve
$W_i=\cF^i(W_0)$ is weakly homogeneous for every $i=0,1,\dots,n$.
Consider two points $x_0, x_0' \in W_0$ and put $x_i= \cF^i (x_0)$
and $x_i' = \cF^i (x_0')$ for $1\leq i\leq n$. We denote by $r_i$,
$\varphi_i$, $\cK_i$, $\cB_i$, etc.\ the corresponding quantities,
as introduced in Section~\ref{subsecSUC}, at the point $x_i$, and by
$r_i'$, $\varphi_i'$, $\cK_i'$, $\cB_i'$, etc.\ similar quantities
at the point $x_i'$.

For any curve $W$ we denote by $W(x,x')$ the segment of $W$
between the points $x,x' \in W$ and by $\measuredangle (x,x')_{W}$
the angle between the tangent vectors to the curve $W$ at $x$ and
$x'$.

\begin{proposition}[Distortion bounds]
 \index{Distortion bounds}
Under the above assumptions, if the following bound holds for
$i=0$ with some $C_0 =c >0$, then it holds for all $i=1,\dots,n-1$
with some $C_i = C>c$ (i.e., $C_i$ is independent of $i$ and $n$)
$$
    \left | \ln \frac{\cJ_{W_i}\cF(x_{i})}
     {\cJ_{W_i}\cF(x_{i}')} \right |
     \leq C_i\, \frac{|W_{i+1}(x_{i+1},x_{i+1}')|}{|W_{i+1}|^{2/3}}
$$
Moreover, in this case
$$
    \left | \ln \frac{\cJ_{W_0}\cF^n(x_0)}
     {\cJ_{W_0}\cF^n(x_0')} \right |
     \leq C\, \frac{|W_{n}(x_{n},x_{n}')|}{|W_{n}|^{2/3}}
$$
\label{prdist}
\end{proposition}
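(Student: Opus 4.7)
The plan is to first obtain a single-step distortion inequality by computing the Jacobian explicitly and isolating the singular $\cos\varphi$ factor, and then to sum telescopically over $i = 0,\dots,n-1$, using the uniform hyperbolicity of Proposition~\ref{prhyper} together with a bootstrap to recast the sum in terms of $W_n$ alone.

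For a single step, I would begin from a representation of the form
\[
\cJ_{W_i}\cF(x_i)=\frac{a_i(x_i)+b_i(x_i)\cos\varphi_{i+1}(x_{i+1})}{\cos\varphi_{i+1}(x_{i+1})},
\]
derivable from Propositions~\ref{prdrdp}--\ref{prJexp}, where $a_i,b_i$ are smooth functions of the collision data at $x_i$ (the flight time, the curvature $\cK_i$, and the slope $\cB_i$), and the $M$-dependent correction terms are $\cO(1/M)$ and hence harmless. Taking logarithms and differencing at $x_i,x_i'\in W_i$ splits $\ln[\cJ_{W_i}\cF(x_i)/\cJ_{W_i}\cF(x_i')]$ into a smooth piece $S_i$ bounded by $C|W_i(x_i,x_i')|\le C\vartheta\,|W_{i+1}(x_{i+1},x_{i+1}')|$ (using Proposition~\ref{prhyper}) and a singular piece $L_i=\ln\cos\varphi_{i+1}(x_{i+1}')-\ln\cos\varphi_{i+1}(x_{i+1})$. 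For the latter, Proposition~\ref{prdrdp} yields $|d\cos\varphi/ds|\le C$ along $W_{i+1}$, while weak homogeneity of $W_{i+1}$ combined with \eqref{cos23} gives $\cos\varphi\ge c\,|W_{i+1}|^{2/3}$ throughout $W_{i+1}$; the mean value theorem then delivers
\[
|L_i|\le \frac{C\,|W_{i+1}(x_{i+1},x_{i+1}')|}{|W_{i+1}|^{2/3}}.
\]
Since $|W_{i+1}|^{2/3}$ is bounded above by $(\diam\Omega)^{2/3}$, the smooth piece is absorbed into the same right-hand side, giving the one-step bound with a universal constant $C$. The hypothesis at $i=0$ with some constant $c$ serves to initiate an inductive argument in which the distortion at step $i$ is tied to that at step $i-1$ through the ratios $|W_i(x_i,x_i')|/|W_i|^{2/3}$; the recursion converges to a uniform $C\ge c$ independent of $i$ and $n$.

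For the multi-step bound, apply the one-step inequality to each term of the telescoping identity
\[
\ln\frac{\cJ_{W_0}\cF^n(x_0)}{\cJ_{W_0}\cF^n(x_0')}=\sum_{i=0}^{n-1}\ln\frac{\cJ_{W_i}\cF(x_i)}{\cJ_{W_i}\cF(x_i')},
\]
and then bootstrap: the running distortion bound yields $|W_{i+1}(x_{i+1},x_{i+1}')|/|W_{i+1}|\le C'\,|W_n(x_n,x_n')|/|W_n|$, while uniform expansion (Proposition~\ref{prhyper}) gives $|W_{i+1}|^{1/3}\le\vartheta^{(n-i-1)/3}|W_n|^{1/3}$. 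Multiplying these produces
\[
\frac{|W_{i+1}(x_{i+1},x_{i+1}')|}{|W_{i+1}|^{2/3}}\le C\,\vartheta^{(n-i-1)/3}\,\frac{|W_n(x_n,x_n')|}{|W_n|^{2/3}},
\]
and summing the geometric series closes the bootstrap and yields the stated inequality.

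The hard part will be the coupling between the singular $\ln\cos\varphi$ term and the fact that u-curves can be arbitrarily short near $\varphi=\pm\pi/2$. The exponent $2/3$ in $|W|^{2/3}$ is forced by \eqref{cos23}, and weak homogeneity is indispensable: without it, the logarithmic divergence of $\ln\cos\varphi$ over a single u-curve is uncontrolled, whereas the homogeneity strips $\bbH_k$ are tuned precisely so that the matching scaling laws $\cos\varphi\sim k^{-2}$ and $|W|\lesssim k^{-3}$ let the product of singular derivative and homogeneous scale balance into the finite quantity $|W|^{-2/3}$ appearing in the statement.
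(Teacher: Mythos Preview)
Your approach is the standard one for classical dispersing billiards and is close in spirit to what the paper does in Appendix~B for the one-particle map $\cF_Q$ (see Proposition~\ref{PrDistCurv}, where the Jacobian is written explicitly and differentiated). However, the paper's actual proof of Proposition~\ref{prdist}, given in Appendix~C, takes a genuinely different route: rather than working with an explicit Jacobian formula, it fixes two unstable tangent vectors $dx_0,dx_0'$ at $x_0,x_0'$ and tracks their $(\varepsilon_i,\tilde{\varepsilon}_i)$-closeness through each collision (Lemma~\ref{lm0}), where $\varepsilon_i$ controls the $(dq,dv)$ components and $\tilde{\varepsilon}_i$ the $(dQ,dV)$ components. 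The one-step recursion is encoded in a $2\times2$ matrix $\bA$ that couples these two scales, and the $n$-step estimate follows by iterating $\bvarepsilon_i=\bA\bvarepsilon_{i-1}+\bb_i$ (Lemma~\ref{lmn}). This vector-tracking approach handles the two-particle complications directly, whereas your Jacobian formula hides the $(dQ,dV)$ dependence inside ``$\cO(1/M)$ and hence harmless,'' which for the full system requires exactly the kind of careful bookkeeping the paper supplies.

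There is also a gap in your single-step argument: your ``smooth piece'' $S_i$ is bounded by $C|W_i(x_i,x_i')|$ only if the coefficients $a_i,b_i$ are uniformly Lipschitz along $W_i$. But $a_i,b_i$ depend on the slope $\cB_i$ of $W_i$, and the variation of $\cB_i$ along $W_i$ is precisely the content of Proposition~\ref{prcurv} (the curvature bound), which the paper proves \emph{jointly} with Proposition~\ref{prdist}. Your outline treats this as given. To close the argument along your lines you would need a coupled induction on distortion and curvature, or else adopt the paper's tangent-vector method, which sidesteps the issue by never isolating $\cB_i$ as a separate quantity. Your bootstrap for the multi-step bound (first a crude $\delta_n^{1/3}$ estimate, then the sharp $\delta_n/|W_n|^{2/3}$) does match the paper's Lemma~\ref{lmdist}.
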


\begin{proposition}[Curvature bounds]
Under the above assumptions, if the following bound holds for
$i=0$ with some $C_0 =c >0$, then it holds for all $i=1,\dots,n$
with some $C_i = C>c$ (independent of $i$ and $n$)
$$
    \measuredangle(x_i,x_i')_{W_i} \leq
    C_i\, \frac{|W_{i}(x_{i},x_{i}')|}{|W_{i}|^{2/3}}
$$
\label{prcurv}
\end{proposition}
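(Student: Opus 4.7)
The plan is to argue by forward induction on $i$, adapting the classical billiard curvature‐bound argument of Bunimovich, Sinai and Chernov \cite{BSC2} to our two‐particle setting, in parallel with the distortion bound of Proposition~\ref{prdist}. The first step is to translate the angle between tangent vectors into a slope difference: since every $W_i$ is a weakly homogeneous u-curve, Proposition~\ref{prdrdp} gives $d\varphi/dr\in[C^{-1},C]$, so
\begin{equation*}
  \measuredangle(x_i,x_i')_{W_i} \asymp
  \bigl|(\cB\cos\varphi - \cK)(x_i) - (\cB\cos\varphi - \cK)(x_i')\bigr|.
\end{equation*}
Thus one only has to control how the quantity $\kappa:=\cB\cos\varphi-\cK$ varies along $W_i$.

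Next I would derive a continued‐fraction‐type recursion for $\cB$ at each collision from the transformation rules for $dq^+,dw^+$ in Section~\ref{subsecSUV} together with the free‐flight equations (\ref{Qqts}). Schematically,
\begin{equation*}
   \cB_{i+1} \;=\; \frac{2\cK_{i+1}}{\cos\varphi_{i+1}}
   \;+\;\frac{1}{\tau_i + 1/\cB_i^{\ast}} \;+\; \cO(1/M),
\end{equation*}
where $\tau_i$ is the intercollision distance and the $\cO(1/M)$ term absorbs the slow drift of the heavy disk between collisions, estimated via Proposition~\ref{prabc}(b)(c). The singular $2\cK_{i+1}/\cos\varphi_{i+1}$ exactly cancels the $-\cK_{i+1}$ in $\kappa_{i+1}$, leaving a smooth dependence on $(\cB_i^\ast,\tau_i,\varphi_{i+1},\cK_{i+1})$. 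A direct computation then shows that $|\partial\kappa_{i+1}/\partial\cB_i^\ast|$ is bounded by the reciprocal of the squared expansion factor, namely $\leq\vartheta^2$ by Proposition~\ref{prJexp}, while the dependence on $\tau_i,\varphi_i,\varphi_{i+1},\cK_i,\cK_{i+1}$ is Lipschitz with coefficient of order $1/\cos^{3/2}\varphi_{i+1}$, the standard singular factor from billiard distortion analysis. This yields the recursion
\begin{equation*}
   \Delta_{i+1} \;\leq\; \vartheta^{2}\,\Delta_i \;+\;
   \Const\cdot\frac{|W_{i+1}(x_{i+1},x_{i+1}')|}{\cos^{3/2}\varphi_{i+1}},
   \qquad \Delta_i := \measuredangle(x_i,x_i')_{W_i}.
\end{equation*}

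Iterating, the initial term $\Delta_0\leq c\,|W_0(x_0,x_0')|/|W_0|^{2/3}$ is damped by $\vartheta^{2i}$ and becomes negligible after $\cO(1)$ steps, so the initial constant $c$ does not appear in the final bound. The accumulated new contributions form a geometric series whose last term dominates. Converting $\cos^{3/2}\varphi_i\geq\Const\cdot|W_i|$ by the weak-homogeneity estimate (\ref{cos23}) and invoking Proposition~\ref{prdist} (bounded distortion preserves the ratio $|W_{i+1}(x_{i+1},x_{i+1}')|/|W_{i+1}|$ up to a universal factor, modulo the jump at each step), the final bound becomes $\Delta_i\leq C\,|W_i(x_i,x_i')|/|W_i|^{2/3}$ with $C$ depending only on the table, not on $i$, $n$, or $c$.

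The main obstacle is Step~2: deriving the recursion in a form where the $1/\cos\varphi_{i+1}$ singularity cancels cleanly and establishing that the perturbation $\cO(1/M)$ from the heavy disk's slow motion does not spoil the contraction. The weak homogeneity of $W_{i+1}$ is essential here, because only inside a single strip $\bbH_k$ is $\cos\varphi$ nearly constant, so that the singular Lipschitz coefficient $1/\cos^{3/2}\varphi_{i+1}$ can be meaningfully combined with $|W_{i+1}|^{2/3}$ via (\ref{cos23}). Once that estimate is in place, the remaining steps are bookkeeping with geometric series, exactly as in the classical case.
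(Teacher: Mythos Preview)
Your approach via a continued-fraction recursion for $\cB$ is the classical route and differs from what the paper actually does in Appendix~C. The paper explicitly abandons the continued-fraction method, remarking that it ``is convenient for completely hyperbolic billiards, because it treats all the components of unstable vectors equally,'' whereas here the $(dQ,dV)$ components are nearly neutral (partial hyperbolicity). Instead of tracking a scalar slope, the paper picks tangent vectors $dx_0,dx_0'$ at the two points and propagates their $(\varepsilon_i,\tilde\varepsilon_i)$-closeness directly: four separate inequalities, with $\tilde\varepsilon_i$ controlling the $Q,V$ components at scale $1/M$. The one-step estimate (Lemma~\ref{lm0}) yields a $2\times 2$ matrix recursion $\bvarepsilon_{i+1}=\bA\,\bvarepsilon_i+\bb_{i+1}$ where $\bA$ has eigenvalues roughly $1\pm 2/\sqrt{M}$, so $\|\bA^k\|$ grows only like $k(1+C/\sqrt{M})^k$; this is precisely where the paper pins down the effect of the heavy disk instead of carrying an unanalysed $\cO(1/M)$ remainder. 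Both Propositions~\ref{prdist} and \ref{prcurv} then fall out simultaneously from Lemma~\ref{lmdist}. The paper's route buys a clean treatment of the slow directions without having to verify that your scalar $\cO(1/M)$ term does not accumulate badly; your route stays closer to the standard billiard literature and keeps the two propositions logically separate.

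One technical slip: the singular Lipschitz factor in your recursion should be $1/\cos\varphi_{i+1}$, not $1/\cos^{3/2}\varphi_{i+1}$ (compare Lemma~\ref{lm0} and Corollary~\ref{cr0}, where the additive term is $C(\delta_0+\delta_1)/\cos\varphi_1$). With exponent $3/2$, conversion via (\ref{cos23}) yields $|W_{i+1}|^{-1}$ rather than $|W_{i+1}|^{-2/3}$, which is too weak to match the stated bound. With the correct exponent your geometric-series argument closes.
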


The proofs of these two propositions are quite lengthy. They are
given in Appendix~C. It is also shown there that sufficiently smooth
unstable curves satisfy distortion \index{Distortion bounds} bounds
for a large enough $c>0$. From now on we fix a sufficiently large
$c>0$ and the corresponding (perhaps even larger) $C>0$ that
guarantee the abundance of curves satisfying distortion and
curvature bounds (this is a standard procedure in the study of
chaotic billiards, see e.g.\ \cite{C3}).

We now consider an arbitrary \index{u-curves (unstable curves)}
u-curve $W \subset \Omega$ and partition it and its images under
$\cF^n$, $n\geq 1$, into weakly homogeneous u-curves (called
H-components) as follows:

\medskip\noindent{\bf Definition (H-components)}.
Given a \index{u-curves (unstable curves)} u-curve $W \subset
\Omega$, we call nonempty sets $W \cap \bbH_k$ (for $k=0$ and $|k|
\geq k_0$) the \emph{H-components of} \index{H-components} $W$. Note
that $W$ intersects each hyperplane $\{\varphi = \pm (\pi/2 -
k^{-2})\}$ separating homogeneity \index{Homogeneity sections}
sections at most once, due to (\ref{CCu}), hence each H-component is
a weakly homogeneous u-curve. Next suppose, inductively, that the
H-components $W_{n,j}$, $j\geq 1$, of $\cF^n(W)$ are constructed.
Then the H-components of $\cF^{n+1}(W)$ are defined to be the
H-components of the u-curves $\cF(W_{n,j})$ for all $j\geq 1$.
\medskip

Observe that the H-components of $\cF^n(W)$ are obtained naturally
\index{H-components} if we pretend that the boundaries of the
homogeneity \index{Homogeneity sections} sections act as additional
singularities of the dynamics.

Next, observe that if the curve $W_0$ satisfies the distortion
\index{Distortion bounds} bound and the curvature bound for $i=0$,
then so does any part of it (because $|W_0|$ and $|W_1|$ decrease if
we reduce the size of the curve, thus the bounds in
Propositions~\ref{prdist} and \ref{prcurv} remain valid). Therefore,
if a weakly homogeneous \index{u-curves (unstable curves)} u-curve
$W_0$ satisfies the distortion bound and curvature bound for $i=0$,
then every H-component of its image $\cF^n(W)$, $n\geq 1$ satisfies
these bounds as well. This allows us to restrict our studies to
weakly homogeneous u-curves that satisfy the distortion and
curvature bounds:

\medskip\noindent{\bf Definition (\index{H-curves} H-curves)}. A weakly homogeneous
\index{u-curves (unstable curves)} u-curve $W_0$ is said to be {\em
homogeneous} (or an {\em H-curve}, for brevity) if it satisfies the
above distortion \index{Distortion bounds} bound and curvature
bound.
\medskip

We note that, in the notation of Proposition~\ref{prdist},
$$
  |W_{n}(x_{n},x_{n}')| /|W_{n}|^{2/3}\leq |W_n|^{1/3}\leq\,\Const,
$$
hence the distortions of \index{H-curves} H-curves under the maps
$\cF^n$, $n\geq 1$, are uniformly bounded, in particular, for some
constant $\tbeta>0$
\beq
       e^{-\tbeta}\,\frac{|W_{0}(x_{0},x_{0}')|}{|\cF^{-n}(W_{n})|}
      \leq \frac{|W_{n}(x_{n},x_{n}')|}{|W_{n}|}
      \leq e^{\tbeta}\,
      \frac{|W_{0}(x_{0},x_{0}')|}{|\cF^{-n}(W_{n})|}.
         \label{distrough}
\eeq
Moreover, Proposition~\ref{prcurv} implies
\beq
      \measuredangle(x,x')_W
      \leq e^{\tbeta} \qquad \forall x,x'\in W.
         \label{curvstrong}
\eeq

Now consider an \index{H-curves} H-curve $W_0$ such that
$W_i=\cF^i(W_0)$ is an H-curve for every $i=1,\dots,n$. Let $\mes_0$
be an absolutely continuous measure on $W_0$ with some density
$\rho_0$ with respect to the measure induced by the $|\cdot|$-norm.
Then $\mes_i=\cF^i(\mes_0)$ is a measure on the curve $W_i$ with
some density $\rho_i$ for each $i=1,\dots,n$. As an immediate
consequence of Proposition~\ref{prdist} and (\ref{distrough}), we
have

\begin{corollary}[Density bounds]
Under the above assumptions, if the following bound holds for
$i=0$ with some $C_0=c>0$, it holds for all $i=1,\dots,n$ with
some $C_i = C>c$ (independent of $i$ and $n$)
\beq
    \left | \ln \frac{\rho_i(x_i)}
     {\rho_i(x_i')} \right |
     \leq C_i\,\frac{|W_{i}(x_{i},x_{i}')|}{|W_{i}|^{2/3}}
        \label{densbound}
\eeq
\label{crrho}
\end{corollary}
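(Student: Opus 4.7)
The plan is to pass from densities to Jacobians via the change-of-variables formula and then invoke the distortion bound already established in Proposition~\ref{prdist}. Since $\cF^i$ is smooth on $W_0$ (no singularities or homogeneity cuts cross it, as each $W_i$ is a single H-curve), it is a diffeomorphism onto $W_i$, and the pushforward formula gives
\beq
  \rho_i(x_i) = \rho_0(x_0)\,/\,\cJ_{W_0}\cF^i(x_0), \qquad x_i = \cF^i(x_0).
     \label{rhoJac}
\eeq
Applying \eqref{rhoJac} at both $x_0$ and $x_0'$ and taking logarithms yields
\beq
  \ln\frac{\rho_i(x_i)}{\rho_i(x_i')}
  = \ln\frac{\rho_0(x_0)}{\rho_0(x_0')}
  - \ln\frac{\cJ_{W_0}\cF^i(x_0)}{\cJ_{W_0}\cF^i(x_0')},
\eeq
so by the triangle inequality the problem reduces to bounding each term individually in units of $|W_i(x_i,x_i')|/|W_i|^{2/3}$.

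The second term is handled directly by Proposition~\ref{prdist}, which bounds it by $C\,|W_i(x_i,x_i')|/|W_i|^{2/3}$ with a constant $C$ independent of $i$ and $n$ (this is where the fact that every $W_j$, $0 \le j \le i$, is an H-curve is used). For the first term, the hypothesis at $i=0$ gives a bound of $c\,|W_0(x_0,x_0')|/|W_0|^{2/3}$, and I only need to convert $W_0$-lengths to $W_i$-lengths. By \eqref{distrough},
$$
  \frac{|W_0(x_0,x_0')|}{|W_0|}
  \leq e^{\tbeta}\,\frac{|W_i(x_i,x_i')|}{|W_i|},
$$
and by the uniform expansion \eqref{JJtheta} of the map $\cF$ on H-curves, $|W_0| \leq \vartheta^{i}|W_i| \leq |W_i|$. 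Multiplying these,
$$
  \frac{|W_0(x_0,x_0')|}{|W_0|^{2/3}}
  \leq e^{\tbeta}\left(\frac{|W_0|}{|W_i|}\right)^{1/3}
       \frac{|W_i(x_i,x_i')|}{|W_i|^{2/3}}
  \leq e^{\tbeta}\,\frac{|W_i(x_i,x_i')|}{|W_i|^{2/3}}.
$$

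Combining the two bounds gives
$$
  \left|\ln\frac{\rho_i(x_i)}{\rho_i(x_i')}\right|
  \leq (c\,e^{\tbeta}+C)\,\frac{|W_i(x_i,x_i')|}{|W_i|^{2/3}},
$$
which is exactly the desired inequality with $C_i := c e^{\tbeta}+C$, uniform in $i$ and $n$. By choosing the constant $C>0$ in Proposition~\ref{prdist} sufficiently large at the outset (as is done in the paragraph following the statement of Proposition~\ref{prcurv}), one can absorb the initial constant $c$ and state the result with a single constant. There is no real obstacle here: the only point that requires care is verifying that the exponent $2/3$ plays well with the length-conversion, and this is precisely what the one-third power of $|W_0|/|W_i|\leq 1$ takes care of, so the whole argument is essentially a one-line consequence of \eqref{rhoJac} together with Proposition~\ref{prdist} and \eqref{distrough}.
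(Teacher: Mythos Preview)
Your proof is correct and follows exactly the route the paper indicates: the corollary is stated there as ``an immediate consequence of Proposition~\ref{prdist} and (\ref{distrough}),'' and you have simply spelled out that immediate consequence via the change-of-variables identity \eqref{rhoJac}, the triangle inequality, and the length-conversion using expansion and the rough distortion bound. The only thing to add is that Proposition~\ref{prdist} as stated gives the bound for $\cF^n$, but since each $W_j$ for $0\le j\le i$ is assumed to be an H-curve, the proposition applies verbatim with $n$ replaced by any $i\le n$, which is what you use.
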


Observe that if the density bound holds for $i=0$ on the curve
$W_0$, then it holds on any part of it (because $|W_0|$ decreases if
we reduce the size of the curves, so the bound (\ref{densbound})
remains valid). Therefore, if $W_0$ is an arbitrary \index{H-curves}
H-curve with a density $\rho_0$, then the map $\cF^n$, $n\geq 1$,
induces densities on the H-components of $\cF^n(W_0)$
\index{H-components} that satisfy the above density bound. Hence we
can restrict our studies to densities satisfying (\ref{densbound}):

\medskip\noindent{\bf Definition}. Given an \index{H-curves} H-curve $W_0$,
we say that $\rho_0$ is a {\em homogeneous density} if it
satisfies (\ref{densbound}).
\medskip

Note that (\ref{densbound}) remains valid whether we normalize the
corresponding densities or not. Also, because $|W(x,x')| /
|W|^{2/3} < |W|^{1/3} < \Const$, we have a uniform bound
\beq
      e^{-\tbeta}\leq\frac{\rho(x)}{\rho(x')}
      \leq  e^{\tbeta}\qquad \forall x,x'\in W,
          \label{rho1rho2}
\eeq
where $\tbeta=C_0\max_W |W|^{1/3}$.

We will require \index{H-curves} H-curves to have length shorter
than a small constant $\tdelta>0$ (to achieve this, large H-curves
can be always partitioned into H-curves of length between
$\tdelta/2$ and $\tdelta$), so that $\tbeta$ in (\ref{distrough}),
(\ref{rho1rho2}) and (\ref{curvstrong}) is small enough. This will
make our H-curves almost straight lines, the map $\cF$ on them will
be almost linear, and homogeneous densities will be almost constant.

\subsection{Standard pairs}
\label{subsecSP} Now we formally define standard pairs mentioned
earlier in Section~\ref{subsecMS}: \index{Standard pair}

\medskip\noindent{\bf Definition (Standard pairs)}.
A standard pair $\ell=(\gamma,\rho)$ is an \index{H-curves} H-curve
$\gamma\subset\Omega$ with a homogeneous probability density $\rho$
on it. We denote by $\mes=\mes_{\ell}$ the measure on $\gamma$ with
density $\rho$.
\medskip

Our previous results imply the following invariance of the class
of standard pairs:

\begin{proposition}
Let $\ell=(\gamma, \rho)$ be a standard pair. \index{Standard pair}
Then for each $n \geq 0$, we have $\cF^n(\gamma) = \cup_i
\gamma_{i,n}$ and $\cF^n(\mes_\ell) = \sum_i c_i \mes_{\ell_{i,n}}$
where $\sum_i c_i = 1$ and $\ell_{i,n} = (\gamma_{i,n}, \rho_{i,n})$
are standard pairs. The curves $\gamma_{i,n}$ are the H-components
\index{H-components} of $\cF^n(\gamma)$. Furthermore, any subcurve
$\gamma' \subset \gamma_{i,n}$ with the density $\rho_{i,n}$
restricted to it, is a standard pair. \label{prspair}
\end{proposition}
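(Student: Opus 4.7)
My plan is to prove the proposition by induction on $n$, reducing the general case to a single application of $\cF$ and then invoking the distortion, curvature and density bounds established in Propositions~\ref{prdist}, \ref{prcurv} and Corollary~\ref{crrho} to verify that the pieces produced are standard pairs. The case $n=0$ is trivial; in the inductive step I must show that if $\ell=(\gamma,\rho)$ is a standard pair, then $\cF(\gamma)$ decomposes into H-components $\gamma_{i,1}$ carrying induced probability densities $\rho_{i,1}$ such that each $(\gamma_{i,1},\rho_{i,1})$ is a standard pair and $\cF(\mes_\ell)=\sum_i c_i\mes_{\ell_{i,1}}$ with $\sum_i c_i=1$.

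For the $n=1$ step I would first invoke the H-component construction from Section~\ref{subsecSHUC}: cutting $\gamma$ along the singularity set $\cS_1$ of $\cF$ and along the preimages of the hyperplanes $\{\varphi=\pm(\pi/2-k^{-2})\}$ that separate the homogeneity sections $\bbH_k$ partitions $\cF(\gamma)$ into maximal weakly homogeneous u-curves $\gamma_{i,1}$. Set $\gamma'_i=\cF^{-1}(\gamma_{i,1})\subset\gamma$. Since restricting to a subcurve preserves the distortion, curvature and density bounds (the observation made immediately after Propositions~\ref{prdist}, \ref{prcurv} and Corollary~\ref{crrho}: shrinking $W_0$ only decreases both sides of each inequality in a compatible way), each $\gamma'_i$ inherits the H-curve property and the homogeneous density from $\gamma$. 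Applying those propositions with $n=1$ to $W_0=\gamma'_i$, the image $\gamma_{i,1}=\cF(\gamma'_i)$ satisfies the distortion and curvature bounds with the uniform constant $C$, hence is an H-curve. Defining $c_i=\mes_\ell(\gamma'_i)$ and $\rho_{i,1}$ to be the pushforward density of $\mes_\ell|_{\gamma'_i}$ normalized by $c_i$, Corollary~\ref{crrho} delivers the density bound for $\rho_{i,1}$, so $\ell_{i,1}$ is a standard pair; clearly $\sum_i c_i=\mes_\ell(\gamma)=1$ and $\cF(\mes_\ell)=\sum_i c_i\mes_{\ell_{i,1}}$.

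For arbitrary $n\geq 1$, I would simply iterate: by the inductive definition of H-components, the H-components of $\cF^n(\gamma)$ are the H-components of $\bigcup_j \cF(\gamma_{j,n-1})$, and applying the $n=1$ case to each standard pair $\ell_{j,n-1}$ given by the induction hypothesis yields the required decomposition of $\cF^n(\mes_\ell)$ as a convex combination of standard measures $\mes_{\ell_{i,n}}$. The final claim about subcurves is immediate from the same subcurve-stability observation used above: weak homogeneity, the distortion bound, the curvature bound and the density bound all survive restriction to $\gamma'\subset\gamma_{i,n}$ with the same constants.

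The main, and really only, point of care is the bookkeeping of constants. I would need to verify that the constant $c=C_0$ in the definition of an H-curve is fixed large enough, relative to the uniform $C$ delivered by Propositions~\ref{prdist}, \ref{prcurv} and Corollary~\ref{crrho}, to guarantee $C\leq c$ so that the bounds on $\gamma_{i,1}$ are again of the form required for an H-curve; this closes the induction. The excerpt explicitly arranges this just after Proposition~\ref{prcurv} (\emph{``we fix a sufficiently large $c>0$ and the corresponding (perhaps even larger) $C>0$''}), so no further mechanism is needed: the proposition is essentially a summary of the invariance properties already encoded in the preceding three statements.
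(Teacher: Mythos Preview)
Your proposal is correct and matches the paper's approach: the paper states this proposition as an immediate consequence of the preceding results (``Our previous results imply the following invariance of the class of standard pairs'') without a separate proof, and you have spelled out exactly the intended argument---induction on $n$, stability of the distortion/curvature/density bounds under restriction to subcurves, and the invariance built into Propositions~\ref{prdist}, \ref{prcurv} and Corollary~\ref{crrho}. Your closing remark that the proposition is ``essentially a summary of the invariance properties already encoded in the preceding three statements'' is precisely how the paper treats it.
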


Recall that $\cF$ expands \index{H-curves} H-curves by a factor
$\geq\vartheta^{-1}>1$, which is a local property. It is also
important to show that \index{H-curves} H-curves grow in a global
sense, i.e.\ \index{H-components} given a small \index{H-curves}
H-curve $\gamma$, the sizes of the H-components of $\cF^n(\gamma)$
tend to grow exponentially in time until they become of order one,
on the average (we will make this statement precise). Such
statements are usually referred to as ``growth lemmas''
 \index{Growth lemma} \cite{BSC2,Y,C2,C3}, and we prove one below.

Let $\ell = (\gamma, \rho)$ be a standard pair and for $n\geq 1$ and
$x \in \gamma$, let $r_n(x)$ denote the distance from the point
$\cF^n (x)$ to the nearest endpoint of the H-component
\index{H-components} $\gamma_n(x) \subset \cF^n (\gamma)$ that
contains $\cF^n (x)$.

\begin{lemma}[``Growth lemma'']
\label{prgrow} \index{Growth lemma}
If $k_0$ in (\ref{bbH0}) is sufficiently large, then \\
{\rm (a)} There are constants $\beta_1 \in (0,1)$ and $\beta_2>0$,
such that for any $\varepsilon >0$
\beq
   \mes_\ell (x\colon\ r_n(x) < \varepsilon) \leq
   (\beta_1/\vartheta)^n\, \mes_\ell
   (x\colon r_0 < \varepsilon\vartheta^n)
   + \beta_2 \varepsilon
     \label{SHUgrown}
\eeq
{\rm (b)} There are constants $\beta_3,\beta_4>0$, such that if $n
\geq \beta_3\big| \ln |\gamma|\big |$, then for any
$\varepsilon>0$ we have $\mes_\ell (x\colon\ r_n(x) < \varepsilon)
\leq \beta_4 \varepsilon$.\\ {\rm (c)} There are constants
$\beta_5,\beta_6>0$, a small $\varepsilon_0 >0$, and $q\in (0,1)$
such that for any $n_2> n_1 > \beta_5\big| \ln| \gamma|\big |$ we
have
$$
   \mes_\ell \Bigl(x\colon\ \max_{n_1 < i < n_2}
   r_i(x) < \varepsilon_0 \Bigr) \leq \beta_6 q^{n_2-n_1}
$$
All these estimates are uniform in $\ell = (\gamma, \rho)$.
\end{lemma}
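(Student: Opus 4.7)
The plan is to reduce all three parts to a single \emph{one-step expansion estimate} and then iterate it. The key ingredient, which is also the main obstacle, is to show that for any \index{H-curves} H-curve $W \subset \Omega$, the H-components $\{\gamma_\alpha\}$ of $\cF(W)$ satisfy
\beq
   \sum_\alpha \frac{1}{\Lambda_\alpha} \leq q_0 < 1,
   \label{oneStepExp}
\eeq
where $\Lambda_\alpha = \inf \cJ_{\cF^{-1}(\gamma_\alpha)} \cF$. By Proposition~\ref{prJexp}, $\cJ_W \cF \geq 1 + C^{-1}/\cos\varphi$, and points inside the homogeneity \index{Homogeneity sections} section $\bbH_{\pm k}$ have $\cos\varphi \leq 2k^{-2}$, so the contribution of the pieces landing in $\bbH_{\pm k}$ is of order $k^{-2}$. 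Summed over $|k|\geq k_0$, this is bounded by $\Const\cdot k_0^{-1}$, which is made as small as we wish by enlarging $k_0$, while the bulk piece in $\bbH_0$ is controlled by $\vartheta$ via (\ref{thetamin}) and the finite-horizon \index{Finite horizon} bound on the number of reflections per iterate. The curvature and distortion bounds (Propositions~\ref{prdist} and \ref{prcurv}) ensure that $W$ is nearly straight, so the intersection of $\cF(W)$ with each strip produces a bounded number of components, making (\ref{oneStepExp}) uniform in $W$ and in $M$.

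Granted (\ref{oneStepExp}), set $Z_n(\varepsilon) = \mes_\ell(r_n < \varepsilon)$. A point with $r_n(x) < \varepsilon$ either has $\cF^{-1}$-image within $\vartheta\varepsilon$ of the boundary of its H-component at time $n-1$, or is within $\varepsilon$ of a \emph{new} endpoint created at step $n$. Using Corollary~\ref{crrho} (density bounds) to translate lengths into $\mes_\ell$-masses and (\ref{oneStepExp}) to sum the fresh-cut contributions, one obtains
\beq
   Z_n(\varepsilon) \leq \beta_1 \, Z_{n-1}(\vartheta^{-1}\varepsilon) + b\,\varepsilon
   \label{recZ}
\eeq
for some $\beta_1 \in (0,1)$ (essentially $q_0$) and $b>0$ independent of $\ell$ and $n$. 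Iterating (\ref{recZ}) and geometric-summing the fresh-cut terms yields part~(a). Part~(b) follows by noting that, because $\rho$ is a homogeneous density on $\gamma$ (hence comparable to $|\gamma|^{-1}\mathbf{1}_\gamma$ by (\ref{rho1rho2})), we have $\mes_\ell(r_0 < \delta) \leq \Const\,\delta/|\gamma|$ for $\delta \leq |\gamma|$. Plugging this into (\ref{SHUgrown}) bounds the first term by $\Const\cdot \beta_1^n\, \varepsilon/|\gamma|$, and choosing $\beta_3$ so that $\beta_1^{\beta_3\,|\ln|\gamma||} \leq |\gamma|$ absorbs it into $\beta_4\varepsilon$.

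Part (c) is a renewal-style argument on the standard-pair decomposition of Proposition~\ref{prspair}. By part (b), once $i \geq \beta_5\,|\ln|\gamma||$ the measure of $\{r_i < \varepsilon_0\}$ is at most $\beta_4\varepsilon_0$, and the same bound applies to every \emph{conditional} standard pair supported on an H-component at time $i$. Fix $\varepsilon_0$ so small that $\beta_4\varepsilon_0 \leq q < 1$. Partition the time interval $(n_1,n_2)$ into blocks of a fixed length $n_0$ chosen so that after $n_0$ further iterates, the conditional measure of $\{r_{i+n_0} < \varepsilon_0\}$ restricted to any H-component of size $\geq$ some fixed $\delta_\star$ is again at most $q$. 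Iterating this across $\lfloor (n_2-n_1)/n_0\rfloor$ blocks gives the desired estimate with $q$ replaced by $q^{1/n_0}$. The main technical obstacle throughout is (\ref{oneStepExp}): one must uniformly (in $M$) handle cuts from grazing singularities together with those from the H-strip boundaries, using Proposition~\ref{prabc} to show that the $\cO(1/M)$ corrections from the two-particle coupling do not spoil the expansion bookkeeping of Proposition~\ref{prJexp}.
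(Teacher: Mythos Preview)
Your overall plan for (a) and (b) — one-step expansion estimate plus induction — is exactly the paper's approach. Two issues, one minor and one real.

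In part (a), the displayed recursion has $\vartheta^{-1}\varepsilon$ where your own text (correctly) says $\vartheta\varepsilon$; presumably a slip. More substantively, the ``old vs.\ new endpoint'' decomposition does not cleanly give $Z_n\leq \beta_1 Z_{n-1}+b\varepsilon$ with an \emph{absolute} constant $b$: the fresh-cut contribution at step $n$ is, on each H-component $\gamma_{j,n-1}$, of order $q_0\varepsilon/|\gamma_{j,n-1}|$, so summing gives $\sim q_0\varepsilon\,\EXP_\ell\bigl[|\gamma_{n-1}(x)|^{-1}\bigr]$ — precisely the quantity you are trying to control. The paper avoids this circularity by \emph{not} splitting old from new: it bounds $\mes_\ell(r_1<\varepsilon)\leq 2\varepsilon\,e^{\tbeta}\tilde\btheta_1/|\gamma|$ using the one-step expansion for \emph{all} endpoints at once, and then observes that this is $\leq(\beta_1/\vartheta)\,\mes_\ell(r_0<\vartheta\varepsilon)$, since the right side is $\sim 2\vartheta\varepsilon/|\gamma|$. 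This is why the multiplier is $\beta_1/\vartheta>1$, not $\beta_1<1$; the exact form of (\ref{SHUgrown}) reflects this.

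For part (c), your fixed-block-length renewal argument has a genuine gap. You invoke (b) ``restricted to any H-component of size $\geq\delta_\star$'', but on the event $\{\max_{n_1<i<n_2}r_i<\varepsilon_0\}$ the H-component at a checkpoint can be arbitrarily short, and then (b) requires a waiting time $\sim\beta_3\bigl|\ln|\gamma_i(x)|\bigr|$ that cannot be bounded by any fixed $n_0$. The paper deals with this by \emph{adaptive} block lengths: at each checkpoint it records how short the component is via an integer $k_i$ with $|\gamma_i(x)|\in[\varepsilon_0^{2k_i},\varepsilon_0^{2k_i-2})$, waits $k_i$ base-blocks (of length $s\sim 2\beta_3|\ln\varepsilon_0|$) before the next checkpoint, applies (b) to get a conditional bound $\beta_4\varepsilon_0^{k_i}$, and then sums over all admissible itineraries $(k_1,\ldots,k_m)$ with $\sum k_i=K$ using the binomial identity $\sum_m\binom{K-1}{m-1}\beta_4^m\leq(1+\beta_4)^K$. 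Choosing $\varepsilon_0<(1+\beta_4)^{-1}$ then gives exponential decay.
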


\proof The proof of (a) follows the lines of the arguments in
\cite{C2} and consists of three steps.

First, let $\gamma$ be an \index{H-curves} H-curve and
$\gamma_{i,1}$ all the \index{H-components} H-components of
$\cF(\gamma)$. For each $i$, denote by $\vartheta_{i,1}^{-1}$ the
minimal (local) factor of expansion of the curve
$\cF^{-1}(\gamma_{i,1})$ under the map $\cF$. We claim that
\beq
   \btheta_1 \colon = \lim_{\delta\to 0}\
   \sup_{\gamma\colon |\gamma|<\delta} \sum_i \vartheta_{i,1} <1
      \label{alpha01}
\eeq
(we call this a \emph{one-step expansion estimate} for the map
$\cF$).

To prove (\ref{alpha01}), we observe that a small \index{H-curves}
H-curve $\gamma$ may be cut into several pieces by the singularities
of $\cF$, which are made by grazing (tangential) collisions with the
scatterers \index{Scatterer} and the disk $\cP(Q)$. At each of them,
$\gamma$ is sliced into two parts -- one hits the scatterer (or the
disk) and gets reflected (almost tangentially) and the other misses
the collision (passes by). The reflecting part is further subdivided
into countably many H-components by the boundaries of the
\index{H-components} homogeneity \index{Homogeneity sections}
sections $\bbH_k$. Note that the reflecting part of $\gamma$ lies
entirely in $\cup_{k\geq k_0} \bbH_{\pm k}$ provided $|\gamma|$ is
small enough, which is guaranteed by taking $\liminf_{\delta\to 0}$.

Let $\gamma'$ be an H-component of $\cF(\gamma)$ falling into a
section $\bbH_{k}$ with some $|k| \geq k_0$. Since $\cos\varphi \sim
k^{-2}$ on $\gamma'$, the expansion factor of the preimage $\cF^{-1}
(\gamma')$ under the map $\cF$ is $\geq c k^2$ for some constant
$c>0$, due to Proposition~\ref{prJexp}. Thus all these
\index{H-components} H-components make a total contribution to
(\ref{alpha01}) less than $\sum_{k\geq k_0}
(ck^2)^{-1}\leq\Const/k_0$.

The part of $\gamma$ passing by without collision may be sliced
again by a grazing collision with another scatterer
\index{Scatterer} later on, thus creating another countable set of
reflecting H-components. This \index{H-components} can happen at
most $L_{\max}/L_{\min}$ times, where $L_{\min}$ is the minimal free
path of the light particle, guaranteed by (\ref{Upsilon}), and
$L_{\max}$ is the maximal free path of the light particle
($L_{\max}<\infty$ due to our finite horizon \index{Finite horizon}
assumption).

In the end, we will have $\leq L_{\max}/L_{\min}$ countable sets
of H-components resulting from almost grazing collisions and at
most one component of $\gamma$ that misses all the grazing
collisions and lands somewhere else on $\dcD \cup \dcP(Q)$. That
last component is only guaranteed to expand by a moderate factor
of $\vartheta^{-1}$. Thus, we arrive at
\beq  \label{bthetabound}
     \btheta_1 \leq \vartheta + \frac{L_{\max}}{L_{\min}}\,
     \frac{\Const}{k_0}
\eeq
Since $\vartheta<1$, the required condition $\btheta_1<1$ can be
ensured by choosing $k_0$ large enough. This completes the proof
of the one-step expansion estimate (\ref{alpha01}).

The second step in the proof of Lemma~\ref{prgrow} (a) is the
verification of (\ref{SHUgrown}) for $n=1$:
\beq
   \mes_\ell (x\colon\ r_1(x) < \varepsilon) \leq
   (\beta_1/\vartheta)\, \mes_\ell
   (x\colon r_0 < \varepsilon\vartheta)
   + \beta_2 \varepsilon.
     \label{SHUgrow1}
\eeq
We assume that $|\gamma| < \tdelta$, where $\tdelta$ is chosen so
that
$$
   \tilde{\btheta}_1 \colon =
   \sup_{\gamma\colon |\gamma|<\tdelta}
   \sum_i \vartheta_{i,1} <
   \bigl(1+\btheta_1 \bigr)/2 < 1.
$$
Now, for each H-component $\gamma_{i,1}$ of $\cF(\gamma)$, the set
\index{H-components} $\gamma_{i,1} \cap
\cF\bigl(\{r_1(x)<\varepsilon\}\bigr)$ is the union of two
subintervals of $\gamma_{i,1}$ of length $\varepsilon$ adjacent to
the endpoints of $\gamma_{i,1}$. Then the set
$\cF^{-1}(\gamma_{i,1}) \cap \{r_1(x)<\varepsilon\}$ is a subset of
the union of two subintervals of $\cF^{-1} (\gamma_{i,1})$ of length
$\vartheta_{i,1} \varepsilon$, therefore,
\begin{align}
   \mes_\ell(r_1(x)<\varepsilon)
     &\leq |\gamma|^{-1} e^{\tbeta}
     \sum_i 2 \varepsilon \vartheta_{i,1}
     \nonumber\\
     &\leq  2 \varepsilon |\gamma|^{-1}
     e^{\tbeta} \tilde{\btheta}_1
\end{align}
where the factor $e^{\tbeta}$ accounts for possible fluctuations
of the density $\rho(x)$ on $\gamma$, see (\ref{rho1rho2}). We can
make $\tbeta>0$ arbitrarily small by decreasing $\tdelta$, if
necessary, and guarantee that
$$
    \beta_1 \colon=e^{2\tbeta} \tilde{\btheta}_1 <1
    \qquad\text{and}\qquad
    \beta_1/\vartheta > 1
$$
(recall that the first bound is required by Lemma~\ref{prgrow};
the second one can be easily ensured because
$\vartheta<\tilde{\btheta}_1<1$). Now
the first term on the right hand side of (\ref{SHUgrow1}) is
bounded below by
\begin{align*}
   (\beta_1/\vartheta)\, \mes_\ell
   (x\colon r_0 < \varepsilon\vartheta)
   &\geq
   (\beta_1/\vartheta)\, \min\{1,2\varepsilon
   \vartheta |\gamma|^{-1} e^{-\tbeta} \} \\
   &=
   \min\{\beta_1/\vartheta,2
  \varepsilon |\gamma|^{-1} e^{\tbeta} \tilde{\btheta}_1 \}
\end{align*}
Since $\beta_1/\vartheta>1$, we obtain
\beq
   \mes_\ell (x\colon\ r_1(x) < \varepsilon) \leq
   (\beta_1/\vartheta)\, \mes_\ell
   (x\colon r_0 < \varepsilon\vartheta).
     \label{SHUgrow1a}
\eeq
This bound appears even better than (\ref{SHUgrow1}), but remember
it is only proved under the assumption $|\gamma| < \tdelta$. To make
this assumption valid, we require all our \index{H-curves} H-curves
to have length shorter than $\tdelta$, as already mentioned in the
end of the previous section. Accordingly, we have to partition the
\index{H-components} H-components of $\cF(\gamma)$ into pieces that
are shorter than $\tdelta$; this will enlarge the set $\{r_1(x) <
\varepsilon\}$ and result in the additional term
$\beta_2\varepsilon$ in (\ref{SHUgrow1}).

More precisely, let us divide each H-component $\gamma_{i,1}$ of
$\cF(\gamma)$ with length $> \tdelta$ into $k_i$ equal
subintervals of length between $\tdelta/2$ and $\tdelta$, with
$k_i \leq 2|\gamma_{i,1}| / \tdelta$. If $|\gamma_{i,1}| \leq
\tdelta$, then we set $k_i =0$ and leave $\gamma_{i,1}$ unchanged.
Then the union of the preimages of the $\varepsilon$-neighborhoods
of the new partition points has measure bounded above by
$$
   \leq 3\varepsilon|\gamma|^{-1} \sum_i k_i \vartheta_{i,1}
   \leq 6\varepsilon \tdelta^{-1} |\gamma|^{-1}
   \sum_i |\gamma_{i,1}| \vartheta_{i,1}
   \leq 7\varepsilon \tdelta^{-1},
$$
where we increased the numerical coefficient from 6 to 7 in order to
incorporate the factor $e^{\tbeta}$ resulting from the distortion
\index{Distortion bounds} bounds (\ref{distrough}). This completes
the proof of (\ref{SHUgrow1}) with $\beta_2 = 7\tdelta^{-1}$.

Lastly, the proof of (\ref{SHUgrown}) for $n>1$ goes by induction
on $n$. Assume that
\begin{align*}
   \mes_\ell \bigl(x\colon\ r_n(x) < \varepsilon\bigr) &\leq
   (\beta_1/\vartheta)^n\, \mes_\ell
   \bigl(x\colon r_0(x) < \varepsilon\vartheta^n\bigr) \\
   &\quad + 7\tdelta^{-1}\bigl(1+\beta_1+\cdots +\beta_1^{n-1}\bigr)
   \, \varepsilon .
\end{align*}
Then we apply (\ref{SHUgrow1}) with $\beta_2 = 7\tdelta^{-1}$ to
\index{H-components} each H-component of $\cF^n (\gamma)$ and obtain
\begin{align*}
   \mes_\ell \bigl(x\colon\ r_{n+1}(x) < \varepsilon\bigr) &\leq
   (\beta_1/\vartheta)\, \mes_\ell
   \bigl(x\colon r_n(x) < \varepsilon\vartheta\bigr)
   + 7\tdelta^{-1}\varepsilon \\
   &\leq (\beta_1/\vartheta)^{n+1}\, \mes_\ell
   \bigl(x\colon r_0(x) < \varepsilon\vartheta^{n+1}\bigr)\\
   &\quad + 7\tdelta^{-1}\bigl(1+\beta_1+\cdots +\beta_1^{n}\bigr)
   \, \varepsilon,
\end{align*}
which completes the induction step. Thus we get (\ref{SHUgrown})
for all $n\geq 1$ with $\beta_2 = 7\tdelta^{-1} / (1-\beta_1)$.

Part (b) of Lemma~\ref{prgrow} directly follows from (a). Indeed,
it suffices to set $\beta_3= 1/\min\{|\ln\beta_1|
,|\ln\vartheta|\}$, so that $\vartheta^n<|\gamma|$ and
$\beta_1^n<|\gamma|$, and notice that $\mes_\ell (x\colon r_0 <
\varepsilon\vartheta^n)<2 e^{\tbeta}\varepsilon\vartheta^n/
|\gamma|$ due to (\ref{rho1rho2}).

The proof of (c) requires a tedious bookkeeping of various short
H-components of the images of $\gamma$. Pick \index{H-components}
$\varepsilon_0<(1+\beta_4)^{-1}$ and divide the time interval
$[n_1,n_2]$ into segments of length $s\colon= [2\beta_3
|\ln\varepsilon_0|]$. We will estimate the measure of the set
$$
    \tilde{\gamma}=\Bigl\{x\in\gamma\colon \max_{1\leq i\leq K}
    r_{n_1+si}(x) <\varepsilon_0\Bigr\}
$$
where $K=(n_2-n_1)/s$. For each $x\in\tilde{\gamma}$ define a
sequence of natural numbers $S(x)=\{k_0,k_1,\dots,k_m\}$, with
$m=m(x)\leq K$, inductively. Set $k_0=1$ and given $k_0,\dots,k_i$
we put $t_i=k_0+\dots+k_i$ and consider the H-component
\index{H-components} $\gamma_i(x)$ of $\cF^{n_1+st_i}(\gamma)$ that
contains $\cF^{n_1+st_i}(x)$. We set $k_{i+1}=k$ if
$|\gamma_i(x)|\in [\varepsilon_0^{2k},\varepsilon_0^{2k-2})$. If it
happens that $t_i+k_{i+1}>K$, we reset $k_{i+1}=K-t_i+1$ and put
$m(x)=i+1$. Note that now $k_1+\dots+k_m=K$.

Next pick a sequence $S=\{k_0=1,k_1,\dots,k_m\}$ of natural
numbers such that $k_1+\dots +k_m=K$ and let $\tilde{\gamma}_S
=\{x\in\tilde{\gamma}\colon S(x)=S\}$. We claim that
\beq
   \mes_\ell (\tilde{\gamma}_S)\leq \beta_4^m\varepsilon_0^K
     \label{claimK}
\eeq
First, by part (b)
\beq
   \mes_\ell \bigl(k_1(x)=k\bigr)
   \leq \beta_4\varepsilon_0^k,\qquad k\geq 1
     \label{mesell1}
\eeq
(for $k\geq 2$ we actually have a better estimate $\mes_\ell
(k_1(x)=k) \leq \beta_4 \varepsilon_0^{2k-2}$). Then, inductively,
for each $i=0,\dots,m-2$ we use our previous notation $t_i$ and
$\gamma_i(x)$ and put $\tilde{\gamma}_i(x)=\gamma_i(x)$ if
$|\gamma_i(x)|<2\varepsilon_0$, otherwise we denote by
$\tilde{\gamma}_i(x) \subset \gamma_i(x)$ the
$\varepsilon_0$-neighborhood of an endpoint of $\gamma_i(x)$ that
contains the point $\cF^{n_1+st_i}(x)$. By
Proposition~\ref{prspair}, the curve $\tilde{\gamma}_i(x)$, with the
corresponding conditional measure on it (induced by \index{Standard
pair} $\cF^{n_1+st_i}(\mes_{\ell})$), makes a standard pair, call it
$\ell_i(x)$. Then again by part (b)
\beq
   \mes_{\ell_i(x)}\bigl(k_{i+2}(x)=k\bigr)
   \leq\beta_4\varepsilon_0^k,\qquad k\geq 1
     \label{meselli}
\eeq
(because $k_{i+2}(x)=k$ implies $|\gamma_{i+1}(x)| <
\varepsilon_0^{2k-2}$, which is enough for $k \geq 2$, and for
$k=1$ we have $|\tilde{\gamma}_{i+1}(x)|<\varepsilon_0$).
Multiplying (\ref{mesell1}) and (\ref{meselli}) for all
$i=0,\dots,m-2$ proves (\ref{claimK}).

Now, adding (\ref{claimK}) over all possible sequences
$S=\{1,k_1,\dots,k_m\}$ gives
$$
   \mes_\ell (\tilde{\gamma})\leq \sum_{m=1}^K
   \binom{K-1}{m-1}\beta_4^m\varepsilon_0^K
        \leq (1+\beta_4)^K\varepsilon_0^K
$$
where $\binom{K-1}{m-1}$ denotes the binomial coefficients coming
from counting the number of respective sequences $\{S\}$. This
completes the proof of Lemma~\ref{prgrow}. \qed\medskip

\noindent{\em Proof of Proposition~\ref{PrDistEq1}} is now
obtained by combining Proposition~\ref{prspair} with
Lemma~\ref{prgrow}. \qed
\medskip

We conclude this subsection with a few remarks. Let
$\gamma=\cup_{\alpha} \gamma_{\alpha}\subset\Omega$ be a finite or
countable union of disjoint \index{H-curves} H-curves with some
smooth probability measure $\mes_{\gamma}$ on it, whose density of
each $\gamma_{\alpha}$ is homogeneous. For every $\alpha$ and
$x\in\gamma_{\alpha}$ and $n\geq 0$ denote by $r_n(x)$ the distance
from the point $\cF^n(x)$ to the nearest endpoint of the
\index{H-components} H-component of $\cF^n (\gamma_{\alpha})$ to
which the point $\cF^n(x)$ belongs. The following is an easy
consequence of Lemma~\ref{prgrow} (a) obtained by averaging over
$\alpha$:
$$
   \mes_{\gamma}\bigl(x\colon\ r_n(x) < \varepsilon\bigr) \leq
   (\beta_1/\vartheta)^n\,
   \mes_{\gamma}\bigl(x\colon r_0 < \varepsilon\vartheta^n\bigr)
   + \beta_2 \varepsilon.
$$
Also, there is a constant $\beta_7>0$ such that if $\mes_{\gamma}
(x\colon\ r_0(x) < \varepsilon) \leq \beta_7 \varepsilon$ for any
$\varepsilon>0$, then $\mes_{\gamma} (x\colon\ r_n(x) <
\varepsilon ) \leq \beta_7 \varepsilon$ for all $\varepsilon>0$
and $n\geq 1$ (it is enough to set $\beta_7 = \beta_2 /
(1-\beta_1e^{\tbeta})$).

In addition, suppose for each $\alpha$ we fix a subcurve
$\gamma_{\alpha}' \subset \gamma_{\alpha}$. Put $\gamma' =
\cup_{\alpha} \gamma_{\alpha}'$ and denote by $\mes_{\gamma'}$ the
measure $\mes_{\gamma}$ conditioned on $\gamma'$. For every $\alpha$
and $x\in\gamma_{\alpha}'$ and $n\geq 0$ denote by $r_n'(x)$ the
distance from the point $\cF^n(x)$ to the nearest endpoint of the
H-component of $\cF^n (\gamma_{\alpha}')$ to which
\index{H-components} the point $\cF^n(x)$ belongs. Distortion \index{Distortion bounds} bounds
(\ref{distrough}) then imply
$$
    \mes_{\gamma'}\bigl(x\colon\ r_n'(x) <\varepsilon\bigr)
    \leq e^{\tbeta}\,\bigl[\mes_{\gamma}(\gamma')\bigr]^{-1}\,
    \mes_{\gamma}\bigl(x\colon\ r_n(x) <\varepsilon\bigr)
%      \label{gamma'}
$$
Lastly, we note that taking the limit $M \to \infty$ automatically
extends all our results to the billiard map $\cF_Q \colon \Omega_Q
\to \Omega_Q$ for any $Q$ satisfying (\ref{Upsilon}).

\subsection{Perturbative analysis}
\label{subsecPA}

Recall that the billiard-type map
$\cF_{Q,V}\colon\Omega_{Q,V}\to\Omega_{Q,V}$ is essentially
independent of $V$ and can be identified with
$\cF_Q\colon\Omega_Q\to\Omega_Q$ via $\pi_{Q,0}\circ\cF_{Q,V}
=\cF_Q\circ\pi_{Q,0}$. Furthermore, the spaces $\Omega_{Q}$ are
identified with the $r,\varphi$ coordinate space $\Omega_0$ by the
projection $\pi_0$. This gives us a family of billiard maps $\cF_Q$
acting on the same space $\Omega_0$. They preserve the same billiard
measure $d\mu_0=c^{-1}\cos\varphi\, dr\, d\varphi$, where
$c=2\,\length(\dcD)+4\pi\br$ denotes the normalizing factor.

We recall a few standard facts from billiard theory
\cite{BSC1,BSC2,C2,Y}. In the $r,\varphi$ coordinates,
\index{u-curves (unstable curves)} u-curves are increasing and
s-curves are decreasing, see (\ref{CCu}) and (\ref{CCs}), and they
are uniformly transversal to each other. For every $Q$ and integer
$m$, the map $\cF_Q^m$ is discontinuous on finite union of curves in
$\Omega_0$, which are stable for $m>0$ and unstable for $m<0$. The
discontinuity curves of $\cF_Q^m$ stretch continuously across
$\Omega_0$ between the two borders of $\Omega_0$, i.e.\ from
$\varphi=-\pi/2$ to $\varphi=\pi/2$ (they intersect each other, of
course).

We use the $|\cdot|$-norm, see Section~\ref{subsecSUC}, to measure
the lengths of stable and unstable curves. For a u-curve
$W\subset\Omega_0$ and a point $x\in\Omega_0$ we define dist$(x,W)$
to be the minimal length of s-curves connecting $x$ with $W$, and
vice versa (if there is no such connecting curve, we set
dist$(x,W)=\infty$). We define the ``Hausdorff distance'' between
two \index{u-curves (unstable curves)} u-curves
$W_1,W_2\subset\Omega_0$ to be
$$
     \dist(W_1,W_2)=\max\Bigl\{\sup_{x\in W_1}\dist(x,W_2),
     \sup_{y\in W_2}\dist(y,W_1)\Bigr\}
$$
(and similarly for s-curves). Let $W\subset\Omega_0$ be a stable or
unstable curve with endpoints $x_1$ and $x_2$, and
$\varepsilon<|W|/2$. For any two points $y_1,y_2\in W$ such that
$|W(x_i,y_i)|<\varepsilon$ for $i=1,2$, we call the middle segment
$W(y_1,y_2)$ an $\varepsilon$-{\em reduction} of $W$.

Next we show that, in a certain crude sense, the map $\cF_Q$
depends Lipschitz continuously on $Q$. Let $Q,Q'$ satisfy
(\ref{Upsilon}) and $\varepsilon =\|Q - Q'\|$. The following lemma
is a simple geometric observation:

\begin{lemma}
There are constants $c_2>c_1>1$ such that
\begin{itemize} \item[\rm (a)] The discontinuity curves of the map
$\cF_{Q'}$ in $\Omega_0$ are within the $c_1 \varepsilon$-distance
of those of the map $\cF_Q$. \item[\rm (b)] Let $W \subset\Omega_0$
be a \index{u-curves (unstable curves)} u-curve of length
$>2c_2\varepsilon$ such that $\cF_Q$ and $\cF_{Q'}$ are smooth on
$W$. Then there are two $c_2\varepsilon$-reductions of this curve,
$\tW$ and $\tW'$, such that ${\rm dist}(\cF_Q (\tW),\cF_{Q'}
(\tW'))<c_1\varepsilon$.
\end{itemize}
\label{lmclose0}
\end{lemma}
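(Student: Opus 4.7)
The plan is to treat $\cF_{Q'}$ as an $\varepsilon$-perturbation of $\cF_Q$ on the common coordinate chart $\Omega_0$, where $\varepsilon=\|Q-Q'\|$. Since $\dcD$ is common and $\dcP(Q')=\dcP(Q)+(Q'-Q)$ is a rigid translate by a vector of length $\varepsilon$, the only $Q$-dependence in the dynamics comes from the moving disk: free flights and reflections off $\dcD$ are $Q$-independent. By the finite horizon assumption each flight has length at most $L_{\max}$, and by (\ref{Upsilon}) at least $L_{\min}$, so pointwise perturbation estimates propagate with multiplicative constants controlled by these quantities and by $\br$.

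For (a), the discontinuity set of $\cF_Q$ in $\Omega_0$ is the preimage, under one step of the flow-and-reflect dynamics, of the grazing set $\{\varphi=\pm\pi/2\}$; equivalently, $(r,\varphi)\in\Omega_0$ is singular iff the free trajectory launched from it is tangent to some piece of $\dcD\cup\dcP(Q)$. The tangencies with $\dcD$ are $Q$-independent, while the tangencies with $\dcP(Q)$ form a finite union of smooth arcs whose defining equations depend smoothly on $Q$: translating the disk by a vector of length $\varepsilon$ shifts each of these arcs in the $(r,\varphi)$-plane by $O(\varepsilon)$, with constant controlled by $L_{\max}$, $\br$, and the bounds of Proposition~\ref{prdrdp}. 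This yields (a) for a suitable $c_1$.

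For (b), fix $x\in W$ with coordinates $(r,\varphi)$. If the free flight from $x$ hits $\dcD$ without crossing the thin sliver $\cP(Q)\triangle\cP(Q')$, then $\cF_Q(x)=\cF_{Q'}(x)$ exactly. If it hits a disk, both maps send $x$ to a collision with their respective disks, and because we stay away from grazing (both maps are smooth on $W$, so $\la w,n\ra$ is bounded below) the standard smoothness of reflection off a convex obstacle gives that $\cF_Q(x)$ and $\cF_{Q'}(x)$ differ by $O(\varepsilon)$ in the $(r,\varphi)$-metric. The troublesome points are those whose trajectories straddle $\cP(Q)\triangle\cP(Q')$, which by part (a) lie within $O(\varepsilon)$ of a singularity curve of one of the two maps. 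Since $W$ is a u-curve, uniformly transversal to stable and discontinuity curves by (\ref{CCu})--(\ref{CCs}), each such bad region meets $W$ in a subsegment of length $O(\varepsilon)$; and because $\cF_Q$, $\cF_{Q'}$ are each smooth on all of $W$, no bad subsegment for a given map can sit in the interior of $W$ without producing an actual interior discontinuity. Thus every bad segment is adjacent to an endpoint, and trimming a piece of length $c_2\varepsilon$ from each end -- separately for each map, producing $\tW$ for $\cF_Q$ and $\tW'$ for $\cF_{Q'}$ -- makes the pointwise $O(\varepsilon)$ bound valid everywhere, which gives the claimed Hausdorff estimate.

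The main obstacle is the geometric bookkeeping in (b): the singularity curves of $\cF_Q$ and $\cF_{Q'}$ are $O(\varepsilon)$-close but do not coincide near $W$, so the two maps genuinely require \emph{different} endpoint trims -- which is exactly why the statement allows $\tW\neq\tW'$. Verifying that one universal $c_2$ works for every admissible $W$ requires quantitative transversality of u-curves to the nearly coincident stable and singularity foliations of both maps, uniform in $Q$ and $Q'$; this transversality is provided by (\ref{CCu}) and (\ref{CCs}) together with Proposition~\ref{prdrdp}, but needs some care when $W$ crosses the $O(\varepsilon)$-gap between corresponding singularity curves of $\cF_Q$ and $\cF_{Q'}$.
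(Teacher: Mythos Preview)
The paper gives no proof, labeling this a ``simple geometric observation''. Your argument for (a) is fine. Your argument for (b), however, has a real gap: you rely on the pointwise estimate $|\cF_Q(x)-\cF_{Q'}(x)|=O(\varepsilon)$, justified by ``both maps are smooth on $W$, so $\la w,n\ra$ is bounded below''. Smoothness on a fixed $W$ only gives $\cos\varphi_1>0$ on $\cF_Q(W)$; it gives no lower bound that is uniform in $W$. In fact, translating the disk by $\varepsilon$ leaves the ray direction $\theta$ unchanged and shifts the impact parameter by $O(\varepsilon)$, hence shifts $\sin\varphi_1$ by $O(\varepsilon)$; the resulting pointwise $(r,\varphi)$-difference is of order $\varepsilon/\cos\varphi_1$. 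Even after you excise your ``straddling'' set at the ends of $W$, you can only guarantee $\cos\varphi_1\gtrsim\sqrt{\varepsilon}$, so the pointwise comparison yields $O(\sqrt{\varepsilon})$ at best, not $O(\varepsilon)$.

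What makes the lemma true is the \emph{Hausdorff} nature of the conclusion. Instead of matching $x\leftrightarrow x$, match $x\in W$ with $x'\in W$ chosen so that $\cF_Q(x)$ and $\cF_{Q'}(x')$ have the same $\varphi_1$-coordinate (equivalently, the same impact parameter relative to the disk). Since the impact parameter varies along the u-curve $W$ at a rate bounded away from zero, one gets $|x'-x|=O(\varepsilon)$ along $W$; at matched $\varphi_1$, the $r_1$-coordinates differ by $\br\bigl(\theta(x')-\theta(x)\bigr)=O(\varepsilon)$ because the ray direction $\theta$ is a smooth (bounded-derivative) function on $W$. The $c_2\varepsilon$-reductions $\tW$, $\tW'$ are needed only to make the $\varphi_1$-ranges of the two image curves coincide so that every point acquires a match, not to cure a derivative blow-up.
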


\begin{corollary}
There is a constant $c_3>c_2$ such that for any \index{H-curves}
H-curve $W\subset\Omega_0$ of length $>c_3\varepsilon$ there are two
finite partitions $W=\cup_{i=0}^I W_i =\cup_{i=0}^I W_i'$ of $W$
such that
\begin{itemize} \item[\rm (a)] $|W_0|<c_3\varepsilon$ and
$|W_0'|<c_3\varepsilon$ \item [\rm (b)] For each $i=1,\dots,I$, the
sets $\cF_Q(W_i)$ and $\cF_{Q'}(W_i')$ are \index{H-curves} H-curves
such that ${\rm dist} (\cF_Q(W_i), \cF_{Q'}(W_i'))< c_1\varepsilon$.
\end{itemize}
\label{crclose1}
\end{corollary}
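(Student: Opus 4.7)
\proof
The plan is to partition $W$ at the (essentially finite) set of points where $\cF_Q$ or $\cF_{Q'}$ is non-smooth, or where the two maps send $W$ into different homogeneity sections, to gather those points together with small neighborhoods into the exceptional pieces $W_0$ and $W_0'$, and then to apply Lemma \ref{lmclose0}(b) on the long complementary arcs to extract matching reductions $W_i$ and $W_i'$ whose images under $\cF_Q$ and $\cF_{Q'}$ are $c_1\varepsilon$-close H-curves.

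First I would identify the ``bad set'' $\cS \subset W$ consisting of the points at which $\cF_Q$ is discontinuous, the points at which $\cF_{Q'}$ is discontinuous, and the preimages under $\cF_Q$ and $\cF_{Q'}$ of the boundaries $\{\varphi=\pm(\pi/2-k^{-2})\}$ of the homogeneity strips $\bbH_{\pm k}$. The finite-horizon assumption plus transversality of u-curves with the (stable) discontinuity curves guarantees that $\cF_Q$ and $\cF_{Q'}$ each contribute only uniformly finitely many discontinuity points on $W$. By Lemma \ref{lmclose0}(a) these two sets are within Hausdorff distance $c_1\varepsilon$, and transversality of s-curves with u-curves converts this into a pairing of points on $W$ at distance $\le C\varepsilon$. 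On each smooth arc in between, Lemma \ref{lmclose0}(b) ensures that $\cF_Q$ and $\cF_{Q'}$ themselves are $c_1\varepsilon$-close, so the preimages of the homogeneity-strip boundaries under the two maps also pair up to within $O(\varepsilon)$ along $W$.

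Next I would take the $c_2\varepsilon$-neighborhood in $W$ of each point of $\cS$ and call the union $W_0$ (respectively $W_0'$, using the paired points); the complementary arcs $W_1,\dots,W_I$ in $W\setminus W_0$ (and $W_1',\dots,W_I'$ in $W\setminus W_0'$) are then matched in order. On each pair $(W_i,W_i')$ both maps are smooth and land in a single common homogeneity section, so a further $c_2\varepsilon$-reduction (absorbed into $W_0,W_0'$) and Lemma \ref{lmclose0}(b) produce reductions with $\dist(\cF_Q(W_i),\cF_{Q'}(W_i'))<c_1\varepsilon$. Arcs too short to support such a reduction are dumped wholesale into $W_0,W_0'$. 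The H-curve property of $\cF_Q(W_i)$ and $\cF_{Q'}(W_i')$ is inherited from $W$: weak homogeneity by construction, and the distortion and curvature bounds by restriction and one iteration, as in Section \ref{subsecSHUC}. The total length of $W_0,W_0'$ is then bounded by $O(\varepsilon)$ times the number of effective bad points, so choosing $c_3$ sufficiently large compared to $c_2$ and the cardinality bound gives the conclusion.

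The main obstacle is the uniform bookkeeping of the bad set. The homogeneity-strip boundaries are indexed by all $k\ge k_0$, so a naive count of their preimages is infinite; the point is that for $k$ with $k^{-2}\lesssim\varepsilon$ the strip $\bbH_{\pm k}$ is thinner than $\varepsilon$ and lies within the $c_2\varepsilon$-neighborhood of $\{\varphi=\pm\pi/2\}$, so all such preimages can be absorbed into the $c_2\varepsilon$-neighborhood of a single discontinuity preimage without inflating $|W_0|$ beyond $O(\varepsilon)$. Verifying that, after this grouping, the neighborhoods on the two sides are genuinely paired and that their total length remains $\le c_3\varepsilon$ with $c_3$ independent of $W,Q,Q'$, is the technical heart of the argument; everything else is a direct application of Lemma \ref{lmclose0} and the stability of the H-curve property under restriction.
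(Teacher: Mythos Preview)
Your overall plan—cut $W$ at the discontinuities and the homogeneity-strip crossings, throw small neighborhoods of those cuts into $W_0,W_0'$, and apply Lemma~\ref{lmclose0}(b) on the surviving arcs—is the right shape, and it is essentially what the paper does. But there is a genuine gap in your size estimate for $|W_0|$. Your absorption argument handles only the strips $\bbH_{\pm k}$ with $k\gtrsim\varepsilon^{-1/2}$: those lie within $O(\varepsilon)$ of $\varphi=\pm\pi/2$ in the image, and their preimages on $W$ indeed cluster inside the $c_2\varepsilon$-neighborhood of a single singularity preimage. That still leaves $O(\varepsilon^{-1/2})$ strips with $k_0\le k\lesssim\varepsilon^{-1/2}$, each giving a crossing point on $W$. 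Taking a uniform $c_2\varepsilon$-neighborhood of each, as you propose, yields $|W_0|=O(\varepsilon^{1/2})$, not $O(\varepsilon)$. Even if you merge overlapping neighborhoods, the preimage points $p_k$ are spaced roughly $k^{-5}$ apart (image length $\sim k^{-3}$ divided by expansion $\sim k^2$), so for $k\lesssim\varepsilon^{-1/5}$ the $\varepsilon$-neighborhoods are disjoint and you cannot do better than $|W_0|\gtrsim\varepsilon^{4/5}$. So the ``cardinality bound'' you invoke at the end is not a constant.

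The paper's fix is to do the homogeneity-strip bookkeeping in the \emph{image} rather than the domain, and then invoke the growth lemma. After the first (finite) partition at the $\le K_1=1+L_{\max}/L_{\min}$ discontinuity points and the $c_2\varepsilon$-reductions from Lemma~\ref{lmclose0}(b), intersect each image u-curve $\cF_Q(\hW_j)$ and $\cF_{Q'}(\hW_j')$ with each $\bbH_k$, take $Cc_1\varepsilon$-reductions of those intersections to make the paired H-curves $c_1\varepsilon$-close, and pull back. The total preimage of everything removed at this stage is exactly $\{x\in W: r_1(x)<Cc_1\varepsilon\}$ in the notation of Lemma~\ref{prgrow}, and the one-step growth estimate (Lemma~\ref{prgrow}(a) with $n=1$) bounds its length by $O(\varepsilon)$. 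The mechanism is precisely the convergence of $\sum_k k^{-2}$ coming from the expansion factors on $\bbH_k$; the growth lemma packages this, and it is the ingredient your argument is missing.
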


\noindent{\em Proof}. The singularities of $\cF_Q$ divide $W$ into
$\leq K_1=1+L_{\max}/L_{\min}$ pieces, see the proof of
Lemma~\ref{prgrow} (a), and so do the singularities of $\cF_{Q'}$.
Removing pieces shorter than $2c_2\varepsilon$ and using
Lemma~\ref{lmclose0} gives us two partitions $W=\cup_{j=0}^J\hW_j
=\cup_{j=0}^J\hW_j'$ such that
\begin{itemize} \item[(a)] $|\hW_0|<2K_1c_2\varepsilon$ and
$|\hW_0'|<2K_1c_2\varepsilon$ \item [(b)] For each $j=1,\dots,J$,
the sets $\cF_Q(\hW_j)$ and $\cF_{Q'}(\hW_j')$ are \index{u-curves
(unstable curves)} u-curves such that ${\rm dist} (\cF_Q(\hW_j)
,\cF_{Q'}(\hW_j'))< c_1\varepsilon$.
\end{itemize}
Next, for any homogeneity strip $\bbH_k$ and $j\geq 1$, consider the
\index{Homogeneity strips} \index{H-curves} H-curves $\hW_{jk}=\cF_Q
(\hW_j)\cap \bbH_k$ and $\hW_{jk}' =\cF_{Q'} (\hW_j')\cap \bbH_k$.
It is easy to see that some $Cc_1\varepsilon$-reductions of these
curves, call them $W_{jk}$ and $W_{jk}'$, respectively, are
$c_1\varepsilon$-close to each other in the Hausdorff metric (here
$C>0$ is the bound on the slopes of u-curves and s-curves in
$\Omega_0$). Then we take the nonempty curves $\cF_Q^{-1}(W_{jk})$
and $\cF_{Q'}^{-1}(W_{jk}')$ for all $j$ and $k$ and relabel them to
define the elements of our partitions $W_i$ and $W_i'$,
respectively. Using the notation of Lemma~\ref{prgrow} we have
\begin{align*}
    \bigl| W\setminus\cup_i W_i\bigr| & \leq
   2K_1c_2\varepsilon + \bigl|\{x\in W\colon r_1(x)<Cc_1\varepsilon\}\bigr| \\
   & \leq  2K_1c_2\varepsilon + \beta_1\vartheta^{-1}
   \bigl|\{x\colon r_0(x)<Cc_1\vartheta\varepsilon\}\bigr|
   + \beta_2\varepsilon|W|
\end{align*}
(we apply the estimate in Lemma~\ref{prgrow} (a) to $\cF_Q$). The
resulting bound clearly does not exceed $c_3\varepsilon$ for some
$c_3>0$. A similar bound holds for $\left | W\setminus\cup
W_i'\right |$. \qed

\begin{corollary}
There is a constant $c_4>1$ such that for each integer $m$ the
discontinuity sets of the maps $\cF_{Q}^m$ and $\cF_{Q'}^m$ are
$c_4 \varepsilon$-close to each other in the Hausdorff metric.
\label{crclose2}
\end{corollary}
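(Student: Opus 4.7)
The plan is to induct on $|m|$, reducing the case $m<0$ to $m>0$ by time reversal (which swaps stable and unstable cones but leaves the argument identical, since the billiard structure is time-reversible). For $m\geq 1$, I would write the discontinuity set of $\cF_Q^m$ as $\bigcup_{k=0}^{m-1}\cF_Q^{-k}(\cS^Q)$, where $\cS^Q$ denotes the basic singularity set of $\cF_Q$ (a finite union of curves stretching across $\Omega_0$ from $\varphi=-\pi/2$ to $\varphi=\pi/2$). Lemma \ref{lmclose0}(a) handles the $k=0$ layer, placing $\cS^Q$ within $c_1\varepsilon$ of $\cS^{Q'}$ in the Hausdorff metric, so it suffices to show that $\cF_Q^{-k}(\cS^Q)$ and $\cF_{Q'}^{-k}(\cS^{Q'})$ remain $C\varepsilon$-close for every $k\geq 0$, with $C$ \emph{uniform in $k$}.

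To establish this, I would iterate backward one component at a time. Fix a component $\sigma$ of $\cS^Q$ and pair it with the close component $\sigma'$ of $\cS^{Q'}$ (within $c_1\varepsilon$). Both are stable-type curves in $\Omega_0$. I would then apply the time-reversed analogue of Corollary \ref{crclose1} to the inverse maps $\cF_Q^{-1}$ and $\cF_{Q'}^{-1}$, which inherit the same hyperbolic structure with stable and unstable roles swapped; at each backward step this yields matched partitions of the stable curves whose pieces are mapped by the respective inverse maps to new stable curves lying within $c_1\varepsilon$ of each other. The decisive geometric point is that two nearby stable curves are separated essentially in the unstable direction, and this transversal separation contracts by a factor $\vartheta<1$ at every backward iterate, by the reversed form of Proposition \ref{prhyper}. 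Hence although a fresh error of order $c_1\varepsilon$ is introduced per step, the accumulated transversal error between $\cF_Q^{-k}(\sigma)$ and $\cF_{Q'}^{-k}(\sigma')$ is bounded by the geometric series $\sum_{j\geq 0}\vartheta^{j}c_1\varepsilon \leq c_1\varepsilon/(1-\vartheta)$, and this yields a $c_4$ independent of $m$.

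The main obstacle will be the failure of a clean one-to-one pairing of components: the stable curves $\cF_Q^{-j}(\sigma)$ and $\cF_{Q'}^{-j}(\sigma')$ are repeatedly subdivided along the way by new singularities of the two inverse maps, and those singularities need not coincide. This is exactly the situation handled by Corollary \ref{crclose1} in forward time: it provides matched partitions up to two leftover pieces of length at most $c_3\varepsilon$ each. Here the leftovers are themselves comparable to the Hausdorff tolerance being proved, so they can be absorbed into the final constant, provided one tracks them carefully across all $k$ steps and uses the growth lemma (Lemma \ref{prgrow}(a)) to control how short pieces proliferate — this is the part I expect to require the most care. Throughout, the symmetric roles of $Q$ and $Q'$ deliver the two-sided Hausdorff bound automatically, and the case $m<0$ follows by applying the same argument to the inverse dynamics.
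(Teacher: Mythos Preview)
Your curve-tracking strategy is quite different from the paper's argument, and it carries a real gap in the place you yourself flag as delicate.

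The paper does not follow the singularity curves at all. It argues by contradiction with a \emph{forward} u-curve probe: set $c_4=10c_2/(1-\vartheta)$, assume the bound for $m$, and suppose it fails for $m+1$. Then some discontinuity point $x$ of $\cF_{Q'}^{m+1}$ is the midpoint of a u-curve $W$, $|W|=2c_4\varepsilon$, on which $\cF_Q^{m+1}$ is smooth. After trimming by $c_1\varepsilon$ (so that $\cF_{Q'}$ is smooth too, by Lemma~\ref{lmclose0}(a)), one applies Lemma~\ref{lmclose0}(b) once to obtain $c_2\varepsilon$-reductions $\tW,\tW'$ with $\dist(\cF_Q(\tW),\cF_{Q'}(\tW'))<c_1\varepsilon$. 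Uniform expansion by $\vartheta^{-1}$ makes each half of $\cF_{Q'}(\tW')$ longer than $c_4\varepsilon$, so the inductive hypothesis at $\cF_{Q'}(x)$ forces a discontinuity of $\cF_Q^m$ onto $\cF_Q(\tW)$, contradicting the smoothness of $\cF_Q^{m+1}$ on $W$. No matched partitions, no leftovers, no bookkeeping across $k$ steps; the single forward push and the choice of $c_4$ do all the work.

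In your scheme, the leftover pieces are a genuine problem, not just a bookkeeping nuisance. Corollary~\ref{crclose1} (and its time reverse) compares the images of a \emph{single} curve under the two maps; to compare $\cF_Q^{-1}(\sigma)$ with $\cF_{Q'}^{-1}(\sigma')$ for $\sigma\neq\sigma'$ you still need an extra contraction step for $\cF_{Q'}^{-1}(\sigma)$ versus $\cF_{Q'}^{-1}(\sigma')$, which is fine, but then at every stage each curve sheds unmatched pieces of length $\leq c_3\varepsilon$. These leftovers are themselves parts of the singularity set and must be iterated further backward, where s-curves \emph{expand}; you give no argument that their images stay within $O(\varepsilon)$ of the other family at all later times. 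Invoking Lemma~\ref{prgrow} cannot help here: the growth lemma is a measure estimate on the proportion of short H-components and says nothing about Hausdorff distance between two curve families. Without a mechanism to control the leftovers uniformly in $k$, the geometric-series bound applies only to the matched cores, not to the full sets, and the Hausdorff statement does not follow.
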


\proof Let $c_4=10c_2/(1-\vartheta)$. We prove this for $m>0$ (the
case $m<0$ follows by time reversal) using induction on $m$. For
$m=1$ the statement follows from Lemma~\ref{lmclose0} (a). Assume
that it holds for some $m\geq 1$. Now, if the statement fails for
$m+1$, then there is a point $x\in\Omega_0$ at which
$\cF_{Q'}^{m+1}$ is discontinuous and which lies in the middle of a
\index{u-curves (unstable curves)} u-curve $W$,
$|W|=2c_4\varepsilon$, on which $\cF_Q^{m+1}$ is smooth. We can
assume that $\cF_{Q'}$ is smooth on a $c_1\varepsilon$-reduction
$\hW$ of $W$, too, otherwise we apply Lemma~\ref{lmclose0} (a). Now
by Lemma~\ref{lmclose0} (b), there are $c_2\varepsilon$-reductions
of $\hW$, call them $\tW$ and $\tW'$, such that ${\rm
dist}(\cF_Q(\tW), \cF_{Q'}(\tW')) < c_1 \varepsilon$. Due to our
choice of $\varepsilon_4$ and the expansion of u-curves by a factor
$\geq\vartheta^{-1}$, the point $\cF_{Q'}(x)$ divides
$\cF_{Q'}(\tW')$ into two \index{u-curves (unstable curves)}
u-curves of length $>c_4\varepsilon+5c_2\varepsilon$ each. Since
$\cF_{Q'}^m$ is discontinuous at $\cF_{Q'}(x)$, our inductive
assumption implies that $\cF_Q^m$ is discontinuous on $\cF_Q(\tW)$,
a contradiction. \qed
\medskip

Let $Q\in \cD$ satisfy (\ref{Upsilon}) and $x\in\Omega$. Denote
\beq
   \varepsilon_n(x,Q) = \max_{0\leq i\leq n}
     \| Q - Q(\cF^i x) \| + 1/M
      \label{varepsn}
\eeq
where $Q(y)$ denotes the $Q$-coordinate of a point $y \in \Omega$.
For a \index{u-curves (unstable curves)} u-curve $W \subset \Omega$
we put
$$
  \varepsilon_n(W,Q) = \sup_{x\in W}\varepsilon_n (x,Q)
$$
Recall that the $Q$ coordinate varies by $<\Const/M$ on
\index{u-curves (unstable curves)} u-curves, so that the map $\cF^n$
acts on a \index{u-curves (unstable curves)} u-curve $W \subset
\Omega$ similarly to the action of $\cF_Q^n$ on its projection
$\pi_Q(W)$, if $\varepsilon_n (W,Q)$ is small.

Recall that we consider initial conditions satisfying (\ref{Away}).
In the lemmas below we require $n\leq \delta_\diamond \sqrt{M}$ to
prevent collision of the heavy particle with the walls, see
Section~\ref{subsecMS}.

The following two lemmas are
close analogies of the previous results (with, possibly, different
values of the constants $c_1,\dots,c_4$), and can be proved by
similar arguments, so we omit details:

\begin{lemma}
There exist constants $c_1,$ $c_3$ such that for any
\index{H-curves} H-curve $W\subset\Omega$ of length
$>c_3\varepsilon$ there are two finite partitions $W=\cup_{i=0}^I
W_i =\cup_{i=0}^I W_i'$ of $W$ such that
\begin{itemize} \item[\rm (a)] $|W_0|<c_3\varepsilon$ and
$|W_0'|<c_3\varepsilon$ \item [\rm (b)] For each $i=1,\dots,I$, the
sets $\cF_{Q}(\pi_{0}(W_i))$ and $\pi_{0}(\cF(W_i'))$ are
\index{H-curves} H-curves such that $\dist(\cF_{Q}(\pi_{0}(W_i)),
\pi_{0}(\cF(W_i')) < c_1\varepsilon$.
\end{itemize}
Here $\varepsilon=\varepsilon_1(W,Q)$. \label{lmclose2}
\end{lemma}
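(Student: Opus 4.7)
The plan is to mimic the proof of Corollary~\ref{crclose1}, replacing the comparison between two frozen billiard maps $\cF_Q, \cF_{Q'}$ by the comparison between the true collision map $\cF$ on $\Omega$ and the frozen billiard map $\cF_Q$ on $\Omega_0$. The two perturbative inputs I need are the one-step analogs of Lemma~\ref{lmclose0}: (i) the singularity set of $\cF$, projected by $\pi_0$ to $\Omega_0$, lies within $\cO(\varepsilon)$ of the singularity set of $\cF_Q$ in the Hausdorff metric on $\Omega_0$, where $\varepsilon = \varepsilon_1(W,Q)$; and (ii) on any smooth H-curve in $\Omega$ of length exceeding $2c_2\varepsilon$, appropriate $c_2\varepsilon$-reductions $\tW, \tW'$ satisfy $\dist(\cF_Q(\pi_0(\tW)),\, \pi_0(\cF(\tW'))) < c_1\varepsilon$.

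To establish (i), observe that $\cS_1$ consists of points with a grazing next collision. Collisions with a fixed scatterer $\partial\BAN_i$ contribute singularity curves that are independent of the disk position, hence identical to those of $\cF_Q$; collisions with the moving disk $\dcP(Q(x))$ contribute curves whose location depends smoothly on $Q(x)$, and since $\|Q(x)-Q\|\le\varepsilon$ along $W$, a direct computation in the $(r,\varphi)$ chart shifts these curves by $\cO(\varepsilon)$. For (ii), along a piece on which both maps are smooth the light particle travels along a straight line in both pictures; the actual disk drifts between collisions by $\|V\|\,L_{\max}=\cO(\varepsilon)$, and the velocity update differs from pure specular reflection by $\cO(\|V\|+1/M)=\cO(\varepsilon)$ by (\ref{CR2b})--(\ref{chi}) and the equations of Section~\ref{subsecSUV}. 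Projection through $\pi_0$ suppresses the $(Q,V)$-discrepancy entirely and yields the $c_1\varepsilon$ bound on image positions in $\Omega_0$; the reduction by $c_2\varepsilon$ at the endpoints absorbs the possibility that one of the two maps encounters a singularity slightly before the other.

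Granting (i) and (ii), the construction is a direct transcription of Corollary~\ref{crclose1}: cut $W$ along the singularities of $\cF$ and of $\pi_0^{-1}\cF_Q\pi_0$ into at most $K_1+1=2+L_{\max}/L_{\min}$ pieces, discard those shorter than $2K_1c_2\varepsilon$, apply (ii) to the rest, then refine both image families by the common homogeneity strip boundaries $\bbH_k$ (which live in $\Omega_0$ and so are shared by the two pictures), take $Cc_1\varepsilon$-reductions to restore Hausdorff-closeness on each matched pair, and pull back to $W$ to define the partitions $\{W_i\}$ and $\{W_i'\}$. The one-step growth lemma (Lemma~\ref{prgrow}(a)) applied to $\cF_Q$ bounds the total length of the discarded segments by $c_3\varepsilon$ with $c_3$ independent of $W$, exactly as in the proof of Corollary~\ref{crclose1}. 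The main technical obstacle is verifying (ii) in the presence of near-grazing collisions, where the $1/\cos\varphi$ amplification in Proposition~\ref{prJexp} threatens to blow up the $\cO(\varepsilon)$ perturbation; this is resolved by the standard observation that the set of points within $\cO(\varepsilon)$ of $\cS_1$ in the $|\cdot|$-norm is absorbed into $W_0\cup W_0'$ during the first cut, so on the surviving pieces $\cos\varphi$ is bounded below by a constant multiple of $\sqrt{\varepsilon}$ and the perturbation estimate closes up with constants depending only on the ambient geometry.
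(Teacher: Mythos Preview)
Your proof is correct and follows exactly the approach the paper indicates: it states that this lemma and the next are ``close analogies of the previous results \ldots\ and can be proved by similar arguments, so we omit details,'' and you have supplied those arguments by replacing the pair $(\cF_Q,\cF_{Q'})$ in Corollary~\ref{crclose1} with $(\cF_Q\circ\pi_0,\,\pi_0\circ\cF)$ and re-running the partition-and-trim construction. One small remark: your worry in the final paragraph about the $1/\cos\varphi_1$ amplification is slightly misdirected, since that expansion acts in the unstable direction while the Hausdorff distance between the two image u-curves is measured along s-curves (which are contracted, not expanded); the $c_2\varepsilon$-reductions already built into (ii) absorb the endpoint mismatch, and no additional excision of near-grazing points is required for the estimate to close.
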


\begin{lemma}
There is a constant $c_4$ such that
for any discontinuity point $x$ of the map $\cF^n$,
$1\leq n\leq \delta_\diamond \sqrt{M}$, its
projection $\pi_{0}(x)$ lies in the $c_4 \varepsilon$-neighborhood
of some discontinuity curve of the map $\cF_{Q}^n$, were
$\varepsilon=\varepsilon_n(x,Q)$. \label{lmclose1}
\end{lemma}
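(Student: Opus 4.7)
\medskip\noindent\textbf{Proof proposal.}
The plan is to prove Lemma~\ref{lmclose1} by induction on $n$, closely following the contradiction argument of Corollary~\ref{crclose2}, but replacing Lemma~\ref{lmclose0} by its perturbative analogue Lemma~\ref{lmclose2}. First I would verify the base case $n=1$: a point $x$ is a discontinuity of $\cF$ exactly when its next collision is grazing, i.e.\ the collision at $\dcD\cup\dcP(Q(x))$ satisfies $\la w,n\ra=0$; similarly the discontinuities of $\cF_Q$ record grazing collisions at $\dcD\cup\dcP(Q)$. Since $\|Q(x)-Q\|\leq\varepsilon_1(x,Q)$, the perturbed and unperturbed grazing trajectories differ in position and direction by $\cO(\varepsilon_1(x,Q))$, so their $(r,\varphi)$-images (and thus the associated discontinuity curves in $\Omega_0$) differ by $\cO(\varepsilon_1(x,Q))$. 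This settles $n=1$.

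For the inductive step, I would fix $c_4$ large enough that $c_3/(1-\vartheta)<c_4$, with $c_3,\vartheta$ from Lemma~\ref{lmclose2} and Proposition~\ref{prJexp}, assume the claim for $n$, and argue by contradiction for $n+1$. Suppose $x\in\cS_{n+1}$ and $\pi_0(x)$ lies at distance $>c_4\varepsilon$ from every discontinuity curve of $\cF_Q^{n+1}$, where $\varepsilon=\varepsilon_{n+1}(x,Q)$. If $x\in\cS_n$, the inductive hypothesis already gives a contradiction since $\varepsilon_n(x,Q)\leq\varepsilon$. Otherwise $\cF^n$ is smooth at $x$ and $\cF^n(x)\in\cS_1$. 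Through $\pi_0(x)$ I would draw an H-curve $W\subset\Omega_0$ of length $2c_4\varepsilon$ centered at $\pi_0(x)$ on which $\cF_Q^{n+1}$ is smooth -- available by the standing hypothesis and by transversality of u-curves to the (stable) discontinuity curves of $\cF_Q^{n+1}$ -- and lift it to an H-curve $\tilde W\subset\Omega$ through $x$ (feasible because $Q$ varies by $\cO(1/M)$ along u-curves, so $\pi_0$ is essentially a bijection onto its image on such a curve). Then I would apply Lemma~\ref{lmclose2} inductively to the pair $(\cF^i(\tilde W_i),\,\cF_Q^i(\pi_0(\tilde W_i)))$ for $i=0,\dots,n$: at each step I select the reduction that retains the orbit of $x$ and whose forward image projects within $c_1\varepsilon$ of the corresponding $\cF_Q$-image. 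The length removed at step $i$ from $\cF^i(\tilde W)$ is at most $c_3\varepsilon$; pulling back through $\cF^{-i}$, which contracts u-curves by at most $\vartheta^i$ by Proposition~\ref{prJexp}, the total length excised from $\tilde W$ is bounded by $c_3\varepsilon\sum_{i=0}^n\vartheta^i<c_4\varepsilon$. Hence the midpoint $x$ survives every reduction, so $\cF^{n+1}$ must in fact be smooth at $x$, the desired contradiction.

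The hard part will be the coherent bookkeeping of the iterated reductions. At each step Lemma~\ref{lmclose2} offers two partitions (one tailored to $\cF$, one to $\cF_Q$), and one must simultaneously pick, across all $n+1$ steps, reductions on both sides that contain the orbit $\{\cF^i(x)\}$ and whose forward images match up; one must also verify that the local parameter $\varepsilon_1(\cF^i(\tilde W_i),Q)$ entering Lemma~\ref{lmclose2} at step $i$ is controlled by the global $\varepsilon_{n+1}(x,Q)$, which is immediate from the maximum definition \eqref{varepsn}, and that the reduced pieces remain genuine H-curves of sufficient length -- handled by Proposition~\ref{prspair} and the growth Lemma~\ref{prgrow}, possibly at the cost of further enlarging $c_4$. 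None of these checks affect the geometric skeleton of the argument, so I expect the proof to go through along these lines exactly as indicated by the paper's remark that the argument is ``similar'' to Corollary~\ref{crclose2}.
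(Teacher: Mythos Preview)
Your overall framework (induction on $n$, base case via grazing geometry, contradiction with a u-curve of length $2c_4\varepsilon$ on which $\cF_Q^{n+1}$ is smooth) is exactly the template the paper has in mind, namely the argument of Corollary~\ref{crclose2}. But you depart from that template at the decisive point: in Corollary~\ref{crclose2} one applies the one-step comparison \emph{once}, notes that by expansion the image curve is again longer than $2c_4\varepsilon$ with $\cF_{Q'}(x)$ in its middle, and then invokes the inductive hypothesis directly on the image point. You instead iterate Lemma~\ref{lmclose2} all the way from $i=0$ to $i=n$ and try to sum pull-back losses.

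This iterative variant has a real gap. Lemma~\ref{lmclose2} produces a \emph{partition} $W=W_0'\cup W_1'\cup\cdots\cup W_I'$, not a single reduction: only the exceptional set $W_0'$ has length $<c_3\varepsilon$, while the remaining pieces $W_1',\dots,W_I'$ can individually be arbitrarily short. When you ``select the reduction that retains the orbit of $x$'' you keep only the single $W_j'$ containing $\cF^i(x)$ and discard all other good pieces, so the length actually removed can vastly exceed $c_3\varepsilon$. Hence the bound $c_3\varepsilon\sum_i\vartheta^i$ on the total excision is not justified, and after one step the surviving piece may be too short for Lemma~\ref{lmclose2} to apply again. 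Proposition~\ref{prspair} and the growth Lemma~\ref{prgrow} are measure-theoretic statements about families of curves and do not control the length of a specific piece.

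The fix is to mirror Corollary~\ref{crclose2} literally. In the case $x\notin\cS_1$ (handled otherwise by the base case), use the one-step analogue of Lemma~\ref{lmclose0}(b) for $\cF$ versus $\cF_Q$ on the reduced curve $\hW$ where both are smooth: there are $c_2\varepsilon$-reductions $\tW,\tW'$ with $\dist\bigl(\cF_Q(\pi_0\tW),\pi_0(\cF\tW')\bigr)<c_1\varepsilon$. By the choice $c_4=10c_2/(1-\vartheta)$ and expansion, $\pi_0(\cF(x))$ lies at distance $>c_4\varepsilon+5c_2\varepsilon$ from the endpoints of $\pi_0(\cF\tW')$. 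Since $\cF(x)\in\cS_n$ and $\varepsilon_n(\cF(x),Q)\leq\varepsilon$, the inductive hypothesis places a discontinuity curve of $\cF_Q^n$ within $c_4\varepsilon$ of $\pi_0(\cF(x))$; by transversality it must cross $\cF_Q(\pi_0\tW)$, contradicting the smoothness of $\cF_Q^{n+1}$ on $\pi_0(\tW)\subset\pi_0(\tilde W)$.
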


\begin{lemma}
For any $1\leq n \leq \delta_\diamond \sqrt{M}$
the singularity set $\cS_n\subset\Omega$ of the
map $\cF^n$ is a finite union of smooth compact manifolds of
codimension one with boundaries. For every
$Q,V\in\Upsilon_{\delta_1}$ the manifold $\cS_n$ intersects
$\Omega_{Q,V}$ transversally (in fact, almost orthogonally), and
$\cS_n\cap\Omega_{Q,V}$ is a finite union of s-curves.
\label{lmsingQV}
\end{lemma}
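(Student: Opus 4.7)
The plan is to establish the three assertions separately---codimension-one smoothness of $\cS_n$, transversality to every slice $\Omega_{Q,V}$, and the stable-curve structure of the intersection---with the first done by induction on $n$ and the latter two via perturbation from the frozen billiard map $\cF_{Q_0}$.

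For the smoothness, write $\cS_n = \bigcup_{i=0}^{n} \cF^{-i}(\partial\Omega)$. The base set $\partial\Omega = \{\la w,n\ra = 0\}$ is a finite union of compact $C^2$ codimension-one submanifolds with boundary: assumption A2 (finite horizon) bounds the number of components and A3 gives the required smoothness. On $\Omega\setminus\partial\Omega$ the map $\cF$ is $C^2$ and a local diffeomorphism (the free-flight-plus-reflection formulae of Section~\ref{subsecSUV} show the Jacobian is nondegenerate), so $\cF^{-1}(\partial\Omega)\setminus\partial\Omega$ is again a finite union of smooth codimension-one submanifolds, joining $\partial\Omega$ along a boundary. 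The inductive step is identical: on $\Omega\setminus\cS_n$ the iterate $\cF^n$ is a smooth local diffeomorphism, so $\cF^{-n}(\partial\Omega)\setminus\cS_n$ is a smooth codimension-one manifold attached to $\cS_n$ along its boundary. The unbounded derivative bound (\ref{DxcF}) plays no role here since we only need smoothness on each open piece; finiteness of components is preserved because each inductive step multiplies the count by a fixed number controlled by the finite-horizon geometry.

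For the transversality and s-curve claims, fix $(Q_0,V_0)\in\Upsilon_{\delta_1}$ and compare $\cF$ to $\cF_{Q_0}$. By classical billiard theory \cite{BSC1,BSC2}, the singularity set of $\cF_{Q_0}^n$ on $\Omega_{Q_0}$ is a finite union of compact smooth s-curves. Lemma~\ref{lmclose1} then gives that $\pi_0(\cS_n\cap\Omega_{Q_0,V_0})$ lies in an $O(n/M)$-neighborhood of this classical set in $\Omega_0$, which shrinks to $0$ uniformly for $n\leq \delta_\diamond\sqrt{M}$. Combining this Hausdorff closeness with the smoothness of $\cS_n$ (paragraph 2) and the uniform transversality of the stable and unstable cones in the $(r,\varphi)$-chart (Proposition~\ref{prdrdp} and (\ref{CCs})), the implicit function theorem upgrades the closeness to a $C^1$-closeness, so each component of $\cS_n\cap\Omega_{Q_0,V_0}$ is an s-curve close to its classical counterpart. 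Transversality (and almost-orthogonality) of $\cS_n$ to $\Omega_{Q_0,V_0}$ in the full $\Omega$ follows because the $M=\infty$ singularity set fibers over $(Q,V)$-space with stable-curve fibers (its tangent at $x$ decomposes as the $4$-dimensional $(Q,V)$-directions plus the stable direction in $\Omega_{Q,V}$), and this model set already meets every slice transversally along the stable cone, which in the $(r,\varphi)$ chart is uniformly separated from the unstable cone that supplies the transverse direction.

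The main obstacle is precisely this $C^1$ upgrade, since Lemma~\ref{lmclose1} only provides Hausdorff closeness. Propagating the per-step $O(1/M)$ discrepancy between $D\cF$ and $D\cF_{Q_0}$ over $n$ iterations risks being spoiled by the unbounded derivative (\ref{DxcF}) near earlier branches of $\cS_n$. The remedy is to work inductively along orbits in $\Omega\setminus\cS_n$ (where the explicit collision formulae of Section~\ref{subsecSUV} apply), decomposed into H-components, and to use the uniform expansion bound (\ref{JJtheta}) in the unstable direction to absorb the local distortion: the total perturbation of the tangent to $\cS_n$ accumulates to at most $\sum_{i=0}^{n-1}\vartheta^{n-i}/M = O(1/M)$, which for $n\leq\delta_\diamond\sqrt{M}$ is far below the uniform separation of the stable cone from the $(Q,V)$-directions and from the unstable cone in $\Omega_{Q,V}$, preserving both the transversality and the s-curve structure.
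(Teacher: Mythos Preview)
Your approach to the transversality and s-curve claims is perturbative---compare $\cF$ to the frozen billiard $\cF_{Q_0}$ and upgrade Hausdorff closeness to $C^1$ closeness---whereas the paper proceeds by a direct symplectic computation that never invokes the $M=\infty$ limit. Concretely, the paper takes a component $\hcS\subset\partial\Omega$ (say the one corresponding to grazing particle--disk collisions, described by the three equations $\|Q-q\|=\br$, $\la Q-q,V-v\ra=0$, $MV^2+v^2=1$), and observes that its tangent space inside $\cT\Omega$ is the skew-orthogonal complement of a single explicit vector $e_1$ with respect to the $\cF$-invariant symplectic form $\omega$. Invariance of $\omega$ then forces $\cT(\cF^{-j}\hcS)$ to be the skew-orthogonal complement of $D\cF^{-j}(e_1)$, and intersecting with $\Omega_{Q,V}$ reduces the tangent direction to $\reals\,\pi(D\cF^{-j}(e_1))$; the formulas of Section~\ref{subsecSUV} show this is an s-vector for every $j\geq 1$. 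This is exact for each finite $M$, uniform in $j$, and requires no accumulated error control.

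Your route could be made to work, but as written the $C^1$ upgrade has a real gap. Hausdorff closeness of two smooth curves together with uniform cone separation does not by itself give $C^1$ closeness through the implicit function theorem: you would first need to know that the tangent to $\cS_n\cap\Omega_{Q,V}$ already lies in some definite cone before you can compare it to the classical s-curve. Your last paragraph sketches a remedy via error accumulation, and the underlying idea---that backward iteration contracts the stable cone projectively, so an angular perturbation of order $1/M$ introduced at step $i$ is damped geometrically by step $n$---is correct in spirit. But invoking \eqref{JJtheta} is not the right tool: that is a norm-expansion bound for u-curves under forward iteration, not a projective-contraction bound for s-directions under backward iteration, and the bound $\sum_{i}\vartheta^{n-i}/M$ is asserted rather than derived. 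To carry this out you would need to set up an invariant stable cone field for the full two-particle derivative $D\cF^{-1}$ on $\cT\Omega$ (the time-reversal of Proposition~\ref{prabc}) and prove a uniform projective contraction rate for it, which is considerably more work than the paper's half-page symplectic argument.
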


\proof The first claim follows from our finite horizon \index{Finite
horizon} assumption.

%Next, consider the set $\tcS\subset\cM$ of
%trajectories that experience singularities (grazing collisions)
%anytime in the future. They make a manifold of codimension one
%invariant under the backward dynamics, i.e.\
%$\Phi^{-t}\tcS\subset\tcS$ for $t>0$. Denote by $dx^{\ast}
%=(dQ^\ast, dV^\ast, dq^\ast, dv^\ast)$ a skew-orthogonal vector to
%$\tcS$ at a point $x = (Q,V,q,v) \in \tcS$, i.e.\ a vector such
%that
%$$
%  \la dQ^\ast,dQ\ra - \la dV^\ast,dV\ra +
%  \la dq^\ast,dq\ra - \la dv^\ast,dv\ra = 0
%$$
%for every tangent vector $(dQ, dV, dq, dv)\in\cT_x\tcS$. Since the
%flow $\Phi^t$ preserves the symplectic form, the vector
%$D\Phi^{-t}(dx^{\ast})$ will be skew-orthogonal to $\tcS$ at the
%point $\Phi^t(x)$ for all $t>0$. It is easy to check that at the
%moment immediately preceding the grazing collision, $dQ^{\ast}
%=dV^{\ast} =0$ and $\la dq^{\ast},dv^{\ast}\ra\leq
%-c_0\,\|dq^{\ast}\|\,\|dv^{\ast}\|$ for some $c_0>0$. Hence, by
%using the estimates of Section~\ref{subsecSUV} we obtain that
%$\|dQ^{\ast}\| =\cO(\|dq^{\ast}\|/M)$, $\|dV^{\ast}\|
%=\cO(\|dv^{\ast}\|/M)$ and $\la dq^{\ast},dv^{\ast}\ra\leq
%-c_0\,\|dq^{\ast}\|\,\|dv^{\ast}\|$ on the entire manifold $\tcS$.
%Thus, skew-orthogonal (hence, tangent) vectors to
%$\tcS\cap\Omega_{Q,V}$ are s-vectors, hence $\tcS\cap\Omega_{Q,V}$
%is a union of s-curves. \qed

Next, recall that $\cS_n=\bigcup_{j=0}^{n-1} \cF^{-j} \cS_0.$
Consider for example a component $\hcS\subset \cS_0$ corresponding
to a grazing collision between particles (other components can be
treated similarly). In the whole 8-dimensional phase space $\hcS$
is given by the equations
\begin{align}
  \|Q-q\|  &=\br & \text{(collision)} \label{ECol}\\
  \la Q-q, V-v\ra &=0   & \text{(tangency)}  \label{ETan}\\
  MV^2+v^2   &=1   & \text{(energy conservation)} \label{EECon}
\end{align}
Recall that $\cF$ preserves the restriction to $\Omega$ of the
symplectic form
\begin{align*}
   &\omega\bigl((dQ_1, dV_1, dq_1, dv_1),
   (dQ_2, dV_2, dq_2, dv_2)\bigr) \\
   &\quad =M\,\la dQ_1, dV_2\ra - M\,\la dQ_2, dV_1\ra
   +\la dq_1, dv_2\ra - \la dq_2, dv_1 \ra
\end{align*}
By (\ref{ECol})--(\ref{EECon}) the tangent space $\cT\hcS$ in the
whole 8-dimensional space is the skew-orthogonal complement of the
linear subspace spanning three vectors
$$
\begin{array}{rccccccl}
e_1 & = & (     & 0,             & \frac{q-Q}{M}, & 0,   & Q-q & ) \\
e_2 & = & \big( & \frac{Q-q}{M}, & \frac{v-V}{M}, & q-Q, & V-v & ) \\
e_3 & = & (     & V,             &  0 ,           & v ,  & 0   & ) \\
\end{array}
$$

Equivalently,  in $\cT\Omega,$ the subspace $\cT\hcS$ can be
described as the skew-orthogonal complement of
$$
   \text{span}(e_1, e_2, e_3)\bigcap \cT\Omega=\reals e_1,
$$
where $\reals e_1 = \{ce_1,\, c\in\reals\}$. Observe that $e_1$ is
tangent to $\Omega$ because
$$
  \omega(e_1, e_3)=\la Q-q, v-V\ra =0
$$
by (\ref{ETan}). Hence $\cT(\cF^{-j} \hcS)$ is the skew-orthogonal
complement of $D\cF^{-j} (e_1)$ and so $\cT\bigl( (\cF^{-j}
\hcS)\bigcap \Omega_{Q,V} \bigr) $ is the skew-orthogonal
complement of $\pi \bigl( D\cF^{-j} (e_1)\bigr).$ Since $(\cF^{-j}
\hcS)\bigcap \Omega_{Q,V}$ is one-dimensional,
$$
   \cT\Bigl( (\cF^{-j} \hcS)\bigcap \Omega_{Q,V}\Bigr)
   =\reals\, \pi \bigl( D\cF^{-j} (e_1)\bigr).
$$
Our results in Section~\ref{subsecSUV} easily imply that  $\pi
\bigl( D\cF^{-j} (e_1)\bigr)$ is an s-vector for all $j \geq 1$,
so the lemma follows. \qed
\medskip

Now let $A$ be a function from Proposition~\ref{PrDistEq2}. For
each pair $(Q,V)$ we define a function $A_{Q,V}$ on $\Omega_0$ by
\beq
       A_{Q,V} = A\circ(\pi_0|_{\Omega_{Q,V}})^{-1}
         \label{AQV}
\eeq
Note that $A_{Q,V}$ has discontinuities on the set
$\cS_{Q,V}=\pi_0(\cS_{n_A}\cap\Omega_{Q,V})$. Also,
$$
   \brA (Q,V) = \int_{\Omega_{Q,V}} A(Q,V,q,w)\, d\mu_{Q,V}(q,w)
   = \int_{\Omega_0} A_{Q,V}(r,\varphi)\, d\mu_0
$$

\begin{lemma}
For any $(Q,V)$ and $(Q',V')$ we have
\begin{align*}
  \left | \brA (Q,V) - \brA (Q',V') \right |
     &\leq  C \, \Bigl( \| Q - Q'\| + \| V - V'\| \\
     &\quad + n_A \bigl(\|V\| + \|V'\|\bigr) + n_A^2/M \Bigr)
\end{align*}
for some $C=C(A)>0$. \label{lmAA}
\end{lemma}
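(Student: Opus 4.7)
The plan is to reduce the comparison to a single integral over the common coordinate chart $\Omega_0$. Since $\pi_0\colon \Omega_{Q,V}\to\Omega_0$ is a bijection and the pullback of $\mu_{Q,V}$ equals the (fixed) measure $\mu_0$, one can write
\[
   \brA(Q,V) - \brA(Q',V')
   = \int_{\Omega_0} \bigl[A(x) - A(x')\bigr]\,d\mu_0(r,\varphi),
\]
where $x=(\pi_0|_{\Omega_{Q,V}})^{-1}(r,\varphi)$ and $x'=(\pi_0|_{\Omega_{Q',V'}})^{-1}(r,\varphi)$. Direct inspection of the definitions (use $q=Q+\br n(r)$ on the disk part and the rescaling (\ref{brw}) for $w$) yields the pointwise closeness
\[
   \norm{x - x'}_\Omega \le C_0\bigl(\norm{Q-Q'}+\norm{V-V'}\bigr).
\]

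I would next split $\Omega_0$ according to proximity to the singular set. Put $\cS^*_{Q,V}:=\pi_0(\cS_{n_A}\cap\Omega_{Q,V})$ and analogously define $\cS^*_{Q',V'}$. Applying Lemma \ref{lmclose1} to $\cF^{n_A-1}$, and bounding the heavy-disk drift along trajectories of length $n_A-1$ by $\varepsilon_{n_A-1}(x,Q)=\cO(n_A\norm{V}+n_A^2/M)$, shows that $\cS^*_{Q,V}$ is Hausdorff-close to the discontinuity set of the classical billiard map $\cF_Q^{n_A-1}$ on $\Omega_0$. Corollary \ref{crclose2} compares the latter sets for $Q$ and $Q'$; combining with the symmetric estimate at $(Q',V')$ and the triangle inequality,
\[
   \dist_H\bigl(\cS^*_{Q,V},\cS^*_{Q',V'}\bigr) \le \varepsilon :=
   C_1\bigl(\norm{Q-Q'}+n_A(\norm{V}+\norm{V'})+n_A^2/M\bigr).
\]
By Lemma \ref{lmsingQV} both sets are finite unions of (almost) s-curves in the two-dimensional chart $\Omega_0$, so the $\varepsilon$-neighbourhood $B$ of their union has $\mu_0(B)\le C_2\varepsilon$.

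On $B$ use the trivial estimate $|A(x)-A(x')| \le 2\norm{B_1}_\infty \norm{B_2}_\infty$ (both factors are bounded, by property (a) of $\fR$ on each connected component of $\Omega\setminus\cS_1$), contributing $\cO(\varepsilon)$ to the integral. On the complement, interpolate $(Q_t,V_t)=((1-t)Q+tQ',(1-t)V+tV')$ and let $x_t$ denote the corresponding point of $\Omega_{Q_t,V_t}$ over $(r,\varphi)$; the Hausdorff estimate applied to every intermediate $(Q_t,V_t)$, together with the almost-orthogonality of $\cS_{n_A}\cap\Omega_{Q_t,V_t}$ to $\Omega_{Q_t,V_t}$ (Lemma \ref{lmsingQV}), shows that $\dist(x_t,\cS_{n_A})\ge c\varepsilon$ uniformly in $t$. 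Lemma \ref{lmfRprop}(b) then gives
\[
   \Bigl|\tfrac{d}{dt}A(x_t)\Bigr|
   \le L_A[\dist(x_t,\cS_{n_A})]^{-\beta_A}\,\norm{x-x'}_\Omega,
\]
and integrating in $t$ and in $(r,\varphi)$ via Fubini bounds the contribution of $\Omega_0\setminus B$ by
\[
   \norm{x-x'}_\Omega\cdot L_A\sup_{t}\int_{\Omega_0}[\dist(x_t,\cS_{n_A})]^{-\beta_A}\,d\mu_0 \le C_3\norm{x-x'}_\Omega,
\]
the supremum being finite because $\mu_0$ gives mass $\cO(\eta)$ to any $\eta$-neighbourhood of a finite union of smooth curves in $\Omega_0$ and because $\beta_A<1$.

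Adding the two contributions gives the claimed bound. The main obstacle is controlling $\dist_H(\cS^*_{Q,V},\cS^*_{Q',V'})$: this is exactly where the genuinely dynamical terms $n_A(\norm{V}+\norm{V'})$ and $n_A^2/M$ enter, tracking the drift of the heavy disk along length-$n_A$ trajectories through Lemma \ref{lmclose1}. The remaining technical point, the uniform integrability of $[\dist(\cdot,\cS_{n_A})]^{-\beta_A}$ across the one-parameter family $\{\Omega_{Q_t,V_t}\}$, is handled by the s-curve structure of the sliced singular set together with the crucial condition $\beta_A<1$ from Lemma \ref{lmfRprop}.
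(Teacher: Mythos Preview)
Your proof is correct and follows essentially the same route as the paper's. Both arguments write $\brA(Q,V)-\brA(Q',V')$ as an integral over the common chart $\Omega_0$, excise a small-measure neighbourhood of the singularity slices (where the trivial $\|A\|_\infty$ bound is used and the width is controlled via Lemma~\ref{lmclose1} and Corollary~\ref{crclose2}, producing exactly the terms $n_A(\|V\|+\|V'\|)+n_A^2/M$), and on the complement invoke the local Lipschitz control of Lemma~\ref{lmfRprop}(b) together with the integrability of $[\dist(\cdot,\text{singularity})]^{-\beta_A}$ for $\beta_A<1$. The only cosmetic difference is that the paper works directly with the swept region $G_0$ and the integral $\int_{\Omega_0\setminus G_0}[\dist(x,G_0)]^{-\beta_A}d\mu_0$, whereas you make the interpolation $(Q_t,V_t)$ explicit and apply Fubini to arrive at $\sup_t\int_{\Omega_0}[\dist(x_t,\cS_{n_A})]^{-\beta_A}d\mu_0$; these are equivalent formulations of the same estimate.
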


\noindent{\em Proof}. If $A$ had a bounded local Lipschitz constant
(\ref{Ax'}) on the entire space $\Omega$, the estimate would be
trivial. However, the function $A$ is allowed to have singularities
on $\cS_{n_A}$ (the discontinuity set for the map $\cF^{n_A}$), and
the local Lipschitz constant Lip$_xA$ is allowed to grow near
$\cS_{n_A}$, according to Lemma~\ref{lmfRprop}. As a result, two
error terms appear, denoted by $E_1+E_2$, where $E_1$ comes from the
fact that the functions $A_{Q,V}$ and $A_{Q',V'}$ have different
singularity sets $\cS_{Q,V}$ and $\cS_{Q',V'}$, and $E_2$ comes from
the growing local Lipschitz constant near these singularity sets.

The error term $E_1$ is bounded by $2\|A\|_{\infty}\Area(G)$, where
$G$ is the region swept by the singularity set $\cS_{Q,V}$ as it
transforms to $\cS_{Q',V'}$ when $(Q,V)$ continuously change to
$(Q',V')$. According to Lemma~\ref{lmclose1}, these singularity sets
lie within the $c_4\varepsilon'$-distance from the discontinuity
lines of the maps $\cF^{n_A}_Q$ and $\cF^{n_A}_{Q'}$, respectively,
where
$$
   \varepsilon'=\Const\, \bigl[\,n_A
   \bigl(\|V\| + \|V'\|\bigr) + n_A^2/M\,\bigr]
$$
Due to our finite horizon \index{Finite horizon} assumption, the
discontinuity lines of $\cF^{n_A}_Q$ and $\cF^{n_A}_{Q'}$ have a
finite total length, and they lie within the $c_4\|Q- Q'\|$-distance
from each other by Corollary~\ref{crclose2}. Therefore, we can cover
$G$ by a finite union of stripes of width $2c_4\varepsilon'+c_4\|Q-
Q'\|$ bounded by s-curves roughly parallel to the discontinuity
lines of $\cF^{n_A}_Q$ and $\cF^{n_A}_{Q'}$. The union of these
stripes, call it $G_0$, has area bounded by $C(\varepsilon'+\|Q-
Q'\|)$, where $C=C(n_A)>0$, thus
$$
     \left | \int_{G_0}
      [A_{Q,V}(r,\varphi)-A_{Q',V'}(r,\varphi)]
      \, d\mu_0 \right | \leq
      2\|A\|_{\infty}C\bigl(\varepsilon'+\|Q- Q'\|\bigr)
$$
To estimate the error term $E_2$, we note that the Lipschitz
constants of the functions $A_{Q,V}$ and $A_{Q',V'}$ on the domain
$\Omega_0\setminus G_0$ are bounded by $CL_A\,\dist(x,
G_0)^{-\beta_A}$, where $x=(r,\varphi)\in\Omega_0$. Here the
distance, originally measured in the Lebesgue metric in
Lemma~\ref{lmfRprop}, can also be measured in the equivalent
$|\cdot|$ metric introduced above (the length of the the shortest
\index{u-curves (unstable curves)} u-curve connecting $x$ with
$\partial G_0$). Thus,
$$
   E_2\leq CL_A\,\Bigl( \| Q - Q'\| + \| V - V'\| \Bigr)
   \,\int_{\Omega_0\setminus G_0}
   [\,{\dist}(x,G_0)]^{-\beta_A}
   \, d\mu_0
$$
and the integral here is finite because $\beta_A<1$.  \qed
\medskip

\subsection{Equidistribution properties}
\label{subsEP} \index{Equidistribution} We use the following scheme
to estimate $\EXP_\ell (A \circ \cF^n)$, where $\ell = (\gamma, \rho
)$ is a standard pair in Proposition~\ref{PrDistEq2}.
\index{Standard pair}

Let $n_1,n_2$ (to be chosen later) satisfy $K\big |\ln |\gamma|\big
| < n_1 < n_2 <n$. For each point $x \in \gamma$ put
$$
   k(x) = \min_{n_1 < k < n_2}
   \{k\colon\ |\gamma_k(x)| \geq \varepsilon_0 \}
$$
where $\gamma_k (x)$ denotes the H-component of $\cF^k (\gamma)$
\index{H-components} that contains the point $\cF^k (x)$ (the
constant $\varepsilon_0$ was introduced in Lemma~\ref{prgrow}). In
other words, $k(x)$ is the first time, during the time interval
$(n_1, n_2)$, when the image of the point $x$ belongs in an
\index{H-curves} H-curve of length $\geq \varepsilon_0$. Clearly,
the set $\{\cF^{k(x)} (x) \colon \ x\in \gamma \}$ is a union of
\index{H-curves} H-curves of length $> \varepsilon_0$. We denote
those curves by $\gamma_j$, $j \geq 1$, and for each $\gamma_j$
denote by $k_j \in (n_1, n_2)$ the iteration of $\cF$ at which this
curve was created. Let $\rho_j$ be the density of the measure $\cF^{
k_j} (\mes_{\ell})$ conditioned on $\gamma_j$. Observe that
$(\gamma_j ,\rho_j)$ is a standard pair for every $j \geq 1$.
\index{Standard pair}

The function $k(x)$ may not be defined on some parts of $\gamma$,
but by Lemma~\ref{prgrow} we have
$$
       \mes_{\ell}\bigl(x\in\gamma\colon\ k(x)\ {\rm is}\
       {\rm not}\ {\rm defined}\,\bigr)
       \leq Cq^{n_2 - n_1}
$$
Hence
\beq
    \EXP_\ell (A \circ \cF^n) =
    \sum_j c_j \EXP_{\ell_j} (A \circ \cF^{n-k_j})
    + \cO (q^{n_2 - n_1})
      \label{expand}
\eeq
where $\sum_j c_j > 1 - Cq^{n_2 - n_1}$.

We now analyze each standard pair $(\gamma_j ,\rho_j)$ separately,
\index{Standard pair} and we drop the index $j$ for brevity. For
example, we denote by $\mes_{\ell}= \mes_{\ell_j}$ the measure on
$\gamma=\gamma_j$ with density $\rho=\rho_j$. Let $x\in \gamma$ be
an arbitrary point and $(Q,V) = \pi_1 (x)$ its coordinates. For each
$0 \leq i \leq n- k$ consider the map
\beq
    \cF_i\colon = \cF_{Q}^{n-k-i} \circ\, \pi_0 \, \circ \cF^i
      \label{cFi}
\eeq
on the curve $\gamma$ (here, as in the previous section, we
identify the domain of the map $\cF_Q$ with $\Omega_0$). Note that
$\cF_{n-k} =\pi_0\circ\cF^{n-k}$, and so
$ A\circ\cF^{n-k}=A_{Q_{n-k},V_{n-k}}\circ\cF_{n-k}, $
where the function $A_{Q,V}$ on $\Omega_0$ was
defined by (\ref{AQV}). Our further analysis is based on the
obvious identity
\begin{align}
\label{mainid}
    \EXP_\ell (A\circ \cF^{n-k}) - \brA (Q,V) &=
    \EXP_\ell (A_{Q,V}\circ \cF_0) - \brA (Q,V) \nonumber\\
    &\quad + \sum_{i=0}^{n-k-1}
    \bigl[\,\EXP_\ell (A_{Q,V}\circ \cF_{i+1}) -
    \EXP_\ell (A_{Q,V}\circ \cF_{i})\,\bigr] \\
    &\quad +\EXP_\ell\left((A_{Q_{n-k}, V_{n-k}}
    \circ \cF_{n-k})-(A_{Q, V}\circ \cF_{n-k})\right)
    \nonumber
\end{align}
We divide the estimate of (\ref{mainid}) into three parts (Propositions
\ref{prexpell}, \ref{prEE} and \ref{prh}).

\begin{proposition}
We have
$$
         \left | \EXP_\ell (A_{Q,V}\circ \cF_{0})
         - \brA (Q,V) \right |
         \leq C \theta_0^{n-k}
$$
for some constants $C>0$ and $\theta_0 <1$. \label{prexpell}
\end{proposition}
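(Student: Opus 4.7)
The plan is to reduce the claim to a standard exponential equidistribution theorem for the frozen--$Q$ billiard map $\cF_Q$ applied to the H-curve $\pi_0(\gamma)\subset\Omega_0$. By Proposition \ref{prdxdx'} the projection $\pi_0$ distorts lengths by a factor $1+\cO(M^{-1/2})$ and carries unstable vectors for $\cF$ to unstable vectors for $\cF_Q$; hence $\tilde\gamma:=\pi_0(\gamma)$ is an H-curve for $\cF_Q$ with $|\tilde\gamma|\geq \varepsilon_0/2$, and the push-forward density $\tilde\rho$ remains homogeneous in the sense of Section \ref{subsecSHUC}. Since $\cF_0=\cF_Q^{n-k}\circ \pi_0$ by \eqref{cFi},
$$\EXP_\ell(A_{Q,V}\circ \cF_0) = \int_{\tilde\gamma}(A_{Q,V}\circ\cF_Q^{n-k})\,\tilde\rho\, dr,$$
so the claim becomes a pure billiard equidistribution statement on $\Omega_0$.

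Next I invoke the following exponential equidistribution property, which is the core output of the coupling method for dispersing billiards developed in \cite{Y,C2}: if $(\tilde\gamma,\tilde\rho)$ is a standard pair for $\cF_Q$ in $\Omega_0$ with $|\tilde\gamma|\geq \varepsilon_0$, and $\phi$ is a bounded function on $\Omega_0$ whose discontinuity set $\Sigma_\phi$ is a finite union of s-curves and whose local Lipschitz constant is bounded by $L[\dist(\cdot,\Sigma_\phi)]^{-\beta}$ for some $\beta<1$, then
$$\left|\int_{\tilde\gamma}(\phi\circ\cF_Q^m)\tilde\rho\,dr-\mu_0(\phi)\right|\leq C\theta_0^m,$$
with $C>0$ and $\theta_0\in(0,1)$ depending only on $\varepsilon_0$ and on the regularity parameters $L,\beta,\|\phi\|_\infty$. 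The proof couples $(\tilde\gamma,\tilde\rho)$ against a fixed family of standard pairs whose convex combination is $\mu_0$; the growth lemma (Lemma \ref{prgrow} applied with $M=\infty$) together with the one-step expansion estimate \eqref{alpha01} guarantees that an exponentially increasing fraction of mass is matched along stable manifolds of $\cF_Q$, while mild singularities of $\phi$ are absorbed into the coupling because $\Sigma_\phi$ is aligned with the stable foliation.

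To apply the equidistribution estimate to $\phi=A_{Q,V}$ I verify its regularity hypotheses. By Lemma \ref{lmsingQV} the discontinuity set $\cS_{Q,V}=\pi_0(\cS_{n_A}\cap\Omega_{Q,V})$ of $A_{Q,V}$ is a finite union of s-curves of $\cF_Q$, and by Lemma \ref{lmfRprop} $A_{Q,V}$ extends H\"older-continuously to each component of $\Omega_0\setminus\cS_{Q,V}$ with local Lipschitz constant dominated by $L_A[\dist(\cdot,\cS_{Q,V})]^{-\beta_A}$ for some $\beta_A<1$. Thus the hypotheses hold uniformly, and the constants $C,\theta_0$ produced by the equidistribution estimate depend only on $A$ (and on $\varepsilon_0$), not on $M$, $Q$, $V$, $n$, or $k$. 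Setting $m=n-k$ yields the proposition.

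The principal obstacle is that $A_{Q,V}$ is only piecewise H\"older with unbounded local Lipschitz constant near $\cS_{Q,V}$, which excludes a direct transfer-operator spectral-gap argument on a classical H\"older space. This forces one to run the coupling argument; what makes coupling go through is the alignment of $\cS_{Q,V}$ with the stable foliation of $\cF_Q$ established in Lemma \ref{lmsingQV}, so that the discontinuities of $\phi$ do not obstruct the match along stable manifolds and instead get incorporated harmlessly into the decomposition used for coupling.
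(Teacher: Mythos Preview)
Your proposal is correct and follows essentially the same approach as the paper: reduce via $\cF_0=\cF_Q^{n-k}\circ\pi_0$ to the equidistribution property of the frozen billiard map $\cF_Q$ (Proposition~\ref{PrDistEq0} in Appendix~A), using that $|\gamma|>\varepsilon_0$ so no waiting period is needed, and that the constants are uniform in $Q$ (Extension~1 in Appendix~A). You supply more detail than the paper's terse proof, in particular you correctly single out Lemma~\ref{lmsingQV} (the singularity set $\cS_{Q,V}$ consists of s-curves) as the reason the coupling argument tolerates the piecewise-H\"older nature of $A_{Q,V}$; the paper leaves this implicit.
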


\noindent{\em Proof}. Since $\cF_{0} = \cF_{Q}^{n-k} \circ\, \pi_0$,
our proposition asserts the equidistribution for
\index{Equidistribution} dispersing \index{Dispersing billiards}
billiards, see Appendix~\ref{subsecA1}. Note that the
\index{u-curves (unstable curves)} u-curve $\gamma$ has length of
order one ($|\gamma|> \varepsilon_0$), hence there is no ``waiting
period'' during which the curve needs to expand -- the exponential
convergence starts right away. Furthermore, the convergence is
uniform in $Q$, i.e.\ $C$ and $\theta_0$ are independent of $Q$ and
$V$, see Extension~1 in Appendix~A. \qed
\medskip

\begin{proposition}
For each $0 \leq i \leq n-k-1$ we have
\beq
       \left | \EXP_\ell (A_{Q,V}\circ \cF_{i+1})
       - \EXP_\ell (A_{Q,V}\circ \cF_{i}) \right |
       \leq C \varepsilon_{\gamma}
         \label{EE}
\eeq
where $C>0$ is a constant and
\begin{align*}
    \varepsilon_{\gamma}
    \colon &= (n-k)\|V\| + (n-k)^2/M\\
    &\geq
    c\,\max_{0\leq j\leq n-k}\sup_{x\in\gamma}
    \bigl(\| Q - Q(\cF^j x)\|+\| V - V(\cF^j x)\|\bigr)
\end{align*}
where $c>0$ is a small constant. \label{prEE}
\end{proposition}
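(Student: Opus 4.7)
The plan is to treat the difference in (\ref{EE}) as a ``one-step swap'' estimate. With $m:=n-k-i-1$, observe that $\cF_i=\cF_Q^m\circ\cF_Q\circ\pi_0\circ\cF^i$ and $\cF_{i+1}=\cF_Q^m\circ\pi_0\circ\cF\circ\cF^i$, so setting $y=\cF^i(x)$, $z_1(y)=\cF_Q(\pi_0(y))$, and $z_2(y)=\pi_0(\cF(y))$, the left-hand side of (\ref{EE}) becomes
\[
\int_\gamma\bigl[\,A_{Q,V}(\cF_Q^m(z_2(y)))-A_{Q,V}(\cF_Q^m(z_1(y)))\,\bigr]\,\rho(x)\,dx.
\]
First I would establish the pointwise bound $\|z_2(y)-z_1(y)\|=\cO(\varepsilon_\gamma)$ in the $r,\varphi$ metric on $\Omega_0$. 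The collision formulas of Section~\ref{subsecSUV} show that $\pi_0\circ\cF$ differs from the billiard map with the disk frozen at $Q(y)$ only by a disk drift $\cO(\|V(y)\|\,L_{\max})$ during the inter-collision flight and an $\cO(1/M)$ recoil at the collision; further shifting the frozen disk from $Q(y)$ to $Q$ changes the image in $\Omega_0$ by $\cO(\|Q-Q(y)\|)$ by Lemma~\ref{lmclose0}. Since the disk moves at speed $\cO(\|V\|+i/M)$ and accumulates $\cO(i/M)$ recoil over $i\leq n-k$ collisions, $\|Q(y)-Q\|+\|V(y)-V\|\leq C\varepsilon_\gamma$ uniformly on $\cF^i(\gamma)$, which yields the claim.

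The second step is to propagate this $\cO(\varepsilon_\gamma)$ initial discrepancy through the remaining $m$ iterations of the hyperbolic map $\cF_Q$. I would decompose $z_2-z_1=h_s+h_u$ along the local stable and unstable cones of $\cF_Q$ and introduce the intermediate point $z_1':=z_1+h_u$ on the local unstable manifold through $z_1$, so that $z_2-z_1'=h_s$ is purely stable and of size $\cO(\varepsilon_\gamma)$. Under $\cF_Q^m$ the stable offset contracts to $\cO(\vartheta^m\varepsilon_\gamma)$, so the local Lipschitz estimate of Lemma~\ref{lmfRprop} (specialized to the billiard limit $M=\infty$), combined with the fact that the inverse-distance singular weight with exponent $\beta_A<1$ is integrable along u-curves, bounds the stable contribution $\int_\gamma[A_{Q,V}(\cF_Q^m(z_2))-A_{Q,V}(\cF_Q^m(z_1'))]\rho(x)\,dx$ by $\cO(\varepsilon_\gamma)$.

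The main obstacle is the unstable contribution $A_{Q,V}(\cF_Q^m(z_1'))-A_{Q,V}(\cF_Q^m(z_1))$, since $\cF_Q^m$ amplifies $h_u$ to a displacement of size $\cO(\vartheta^{-m}\varepsilon_\gamma)$, not small pointwise. The key observation is that as functions of $y$ (equivalently $x$), the map $y\mapsto\cF_Q^m(z_1'(y))$ is obtained from $y\mapsto\cF_Q^m(z_1(y))$ by a reparametrization shift along the image u-curve of size $\cO(\vartheta^{-m}\varepsilon_\gamma)$, while the total length $|\cF_Q^m(z_1(\gamma))|$ is comparable to $\vartheta^{-m}|\gamma|\geq\vartheta^{-m}\varepsilon_0$ by the uniform expansion (\ref{JJtheta}) and the hypothesis $|\gamma|\geq\varepsilon_0$ arising from the construction in Section~\ref{subsEP}. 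Changing variables to arclength along the image and using the distortion bound (Proposition~\ref{prdist}) together with the density bound (Corollary~\ref{crrho}), this unstable shift produces, after pushing $\rho$ forward and subtracting, a signed measure supported near the endpoints of the image H-components of total mass $\cO(\varepsilon_\gamma)$ by the growth-lemma estimate (Lemma~\ref{prgrow}(b)); since $A_{Q,V}$ is bounded, the unstable contribution is therefore $\cO(\varepsilon_\gamma)$. The hardest technical point is the singularity bookkeeping: one must confirm that the compounding local Lipschitz weights of $A_{Q,V}\circ\cF_Q^m$ remain integrable along u-curves uniformly in $m$, which is handled by a cone-tracking argument analogous to that of Appendix~C, using $\beta_A<1$.
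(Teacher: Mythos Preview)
Your overall strategy---matching $z_1=\cF_Q\circ\pi_0$ with $z_2=\pi_0\circ\cF$ at the level of $\cF^i\gamma$ and then propagating through $\cF_Q^m$---is exactly the paper's plan, and your pointwise bound $\|z_2-z_1\|=\cO(\varepsilon_\gamma)$ (away from singularities) is correct. However, the treatment of the unstable contribution contains a genuine gap.

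You claim that after pushing $\rho$ forward along the two maps and subtracting, the resulting signed measure is supported only near the endpoints of the image H-components. This would be true if the reparametrization $\psi$ (defined by $z_1'(y)\approx z_1(\psi(y))$) were a \emph{rigid} shift, i.e.\ $\psi'\equiv 1$. It is not: $\psi'(y)-1$ is controlled by the derivative of $h_u(y)=\Pi^u\bigl(z_2(y)-z_1(y)\bigr)$ along the curve, and since $D\cF$ and $D\cF_Q$ both blow up like $1/\cos\varphi_1$ near grazing collisions, their difference produces $h_u'(y)$ of order $\varepsilon_\gamma/\cos^2\varphi_1$, not $\cO(\varepsilon_\gamma)$. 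Thus the bulk term $\int A\cdot(\rho\circ\psi^{-1})\bigl|(\psi^{-1})'\bigr|-\int A\cdot\rho$ is \emph{not} absorbed by your endpoint estimate, and neither Proposition~\ref{prdist} nor Corollary~\ref{crrho} controls it: those results bound the distortion of a \emph{single} map along a curve, not the Jacobian of the map comparing two nearby dynamics. This Jacobian is precisely the quantity $\cJ_\ast$ in the paper's property (H3), and bounding $\EXP_{\ell_i^\ast}(|\ln\cJ_\ast|)\leq C\varepsilon_\gamma$ requires the separate two-orbit distortion estimate of Lemma~\ref{LmJac} (proved in Appendix~C), followed by an averaging over H-components via the growth lemma as in \eqref{sumbetaA}. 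Your proposal does not supply this step.

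A secondary issue: your stable contribution argument assumes that a displacement $h_s$ lying in the stable cone at $z_1$ contracts under $\cF_Q^m$. For vectors this is fine, but for \emph{points} it requires $\cF_Q^j(z_1')$ and $\cF_Q^j(z_2)$ to land in the same homogeneity section for every $0\leq j\leq m$, which fails on a set of positive measure. The paper handles this by passing to the genuine stable-manifold holonomy $h^\ast$ and removing, via $W'\setminus W_\ast'$ in \eqref{Wast}, the points whose stable manifold is too short; the growth lemma then bounds the removed measure by $\cO(\varepsilon_\gamma)$ (property (H1)). Your linear cone-splitting does not provide this excision, so the claimed contraction $\cO(\vartheta^m\varepsilon_\gamma)$ is not justified as stated.
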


\noindent{\em Proof}. Estimates of this kind have been obtained for
Anosov diffeomorphisms \cite{D1} and they are based on shadowing
\index{Shadowing} type arguments. We follow this line of arguments
here, too, but face additional problems when dealing with
singularities.

%In fact, for Anosov diffeomorphisms the right hand side
%of (\ref{EE}) would be just $\cO (\varepsilon_{ \gamma})$, which is
%better than ours, the extra factor $n-k$ here comes from the
%singularities in our system.

We first outline our proof. We will construct two subsets
$\gamma^{\ast}_i ,\hgamma^{\ast}_{i}\subset\gamma$ and an absolutely
continuous map $H^{\ast} \colon \gamma^{\ast}_i \to
\hgamma^{\ast}_{i}$ (in fact, the map $h^{\ast} = \cF_{i+1} \circ
H^{\ast} \circ \cF_i^{-1}$ will be the holonomy \index{Holonomy map}
map between some H-components of $\cF_i(\gamma)$ and those of
$\cF_{i+1} (\gamma)$)
\index{H-components} that have three properties:
\begin{itemize}
\item[(H1)] $\mes_{\ell} (\gamma \setminus \gamma^{\ast}_i) < C
\varepsilon_{\gamma}$ and $\mes_{\ell} (\gamma \setminus
\hgamma^{\ast}_{i}) < C\varepsilon_{\gamma}$,
\item[(H2)]
$\EXP_{\ell^{\ast}_i} (|A_{Q,V} \circ \cF_i-A_{Q,V} \circ h^{\ast}
\circ \cF_i|) < C\varepsilon_{\gamma} \vartheta^{n-k-i}$,
\item[(H3)] the Jacobian
$\cJ_\ast (x)$ of the map $H^{\ast}$ satisfies
\begin{equation} \label{smallC}
      |\ln \cJ_\ast| \leq 2
\end{equation}
 and
\begin{equation} \label{smallL}
   \EXP_{\ell^{\ast}_i}(|\ln \cJ_\ast|) < C\varepsilon_{\gamma}
\end{equation}
\end{itemize}
(here $\EXP_{\ell^{\ast}_i}(f) = \int_{\gamma^{\ast}_i} f\,
d\mes_{\ell}$ for any function $f$). In the rest of the proof of
Proposition~\ref{prEE}, we will denote $A_{Q,V}$ by $A$ for
brevity.

Observe that (H1)--(H3) imply (\ref{EE}). Indeed,
we use the
change of variables $y=H^{\ast}(x)$ and get
\begin{equation}
\label{SUSin}
   \bigl| \EXP_\ell (A\circ \cF_{i+1} - A\circ \cF_{i})
       \bigr|
\end{equation}
\begin{align*}
       & \leq
\left(     \int_{\gamma-\gamma_{i}^\ast}
     \bigl|(A\circ \cF_{i})\, d\mes_{l_i}\bigr|
    +\int_{\gamma-\hgamma_{i}^\ast}
    \bigl|(A\circ \cF_{i+1})\bigr|\, d\mes_{l_i} \right)\\
    & + \EXP_{l_i^\ast}\bigl|(A\circ h^\ast\circ \cF_i)
    -(A \circ \cF_i)\bigr| \\
    & + \EXP_{l_i^\ast}\bigl|(A\circ h^\ast\circ \cF_i)
    (\cJ_\ast-1)\bigr|\\
    &=I+\RmII+\RmIII
\end{align*}
Next
\begin{align}
    |I| & \leq 2 C\eps_\gamma \|A\|_\infty & \quad \text{by (H1)} \\
    |\RmII| & \leq C\eps_\gamma & \quad \text{by (H2)}
\end{align}
To estimate $\RmIII$ observe that (\ref{smallC}) implies
$|\cJ_\ast-1|\leq\Const\, |\ln\cJ_\ast|$ on $\gamma_i^\ast$, so
\begin{equation}
    |\RmIII|\leq
    C\, \EXP_{\ell_i^\ast}\bigl(|\ln\cJ_\ast|\bigr)\,
    \|A\|_\infty \leq C \eps_\gamma \|A\|_\infty
     \qquad\text{by (H3)}
\end{equation}
This completes the proof of (\ref{EE}) assuming (H1)--(H3).

We begin the construction of the sets $\gamma^{\ast}_i$ and
$\hgamma^{\ast}_{i}$. First, the definition of both maps $\cF_i$ and
$\cF_{i+1}$, see (\ref{cFi}), involves the transformation of the
curve $\gamma$ to $\cF^i (\gamma )$. Let $\tgamma$ be an H-component
of $\cF^{i} (\gamma)$. If its length is \index{H-components}
$<c_3\varepsilon_\gamma$, we simply discard it (i.e., remove its
preimage in $\gamma$ from the construction of both $\gamma^{\ast}_i$
and $\hgamma^{\ast}_{i}$). If $|\tgamma| > c_3\varepsilon_\gamma$,
then Lemma~\ref{lmclose2} gives us two partitions $\tgamma
=\cup_{j=0}^J\tgamma_i =\cup_{j=0}^J\tgamma_j'$, such that for each
$j=1,\dots,J$ the sets $\cF_{Q}(\pi_{0}(\tgamma_j))$ and
$\pi_{0}(\cF(\tgamma_j'))$ are \index{H-curves} H-curves and
$\dist(\cF_{Q} (\pi_{0}(\tgamma_j)) ,\pi_{0}(\cF(\tgamma_j')) <
c_1\varepsilon_{\gamma}$. We remove the preimage of $\tgamma_0$ from
the construction of $\gamma^{\ast}_i$, and the preimage of
$\tgamma_0'$ from the construction of $\hgamma^{\ast}_{i}$. By
Lemma~\ref{prgrow}, the total $\mes_{\ell}$-measure of the (so far)
removed sets is $\cO (\varepsilon_\gamma)$. The remaining H-curves
in $\cF_{Q}(\pi_{0}(\cF^i(\gamma))$ and $\pi_{0}(\cF^{i+1}(\gamma))$
are now paired according to Lemma~\ref{lmclose2}.

Consider an arbitrary pair of curves $W'\subset\cF_{Q} (\pi_{0}
(\cF^i (\gamma))$ and $W''\subset\pi_{0} (\cF^{i+1} (\gamma))$
constructed above and remember that $\dist(W',W'') <
c_1\varepsilon_{\gamma}$. According to our definition of the maps
$\cF_i$ and $\cF_{i+1}$, both curves $W'$ and $W''$ will be then
iterated $n-k -i -1$ times under the same billiard map $\cF_{Q}$.
For each $x\in W'$ and $n\geq 0$ denote by $r_n(x)$ the distance
from the point $\cF_Q^n(x)$ to the nearest endpoint of the
H-component of $\cF_Q^n(\gamma')$ that contains the point
\index{H-components} $\cF_Q^n(x)$. Define
\begin{equation} \label{Wast}
    W'_\ast=\{x\in W':\quad r_n(x)\geq C\eps_\gamma \vartheta^n
    \quad \text{for all}\quad n\geq 0 \}
\end{equation}
where $C$ is a constant chosen as follows. Let $r^s(x)$ denote the
distance from $x$ to the nearest endpoint of the homogeneous stable
manifold $W^s_x$ for the map $\cF_{Q}$ passing through $x$. (A
homogeneous stable manifold $W^s \subset \Omega_Q$ is a maximal
curve such that $\cF_Q^n (W^s)$ is a homogeneous s-curve for each
$n\geq 0$.) By \cite[Appendix 2]{BSC2}, if $r^s(x)<\varepsilon$,
then for some $n\geq 0$ the point $\cF_Q^n(x)$ lies within the
$(\varepsilon \vartheta^n) $-neighborhood of either a singularity
set of the map $\cF_Q$ or the boundary of a homogeneity strip
\index{Homogeneity strips} $\bbH_{\pm k}$, $k\geq k_0$. Since the
singularity lines and the boundaries of homogeneity strips are
uniformly transversal to \index{u-curves (unstable curves)} u-curves
it follows that if $C$ in (\ref{Wast}) is large enough then for all
$x\in W'_\ast$ $W^s_x \cap W'' \neq \emptyset .$ Let $h \colon
W_{\ast}' \to W''$ denote the holonomy \index{Holonomy map} map
(defined by sliding along the stable manifolds $W^s_x$). We remove
the preimage of the set $W' \setminus W_{\ast}'$ from the
construction of $\gamma^{\ast}_i$, and the preimage of the set $W''
\setminus h(W_{\ast} ')$ -- from the construction of
$\hgamma^{\ast}_{i}$.

We need to estimate the measure of the sets just removed from the
construction. Denote by $\gamma' = \cup_{\alpha} \gamma_{\alpha}'
\subset \Omega_0$ the union of the above \index{H-curves} H-curves
$W' \subset \cF_{Q} (\pi_{0} (\cF^i (\gamma) )$ and by
$\mes_{\gamma'}$ the restriction of the measure $\cF_{Q} (\pi_{0}
(\cF^i (\mes_\ell)))$ to $\gamma'$.

\begin{claim}
$\mes_{\gamma'} \bigl( \cup_{W'}
(W' \setminus W_{\ast}') \bigr)
\leq \Const\, \varepsilon_\gamma$,
and a similar estimate holds for
$\cup_{W''} (W'' \setminus h(W_{\ast}'))$.
\end{claim}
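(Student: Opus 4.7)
\proof (Proposed plan.)
The strategy is to apply the growth lemma (Lemma~\ref{prgrow}) to the family of H-curves $\{W'\}$ comprising $\gamma'$ and then sum a geometric series. First, observe that the family of H-curves $\{W'\}$ was produced as $\cF_Q\circ\pi_0$ applied to the H-components of $\cF^i(\gamma)$ (after discarding very short pieces); by Proposition~\ref{prspair} and Lemma~\ref{prgrow}(b), together with the remarks at the end of Section~\ref{subsecSP}, the initial measure $\mes_{\gamma'}$ satisfies a uniform endpoint bound
\[
   \mes_{\gamma'}\bigl(x\colon\ r_0(x)<\varepsilon\bigr)\leq \beta_7\,\varepsilon
   \qquad \forall \varepsilon>0,
\]
and hence by the averaged form of Lemma~\ref{prgrow}(a) quoted there,
\[
   \mes_{\gamma'}\bigl(x\colon\ r_n(x)<\varepsilon\bigr)\leq \beta_7\,\varepsilon
   \qquad \forall \varepsilon>0,\ n\geq 0.
\]

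With this bound in hand, the set $W'\setminus W'_{\ast}$ is by definition (\ref{Wast}) contained in $\bigcup_{n\geq 0}\{x\colon r_n(x)<C\varepsilon_\gamma\vartheta^n\}$, so subadditivity gives
\[
   \mes_{\gamma'}\Bigl(\bigcup_{W'}(W'\setminus W'_{\ast})\Bigr)
   \leq \sum_{n=0}^{\infty}\beta_7\, C\varepsilon_\gamma\vartheta^n
   =\frac{\beta_7\,C}{1-\vartheta}\,\varepsilon_\gamma,
\]
which is the first claimed estimate. The main step here is really just the growth lemma; the only thing to be careful about is that $\gamma'$ is a union of H-curves (not a single one), which is precisely why we invoke the averaged version at the end of Section~\ref{subsecSP}.

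For the second estimate, the plan is to run a parallel argument on the $W''$ side and then match the two via the holonomy map. Define $\gamma''=\bigcup_{W''} W''$ with the natural conditional measure $\mes_{\gamma''}$, and set
\[
   W''_{\ast}=\bigl\{y\in W''\colon r_n(y)\geq C'\varepsilon_\gamma\vartheta^n
   \text{ for all } n\geq 0\bigr\}
\]
for a sufficiently large constant $C'$. The same growth-lemma argument yields $\mes_{\gamma''}(\gamma''\setminus\bigcup W''_{\ast})\leq\Const\,\varepsilon_\gamma$. It therefore suffices to verify the inclusion $W''_{\ast}\subseteq h(W'_{\ast})$. Given $y\in W''_{\ast}$, the same estimate from \cite[Appendix 2]{BSC2} that was used to construct $h$ shows that $r^s(y)$ is bounded below by a constant multiple of $C'\varepsilon_\gamma$; taking $C'$ large relative to $c_1$ and using $\dist(W',W'')<c_1\varepsilon_\gamma$ guarantees that $W^s_y$ meets $W'$ at a unique point $x$. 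The fact that $x\in W'_{\ast}$ follows because $\cF_Q^n(x)$ and $\cF_Q^n(y)$ lie on a common homogeneous stable manifold and hence sit in the corresponding H-components of $\cF_Q^n(\gamma')$ and $\cF_Q^n(\gamma'')$ that are $\cO(\vartheta^n)$-close; one then has $r_n(x)\geq r_n(y)-\cO(\vartheta^n)$, so choosing $C'$ slightly larger than $C$ absorbs the error.

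The main obstacle is this last step: relating $r_n(x)$ and $r_n(y)$ when $x$ and $y$ belong to H-components of two different (but close) u-curves $\cF_Q^n(\gamma')$ and $\cF_Q^n(\gamma'')$. Arguing this rigorously requires that the endpoints of these H-components be controlled in pairs, which in turn reduces to the closeness of the corresponding singularity sets and homogeneity-strip boundaries — exactly the content of Lemma~\ref{lmclose0}(a) and Corollary~\ref{crclose2} iterated along the orbit. Once this pairing is handled (with the factor $\vartheta^n$ absorbed into the choice of $C'$), the second estimate follows from the first applied to $\gamma''$. \qed
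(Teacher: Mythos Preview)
Your argument for the first estimate is essentially identical to the paper's: apply the averaged growth lemma to the family $\gamma'$ and sum the geometric series $\sum_n \beta_7 C\varepsilon_\gamma\vartheta^n$.

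For the second estimate, your approach is logically equivalent to the paper's but organized differently, and one step is not justified correctly. The paper does not introduce $W''_\ast$ or try to compare $r_n(x)$ with $r_n(y)$ across the two curves. Instead it argues directly that every $x\in W''\setminus h(W'_\ast)$ has its $\cF_Q$-orbit passing close to the singularity set: either $r^s(x)\le\Const\,\varepsilon_\gamma$, in which case this is immediate from the characterization in \cite[Appendix~2]{BSC2}; or $r^s(x)>\Const\,\varepsilon_\gamma$, so $x'=W^s_x\cap W'$ exists, and since $x\notin h(W'_\ast)$ we have $x'\notin W'_\ast$, hence some $\cF_Q^k(x')$ is within $C\varepsilon_\gamma\vartheta^k$ of a singularity, and then so is $\cF_Q^k(x)$ because the two points lie on a common stable manifold and are therefore $\cO(\varepsilon_\gamma\vartheta^k)$-close. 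One then applies the same growth-lemma bound (\ref{rnxass}) on $\gamma''$.

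The advantage of the paper's phrasing is that ``distance to the singularity set of $\cF_Q$'' is intrinsic to the point and the map, so transferring it from $x'$ to $x$ requires only that the two orbits stay close. Your inequality $r_n(x)\ge r_n(y)-\cO(\vartheta^n)$ instead compares distances to endpoints of H-components of \emph{different} curves $\cF_Q^n(W')$ and $\cF_Q^n(W'')$, which indeed requires a pairing of those endpoints. Your appeal to Lemma~\ref{lmclose0}(a) and Corollary~\ref{crclose2} here is misplaced: those results compare the singularity sets of $\cF_Q$ and $\cF_{Q'}$ for \emph{different} $Q$, whereas in the present situation both curves are iterated by the \emph{same} map $\cF_Q$, so its singularity set is identical for both orbits and no such comparison lemma is needed. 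The endpoint pairing you want does hold, but it follows simply from the fact that both H-component structures are cut by the same singularities and homogeneity-strip boundaries, and the two orbits stay $\cO(\varepsilon_\gamma\vartheta^n)$-close; the paper's formulation via $r^s$ just avoids stating this explicitly.
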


\proof For any $n\geq 0$ and $\varepsilon>0$
\beq
      \mes_{\gamma'}\left(x\in\gamma'\colon
      r_n(x)<\varepsilon\right)<\beta\varepsilon
        \label{rnxass}
\eeq
where $\beta>0$ is some large constant, according to the remarks
in the end of Section~\ref{subsecSHUC} (they are stated for the
map $\cF$, but obviously apply to the billiard map $\cF_Q$ as
well).

% \beq
%      \mes_{\gamma'}\bigl(x\in\gamma'\colon\,
%      r^s(x)<\varepsilon\bigr)<\beta'\varepsilon
%        \label{rnxcon}
% \eeq
%for some large constant $\beta'>0$.
%Now we use (\ref{rnxass}) to obtain
Thus
$$
    \mes_{\gamma'}\bigl(\cup_{W'}
    (W' \setminus W_{\ast}') \bigr) \leq
      \sum_{n=0}^{\infty}C\beta\varepsilon_\gamma\vartheta^n=
      \frac{C\beta}{1-\vartheta}\,\varepsilon_\gamma.
$$
%which proves (\ref{rnxcon}).
This proves the estimate for $\mes_{\gamma'}\bigl(\cup_{W'} (W'
\setminus W_{\ast}') \bigr).$ To get a similar estimate for
$\cup_{W''} (W'' \setminus h(W_{\ast}'))$ we observe that the
$\cF_Q$ orbits of the points $x \in \cup_{W''} (W'' \setminus
h(W_{\ast}'))$ also come close to the singularities. Indeed, if
$r^s(x)\leq \Const\, \eps_\gamma$, then the orbit of $x$ comes
close to singularities by the previous discussion. If the opposite
inequality holds, then the orbit of $x$ should pass near a
singularity since otherwise we would have $h^{-1} (x) \in
W_\ast'.$ Now the result follows by (\ref{rnxass}). \qed
\medskip

This completes the construction of the sets $\gamma^{\ast}_i$ and
$\hgamma^{\ast}_{i}$ and the proof of (H1). The map $h^\ast\colon\
\cF_i(\gamma^{\ast}_i )\to\cF_{i+1} (\hgamma^{\ast}_{i})$ is the
induced holonomy \index{Holonomy map} map. It remains to prove (H2)
and (H3).

Put $d\colon= n-k -i -1$ for brevity. For any point $x' \in
W_{\ast}'$ and its ``sister'' $x'' =h(x) \in W''$, the points $z'=
\cF_{Q}^d (x') \in \cF_i (\gamma)$ and $z''= \cF_{Q}^d (x'') \in
\cF_{i+1} (\gamma)$ (related by $h^{\ast}(z')=z''$) will be
$(C\vartheta^d\varepsilon_\gamma)$-close, since $\cF_Q$ contracts
stable manifolds by a factor $\leq\vartheta<1$. In other words, the
trajectory of the point $x''$ shadows \index{Shadowing} that of $x'$
in the forward dynamics. Therefore, the values of the function
$A=A_{Q,V}$ will differ at the endpoints $z'$ and $z''$ by at most
$\cO (\vartheta^d \varepsilon_\gamma D(z',z''))$, unless they are
separated by a discontinuity curve of the function $A$. Here
$D(z',z'')=[\dist(W^s(z',z''),\cS_{Q,V})]^{-\beta_A}$, where
$W^s(z',z'')$ denotes the stable manifold connecting $z'$ with
$z''$, and $\cS_{Q,V}=\pi_0(\cS_{n_A}\cap\Omega_{Q,V})$ in
accordance with Lemma~\ref{lmfRprop} (b).

Let $W^{\diamond}$ be an H-component of $\cF_Q^d(W')$. Put
\index{H-components}
$W^{\diamond}_{\ast}=W^{\diamond}\cap\cF_Q^d(W_{\ast}')$ and
$\mes_i=\cF_i(\mes_{\ell})$. We need to estimate
$$
    \Delta(W^{\diamond})\colon =
    \int_{W^{\diamond}_{\ast}}|A(z')-A(z'')|\,d\,\mes_i
$$
The curve $W^{\diamond}$ crosses the discontinuity set $\cS_{Q,V}$
in at most $K_{n_A}$ points, cf.\ Lemma~\ref{lmsingQV}. If a pair of
points $z'$ and $z'' = h^\ast (z')$ is separated by a curve of
$S_{Q,V}$, then both $z'$ and $z''$ lie in the $(C\vartheta^d
\varepsilon_\gamma)$-neighborhood of that curve. Let $\fU_A$ denote
the $(C\vartheta^d \varepsilon_\gamma)$-neighborhood of $S_{Q,V}$.
The sets $W^\diamond \cap \fU_A$ and $W^\diamond_\ast \cap \fU_A$
have $|\cdot |$-measure less than $K_{n_A}$ times the $|\cdot |$
measure of the $(C \vartheta^d \varepsilon_\gamma)$-neighborhood of
the endpoints of $W^\diamond$ and $W^\diamond_\ast$, respectively.
Hence the contribution of these sets to $\Delta(W^{\diamond})$ will
be bounded by
$$
  K_{n_A} \mes_\ell \bigl( r_{n-k} (x)
  < C \vartheta^d \varepsilon_\gamma \bigr)
  \leq \Const \, \vartheta^d \varepsilon_\gamma
$$
where we used Lemma~\ref{prgrow}.

Next, the set $W^{\diamond} \setminus \fU_A$ is a union of
\index{H-curves} H-curves $W_1,\dots,W_k$ with some $k\leq K_{n_A}$.
For each $W_j$ we put $W_{j\ast}=W_j\cap\cF_Q^d(W_{\ast}')$ and
estimate
\begin{align*}
   \int_{W_j\ast}|A(z')-A(z'')|\,d\,\mes_i
     & \leq
   C'\vartheta^d\varepsilon_\gamma\,
   \frac{\mes_i(W_j)}{|W_j|}\,
   \int_0^{|W_j|} t^{-\beta_A}\, dt \\
     & \leq
   C''\vartheta^d \varepsilon_\gamma
     \frac{\mes_i(W_j)}{|W_j|^{\beta_A}}
\end{align*}
where $C',C''>0$ are some constants. Summing up over $j$ gives
\beq \label{AAWd}
   \int_{W^{\diamond}}|A(z')-A(z'')|\,d\,\mes_i
   \leq \Const\,K_{n_A}^{\beta_A}
   \vartheta^d\varepsilon_\gamma
   \frac{\mes_i(W^{\diamond})}{|W^{\diamond}|^{\beta_A}},
\eeq
where we first used the homogeneity of the measure $\mes_i$ to
estimate
$$
   \mes_i (W_j) \leq \Const\, |W_j|\,
   \frac{\mes_i (W^\diamond)}{|W^\diamond|}
$$
and then by Jensen's inequality obtain
$$
  \sum_j |W_j|^{1-\beta_A}
  \leq K_{n_A}^{\beta_A} |W^\diamond|^{1-\beta_A}.
$$

Next, summing over all the H-components of $\cF_Q^d(W')$ and all
\index{H-components} the curves $W'\subset\cF_{Q} (\pi_{0} (\cF^i
(\gamma))$ gives a bound
\begin{align}
\label{EqShift}
    \EXP_{\ell^{\ast}_i}
    (|A\circ\cF_i-A\circ h^{\ast}\circ\cF_i|)
    & \leq
    \sum_{W^{\diamond}\subset\cF_i(\gamma)}
   \Const\,K_{n_A}^{\beta_A} \vartheta^d\varepsilon_\gamma
     \frac{\mes_i(W^{\diamond})}{|W^{\diamond}|^{\beta_A}}
     \nonumber\\
     & \leq
   \Const\,K_{n_A}^{\beta_A} \vartheta^d\varepsilon_\gamma
\end{align}
where the last inequality follows from Lemma~\ref{prgrow} and
\beq
       \sum_{W^{\diamond}\subset\cF_i(\gamma)}
       \frac{\mes_i(W^{\diamond})}{|W^{\diamond}|^{\beta_A}}
       \leq \Const\int_\gamma
       [r_{n-k}(x)]^{-\beta_A}\, d\rho(x)
       \leq \, \Const
         \label{sumbetaA}
\eeq
(we remind the reader that $\beta_A<1$). This proves (H2).
It remains to prove
(H3).

\begin{figure}[htb]
    \centering
    \psfrag{x1}{$x'$}
    \psfrag{x2}{$x''$}
    \psfrag{x3}{$x_-'$}
    \psfrag{x4}{$x_-''$}
    \psfrag{y1}{$y'$}
    \psfrag{y2}{$y''$}
    \psfrag{z1}{$z'$}
    \psfrag{z2}{$z''$}
    \psfrag{g}{$\gamma$}
    \psfrag{gt}{$\tgamma$}
    \psfrag{p}{$\pi_Q$}
    \psfrag{W}{$W^s$}
    \psfrag{F}{$\cF$}
    \psfrag{Fi}{$\cF^i$}
    \psfrag{Fq}{$\cF_Q$}
    \psfrag{Fd}{$\cF_Q^d$}
    \psfrag{B}{$\Omega_0$ space $\ \ \Bigg\{$}
    \psfrag{T}{$\Bigg\}\quad\Omega$ space}
    \includegraphics{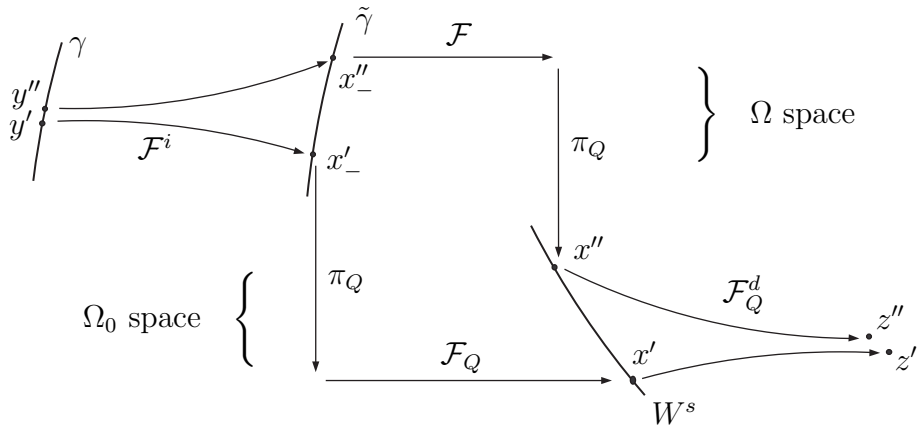}
    \caption{The construction in Proposition~\ref{prEE}}
    \label{FigShadow}
\end{figure}

Let $x_-', x_-''\in\tgamma$ be the preimages of $x',x''$,
respectively, i.e.\ $x' = \cF_{Q} (\pi_0 (x_-'))$ and $x'' = \pi_0
(\cF (x_- ''))$. Note that the distance between $x_- '$ and $x_- ''$
is $<C \varepsilon_\gamma$. Let $y' = \cF^{-i} (x_-')$ and $y'' =
\cF^{-i} (x_-'')$ be the preimages of our two points on the original
curve $\gamma$. Note that dist$(y', y'') \leq C\vartheta^i
\varepsilon_\gamma$ and $z' = \cF_i (y')$ and $z'' = \cF_{i+1}
(y'')$, see Fig.~\ref{FigShadow}. (We can say that the trajectory of
the point $y''$ shadows \index{Shadowing} that of $y'$ during all
the $n-k$ iterations.) Now $y''=H^{\ast}(y')$, where $H^{\ast}
=\cF_{i+1}^{-1}\circ h^{\ast}\circ\cF_i$. The Jacobian $\cJ_{\ast}$
of the map $H^{\ast}\colon\gamma\to\gamma$ satisfies
$$
   \ln \cJ_{\ast}(y') = \ln \frac{\cJ_{\gamma}\cF_i(y')}
       {\cJ_{\gamma}\cF_{i+1}(y'')} + \ln \cJ h^{\ast}(z')
$$
where $\cJ_{\gamma} \cF_i$ and $\cJ_{\gamma} \cF_{i+1}$ denote the
Jacobians (the expansion factors) of the maps $\cF_i$ and
$\cF_{i+1}$, respectively, restricted to the curve $\gamma$, and
$\cJ h^{\ast}$ is the Jacobian of the holonomy \index{Holonomy map}
map $h^{\ast}$.

\begin{lemma}
\label{LmJac}
We have
\beq
       \left | \ln \frac{\cJ_{\gamma}\cF_i(y')}
       {\cJ_{\gamma}\cF_{i+1}(y'')} \right |
       \leq \frac{C\varepsilon_\gamma}{|\tgamma|^{2/3}}
       + \sum_{r=0}^{d}
       \frac{C\vartheta^r\varepsilon_\gamma}{|\tgamma_r'|^{2/3}}
          \label{sumd}
\eeq
and
\beq
    \bigl |\ln \cJ h^{\ast}(z')\bigr |
    \leq \sum_{r=d}^{\infty}
       \frac{C\vartheta^r\varepsilon_\gamma}{|\tgamma_r'|^{2/3}}
\eeq
where $\tgamma_r'$ denotes the \index{H-components} H-component of
$\cF_{Q}^{r+1} (\pi_0 (\tgamma))$ containing the point $\cF_{Q}^r
(x')$.
\end{lemma}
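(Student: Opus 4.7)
The plan is to decompose both Jacobians using the factorizations $\cF_i = \cF_Q^{d+1}\circ\pi_0\circ\cF^i$ and $\cF_{i+1} = \cF_Q^d\circ\pi_0\circ\cF^{i+1}$ (where $d=n-k-i-1$), and to pair up the resulting factors so that each pairwise discrepancy is controlled by the one-step distortion estimate of Proposition~\ref{prdist}. The first term $C\varepsilon_\gamma/|\tgamma|^{2/3}$ arises from comparing $\cJ_\gamma\cF^i(y')$ with $\cJ_\gamma\cF^i(y'')$: both $x_-'=\cF^i(y')$ and $x_-''=\cF^i(y'')$ land in the same H-component $\tgamma\subset\cF^i(\gamma)$ at distance $\leq C\varepsilon_\gamma$, so Proposition~\ref{prdist}, applied to the perturbed map $\cF$ (whose distortion bounds carry over by the invariance of standard pairs, Proposition~\ref{prspair}), yields exactly this contribution.

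The residual factor telescopes into a one-step bridge pitting $\cJ(\cF_Q\circ\pi_0)(x_-')$ against $\cJ(\pi_0\circ\cF)(x_-'')$, times the $d$-step ratio $\cJ\cF_Q^d(x')/\cJ\cF_Q^d(x'')$. I split the bridge as
\[
 \cJ(\cF_Q\circ\pi_0)(x_-')-\cJ(\pi_0\circ\cF)(x_-'') = \bigl[\cJ(\cF_Q\circ\pi_0)(x_-')-\cJ(\pi_0\circ\cF)(x_-')\bigr] + \bigl[\cJ(\pi_0\circ\cF)(x_-')-\cJ(\pi_0\circ\cF)(x_-'')\bigr].
\]
The first bracket measures the effect of freezing the disk at $Q$ instead of letting it drift, an $\cO(1/M)=\cO(\varepsilon_\gamma)$ perturbation; the second is a standard distortion inside $\tgamma$ with displacement $\leq C\varepsilon_\gamma$. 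Proposition~\ref{prdist} together with Lemmas~\ref{lmclose2} and \ref{lmclose1} (which locate the singularity sets and H-components of $\cF_Q$ and $\pi_0\circ\cF$ within $C\varepsilon_\gamma$ of each other, so that $\tgamma_0'$ serves as the natural common denominator) produce the $r=0$ term $C\varepsilon_\gamma/|\tgamma_0'|^{2/3}$. For the $d$-step ratio, I use the crucial fact that $x''\in W^s_{x'}$, built into the definition of $h$ on $W_\ast'$: stable manifolds cannot cross homogeneity-strip boundaries, so $\cF_Q^s(x')$ and $\cF_Q^s(x'')$ belong to the common H-component $\tgamma_s'$ and satisfy $\dist(\cF_Q^s(x'),\cF_Q^s(x''))\leq C\vartheta^s\varepsilon_\gamma$. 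Iterating Proposition~\ref{prdist} step by step and summing gives $\sum_{s=1}^d C\vartheta^s\varepsilon_\gamma/|\tgamma_s'|^{2/3}$, completing the first estimate.

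For the holonomy Jacobian $\cJ h^\ast(z')$, I use the standard absolute-continuity telescoping: $\ln \cJ h^\ast(z')$ equals the sum over $s\geq 0$ of the differences of the unstable expansion factor of $\cF_Q$ evaluated at $\cF_Q^s(z')$ and $\cF_Q^s(z'')$. Since $z'=\cF_Q^d(x')$ and $z''=\cF_Q^d(x'')$ inherit the stable-manifold relation, $\dist(\cF_Q^s(z'),\cF_Q^s(z''))\leq C\vartheta^{d+s}\varepsilon_\gamma$, both points lie in $\tgamma_{d+s}'$, and the single-step distortion estimate bounds the $s$-th summand by $C\vartheta^{d+s}\varepsilon_\gamma/|\tgamma_{d+s}'|^{2/3}$. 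Reindexing $r=d+s$ yields the claimed tail starting at $r=d$.

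The main obstacle will be making the one-step bridge rigorous: one must show that $\cF_Q\circ\pi_0$ and $\pi_0\circ\cF$ are close in a \emph{distortion} sense near their singularities, so that the discrepancy is still controlled by $\cO(\varepsilon_\gamma/|\tgamma_0'|^{2/3})$ even though the two maps' singularity sets are displaced and their one-step Jacobians blow up there. This requires exploiting the transversality of singularity curves to u-curves (Lemma~\ref{lmsingQV}) and the quantitative $\cO(\varepsilon_\gamma)$ closeness of singularities and H-components afforded by Lemmas~\ref{lmclose2} and \ref{lmclose1}.
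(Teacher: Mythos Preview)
Your three-piece decomposition of $\ln\bigl(\cJ_\gamma\cF_i(y')/\cJ_\gamma\cF_{i+1}(y'')\bigr)$ into a $\cF^i$-block, a one-step bridge, and a $\cF_Q^d$-block is exactly the right architecture and matches the paper. The first block is fine: $y',y''$ lie on the common H-curve $\gamma$ (hence $x_-',x_-''\in\tgamma$), so Proposition~\ref{prdist} applies literally and yields $C\varepsilon_\gamma/|\tgamma|^{2/3}$. The holonomy part is also fine.

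The gap is in the third block and, to a lesser extent, the bridge. You invoke Proposition~\ref{prdist} ``step by step'' for $\cF_Q^r$ acting on $x',x''$, and you assert that $\cF_Q^s(x')$ and $\cF_Q^s(x'')$ ``belong to the common H-component $\tgamma_s'$''. They do not: $x'=\cF_Q(\pi_0(x_-'))$ lies on the curve $\cF_Q(\pi_0(\tgamma))$, while $x''=\pi_0(\cF(x_-''))$ lies on the \emph{different} curve $\pi_0(\cF(\tgamma))$; so for all $s$ the two iterates sit on distinct u-curves, linked only by a stable manifold. Proposition~\ref{prdist} as stated requires both points on a single u-curve $W_0$, so it does not literally apply. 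What saves the argument is that the machinery \emph{behind} Proposition~\ref{prdist}---the one-step tangent-vector comparison of Lemma~\ref{lm0} and Corollary~\ref{cr0} in Appendix~C---only needs the two points to lie in the same homogeneity section and to satisfy (\ref{assq})--(\ref{assv}); both hold here because $x',x''$ share a homogeneous stable manifold (so do all their $\cF_Q$-iterates). The paper therefore abandons the Jacobian-ratio viewpoint and instead carries a pair of tangent vectors $(dy',dy'')$ through the entire composite map, proving at each stage that the image vectors are $\varepsilon$-close in the sense of Appendix~C; the Jacobian bound then drops out. The bridge step is handled the same way: rather than citing Lemmas~\ref{lmclose2}--\ref{lmclose1} (which give only $C^0$ closeness of curves, not distortion control), the paper compares $D(\cF_Q\circ\pi_0)(dx_-')$ with $D(\pi_0\circ\cF)(dx_-'')$ directly, via a Claim whose proof analyzes how $D\pi_0$ acts on the $(dQ,dV,dq,dw)$-components and then feeds the result into Lemma~\ref{lm0}. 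Your outline becomes a complete proof once you switch from Proposition~\ref{prdist} to the tangent-vector lemmas of Appendix~C at these two steps.
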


This lemma is, in a sense, an extension of
Proposition~\ref{prdist}. Its proof is given in Appendix~C, after
the proof of Proposition~\ref{prdist}.

Now (\ref{smallC}) follows directly from Lemma \ref{LmJac} and the
definition of $\gamma^\ast_i$, cf.\ (\ref{Wast}). To complete the
proof of (H3), we need to establish (\ref{smallL}):
\begin{align*}
    \EXP_{\ell_i^{\ast}}(|\ln \cJ_\ast|)
    & \leq
    \sum_{\tgamma\subset\cF^i(\gamma)}
   C\varepsilon_\gamma
     \frac{\mes'(\tgamma)}{|\tgamma|^{2/3}}\\
     &\quad +\sum_{r=0}^{\infty}
     \sum_{\tgamma'\subset\cF_{Q}^{r+1}(\pi_0 (\cF^i(\gamma)))}
   C\vartheta^r\varepsilon_\gamma
     \frac{\mes_r'(\tgamma')}{|\tgamma'|^{2/3}}\\
     & \leq
   C\varepsilon_\gamma+\sum_{r=0}^{\infty}
   C\vartheta^r\varepsilon_\gamma
   = \Const\,\varepsilon_\gamma
         \label{EElong}
\end{align*}
where $\mes' = \cF^i(\mes_{\ell})$ and $\mes_r' = \cF_{Q}^{r+1}
(\pi_0 (\cF^i(\mes_{\ell})))$. Here we used the same trick as in
(\ref{sumbetaA}). The property (H3) is proved, and so is
Proposition~\ref{prEE}. \qed
\medskip

\begin{proposition}
\label{prh} There is a constant $C$ such that
$$
    \left|\EXP_\ell\left((A_{Q_{n-k}, V_{n-k}}\circ
    \cF_{n-k})-(A_{Q, V}\circ \cF_{n-k})\right)\right|
    \leq C \eps_\gamma.
$$
\end{proposition}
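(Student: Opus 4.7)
The proof plan closely parallels the argument of Lemma~\ref{lmAA}, but now the integration is carried out against the push-forward measure $\nu := (\pi_0\circ\cF^{n-k})_\ast\mes_\ell$ on $\Omega_0$ (supported on the H-curves $\pi_0(\cF^{n-k}(\gamma_\alpha))$) rather than against the smooth billiard measure $\mu_0$. For each $x\in\gamma$ set $(Q',V')=\pi_1(\cF^{n-k}(x))$; the very definition of $\eps_\gamma$ in Proposition~\ref{prEE} gives $\|Q-Q'\|+\|V-V'\|\leq C\eps_\gamma$ uniformly in $x$, so it suffices to bound
$$
   \int \bigl|A_{Q',V'}(y)-A_{Q,V}(y)\bigr|\,d\nu(y)\leq C\eps_\gamma,
$$
where $(Q',V')$ is read as a function of $x$ via $y=\pi_0(\cF^{n-k}(x))$.

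As in Lemma~\ref{lmAA} the pointwise difference splits into an $E_1$-contribution from the mismatch of the discontinuity sets $\cS_{Q,V}$ and $\cS_{Q',V'}$ of $A_{Q,V}$ and $A_{Q',V'}$, and an $E_2$-contribution from the Lipschitz variation of $A_{Q,V}$ with $(Q,V)$ on the common regular region. Applying Corollary~\ref{crclose2} inductively over the $n_A$ iterates hidden inside $A$ shows that these two discontinuity sets lie within Hausdorff distance $O(\eps_\gamma)$ of each other, and by Lemma~\ref{lmsingQV} each is a finite union of s-curves uniformly transversal to H-curves. Let $G_0\subset\Omega_0$ denote a union of stripes of width $O(\eps_\gamma)$ enclosing both singularity sets.

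For the $E_1$ bound, I would invoke the Growth Lemma~\ref{prgrow}(b) together with the remarks at the end of Section~\ref{subsecSP}: since $n-k\geq K|\ln\length(\gamma)|$ and H-components of $\cF^{n-k}(\gamma)$ are uniformly transversal to s-curves, one obtains $\nu(G_0)\leq C\eps_\gamma$, dominating the $E_1$-term by $2\|A\|_\infty\cdot C\eps_\gamma$. For the $E_2$ bound, Lemma~\ref{lmfRprop}(b) gives the pointwise estimate
$$
   |A_{Q',V'}(y)-A_{Q,V}(y)|\leq C\eps_\gamma\,[\dist(y,G_0)]^{-\beta_A}
$$
on $\Omega_0\setminus G_0$; integrating against $\nu$ and proceeding as in~\eqref{sumbetaA}---breaking each H-component near an s-curve of $\cS_{n_A}\cap\Omega_{Q,V}$ into sub-arcs $W_j$ of homogeneous density and summing one-dimensional integrals $\int_0^{|W_j|}t^{-\beta_A}\,dt$---gives a finite bound since $\beta_A<1$, contributing a further $C\eps_\gamma$.

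The main obstacle is controlling this $E_2$ integral when $\nu$ is not smooth but supported on a family of H-curves of widely varying lengths. The resolution uses, in order, the homogeneity of the density guaranteed by~\eqref{densbound}, the uniform transversality of unstable and stable directions encoded in Proposition~\ref{prdrdp} and~\eqref{CCs}, and a Jensen-type estimate to collapse the weighted singular integral against $\nu$ into a sum of one-dimensional integrals of the type $\int_0^{|W_j|}t^{-\beta_A}\,dt$, exactly as in~\eqref{sumbetaA}. Once this uniform finiteness is in hand, summing the $E_1$ and $E_2$ contributions yields the claimed bound $C\eps_\gamma$.
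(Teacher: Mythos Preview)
Your proposal is correct and matches the paper's approach: the paper's own proof simply says ``the proof of this proposition follows exactly the same arguments as the proof of \eqref{EqShift} in the estimate of (H2) so we omit it,'' and your $E_1$/$E_2$ decomposition with the $\fU_A$-type neighborhood bound plus the \eqref{sumbetaA}-style singular integral over H-components is precisely that argument. Your framing via Lemma~\ref{lmAA} is a natural way to organize it, and the key adaptation you identify---that the integration is now against the curve-supported push-forward $\nu$ rather than $\mu_0$, handled by the growth lemma and \eqref{sumbetaA}---is exactly the point.
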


\proof The proof of this proposition follows exactly the same
arguments as the proof of (\ref{EqShift}) in the estimate of (H2)
so we omit it. \qed
\medskip

We now return to our main identity (\ref{mainid}) and obtain
\begin{align*}
    \EXP_\ell (A\circ \cF^{n-k}) - \brA (Q,V)
    & \leq
    C\theta_0^{n-k} + C(n-k)\varepsilon_\gamma \\
    & \leq
    C\theta_0^{n-k} + C(n-k)^2\|V\| + C(n-k)^3/M \\
\end{align*}
Equation (\ref{expand}) now yields
\begin{align*}
    \EXP_\ell (A \circ \cF^n)
    &=
    \sum_j c_j \brA (Q_j,V_j)
        + \cO (q^{n_2 - n_1})\\
       &\quad + \cO (\theta_0^{n-n_2})
        + \cO((n-n_1)^2)\max_{\gamma_j} \|V_j\|\\
       &\quad + \cO\left ((n-n_1)^3\right )/M
\end{align*}
where $(Q_j,V_j) \in \pi_1 (\gamma_j)$. Note that
$$
   \max_j \| V_j \| \leq \|V\| + Cn^2 / M
$$
Finally, we apply Lemma~\ref{lmAA} to estimate the value of $\brA
(Q_j, V_j)$:
$$
   \left | \brA(Q_j,V_j) - \brA(Q,V) \right |
   \leq Cn\,\|V\| + Cn^2/M
$$
and arrive at
\begin{align*}
   \left | \EXP_\ell(A \circ \cF^n) - \brA(Q,V) \right |
   &\leq C\,\|V\| \left [n+(n-n_1)^2\right ] \\
   &\quad +\,
   C\left [n^2+(n-n_1)^3\right ]/M\\
   &\quad +\, C\, q^{n_2 - n_1}+C\, \theta_0^{n-n_2}
\end{align*}
Now for any $m\leq \min\{n/2,K\ln M\}$ we can choose $n_1,n_2$ so
that $n-n_2 =n_2-n_1 = m$. This completes the proof of
Proposition~\ref{PrDistEq2}. \qed \medskip

\medskip\noindent{\em Remark}. Proposition~\ref{PrDistEq2} can be
easily generalized to any finite or countable union $\ell =
\cup_{\alpha} \gamma_{\alpha}$ of \index{H-curves} H-curves with a
smooth probability measure $\mes_{\ell}$ on it (as introduced in the
end of Section~\ref{subsecSP}) provided (i) they have approximately
the same $Q$ and $V$ coordinates and (ii) the lower bound on $n$
(that is, $n \geq K |\ln \length (\gamma)|$) is modified
accordingly. For the latter, let us assume that
$$
  \mes_{\ell} \bigl( r_0(x) < \varepsilon ) < \beta_7 \varepsilon
$$
for all $\varepsilon>0$, in the notation of the end of
Section~\ref{subsecSP}; i.e.\ the curves $\gamma_{\alpha}$ are
`long' on the average. Then the proposition would simply hold for
all $n \geq \Const$.
\medskip

To prove Corollary~\ref{PrDistEq3}, we decompose the set
$\cF^{n-j}(\gamma)$ into H-components according to
\index{H-components} Proposition~\ref{PrDistEq1}, then apply
Proposition~\ref{PrDistEq2} to these H-components (see the above
remark), and deal only with the last $j$ iterations of $\cF$. \qed

\noindent{\em Remark.}  As it was mentioned in Chapter~\ref{SecPP},
the estimates of Proposition~\ref{PrDistEq2}  and
Corollary~\ref{PrDistEq3} are apparently less than optimal. Even
though they suffice for our purposes, we outline possible
improvements of the key estimates \eqref{mainid} and \eqref{SUSin}
used in our analysis. Such improvements could be useful in other
applications.

\begin{itemize}
\item In our estimation of the term $I$ in \eqref{SUSin},
there is no need to discard  the orbits which pass close to
singularities for small $i$ (i.e., for $n-i\geq\Const \ln
\varepsilon_\gamma$), because the corresponding pieces expand in the
remaining time and their images become asymptotically
equidistributed.
\item In the same estimation, if $i$ is large then one can expect
that the singularities of $\cF$ on the set $\cF^i\gamma$ will be
asymptotically uniformly distributed along the singularities of
$\cF_Q$. Hence one can express their contribution in terms of some
integrals over $\cS.$ In fact, this idea is realized, in a different
situation, in Chapter~\ref{SecRDM}.
\item In the analysis of the last term in \eqref{mainid}, instead
of estimating $(Q_m, V_m)-(Q,V)$ by $\cO(\varepsilon_\gamma)$ we can
use more precise formulas
$$
   Q_m-Q=\sum_j V_j s_j \quad\text{and}\quad
   V_m-V\sim\sum_j \frac{(\cA\circ\cF^j)}{\sqrt{M}},
$$
where $s_j$ are intercollision times. Combining these with the
Taylor expansion of $A_{Q_m, V_m}-A_{Q,V}$ we obtain a series with
many terms of zero mean which lead to additional cancelation. A
similar method can be useful to improve our estimates on the terms
$\RmII$ and $\RmIII$ in \eqref{SUSin} (this is actually done in
\cite{R1} in the case of uniformly hyperbolic systems without
singularities), but it is technically quite complicated since $h$
and $J$ depend on infinite orbits.
\end{itemize}
\newpage

\chapter{Regularity of the diffusion matrix}
\label{SecRDM} \setcounter{section}{5}\setcounter{subsection}{0}
 \index{Diffusion matrix}

\subsection{Transport coefficients}
In this section we establish the log-Lipschitz \index{Log-Lipschitz
continuity} continuity, in the sense of (\ref{sigQ1Q2}), for the
diffusion matrix $\sigma_Q^2(\cA)$ given by (\ref{EqSigma}). Our
arguments, however, can be used for the analysis of other transport
coefficients in a periodic Lorentz gas (such as electrical
conductivity, heat conductivity, viscosity, etc.), so we precede the
proof of (\ref{sigQ1Q2}) by a general discussion.

Computing transport coefficients is one of the central problems in
linear response theory of statistical physics. The evolution of
various macroscopic quantities such as mass, momentum, heat, and
charge can be described by transport equations, which are very
general and can be derived from a few basic principles. They have
a wide range of applicability, in the sense that one equation can
describe transport in different media.
%The only difference between different materials is the numerical
%value of the corresponding transport coefficients.
However, the numerical values of transport coefficients are
material specific and cannot be found from general principles used
to derive the equations themselves. In physics, the values of
transport coefficients often have to be determined experimentally.
Obtaining the values of transport coefficients theoretically, from
the microstructure of the material, seems to be a difficult task.

The difficulty in computing transport coefficients may be partly due
to their erratic dependence on the parameters involved. It has been
noticed recently that transport coefficients are not differentiable
with respect to the model's parameters in several seemingly
unrelated cases: one-dimensional piecewise linear mappings
\cite{GK,GrK,Kl,KD1,KD2}, nonlinear baker transformations
\cite{GFD}, nonhyperbolic climbing-sine maps \cite{KK2}, billiard
particles bouncing against a corrugated wall \cite{HG}, and various
modifications of a periodic Lorentz gas \cite{BDL,HKG,KK1}. The only
common feature of these models is the presence of singularities in
the dynamics. Actually, for completely smooth chaotic systems, such
as Anosov diffeomorphisms, the transport coefficients are proven to
be differentiable \cite{R2, R1}.

In this section we analyze the diffusion \index{Diffusion matrix}
matrix $\sigma^2_Q (\cA)$ in a periodic Lorentz gas. Even though we
only derive an upper bound on its variation, our results and
analysis strongly suggest that it may be not differentiable with
respect to $Q$. A similar conjecture was stated in \cite{BDL}, where
another transport coefficient (electric conductivity) for the
periodic Lorentz gas was studied numerically and semi-heuristically.
The lack of differentiability of the electric conductivity was
traced in \cite{BDL} to singularities in the dynamics, and these are
the same singularities that cause divergence of certain terms in our
estimates. Eventually we hope to prove rigorously that transport
coefficients are not smooth, but so far this remains an open
problem. Let us also mention that the regularity of transport
coefficients is an issue for stochastic models of interacting
particles, see, e.g.\ \cite{V}.

Next we describe several specific problems related to transport
coefficients. We restrict our discussion to a periodic Lorentz gas
with finite \index{Finite horizon} horizon; other models are
discussed, e.g., in \cite{BLRB, HBS, Sp}.  \medskip

\noindent {\sc A. Diffusion.} Consider a single particle moving in a
periodic array of scatterers \index{Scatterer} in $\reals^2$. Let
$q(t)$ denote the position of the particle and $x(t)$ the projection
of its position and velocity onto the unit tangent bundle over the
fundamental domain (the latter is a torus minus the scatterers). Let
$x_n$ be the value of $x$ at the moment of the $n$-th collision and
$q_n$ the position of the particle in $\reals^2$ at this moment.
Then we have
$$
     q_n=\sum_{j=0}^{n-1} H(x_j),
$$
where $H(x_j)$ denotes the displacement (the change in position) of
the particle between the $j$th and the $(j+1)$st collisions
(obviously this difference does not depend on which lift of $x$ to
the plane we choose). The Central Limit Theorem for dispersing
\index{Dispersing billiards} billiards now gives the following:

\begin{theorem}[\cite{BSC2}]
If $x_0$ has a smooth initial density with respect to the Lebesgue
measure, then $q_n/\sqrt{n}$ converges,  as $n\to\infty$, to a
normal law $\cN(0, \brD^2)$ with non-singular covariance matrix
$\brD^2$ given by
\begin{equation} \label{DGK}
    \brD^2=\sum_{n=-\infty}^{\infty}
    \int_{\Omega} H(x_0) H(x_n)\, d\mu(x)
\end{equation}
where $\mu$ denotes the invariant measure on the collision space
$\Omega$.
\end{theorem}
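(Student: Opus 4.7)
The plan is to prove the theorem by the classical martingale approximation technique applied to the discrete-time billiard map $\cF\colon\Omega\to\Omega$ with invariant measure $\mu$, using as a black box the exponential decay of correlations for $\cF$ on H\"older (or dynamically H\"older) observables established in \cite{Y,C2}. First I would verify that the displacement function $H\colon\Omega\to\reals^2$ has zero mean. This follows from time-reversibility: the involution $\iota(q,v)=(q,-v)$ sends $\cF$ to $\cF^{-1}$ (with appropriate identification of pre- and postcollisional vectors) and $\iota$ preserves $\mu$ while reversing the sign of $H$, hence $\int H\,d\mu=0$. Next, since $H$ is bounded (by the finite horizon assumption, $\|H\|\le L_{\max}$) and piecewise H\"older on the pieces where $\cF$ is smooth, the bound $\bigl|\int H\cdot(H\circ\cF^n)^T\,d\mu\bigr|\le C\theta^{|n|}$ holds for some $\theta<1$, so the series (\ref{DGK}) converges absolutely.

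Second, I would construct a martingale approximation (Gordin's method). Define $\psi=\sum_{k=1}^\infty H\circ\cF^k$; the exponential decay bound gives $\|H\circ\cF^k\|_{L^2}$-type smallness for the sum after projecting onto a suitable filtration. More precisely, let $\{\cF^{-n}\cB\}_{n\ge 0}$ be the decreasing filtration generated by the natural partition of $\Omega$ pushed forward, and set $\tilde H=H+\psi\circ\cF-\psi$, chosen so that $\tilde H$ is a reverse martingale difference sequence with respect to this filtration. Then $q_n=\sum_{j=0}^{n-1} H\circ\cF^j=\sum_{j=0}^{n-1} \tilde H\circ\cF^j+\psi-\psi\circ\cF^n$, the boundary term is $\cO(1)$ in $L^2$, and the CLT for $q_n/\sqrt n$ reduces to the CLT for normalized sums of stationary martingale differences, which follows from the classical theorem of Billingsley--Ibragimov once one checks the ergodicity of $(\cF,\mu)$ and computes $\EXP[\tilde H\tilde H^T]$. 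A direct manipulation of the series shows this second moment equals the Green--Kubo matrix $\brD^2$ in (\ref{DGK}). The smooth initial density is handled by the absolute continuity with respect to $\mu$ together with mixing: any smooth density $\rho$ satisfies $\|\cF_*^n\rho-\mu\|\to 0$ in a suitable norm, and a standard argument (e.g.\ approximating $\rho$ by piecewise constants on long dynamical rectangles, or using that the CLT for stationary sequences is stable under absolutely continuous changes of measure with bounded density) transfers the limit law from $\mu$ to $\rho\,d\mu$.

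Third, non-singularity of $\brD^2$ must be proven. Suppose for contradiction that $\langle u,\brD^2 u\rangle=0$ for some unit vector $u\in\reals^2$. Then $\langle u,\tilde H\rangle=0$ $\mu$-a.e., which by the cohomological equation means $\langle u,H\rangle=\psi-\psi\circ\cF$ for a measurable function $\psi$. Evaluating this along any periodic orbit $\cF^n x_0=x_0$ gives $\sum_{j=0}^{n-1}\langle u,H(\cF^j x_0)\rangle=0$, i.e.\ the total displacement of every periodic orbit is orthogonal to $u$. The contradiction follows from the existence of two periodic orbits whose total displacements span $\reals^2$: e.g.\ short periodic orbits traversing the torus in the two coordinate directions (these exist by the construction of $\cD$ as $\Tor^2\setminus\cup \BAN_i$ and the finite-horizon assumption, which guarantees enough room for horizontal and vertical periodic billiard trajectories).

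The main obstacle is the control of the martingale approximation in the presence of singularities: the function $\psi=\sum_{k\ge1}H\circ\cF^k$ need only be defined almost everywhere and may blow up near $\cup_n \cS_n$, so one must use a function space adapted to the billiard dynamics (dynamically H\"older observables, or the standard-pair formalism developed in Chapter~\ref{SecSPE}) in which the decay-of-correlation estimate from \cite{Y} actually applies to $H$ and to $\psi$. The non-degeneracy step, while conceptually simple, also hides the nontrivial verification that the displacements of periodic orbits span $\reals^2$; under the finite-horizon assumption this is standard but requires a short geometric argument.
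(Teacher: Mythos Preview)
The paper does not prove this theorem; it is quoted from \cite{BSC2} as a known background result in the survey Section~5.1 on transport coefficients, with no proof given. There is therefore no ``paper's own proof'' to compare against---the statement is used as a black box, and the surrounding text simply moves on to its continuous-time corollary.

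That said, your sketch via Gordin's martingale approximation is a standard and correct route to the CLT for dispersing billiards, and is close in spirit to what \cite{BSC2} and later \cite{Y} do. One point to tighten: the definition $\psi=\sum_{k\ge1}H\circ\cF^k$ does not converge in any useful sense; what you want is $\psi=\sum_{k\ge0}\EXP[H\circ\cF^k\mid\cB_0]$ for a suitable decreasing filtration (e.g.\ generated by local stable manifolds), whose $L^2$ convergence is what the exponential decay of correlations actually buys. Your non-degeneracy argument via periodic orbits is exactly the criterion the paper itself states in Section~\ref{subsecA2a} for the matrix $\brsigma_Q^2(\cA)$; for the lifted displacement $H$ the existence of two periodic orbits with linearly independent total displacements is not automatic from finite horizon alone (your ``horizontal and vertical trajectories'' claim needs a genuine construction depending on the scatterer configuration), so in full rigor this step requires either a geometric argument specific to the table or an appeal to the Livshits-type results cited in \cite{BSC2,E}.
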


Now standard methods allow us to pass from discrete to continuous
time (see \cite{Rt, DP, MT} or our Section~\ref{subsecMart}) and we
obtain

\begin{corollary}[\cite{BSC2}]
If $x(0)$ has a smooth initial density in the phase space, then
$q(t)/\sqrt{t}$ converges to $\cN(0, D^2)$ where
\begin{equation} \label{ContD}
     D^2=\brD^2/\brL .
\end{equation}
\end{corollary}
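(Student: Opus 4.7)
The plan is to deduce the continuous-time CLT from the discrete one by means of a random time change, exploiting the finite horizon assumption to control the error between $q(t)$ and $q_{N(t)}$, where $N(t)$ denotes the number of collisions of the flow trajectory by time $t$.

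First, I would establish the sampling bound $\|q(t) - q_{N(t)}\| \leq L_{\max}$ for all $t$. Indeed, between the $N(t)$-th and the next collision the particle moves in a straight line, and by the finite horizon assumption any collision-free flight has length at most $L_{\max} < \infty$. Consequently $\|q(t)/\sqrt{t} - q_{N(t)}/\sqrt{t}\| \leq L_{\max}/\sqrt{t} \to 0$, so it suffices to prove that $q_{N(t)}/\sqrt{t}$ converges in distribution to $\cN(0, D^2)$.

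Second, I would show the law of large numbers $N(t)/t \to 1/\brL$ in probability, where $\brL$ is the mean free path (\ref{FPL}). Write $\tau_i$ for the time between the $i$-th and $(i+1)$-st collisions; then $\tau_i = \tau \circ \cF^i(x_0)$ where $\tau(x)$ is the free flight function on $\Omega$. Since $\cF$ is ergodic with respect to $\mu$ and $\int_\Omega \tau\, d\mu = \brL$ (this is Kac's formula, cf.\ \cite{C1}), the Birkhoff ergodic theorem gives $(\tau_0 + \cdots + \tau_{n-1})/n \to \brL$ almost surely. Inverting this yields $N(t)/t \to 1/\brL$ almost surely under $\mu$, and the same convergence holds under the smooth initial distribution on phase space because one may condition on the first collision (at time bounded by $L_{\max}$), reducing the initial data to a smooth distribution on $\Omega$ absolutely continuous with respect to $\mu$.

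Finally, I would combine the discrete CLT with this time change. Writing
\[
   \frac{q_{N(t)}}{\sqrt{t}} = \frac{q_{N(t)}}{\sqrt{N(t)}} \cdot \sqrt{\frac{N(t)}{t}},
\]
the second factor converges in probability to $1/\sqrt{\brL}$. To handle the first factor one cannot simply substitute, since $N(t)$ is random and depends on the trajectory; the standard device here is an Anscombe-type argument: set $n(t) = \lfloor t/\brL \rfloor$ and write
\[
   \frac{q_{N(t)}}{\sqrt{N(t)}} = \frac{q_{n(t)}}{\sqrt{n(t)}} + \frac{q_{N(t)} - q_{n(t)}}{\sqrt{n(t)}} + q_{n(t)}\Bigl(\tfrac{1}{\sqrt{N(t)}} - \tfrac{1}{\sqrt{n(t)}}\Bigr).
\]
The first term converges to $\cN(0, \brD^2)$ by the discrete CLT. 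The main technical step, which I expect to be the chief obstacle, is to show that the second term tends to zero in probability; by the exponential decay of correlations for $\cF$ and a maximal inequality, for any $\eps > 0$ one has $\max_{|k - n(t)| \leq \eps n(t)} \|q_k - q_{n(t)}\|/\sqrt{n(t)} = O(\sqrt{\eps})$ with high probability, and the event $|N(t) - n(t)| > \eps n(t)$ has vanishing probability by the previous paragraph. The third term is $o(1)$ by the same reasoning. Applying Slutsky's theorem, $q_{N(t)}/\sqrt{t}$ converges to $\cN(0, \brD^2/\brL) = \cN(0, D^2)$, which is the claim.
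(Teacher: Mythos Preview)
The paper does not actually prove this corollary; it only remarks that ``standard methods allow us to pass from discrete to continuous time'' and cites \cite{Rt, DP, MT} together with its own later section on the continuous-time transition. Your Anscombe-type argument is exactly one such standard route and is correct in outline; the maximal inequality you need for the fluctuation term follows from the functional CLT (weak invariance principle) for dispersing billiards, which is established in the very reference \cite{BSC2} attached to the corollary, and the small algebraic slip in your three-term decomposition (the denominator $\sqrt{n(t)}$ versus $\sqrt{N(t)}$) is harmless since $N(t)/n(t)\to 1$.
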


This result for a single particle system allows us to describe the
diffusion in the ideal gas of many noninteracting particles. For
example let $\rho_0$ be a smooth nonnegative function with a
compact support. Pick some $\eps>0$ and for every $m \in
\integers^2$ put $N_\eps = [\eps^{-1} \rho_0(\eps m)]$ independent
particles into the fundamental domain, which is centered at $m$,
so that each particle's position and velocity direction are
uniformly distributed with respect to the Lebesgue measure. Let
$\nu_{\eps,t}$ be the measure on $\reals^2$ given by
$\nu_{\eps,t}(B)=\eps^{-3} \times \#$(particles in $B/\eps$ at
time $t/\eps^2).$ Endow the space of measures with weak topology.
Then $\nu_{\eps,t}$ converges in probability, as $\eps\to 0$, to a
measure $\nu_t$ with density $\rho_t$, which is the convolution
$\rho_t = \rho_0 * \cN(0, D^2 t)$, i.e.\ $\rho_t$ satisfies the
diffusion equation
$$
   \frac{\partial\rho}{\partial t}=
   \frac{1}{2} \sum_{i,j} D_{ij}^2
   \frac{\partial^2 \rho}
   {\partial y_i \partial y_j},
$$
where $D^2$ is the matrix given by (\ref{ContD}), and $y_1,y_2$
denote the coordinates in $\reals^2$.

\medskip\noindent {\sc B. Electric conductance.} Consider the previous
model with a single particle moving in a periodic array of
scatterers, \index{Scatterer} and in addition assume that between
collisions the motion is governed by the equation
\begin{equation} \label{ET}
    \dot{v}=E-\frac{\la v,E \ra}{\|v\|^2}\, v
\end{equation}
where $E \in \reals^2$ is a fixed vector representing a constant
electric field; the second term in (\ref{ET}) is the so called
Gaussian thermostat, it models the energy dissipation (observe
that (\ref{ET}) preserves kinetic energy). Let $\cF_E \colon
\Omega \to \Omega$ denote the induced collision map.

\begin{theorem}[\cite{CELS}]
\label{ThOhm} {\rm (a)} For small $E$ there exists an
$\cF_E$-invariant ergodic measure $\mu_E$ such that for almost all
$x$ for all $A\in C(\Omega)$
$$
     \frac{1}{n} \sum_{j=0}^{n-1} A(\cF_E x)\to \mu_E(A),
$$
and $\mu_E$ is an SRB measure, i.e.\ its conditional distributions
on unstable manifolds are smooth.
\\
{\rm (b)} If $A$ is piecewise H\"older continuous, then
\begin{equation} \label{mesdiff}
     \mu_E(A)=\mu(A)+\omega(A,E)+o(\|E\|)
\end{equation}
where $\omega$ is linear in each variable.
\end{theorem}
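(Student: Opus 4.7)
My plan for part (a) is to verify that $\cF_E$ inherits enough of the hyperbolic and mixing structure of $\cF$ for small $\|E\|$, and then to build $\mu_E$ via the standard-pair technology of Chapter~\ref{SecSPE}. Between collisions the dynamics differs from the free billiard flow by the smooth vector field $Y_E(v) = E - \la v,E\ra v/\|v\|^2$ of size $\cO(\|E\|)$, while reflections remain elastic, so $D\cF_E$ is a $C^1$ perturbation of $D\cF$ of order $\|E\|$ away from grazing trajectories. This persistence implies (i) the unstable cone field for $\cF$ is preserved by $D\cF_E$ with slightly smaller aperture, (ii) the singularity sets of $\cF_E^n$ lie within $Cn\|E\|$ of those of $\cF^n$ in the Hausdorff metric, by an analog of Corollary~\ref{crclose2}, (iii) the one-step expansion estimate (\ref{alpha01}) and hence the growth lemma (Lemma~\ref{prgrow}) hold for $\cF_E$ with constants uniform in $E$, and (iv) the distortion and curvature bounds of Propositions~\ref{prdist}--\ref{prcurv} still apply. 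I would then push forward an arbitrary measure in $\fM$ under $\cF_E$; by the standard pair arguments of Section~\ref{subsecSP} (uniform length control, exponential expansion, and a coupling argument between standard pairs) the iterates converge weakly to an $\cF_E$-invariant limit $\mu_E$ whose conditional measures on unstable manifolds have log-H\"older densities. Ergodicity and the almost-sure Birkhoff convergence follow from the Sinai--Chernov local ergodic theorem applied to the perturbed system, and the SRB property is built into the construction.

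For part (b) the plan is to use a Kawasaki-type response formula. The thermostatted vector field has velocity-divergence $\mathrm{div}\,Y_E = -\la v,E\ra/\|v\|^2$, so the logarithm of the phase-volume Jacobian of $\cF_E$ admits the expansion
\[
   \ln|\det D\cF_E|(x) = \ln|\det D\cF|(x) + \la E, G(x)\ra + \cO(\|E\|^2),
\]
for some piecewise H\"older function $G$ with $\mu(G) = 0$ (since $\cF$ preserves $\mu$). Representing the SRB density of $\mu_E$ on an unstable manifold as an infinite product of Jacobian ratios along the backward orbit and differentiating formally in $E$ at $E = 0$ yields
\[
   \mu_E(A) - \mu(A) \;=\; \sum_{n=0}^{\infty}\int_\Omega (A\circ\cF^n)\,\la E, G\ra\,d\mu \;+\; o(\|E\|).
\]
The series on the right converges absolutely by the exponential decay of correlations for dispersing billiards (\cite{BSC2,Y}), applied to the piecewise-H\"older observables $A$ and $G$, and setting $\omega(A,E) = \sum_{n\ge 0} \int (A\circ\cF^n)\,\la E, G\ra\,d\mu$ gives the desired bilinear functional.

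The main obstacle is controlling the interplay between the $\cO(\|E\|)$ smooth perturbation and the singularities of the billiard map, which is precisely where Ruelle's differentiability theorem for Anosov diffeomorphisms fails. The singularity sets of $\cF_E^n$ move by $Cn\|E\|$ from those of $\cF^n$, so the symmetric difference between the preimages of a H\"older observable under $\cF_E^n$ and $\cF^n$ has $\mu$-measure bounded by $Cn\|E\|^{\alpha_A}$, with a H\"older-exponent penalty analogous to that in the proof of Lemma~\ref{lmAA}. The exponential decay factor $\theta^n$ from correlation decay beats this polynomial growth uniformly, producing an $o(\|E\|)$ remainder after summation over $n$; the more refined bound $\cO(\|E\|^2)$ available for smooth Anosov systems is out of reach here. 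Making the formal differentiation rigorous requires shadowing $\cF_E$-orbits by $\cF$-orbits outside a thin neighborhood of the singularity set and controlling the resulting Jacobian ratios along unstable manifolds, by techniques closely parallel to the holonomy-map estimate in Section~\ref{subsEP} (in particular the construction of $H^{\ast}$ and the bound on $\ln\cJ_\ast$ carried out there).
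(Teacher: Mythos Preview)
This theorem is not proved in the paper: it is quoted from \cite{CELS} as background for the discussion of transport coefficients, and the paper offers no proof of its own. What the paper does give, immediately after the statement, is a formal Kawasaki-formula derivation of the linear-response term $\omega(A,E)$ --- writing $\mu_E(A)=\lim_n\mu(A\circ\cF_E^n)$, telescoping, expanding the Radon--Nikodym derivative $d\mu(y)/d\mu(x)=1+\Div_\mu(\partial\cF_E/\partial E)+\cO(\|E\|^2)$, and arriving at $\omega(A,E)=-\sum_{j\ge 0}\int\Div_\mu(\partial\cF_E/\partial E)\,(A\circ\cF^j)\,d\mu$. This is presented as a heuristic computation leading to the Einstein relation, not as a rigorous proof.

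Your outline for part (b) is essentially this same Kawasaki argument, with the additional (and correct) observation that making it rigorous in the presence of billiard singularities requires controlling the drift of the singularity sets of $\cF_E^n$ against the exponential correlation decay --- something the paper's heuristic skips entirely. Your plan for part (a), building $\mu_E$ via standard pairs and the growth lemma for the perturbed map, is in the spirit of \cite{CELS} and of Chapter~\ref{SecSPE}, but again the paper itself does not carry this out. So there is nothing in the paper to compare against beyond the formal Kawasaki sketch, which you have already reproduced and begun to make honest.
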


Equation (\ref{mesdiff}) is typical for linear response theory in
statistical physics -- it describes the response of the system to
small perturbations of its parameters (here the parameter vector
$E$), up to a linear order.

As before we apply this result to the displacement of the moving
particle between consecutive collisions. Part (a) implies that for
almost all initial conditions there exists a limit
\begin{equation} \label{DCurr}
    \brJ(E)=\lim_{n\to\infty} \frac{q_n}{n}
\end{equation}
which we can interpreted as electrical current (the average speed
of the charged particle, see also below). Part (b) implies that
there exists a matrix $\brM$ such that for small $E$
\beq
        \brJ(E)=\brM E+o(\|E\|).
\eeq
In other words, $\brJ$ is a differentiable function of $E$ at
$E=0.$ Note, however, that numerical evidence \cite{BDL} indicates
that it is not always differentiable for $E \neq 0$.

As in Subsection~A above, (\ref{DCurr}) implies that there exists
a limit
\begin{equation} \label{CCurr}
     J(E)=\lim_{t\to\infty} \frac{q(t)}{t}=\frac{\brJ(E)}{\brL(E)}
\end{equation}
where $\brL (E)$ denotes the mean (with respect to $\mu_E$) free
path. Since $L(E)\to\brL$ as $E\to 0$, it follows that
$$
    J(E)=\frac{\brM E}{L}+o(\|E\|).
$$
Similarly to Subsection~A, this result can be applied to an ideal
gas. For example consider an infinitely long ``wire'' $W$ obtained
by identifying points of $\reals^2$ whose second coordinates
differ by an integer. Let $S= \{x=0\}$ be a vertical line cutting
$W$ in half. Put one particle to each fundamental domain in $W$
independently and uniformly distributed with respect to the
Lebesgue measure. Let $N_+(t)$ be the number of particles crossing
$S$ from left to right during the time interval $(0,t)$, and
$N_-(t)$ be the number of particles crossing $S$ from right to
left; denote $N(t) = N_+(t) - N_-(t)$. Then (\ref{CCurr}) implies
that almost surely there exists a limit
$$
     \lim_{t\to\infty} \frac{N(t)}{t}=\la J(E), e_1 \ra.
$$
Thus, the flow of particles in our wire is an electric current which
is for small fields approximately proportional to the ``voltage''
$\la E, e_1\ra$ -- we arrive at classical Ohm's law \index{Ohm's
law} of physics. To compute the coefficient in the corresponding
equation, we need to know the functional $\omega$ appearing in
(\ref{mesdiff}). It can be obtained by the following argument
(Kawasaki formula):
$$
    \mu_E(A)=\lim_{n\to\infty} \mu(A\circ \cF_E^n) ,
$$
$$
    \mu(A\circ \cF_E^n)=\mu(A)+\sum_{j=0}^{n-1}
    \bigl[\mu(A\circ \cF_E^{j+1}) -\mu(A\circ \cF_E^{j}) \bigr].
$$
To estimate the terms in the last sum let $y=\cF_E (x). $ Since
$\cF$ preserves the measure $\mu$, it follows that
$$
   \frac{d\mu(y)}{d\mu(x)}=1+\Div_\mu\left(\frac{\partial \cF_E}
   {\partial E} \right)+\cO(\|E\|^2).
$$
Hence
\begin{align*}
       & \int A(\cF_E^{j+1} x)\, d\mu(x)=
        \int A(\cF_E^{j} y)\,\, \frac{d\mu(x)}{d\mu(y)}\,\, d\mu(y)\\
       &\quad =\int A(\cF_E^{j} y)
       \left[1-\Div_\mu\left(\frac{\partial \cF_E}{\partial\, E}
       \right)\right]\, d\mu(y)+O\bigl(\|E\|^2\bigr).
\end{align*}
It follows that
$$
      \omega(A,E)=-\sum_{j=0}^\infty \int
      \Div_\mu\left(\frac{\partial \cF_E}{\partial E}
      \, \right)(y) \, A(\cF^j y)\, d\mu(y)
$$
expressing the derivative of $\mu_E$ as the sum of correlations.
In our case the divergence in question is easy to compute so we
obtain the relation $\bar{J}=\tfrac{1}{2} \bar{D}^2 E$, where
$\bar{D}^2$ is the matrix given by (\ref{DGK}). In other words,
$\bar{M} = \tfrac{1}{2} \bar{D}^2$, and, respectively,
$J=\frac{1}{2} D^2 E$, where $D^2$ is the matrix given by
(\ref{ContD}). This is known in physics as Einstein relation \index{Einstein relation}
\cite{CELS}.

\medskip\noindent {\sc C. Viscosity.} This transport coefficient
characterizes the flow of momentum in gases. By its very nature,
it can only be defined for systems with several interacting
particles, so we will not discuss it here (note, however, that a
very simplified version of viscosity in a gas with only two
molecules was introduced in \cite{BSp}).

\medskip\noindent {\sc D. Rayleigh gas.} \index{Rayleigh gas}
So far we have discussed
identical particles moving in a periodic configuration of fixed
scatterers, \index{Scatterer} but similar considerations apply to
Rayleigh gases, in which one or several big massive particles are
submerged into an ideal gas of light particles in the open space.
Another possibility is to take only one light particle but place it
in a semi-open container, such as a halfplane or a section of the
plane between two intersecting lines, cf.\ \cite{CDK}. In this case
an analysis similar to the one given in Section~\ref{subsecBA} leads
us to a diffusion equation for the big particle(s), but in contrast
with the single particle case \cite{DGL1}, a typical light particle
collides with several heavy ones before escaping to infinity. So the
coefficients of the corresponding transport equations are sum of
infinite series. Unfortunately our method cannot be applied to this
case yet, because of the lack of necessary results about mixing
properties of open billiards, but once such results become available
(see \cite{CMT} for a discussion of a simplified model), our method
could be used for the study of the well-posedness of transport
equations.

We summarize our discussion as follows:

\begin{itemize}
\item Transport coefficients are given by infinite correlation
sums. \item The regularity of the transport coefficients plays an
important role in proving well-posedness of the corresponding
transport equations. \item There is an experimental evidence that
for billiard problems transport coefficients are not smooth, but
this has yet to be established analytically.
\end{itemize}

We now turn to our primary goal -- proving the log-Lipschitz
\index{Log-Lipschitz continuity} continuity of the diffusion \index{Diffusion matrix} matrix,
as claimed by (\ref{sigQ1Q2}). Let $A$ and $B$ be smooth functions
on the $r\varphi$ coordinate chart $\Omega_0$ such that $\int
A\,d\mu_0=\int B\,d\mu_0=0$ (in fact, it is enough that one integral
vanishes). Recall that the spaces $\Omega_Q$ are identified with the
$\Omega_0$, hence our functions $A,B$ are defined on $\Omega_Q$. For
every $Q\in\cD$ such that $\dist(Q,\dcD)\geq\br+\delta$ we put
\begin{equation}
  \label{EqSigmabarA}
  \brsigma^2_{Q}(A,B) = \sum_{j=-\infty}^{\infty} \int_{\Omega_Q}
  A\, \left (B\circ\cF_{Q}^j\right )\,d\mu_{Q}.
\end{equation}
(Here we let $j$ change from $-\infty$ to $+\infty$
but of course our result is valid for one sided sums as well).

\begin{proposition}
\label{PrRegDif} Under Assumption A3', for all $Q_1\approx Q_2$ we
have
\beq
    \bigl|\brsigma^2_{Q_1}(A,B)-\brsigma^2_{Q_2}(A,B)\bigr|
      \leq    \Const\, \norm{Q_1-Q_2} \,
          \Big |\ln\norm{Q_1-Q_2}\,\Big |
        \label{sigQ1Q2AB}
\eeq
\end{proposition}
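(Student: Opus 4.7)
The plan is to truncate the defining series using exponential decay of correlations and then bound the difference of each retained term uniformly. Set $\varepsilon=\|Q_1-Q_2\|$ and choose $N=\lceil C_1|\ln\varepsilon|\rceil$ with $C_1$ large enough that the exponential decay of correlations in dispersing billiards (with constants uniform in $Q$) yields $\sum_{|j|>N}|\int A\,(B\circ\cF_{Q_i}^{j})\,d\mu_{Q_i}|\leq K\varepsilon$ for $i=1,2$. The proposition then reduces to the uniform-in-$j$ estimate
\[
\left|\int A\,(B\circ\cF_{Q_1}^{j})\,d\mu_{Q_1}-\int A\,(B\circ\cF_{Q_2}^{j})\,d\mu_{Q_2}\right|\leq K\varepsilon \qquad (\ast)
\]
for all $|j|\leq N$; summing then produces the claimed $\cO(\varepsilon|\ln\varepsilon|)$ bound.

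For $(\ast)$, I identify $\Omega_{Q_1},\Omega_{Q_2}$ with the common chart $\Omega_0$ via $\pi_0$, so that both systems carry the same reference measure $d\mu_0=c^{-1}\cos\varphi\,dr\,d\varphi$. Corollary~\ref{crclose2} ensures the singularity sets of $\cF_{Q_1}^j$ and $\cF_{Q_2}^j$ lie within Hausdorff distance $c_4\varepsilon$ \emph{uniformly in $j$}, while Lemma~\ref{lmclose0} says that the two maps differ pointwise by $\cO(\varepsilon)$ outside their singularities. The argument now parallels the proof of Proposition~\ref{prEE}: decompose $\mu_0$ into standard pairs for $\cF_{Q_1}$; iteratively apply Lemma~\ref{lmclose2} to partition each curve $\gamma$ into matching sub-curves whose images under $\cF_{Q_1}$ and $\cF_{Q_2}$ land within $c_1\varepsilon$; complete the pairing via the holonomy map along stable manifolds of $\cF_{Q_2}$. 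This produces subsets $\Omega_0^\ast,\hat\Omega_0^\ast\subset\Omega_0$ of $\mu_0$-measure $\geq 1-K\varepsilon$ and an absolutely continuous coupling $H\colon\Omega_0^\ast\to\hat\Omega_0^\ast$ such that $\dist(\cF_{Q_1}^k x,\cF_{Q_2}^k H(x))\leq C\varepsilon$ for all $0\leq k\leq j$ and $\EXP(|\ln\cJ_H|)\leq K\varepsilon$. Decomposing the difference in $(\ast)$ exactly as in \eqref{SUSin} then produces three error terms bounded by (H1)--(H3), and $(\ast)$ follows along the lines of the derivation of \eqref{EE}.

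The main obstacle is keeping the measure of the discarded (``bad'') set bounded by $K\varepsilon$ \emph{uniformly in $j$}. A naive union bound over the $j$ intermediate iterations yields only $\cO(j\varepsilon)$, which would degrade the final answer to $\cO(\varepsilon|\ln\varepsilon|^2)$. To avoid this one must exploit the growth lemma (Lemma~\ref{prgrow}): at step $k$ the danger zone of width $c_1\varepsilon$ around the one-step singularity pulls back to a set whose measure on any standard pair is controlled by the expansion factor in a summable way, so the successive bad measures form a geometric series summing to $\cO(\varepsilon)$ independently of $j$. The regularity hypothesis A3' ($C^4$ scatterers, whence $\cF$ is $C^3$) enters through Lemma~\ref{LmJac}: it is precisely the smoothness needed so that the telescoping bound $|\ln\cJ_H|\leq\sum_r C\vartheta^r\varepsilon/|\tgamma_r'|^{2/3}$ converges uniformly in $j$, giving $\EXP(|\ln\cJ_H|)=\cO(\varepsilon)$ rather than a divergent expression. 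With these two ingredients the machinery of Section~\ref{subsEP} transfers to the present setting and establishes $(\ast)$.
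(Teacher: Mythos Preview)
Your reduction to the uniform-in-$j$ estimate $(\ast)$ is exactly the right target, but the argument you give for it does not go through, and this is not a minor repair---it is the entire difficulty of the proposition.

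The flaw is in the claim that the discarded ``bad'' measure sums geometrically. At each intermediate time $k\in[0,j]$ the danger zone is the $c_1\varepsilon$-neighborhood of the one-step singularity set $\cS_1$, and its $\mu_0$-preimage under $\cF_{Q_1}^{-k}$ has measure $\cO(\varepsilon)$ \emph{for every} $k$, by invariance of $\mu_0$. These events for different $k$ are essentially independent, so the union bound $\cO(j\varepsilon)$ is sharp at this level of argument. The growth lemma (Lemma~\ref{prgrow}) gives geometric decay only for shrinking neighborhoods of width $\varepsilon\vartheta^n$ at time $n$, which is what makes the \emph{single}-step shadowing in Proposition~\ref{prEE} work; it says nothing about fixed-width neighborhoods visited at $j$ successive times. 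So your coupling discards $\cO(j\varepsilon)$ measure, and you recover only $\cO(\varepsilon|\ln\varepsilon|^2)$.

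The paper does not prove $(\ast)$ termwise at all. It differentiates $D^{(N)}_{A,B}(Q)=\sum_{n\leq N}\mu[A\,(B\circ\cF_Q^n)]$ in $Q$ and shows $|dD^{(N)}_{A,B}/dQ|\leq CN$, which via the mean value theorem gives the same summed bound. The near-singularity contributions become integrals over the singularity curves---the triple correlations $\beta_{m,n}=\int_S (A\circ\cF_Q^{-m})\,B\,(C\circ\cF_Q^n)\,d\nu$---and the paper emphasizes that the double series $\sum|\beta_{m,n}|$ appears to diverge; only the aggregate $\sum_{m+n\leq N}|\beta_{m,n}|=\cO(N)$ is obtained, through integration by parts, explicit cancellation of exponentially growing boundary terms, and the two-sided estimate of Proposition~\ref{PrTwoSide}. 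A separate point: A3$'$ is not used in Lemma~\ref{LmJac} (that holds under $C^3$); the $C^4$ hypothesis enters through the second-derivative bounds \eqref{curvC4}--\eqref{distC4} needed in Lemma~\ref{LmSmalldivj}.
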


The bound (\ref{sigQ1Q2AB}), along with Assumption A4 on the
nonsingularity of the matrix $\sigma^2_Q(\cA)$, immediately
implies the required (\ref{sigQ1Q2}), so it remains to prove
Proposition~\ref{PrRegDif}.

\subsection{Reduction to a finite series}
We first discuss our approach to the problem. For smooth uniformly
hyperbolic systems, the dynamical invariants, such as diffusion
coefficients, are usually differentiable with respect to the
parameters of the model \cite{KKPW}. In dispersing \index{Dispersing
billiards} billiards, the presence of singularities makes the
dynamics {\em nonuniformly} hyperbolic, and for such systems, no
similar results are available. On the contrary, there is an
experimental evidence (supported by heuristic arguments) that
dynamical invariants are, generally, not differentiable, see
\cite{BDL}. Our proposition is the first positive result in this
direction.

Due to the identification of $\Omega_Q$ with $\Omega_0$, all the maps
$\cF_Q$ act on the same space $\Omega_0$ and preserve the same measure
$d\mu_0=c^{-1}\cos\varphi\, dr\, d\varphi$, where
$c=2\,\length(\dcD)+4\pi\br$ is the normalizing factor. Hence we can
treat $\cF_{Q_2}$ as a perturbation of $\cF_{Q_1}$ in (\ref{sigQ1Q2}).

There are two equivalent approaches to establish the regularity of
dynamical invariants for hyperbolic maps under perturbations. The
analytic approach consists of term-by-term differentiation of the
relevant infinite series, like our (\ref{EqSigmabarA}), with respect
to the parameters of the model (in our case, it is $Q$) and then
integrating by parts. The geometric approach is based on an explicit
comparison of the orbits under the two maps and using a
shadowing-type \index{Shadowing} argument.

The analytic method is shorter, if somewhat less transparent,
since it involves algebraic manipulations instead of geometric
considerations. However its range of applicability is rather
narrow, because it requires differentiability at all the relevant
values of parameters, whereas the geometric method is more
flexible and may handle less regular parameterizations. For this
reason we have used the geometric approach in the proof of
Proposition~\ref{PrDistEq2} because we treated the dynamics as a
small perturbation of the $M=\infty$ case, where we had no
analyticity (in $M$). For the proof of Proposition~\ref{PrRegDif}
we choose the analytic method, but we hope that after the proof of
Proposition~\ref{PrDistEq2} the geometric meaning of our
manipulations is clear.

The main difference between the proofs of
Propositions~\ref{PrRegDif} and \ref{PrDistEq2} is that in the
latter we had a luxury of discarding orbits which come close to
the singularities, but now we have to take them into account as
well. The contribution of those orbits is described by certain
triple correlation functions
$$
    \beta_{m,n}=\int_S (A\circ\cF_Q^{-m})\,B\,
    (C\circ\cF_Q^n)\,d\nu
$$
where $A,B,C$ are smooth functions and the measure $\nu$ is
concentrated on a singularity curve $S$ of the map $\cF_Q$. If
$\nu$ were a smooth measure on the entire space $\Omega_0$, then
we could use a local product structure to show, in a usual way, an
asymptotic independence of the future and the past, and the triple
correlations could be bounded as in \cite{CD}, so that
$\sum_{m,n\geq 0} |\beta_{m,n}| <\infty$. However, $\nu$ is
concentrated on a curve $S$, which has no local product structure,
and the series $\sum_{m,n\geq 0} |\beta_{m,n}|$ appears to
diverge. In fact, we are only able to get an estimate growing with
the number of terms: $\sum_{m+n\leq N} |\beta_{m,n}| = \cO(N)$,
and it is this estimate that gives us the logarithmic factor in
(\ref{sigQ1Q2AB}).

For brevity, we will write $\Omega=\Omega_0$ and $\mu=\mu_0$. Let
$$
      D_{A,B}^{(N)}(Q) = \sum_{n=1}^N
      \mu\bigl[A\, (B\circ\cF_Q^n)\bigr]
%    \int_{\Omega}A\,\left(B\circ\cF_Q^n\right )\, d\mu
$$
Our main estimate is
\beq
      \left|\frac{d D_{A,B}^{(N)}(Q)}{d Q}\right|
      \leq \Const_{A,B}\, N
         \label{MaindDN}
\eeq
where $d/dQ$ denotes the directional derivative along an arbitrary unit
vector in the $Q$ plane.

\medskip\noindent{\it Proof of Proposition \ref{PrRegDif}.} According to
a uniform exponential bound on correlations (Extension~1 in
Section~A.1),
$$
       \Bigl|\mu\bigl[ A\,(B\circ \cF_Q^n)\bigr]\Bigr|
       \leq \Const_{A,B} \, \theta^{|n|}
$$
for some $\theta<1$. Let $N=2\,\ln\|Q_1-Q_2\|/\ln\theta.$ Then for any
$Q$
$$
    \brsigma^2_Q=\mu(AB)
    +D_{A,B}^{(N)}(Q)+D_{B,A}^{(N)}(Q)
    +\cO\left(\|Q_1-Q_2\|^2\right)
$$
where the first term does not depend on $Q$. Hence
\begin{align*}
        \brsigma^2_{Q_1}-\brsigma^2_{Q_2}
        &=\bigl[D_{A,B}^{(N)}(Q_1)+D_{B,A}^{(N)}(Q_1)
        -D_{A,B}^{(N)}(Q_2)-D_{B,A}^{(N)}(Q_2)\bigr]\\
        &\quad +\cO\bigl(\|Q_1-Q_2\|^2\bigr)
\end{align*}
The main estimate (\ref{MaindDN}) implies that the expression in
brackets is bounded by $\Const_{A,B}\|Q_1 -Q_2\|\, N.$ \qed
\medskip

The rest of this section is devoted to proving the main estimate
(\ref{MaindDN}).

\subsection{Integral estimates: general scheme}
\label{subsecIEGS} Let
$$
     I_n=\frac{d}{dQ}\,\mu\bigl[ A\,(B\circ \cF_Q^n) \bigr]
$$
Since the rest of the proof deals with $\cF_Q$ for a fixed $Q$ we shall
omit the subscript from now on. We also put $A_n=A\circ\cF^n$. Note
that $A$ is smooth on $\Omega$ but $A_n$ has discontinuities on the
singularity set $\cS_n\subset\Omega$ of the map $\cF^n$. The curves of
$\cS_n$ change with $Q$ smoothly, so we have
$$
       I_n = I_n^{(c)}+I_n^{(d)}
$$
where the first term contains the derivative of the integrand
$$
    I_n^{(c)}=\int_{\Omega} A\,\frac{dB_n}{dQ}\, d\mu
$$
and the second one contains the boundary integrals
\beq
    I_n^{(d)}=\int_{\cS_n\setminus \cS_0}
    A\, (\Delta B_n)\,
    v^{\perp}\cos\varphi\, dl
      \label{Ind}
\eeq
where $\Delta B_n$ denotes the jump of $B_n$ across
$\cS_n\setminus\cS_0$, $v^{\perp}$ is the velocity of
$\cS_n\setminus\cS_0$ as it changes with $Q$ (in the normal
direction), and $dl$ the Lebesgue measure (length) on $\cS_n$.
Observe that $\cS_0=\partial\Omega$ does not change with $Q$,
hence it need not be included in $I_n^{(d)}$. We postpone the
analysis of the boundary terms until Section~\ref{subsecABT1}.

Now consider a vector field on $\Omega$ defined by
$$
        X=\frac{d\cF}{dQ}\circ \cF^{-1}
$$
For an $x=(r,\varphi)$ such that either $x$ or $\cF^{-1}(x)$ lies on
$\dcP(Q)\times [-\pi/2 ,\pi/2]$ $X$ vanishes, otherwise $X$ is an
unstable \index{Unstable vectors} vector with coordinates
\beq
   X=(dr_X,d\varphi_X)=\left(\frac{\sin(\varphi+\psi)}{\cos\varphi},
   \,\cK\,\frac{\sin(\varphi+\psi)}{\cos\varphi}\right)
          \label{Xfield}
\eeq
where $\cK>0$ is the curvature of the boundary $\dcD\cup\dcP(Q)$
at $x$, and $\psi$ is the angle between the normal to the boundary
at the point $x$ and the direction of our derivative $d/dQ$. Note
that $\|X\|=\cO(1/\cos\varphi)$ is unbounded but $\mu(\|X\|) <
\infty$, because the density of $\mu$ is proportional to
$\cos\varphi$. It is also easy to check that $\mu \bigl(
\|d\cF^k(X)\| \bigr) < \infty$ for all $k\geq 1$.

It is now immediate that
$$
      I_{n}^{(c)}=\sum_{k=0}^{n-1} I_{n,k}^{(c)}
$$
where
\beq
    \label{Inkc}
    I_{n,k}^{(c)}=\int_{\Omega} A\, \bigl[\left(\partial_{d\cF^k(X)}
    B\right)\circ\cF^n\bigr]\, d\mu
\eeq
Note that $d\cF^k(X)$ grows exponentially fast with $k$. To properly
handle these integrals, we will decompose $d\cF^k(X)$ into stable and
unstable components.

Let $E^u=E^u(x)$ be the field of unstable directions given by
equation $d\varphi/dr =\cK$, and $E^s=E^s(x)$ be the field of
stable directions given by equation $d\varphi/dr =-\cK$. The field
$E^u$ corresponds to (infinitesimal) families of trajectories that
are parallel before the collision at $x$, and $E^s$ corresponds to
families of trajectories that are parallel after the collision.
Note that in contrast with more common notation $E^u$ and $E^s$ are
not invariant under dynamics.
Rather they are smooth vector fields such that
$d\cF(E^s)=E^u$ and $X$ belongs in
$E^u$.

Let $\cG$ be a smooth foliation of $\Omega$ by \index{u-curves
(unstable curves)} u-curves that integrate the field $E^u$. Then
$\cG_m=\cF^m(\cG)$, for $m\geq 0$, is a piecewise smooth foliation
by u-curves that integrate the field $E^u_{m}=d\cF^m (E^u)$. Note
that the discontinuities of $\cG_m$ coincide with those of the map
$\cF^{-m}$. Let $\Pi^u_{m}$ and $\Pi^s_{m}$ denote the projections
onto $E^u_{m}$ and $E^s$, respectively, along $E^s$ and $E^u_m$. Let
$\Theta^{\ast}_{m}=\Pi^{\ast}_{m} \circ d\cF$, where $\ast=u,s$. For
$k> m\geq 0$, let
$$
    \Theta^s_{m,k}=\Theta^s_{k}\circ\dots\circ
     \Theta^s_{m+2}\circ \Theta^s_{m+1}
$$

\begin{lemma}
There is a constant $\theta<1$ and a function $u(x)$ on $\Omega$
such that for any nonzero vector $dx\in E^s$ and $m\geq 0$ we have
\beq
    \frac{\|\Theta^s_{m}(dx)\|}{\|dx\|} \leq
    \theta\, \frac{u(\cF(x))}{u(x)}
      \label{contract1}
\eeq
The function $u(x)$ is bounded away from zero and infinity:
$$
    0< u_{\min} < u(x) < u_{\max} < \infty,
$$
therefore, for any $0\neq dx \in E^s$
\beq
    \frac{\|\Theta^s_{m,k} (dx)\|}{\|dx\|} \leq
    \theta^{k-m}\, \frac{u(\cF^{k-m}(x))}{u(x)}
    \leq  \theta^{k-m}\, \frac{u_{\max}}{u_{\min}}
      \label{contract2}
\eeq
for any $k>m$. Lastly, $\|\Pi^s_m(X)\|\leq\Const$. \label{LmContract}
\end{lemma}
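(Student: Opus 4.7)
The plan is to prove the lemma by a direct calculation in the $(r,\varphi)$ coordinates, exploiting the explicit slopes of $E^u$, $E^s$, and $E^u_m$ together with the billiard $p$-metric (\ref{dxnormQV}). The key structural observation is that $d\cF(E^s) = E^u$, so for $dx \in E^s(x)$ the image $d\cF(dx)$ is an unstable vector at $\cF x$ with slope $d\varphi/dr = \cK(\cF x)$; the projection $\Pi^s_{m+1}$ onto $E^s(\cF x)$ along $E^u_{m+1}(\cF x)$ then picks out the $E^s$-component, whose size is controlled by the angular deficit between $E^u$ and $E^u_{m+1}$ at $\cF x$. Since $E^u_{m+1}$ has slope in $[\cK, \cK+C]$ by Proposition~\ref{prdrdp}, while $E^s$ has slope $-\cK$, elementary linear algebra in the basis $\{E^s, E^u_{m+1}\}$ yields
\[
   \Pi^s_{m+1}\bigl(d\cF(dx)\bigr) \;=\; \frac{\cK_{m+1}-\cK}{2\cK+\cK_{m+1}}\;\bigl(d\cF(dx)\bigr)^{\flat},
\]
where $(\cdot)^{\flat}$ denotes the re-expression on the $E^s$-line at $\cF x$.

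Second, I would combine this projection factor with the norm ratio for $d\cF|_{E^s \to E^u}$ in the $p$-metric from Section~\ref{subsecSUV}. Since a vector of direction $(1,-\cK(x))$ has $p$-norm proportional to $\cos\varphi_x$, and the image $d\cF(dx)$ at $\cF x$ has $p$-norm computable from the collision and free-flight rules, multiplying by the above projection ratio produces a single-step inequality of the form
\[
   \frac{\|\Theta^s_{m+1}(dx)\|}{\|dx\|} \;\leq\; \theta\,\frac{u(\cF x)}{u(x)},
\]
once $u(x)$ is chosen so as to absorb the $x$-dependent multiplicative factor as a coboundary. Since $\cK(x)$ and $\cK_m(x)$ stay in compact intervals independent of $m$ and $x$ (again by Proposition~\ref{prdrdp}), the resulting $u$ is automatically bounded between some positive constants $u_{\min}$ and $u_{\max}$, and the composed bound (\ref{contract2}) follows by telescoping the single-step estimate along the orbit.

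Finally, for the bound $\|\Pi^s_m(X)\|\le\mathrm{Const}$ I would use the explicit formula (\ref{Xfield}) showing that $X = s(x)(1,\cK)$ lies in $E^u$ with $s(x)=\sin(\varphi+\psi)/\cos\varphi$. Applying the same basis decomposition gives $\Pi^s_m(X) = \alpha_m\, s(x)\,(1,-\cK)$ with $\alpha_m=(\cK_m-\cK)/(2\cK+\cK_m)\le 1$; but in the $p$-metric the factor $\cos\varphi$ from $\|(1,-\cK)\|_p$ cancels the $1/\cos\varphi$ in $s(x)$, leaving $\|\Pi^s_m(X)\|_p \le \alpha_m\,|\sin(\varphi+\psi)|$, which is uniformly bounded. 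The main obstacle is the first step: identifying the correct weight $u(x)$ and verifying that the combined factor is strictly less than $1$ uniformly in $m$. The projection factor $\alpha_{m+1}$ alone is only bounded away from $1$ by a constant depending on the uniform slope bound $C$, so the strict contraction must come from a genuine combination of this projection with the scaling of $d\cF$ on the $p$-normalized stable line, and verifying this requires a careful accounting of the collision and free-flight rules collected in Section~\ref{subsecSUV}.
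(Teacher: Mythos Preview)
Your plan matches the paper's proof almost exactly: compute in $(r,\varphi)$, use $d\cF(E^s)=E^u$, project onto $E^s$ along $E^u_m$ via the slopes, and absorb the point-dependence into a multiplicative weight $u$. However, your execution contains a concrete error that, left uncorrected, leaves both the bound on $\|\Pi^s_m(X)\|$ and the strict contraction unestablished.

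You assert that $\|(1,-\cK)\|_p\propto\cos\varphi$. In the \emph{stable} norm (\ref{dxnormQVst}), which is the relevant one since every vector here lies in $E^s$, one has $\|(1,-\cK)\|^2=\cos^2\varphi+4\cK^2$, bounded above and below by positive constants independent of $\varphi$. (Your claim holds in the unstable norm (\ref{dxnormQV}), but that is the wrong norm for $E^s$-vectors.) With only $\alpha_m\le 1$ and $|s(x)|=O(1/\cos\varphi)$, your argument would yield $\|\Pi^s_m(X)\|=O(1/\cos\varphi)$, which is unbounded.

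The mechanism you are missing is that the projection coefficient $\alpha_m$ \emph{itself} carries a factor $\cos\varphi$. By Proposition~\ref{prdrdp} the slope of $E^u_m$ at a point is $\cK+\cB\cos\varphi$, where $\cB$ is the \emph{precollisional} curvature of the associated trajectory family and satisfies $0<\cB\le 1/L_{\min}$. Hence your ``$\cK_m-\cK$'' equals $\cB\cos\varphi$, and (with the corrected denominator $\cK+\cK_m=2\cK+\cB\cos\varphi$, not your $2\cK+\cK_m$) the $dr$-coefficient of the projection of a unit $E^u$-vector onto $E^s$ along $E^u_m$ is $\cB\cos\varphi/(2\cK+\cB\cos\varphi)$. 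This is what cancels the $1/\cos\varphi$ in $s(x)$, and it is also what produces the strict contraction in (\ref{contract1}): combining this projection factor with the free-flight identity $\cos\varphi\,|dr|=\cos\varphi_1\,|dr_1|$ for a parallel beam and the stable-norm formula gives
\[
\frac{\|\Theta^s_m(dx)\|^2}{\|dx\|^2}
=\frac{4\cK_1^2+\cos^2\varphi_1}{4\cK^2+\cos^2\varphi}
\cdot\frac{\cos^2\varphi}{(2\cB_1^{-1}\cK_1+\cos\varphi_1)^2},
\]
and since $2\cB_1^{-1}\cK_1\ge c_0:=2L_{\min}\cK_{\min}>0$, the choice $u(x)=(4\cK^2+\cos^2\varphi)^{1/2}/(c_0+\cos\varphi)$ and $\theta=1/(1+c_0)$ yields (\ref{contract1}) directly. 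So the contraction is not an abstract ``combination'' to be hunted for: it is exactly the bound $\cos\varphi/(c_0+\cos\varphi)\le 1/(1+c_0)<1$, driven by the uniform lower bound on free path times curvature.
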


\proof We denote $x=(r,\varphi)$ and $\cF(x)=x_1=(r_1,\varphi_1)$.
Note that the vectors $dx=(dr,d\varphi)\in E^s(x)$ and $d\cF(dx)
=dx_1= (dr_1, d\varphi_1)\in E^u(x_1)$ correspond to an
(infinitesimal) family of trajectories that remain parallel
between the collisions at $x$ and $x_1$, and this family is
characterized by the vector $dq$ (orthogonal to the velocity
vector) and $dv=0$, in the notation of Section~\ref{subsecSUV},
and we have
$$
     \|dq\|= |\cos\varphi\, dr|= |\cos\varphi_1\, dr_1|
$$
Recall that the norm of s-vectors is defined by
(\ref{dxnormQVst}), where $\bs_V=1$ since $V=0$, hence
$$
  \|dx\|^2 = (4\cK^2+\cos^2\varphi)(dr)^2
$$
and for the s-vector $\Theta^s_{m}(dx)= dx_1^s= (dr_1^s,
d\varphi_1^s)\in E^s(x_1)$ we have
$$
  \|dx_1^s\|^2 = (4\cK_1^2 +\cos^2\varphi_1) (dr_1^s)^2
$$
where $\cK$ and $\cK_1$ denote the curvature of the boundary at
$x$ and $x_1$, respectively. Next, the vector $dx_1^s$ is the
projection of $dx_1$ onto $E^s(x_1)$ along $E^u_m(x_1)$, the
latter is given by equation $d\varphi/dr =\cK_1
+\cB_1\cos\varphi_1$, where $\cB_1$ is the curvature of the
precollisional family of trajectories corresponding to
$E^u_m(x_1)$.
By a direct inspection, see Fig.~\ref{FigDecomp}, we
have
\beq
   |dr_1^s| = |dr_1|\,
   \frac{\cB_1\cos\varphi_1}{2\cK_1+\cB_1\cos\varphi_1}
     \label{dr1sdr1}
\eeq
hence
\beq
     \frac{\|dx_1^s\|^2}{\|dx\|^2} =
     \frac{4\cK_1^2 +\cos^2\varphi_1}{4\cK^2+\cos^2\varphi}
     \times \frac{\cos^2\varphi}{(2\cB_1^{-1}\cK_1+\cos\varphi_1)^2}
       \label{longexact}
\eeq
Note that $0<\cB_1\leq 1/s$, where $s$ the free path length
between the points $x$ and $x_1$, hence $\cB_1 \leq 1/L_{\min}$.
Thus
\beq
     \frac{\|dx_1^s\|^2}{\|dx\|^2} \leq
     \frac{4\cK_1^2 +\cos^2\varphi_1}{4\cK^2+\cos^2\varphi}
     \times \frac{(c_0+\cos\varphi)^2}{(c_0+\cos\varphi_1)^2}
     \times \frac{\cos^2\varphi}{(c_0+\cos\varphi)^2}
       \label{longuu}
\eeq
where $c_0=2\,L_{\min}\cK_{\min}>0$. Now we put
$u(x)=(4\cK^2+\cos^2\varphi)^{1/2} /(c_0+\cos\varphi)$ and
$\theta=1/(c_0+1)$, which proves (\ref{contract1}). Replacing
$x_1$ by $x$ and $dx_1$ by $X=(dr_X,d\varphi_X)$, see
(\ref{Xfield}), in the above argument gives an estimate for the
vector $\Pi_m^s(X)=(dr^s,d\varphi^s)$:
\begin{align*}
  \|\Pi_m^s(X)\|^2 &= (4\cK^2+\cos^2\varphi)(dr^s)^2\\
  &= \frac{(\cB\cos\varphi)^2}{(2\cK+\cB\cos\varphi)^2}
  (4\cK^2+\cos^2\varphi)(dr_X)^2\\
  &\leq \frac{4\cK_{\max}^2+1}{4\cK_{\min}^2/\cB_{\max}^2}
    \qquad\qquad\qquad\qquad\qquad\square
\end{align*}

\begin{figure}[htb]
    \centering
    \psfrag{f}{$\varphi$}
    \psfrag{r}{$r$}
    \psfrag{x}{$x_1$}
    \psfrag{Es}{$E^s$}
    \psfrag{Eu}{$E^u$}
    \psfrag{Em}{$E^u_m$}
    \psfrag{F}{$d\cF(dx)$}
    \psfrag{Tu}{$\Theta^u_m(dx)$}
    \psfrag{Ts}{$\Theta^s_m(dx)$}
    \includegraphics{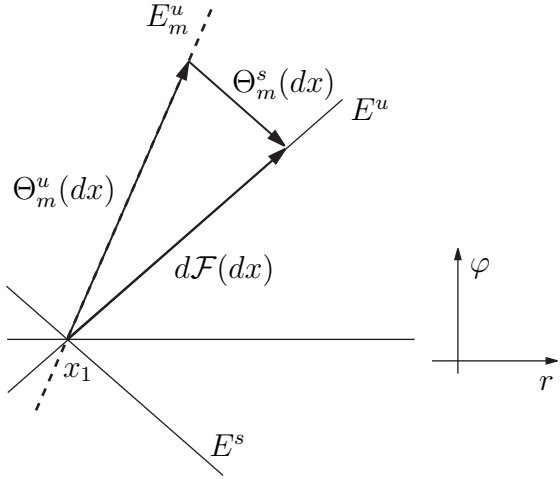}
    \caption{The decomposition $d\cF(dx)=
    \Theta^u_{m}(dx)+\Theta^s_{m}(dx)$}
    \label{FigDecomp}
\end{figure}

\medskip\noindent{\em Remark}. As (\ref{dr1sdr1}) implies,
$$
   \|d\cF(dx)\|\leq\Const\,\frac{\|\Theta_m^s(dx)\|}{\cos\varphi_1}
$$
hence
\beq
   \frac{\|\Theta_{k+1}^u\circ\Theta_{m,k}^s(dx)\|}{\|dx\|}
   \leq\Const\,\frac{\theta^{k-m}}{\cos\varphi_{k-m+1}}
     \label{dxusdxcos}
\eeq
where we denote $\cF^{k-m+1}(x) = (r_{k-m+1}, \varphi_{k-m+1})$.
Since $\cF^{-1}$ uniformly contracts u-vectors by a factor
$\cO(\cos\varphi)$, then
\beq
   \frac{\|d\cF^{-1}\circ\Theta_{k+1}^u\circ\Theta_{m,k}^s(dx)\|}{\|dx\|}
   \leq\Const\,\theta^{k-m}
     \label{dxusdx}
\eeq

\medskip\noindent{\em Remark}. For a future reference, we record a
slight improvement of the estimate (\ref{contract1}):
\beq
     \forall dx\in E^s\qquad
    \frac{\|\Theta^s_{m}(dx)\|}{\|dx\|} \leq
    \theta\, R(\cos\varphi)\, \frac{u(\cF(x))}{u(x)}
      \label{contract1a}
\eeq
where $R(\cos\varphi) = \min\{1,C_0\cos\varphi\}$ and
$C_0=1+c_0^{-1}$. This improvement follows from
(\ref{longuu}).\medskip

We now return to the integral (\ref{Inkc}). Let us decompose
$$
         X = \bralpha_{n-k,0} + \beta_{n-k,0}
$$
where
$$
   \bralpha_{n-k,0}=\Pi_{n-k}^u(X)\in E^u_{n-k},\qquad
   \beta_{n-k,0}=\Pi_{n-k}^s(X)\in E^s
$$
and, inductively, for $r\geq 1$,
$$
      d\cF^r(X) = \bralpha_{n-k,r} + \beta_{n-k,r}
$$
where
$$
      \bralpha_{n-k,r} = d\cF(\bralpha_{n-k,r-1}) + \Theta_{n-k+r}^u
      (\beta_{n-k,r-1}) \in E^u_{n-k+r}
$$
and
$$
     \beta_{n-k,r} =\Theta_{n-k+r}^s
      (\beta_{n-k,r-1}) \in E^s
$$
Observe that
$$
    \beta_{n-k,k} = \Theta^s_{n-k,n}(\beta_{n-k,0})
$$
and
$$
  \bralpha_{n-k,k} = d\cF^k(\bralpha_{n-k,0}) +
  \sum_{j=0}^{k-1} d\cF^j\circ\Theta^u_{n-j}(\beta_{n-k,k-j-1})
$$
Denote
$$
  \alpha_{n-k,k-j}=\Theta^u_{n-j}(\beta_{n-k,k-j-1})
  \in E^u_{n-j}
$$
If we also put, for convenience of notation, $\alpha_{n-k,0}
=\bralpha_{n-k,0}$ and denote
$$
  \alpha_{n-k,k-j}^{(m)} = d\cF^m(\alpha_{n-k,k-j})
  \qquad \forall\ m\in\integers
$$
then we obtain
\beq
    \label{dFkX}
   d\cF^k(X)=\sum_{j=0}^{k} \alpha_{n-k,k-j}^{(j)}
    +\beta_{n-k,k}
\eeq
Accordingly,
$$
    I_{n,k}^{(c)}=\sum_{j=0}^{k} I_{n,k,j}^{(u)} + I_{n,k}^{(s)}
$$
where
\beq
   I_{n,k,j}^{(u)}=
   \int_\Omega A\,\Bigl[(\partial_{\alpha_{n-k,k-j}^{(j)}}
   B) \circ \cF^n\Bigr]\, d\mu
     \label{Inkju0}
\eeq
and
\begin{equation}
\label{Inks}
   I_{n,k}^{(s)}=
   \int_\Omega A\,\Bigl[(\partial_{\beta_{n-k,k}}
   B)\circ \cF^n\Bigr] d\mu
\end{equation}

\begin{lemma}
\label{Lmalphabeta}
There is a constant $\theta<1$ such that for
all $0 \leq k < n$
\begin{align*}
  \|\alpha_{n-k,k-j}\| &\leq\Const\,\theta^{k-j}/\cos\varphi,\\
  \|\alpha_{n-k,k-j}^{(-m)}\| &\leq\Const\,\theta^{k-j+m}
  \qquad \forall\ m\geq 1
\end{align*}
and
$$
  \|\beta_{n-k,k}\|\leq\Const\,\theta^k.
$$
\end{lemma}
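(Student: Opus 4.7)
The plan is to prove (3), then (1), then (2): bound (3) is almost immediate from Lemma~\ref{LmContract}; the same iteration provides the intermediate $\beta$'s that enter (1); and (2) follows from (1) because the first application of $d\cF^{-1}$ to a u-vector absorbs exactly one factor of $\cos\varphi$ via Proposition~\ref{prJexp}.

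First I would establish (3). By definition $\beta_{n-k,k} = \Theta^s_{n-k,n}(\beta_{n-k,0})$ with $\beta_{n-k,0} = \Pi^s_{n-k}(X)$. The last clause of Lemma~\ref{LmContract} gives $\|\beta_{n-k,0}\|\leq\Const$ uniformly, and the composed-contraction bound (\ref{contract2}) gives $\|\Theta^s_{n-k,n}(v)\|\leq\theta^k(u_{\max}/u_{\min})\,\|v\|$ for every $v\in E^s$; multiplying yields (3).

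Next I would establish (1). For $1\leq j<k$, $\alpha_{n-k,k-j} = \Pi^u_{n-j}(d\cF(\beta_{n-k,k-j-1}))$, so I need three ingredients: the bound $\|\beta_{n-k,k-j-1}\|\leq\Const\,\theta^{k-j-1}$ (the argument of (3) stopped one step early), a bound on $\|d\cF(\beta)\|$ for $\beta\in E^s$, and a bound on the projection $\Pi^u_{n-j}$. The middle ingredient comes from the same calculation used in the proof of Lemma~\ref{LmContract}: for $\beta\in E^s_y$, the relation $|dr_1| = |dr|\cos\varphi/\cos\varphi_1$ together with the form of the u-vector norm gives $\|d\cF(\beta)\|/\|\beta\|\leq\Const/\cos\varphi_{\cF(y)}$. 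The third ingredient uses that $E^u$, $E^u_{n-j}$ both lie in the unstable cone while $E^s$ lies in the stable cone, and the two cones are uniformly transversal; hence $\Pi^u_{n-j}$ has a uniformly bounded operator norm. Combining the three factors yields $\|\alpha_{n-k,k-j}\|\leq\Const\,\theta^{k-j}/\cos\varphi$, where $\cos\varphi$ is evaluated at the base point $\cF^{k-j}(x)$ of $\alpha_{n-k,k-j}$. The boundary case $j=k$ is immediate: $\alpha_{n-k,0} = \Pi^u_{n-k}(X)$ with $\|X\|\leq\Const/\cos\varphi$ by (\ref{Xfield}) and the projection again bounded.

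Finally, for (2), I would apply Proposition~\ref{prJexp}: the Jacobian satisfies $\cJ\cF \geq 1+C^{-1}/\cos\varphi_1$ on any u-vector, so the contraction factor of a single $d\cF^{-1}$ on a u-vector at $\cF(y)$ is at most $\cos\varphi_1/(C^{-1}+\cos\varphi_1)\leq C\cos\varphi_1$. Applied to $\alpha_{n-k,k-j}$, this precisely cancels the $1/\cos\varphi$ in (1), yielding $\|\alpha_{n-k,k-j}^{(-1)}\|\leq\Const\,\theta^{k-j}$. The remaining $m-1$ preimages are plain backward iterations of u-vectors, each contracting by $\leq\vartheta$ (Proposition~\ref{prhyper}), so $\|\alpha_{n-k,k-j}^{(-m)}\|\leq\Const\,\vartheta^{m-1}\theta^{k-j}$; replacing $\theta$ throughout by $\max(\theta,\vartheta)<1$ gives the desired form $\Const\,\theta^{k-j+m}$. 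The main technical care is the bookkeeping of base points in (1): one must be sure that $\cos\varphi$ there really refers to the angle at $\cF^{k-j}(x)$, since only then does the single-step contraction of $d\cF^{-1}$ exactly erase this factor in (2); otherwise the uniformity in $m$ would be lost.
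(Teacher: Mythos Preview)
Your proof is correct and follows essentially the same route as the paper. The paper's proof is the one-liner ``Use Lemma~\ref{LmContract} and the subsequent remark,'' and what you have written is precisely the content of that remark unpacked: the composed contraction (\ref{contract2}) gives (3); the observation that $d\cF$ on an $E^s$-vector blows up by at most $\Const/\cos\varphi_1$ (which is the paper's inequality $\|d\cF(dx)\|\le\Const\,\|\Theta^s_m(dx)\|/\cos\varphi_1$ derived from (\ref{dr1sdr1})) together with uniform transversality of the cones gives (1); and the fact that $\cF^{-1}$ contracts u-vectors by $\cO(\cos\varphi)$ (the paper phrases this identically in the sentence preceding (\ref{dxusdx})) absorbs the $1/\cos\varphi$ and yields (2), with subsequent backward iterates handled by the uniform contraction $\vartheta$.
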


\proof Use Lemma~\ref{LmContract} and the subsequent remark. \qed

\begin{corollary}
\label{CrRD1}
$$
       \left| \sum_{n=1}^N\sum_{k=0}^{n-1}
       I_{n,k}^{(s)} \right|\leq \Const\, N.
$$
\end{corollary}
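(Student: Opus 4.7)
The plan is to exploit the fact that $\beta_{n-k,k}$ is a stable vector whose norm has been geometrically contracted $k$ times, so the integrand in (\ref{Inks}) is pointwise exponentially small in $k$, and the estimate then reduces to a trivial bound. First I will combine Lemma~\ref{LmContract} and Lemma~\ref{Lmalphabeta}: the starting data $\beta_{n-k,0}=\Pi^s_{n-k}(X)$ has uniformly bounded intrinsic norm by Lemma~\ref{LmContract}, and the $k$ subsequent applications of $\Theta^s_m$, each contracting by a factor at most $\theta\, u(\cF x)/u(x)$ with $u$ bounded above and below, yield the pointwise bound $\|\beta_{n-k,k}(x)\|\leq \Const\,\theta^k$ in the stable norm (\ref{dxnormQVst}), uniformly in $x$.

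Next, since the curvature $\cK$ of $\dcD\cup\dcP(Q)$ is bounded below by $\cK_{\min}>0$, for any $dx\in E^s$ the intrinsic norm satisfies $\|dx\|^2=(\cos^2\varphi+4\cK^2)(dr)^2\geq 4\cK_{\min}^2(dr)^2$, so $|dr|\leq \Const\,\|dx\|$ and the Euclidean norm of $\beta_{n-k,k}$ in $(r,\varphi)$-coordinates is also bounded pointwise by $\Const\,\theta^k$. Since $B$ is smooth on the compact space $\Omega_0$, we have $\|\nabla B\|_\infty<\infty$, whence
$$
  \bigl|(\partial_{\beta_{n-k,k}}B)\circ\cF^n\bigr|(x)
  \leq \|\nabla B\|_\infty\cdot\|\beta_{n-k,k}(\cF^n x)\|_{\rm Euc}
  \leq \Const\,\theta^k.
$$
Inserting this into (\ref{Inks}) and using boundedness of $A$ together with $\mu(\Omega)=1$ gives $|I_{n,k}^{(s)}|\leq \Const\,\theta^k$.

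Summing the geometric series in $k$ produces $\sum_{k=0}^{n-1}|I_{n,k}^{(s)}|\leq \Const/(1-\theta)$, a bound independent of $n$, and a further sum over $n=1,\dots,N$ yields the claimed $\Const\cdot N$. I expect no serious obstacle here: this corollary is the easy half of the section, precisely because stable contraction squashes $\beta_{n-k,k}$ uniformly and no integration by parts or singularity bookkeeping is needed. The genuine difficulty will lie in treating the unstable pieces $I_{n,k,j}^{(u)}$, where the vector against which $B$ is being differentiated has been expanded rather than contracted; controlling those terms will require integration by parts along unstable curves together with estimates on triple correlation integrals supported on singularity sets of $\cF$, and it is there that the logarithmic factor in (\ref{sigQ1Q2AB}) will eventually be generated.
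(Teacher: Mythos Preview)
Your proof is correct and follows the same approach as the paper. The paper's argument is the one-liner: estimating the integrand in (\ref{Inks}) by its absolute value and invoking Lemma~\ref{Lmalphabeta} gives $|I_{n,k}^{(s)}|\leq \Const\,\|A\|_\infty\,\|B\|_{C^1}\,\theta^k$, after which the double sum is bounded exactly as you do. Your extra step converting from the intrinsic stable norm (\ref{dxnormQVst}) to the Euclidean $(r,\varphi)$-norm via the lower bound on $\cK$ is a legitimate detail the paper leaves implicit.
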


\proof  Estimating the integrand in (\ref{Inks}) by its absolute value we get

\noindent
$\left|I_{n,k}^{(s)}\right|\leq
\Const\,\|A\|_{\infty}\,\|B\|_{C^1}\,\theta^k. $ \qed

\subsection{Integration by parts}
The estimation of $I_{n,k,j}^{(u)}$ in (\ref{Inkju0}) requires
integration by parts. Changing variables $y=\cF^{n-j-1} x$ gives
\beq
        \label{Inkju}
   I_{n,k,j}^{(u)}=\int_{\Omega}
   A_{-(n-j-1)}\,
   \left(\partial_{\alpha_{n-k,k-j}^{(-1)}}
   B_{j+1}\right)\,d\mu
\eeq
(we have to work with $d\cF^{-1}\alpha_{n-k,k-j}$, instead of
$\alpha_{n-k,k-j}$ to avoid an infinite growth of the latter as
$\cos\varphi\to 0$, see Lemma~\ref{Lmalphabeta}).

Observe that the integrand in (\ref{Inkju}) is discontinuous on
the set $\cS_{-(n-j-1)}$ (due to $A_{-(n-j-1)}$ and the vector
field) and $\cS_{j+1}$ (due to $B_{j+1}$), hence we have to
integrate by parts on each connected domain
$D\subset\Omega\setminus (\cS_{-(n-j-1)}\cup\cS_{j+1})$.

Observe that $\alpha_{n-k,k-j}^{(-1)}\in E^u_{n-j-1}$, hence the
integral curves of this vector field are the fibers of the
foliation $\cG_{n-j-1}$. For every domain $D$, denote by
$\cG_D=\{\gamma_D\}$ the fibers of this foliation restricted to
$D$ and by $\rho$ the densities of the corresponding conditional
measures on them. Let $\lambda_D$ denote the factor measure. To
simplify our notation, we put $A_-=A_{-(n-j-1)}$, $B_+=B_{j+1}$
and $\alpha =\|\alpha_{n-k,k-j}^{(-1)}\|$. For any curve $\gamma$,
let $\int_\gamma C\, dx$ denote the integral of a function $C$
with respect to the arclength parameter on $\gamma$, and $C' =
\partial C / \partial x$ denote the derivative along $\gamma$. Then
the integration by parts gives
\begin{align}
   I_{n,k,j}^{(u)} &=
      \sum_D\int_{\cG_D} d\lambda_D \int_{\gamma_D}
   A_-\,\alpha\, B_+'\,\rho\,dx
   \nonumber\\
   &= I_{n,k,j}^{(b)}-I_{n,k,j}^{(i)}
   \label{intA-A+}
\end{align}
where
\beq
      \label{Inkjb}
    I_{n,k,j}^{(b)}=
    \int_{(\cS_{-(n-j-1)}\cup\cS_{j+1})\setminus\cS_0}
    \Delta\left[ A_-\,B_+\, \|\alpha^\perp\|_0\right]
    \cos\varphi\,dl
\eeq
is the boundary term, which will be analyzed in the next subsection
(note that we exclude the set $\cS_0=\{(r,\varphi)\colon
\cos\varphi=0\}$ since $\rho=0$ on $\cS_0$) and
\begin{equation}
\label{Inkji}
   I_{n,k,j}^{(i)}=\sum_D\int_{\cG_D} d\lambda_D \int_{\gamma_D}
   (\rho\,\alpha\,A_-)'\,B_+\,dx .
\end{equation}

Observe that
$$
  (\rho\,\alpha\,A_-)'
  =A_-'\,\alpha\,\rho+A_-\,\alpha\,(\ln\rho)'\,\rho
  +A_-\,\alpha'\,\rho,
$$
hence the last sum in (\ref{intA-A+}) equals
$$
    \int_{\Omega}\alpha\,A_-'\,B_+\,d\mu
    +\int_{\Omega}\alpha\,(\ln\rho)'\,A_-\,B_+\,d\mu
    +\int_{\Omega}\alpha'\,A_-\,B_+\,d\mu.
$$
These integrals will be estimated in the next two lemmas.

\begin{lemma}
\label{Lm3} For some constant $\theta<1$, we have
\beq
    \biggl|\int_{\Omega}\alpha\,A_-'\,B_+\,d\mu\biggr|
    \leq \Const\, \theta^{n-j},
       \label{3-1}
\eeq
\beq
    \left|\int_{\Omega}\alpha\,(\ln\rho)'\,A_-\,B_+\,d\mu\right|
    \leq \Const\, \theta^{k-j},
       \label{3-2}
\eeq
\beq
    \left|\int_{\Omega}\alpha'\,A_-\,B_+\,d\mu\right|
    \leq \Const\, \theta^{k-j}.
       \label{3-3}
\eeq
\end{lemma}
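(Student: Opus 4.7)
The plan is to prove the three bounds in increasing order of difficulty, exploiting the fact that all three integrands have the form (something exponentially small) $\times$ (bounded quantities), integrated against the smooth measure $\mu$. The exponential smallness comes from two different sources: contraction of $\cF^{-1}$ on unstable vectors (for the $\theta^{n-j}$ in (\ref{3-1})), and the bound $|\alpha|\leq\Const\,\theta^{k-j}$ from Lemma \ref{Lmalphabeta} (for the $\theta^{k-j}$ in (\ref{3-2}) and (\ref{3-3})). The bounded quantities are $|A_-|,|B_+|\leq\Const$ (pulled back from smooth $A,B$) and distortion-type factors controlled by the standard billiard theory developed in Chapter~\ref{SecSPE}.

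For (\ref{3-1}), I would observe that along the fibers of $\cG_{n-j-1}$, using the chain rule, $A_-'=\partial_{e_u}(A\circ\cF^{-(n-j-1)})=dA\bigl(d\cF^{-(n-j-1)}e_u\bigr)$, where $e_u$ is the unit vector along these fibers, which lie in $E^u_{n-j-1}$. Since $\cF$ expands u-vectors uniformly by a factor $\geq\vartheta^{-1}$ (Proposition \ref{prJexp}), the pullback $d\cF^{-(n-j-1)}e_u$ has norm $\leq\Const\,\vartheta^{n-j-1}$, so $|A_-'|\leq\Const\,\|A\|_{C^1}\,\vartheta^{n-j-1}$. Combined with $|\alpha|\leq\Const$ and $|B_+|\leq\|B\|_\infty$, and integrating against the finite measure $\mu$, we obtain (\ref{3-1}).

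For (\ref{3-2}), the factor $|\alpha|\leq\Const\,\theta^{k-j}$ already supplies the required decay, so it suffices to show that $\int_\Omega|(\ln\rho)'|\,d\mu$ is bounded uniformly in $n-j-1$. Since $\mu$ has smooth density $\propto\cos\varphi$, on each smooth component of a fiber $\gamma_D\subset\cG_{n-j-1}$ the density $\rho$ is smooth; however, it can degenerate near endpoints of $\gamma_D$, which lie on $\cS_{-(n-j-1)}$. I would partition each fiber into H-curves by cutting at boundaries of homogeneity strips, apply the distortion-type bound $|(\ln\rho)'|\leq\Const/|\gamma|^{2/3}$ on each piece (the analog of Proposition~\ref{prdist} applied to the transversal Jacobian of $\cF^{n-j-1}$ together with the $\cos\varphi$ factor), and sum using the growth-lemma-type estimate from the end of Section \ref{subsecSP} to control short fibers. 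This yields a uniform bound for $\int|(\ln\rho)'|\,d\mu$.

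For (\ref{3-3}), which I expect to be the main obstacle, the quantity $\alpha'$ is the arclength derivative of $\|\alpha_{n-k,k-j}^{(-1)}\|$ along the fibers of $\cG_{n-j-1}$. Unwinding the recursive construction, $\alpha_{n-k,k-j}^{(-1)}=d\cF^{-1}\circ\Theta^u_{n-j}\circ\Theta^s_{n-k,n-j-1}\circ\Pi^s_{n-k}(X)$. I would differentiate this expression along an unstable fiber, producing a sum of terms where each application of $d\cF^{-1}$ contributes a contraction factor $\vartheta$ (giving the total $\theta^{k-j}$), while derivatives of $X$, of the projection operators $\Pi^u,\Pi^s$, and of the smoothly varying unstable/stable line fields are controlled by the curvature bounds of Proposition~\ref{prcurv} and by Assumption A3' (the extra smoothness of $\partial\BAN_i$ makes $X$ sufficiently regular). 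This would yield $|\alpha'|\leq\Const\,\theta^{k-j}/\cos\varphi$, and the singular factor $1/\cos\varphi$ is absorbed by the density $\cos\varphi$ of $d\mu$. The main technical difficulty is bookkeeping: one must verify that at every step of the recursion the derivative of the norm does not pick up additional unbounded factors beyond $1/\cos\varphi$, which requires using the precise formula for $\Theta^s_m$ from the proof of Lemma~\ref{LmContract}, particularly the improved bound (\ref{contract1a}) that shows the contraction is uniform away from grazing collisions.
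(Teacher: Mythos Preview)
Your treatment of (\ref{3-1}) and (\ref{3-2}) matches the paper's proof: contraction of u-vectors under $\cF^{-1}$ gives $|A_-'|\leq\Const\,\vartheta^{n-j}$, and for (\ref{3-2}) one bounds $\alpha$ by $\theta^{k-j}$ via Lemma~\ref{Lmalphabeta} and integrates $|(\ln\rho)'|$ using $|(\ln\rho)'|\leq\Const/|\gamma_{n-j-1}(x)|^{2/3}$ (this is (\ref{densC3}), from Corollary~\ref{CoDens1}, not Proposition~\ref{prdist}) together with the growth-lemma tail bound $\mu\{|\gamma_m|<t\}\leq\Const\,t$.

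For (\ref{3-3}) there is a genuine gap: you have located the singularity of $\alpha'$ at the wrong collision point. The vector $\alpha_{n-k,k-j}^{(-1)}$ at the integration point $x$ is built from quantities (in the base case $j=k$, the projection of $d\cF^{-1}X$ along $E^s_{-1}$) that depend smoothly on $x_1=\cF(x)$; differentiating along the fiber brings in the Jacobian factor $|dx_1/dx|=\cO(1/\cos\varphi_1)$. Thus the paper obtains
\[
   |\alpha'(x)|\leq \frac{\Const\,\theta^{k-j}}{|\gamma_{n-j}(\cF(x))|^{2/3}}
   \ \Bigl(\leq \frac{\Const\,\theta^{k-j}}{\cos\varphi_1}\Bigr),
\]
a singularity at $\cF(x)$, \emph{not} at $x$. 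Your proposed absorption of $1/\cos\varphi$ into the density $\cos\varphi\,dr\,d\varphi$ therefore does not apply; the paper instead changes variables $y=\cF(x)$ and integrates $|\gamma_{n-j}(y)|^{-2/3}$ via the same growth-lemma estimate used for (\ref{3-2}).

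Furthermore, for $t=k-j\geq 1$ the argument is not just bookkeeping: the paper sets up a recursion $\alpha_{t+1}=G_t\alpha_t$ with $G_t$ a uniformly $C^2$ function of $x_{t-1},x_t,x_{t+1}$ and curvature data, and proves an inductive bound (\ref{alpha'ind}) on $|d\alpha_t/dx_{t-1}|$ with a carefully chosen weight $E_{t-1}u_{t-1}\cos\varphi_{t-1}/R(\cos\varphi_{t-1})$. The improved contraction bound (\ref{contract1a}) that you cite is indeed used, but to make the induction close (choosing $P,Q$ so that $(P+C_0Q)\theta<P$), not merely to suppress grazing collisions at one step.
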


\proof Since $\cF^{-1}$ contracts unstable curves by a factor
$\leq\vartheta<1$, we have $\|A_-'\|\leq\Const\,\vartheta^{n-j}$, which
proves (\ref{3-1}).

Lemma~\ref{Lmalphabeta} implies that $\alpha\leq\Const\,\theta^{k-j}$,
and so
$$
    \left|\int_{\Omega}\alpha\,(\ln\rho)'\,A_-\,B_+\,d\mu\right|
    \leq \Const\, \theta^{k-j}
    \int_{\Omega}\left|(\ln\rho)'\right|\,d\mu
$$
To show that the last integral is finite, we first need to refine
our foliations $\cG_m$, $m\geq 0$. We divide the fibers of the
original foliation $\cG$ into \index{H-curves} H-curves (by cutting
\index{Homogeneity strips} them at the boundaries of the homogeneity
strips) and denote the resulting family of H-curves by $\tcG$. For
$m\geq 0$, let $\tcG_m$ denote the foliation of $\Omega$ into the
H-components of the sets $\cF^m(\gamma)$, $\gamma\in\tcG$, see
\index{H-components} Section~\ref{subsecSHUC} (note that $\tcG_m$ is
a refinement of $\cG_m$). Denote by $\gamma_{m}(x)$ the fiber of
$\tcG_{m}$ that contains the point $x$. Now by (\ref{densC3})
\beq
     \Bigl|[\ln\rho(x)]'\Bigr| \leq
     \frac{\Const}{|\gamma_{n-j-1}(x)|^{2/3}}
        \label{rho23}
\eeq
Lemma~\ref{prgrow} implies that $\mu\{x\colon\,
|\gamma_m(x)|<t\}\leq\Const\, t$ for every $m\geq 0$, hence
$$
       \int_{\Omega}\frac{1}{|\gamma_{n-j-1}(x)|^{2/3}}\,d\mu
       \leq \Const\int_0^1 t^{-2/3}\, dt\leq \, \Const
$$
which proves (\ref{3-2}). To derive (\ref{3-3}) we will show that
\beq
     |\alpha'(x)| \leq
     \frac{\Const\,\theta^{k-j}}
     {|\gamma_{n-j}(\cF(x))|^{2/3}}
        \label{alpha23}
\eeq
where $\theta<1$ is a constant. Then (\ref{3-3}) will follow by
$$
       \int_{\Omega}\frac{\theta^{k-j}}
       {|\gamma_{n-j}(\cF(x))|^{2/3}}\,d\mu
       \leq \Const\,\theta^{k-j}\int_0^1 t^{-2/3}\, dt
       \leq \, \Const\,\theta^{k-j}
$$
where we changed variable $y=\cF(x)$ and used the invariance of $\mu$.

It remains to prove (\ref{alpha23}). For $j=k$, we have $\alpha
=\|\alpha_{n-k,0}^{(-1)}\|$. The vector $\alpha_{n-k ,0}^{(-1)}$ is the
projection of $d\cF^{-1}X$ onto $E^u_{n-k-1}$ along $E^s_{-1}\colon
=d\cF^{-1}(E^s)$. Similarly to (\ref{Xfield}), we have
$$
   d\cF^{-1}X=(dr,d\varphi)=\left(\frac{\sin(\varphi-\psi)}{\cos\varphi},
   \,-\cK\,\frac{\sin(\varphi-\psi)}{\cos\varphi}\right)
$$
hence the vector field $(\cos\varphi)\, d\cF^{-1}X$ is $C^2$
smooth, with uniformly bounded first and second derivatives on
$\Omega$. For brevity, we will say that a function is {\em
uniformly $C^2$ smooth}, if its first and second derivatives are
bounded by some constants determined by the domain $\cD$, by
$\delta_0$ in (\ref{Upsilon}), and by our functions $A$ and $B$.
The field $E^u_{n-k-1}$ is uniformly $C^2$ smooth along the fibers
of $\tcG_{n-k-1}$ by Proposition~\ref{PrDistCurv}. The field $E^s$
is given by equation $d\varphi/dr =-\cK$, so it is uniformly $C^2$
smooth on $\Omega$. By using basic facts about billiards, cf.\
Appendices~A and B, and direct calculation we find that the vector
field $E^s_{-1}$ is given by equation
$$
   d\varphi/dr =-\cK -2\cK_1\cos\varphi/
     (2s\cK_1+\cos\varphi_1)
$$
where $x_1=(r_1,\varphi_1)=\cF(x)$ and $\cK_1$ denotes the
curvature of the boundary at the point $x_1$. Note that the lines
$E^s(x)$ and $E^s_{-1}(x)$ have slopes bounded away from $0$ and
$-\infty$, and the difference between these slopes is
\beq \label{EEangle}
     \measuredangle \bigl( E^s(x),E^s_{-1}(x) \bigr)
     = \cO(\cos\varphi)
\eeq
If $\hgamma(x)$ denotes the angle between $E^s(x)$ and
$E^s_{-1}(x)$, then $(\cos\varphi)^{-1}\hgamma(x)$ can be given by
a formal expression (in terms of $x$ and $x_1$) that would be a
uniformly $C^2$ smooth function of $x$ and $x_1$. However, if we
differentiate $(\cos\varphi)^{-1}\hgamma(x)$ with respect to $x$
along the fibers of the unstable foliation $\tcG_{n-k-1}$, then
$x_1$ becomes a function of $x$ such that
$|dx_1/dx|=\cJ(x)=\cO(1/\cos\varphi_1)$, where $\cJ(x)$ is the
Jacobian of the map $\cF\colon\gamma_{n-k-1} (x)\to\gamma_{n-k}
(x_1)$. Hence
$$
       \left| \frac{d}{dx}\,
       \left[(\cos\varphi)^{-1}\hgamma(x)\right]\right|
        \leq \frac{\Const}{\cos\varphi_1}
$$
This gives us an estimate for the derivative along the fibers of
$\tcG_{n-k-1}$:
\beq
   \left|\frac{d\alpha(x)}{dx}\right| =
   \left|\frac{d\alpha(x)}{dx_1}\,\frac{dx_1}{dx}\right|
   \leq \frac{\Const}{\cos\varphi_1}
   \leq \frac{\Const}{|\gamma_{n-k}(\cF(x))|^{2/3}}
     \label{alpha0'}
\eeq
where the last inequality follows from (\ref{cos23}), which proves
(\ref{alpha23}) for $j=k$. For a future reference, we also note
that
\beq
   \left|\frac{d^2\alpha(x)}{dx^2}\right| =
   \left|\frac{d[d\alpha/dx]}{dx_1}\,\frac{dx_1}{dx}\right|
   \leq \frac{\Const\, \cJ(x)}{|\gamma_{n-k}(\cF(x))|^{4/3}}
      \label{secder0}
\eeq

\begin{figure}[htb]
    \centering
    \psfrag{f}{$\varphi$}
    \psfrag{r}{$r$}
    \psfrag{x}{$x_{t-1}$}
    \psfrag{Es}{$E^s$}
    \psfrag{Eu}{$E^u$}
    \psfrag{E1}{$E^s_{-1}$}
    \psfrag{E2}{$E^u_{n-k+t-1}$}
    \psfrag{a}{$\alpha_{n-k,t}^{(-1)}$}
    \psfrag{b}{$\beta_{n-k,t-1}$}
    \includegraphics{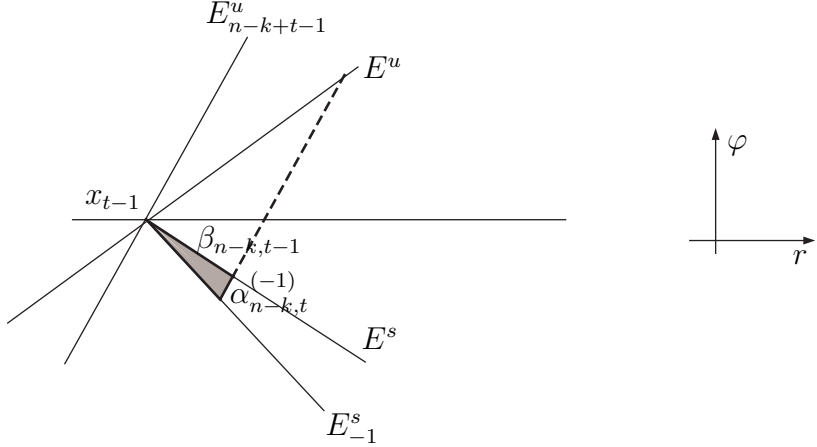}
    \caption{The vector $\alpha_{n-k,t}^{(-1)}$
    is parallel to $E^u_{n-k+t-1}$}
    \label{FigAlphabeta}
\end{figure}

To prove (\ref{alpha23}) for $j<k$ we use induction on $t\colon
=k-j$. Let $\alpha_t =\|\alpha_{n-k,t}^{(-1)}\|$ and $\beta_t
=\|\beta_{n-k,t}\|$. Consider the trajectory $x_t= (r_t ,
\varphi_t) =\cF^t (x)$ of a point $x$. Observe that the vector
$\alpha^{(-1)}_{n-k,t}$ is parallel to the line $E^u_{n-k+t-1}$;
and $\alpha_t$, $\beta_{t-1}$ are two sides of a triangle (shaded
on Fig.~\ref{FigAlphabeta}), in which one angle is $\cO( \cos
\varphi_{t-1} )$, cf.\ (\ref{EEangle}). Therefore,
\beq
    \alpha_t =E_{t-1}\beta_{t-1}\cos\varphi_{t-1}
\eeq
where the factor $E_{t-1}$ is bounded away from zero and infinity:
$$
   0<E_{\min} \leq E_{t-1} \leq E_{\max}<\infty;
$$
and $E_{t-1}$ can be given by a formal expression (in terms of
$x_{t-1}$, $x_t$, and the slope $\Gamma^u_{t-1} =d\varphi/dr$ of
the line $E^u_{n-k+t-1}$ at the point $x_{t-1}$) that would be a
uniformly $C^2$ smooth function of the variables $x_{t-1}$, $x_t$,
and $\Gamma^u_{t-1}$. Now we have
\beq
      \alpha_{t+1} = G_t\, \alpha_t,\qquad
      G_t\colon =\frac{E_{t}\beta_{t}\cos\varphi_{t}}
      {E_{t-1}\beta_{t-1}\cos\varphi_{t-1}}.
        \label{alphatt+1}
\eeq
It follows from (\ref{longexact}) that
$$
   H_t\colon =
   \frac{\beta_{t}}{\beta_{t-1}\cos\varphi_{t-1}}
$$
is bounded away from zero and infinity and can be given by a
formal expression (in terms of $x_{t-1}$, $x_t$, and the curvature
$\cB_t$ of the precollisional family of trajectories corresponding
to $E^u_{n-k+t}$ at the point $x_{t}$) that would be a uniformly
$C^2$ smooth function of $x_{t-1}$, $x_t$, and $\cB_{t}$. Then
\beq \label{GHEE}
     G_t=\frac{E_tH_t\cos\varphi_t}{E_{t-1}}
\eeq
is a uniformly $C^2$ smooth function of $x_{t-1}$, $x_t$,
$x_{t+1}$, $\Gamma_{t-1}$, $\Gamma_t$, and $\cB_t$ (the variable
$x_{t+1}$ comes only from $E_t$). We note that $\Gamma_{t-1}$ and
$\Gamma_t$ are $C^2$ smooth functions of $x_{t-1}$ and $x_t$,
respectively, and $\cB_t$ is a uniformly $C^2$ smooth function
along the corresponding fiber of the foliation $\cG_{n-k+t}$, see
Appendix~B.

Now we differentiate (\ref{GHEE}) with respect to $x_t$ along the
corresponding fiber of the foliation $\cG_{n-k+t}$ and use
$|dx_{t+1}/dx_t| = \cO(1/\cos\varphi_{t+1})$ and
$|dx_{t-1}/dx_t|<1$ to obtain
\beq
     \left|\frac{dG_t}{dx_t}\right|
     \leq \frac{\brC}{\cos\varphi_{t+1}}
\eeq
with a constant $\brC>0$. Next, differentiating the identity
$\alpha_{t+1}=G_t\alpha_t$ gives
\beq
     \left|\frac{d\alpha_{t+1}}{dx_t}\right|
     \leq \frac{\brC\alpha_t}{\cos\varphi_{t+1}}
     +\left|\frac{E_{t}\beta_{t}\cos\varphi_{t}}
      {E_{t-1}\beta_{t-1}\cos\varphi_{t-1}}
      \,\frac{1}{\cJ_{t-1}}
      \frac{d\alpha_{t}}{dx_{t-1}}\right|
        \label{alpha'recursive}
\eeq
where $\cJ_{t-1}=dx_t/dx_{t-1}$ is the Jacobian of the map
$\cF\colon\gamma_{n-k+t-1}(x_{t-1})\to\gamma_{n-k+t}(x_t)$. Note that
by (\ref{contract1a}),
\beq
   \frac{\beta_t}{\beta_{t-1}} \leq
   \frac{u_t}{u_{t-1}}\,R(\cos\varphi_{t-1})
     \label{betatou}
\eeq
where $u_t=u(x_t)$. Now we prove, by induction on $t$ that there exists
a constant $\theta<1$ and large constants $P,Q>0$ such that
\beq
     \left|\frac{d\alpha_{t}}{dx_{t-1}}\right| \leq
     \frac{E_{t-1}u_{t-1}\cos\varphi_{t-1}}{R(\cos\varphi_{t-1})}
     \,\left(P+\frac{Q}{\cos\varphi_t}\right)\,\theta^t
       \label{alpha'ind}
\eeq
Note that the first factor here is bounded away from zero and infinity:
$$
    0< \frac{E_{\min}u_{\min}}{C_0} \leq
    \frac{E_{t-1}u_{t-1}\cos\varphi_{t-1}}{R(\cos\varphi_{t-1})}
    \leq E_{\max}u_{\max} < \infty
$$
For $t=0$, the bound (\ref{alpha'ind}) follows from
(\ref{alpha0'}). (Note that the same angle is denoted by
$\varphi_1$ in (\ref{alpha0'}) and by $\varphi_t=\varphi_0$ in
(\ref{alpha'ind}) with $t=0$.) Combining
(\ref{alpha'recursive})--(\ref{alpha'ind}) gives
\begin{align}
     \left|\frac{d\alpha_{t+1}}{dx_t}\right|
     &\leq \frac{\brC\alpha_t}{\cos\varphi_{t+1}}
     +E_{t}u_{t}\cos\varphi_t
     \,\left( P+\frac{Q}{\cos\varphi_t}\right)
     \frac{\theta^t}{\cJ_{t-1}}\nonumber\\
     &\leq \frac{\brC\alpha_t}{\cos\varphi_{t+1}}
     +\frac{E_{t}u_{t}\cos\varphi_t}{R(\cos\varphi_{t})}
     \,\left( P+C_0 Q\right)\frac{\theta^t}{\cJ_{t-1}}
       \label{dalphadx}
\end{align}
Since $\alpha_{t}<C_1\theta^{t}$ for some $C_1>0$ and $\theta<1$,
by Lemma~\ref{Lmalphabeta}, the first term in (\ref{dalphadx}) can
be bounded as
$$
  \frac{\brC\alpha_t}{\cos\varphi_{t+1}}\leq
  \frac{E_{t}u_{t}\cos\varphi_t}{R(\cos\varphi_{t})}\,
  \frac{C_0C_1\brC\,\theta^{t+1}}{\theta
  E_{\min}u_{\min}\cos\varphi_{t+1}}
$$
We can choose $\theta$ in Lemma~\ref{Lmalphabeta} so that
$\theta^2 > \vartheta$, then $\theta^2 > 1/\cJ_{t-1}$. Now we
select $P$ and $Q$ so that
$$
  \frac{C_0C_1\brC}{\theta E_{\min}u_{\min}}<Q,
  \qquad{\rm and}\qquad
  (P+C_0Q)\,\theta<P
$$
which completes the proof of (\ref{alpha'ind}) by induction. Now
(\ref{alpha23}) follows from (\ref{alpha'ind}) due to
(\ref{cos23}), and hence Lemma~\ref{Lm3} is proven. \qed\medskip

\noindent{\em Remark}. For a future reference, we record a bound,
similar to (\ref{secder0}), on the second derivative of $\alpha_t$
taken along the corresponding fiber of $\cG^u_{n-k+t-1}$:
\beq
     \left|\frac{d^2\alpha_{t}}{dx^2_{t-1}}\right| \leq
     \frac{\Const\,\theta^t}{\cos^3\varphi_{t}}
   \leq \frac{\Const\,\theta^t
   \cJ_{t-1}}{|\gamma_{n-k+t}(x_t)|^{4/3}},
       \label{alpha''}
\eeq
here the second inequality follows from the first due to
(\ref{cos23}) and because $\cJ_{t-1}=\cO(1/\cos\varphi_t$). For
$t=0$, the bound (\ref{alpha''}) reduces to (\ref{secder0}), and
for $t\geq 1$ one can use an inductive argument as above, we leave
the details out.

\medskip
Our bounds (\ref{3-2}) and (\ref{3-3}) in Lemma~\ref{Lm3} are too
weak for small values of $k-j$. The next lemma provides stronger
bounds for that case:

\begin{lemma}
\label{LmSmalldivj} For some constant $\theta<1$, we have
\beq
    \left|\int_{\Omega}\alpha\,(\ln\rho)'\,A_-\,B_+\,d\mu\right|
    \leq \Const\, \theta^{j}
       \label{3-4}
\eeq
\beq
    \left|\int_{\Omega}\alpha'\,A_-\,B_+\,d\mu\right|
    \leq \Const\, \theta^{j}
       \label{3-5}
\eeq
\end{lemma}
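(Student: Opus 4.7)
The plan is to exploit exponential decay of correlations for the billiard map $\cF_Q$. Since $\mu(B)=0$, for any sufficiently regular observable $G$ one has
$$\left|\int G\cdot B_+\,d\mu\right|\leq\Const\,\theta^{j}\,\|G\|_{\ast},$$
where $\|\cdot\|_{\ast}$ is a suitable piecewise H\"older-type norm adapted to billiard singularities (cf.\ \cite{Y,C2}). The bounds (\ref{3-4}) and (\ref{3-5}) would then follow by taking $G=\alpha\,(\ln\rho)'\,A_-$ and $G=\alpha'\,A_-$ respectively. The obstruction is that these integrands fail to be H\"older continuous near the set where unstable fibers become short; both $(\ln\rho)'$ and $\alpha'$ grow like a negative power of the containing fiber length by (\ref{rho23}) and (\ref{alpha23}).

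First I would truncate. Pick a threshold $\eps>0$ (to be optimized below) and split $\Omega=\Omega_\eps\cup\Omega_\eps^c$ with $\Omega_\eps=\{x\colon|\gamma_{n-j-1}(x)|\geq\eps\}$. Write $G=G_1+G_2$ where $G_1=G\cdot\mathbf{1}_{\Omega_\eps}$. The growth lemma (Lemma~\ref{prgrow}) gives $\mu(|\gamma_{n-j-1}|<t)\leq\Const\,t$, so
$$\|G_2\|_{L^1(\mu)}\leq\Const\int_0^{\eps}t^{-2/3}\,dt\leq\Const\,\eps^{1/3},$$
and therefore $\bigl|\int G_2\cdot B_+\,d\mu\bigr|\leq\Const\,\|B\|_\infty\,\eps^{1/3}$.

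Next I would apply decay of correlations to $G_1$. On $\Omega_\eps$ this function is piecewise H\"older with modulus bounded by a negative power of $\eps$: combining (\ref{secder0}), (\ref{alpha''}), and the analogous bound on the derivative of $(\ln\rho)'$ along unstable fibers (which follows from the same geometric computation underlying (\ref{rho23})), one shows $\|G_1\|_{\ast}\leq\Const\,\eps^{-\beta}$ for some explicit $\beta<\infty$. The artificial discontinuity at $\{|\gamma_{n-j-1}|=\eps\}$ can be added to the discontinuity set of $\cF^{-(n-j-1)}$, keeping us within the class of observables for which the decay-of-correlations result applies. Thus
$$\left|\int G_1\cdot B_+\,d\mu\right|\leq\Const\,\eps^{-\beta}\,\theta^{j}.$$
Combining the two pieces yields $\Const\bigl(\eps^{1/3}+\eps^{-\beta}\theta^{j}\bigr)$, and optimizing $\eps$ (for instance $\eps=\theta^{j/(1/3+\beta)}$) gives a bound of the form $\Const\,\tilde\theta^{\,j}$ for some $\tilde\theta<1$. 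Since the statement only asks for the existence of such a constant, this completes both (\ref{3-4}) and (\ref{3-5}).

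The main obstacle will be the precise verification that the $\ast$-norm of $G_1$ is really controlled by $\eps^{-\beta}$ uniformly in $n,k,j$. This requires extending the geometric analysis underlying (\ref{rho23}) and the proof of (\ref{alpha23}) to second derivatives along unstable fibers, together with control on the transversal variation of the foliation $\cG_{n-j-1}$. The cross-fiber regularity is delicate because the foliation itself is only H\"older in the transversal direction; however, the standard billiard machinery of Section~\ref{subsecSHUC} and Appendix~C should be adequate. A secondary technical point is that $A_-=A\circ\cF^{-(n-j-1)}$ is only piecewise H\"older with modulus depending on distance to $\cS_{-(n-j-1)}$; this can be handled exactly as in the proof of property (H2) in Proposition~\ref{prEE}, by incorporating $\cS_{-(n-j-1)}$ into the singularity set used to formulate the decay-of-correlations estimate.
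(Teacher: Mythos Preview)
Your truncation by fiber length and the $L^1$ bound on $G_2$ are both correct, and these ideas are present in the paper's argument. The gap is in the step where you invoke a global decay-of-correlations bound for $G_1$. The singularities of $G_1$ lie on $\cS_{-(n-j-1)}$, so $G_1$ belongs at best to a class of the type $\cH_{-(n-j-1),\eta}$, whose complexity is unbounded in $n$. Proposition~\ref{PrDecCor} gives a rate $\theta_{k,\eta}$ that depends on $k$; there is no uniform $\theta^j$ available when $k=n-j-1$ can be arbitrarily large compared to $j$. The cross-fiber regularity you flag at the end is not a secondary technicality but another face of the same obstruction: the quantities $\alpha$, $\alpha'$, $(\ln\rho)'$ are built from the foliation $\cG_{n-j-1}$, and their variation \emph{across} its fibers is not controlled uniformly in $n,j$ by the distortion machinery of Appendix~C (which provides only along-fiber estimates).

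The paper's proof avoids both problems by never using a global correlation bound. It disintegrates $\mu$ along the fibers $\gamma_\ell$ of $\cG_{n-j-1}$, each of which is a standard pair, and observes that $B_{1,\ell}=\alpha(\ln\rho)'A_-$ and $B_{2,\ell}=\alpha'A_-$ are smooth \emph{along each fiber} (since $\cF^{-(n-j-1)}$ is smooth there and the foliation is smooth along itself). Equidistribution (Proposition~\ref{PrDistEq0}) is then applied to $B\circ\cF^{j+1}$ fiber by fiber; this is packaged as Lemma~\ref{LmAux}. The only hypotheses needed are (i) the growth-lemma length control $\lambda_\ast(|\gamma_\ell|<\eps)\leq\Const\,\eps$, and (ii) along-fiber bounds of the form $\|B_{r,\ell}\|_\infty\leq b|\gamma_\ell|^{-\beta}$ together with a fiber-wise H\"older estimate, which follow from (\ref{rho23}), (\ref{alpha23}) and the second-derivative bounds (\ref{secder0}), (\ref{alpha''}), (\ref{densC4}). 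No transversal regularity is required, and the set $\cS_{-(n-j-1)}$ never enters.
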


The proof is based on a more general lemma, which will be useful later
as well:

\begin{lemma}
\label{LmAux} Let $\cG_{\ast}=\{\ell\}$ be a family of standard
pairs \index{Standard pair} $\ell=(\gamma_{\ell},\rho_{\ell})$ and
$\lambda_{\ast}$ a probability measure on $\cG_{\ast}$ such that
\beq
      \lambda_{\ast}\{\ell\colon
      |\gamma_{\ell}|< \varepsilon\}
      \leq\Const\, \varepsilon
      \qquad \forall \varepsilon>0.
       \label{Aux-1}
\eeq
Let $A$ be a $C^1$ function on $\Omega$ such that $\int
A\,d\mu=0$. Let $B_{\ell}\colon\gamma_{\ell}\to\reals$ be a family
of functions such that
\beq
   \|B_{\ell}\|_{\infty} < b|\gamma_{\ell}|^{-\beta}
       \label{Aux-2}
\eeq
for some $\beta\in (0,1)$ and $b>0$, and for every $\ell$ and any
$x,y\in\gamma_{\ell}$
\beq
    |B_{\ell}(x) -B_{\ell}(y)|\leq
    b|\gamma_{\ell}|^{-\beta}\,[\dist(x,y)]^{\zeta}
       \label{Aux-3}
\eeq
for some $\zeta>0$. Then for some $\theta\in(0,1)$ we have
$$
  \left|\int_{\cG_{\ast}}d\lambda_{\ast}\int_{\gamma_{\ell}}
  (A\circ\cF^{n})\,B_{\ell}\,\rho_{\ell}\,dx
  \right|\leq\Const\,b\, \theta^n
$$
for all $n\geq 0$.
\end{lemma}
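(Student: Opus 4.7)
The plan is to exploit the length control \eqref{Aux-1} to offset the potential blow-up of $B_\ell$ near short standard pairs, and to apply billiard equidistribution (the $M=\infty$ specialization of Proposition \ref{PrDistEq2}) on each pair after reducing $B_\ell$ to piecewise constants.

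First I would fix a constant $\theta_0 \in (0,1)$ sufficiently close to $1$ and split $\cG_\ast = \cG_\ast^{\rm short} \cup \cG_\ast^{\rm long}$ according to whether $|\gamma_\ell| < \theta_0^n$ or $|\gamma_\ell| \geq \theta_0^n$. On $\cG_\ast^{\rm short}$ I only use the pointwise bound \eqref{Aux-2}. A layer-cake computation based on \eqref{Aux-1} yields, for any $\varepsilon>0$,
\[
   \int_{\{|\gamma_\ell|<\varepsilon\}} |\gamma_\ell|^{-\beta}\, d\lambda_\ast
   \leq \frac{\Const}{1-\beta}\,\varepsilon^{1-\beta},
\]
finite precisely because $\beta<1$; with $\varepsilon=\theta_0^n$ the contribution of $\cG_\ast^{\rm short}$ is bounded by $\Const\,b\,\|A\|_\infty\,\theta_0^{n(1-\beta)}$.

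Second, for each $\ell \in \cG_\ast^{\rm long}$ I would subdivide $\gamma_\ell$ into $N_\ell \sim |\gamma_\ell|\,e^{n/(2K)}$ subcurves $\gamma_\ell^{(i)}$ of equal length $\delta = e^{-n/(2K)}$, where $K$ is the constant in the equidistribution hypothesis $n\geq K|\ln|\gamma||$. By Proposition \ref{prspair} each $\gamma_\ell^{(i)}$ with the restricted density is itself a standard pair. Picking $x_i \in \gamma_\ell^{(i)}$ and setting $c_i = B_\ell(x_i)$, I split
\[
   \int_{\gamma_\ell} (A\circ\cF^n)\,B_\ell\,\rho_\ell\,dx
   = \sum_i c_i\!\int_{\gamma_\ell^{(i)}}(A\circ\cF^n)\rho_\ell\,dx
   + \sum_i \int_{\gamma_\ell^{(i)}}(A\circ\cF^n)(B_\ell-c_i)\rho_\ell\,dx.
\]
Since $n \geq 2K|\ln\delta|$ by construction, equidistribution applied to each renormalized subpair (combined with $\mu(A)=0$) yields $\bigl|\int_{\gamma_\ell^{(i)}}(A\circ\cF^n)\rho_\ell\,dx\bigr| \leq \Const\,\theta_1^n\,\mes_\ell(\gamma_\ell^{(i)})$ for some $\theta_1<1$, so the first sum is bounded by $\Const\,\theta_1^n\,b|\gamma_\ell|^{-\beta}$. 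The H\"older hypothesis \eqref{Aux-3} bounds the oscillation of $B_\ell$ on each subpiece by $b|\gamma_\ell|^{-\beta}\delta^\zeta = b|\gamma_\ell|^{-\beta} e^{-n\zeta/(2K)}$, making the second sum $\leq \Const\,b\,\|A\|_\infty\,|\gamma_\ell|^{-\beta}\,e^{-n\zeta/(2K)}$. Hence
\[
   \biggl|\int_{\gamma_\ell}(A\circ\cF^n)B_\ell\rho_\ell\,dx\biggr|
   \leq \Const\,b\,|\gamma_\ell|^{-\beta}\,\theta_2^n,\qquad
   \theta_2 := \max(\theta_1, e^{-\zeta/(2K)}) < 1.
\]
A second layer-cake argument with \eqref{Aux-1} and $\beta<1$ gives $\int |\gamma_\ell|^{-\beta}\,d\lambda_\ast < \infty$, so integrating over $\cG_\ast^{\rm long}$ bounds its contribution by $\Const\,b\,\theta_2^n$. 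Combined with the short-pair estimate this yields the claim with $\theta = \max(\theta_0^{1-\beta},\theta_2)$.

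The main obstacle is the consistent choice of scale parameters: $\theta_0$ must be close enough to $1$ so that every $\ell \in \cG_\ast^{\rm long}$ satisfies the equidistribution hypothesis $n\geq K|\ln|\gamma_\ell||$, while the subdivision scale $\delta = e^{-n/(2K)}$ must simultaneously permit equidistribution on each subpiece and force the H\"older oscillation of $B_\ell$ to decay exponentially. The hypothesis $\beta<1$ is essential in both applications of the layer-cake formula; without it the total mass of $|\gamma_\ell|^{-\beta}$ against $\lambda_\ast$ would diverge, and neither the short-pair remainder nor the long-pair integral could be controlled.
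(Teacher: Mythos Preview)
Your proof is correct and follows the same overall architecture as the paper: split off short pairs via a layer-cake bound (exploiting $\beta<1$), and on long pairs freeze $B_\ell$ to piecewise constants before invoking equidistribution. The difference lies in the decomposition you use on long pairs. You partition $\gamma_\ell$ \emph{statically} into equal-length subintervals of size $e^{-n/(2K)}$ and apply equidistribution to each subpiece over all $n$ iterations. The paper instead iterates halfway: it takes $k=n/2$, forms the H-components $\gamma_j'$ of $\cF^k(\gamma_\ell)$, freezes $B_\ell$ on their \emph{preimages} $\cF^{-k}(\gamma_j')\subset\gamma_\ell$ (which are automatically short by contraction, giving the H\"older error $\vartheta^{k\zeta}$), and then applies Proposition~\ref{PrDistEq0} to the H-components $\gamma_j'$ for the remaining $k$ iterations.

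Both routes yield the claimed bound, and yours is arguably more elementary. The paper's dynamical decomposition, however, has a structural payoff: because the pieces are preimages of actual H-components of the forward image, the argument extends immediately to the relaxed hypotheses in the Remark following the lemma, where the bounds \eqref{Aux-2}--\eqref{Aux-3} are replaced by bounds involving $|\gamma'_{\ell,x}|$, the H-component of $\cF(\gamma_\ell)$ through $\cF(x)$. That relaxed version is precisely what is invoked in the proof of Lemma~\ref{LmSmalldivj}, and your static subdivision would not accommodate it without further work. A minor point: your reference to ``the $M=\infty$ specialization of Proposition~\ref{PrDistEq2}'' is really Proposition~\ref{PrDistEq0}, and the condition you need on $\theta_0$ is $\theta_0\geq e^{-1/(2K)}$ (so that long pairs are at least as long as your subdivision scale $\delta$), not the condition $n\geq K|\ln|\gamma_\ell||$ on the full curve that you mention in the final paragraph.
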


\proof Let $k=n/2$ and
$$
      \cG_{\ast}^0=\{\ell\in\cG_{\ast}\colon
        |\gamma_{\ell}|<e^{-k/K}\}
$$
where $K>0$ is the constant from Proposition~\ref{PrDistEq0}. Observe
that
$$
  \left|\int_{\cG_{\ast}^0}
  d\lambda_{\ast}\int_{\gamma}
  (A\circ\cF^{n})\,B_{\ell}\,\rho_{\ell}\,dx
  \right|\leq\Const\, b\int_0^{e^{-k/K}} t^{-\beta}\, dt
  \leq \Const\,b\,\theta^n
$$
for some $\theta<1$. Then we apply Proposition~\ref{PrDistEq0} to
every pair $(\gamma_{\ell} ,\rho_{\ell})\in\cG_{\ast}^1\colon
=\cG_{\ast}\setminus\cG_{\ast}^0$ in the following way. Denote by
$\gamma_1',\gamma_2',\dots$ the H-components of \index{H-components}
$\cF^k(\gamma_{\ell})$. On each curve
$\gamma_j''=\cF^{-k}(\gamma_j')\subset\gamma_{\ell}$, we pick a
point $x_j\in\gamma_{j}''$ and replace $B_{\ell}(x)$ with a constant
function $\brB_{\ell}(x)=B_{\ell}(x_j)$ on the curve $\gamma_j''$.
This replacement gives us an error term
\begin{align*}
   \int_{\cG_{\ast}^1} d\lambda_{\ast}\int_{\gamma}
  |A\circ\cF^{n}|\,|B_{\ell}-\brB_{\ell}|
  \,\rho_{\ell}\,dx &\leq
  \Const\,\|A\|_{\infty}\,b\,\vartheta^{k\zeta}\,
   \int_0^{1} t^{-\beta}\, dt\\ &\leq
   \Const\,b\,\vartheta^{n\zeta/2}
\end{align*}
here $\vartheta<1$ is the minimal contraction factor of
\index{u-curves (unstable curves)} u-curves under $\cF^{-1}$.
Lastly, the constant function $\brB_{\ell}$ can be factored out, and
we can apply Proposition~\ref{PrDistEq0} to the \index{H-components}
H-components of $\cF^k(\gamma_{\ell})$ and get
\begin{align*}
  \left|\int_{\cG_{\ast}^1} d\lambda_{\ast}\int_{\gamma}
  (A\circ\cF^{n})\,\brB_{\ell}
  \,\rho_{\ell}\,dx\right| &\leq
  \Const\,b\,\theta^{k}\,
   \int_0^{1} t^{-\beta}\, dt\\ &\leq
   \Const\,b\,\theta^{n/2} \qquad\qquad\square
\end{align*}

\noindent{\em Remark}. In the above lemma, it is obviously enough
to require (\ref{Aux-3}) only for $x,y\in\gamma_{\ell}$ such that
$\cF^k(x)$ and $\cF^k(y)$ belong to the same H-component of
$\cF^k(\gamma_{\ell})$. In fact, the requirements of the lemma can
be relaxed even further in the following way: (\ref{Aux-2}) may be
replaced by
\beq
   |B_{\ell}(x)|<\Const\bigl(
   |\gamma_{\ell}|^{-\beta}+
   |\gamma_{\ell,x}'|^{-\beta}\bigr)
       \label{Aux-2a}
\eeq
where $\gamma_{\ell,x}'$ denotes the \index{H-components}
H-component of $\cF(\gamma_{\ell})$ that contains the point
$\cF(x)$, and (\ref{Aux-3}) may be replaced by
\beq
    |B_{\ell}(x) -B_{\ell}(y)|\leq \Const\biggl(
    \frac{[\dist(x,y)]^{\zeta}}{|\gamma_{\ell}|^{\beta}}+
    \frac{[\dist(\cF(x),\cF(y))]^{\zeta}}{|\gamma_{\ell,x}'|^{\beta}}
    \biggr)
      \label{Aux-3a}
\eeq
for every $x,y\in\gamma_{\ell}$ such that
$\cF(y)\in\gamma_{\ell,x}'$. The proof only requires minor changes
that we leave to the reader.
\medskip

\noindent {\em Proof of Lemma~\ref{LmSmalldivj}}. It suffices to
apply Lemma~\ref{LmAux} to two functions,
$B_{1,\ell}=\alpha(\ln\rho)'A_-$ and $B_{2,\ell}=\alpha'A_-$. The
family $\cG_{\ast}$ consists of fibers of the foliation
$\cG_{n-j-1}^u$, and (\ref{Aux-1}) follows from the growth
\index{Growth lemma} lemma~\ref{prgrow}. Next, (\ref{Aux-2a}) for
the functions $B_{1,\ell}$ and $B_{2,\ell}$ follows from
(\ref{rho23}) and (\ref{alpha23}), respectively. To verify
(\ref{Aux-3a}), it is enough to show that for $r=1,2$
\beq
        |B_{r,\ell}'(x)|\leq \Const\,\Big (|\gamma_{\ell}|^{-q}
        + |\gamma_{\ell,x}'|^{-q}\cJ_{\gamma_{\ell}}\cF(x)\Big )
          \label{B'q}
\eeq
with some $q<2$ (here $\cJ_{\gamma_{\ell}}\cF(x)$ stands for the
Jacobian of the map $\cF\colon \gamma_{\ell}\to \gamma_{\ell,x}'$
at the point $x$). Indeed, if (\ref{B'q}) holds, then for any
$x,y\in\gamma_{\ell}$
\begin{align*}
    |B_{r,\ell}(x) -B_{r,\ell}(y)| &\leq
    \frac{\dist(x,y)}{|\gamma_{\ell}|^{q}}
    +\frac{\dist(\cF x,\cF y)}{|\gamma_{\ell,x}'|^{q}}\\
    &\leq
    \frac{[\dist(x,y)]^{1-q/2}}{|\gamma_{\ell}|^{q/2}}
    +\frac{[\dist(\cF x,\cF y)]^{1-q/2}}{|\gamma_{\ell,x}'|^{q/2}}
\end{align*}
and we get (\ref{Aux-3a}). It remains to prove (\ref{B'q}) for
both functions $B_{1,\ell}$ and $B_{2,\ell}$. This is a
consequence of the following obvious facts: $|A_-'|\leq\Const$,
$\alpha \leq \Const$, and $\alpha' \leq \Const\, |\gamma_{\ell,x}
'|^{-2/3}$ by (\ref{alpha23}),
$$
   |\alpha''| \leq \Const\,
   |\gamma_{\ell,x}' |^{-4/3}\cJ_{\gamma_{\ell}}\cF(x)
$$
by (\ref{alpha''}), $|(\ln\rho)'| \leq \Const\,
|\gamma_{\ell}|^{-2/3}$ by (\ref{densC3}) and $|(\ln\rho)''|\leq
\Const\, |\gamma_{\ell}|^{-4/3}$ by (\ref{densC4}).
Lemma~\ref{LmSmalldivj} is now proved. \qed
\medskip

Combining Lemmas~\ref{Lm3} and \ref{LmSmalldivj} gives the
following upper bound on all non-boundary terms in the integral
formula (\ref{intA-A+}):

\begin{corollary}
$$
       \sum_{n=1}^N\sum_{k=0}^{n-1}\sum_{j=0}^k
       \left| I_{n,k,j}^{(v)} \right|\leq \Const\, N
$$
\end{corollary}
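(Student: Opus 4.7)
The plan is to treat each $I_{n,k,j}^{(i)}$ by the three-term decomposition displayed just after \eqref{intA-A+}:
\begin{equation*}
  I_{n,k,j}^{(i)}=\int_{\Omega}\alpha\,A_-'\,B_+\,d\mu
  +\int_{\Omega}\alpha\,(\ln\rho)'\,A_-\,B_+\,d\mu
  +\int_{\Omega}\alpha'\,A_-\,B_+\,d\mu,
\end{equation*}
and to combine the two previous lemmas into a single pointwise-in-$(n,k,j)$ bound. Lemma~\ref{Lm3} controls the first integral by $\Const\,\theta^{n-j}$ and the other two by $\Const\,\theta^{k-j}$, whereas Lemma~\ref{LmSmalldivj} controls the same last two integrals by $\Const\,\theta^{j}$. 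Taking the minimum of the two available estimates for the second and third pieces gives
\begin{equation*}
   \bigl|I_{n,k,j}^{(i)}\bigr|
   \leq \Const\bigl(\theta^{n-j}+\theta^{\min\{j,\,k-j\}}\bigr),
\end{equation*}
so that a good bound is always supplied by Lemma~\ref{Lm3} when $j$ is close to $k$ (the regime where $k-j$ is small) and by Lemma~\ref{LmSmalldivj} when $j$ is close to $0$.

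The remaining task is purely a summation exercise. For the first piece I would compute the inner sum as a geometric series,
\begin{equation*}
  \sum_{j=0}^{k}\theta^{n-j}
  =\theta^{n-k}\frac{1-\theta^{k+1}}{1-\theta}
  \leq \Const\,\theta^{n-k},
\end{equation*}
and then $\sum_{k=0}^{n-1}\theta^{n-k}\leq \Const$, whence the total contribution to $\sum_{n=1}^{N}$ is $\cO(N)$. For the second piece I would split the $j$-sum at $j=k/2$: on the half $j\leq k/2$ the exponent $\min\{j,k-j\}=j$, and on the upper half it equals $k-j$, so
\begin{equation*}
   \sum_{j=0}^{k}\theta^{\min\{j,\,k-j\}}
   \leq 2\sum_{j=0}^{\lceil k/2\rceil}\theta^{j}
   \leq \Const\,,
\end{equation*}
where in fact a sharper $\cO(\theta^{k/2})$ bound is available but not needed. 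Summing over $k$ from $0$ to $n-1$ produces at worst a factor $n$; however, using the sharper $\cO(\theta^{k/2})$ estimate keeps the $k$-sum bounded and gives a final $\cO(N)$ contribution after summing over $n$.

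There is no real analytic obstacle here — the deep work has already been done in Lemmas~\ref{Lm3} and \ref{LmSmalldivj} (and in the technical bounds \eqref{alpha23}, \eqref{alpha''}, \eqref{rho23} on the derivatives of $\alpha$ and $\ln\rho$ that feed Lemma~\ref{LmAux}). The only subtlety is the choice to combine the two lemmas by taking the pointwise minimum of their estimates; without this observation, Lemma~\ref{Lm3} alone would fail to control the $j\ll k$ regime where $\theta^{k-j}$ is close to $1$, while Lemma~\ref{LmSmalldivj} alone would fail in the symmetric regime $j\approx k$. With both bounds in hand the triple geometric series converges in $k$ and $j$ uniformly in $n$, leaving the outer sum $\sum_{n=1}^{N}$ as the sole source of growth and producing the linear bound $\Const\cdot N$ claimed.
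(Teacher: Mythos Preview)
Your approach is exactly what the paper intends: combine Lemmas~\ref{Lm3} and \ref{LmSmalldivj} and sum. The paper itself gives no details beyond ``Combining Lemmas~\ref{Lm3} and \ref{LmSmalldivj} gives\ldots'', so your write-up is more explicit than the original.

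There is, however, a slip in your formula. Taking the minimum of the two upper bounds $\theta^{k-j}$ and $\theta^{j}$ gives $\theta^{\max\{j,\,k-j\}}$, not $\theta^{\min\{j,\,k-j\}}$, since $\theta<1$. As written, your displayed bound $\sum_{j=0}^k\theta^{\min\{j,k-j\}}\leq\Const$ is correct but useless: it yields only a constant per $k$, hence $\cO(n)$ after summing over $k$ and $\cO(N^2)$ overall, and there is no ``sharper $\cO(\theta^{k/2})$'' available for that sum (its largest terms are $\theta^0=1$ at both endpoints). With the correct exponent you get $\sum_{j=0}^k\theta^{\max\{j,k-j\}}\leq 2\sum_{m\geq k/2}\theta^m=\cO(\theta^{k/2})$, which then sums over $k$ to a constant and over $n$ to $\cO(N)$, exactly as you describe in your final sentence. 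So the idea is right and the conclusion is right; only the exponent needs to be flipped from $\min$ to $\max$.
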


It remains to estimate the boundary terms $I_n^{(d)}$ and
$I_{n,k,j}^{(b)}$.

\subsection{Cancellation of large boundary terms} \label{subsecABT1}
Here we estimate the boundary terms
$I_n^{(d)}$ given by (\ref{Ind}) and $I_{n,k,j}^{(b)}$, see
(\ref{Inkjb}). First we rewrite them in a more explicit manner and
cancel out some of the resulting integrals.

\medskip\noindent{\em Convention}. Let $S\subset\Omega$ be a smooth
curve, $C$ a function and $\bv$ a vector field on $S$. Then we can
integrate
\beq
      \label{intS}
    \int_S C\, (\omega*\bv) =
    \int_S C\,\|\bv^\perp\|_0\cos\varphi\,dl
\eeq
were $\omega$ denotes the $\cF$-invariant volume form
$$
  \omega(dr,d\varphi) = \cos\varphi\,\,dr\wedge d\varphi
$$
and $(\omega*\bv)$ stands for the one form
$$
   (\omega*\bv)(\bw) = \omega(\bv,\bw)
$$
On the right hand side of (\ref{intS}), $\|\cdot\|_0$ stands for
the Euclidean norm $\bigl[ (dr)^2+(d\varphi)^2 \bigr]^{1/2}$ and
$\bv^{\perp}$ for the normal component of the vector $\bv$, and we
integrate with respect to the Lebesgue measure (length) $dl$ on
$S$.

The $\cF$-invariance of $\omega$ gives us a {\em change of
variables formula}
\beq
       \label{intAinv}
    \int_S C\, (\omega*\bv) =
    \int_{\cF^n(S)} (C\circ\cF^{-n})\, (\omega*d\cF^n\bv)
\eeq
provided $\cF^n$ is smooth on $S$.

First we consider $I_n^{(d)}$ given by (\ref{Ind}). Each
discontinuity curve $S\subset\cS_n\setminus\cS_0$ has the form
$S=\cF^{-k}S^+$, where $0\leq k\leq n-1$ and $S^+ \subset \cS_1
\setminus \cS_0$ is a discontinuity curve for $\cF$. Thus $S$
changes with velocity
$$
   v = d\cF^{-k}v_0-\sum_{m=0}^{k-1}d\cF^{-(k-m)}(X)
$$
where $v_0$ is the speed of $S^+$ as it changes with $Q$ (in the normal
direction). Therefore,
\begin{align*}
   I_n^{(d)} &= I_n^{(v)}-I_n^{(x)}\\
   &=  \sum_{k=0}^{n-1} I_{n,k}^{(v)}
   -\sum_{k=0}^{n-1}\sum_{m=0}^{k-1} I_{n,k,m}^{(x)}
\end{align*}
where
\beq
    \label{Inkv}
   I_{n,k}^{(v)}=\sum_{S^+}\int_{S^+}
  A_{-k}\,\Delta B_{n-k}\,
  \left(\omega*v_0\right)
\eeq
and
\beq
    \label{Inkmx}
  I_{n,k,m}^{(x)}=\sum_{S^+}\int_{S^+}
  A_{-k}\,\Delta B_{n-k}\,
  \left(\omega*d\cF^{m}(X)\right)
\eeq
(here the summation is performed over all smooth curves $S^+\subset
\cS_1\setminus\cS_0$).

Furthermore, by (\ref{dFkX}) we have
$$
   d\cF^m(X)=\sum_{j=0}^{m} \alpha_{k-m,m-j}^{(j)}
  +\beta_{k-m,m}
$$
Reindexing our formula by $r=k-m$, $s=m-j$, and $t=j$ gives
\begin{align}
   I_n^{(x)} &=
   \sum_{\substack{r,s,t\geqslant 0\\ r+s+t<n}}
   \sum_{S^+}\int_{S^+}
  A_{-(r+s+t)}\,\Delta B_{n-(r+s+t)}\,
  \left(\omega*\alpha_{r,s}^{(t)}\right)\nonumber\\
   &\quad +
   \sum_{\substack{r,s\geqslant 0\\ r+s<n}}
   \sum_{S^+}\int_{S^+}
  A_{-(r+s)}\,\Delta B_{n-(r+s)}\,
  \left(\omega*\beta_{r,s}\right)
    \label{Inx}
\end{align}
The first sum contains exponentially growing (with $t$) integrals,
but they will be cancelled shortly. At this moment we estimate the
total contribution of the second sum
\begin{align}
   T_N\colon &=
   \sum_{n=1}^N\sum_{\substack{r,s\geqslant 0\\ r+s<n}}
   \sum_{S^+}\int_{S^+}
  A_{-(r+s)}\,\Delta B_{n-(r+s)}\,
  \left(\omega*\beta_{r,s}\right)\nonumber\\
      &=
   \sum_{\substack{r,s\geqslant 0\\ r+s<N}}
   \sum_{S^+}\int_{S^+}
  A_{-(r+s)}\,
  \left[\sum_{n=r+s+1}^N\Delta B_{n-(r+s)}\right]
  \left(\omega*\beta_{r,s}\right)
     \label{TN}
\end{align}

\begin{lemma} We have
$$
   |T_N| \leq \Const \, N
$$
\end{lemma}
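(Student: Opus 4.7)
The bound $|T_N|\leq \Const\, N$ should follow by exploiting the exponential smallness $\|\beta_{r,s}\|\leq \Const\,\theta^s$ from Lemma~\ref{Lmalphabeta} together with a uniform-in-$M$ estimate for the cumulative-jump function
\[
W_M(y) := \sum_{j=1}^{M}\Delta B_j(y)
= \sum_{j=1}^{M}\bigl[B(\cF^j_+ y)-B(\cF^j_- y)\bigr],
\]
where $\cF_+$ and $\cF_-$ denote the two one-sided continuous extensions of $\cF$ through a component $S^+\subset\cS_1\setminus\cS_0$. The first move is to interchange the outer sum over $n$ in \eqref{TN} with the sums over $r,s$, producing
\[
T_N = \sum_{\substack{r,s\geq 0\\ r+s<N}} \sum_{S^+} \int_{S^+} A_{-(r+s)}\, W_{N-r-s}\,(\omega*\beta_{r,s}).
\]
Combining $\|\beta_{r,s}\|\leq \Const\,\theta^s$ with the identity $|(\omega*\bv)|\leq \|\bv\|_0\cos\varphi\,dl$ bounds each integral by $\Const\,\theta^s\|A\|_\infty$ times a weighted $L^1$-norm of $W_M$ on $\cS_1$.

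The crux is then the $M$-uniform estimate
\[
\sum_{S^+}\int_{S^+}|W_M|\,\cos\varphi\,dl \leq \Const,\qquad M\geq 1.
\]
To establish this I would write $W_M=\tilde B_M^+-\tilde B_M^-$ with $\tilde B_M^\pm=\sum_{j=1}^M B\circ\cF^j_\pm$ and show that, for any smooth bounded weight $G$, the partial sums $\int_{S^+}\tilde B_M^\pm \cdot G\cos\varphi\,dl$ converge as $M\to\infty$ to a common limit at a geometric rate. This reduces to an equidistribution statement for the pushforward measures $(\cF^j_\pm)_\ast(G\cos\varphi\,dl|_{S^+})$: although each is supported on the short stable curve $\cF^j_\pm(S^+)$, the sequence of such measures spreads out in the phase space under further iteration, and this can be made quantitative via the standard-pair machinery of Section~\ref{subsecSP} together with a suitable extension of Lemma~\ref{LmAux}, by pulling the measures back under $\cF^{-m}$ for a small $m$ to recover long unstable curves on which the propagation and equidistribution estimates apply.

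Given the uniform bound, the proof concludes with
\[
|T_N|\leq \Const\sum_{\substack{r,s\geq 0\\ r+s<N}}\theta^s
\leq \Const\,N\sum_{s\geq 0}\theta^s\leq \Const\,N,
\]
since $r$ takes at most $N$ values and the geometric series in $s$ converges. The main obstacle is the uniform $L^1$ bound on $W_M$: pointwise $|W_M(y)|$ may grow like $\sqrt{M}$ by the central limit theorem for billiards, so a term-by-term estimate fails, and the bound can only hold after integration against $\cos\varphi\,dl$ globally on $\cS_1$, requiring genuine cancellation between the branches $\cF_+$ and $\cF_-$. Without such cancellation one only obtains $|T_N|=\cO(N^{3/2})$; the improvement to $\cO(N)$ is exactly what this cancellation provides.
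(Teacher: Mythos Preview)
Your overall strategy---reduce to a uniform bound on $W_M=\sum_{j=1}^M\Delta B_j$ and then sum the geometric factor $\theta^s$---is exactly right, but you miss the one-line observation that makes the uniform bound trivial. A point $x\in S^+\subset\cS_1\setminus\cS_0$ is, by definition, a point whose next collision is grazing. As you approach $S^+$ from the ``hit'' side, that collision becomes exactly tangent and therefore does not deflect the trajectory at all; hence the two one-sided extensions of the dynamics through $S^+$ differ by \emph{exactly one} (degenerate) collision: $\cF_+^{\,j+1}(x)=\cF_-^{\,j}(x)$ for all $j\geq 1$ (or the same with $+$ and $-$ swapped, depending on orientation). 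In the paper's notation this reads $(B_j)_+ = (B_{j\pm1})_-$, so the sum $W_M$ telescopes:
\[
W_M(x)=\sum_{j=1}^M\bigl[(B_j)_+-(B_j)_-\bigr]
=\sum_{j=1}^M\bigl[(B_{j\pm1})_- -(B_j)_-\bigr]
\]
collapses to two boundary terms, giving $|W_M(x)|\leq 2\|B\|_\infty$ \emph{pointwise} on $S^+$ (off the measure-zero set $\cup_{k\geq2}\cS_k$). No equidistribution, no cancellation in $L^1$, no standard-pair machinery is needed.

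Your worry that $|W_M|$ grows like $\sqrt{M}$ by the CLT is therefore unfounded: $W_M$ is not the difference of two \emph{independent} Birkhoff sums, but of two sums along the \emph{same} orbit with a one-step index shift. Your proposed route through equidistribution of the pushforwards $(\cF_\pm^j)_\ast(G\cos\varphi\,dl|_{S^+})$ is also problematic on its own terms: $S^+$ is an s-curve, so these pushforwards concentrate exponentially rather than spread out, and pulling back by $\cF^{-m}$ only produces longer s-curves, to which the u-curve propagation estimates of Section~\ref{subsecSP} do not apply. Once you see the telescoping, the rest of your argument finishes the proof exactly as the paper does.
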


\proof Observe that for any point $x\in S^+\setminus \left(\cup_{k\geq
2}S_k\right)$ we have
$$
  \left(B_{n-(r+s)}(x)\right)_+=
  \left(B_{n\pm 1-(r+s)}(x)\right)_-
$$
where $(\cdot)_+$ and $(\cdot)_-$ denote the one-sided limit
values of the corresponding functions, and the choice of the sign
(in $\pm 1$) in the subscript depends on the orientation of the
curve $S^+$. Since $\Delta (B)=(B)_+-(B)_-$ for any function $B$,
the sum in the bracket in (\ref{TN}) telescopes, hence
$$
   \left|T_N\right| \leq \Const
   \sum_{\substack{r,s\geqslant 0\\ r+s<N}} \|\beta_{r,s}\|_0
$$
Recall that the $\|\cdot\|_0$ norm is equivalent to $\|\cdot\|$
(Proposition~\ref{prdrdp}) and
$\|\beta_{r,s}\|\leq\Const\,\theta^s$ (Lemma~\ref{Lmalphabeta}),
hence $|T_N|\leq \Const\, N$. \qed
\medskip

We now turn to $I_{n,k,j}^{(b)}$ from (\ref{Inkjb}). The set
$(\cS_{-(n-j-1)}\cup\cS_{j+1})\setminus\cS_0$ consists of s-curves
$S\subset\cF^{-m}(\cS_1\setminus\cS_0)$, $0\leq m\leq j$ and
\index{u-curves (unstable curves)} u-curves
$S\subset\cF^{m}(\cS_{-1}\setminus\cS_0)$, $0\leq m\leq n-j-2$.
Accordingly,
$$
   I_{n,k,j}^{(b)}=I_{n,k,j}^{(bs)}+I_{n,k,j}^{(bu)}
$$
where (using change of variables)
\beq
  I_{n,k,j}^{(bs)}=\sum_{m=0}^{j} \sum_{S^+}
  \int_{S^+} A_{-(n-j-1+m)}\, \Delta B_{j+1-m}
  \left(\omega * \alpha_{n-k,k-j}^{(m-1)}\right)
    \label{Inkjbs}
\eeq
and
\beq
  I_{n,k,j}^{(bu)}=\sum_{m=0}^{n-j-2} \sum_{S^-}
  \int_{S^-} \Delta\left[A_{-(n-j-1-m)}\,
  \left(\omega * \alpha_{n-k,k-j}^{(-m-1)}\right)\right]\,
  B_{j+1+m}
    \label{Inkjbu-}
\eeq
(here the summation is performed over all the discontinuity curves
$S^-\subset\cS_{-1}\setminus\cS_0$ of the map $\cF^{-1}$).

First we analyze (\ref{Inkjbs}). The case $m=0$ is special, and we
combine all the terms with $m=0$ in a separate expression:
\beq
    \label{Inbs}
  I^{(bs,0)}_n=\sum_{k=0}^{n-1}\sum_{j=0}^{k} \sum_{S^+}
  \int_{S^+} A_{-(n-j-1)}\, \Delta B_{j+1}
  \bigl(\omega * \alpha_{n-k,k-j}^{(-1)}\bigr)
\eeq
To deal with the other terms ($m>0$) in (\ref{Inkjbs}), we change our
indexing system to $r=n-k$, $s=k-j$, and $t=m-1$, and obtain a total of
$$
   \sum_{\substack{r,s,t\geqslant 0\\ r+s+t<n}}
   \sum_{S^+}\int_{S^+}
  A_{-(r+s+t)}\,\Delta B_{n-(r+s+t)}\,
  \left(\omega*\alpha_{r,s}^{(t)}\right)\\
$$
which completely cancels the first sum in (\ref{Inx}), hence all the
large integrals are now gone.
% the previously developed expression for $I_n^{(x)}$.

Next we make a general remark. Every curve $S^-$ separates two
regions, one is mapped by $\cF^{-1}$ into a vicinity of $\cS_0$
and the other -- into a vicinity of some curve
$S^+\subset\cS_1\setminus\cS_0$. On the side of $S^-$ that is
mapped onto $\cS_0$, the map $\cF^{-1}$ has unbounded derivatives,
and we call that side of $S^-$ {\em irregular}. On the other side,
the map $\cF^{-1}$ has bounded derivatives, and we call that side
of $S^-$ {\em regular}. Thus, every curve $S^- \subset \cS_{-1}
\setminus \cS_0$ has one regular side and one irregular side.
Similarly we define regular and irregular sides for every curve
$S^+\subset\cS_1\setminus\cS_0$. Note that $\cF^{-1}$ maps the
regular sides of $\cS_{-1}\setminus\cS_0$ to the regular sides of
$\cS_{1}\setminus\cS_0$, and the map $\cF$ does the opposite.

Observe that the integrand in (\ref{Inkjbu-}) vanishes on the
irregular side of every curve $S^-$ (to see this, note that the
field $\alpha_{n-k,k-j}^{(-m-1)}\subset E^u_{n-j-1-m}$ is in fact
tangent to $S^-$ on its irregular side; or, equivalently, one can
approximate $S^-$ by a curve $S$ lying on its irregular side, apply
(\ref{intAinv}) with $n=-1$ to $S$, and note that the form $\omega$
vanishes on $\cF^{-1} (S)$ as that curve approaches $\cS_0$). Now we
can change variables $y=\cF^{-1}x$ and rewrite (\ref{Inkjbu-}) as
\beq
    \label{Inkjbu+}
  I_{n,k,j}^{(bu)}=\sum_{m=1}^{n-j-1} \sum_{S^+}
  \int_{S^+} A_{-(n-j-1-m)}\,
  B_{j+1+m}\,
  \left(\omega * \alpha_{n-k,k-j}^{(-m-1)}\right)
\eeq
where the integration is performed along the regular side of each
curve $S^+$ (again, on the irregular side of $S^+$ the integrand
in (\ref{Inkjbu+}) vanishes).

Now the integrals in (\ref{Inkjbu+}) can be naturally combined with
those in (\ref{Inbs}) and make a total of
\beq
    \label{Inkb}
  I_{n,k}^{(b)}=\sum_{j=0}^{k}
  \sum_{m=0}^{n-j-1} \sum_{S^+}
  \int_{S^+} A_{-(n-j-1-m)}\,
  B_{j+1+m}\,
  \left(\omega * \alpha_{n-k,k-j}^{(-m-1)}\right)
\eeq
Here the case $m=0$ corresponds to (\ref{Inbs}) and the case
$m\geq 1$ to (\ref{Inkjbu+}). (Note that the integrand in
(\ref{Inbs}) also vanishes on the irregular side of each curve
$S^+$.)

We also note that
\beq
     \left\|\alpha_{n-k,k-j}^{(-m-1)}\right\|
     \leq\Const\,\theta^{m+k-j}
       \label{dFmalpha}
\eeq
due to Lemma~\ref{Lmalphabeta}.

It remains to estimate the terms $I_{n,k}^{(v)}$ given by (\ref{Inkv})
and $I_{n,k}^{(b)}$ of (\ref{Inkb}).

\noindent{\em Remark.} Before proceeding with our estimates let us
compare the approach of the present section with that of
Chapter~\ref{SecSPE}. There are three types of terms corresponding
to the variation of $\mu(A(B\circ \cF_Q))$:
\begin{itemize}
\item ``stable continuous'' terms $I_{n,k}^s$ given by \eqref{Inks}.
They correspond to the term $\RmII$ in \eqref{SUSin} since they deal
with the difference of the values of the observable at the shadowed
and the shadowing points.
\item ``unstable continuous'' terms $I_{n,k,j}^{(i)}$ given by
\eqref{Inkji}. They correspond to the term $\RmIII$ in
\eqref{SUSin}, since the Jacobian of the holonomy map is a product
of unstable Jacobian ratios.
\item The terms containing integration over discontinuity
($T_N$ given by \eqref{TN}, $I_{n,k}^{(v)}$ given by \eqref{Inkv}
and $I_{n,k}^{(b)}$ given by  (\ref{Inkb})). They correspond to the
first term  in \eqref{SUSin} since they account for orbits where
shadowing is impossible as they pass too close to the singularities.
\end{itemize}

\subsection{Estimation of small boundary terms}
\label{subsecABT2} First we outline our strategy. All the
integrals in (\ref{Inkv}) and (\ref{Inkb}) have a general form of
$$
   \int_{S^+} A_{-k_1}\,
  B_{k_2}\,
  (\omega * \bv) =
   \int_{S^+} A_{-k_1}\,
  B_{k_2}\,
  \|\bv^{\perp}\|_0\,\cos\varphi\, dl
$$
with $k_1+k_2=n$ and some vector fields $\bv$ on $S^+$. The curve
$S^+$ is strongly expanded by $\cF^{-k_1}$, as well as by
$\cF^{k_2}$, and so both functions $A_{-k_1}$ and $B_{k_2}$
rapidly oscillate on the curve $S^+$. However, if $k_1\ll k_2$,
then $B_{k_2}$ oscillates much faster than $A_{-k_1}$, and we will
approximate $A_{-k_1}$ by constants on appropriately chosen pieces
of $S^+$ and then use Proposition~\ref{PrDistEq0} to average
$B_{k_2}$ on each of those pieces. If $k_1\gg k_2$, then
$A_{-k_1}$ and $B_{k_2}$ switch places. In the remaining case
$k_1\approx k_2$ we simply bound the above integrand by
$\norm{A}_{\infty}\norm{B}_{\infty}\,\sup_{\Omega} \|\bv\|$, and
then summing up over $n\leq N$ and using (\ref{dFmalpha}) will
give us the desired $\cO(N)$ estimate.

When applying Proposition~\ref{PrDistEq0}, we will treat the
function $\rho=\|\bv^{\perp}\|_0\,\cos\varphi$ as a ``density'' on
the corresponding pieces of $S^+$, so that they become standard
pairs. However, while $v_0$ in (\ref{Inkv}) is bounded and smooth
\index{Standard pair} (which can be easily verified directly, we
omit details), the vector fields (and hence, the corresponding
$\rho$) in (\ref{Inkb}) are badly discontinuous: their
discontinuities lie on the set $\cS_{-k_1}$, which is very dense on
$S^+$. Our first task is to approximate vector fields in
(\ref{Inkb}) by smooth enough functions. To this end we develop a
general approach.

Let $S\subset\Omega$ be a \index{u-curves (unstable curves)} u-curve
or an s-curve, $a_1\in (0,1]$ and $a_2\geq 0$. We denote by
$\cH^{a_1,a_2}(S)$ the class of functions $\rho\colon S\to\reals$
that are well approximated by H\"older continuous functions in the
following sense:

\medskip\noindent{\bf Definition}.
$\rho\in\cH^{a_1,a_2}(S)$ iff there is a $L_{\rho}>0$ such that
for every $\varepsilon\in (0,1)$ there exists a function
$\rho_{\varepsilon}\colon S\to\reals$ satisfying two requirements:
\beq
      \label{ApHold1}
    \int_S |\rho-\rho_{\varepsilon}|\, dl
    \leq \varepsilon
\eeq
and for all $x,y\in S$
\beq
       \label{ApHold2}
  |\rho_{\varepsilon}(x)-\rho_{\varepsilon}(y)|
  \leq L_{\rho}\,\varepsilon^{-a_2}\,|S(x,y)|^{a_1}
\eeq
where $S(x,y)$ denotes the segment of the curve $S$ between the
points $x$ and $y$. We always take the smallest $L_{\rho}$ for
which (\ref{ApHold2}) holds for all $\varepsilon\in (0,1)$ and put
$$
     \|\rho\|_{a_1,a_2}\colon =L_{\rho}.
$$

\begin{lemma}[H\"older approximation]
\label{MidSmall} There exist $a_1\in (0,1]$ and $a_2\geq 0$ such that
$$
   \rho=\Bigl\|\left(\alpha_{n-k,k-j}^{(-m-1)}\right)^{\perp}\Bigr\|_0
   \cos\varphi\in\cH^{a_1,a_2}(S^+)
$$
and
$$
   \norm{\rho}_{a_1,a_2} \leq 1
$$
uniformly in $n,k,j,m$.
\end{lemma}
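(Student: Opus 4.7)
My plan is to exploit two facts: first, that $\|\alpha_{n-k,k-j}^{(-m-1)}\|_0 \leq C\theta^{m+k-j}$ by Lemma~\ref{Lmalphabeta}, so $\rho$ is \emph{uniformly} bounded on $S^+$ even though the index $m+k-j$ does not enter the bound; second, that although the field $E^u_{n-j-m-1}$ carrying $\alpha_{n-k,k-j}^{(-m-1)}$ has discontinuities on the set $S^+\cap \cS_{-(n-j-m-1)}$ which becomes increasingly dense as $n\to\infty$, the directional part of this field converges exponentially fast (in $n-j$) to the limiting unstable line field $E^u_\infty$, away from singularities of bounded iterations. This will allow me to reduce the approximation problem to an $\varepsilon$-dependent finite-iteration truncation.

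First I would fix a truncation level $l_0 = l_0(\varepsilon) \asymp |\ln\varepsilon|$, chosen so that $\vartheta^{l_0} \leq \varepsilon$, and replace $\alpha_{n-k,k-j}^{(-m-1)}$ by the analogous vector $\tilde\alpha^{(-m-1)}$ constructed using $E^u_{\min(l_0,n-j)}$ in place of $E^u_{n-j}$. By the standard exponential convergence of unstable foliations (established by iterating the projection $\Pi^u_l$ and using the contraction estimate \eqref{contract1}), the resulting pointwise error in $\rho$ is $\cO(\vartheta^{l_0}) \leq \cO(\varepsilon)$, which is negligible for (\ref{ApHold1}). Next I would define a good set $G_\varepsilon \subset S^+$ of points lying at $|\cdot|$-distance at least $\varepsilon^\eta$ from $S^+ \cap \cS_{-(l_0+m+1)}\cup\cS_0$ for a small constant $\eta>0$ to be chosen. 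Applying the growth \index{Growth lemma} lemma (Lemma~\ref{prgrow}) to the s-curve $S^+$ under the iterates of $\cF^{-1}$, which is the symmetric counterpart of the u-curve growth lemma used throughout Chapter~\ref{SecSPE}, bounds $|S^+\setminus G_\varepsilon|$ by $\cO(l_0 \varepsilon^\eta) = \cO(\varepsilon^\eta |\ln\varepsilon|)$, which for suitable $\eta$ is $\leq \varepsilon$. Setting $\rho_\varepsilon := \tilde\rho\cdot\chi_{G_\varepsilon}$, where $\chi_{G_\varepsilon}$ is a smooth cutoff supported on $G_{\varepsilon/2}$ and equal to $1$ on $G_\varepsilon$, yields (\ref{ApHold1}) by the triangle inequality and the uniform bound $\|\rho\|_\infty \leq C$.

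The verification of (\ref{ApHold2}) is where the main work lies. On $G_\varepsilon$, the field $\tilde\alpha^{(-m-1)}$ is constructed from at most $l_0 + m+1$ iterations of smooth maps whose derivatives blow up at most like $[\dist(\cdot,\cS_1)]^{-1/2}$ by \eqref{DxcF}. Since every iterate involved is evaluated at a point whose trajectory over the relevant iterations stays $\varepsilon^\eta$-away from singularities, chaining the derivative bounds and applying the distortion control of Proposition~\ref{prdist} together with the curve-curvature bound of Proposition~\ref{prcurv} yields a pointwise Lipschitz estimate of the form $|\partial\rho_\varepsilon/\partial l| \leq C\,\varepsilon^{-a_2}$ for a fixed $a_2>0$ depending on $\eta$ and the product of $l_0$ factors of $\varepsilon^{-\eta/2}$. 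Integration along $S^+$ then gives (\ref{ApHold2}) with $a_1 = 1$ (or any $a_1 \leq 1$). The uniformity in $n,k,j,m$ is automatic because (i) the $L^\infty$-bound on $\rho$ is uniform, (ii) the truncation level $l_0$ depends only on $\varepsilon$, and (iii) the $m+k-j$ dependence sits in a $\theta^{m+k-j} \leq 1$ prefactor.

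The main obstacle I anticipate is controlling the iterated derivatives of $\tilde\alpha^{(-m-1)}$ near singularities of the intermediate iterates $\cF^{-1},\ldots,\cF^{-(l_0+m+1)}$: the naive estimate using $\eqref{DxcF}$ accumulates a product of $l_0$ factors of $\varepsilon^{-\eta/2}$, which would require $\eta$ to be taken so small that the measure estimate in the previous step is jeopardized. Overcoming this will require a more delicate telescoping argument, in the spirit of the proof of (\ref{alpha''}) carried out for $d^2\alpha_t/dx_{t-1}^2$, which replaces raw products of $1/\cos\varphi_i$ by the H-component length $|\gamma_m(\cdot)|^{-2/3}$ so that the growth \index{Growth lemma} lemma can absorb them in an $L^p$-sense rather than pointwise. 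Once this is accomplished, balancing $\eta$ against the exponent lost in the distortion estimates delivers admissible values of $a_1 \in (0,1]$ and $a_2 \geq 0$, and the normalization $\|\rho\|_{a_1,a_2}\leq 1$ is obtained by rescaling the constants in the construction of $\rho_\varepsilon$.
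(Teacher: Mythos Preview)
Your overall architecture---truncate the unstable foliation at depth $\asymp|\ln\varepsilon|$, use exponential convergence of the $E^u$-fields for the $L^1$ error, and handle discontinuities by cutting out a bad set---matches the paper's strategy. But there is a genuine gap in your verification of \eqref{ApHold2}, and the paper fills it with an ingredient you do not invoke.

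The problem is exactly the one you flag as the ``main obstacle'': chaining $l_0$ factors of the derivative bound \eqref{DxcF} on a set where the orbit stays $\varepsilon^{\eta}$-away from singularities yields a Lipschitz constant of order $\varepsilon^{-\eta l_0/2}$, and since $l_0\asymp|\ln\varepsilon|$ this is $\exp\bigl(c(\ln(1/\varepsilon))^2\bigr)$, which grows faster than any $\varepsilon^{-a_2}$. No balancing of $\eta$ can fix this. Your proposed telescoping ``in the spirit of \eqref{alpha''}'' does not obviously help: that estimate controls derivatives of $\alpha_t$ along a \emph{u}-curve via H-component lengths, whereas here you need H\"older regularity of a pushed-forward unstable direction field along the \emph{s}-curve $S^+$. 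What the paper actually uses is the \emph{Invariant Section Theorem} (stated as the ``Fact'' in Section~\ref{subsecHA} and proved in Section~\ref{subsecIST}): given any smooth field $\hat E^u$ of unstable directions, the iterates $\cF^r(\hat E^u)$ are H\"older continuous along s-curves with exponent and norm \emph{uniform in $r$}. This is a genuinely nontrivial regularity statement---contraction in the projective action of $d\cF$ on unstable cones compensates the blowup of $\|D\cF\|$---and it is what replaces your failed chaining.

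Two further points where your plan diverges from what is needed. First, the construction of $\alpha_{n-k,k-j}^{(-m-1)}$ involves projectors at points $\cF^i(x)$ for $i$ ranging over an interval of length $k-j+m+1$ in \emph{both} time directions relative to $x\in S^+$; accordingly the paper partitions $S^+$ using H-components of both $\cF^{-r}(S^+)$ and $\cF^{r}(S^+)$ (the joint partition $\xi=\xi^+\vee\xi^-$), whereas you only cut out backward singularities $\cS_{-(l_0+m+1)}$. Second, the paper restricts to $\varepsilon\leq\|\hrho\|_\infty^{d}$, which via $\|\hrho\|_\infty\leq C\theta^{k-j+m}$ forces $r\gg k-j+m$; this is what guarantees that on each partition element all the projectors $\Theta^s_p,\Theta^u_p$ in the construction are smooth, so that the only source of discontinuity is the underlying field $E^u_{n-j-m-1-r}$ on $\cF^{-r}(W)$. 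Your argument treats $l_0$ and $k-j+m$ as independent, which leaves the smoothness of the intermediate projectors unaddressed.
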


We postpone the proof of Lemma~\ref{MidSmall} until
Section~\ref{subsecHA} and continue our analysis of the integrals
(\ref{Inkv}) and (\ref{Inkb}).

As we mentioned already, $v_0$ is a bounded and smooth vector field,
hence
\beq
    \norm{v_0}_{\infty}\leq\Const\qquad{\rm and}\qquad
    \norm{v_0}_{a_1,a_2} \leq \Const
\eeq
for any $a_1\in (0,1]$ and $a_2\geq 0$.

\begin{proposition}[Two-sided integral sums]
\label{PrTwoSide} Given $a_1\in (0,1]$, $a_2\geq 0$, and $L>0$,
there are constants $C, c, \xi>0$ such that for each curve
$S^+\subset\cS_1\setminus\cS_0$ and all $m_1, m_2$ such that
$m_j<L\ln N,$ for any $\delta>0,$ and for any functions
$\rho_{k_1}\in\cH^{a_1,a_2}(S^+)$ such that
\beq
  \norm{\rho_{k_1}}_{\infty}\leq \delta
  \qquad{\rm and}\qquad
  \norm{\rho_{k_1}}_{a_1,a_2}
  \leq 1
    \label{rhoass}
\eeq
we have
$$
      \biggl|\sum_{\substack{k_1>m_1,\, k_2>m_2\\ k_1+k_2\leq N}}
      \int_{S^+}  A_{-k_1}
      \,B_{k_2}\, \rho_{k_1}\, dl\, \biggr|\leq
      C\Bigl(N\delta|\ln \delta| +N^2 e^{-c N^\xi}\Bigr)
$$
where the integral can be taken on either side of $S^+$ (but this
should be done consistently).
\end{proposition}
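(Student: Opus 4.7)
The plan is to split the double sum according to whether the two iteration counts are close or far apart, and to treat the two regions by different techniques. Introduce a parameter $L=L(\delta,N)$, to be fixed at the end, and write
\[
   \Sigma\;=\;\Sigma_{\rm near}+\Sigma_{\rm far}
\]
where $\Sigma_{\rm near}$ collects the pairs $(k_1,k_2)$ with $|k_1-k_2|\le L$ and $\Sigma_{\rm far}$ the pairs with $|k_1-k_2|>L$. On the near-diagonal region I would use the trivial pointwise bound $|A_{-k_1}B_{k_2}\rho_{k_1}|\le\|A\|_\infty\|B\|_\infty\delta$; since there are $\cO(NL)$ pairs, this contributes $\cO(NL\delta)$, which produces the announced term $N\delta|\ln\delta|$ once $L$ is chosen of order $|\ln\delta|$.

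For the off-diagonal region I would treat the part $k_2-k_1>L$ and deduce the part $k_1-k_2>L$ by a symmetric argument. Fix such a pair and aim to show $\bigl|\int_{S^+}A_{-k_1}B_{k_2}\rho_{k_1}\,dl\bigr|\le C e^{-cL^\xi}$, so that the contribution of the $\cO(N^2)$ off-diagonal pairs is bounded by $CN^2 e^{-cL^\xi}$. The estimate on a single integral proceeds in three nested steps. First, invoke Lemma~\ref{MidSmall} with $\varepsilon=e^{-c_1 L^\xi}$ to replace $\rho_{k_1}$ by $\rho_{k_1,\varepsilon}$, absorbing an $L^1$-error $\varepsilon$ and gaining a H\"older approximation with exponent $a_1$ and H\"older norm $\cO(\varepsilon^{-a_2})$. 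Second, partition $S^+$ -- using $\cS_{-k_1}\cap S^+$ and additional subdivisions -- into pieces $J$ on which $A_{-k_1}$ varies by at most $e^{-c_2 L^\xi}$; this is possible because on each smooth component $\cF^{-k_1}$ stretches $S^+$ by a factor $\vartheta^{-k_1}$, and $A$ is H\"older on each component of $\Omega\setminus\cS_1$. Third, on each such piece $J$ replace $A_{-k_1}$ by its value at a chosen central point and estimate $\int_J B_{k_2}\rho_{k_1,\varepsilon}\,dl$ by the equidistribution results of Chapter~\ref{SecSPE}.

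The third step is where the main content lies. The Lebesgue measure on $J$, weighted by the H\"older density $\rho_{k_1,\varepsilon}$, is not immediately an auxiliary measure in the sense of \eqref{aux1}--\eqref{aux3} because $J\subset S^+$ is stable rather than unstable. The plan is to foliate a small neighborhood of $J$ by unstable H-curves and to use the stable holonomy to push the weighted measure from $J$ onto a convex combination of standard pairs on those u-curves; uniform transversality of the stable/unstable cones, together with the H\"older regularity obtained in step one, ensures the resulting conditional densities are homogeneous in the sense of Section~\ref{subsecSHUC}, while Lemma~\ref{prgrow} supplies the length control \eqref{aux3}. With this transfer in hand, Corollary~\ref{PrDistEq3} (or the family-of-pairs version of Proposition~\ref{PrDistEq2} from the Remark at the end of Section~\ref{subsEP}) gives $\bigl|\EXP(B\circ\cF^{k_2})\bigr|\le C\theta^{k_2}$ since $\mu(B)=0$ and $k_2\ge L$, yielding the per-piece decay. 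Summing over the $\cO(e^{c_2 L^\xi}\vartheta^{-k_1})$ pieces and the $\cO(N^2)$ off-diagonal pairs, and choosing $L$ of order $|\ln\delta|$ to balance the diagonal error while keeping $L^\xi$ compatible with $N^\xi$, yields the announced estimate $C(N\delta|\ln\delta|+N^2 e^{-cN^\xi})$.

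The hardest part will be the third step: transferring the one-dimensional integral on the singular stable curve $S^+$ into the standard-pair framework of the paper. The density $\rho_{k_1}$ is only approximately H\"older (after Lemma~\ref{MidSmall}), not smooth, so one must carefully balance the approximation scale $\varepsilon$, the partition scale $e^{-c_2 L^\xi}$, and the length threshold underlying \eqref{aux3}; in addition the discontinuities of $B_{k_2}$ along $\cS_{k_2}\cap S^+$ must be absorbed into the partitioning. Ensuring that these three scales are mutually compatible, uniformly for all $(k_1,k_2)$ in the off-diagonal range, is the core bookkeeping at the heart of the argument; once accomplished, the proof is a spatially non-stationary analogue of the standard stable-curve correlation-decay argument for dispersing billiards.
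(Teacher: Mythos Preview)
Your approach has a genuine gap in the off-diagonal step. You propose to partition $S^+$ into pieces $J$ on which $A_{-k_1}$ is nearly constant; since $\cF^{-k_1}$ expands the s-curve $S^+$ by a factor $\sim\vartheta^{-k_1}$, such pieces have length $|J|\sim\vartheta^{k_1}$. Applying the equidistribution results to $B\circ\cF^{k_2}$ on (a standard-pair version of) $J$ then requires a warm-up time $k_2\geq K|\ln|J||\sim Ck_1$ before any decay begins, so the bound you would actually get is $\theta^{k_2-Ck_1}$, not $\theta^{k_2}$. For the bulk of the off-diagonal region (say $k_1\approx k_2\approx N/2$ with gap $L\sim|\ln\delta|$) this yields no decay at all. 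Thus the claimed per-integral bound $|\cI_{k_1,k_2}|\le Ce^{-cL^\xi}$ fails, and with it your balance $L\sim|\ln\delta|$.

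There is a second difficulty in your ``transfer'' step. The curve $J\subset S^+$ is a stable curve; each unstable H-curve in a transversal foliation meets it in a single point, so pushing the one-dimensional measure $\rho_{k_1,\varepsilon}\,dl$ along stable holonomy produces Dirac masses on each u-curve, not homogeneous densities. One cannot represent a one-dimensional s-curve measure as a convex combination of standard pairs in this way.

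The paper resolves both issues differently. It proves a per-integral off-diagonal bound only when $|k_1-k_2|>N^{1/2+\zeta}$ (Lemma~\ref{OffD}), by iterating $S^+$ forward $k_3=k_2-\Delta/4$ times so that the H-components of $\cF^{k_3}(S^+)$ are u-curves of length $\cO(1)$, and then invoking the \emph{moderate deviation} estimate (Proposition~\ref{PrMD}) to control the pieces whose backward images are not yet short enough for $A_{-k_1}$ to be frozen; the moderate-deviation error forces the gap to be $\gtrsim N^{1/2}$. The resulting near-diagonal region is therefore a strip of width $N^{1/2+\zeta}$, far too wide for the trivial bound $\cO(\#\text{pairs}\cdot\delta)$. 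The key device you are missing is the Cauchy--Schwarz factorization of Lemma~\ref{SqB}: writing $\sum_{k_1,k_2}\cI_{k_1,k_2}=\int_{S^+}\bigl(\sum_{k_1}A_{-k_1}\rho_{k_1}\bigr)\bigl(\sum_{k_2}B_{k_2}\bigr)\,dl$ and bounding each factor in $L^2$ reduces the sum over a square of side $R$ to $\cO(R\,\delta|\ln\delta|)$ rather than $\cO(R^2\delta)$. A dyadic decomposition into squares then handles the wide near-diagonal strip.
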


We prove Proposition~\ref{PrTwoSide} in Section~\ref{subsecTSIS}.

\begin{corollary}
\label{ContrSing}
$$
  \left| \sum_{n=1}^N\sum_{k=0}^{n-1} I_{n,k}^{(v)}\right|
  \leq\Const\, N,
   \qquad
  \left| \sum_{n=1}^N\sum_{k=0}^{n-1} I_{n,k}^{(b)}\right|
  \leq\Const\, N
$$
\end{corollary}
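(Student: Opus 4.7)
The plan is to apply Proposition~\ref{PrTwoSide} to each of the two sums after an appropriate reindexing, using Lemma~\ref{MidSmall} to verify the H\"older-approximation hypothesis on the relevant densities, and then summing the resulting bounds as convergent geometric series in the auxiliary parameters.

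For $\sum_{n,k} I_{n,k}^{(v)}$, I would first split $\Delta B_{n-k}=B_{n-k}^{+}-B_{n-k}^{-}$ so that each resulting integral is performed on a single side of $S^{+}$. Reindexing by $k_1=k$, $k_2=n-k$ turns the double sum over $0\leq k<n\leq N$ into a triangular sum over $\{k_1\geq 0,\ k_2\geq 1,\ k_1+k_2\leq N\}$ with density $\rho=\|v_0^{\perp}\|_0\cos\varphi$ independent of $k_1$. Because $v_0$ is uniformly bounded and smooth, $\rho\in\cH^{a_1,a_2}(S^{+})$ with $\|\rho\|_\infty\leq\Const$ and $\|\rho\|_{a_1,a_2}\leq\Const$; Proposition~\ref{PrTwoSide} applied with $\delta=\Const$ and any fixed $m_1,m_2$ then yields a bound $C(N\delta|\ln\delta|+N^2 e^{-cN^\xi})=O(N)$. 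The finitely many pairs excluded by the cutoffs $k_j\leq m_j$ contribute only $O(1)$ and can be estimated directly.

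The sum $\sum_{n,k} I_{n,k}^{(b)}$ is a quadruple sum over $(n,k,j,m)$, and I would reindex by $r=n-k$, $s=k-j$, $t=m$ together with
\[
  k_1 \;=\; n-j-1-m \;=\; r+s-t-1,\qquad k_2 \;=\; j+1+m,
\]
so that $k_1+k_2=n$ while the density $\alpha_{n-k,k-j}^{(-m-1)}$ becomes $\alpha_{r,s}^{(-t-1)}$. For each fixed pair $(s,t)$ the parameter $r$ is determined by $k_1$ through $r=k_1+t+1-s$, so
\[
  \rho_{k_1} \;=\; \|(\alpha_{r,s}^{(-t-1)})^{\perp}\|_0\,\cos\varphi
\]
is a function of $k_1$ alone. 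Lemma~\ref{MidSmall} gives $\rho_{k_1}\in\cH^{a_1,a_2}(S^{+})$ with $\|\rho_{k_1}\|_{a_1,a_2}\leq 1$, and (\ref{dFmalpha}) gives $\|\rho_{k_1}\|_\infty\leq\Const\,\theta^{s+t}$. Proposition~\ref{PrTwoSide} applied with $\delta=\Const\,\theta^{s+t}$ and suitable $m_1,m_2$ then produces, for each $(s,t)$, a bound $C\bigl(N\theta^{s+t}(s+t+1)+N^2 e^{-cN^\xi}\bigr)$; summing in $s,t\geq 0$ gives a main term $\Const\, N$ (since $\sum_{s,t}(s+t+1)\theta^{s+t}<\infty$) together with a negligible error $O(N^4 e^{-cN^\xi})=O(1)$.

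The main obstacle is precisely the uniform H\"older-approximability of the vector fields $\alpha_{r,s}^{(-t-1)}$ restricted to the singularity curves $S^{+}$, i.e.\ the uniform bound $\|\rho_{k_1}\|_{a_1,a_2}\leq 1$ independent of the parameters $n,k,j,m$; this is exactly the content of Lemma~\ref{MidSmall}, whose proof is deferred. Granting that lemma and Proposition~\ref{PrTwoSide}, the corollary reduces entirely to the geometric-series bookkeeping sketched above.
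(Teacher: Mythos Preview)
Your approach is essentially the paper's own: the same reindexing (your $(s,t)$ is the paper's $(r,m)=(k-j,m)$), the same invocation of Lemma~\ref{MidSmall} for the H\"older-approximation hypothesis, and the same application of Proposition~\ref{PrTwoSide} with $\delta=\Const\,\theta^{s+t}$ followed by a geometric summation.

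There is one small but genuine gap. Proposition~\ref{PrTwoSide} carries the hypothesis $m_1,m_2<L\ln N$, and in your reindexing the forced lower bounds are $m_1=\max(0,s-t)-1$ and $m_2=t$. So the proposition, as stated, applies only when $s,t\lesssim L\ln N$; your phrase ``suitable $m_1,m_2$'' does not cover the case where $s$ or $t$ is large. The paper deals with this by an explicit truncation: choose $L$ so large that $\theta^{L\ln N}\leq N^{-3}$, observe from~(\ref{dFmalpha}) that each integral with $s>L\ln N$ or $t>L\ln N$ is bounded by $\Const\,\theta^{s+t}$, and conclude that the entire tail contributes at most $\Const\,N^2\sum_{s+t>L\ln N}\theta^{s+t}\leq 1$. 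Once $s,t\leq L\ln N$, both $m_1$ and $m_2$ are below $L\ln N$, Proposition~\ref{PrTwoSide} applies, and the error term is $(L\ln N)^2 N^2 e^{-cN^\xi}=o(1)$ rather than the $N^4 e^{-cN^\xi}$ you wrote (which presupposed the proposition applied for all $s,t$). With this truncation inserted, your argument is complete and coincides with the paper's.
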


\proof We prove the second bound (the first one is easier).
Introduce new indices $(k_1, k_2, r)$ where $k_1=n-j-1-m,$
$k_2=j+1+m,$ and $r=k-j.$ Due to (\ref{dFmalpha}) we can choose
$L$ so large that the sum over quadruples with $m>L\ln N$ or $r>L
\ln N$ will be less than 1. Now Proposition~\ref{PrTwoSide}
and Lemma \ref{Lmalphabeta}  imply that for fixed $m$ and $r$ such that $m\leq L\ln N$ and
$r\leq L \ln N$, the sum over $k_1$ and $k_2$ is bounded by
$\Const\,[(r+m)\theta^{r+m} N + N^2 e^{-c N^\xi}].$ Summation over
$m$ and $r$ gives the desired bound. \qed \medskip

This completes the proof of our main estimate (\ref{MaindDN}) and hence
that of Proposition~\ref{PrRegDif} (modulo Lemma~\ref{MidSmall} and
Proposition~\ref{PrTwoSide}). \qed

\subsection{Two-sided integral sums}
\label{subsecTSIS} Here we prove Proposition~\ref{PrTwoSide}. For the
sake of brevity we shall call any set of the form
$$
    \{(k_1,k_2)\colon k_1\geq m_1,\ k_2\geq m_2,\
    (k_1-m_1)+(k_2-m_2)\leq R\}
$$
a {\em triangle} with side $R$, and any set of the form
$$
   \{(k_1,k_2)\colon
   m_1\leq k_1\leq m_1+R,\
   m_2\leq k_2\leq m_2+R\}
$$
a {\em square} with side $R.$ For brevity, we denote
$$
    \cI_{k_1,k_2}=\int_{S^+}
    A_{-k_1}\, B_{k_2}\, \rho_{k_1}\, dl
$$

\begin{lemma}
\label{SqB} For any square $\bS_R$ with side $R$
$$
   \biggl\lvert\sum_{(k_1, k_2)\in \bS_R}
   \cI_{k_1,k_2}\biggr\rvert
    \leq \Const\, R\, \delta |\ln \delta|
$$
\end{lemma}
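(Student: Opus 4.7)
The plan is to prove Lemma \ref{SqB} by splitting the square $\bS_R$ into a diagonal band $\bS_R^{\mathrm{d}} = \{(k_1,k_2) \in \bS_R\colon |k_1 - k_2| \leq M\}$ and its complement $\bS_R^{\mathrm{o}}$, where $M$ is of order $|\ln \delta|$ to be fixed later. The diagonal band contains $\cO(RM)$ pairs, and the trivial bound $|\cI_{k_1,k_2}| \leq \|A\|_\infty \|B\|_\infty \|\rho_{k_1}\|_\infty |S^+| \leq C\delta$ already yields a contribution of $\cO(R M \delta) = \cO(R \delta |\ln \delta|)$ from this band. The remaining task is to bound the off-diagonal sum by a comparable quantity using cancellation from equidistribution.

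For $\bS_R^{\mathrm{o}}$ I would treat the cases $k_1 > k_2$ and $k_1 < k_2$ symmetrically via time reversal, and focus on the regime $k_1 \geq k_2 + M$. On the regular side of $S^+$ the map $\cF^{k_2}$ is a smooth diffeomorphism, so a change of variables $y = \cF^{k_2}(x)$ rewrites $\cI_{k_1,k_2}$ as an integral over the short s-curve $\cF^{k_2}(S^+)$ of length $\lesssim \vartheta^{k_2}$ of a product of the form $A_{-(k_1-k_2)} \cdot B \cdot (\rho_{k_1} \circ \cF^{-k_2}) \cdot \cJ^{-1}$. Since $\cF^{k_2}(S^+)$ is an s-curve for $\cF$, it is a u-curve for the time-reversed dynamics, under which $A_{-(k_1-k_2)}$ becomes a forward iterate. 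After approximating the density by its H\"older version $\rho_{k_1,\varepsilon}$ from Lemma \ref{MidSmall} (incurring $L^1$-error $\varepsilon$), the resulting integral is an expectation along a (time-reversed) standard pair, and Proposition~\ref{PrDistEq0} delivers decay of order $\delta\,\varepsilon^{-a_2}\,\theta^{k_1-k_2}$ plus the $\varepsilon$ approximation error per term.

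Summing these bounds over $\bS_R^{\mathrm{o}}$ with the choices $\varepsilon = \delta/R^2$ and $M$ picked so that $\vartheta^M \varepsilon^{-a_2} \leq 1$ (which forces $M \asymp |\ln \delta|$) yields a total off-diagonal contribution of $\cO(R \delta |\ln \delta|)$. Combining this with the diagonal estimate completes the proof. The main obstacle is the equidistribution step: the pushed-forward curve $\cF^{k_2}(S^+)$ has length $\sim \vartheta^{k_2}$, far below the minimal length $M^{-100}$ that genuine standard pairs require, so under the time-reversed dynamics one must first wait for $\cF^{k_2}(S^+)$ to expand and break into honest H-components via the growth lemma (Lemma \ref{prgrow}) before Proposition \ref{PrDistEq0} can be applied. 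Carefully tracking the interplay between this waiting period, the H\"older-approximation parameter $\varepsilon$, and the distortion that $\cF^{-k_2}$ induces on $\rho_{k_1}$ (controlled through $\|\rho_{k_1}\|_{a_1,a_2}$) is the delicate heart of the argument.
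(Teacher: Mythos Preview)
Your proposal has a genuine gap at the change-of-variables step. You assert that $\cF^{k_2}(S^+)$ is a short s-curve of length $\lesssim\vartheta^{k_2}$, but this is false: the curve $S^+\subset\cS_1\setminus\cS_0$ is a singularity curve, and $\cF$ maps it to a u-curve $S^-\subset\cS_{-1}\setminus\cS_0$ (see Section~\ref{subsecHA}); further forward iterates of $\cF$ \emph{expand} $S^-$, so $\cF^{k_2}(S^+)$ is a union of long H-components, not a short curve. In fact both $\cF^{-k_1}$ and $\cF^{k_2}$ expand $S^+$, which means that on $S^+$ \emph{both} factors $A_{-k_1}$ and $B_{k_2}$ oscillate at exponentially fine scales. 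No one-sided equidistribution argument can control a single product $A_{-k_1}B_{k_2}$; the decay $\theta^{|k_1-k_2|}$ you aim for is simply not available for individual $\cI_{k_1,k_2}$ (compare Lemma~\ref{OffD}, where such decay is obtained only when $|k_1-k_2|\gtrsim R^{1/2+\zeta}$, a vastly stronger separation than $|\ln\delta|$).

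The paper's proof avoids term-by-term estimation entirely. Because $\rho_{k_1}$ depends only on $k_1$, the double sum over the square factors exactly as
\[
\sum_{(k_1,k_2)\in\bS_R}\cI_{k_1,k_2}=\int_{S^+}\Bigl(\sum_{k_1}A_{-k_1}\rho_{k_1}\Bigr)\Bigl(\sum_{k_2}B_{k_2}\Bigr)\,dl,
\]
and Cauchy--Schwarz separates the two bracketed sums. Each $L^2$ norm is then bounded by expanding the square and estimating the resulting correlation sums (after H\"older-approximating $\rho_{j_1}\rho_{j_2}$ and passing to $\cF^{-j_1}(S^+)$ via Lemma~\ref{LmAux}): one obtains $\int_{S^+}\bigl(\sum_{k_1}A_{-k_1}\rho_{k_1}\bigr)^2\,dl=\cO(R\delta^2|\ln\delta|)$ and $\int_{S^+}\bigl(\sum_{k_2}B_{k_2}\bigr)^2\,dl=\cO(R)$. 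The Cauchy--Schwarz factorization is precisely what decouples the forward and backward oscillations that defeat your approach.
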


\proof For simplicity, we will set $m_1=m_2=0$ (the general case
only requires minor modifications). Now we have
\begin{align*}
   \biggl\lvert\sum_{(k_1, k_2)\in \bS_R} \cI_{k_1,k_2}\biggr\rvert
     & \leq
   \int_{S^+} \biggl\lvert\biggl( \sum_{k_1} A_{-k_1} \rho_{k_1}\biggr)
   \biggl(\sum_{k_2} B_{k_2} \biggr)\biggr\rvert\, dl \\
     & \leq
  \left[\left(\int_{S^+} \biggl[ \sum_{k_1} A_{-k_1}\rho_{k_1}
  \biggr]^2 dl\right)
%\right)^{1/2}
\left(\int_{S^+} \biggl[\sum_{k_2}
  B_{k_2}\biggr]^2 dl\right) \right]^{1/2}
\end{align*}
To estimate the first factor we expand
\begin{align*}
   \int_{S^+} \biggl[\sum_{k_1} A_{-k_1}\rho_{k_1}
   \biggr]^2 dl
     & =
   \int_{S^+} \sum_{j_1, j_2} A_{-j_1}A_{-j_2}
   \rho_{j_1}\rho_{j_2}\, dl\\
     & =
   \int_{S^+} \sum_{j} A_{-j}^2\rho_{j}^2\, dl
   + 2 \int_{S^+}\sum_{j_2>j_1}
   A_{-j_1}A_{-j_2}
   \rho_{j_1}\rho_{j_2}\, dl
\end{align*}
The first term here is $\cO(R\,\delta^2)$. To estimate the second
sum we choose a large $K>0$ and divide the domain of summation
$\{j_1<j_2\}\subset \bS_R$ into two parts: a smaller one
$$
   \bS_R'=\{j_1<2K|\ln\delta|\}\cup\{|j_1-j_2|<2K|\ln\delta|\}
$$
and a larger one $\bS_R''= \{j_1<j_2\}\setminus \bS_R'$.
Obviously,
$$
   \biggl|\int_{S^+} \sum_{(j_1,j_2)\in \bS_R'}
   A_{-j_1}A_{-j_2}\rho_{j_1}\rho_{j_2}\, dl \biggr|
   \leq \Const\,R\,\delta^2|\ln\delta|
$$
To estimate the larger sum, we need to approximate
$\rho_{j_1,j_2}=\rho_{j_1} \rho_{j_2}$ by a H\"older continuous
function: (\ref{rhoass}) implies $\rho_{j_1,j_2}\in\cH^{a_1,a_2}(S^+)$
and $\|\rho_{j_1,j_2}\|_{a_1,a_2}\leq 1$, hence we can set
$\varepsilon=e^{-j_1/K}$ and find $\brrho_{j_1,j_2}$ such that
$$
  \int_{S^+}|\rho_{j_1,j_2}-\brrho_{j_1,j_2}|\,dl\leq e^{-j_1/K}
$$
and for any $x,y\in S^+$
$$
  |\brrho_{j_1,j_2}(x)-\brrho_{j_1,j_2}(y)|\leq e^{a_2j_1/K}[\dist(x,y)]^{a_1}
$$
The error of approximation can be bounded by
\begin{align*}
   \int\limits_{S^+} \sum_{(j_1,j_2)\in \bS_R''}
   |A_{-j_1}A_{-j_2}|\,|\rho_{j_1,j_2}
   -\brrho_{j_1,j_2}|\, dl
     & \leq
   \norm{A}_{\infty}^2\sum_{j_2>j_1\geq 2K|\ln\delta|}e^{-j_1/K}\\
     & \leq
   \Const\,R\,\delta^2
\end{align*}
It remains to bound the integrals
$$
   \bar{\cI}_{j_1,j_2} = \int_{S^+}
   A_{-j_1}A_{-j_2}\brrho_{j_1,j_2}\, dl
$$
for $(j_1,j_2)\in \bS_R''$. We denote by $S^+_q$, $q\geq 1$, all the
\index{H-components} H-components of $\cF^{-j_1}(S^+)$ (i.e.\ the
maximal curves $S_q^+ \subset \cF^{-j_1} (S^+)$ such that
\index{Homogeneity strips} $\cF^i(S^+_q)$ lies in one homogeneity
strip for each $i=0,\dots,j_1$), and by $m_q$ the image of the
Lebesgue measure $dl$ under $\cF^{-j_1}$ on $S_q^+$. Then
\beq
         \label{barcI}
     \bar{\cI}_{j_1,j_2} = \sum_q
     \int_{S^+_q} (A\,A_{-(j_2-j_1)})
    (\brrho_{j_1,j_2}\circ\cF^{j_1})\, dm_q
\eeq
We claim that if $K>0$ is large enough, then the function
$g=\brrho_{j_1,j_2}\circ\cF^{j_1}$ is H\"older continuous on each curve
$S^+_q$ with exponent $a_1$ and a uniformly bounded norm. Indeed, for
any $x,y\in S^+_q$
$$
  |g(x)-g(y)|\leq
  e^{a_2j_1/K}\vartheta^{a_1j_1}[\dist(x,y)]^{a_1}
  \leq[\dist(x,y)]^{a_1}
$$
where $\vartheta<1$ is the minimal factor of expansion of s-curves
under $\cF^{-1}$, and $e^{a_2/K}\vartheta^{a_1}<1$ provided $K$ is
large enough. Hence we can apply Lemma~\ref{LmAux} to the map
$\cF^{-(j_2-j_1)}$ (using time reversibility) on the set $\cup_q
S_q^+$, and thus estimate (\ref{barcI}) as
$$
    \left|  \sum_q\int_{S^+_q} (A\,A_{-(j_2-j_1)})
    (\brrho_{j_1,j_2}\circ\cF^{j_1})\, dm_q\right|
   \leq \Const\,\theta^{j_2-j_1}
$$
with some constant $\theta<1$. Therefore
$$
   \sum_{(j_1,j_2)\in \bS_R''}
   \left|\bar{\cI}_{j_1,j_2}\right|\leq\Const\,R\,
   \theta^{2K|\ln\delta|}\leq\Const\,R\,\delta^2
$$
provided $K$ is large enough (say, $K>1/|\ln\theta|$). Combining
all the previous estimates gives
$$
 \int_{S^+} \biggl( \sum_{k_1} A_{-k_1}\rho_{k_1}
  \biggr)^2 dl \leq \Const\,R\,\delta^2|\ln\delta|
$$
The same argument yields
$$
   \int_{S^+} \biggl( \sum_{k_2}
  B_{k_2}\biggr)^2 dl
  \leq \Const\, R
$$
(in fact, this is easier since there is no $\rho$'s to approximate).
This completes the proof of Lemma~\ref{SqB}. \qed

\begin{lemma}
\label{TrB} There exists a constant $C>0$ such that for any
triangle $\bT$ with side $R$
$$
    \biggl|\sum_{(k_1, k_2)\in \bT} \cI_{k_1,k_2}\biggr|
    \leq C (R \ln R)\, \delta |\ln\delta|
$$
\end{lemma}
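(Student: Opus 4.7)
The plan is to deduce Lemma \ref{TrB} from the square bound Lemma \ref{SqB} by a dyadic decomposition of the triangle, and a simple recursion. The key observation is that a right triangle with legs of length $R$ can be partitioned into one square of side $R/2$ and two similar right triangles of side $R/2$, so that a single application of Lemma \ref{SqB} at each scale generates a geometric series whose length is $\log_2 R$.

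More precisely, I would first observe that the constant $C_0$ in Lemma \ref{SqB} is uniform in the location $(m_1,m_2)$ of the square: the proof of Lemma \ref{SqB} uses only the length of the summation range together with Cauchy--Schwarz and Lemma \ref{LmAux}, none of which sees the absolute values of the starting indices. Thus
\[
   \Bigl|\sum_{(k_1,k_2)\in\bS_R} \cI_{k_1,k_2}\Bigr| \leq C_0\, R\,\delta\,|\ln\delta|
\]
for every square of side $R$, regardless of where it is placed. Now write the triangle $\bT$ with vertices $(m_1,m_2)$, $(m_1+R,m_2)$, $(m_1,m_2+R)$ as the disjoint union of the square $\bS = \{m_1\leq k_1\leq m_1+\lfloor R/2\rfloor,\ m_2\leq k_2\leq m_2+\lfloor R/2\rfloor\}$ (which lies inside $\bT$ since $\lfloor R/2\rfloor+\lfloor R/2\rfloor\leq R$) together with two triangles $\bT_1,\bT_2$ of side $\leq R/2$ sitting above and to the right of $\bS$. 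Let $T(R)$ denote the supremum of $|\sum_{(k_1,k_2)\in\bT}\cI_{k_1,k_2}|$ over all admissible triangles of side $R$ (uniformly in $S^+$ and in the choice of functions $\rho_{k_1}$ obeying \eqref{rhoass}). The decomposition produces the recursion
\[
    T(R) \leq C_0\,(R/2)\,\delta\,|\ln\delta| + 2\,T(R/2).
\]
Iterating and using the trivial base case $T(1)\leq \Const\,\delta$ (coming from $|\cI_{k_1,k_2}|\leq \|A\|_\infty\|B\|_\infty\|\rho_{k_1}\|_\infty\,|S^+|\leq \Const\,\delta$) gives
\[
    T(R) \leq C_0\,\delta\,|\ln\delta|\sum_{j=0}^{\lceil\log_2 R\rceil} 2^j \cdot \frac{R}{2^{j+1}} + \Const\,\delta
    \leq C\,R\ln R\cdot\delta\,|\ln\delta|,
\]
as claimed.

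There is no real obstacle to overcome here; the substantive work was done in the proof of Lemma \ref{SqB}, and the present argument is a standard divide-and-conquer estimate. The only point that requires verification, as noted above, is the uniformity of the constant $C_0$ in Lemma \ref{SqB} with respect to translations of the square, which is implicit in the remark that the assumption $m_1=m_2=0$ there was only ``for simplicity.'' With this in hand, the argument above completes the proof of Lemma \ref{TrB}, and together with Corollary \ref{ContrSing} it closes the estimation of the boundary contributions, hence establishing the main estimate \eqref{MaindDN} and Proposition \ref{PrRegDif}.
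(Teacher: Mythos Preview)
Your proof is correct and follows exactly the paper's approach: the paper decomposes the triangle dyadically into a square of side $R/2$ and two triangles of side $R/2$, iterates, and observes that at level $k$ there are $2^k$ squares of side $\sim R/2^k$, so that Lemma~\ref{SqB} applied to each yields the bound $\sum_k 2^k\cdot C_0(R/2^k)\,\delta|\ln\delta|\sim C R\ln R\,\delta|\ln\delta|$. Your recursion $T(R)\le C_0(R/2)\delta|\ln\delta|+2T(R/2)$ is just another bookkeeping of the same decomposition (one small slip: the base-case contribution should carry the factor $2^{\lceil\log_2 R\rceil}\sim R$, but $R\delta$ is absorbed by the main term).
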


\proof (See Figure~\ref{FigTriangles}.) We decompose $\bT$ into
the union of a square and two triangles with sides $R/2.$ Then we
apply a similar decomposition to each of the two smaller
triangles, an so on. In this way we get a decomposition of $\bT$
into squares of variable size, so that for each $k\geq 1$ there
are $2^k$ squares with side about $R/2^k.$ Applying
Lemma~\ref{SqB} to each square yields the required bound. \qed
\medskip

\begin{figure}[htb]
    \centering
    \includegraphics{brown-6.eps}
    \caption{Proof of Lemma~\ref{TrB}}
    \label{FigTriangles}
\end{figure}

Lemma \ref{TrB} falls short of the estimate claimed in
Proposition~\ref{PrTwoSide}, because of the extra $\ln R$ factor
here, but it has the advantage of being applicable to an arbitrary
triangle. To upgrade Lemma \ref{TrB} to the estimate claimed in
Proposition~\ref{PrTwoSide} we need to bound off-diagonal terms.

\begin{lemma}[Off-diagonal bounds]
\label{OffD} Fix some $0<\zeta<1/2.$ Then there are constants $C, c,
\xi>0$ such that if
\beq
      \max\{k_1,k_2\}>R/2\qquad{\rm and}\qquad
         |k_1-k_2|>R^{1/2+\zeta}
           \label{OffDass}
\eeq
then
$$
    \left|\cI_{k_1,k_2}\right|\leq C\exp(-cR^{\xi})
$$
\end{lemma}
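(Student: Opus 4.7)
The strategy is to exploit the fact that $S^+$ is a stable curve for $\cF$: the forward iterate $\cF^{k_2}$ contracts it, making $B_{k_2}$ essentially constant on $S^+$, while $A_{-k_1}=A\circ\bar\cF^{k_1}$ (where $\bar\cF:=\cF^{-1}$) can be averaged out using an equidistribution argument in the reversed dynamics (in which $S^+$ is an \emph{unstable} curve). The $\cF\leftrightarrow\bar\cF$ time-reversal swaps the roles of $k_1$ and $k_2$ and of $A$ and $B$, so it suffices to treat the case $k_2>k_1$, hence $k_2>R/2$ and $k_2-k_1>R^{1/2+\zeta}$. Since $\cF^{k_2}(S^+)$ has length $\leq\vartheta^{k_2}|S^+|$, the function $B_{k_2}$ has oscillation at most $\vartheta^{k_2}\|B\|_{C^1}|S^+|$ on $S^+$, so for any fixed $x_*\in S^+$,
\[
  \cI_{k_1,k_2} \;=\; B_{k_2}(x_*)\,J \;+\; O(\vartheta^{k_2}),
  \qquad J:=\int_{S^+}A_{-k_1}\,\rho_{k_1}\,dl,
\]
and the error is already $\leq C\exp(-c'R)$. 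The task thus reduces to showing $|J|\leq C\exp(-cR^\xi)$ uniformly in~$\delta$.

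To bound $J$, I view $(S^+,\rho_{k_1})$ as an unstable pair for $\bar\cF$ and use the $\bar\cF$-analog of Lemma~\ref{LmAux}. Since $\rho_{k_1}$ is only in $\cH^{a_1,a_2}(S^+)$, I first replace it by its H\"older approximant $\rho_{k_1,\eps}$ at a scale $\eps>0$ to be chosen: this costs an $L^1$-error contributing $O(\|A\|_\infty\eps)$ to $J$, while replacing the (useless) sup bound $\delta$ by a H\"older norm $\leq\eps^{-a_2}$. When $k_1\geq R^{1/2+\zeta}/3$, the $\bar\cF$-version of Lemma~\ref{LmAux} applied with iteration count $k_1$ directly yields
\[
  |J|\;\leq\;\Const\cdot\eps^{-a_2}\,\theta^{k_1}\;+\;\Const\cdot\eps,
\]
which on setting $\eps=\theta^{k_1/(1+a_2)}$ becomes $|J|\leq C\exp(-cR^\xi)$ for any $\xi<(1/2+\zeta)/(1+a_2)$. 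In the remaining case $k_1<R^{1/2+\zeta}/3$, I inflate the iteration count by an auxiliary $m:=\lceil R^{1/2+\zeta/2}\rceil$: changing variables $x=\cF^m(y)$ rewrites $J$ (up to the $\eps$-error) as a sum
\[
  \sum_q\int_{\tilde S_q}(A\circ\bar\cF^{k_1+m})\,\tilde\rho_q\,dl
\]
over H-components $\tilde S_q$ of $\cF^{-m}(S^+)$, with inherited densities $\tilde\rho_q$ obtained from $\rho_{k_1,\eps}$ by composition with $\cF^m$ and multiplication by the corresponding Jacobian. Applying the $\bar\cF$-version of Lemma~\ref{LmAux} to the resulting family of stable standard pairs with iteration count $k_1+m\geq m$ and balancing as before gives again $|J|\leq C\exp(-cR^\xi)$, with the same $\xi$ (up to constants). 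Combining with the constant-approximation step completes the proof.

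\paragraph{Main obstacle.}
The hard part is verifying that, after the H\"older smoothing of $\rho_{k_1}$, the change of variables by $\cF^{-m}$, the multiplication by Jacobians, and the partition into H-components, the resulting densities $\tilde\rho_q$ on the stable H-curves $\tilde S_q$ satisfy the hypotheses of the $\bar\cF$-version of Lemma~\ref{LmAux}: uniform H\"older bounds of the form \eqref{Aux-2a}--\eqref{Aux-3a} with constants depending only on $\eps^{-a_2}$ and the geometry (crucially \emph{not} on the sup-norm $\delta$), together with the length control $\lambda(q:|\tilde S_q|<\varepsilon)\leq\Const\,\varepsilon$ of~\eqref{Aux-1}. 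The former reduces to the backward-time analogs of the distortion and density bounds (Proposition~\ref{prdist}, Corollary~\ref{crrho}) applied to $\bar\cF$, while the latter follows from a time-reversed version of the growth lemma~\ref{prgrow}. A secondary bookkeeping point is to make sure the exponents in the two cases $k_1\gtrless R^{1/2+\zeta}/3$ can be combined into a single positive $\xi$ independent of the relative sizes of $k_1,k_2$.
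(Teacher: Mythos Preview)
Your argument rests on the claim that $\cF^{k_2}$ contracts $S^+$, so that $B_{k_2}$ is essentially constant there. This is false. Although $S^+\subset\cS_1\setminus\cS_0$ is an s-curve, the map $\cF$ does \emph{not} send it to another s-curve: as the paper states explicitly in Section~\ref{subsecHA}, ``$\cF$ maps $S^+$ onto a u-curve $S^-\subset\cS_{-1}\setminus\cS_0$''. After this single step, all further forward iterates expand, so $\cF^{k_2}(S^+)=\cF^{k_2-1}(S^-)$ is a long, highly fragmented union of H-curves, and $B_{k_2}$ oscillates rapidly on $S^+$. This is exactly why the paper says in Section~\ref{subsecABT2} that ``the curve $S^+$ is strongly expanded by $\cF^{-k_1}$, \emph{as well as} by $\cF^{k_2}$''. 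Your estimate $|\cF^{k_2}(S^+)|\le\vartheta^{k_2}|S^+|$ therefore fails, and with it the reduction $\cI_{k_1,k_2}=B_{k_2}(x_*)J+O(\vartheta^{k_2})$, which is the hinge of your whole plan.

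The paper's proof (Section~\ref{subsecBODT}) works in the opposite direction. It partitions $S^+$ into pieces $S_p^+$ that are preimages of H-components of $\cF^{k_3}(S^+)$ with $k_3=k_2-\Delta/4$; by the moderate-deviation estimate (Proposition~\ref{PrMD}) most such pieces have length $\approx e^{-\bchi k_3}$ and, crucially, their $\cF^{-k_4}$-preimages (with $k_4=k_2-\Delta\ge k_1$) are still short, of size $\le e^{-\bchi\Delta/4}$. On these ``good'' pieces it is $A_{-k_1}$ (not $B_{k_2}$) that is approximately constant, because the backward expansion over $k_1$ steps is dominated by the forward expansion over $k_3$ steps. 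After also freezing the H\"older approximant of $\rho_{k_1}$ on each piece, one applies the equidistribution Proposition~\ref{PrDistEq0} to the H-components of $\cF^{k_3}(S^+)$ to average out $B\circ\cF^{k_2-k_3}$, obtaining the bound $\theta_0^{\Delta/4}$. The essential analytic input you are missing is Proposition~\ref{PrMD}, which controls the simultaneous forward and backward expansion rates along $S^+$; without it there is no way to decouple $A_{-k_1}$ from $B_{k_2}$.
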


We prove Lemma~\ref{OffD} in Section~\ref{subsecBODT} and first
derive Proposition~\ref{PrTwoSide}.

\medskip\noindent{\em Proof of Proposition~\ref{PrTwoSide}.}
(See Figure~\ref{FigTriPart}.) Let $\bT$ be the triangle of
Proposition~\ref{PrTwoSide}, $\bS$ the inscribed square and
$\bT_1$, $\bT_2$ the triangles with side $N^{1/2+\zeta}$ whose one
vertex is the midpoint of the hypotenuse of $\bT$. Then
$$
     \sum_{(k_1, k_2)\in \bT} \cI_{k_1, k_2}=\sum_{\bS} \cI_{k_1, k_2}+
      \sum_{\bT_1\bigcup \bT_2} \cI_{k_1, k_2}+
      \sum_{\bT\setminus(\bS\bigcup \bT_1\bigcup \bT_2)} \cI_{k_1, k_2}
$$
The first sum here is $\cO(N\delta|\ln\delta|)$ by Lemma~\ref{SqB},
the second one is $\cO(N^{1/2+\zeta} \ln N\,\delta|\ln\delta|)$ by
Lemma~\ref{TrB}, and the last one is $\cO \bigl( N^2 \times
\exp(-cN^\xi)\bigr)$ by Lemma~\ref{OffD}, because every pair
$(k_1,k_2)\in \bT\setminus(\bS\cup \bT_1\cup \bT_2)$ satisfies
(\ref{OffDass}). \qed

\begin{figure}[htb]
    \centering
    \psfrag{t1}{$\bT_1$}
    \psfrag{t2}{$\bT_2$}
    \psfrag{s}{$\bS$}
    \includegraphics{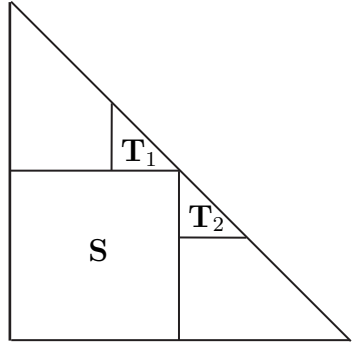}
    \caption{Proof of Proposition~\ref{PrTwoSide}}
    \label{FigTriPart}
\end{figure}

\subsection{Bounding off-diagonal terms}
\label{subsecBODT} Here we prove Lemma~\ref{OffD}. Our main idea
is that if $k_2-k_1\gg \sqrt{R}$, then we can partition $S^+$ into
subintervals such that the preimages under $\cF^{-k_1}$ are
predominantly small whereas their images under $\cF^{k_2}$ are
mostly large (as it will follow from moderate deviation bounds of
Section~\ref{subsecMD}). Thus we can approximate $A_{-k_1}$ and
$\rho_{k_1}$ by constants on each interval and average the value
of $B_{k_2}$ by using Proposition~\ref{PrDistEq0}.

Without loss of generality, we suppose that
$$
       k_2-k_1>\Delta=R^{1/2+\zeta}
$$
(the case $k_1-k_2>\Delta$ is completely symmetric by
time-reversibility).

Let $k_3=k_2-\Delta/4$ and $k_4=k_2-\Delta$. Denote by $S_p^-$,
$p\geq 1$, all the \index{H-components} H-components of the set
$\cF^{k_3}(S^+)$, then the curves $S_p^+=\cF^{-k_3}(S_p^-)\subset
S^+$ make a partition of $S^+$. Let $\bchi$ denote the Lyapunov
exponent of the map $\cF$. We say that a curve $S_p^+$ is {\em good}
if it satisfies three requirements:
\begin{itemize}\item[(a)] $\left|S_p^+\right|<\exp(-\bchi
k_3+\bchi\Delta/4)$, \item[(b)] $\cF^{-k_4}(S_p^+)$ belongs in one
\index{H-components} H-component of the set $\cF^{-k_4}(S^+)$,
\item[(c)] $\left|\cF^{-k_4}(S_p^+)\right| <\exp(-\bchi\Delta/4)$,
\end{itemize} and denote $G^+=\cup\{S_p^+\colon S_p^+$ is
good$\}$.

\begin{lemma}
\beq
    l(S^+\setminus G^+)\leq\Const\,\exp(-cR^{2\zeta})
\eeq
for some constant $c>0$.
\end{lemma}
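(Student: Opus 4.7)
\medskip\noindent{\em Proof plan.} The strategy is to decompose $S^+\setminus G^+=\cB_a\cup\cB_b\cup\cB_c$, where $\cB_a$ consists of those $S_p^+$ that violate condition (a), $\cB_b$ of those that satisfy (a) but violate (b), and $\cB_c$ of those that satisfy both (a) and (b) but violate (c); each of the three subsets will be bounded by $\mathrm{Const}\cdot\exp(-cR^{2\zeta})$ separately. Throughout, note that the standing assumption $k_2>R/2$ and $k_2-k_1>\Delta=R^{1/2+\zeta}$ gives $k_3=k_2-\Delta/4>\max(R/4,\,3\Delta/4)$ and $k_4=k_2-\Delta>R/4$, so $\Delta^2/k_3,\;\Delta^2/k_4\geq cR^{2\zeta}$.

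For $\cB_a$: writing $|S_p^+|=|S_p^-|/\cJ_{\cF^{k_3}}(x)$ with $|S_p^-|\leq\tdelta$ uniformly, a violation of (a) forces the Birkhoff sum $\log\cJ_{\cF^{k_3}}(x)=\sum_{i=0}^{k_3-1}\log\cJ_\cF(\cF^i x)$ to fall below its ergodic mean $\bchi k_3$ by at least $\bchi\Delta/4-\mathrm{Const}$. The moderate deviation estimates for such Birkhoff sums over dispersing billiard maps (developed in Section~\ref{subsecMD}) then give $l(\cB_a)\leq\exp(-c\Delta^2/k_3)\leq\exp(-cR^{2\zeta})$.

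For $\cB_c$: under (a) we have $|S_p^+|\leq\exp(-\bchi k_3+\bchi\Delta/4)$, and since $k_3-k_4=3\Delta/4$, violation of (c) forces
$$
\cJ_{\cF^{-k_4}}\bigl|_{S_p^+}\;\geq\;\exp(-\bchi\Delta/4)/|S_p^+|
\;\geq\;\exp\bigl(\bchi(k_3-\Delta/2)\bigr)\;=\;\exp\bigl(\bchi k_4+\bchi\Delta/4\bigr).
$$
Thus $\sum_{i=1}^{k_4}\log\cJ_{\cF^{-1}}(\cF^{-i}x)$ must exceed its ergodic mean $\bchi k_4$ by at least $\bchi\Delta/4$, and moderate deviations again yield $l(\cB_c)\leq\exp(-c\Delta^2/k_4)\leq\exp(-cR^{2\zeta})$.

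For $\cB_b$: under (a), $|S_p^+|\leq\exp(-\bchi k_3+\bchi\Delta/4)\leq\exp(-\bchi\Delta/2)$. Violation of (b) means that $\cF^{-k_4}(S_p^+)$ straddles an endpoint of some H-component of $\cF^{-k_4}(S^+)$, so the base point lies in the pull-back of an $|S_p^+|$-neighborhood of such an endpoint. The growth lemma (Lemma~\ref{prgrow}(a)), applied to the backward dynamics $\cF^{-k_4}$ on $S^+$ (equivalently, to forward iteration by $\cF$ starting from the curve $\cF^{-k_4}(S^+)$), bounds the total measure of the $\varepsilon$-neighborhood of H-component endpoints by $\mathrm{Const}\cdot\varepsilon$. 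Hence $l(\cB_b)\leq\mathrm{Const}\cdot\exp(-\bchi\Delta/2)$, which for $\zeta<1/2$ is far smaller than $\exp(-cR^{2\zeta})$.

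Summing the three bounds yields the lemma. The main obstacle is establishing the moderate deviation estimates for Birkhoff sums of $\log\cJ_{\cF^{\pm 1}}$, which is nontrivial because $\log\cJ_\cF$ is unbounded (it diverges like $\log(1/\cos\varphi)$ near $\cS_0$) and therefore one must combine exponential decay of correlations for smooth observables with a growth-lemma argument to control the contribution from orbits approaching singularities; this is exactly the content of Section~\ref{subsecMD}. Once these tail estimates are in place, the remainder of the argument is routine bookkeeping using the growth lemma and standard Jacobian identities for $\cF^{\pm k}$.
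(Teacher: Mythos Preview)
Your treatment of $\cB_a$ and $\cB_c$ is essentially the paper's argument: moderate deviations applied to the Jacobian of $\cF^{k_3}$ (resp.\ $\cF^{-k_4}$) on $S_p^+$, using bounded distortion. That part is fine.

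The gap is in $\cB_b$. You argue that if $\cF^{-k_4}(S_p^+)$ straddles an H-component endpoint $z$, then every $x\in S_p^+$ lies within $|S_p^+|\leq\exp(-\bchi\Delta/2)$ of the pulled-back endpoint, and the growth lemma bounds the measure of such a neighborhood by $\Const\cdot\exp(-\bchi\Delta/2)$. But the growth lemma controls $l\{x\in S^+:r_{k_4}(x)<\varepsilon\}$, where $r_{k_4}(x)$ is the distance from $\cF^{-k_4}(x)$ to the nearest endpoint \emph{measured in} $\cF^{-k_4}(S^+)$. For $x\in S_p^+\in\cB_b$ you only know $r_{k_4}(x)\leq|\cF^{-k_4}(S_p^+)|$, and since $\cF^{-k_4}$ expands $S_p^+$ (typically by a factor $\sim\exp(\bchi k_4)$), this length is \emph{not} bounded by $|S_p^+|$; it is a priori uncontrolled. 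Equivalently, the number of pulled-back endpoints in $S^+$ can be of order $\exp(\bchi k_4)$, so ``each $S_p^+$ is short and contains one'' gives a useless bound.

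The paper fixes this by introducing a dichotomy that your argument omits. Rather than working at time $k_4$, the paper looks at the \emph{first} time $k<k_4$ at which $\cF^{-k}(S_p^+)$ crosses a singularity or a homogeneity boundary, and splits according to whether $|\cF^{-k}(S_p^+)|<\exp(-\bchi\Delta/4)$ (``short'') or $\geq\exp(-\bchi\Delta/4)$ (``long''). In the short case the growth lemma applies legitimately at time $k$, since now $r_k(x)<\exp(-\bchi\Delta/4)$ for every $x\in S_p^+$; summing over $k\leq k_4$ gives $\Const\,k_4\exp(-\bchi\Delta/4)$. In the long case, combining $|\cF^{-k}(S_p^+)|\geq\exp(-\bchi\Delta/4)$ with (a) forces $\cJ_{S_p^+}\cF^{-k}\geq\brC^{-1}\exp(\bchi k_3-\bchi\Delta/2)$, which (since $k<k_4=k_3-3\Delta/4$) is an anomalously large expansion and is handled by moderate deviations exactly as in your $\cB_c$ argument. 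This short/long split is the missing ingredient in your $\cB_b$ step.
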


\proof We note that the distortions \index{Distortion bounds} of the
map $\cF^{k_3}$ on each curve $S_p^+$ are bounded, i.e.\ for any
$x,y\in S_p^+$
$$
      0<\brC^{-1}\leq \frac{\cJ_{S_p^+}\cF^{k_3}(x)}
      {\cJ_{S_p^+}\cF^{k_3}(y)}\leq \brC<\infty
$$
where $\cJ_S\cF^k(x)$ denotes the Jacobian (the expansion factor) of
the map $\cF^k$ restricted to the curve $S$ at the point $x$. Now by
Proposition~\ref{PrMD} on moderate deviations
$$
   l\bigl(\cup S_p^+\colon \left|S_p^+\right|
   >\exp(-\bchi k_3+\bchi\Delta/4)\bigr)<\Const\,\exp(-cR^{2\zeta})
$$
for some $c>0$, hence we may ignore the curves on which (a) fails.

Similarly, the distortions of the maps $\cF^{-k}$, $k\geq 1$, on
each curve $S_p^+$ remain bounded as long as the preimage
$\cF^{-k}(S_p^+)$ lies within one \index{H-components} H-component
of the set $\cF^{-k}(S^+)$. If the condition (b) fails, then there
is a (smallest) $k<k_4$ such that $\cF^{-k}(S_p^+)$ crosses either a
singularity line of the map $\cF^{-1}$ or the boundary of a
homogeneity strip. \index{Homogeneity strips} Now we distinguish two
cases:
\begin{itemize}\item[(b1)]
$\left|\cF^{-k}(S_p^+)\right|<\exp(-\bchi\Delta/4)$ (a
\emph{short} curve), \item[(b2)]
$\left|\cF^{-k}(S_p^+)\right|\geq\exp(-\bchi\Delta/4)$ (a
\emph{long} curve).
\end{itemize}
Observe that every short curve $\cF^{-k}(S_p^+)$ lies within a
distance less than $\exp(-\bchi\Delta/4)$ of an endpoint of an
\index{H-components} H-component of $\cF^{-k}(S^+)$. Therefore, by
the growth \index{Growth lemma} lemma~\ref{prgrow}
$$
   l\left(\cup S_p^+\colon {\rm (b)}\ {\rm fails}\ {\rm and}\
   \cF^{-k}(S_p^+)\ {\rm is}\ {\rm short} \right)
      <\Const\,k_4 \exp(-\bchi\Delta/4)
$$
On the other hand, if $\cF^{-k}(S_p^+)$ is long and (a) holds, then
by bounded distortion \index{Distortion bounds}
$$
        \cJ_{S_p^+}\cF^{-k}(x)\geq
        \brC^{-1}\exp\left(\bchi k_3-\bchi\Delta/2\right)
$$
for every point $x\in S_p^+$. Note that $k<k_4=k_3-3\Delta/4$,
hence by Proposition~\ref{PrMD} on moderate deviations
$$
   l\left(\cup S_p^+\colon {\rm (b)}\ {\rm fails}\ {\rm and}\
   \cF^{-k}(S_p^+)\ {\rm is}\ {\rm long} \right)
      <\Const\,k_4 \exp(-cR^{2\zeta})
$$
with some $c>0$, hence we may ignore the curves on which (b) fails.

Lastly, if (a) and (b) hold but (c) fails, then we can apply the
previous argument to $\cF^{-k_4}(S_p^+)$, since its length exceeds
$\exp(-\bchi\Delta/4)$. \qed \medskip

Now, consider a good curve $S_p^+\subset S^+$. Observe that
$k_4\geq k_1$, hence $\left|\cF^{-k_1}(S_p^+)\right|
<\exp(-\bchi\Delta/4)$. Therefore, the oscillations of the
function $A_{-k_1}$ on $S_p^+$ does not exceed
$\Const\,\exp(-\bchi\Delta/4)$, so we can approximate $A_{-k_1}$
by a constant, $\hat{A}_{-k_1}$, on each good curve.

Next, we use a H\"older continuous approximation to $\rho_{k_1}$. We
set $\varepsilon=\exp(-\Delta)$ and find a $\brrho_{k_1}$ such that
$$
  \int_{S^+}|\rho_{k_1}-\brrho_{k_1}|\,dl\leq e^{-\Delta}
$$
and for any $x,y\in S^+_p$
$$
  |\brrho_{k_1}(x)-\brrho_{k_1}(y)|\leq
  e^{a_2\Delta}[\dist(x,y)]^{a_1}
  \leq e^{a_2\Delta-a_1\bchi k_3}
$$
so that the oscillations of $\brrho_{k_1}$ on $S_p^+$ do not exceed
$e^{-cR}$ with some $c>0$. Now we approximate $\brrho_{k_1}$ by a
constant, $\hat{\rho}_{k_1}$, on each good curve $S_p^+$. The errors of
this and other approximations above are all bounded by $\exp(-c\Delta)$
with some $c>0$.

Lastly, we apply Proposition~\ref{PrDistEq0} to the
\index{H-components} H-components of the set $\cF^{k_3}(S^+)$ and
average the function $B\circ\cF^{k_2-k_3}$ on every such component.
This gives us the estimate
$$
   \biggl|\int_{G^+}\hat{A}_{-k_1}B_{k_2}\,\hat{\rho}_{k_1}\,dl\biggr|
   \leq \Const\,\theta_0^{k_2-k_3} = \Const\,\theta_0^{\Delta/4}
$$
with a constant $\theta_0<1$. This proves Lemma~\ref{OffD}. \qed

\subsection{H\"older approximation}
\label{subsecHA} Here we prove Lemma~\ref{MidSmall}. The problem
we face is that the discontinuities of our vector field
$\alpha_{n-k,k-j}^{(-m-1)}$ on the curve $S^+$ exponentially grow
with $n-j$, whereas we need a bound independent of $n,j,m$. The
discontinuities of our vector field are generated by those of the
unstable foliation $\cG_{n-j-m-1}$ obtained by iterating the
original smooth foliation $\cG$, see Section~\ref{subsecIEGS}.
However, the action of $d\cF$ on the projective tangent space is
contractive within the unstable cone. Since contractions improve
smoothness, the influence of the singularities that have occurred
far back in the past decays exponentially allowing us to get a
uniform estimate in the end.

The singularity curves $S^+\subset\cS_1\setminus\cS_0$ are known
to be $C^2$ smooth with uniformly bounded curvature (see
\cite{C3}, where this fact is proved even for a more general class
of billiards, those in small external fields), hence it is enough
to prove that the restriction of the function
$$
   \hrho=\bigl\|\alpha_{n-k,k-j}^{(-m-1)}\bigr\|_0
$$
to $S^+$ has the required properties, i.e.\
$\hrho\in\cH^{a_1,a_2}(S^+)$ and $\norm{\hrho}_{a_1,a_2} \leq
\Const$. Next, it suffices to construct approximating functions
$\hrho_{\varepsilon}$ for
\begin{equation}
     \label{Epsmall}
   \varepsilon\leq \varepsilon_0\colon
     =\norm{\hrho}_{\infty}^{d}
\end{equation}
for some fixed large $d$. Indeed, if we can construct
$\hrho_\eps$ satisfying
(\ref{ApHold1})--(\ref{ApHold2})
for some $a_1,a_2$ and all $\varepsilon\leq \eps_0$,
then for $\eps>\eps_0$ we can define
$$
   \hrho_{\varepsilon}=
   \begin{cases} 0 & \text{if} \quad \varepsilon\geq
   \norm{\hrho}_{\infty},\cr
   \hrho_{\varepsilon_0} &
   \text{if} \quad \norm{\hrho}_{\infty}>\varepsilon>
    \varepsilon_0,
    \end{cases}
$$
and the resulting family $\{\hrho_\eps\}$ will satisfy
(\ref{ApHold1})--(\ref{ApHold2}) for all $\varepsilon>0$ but
with a different $a_2$. (In fact, the method we present here works also
for $\varepsilon\geq\varepsilon_0$, but we consider only small
$\varepsilon$ in order to avoid dealing with too many different cases.)

Now let
\begin{equation}
   \label{AprWu}
     r=K|\ln \varepsilon|
\end{equation}
where $K>0$ is a sufficiently large constant. Denote by $S^+_p$,
$p\geq 1$, all the \index{H-components} H-components of
$\cF^{-r}(S^+)$ and let $\xi^+$ be the partition of $S^+$ into the
curves $\cF^{r}(S^+_p)$. Next, $\cF$ maps $S^+$ onto a
\index{u-curves (unstable curves)} u-curve
$S^-\subset\cS_{-1}\setminus\cS_0$, and we denote by $S^-_q$, $q\geq
1$, all the \index{H-components} H-components of $\cF^{r-1}(S^-)$ .
Denote by $\xi^-$ the partition of $S^+$ into the curves
$\cF^{-r}(S^-_q)$. Let $\xi=\xi^+\vee\xi^-$ and denote by $\xi(x)$
the element of the partition $\xi$ that contains the point $x$.

We say that an element $W\in\xi$ is {\em large} if
$\length(W)>\varepsilon^{K^2}$ and {\em small} otherwise. We claim that

\begin{lemma}
\label{SmallInt}
The total Lebesgue measure of small intervals is less than
$\Const\,\varepsilon^2$ if $K$ is large enough.
\end{lemma}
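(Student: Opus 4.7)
The plan is to bound separately the contributions to small intervals coming from the two partitions $\xi^+$ and $\xi^-$, and to observe that an element $W\in\xi=\xi^+\vee\xi^-$ of length $<\delta:=\varepsilon^{K^2}$ is either contained in a $\xi^+$-element of length $<\delta$, contained in a $\xi^-$-element of length $<\delta$, or an intersection element whose smallness is caused by one of its boundary points coming from the other partition lying within $\delta$ of a boundary of the first partition. All three cases are handled by the same machinery, so it suffices to treat $\xi^+$; the treatment of $\xi^-$ is identical by time-reversal, and the ``intersection'' case is absorbed by applying the argument for $\xi^-$ inside each element of $\xi^+$.

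For $\xi^+$ I would apply Lemma~\ref{prgrow} to $S^+$ viewed as an unstable curve for the reversed map $\hat\cF=\cF^{-1}$; since $r=K|\ln\varepsilon|$, for $K$ large enough we have $r\geq\beta_3|\ln|S^+||$, so part (b) yields $\mes_{S^+}(x:\hat r_r(x)<\eta)\leq\beta_4\eta$ for any $\eta>0$, where $\hat r_r(x)$ is the distance from $\cF^{-r}(x)$ to the nearest endpoint of its H-component $S_p^+$ in $\cF^{-r}(S^+)$. Setting $\eta=\varepsilon^K$ (a threshold for $|S_p^+|$), I would decompose
\[
  \mes_{S^+}\bigl(\{x:|\xi^+(x)|<\delta\}\bigr)
  \leq \mes_{S^+}\bigl(|S_p^+(x)|<\eta\bigr)
  + \mes_{S^+}\bigl(|S_p^+(x)|\geq\eta,\ |W_p|<\delta\bigr).
\]
The first term is $\leq\beta_4\eta=\beta_4\varepsilon^K\leq\Const\,\varepsilon^2$ as soon as $K\geq 2$, by the growth-lemma estimate above combined with the obvious inclusion $\{|S_p^+(x)|<\eta\}\subseteq\{\hat r_r(x)<\eta\}$.

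The second term requires the moderate-deviation estimate of Proposition~\ref{PrMD}. On each H-component $S_p^+$ with $|S_p^+|\geq\eta$, bounded distortion of $\cF^r$ (Proposition~\ref{prdist}) implies that the local contraction factor $\cJ_{\cF^r}$ of $\cF^r\colon S_p^+\to W_p$ is pointwise comparable to $|W_p|/|S_p^+|$; so $|W_p|<\delta$ combined with $|S_p^+|\geq\eta$ forces $\cJ_{\cF^r}\leq e^{\tbeta}\delta/\eta=e^{\tbeta}\varepsilon^{K^2-K}$ uniformly on $S_p^+$. Since the Lyapunov exponent of $\cF$ is some fixed $\bchi<\infty$, the ``typical'' contraction over $r=K|\ln\varepsilon|$ steps is $\asymp\varepsilon^{K\bchi}$, whereas the above bound forces a contraction of order $\varepsilon^{K^2-K}$, which is atypically strong once $K$ is large compared to $\bchi+1$. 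Proposition~\ref{PrMD} (essentially the same moderate-deviation bound used in the proof of Lemma~\ref{OffD} in Section~\ref{subsecBODT}, applied to $\hat\cF$) then gives that the $\mes_{S^+}$-measure of such atypical points is exponentially small in $|\ln\varepsilon|$, and in particular $\leq\varepsilon^2$ once $K$ is chosen large enough.

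The main obstacle is the second step: it needs a clean quantitative moderate-deviation statement for the logarithm of the Jacobian of $\cF^r$ along H-components of $\cF^{-r}(S^+)$, uniform in the singularity curve $S^+$. I would establish this by exactly the same method as Proposition~\ref{PrMD}, writing $\ln\cJ_{\cF^r}=\sum_{i=0}^{r-1}\ln\cJ_{\cF}\circ\hat\cF^i$ as an ergodic sum and using exponential decay of correlations for $\hat\cF$ together with Bernstein-type moment bounds; the only novelty compared to the situation in Section~\ref{subsecBODT} is that the base measure lives on the lower-dimensional curve $S^+$, but the distortion and growth bounds we have established in Sections~\ref{subsecSHUC}--\ref{subsecSP} are precisely what is needed to transfer the standard bound for $\cF$-invariant measures to conditional measures on $S^+$.
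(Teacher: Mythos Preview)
Your overall strategy --- growth lemma for short H-components plus a deviation bound on the Jacobian for atypically contracted long ones --- is the right idea and matches the paper's. But the deviation bound you invoke is the wrong one. You appeal to Proposition~\ref{PrMD} (moderate deviations), whose hypothesis requires $\sqrt{n}\leq k\leq n^{2/3-\delta}$. Here $n=r=K|\ln\varepsilon|$, and the condition $|W_p|<\varepsilon^{K^2}$, $|S_p^+|\geq\varepsilon^K$ forces the log-Jacobian to deviate from its mean $r\bchi$ by at least $(K^2-K-K\bchi)|\ln\varepsilon|$, which is of order $n$ itself, not $o(n^{2/3})$. So Proposition~\ref{PrMD} simply does not apply; this is the large-deviation regime, not the moderate one. (In the proof of Lemma~\ref{OffD} that you cite, the deviation is only of order $n^{1/2+\zeta}$, which is why PrMD is legitimate there.)

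The paper uses Proposition~\ref{PrLD} instead, which says $\nu(\ln\cJ_W\cF^n>K'n)\leq\Const\,\theta^n$ for a fixed constant $K'$. Choosing $K>K'+1$ puts your second case inside $\{\ln\cJ_{S^+}\cF^{-r}>K'r\}$, and Proposition~\ref{PrLD} gives measure $\leq\Const\,\theta^r=\Const\,\varepsilon^{K|\ln\theta^{-1}|}\leq\varepsilon^2$ for $K$ large. So no new moderate-deviation statement needs to be proved; your ``main obstacle'' dissolves once you switch propositions.

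There is also a smaller gap in your reduction. Your argument bounds only $\{x:|\xi^+(x)|<\delta\}$, but your case (c) actually requires the larger set $\{x:\dist(x,\partial\xi^+(x))<\delta\}$ --- a $\xi^+$-element may be long while $x$ sits near its endpoint, and your phrase ``apply the $\xi^-$ argument inside each element of $\xi^+$'' does not cover those boundary slivers. The paper's reduction is cleaner: $|\xi(x)|<\delta$ implies $\dist(x,\partial\xi^\pm(x))<\delta$ for at least one sign, and then one bounds $\{\dist(x,\partial\xi^+(x))<\delta\}$ directly by the same combination: use Proposition~\ref{PrLD} to bound the expansion factor of $\cF^{-r}$ by $e^{K'r}$ off a $\theta^r$-small set, and on the good set $\dist(x,\partial\xi^+(x))<\delta$ forces $\hat r_r(x)\leq\Const\,e^{K'r}\delta=\Const\,\varepsilon^{K^2-KK'}$, which the growth lemma then bounds.
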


\begin{proof}
It is enough to check that
$$
   l\{x\colon \dist(x,\partial\xi(x)) < \varepsilon^{K^2}\}
    < \Const\, \varepsilon^2
$$
This in turn follows from the estimates
$$
   l\bigl\{x\colon \dist(x,\partial\xi^\pm(x))
   <\varepsilon^{K^2}\bigr\}
   < \Const\, \varepsilon^2
$$
and the last bound holds by Proposition~\ref{PrLD} on large
deviations and the growth \index{Growth lemma} lemma~\ref{prgrow}.
\qed
\end{proof}

Next, we consider an element $W\in\xi$. Observe that (\ref{Epsmall}),
(\ref{AprWu}) and (\ref{dFmalpha}) imply $r\gg k-j+m$ (in fact, even
$r\gg 2(k-j+m)$), so that $n-k\gg n-j-m-r$. Hence, the projectors
$\Theta_p^s$ and $\Theta_p^u$ used in the construction of the field
$\alpha_{n-k,k-j}^{(-m-1)}$, cf.\ Section~\ref{subsecIEGS}, are smooth
on the curve $W$ and its preimages, up to $\cF^{-(n-k)}(W)$. Hence all
the discontinuities of $\alpha_{n-k,k-j}^{(-m-1)}$ on $W$ come from the
discontinuities of the field $E^u_{n-j-m-1-r}$ on the curve
$\cF^{-r}(W)$.

We claim that there exist constants $\ttheta<1$ and $a_1>0$ such
that the restriction of $\alpha_{n-k,k-j}^{(-m-1)}$ to each
$W\in\xi$ can be $\ttheta^r$ approximated in the $L^\infty$ metric
by a vector field whose $a_1$-H\"older norm is uniformly bounded.
To prove this claim, consider first the case $r\leq n-j-1-m.$ We
take an arbitrary smooth family of unstable directions $\hE^u$ on
the curve $\cF^{-r}(W)$ (whose derivative along $\cF^{-r}(W)$ is
uniformly bounded), for example, we take the restriction of the
family $E^u$ defined in Section~\ref{subsecIEGS} to $\cF^{-r}(W)$.
Then we use $\hE^u$, instead of $E^u_{n-j-1-m-r}$, to construct an
approximation $\halpha_{n,k,j,m,r}$ to
$\alpha_{n-k,k-j}^{(-m-1)}$. Our claim now follows from a general
fact:

\medskip\noindent{\bf Fact}.
Given a smooth field $\hE^u$ of unstable directions, the
derivatives of $\cF^n\hE^u$ along stable curves grow exponentially
with $n$, but the field $\cF^n\hE^u$ remains H\"older continuous
with some fixed exponent $a>0$ and a H\"older norm bounded
uniformly over all $n\geq 0$.
\medskip

This fact is known as the Invariant Section Theorem \cite[Theorem
5.18]{Sh}, and it has been proven for quite general hyperbolic
systems. For dispersing \index{Dispersing billiards} billiards, we
outline a direct proof in Section~\ref{subsecIST}.

Hence we obtain an $a_1$-H\"older continuous vector field
$\halpha_{n,k,j,m,r}$ with a uniformly bounded H\"older norm on each
$W$. Besides,
$$ \label{HLI}
   \bigl\lVert\halpha_{n,k,j,m,r}-
   \alpha_{n-k,k-j}^{(-m-1)}\bigr\rVert_{\infty}
    \leq\Const\,\ttheta^r
$$
because the angle between $\cF^r(\hE^u)$ and $E^u_{n-j-1-m}$ at every
point $x\in S^+$ is $\cO(\ttheta^r)$.

This proves the claim in the case $r\leq n-j-1-m.$ If the opposite
inequality holds, then the field $\alpha_{n-k,k-j}^{(-m-1)}$ itself is
smooth on $W$ and the claim follows by a direct application of the
Invariant Section Theorem.

Lastly, we need to make our approximative vector field H\"older
continuous on the entire curve $S^+$, which will be done in two
steps. First, let $\halpha_{n,k,j,m,r}^{(1)}$ coincide with
$\halpha_{n,k,j,m,r}$ on large intervals $W\subset S^+$ and be 0
on small ones (see the definition above). We have
$$
    \int_{S^+}\bigl\lVert\halpha_{n,k,j,m,r}^{(1)}-
    \alpha_{n-k,k-j}^{(-m-1)}\bigr\rVert_0\, dl
      \leq\Const(\varepsilon^2 \theta^{k-j+m}+ \ttheta^r)
$$
where the first term estimates the contribution of the small intervals
via Lemma \ref{SmallInt} and (\ref{dFmalpha}) and the second term estimates
the contribution of the large intervals via (\ref{HLI}).
This modification eliminates most of the discontinuities of
$\halpha_{n,k,j,m,r},$ however $\halpha_{n,k,j,m,r}^{(1)}$ is not
yet globally H\"older continuous -- it can have jumps of size
$\cO(\theta^{m+k-j})$ at the endpoints of each large interval. The
total number of jumps is twice the number of large intervals,
i.e.\ $\leq \Const\, \varepsilon^{-K^2}$. Now we further modify
$\halpha_{n,k,j,m,r}^{(1)}$ by replacing it with a linear function
in the $\varepsilon^{K^2+1}$ neighborhood of each jump, so that
the new modification, we call it $\halpha_{n,k,j,m,r}^{(2)}$,
becomes continuous on $S^+$. It is easy to see that
$$
    \int_{S^+}\bigl\lVert\halpha_{n,k,j,m,r}^{(2)}-
    \alpha_{n-k,k-j}^{(-m-1)}\bigr\rVert_0\, dl
      \leq\Const(\varepsilon\theta^{k-j+m}+ \ttheta^r)
$$
and the $a_1$-H\"older norm of the new approximation is
$\leq\Const\,\varepsilon^{-a_1(K^2+1)}$. So we set $a_2=a_1(K^2+1)$ and
obtain the required approximation to $\alpha_{n-k,k-j}^{(-m-1)}$ on
$S^+$. \qed
\newpage

\chapter{Moment estimates}
\label{ScME} \setcounter{section}{6}\setcounter{subsection}{0}

The main goal of this section is to prove Proposition~\ref{Weak}
and thus establish Theorem~\ref{ThVelBM}. Our proof is based on
various moment estimates of the underlying processes. In addition,
we obtain some more estimates to be used in the proofs of
Theorems~\ref{Root} and \ref{ThSmall} presented in the subsequent
sections.

\subsection{General plan} \label{subsecMom} Here we formulate several
propositions that constitute the basis of our arguments. Their
proofs are provided in Sections~\ref{subsecME-ST}--\ref{subsecMart}.

The proof of Theorem \ref{ThVelBM} uses martingale approach of
Stroock and Varadhan. Let us briefly recall the main ideas of this
approach postponing the details till Section~\ref{subsecMart}.
According to \cite{SV} in order to show that $\cX_\tau$ is a
diffusion process with generator $\cL$ it is enough to check that
for a large set of observables $B$ the process
$$ \cM_\tau=B(\cX_\tau)-\int_0^\tau \cL B(\cX_\sigma) d\sigma $$
is a martingale. Thus one has to check that for sufficiently smooth functions
$B_1, B_2\dots B_m$ and for all
$s_1\leq s_2\dots \leq s_m\leq \tau_1\leq \tau_2$
$$ \EXP\left(\left[\prod_{k=1}^m B_k(\cX_{s_k})\right]
(\cM_{\tau_2}-\cM_{\tau_1})\right)=0 . $$
Therefore in order to show that a family of random processes
$\{\cX_\tau,M\}$ converges to $\cX_\tau$ as $M\to\infty$
we need to show that
$$ \EXP\left(\left[\prod_{k=1}^m B_k(\cX_{s_k,M})\right]
(\cM_{\tau_2,M}-\cM_{\tau_1,M})\right)\to 0 . $$
To derive this one usually divides the segment
$[\tau_1, \tau_2]$ into small segments
$$\tau_1=t_1\leq t_2\dots \leq t_N=\tau_2$$
and uses Taylor development of $B$ to estimate
$B(\cX_{t_{j+1},M})-B(\cX_{t_{j},M}).$ Thus one needs to control the moments
$\EXP([\cX_{t_{j+1},M}-\cX_{t_j,M}]^p).$ Our first task is to control
the moments of $Q$ and $V.$

We need some notation.
Let $\bn=\kappa_M\sqrt{M},$ where for the proof of
Theorem~\ref{ThVelBM} we set $\kappa_M=M^{-\delta}$ with some
$\delta>0$, whereas for the proof of Theorem~\ref{Root} we will
need $\kappa_M$ to be a small positive constant (independent of
$M$).

The estimates of Propositions~\ref{PrDistEq1} and \ref{PrDistEq2}
require that $(Q,V)\in\Upsilon_{\delta_1}$, see (\ref{Upsilon}), so
we have to exclude the orbits leaving this region. Fix a
$\delta_2\in (\delta_1,\delta_0)$ and let $\ell=(\gamma, \rho)$ be
\index{Standard pair} a standard pair such that $\length(\gamma)\geq
M^{-100}.$ We define, inductively, subsets
$$
  \emptyset=I_0\subset I_1\subset
  \dots I_k\subset I_{k+1} \subset \dots \subset \gamma,
$$
which we will exclude from $\gamma$, as follows. Suppose that
$I_k$ is already defined so that

(i) $\cF^{k\bn} (\gamma\setminus I_k)= \bigcup_{\alpha}
\gamma_{\alpha,k}$, where for each $\alpha$ we have
$\length(\gamma_{\alpha,k})>M^{-100}$, and
$\ell_{\alpha,k}=(\gamma_{\alpha,k}, \rho_{\alpha,k})$ is a standard
pair (here $\rho_{\alpha,k}$ is the density of the measure
$\cF^{k\bn}(\mes_\ell)$ conditioned on $\gamma_{\alpha,k}$);

(ii) $\pi_1(\gamma\setminus I_k)\subset\Upsilon_{\delta_2}$, cf.\
(\ref{Upsilon}).

\noindent Now, by Proposition~\ref{PrDistEq1}, for each $\alpha$
$$
         \cF^{k\bn} \gamma_{\alpha,k}=
         \Big(\bigcup_{\beta}
         \gamma_{\alpha, \beta, k+1}\Big)
         \bigcup \tgamma_{\alpha,k+1}
$$
where $\mes_{\ell_{\alpha,k}}(\tgamma_{\alpha,k+1})<M^{-50}$ and
$\length(\gamma_{\alpha,\beta,k+1})>M^{-100}$ for each $\beta$.

We now define
$$
   I_{k+1} = I_k \cup \left (\cup_{\alpha}
   \cF^{-(k+1)\bn}(\tgamma_{\alpha,k+1})\right )
   \cup \left (\cup_{\alpha,\beta}^\ast
   \cF^{-(k+1)\bn}(\gamma_{\alpha,\beta, k+1}) \right )
$$
where $\cup_{\alpha,\beta}^\ast$ is taken over all pairs
$\alpha,\beta$ such that $\pi_1(\gamma_{\alpha,\beta,
k+1})\notin\Upsilon_{\delta_2}$. For each $x\in\ell$ let
$\bk(x)=\min\{k\colon\, x\in I_k\}$ (we set $\bk(x)=\infty$ if
$x\nsubseteq I_k$ for any $k$).

For brevity, for each $x\in\Omega$ we denote the point $\cF^n(x)$ by
\index{Standard pair} $x_n=(Q_n,V_n,q_n,v_n)$. Given a standard pair
$\ell=(\gamma,\rho)$ as above, we define for every $x\in\gamma$
\beq
  \hQ_n=\left \{
  \begin{array}{cc} Q_n & {\rm for}\ \ n<\bk\bn\\
    Q_{\bk\bn} & {\rm for}\ \ n\geq \bk\bn \end{array}\right .\qquad
  \hV_n=\left \{
  \begin{array}{cc} V_n & {\rm for}\ \ n<\bk\bn\\
  0 & {\rm for}\ \ n\geq \bk\bn \end{array}\right .
   \label{hQhVn}
\eeq

Recall that Theorem~\ref{ThVelBM} claims a weak convergence of the
stochastic processes $\cQ(\tau)$ and $\cV(\tau)$ on any finite
time interval $0<\tau<c$. From now on, we fix $c>0$ and set
$\brc=c/\brL$, where $\brL$ is the mean free path (\ref{FPL}).

Let $A\in\fR$ be a function satisfying two additional
requirements:\\ (a) $\mu_{Q,V}(A)=0$ for all $Q,V$;\\ (b)
$\mu_{Q,V}(A^k)$ is a Lipschitz continuous function of $Q$ and $V$
for $k=2,3,4.$

Given a standard pair $\ell=(\gamma,\rho)$ as above and
\index{Standard pair} $x\in\gamma$, we put $A(x_j) = A(\cF^jx)$ and
$S_n(x) = \sum_{j=0}^{n-1} A(x_j)$. We also set
$$
  \hA(x_j)=\left\{ \begin{array}{ll}
  A(x_j) & \text{if}\ \ x\not\in I_{[j/\bn]}\\
  0 & \text{otherwise}\end{array}\right.
  \qquad \text{and} \qquad
  \hS_n=\sum_{j=0}^{n-1} \hA(x_j)
$$
cf.\ (\ref{hQhVn}). In Section~\ref{subsecMEAp} we will prove

\begin{proposition}
\label{PrMomA} The following bounds hold uniformly in $M^{1/2}\leq n
\leq \brc M^{2/3} $ and for all standard pairs $\ell=(\gamma, \rho)$
such that \index{Standard pair}
\beq
     \label{Upsilon*}
   \pi_1(\gamma)\subset
   \Upsilon^\ast_{\delta_2,a}\colon=
   \{\dist(Q,\partial\cD)>\br +\delta_2,
   \ \ \|V\|<a M^{-2/3}\}
\eeq
and  $\length(\gamma)>M^{-100}$:
$$ {\rm (a)} \quad \EXP_\ell\bigl(\hS_n\bigr)=
\cO\left(M^\delta n/\bn\right). \Quad $$
$${\rm (b)} \quad \EXP_\ell\bigl(\hS_n^2\bigr)=\cO(n).\Quad $$
$${\rm (c)} \quad \EXP_\ell\bigl(\hS_n^4\bigr)=\cO(n^2).\Quad $$
{\rm (d)} The last estimate can be specified as follows. Let
\beq
   \label{AAsmall}
   \fS_A=\max_{(Q,V)\in\Upsilon_{\delta_2}}
   \max\left\{\mu_{Q,V} (A^2),D_{Q,V} (A)\right\}
\eeq
where
\beq
    D_{Q,V}(A)=\sum_{j=-\infty}^\infty \mu_{Q,V}
    \left(A\, (A\circ \cF_{Q,V}^j)\right)
       \label{DQVA}
\eeq
Then
$$
    \EXP_\ell\left(\hS^4_n\right)\leq
    2\fS_A^2n^2+\cO\left(n^{1.9}\right).
$$
\end{proposition}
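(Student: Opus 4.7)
The plan is to prove each bound by expanding $\hS_n$ and its powers as sums over iterates and estimating each term via the equidistribution machinery of Chapter~\ref{SecSPE} -- specifically, the propagation result (Proposition~\ref{PrDistEq1}) and the long-term equidistribution (Corollary~\ref{PrDistEq3}). Crucially, the hypothesis $\mu_{Q,V}(A)\equiv 0$ is independent of $(Q,V)$, so $\brA = 0$ and Corollary~\ref{PrDistEq3} applies cleanly, with error terms of the form $\cR_{n,j,m}+\theta^m$ driven only by $\|V\|$ and $1/M$. Throughout the argument, the exclusion set $I_k$ confines the orbit to $\Upsilon_{\delta_2}$, so $\|V\|=\cO(M^{-1/2})$ is guaranteed; and since the initial condition enforces $\|V_0\|\leq aM^{-2/3}$, the typical size $\|V_j\| = \cO(M^{-2/3})$ propagates throughout the time scale $n\leq \brc M^{2/3}$ by the heuristic of Theorem~\ref{ThVelBM} -- a fact I will need to upgrade from heuristic to rigorous inductively as part of the proof.

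For part (a), I write $\EXP_\ell(\hS_n)=\sum_{j=0}^{n-1}\EXP_\ell(\hA(x_j))$ and estimate each summand by first propagating $\ell$ forward via Proposition~\ref{PrDistEq1} by a burn-in time $\sim K|\ln|\gamma||$ (long enough to ensure length control $\length(\gamma_\alpha)\geq M^{-100}$ uniformly, as guaranteed by \eqref{Kvarepsilon}), then applying Corollary~\ref{PrDistEq3} to each resulting standard pair with parameters $j\sim m\sim K\ln M$. The per-term error is $\cO(\|V\|(\ln M)^2 + (\ln M)^3/M)$, which after summation over $j$ and accounting for the discarded mass $|m_2|<M^{-50}$ yields the claimed $\cO(M^\delta n/\bn)$.

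For part (b), I expand $\hS_n^2 = \sum_{j}\hA(x_j)^2 + 2\sum_{i<j}\hA(x_i)\hA(x_j)$. The diagonal contributes $\cO(n)$ since $A$ is bounded. For the off-diagonal part I fix $i$, apply Proposition~\ref{PrDistEq1} to decompose $\cF^i(\mes_\ell)$ into standard pairs, and on each one estimate the inner sum $\sum_{j>i}\EXP(\hA(x_j)\mid x_i)$ exactly as in part (a), showing it is $\cO(1)$ uniformly in $i$. Summing over $i$ gives $\cO(n)$. The same scheme extends to part (c): expand $\hS_n^4$ over ordered quadruples $i\leq j\leq k\leq l$, and estimate each ordered term by iteratively conditioning at times $i,j,k$ and applying equidistribution on each interval. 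The dominant contribution, coming from configurations where $\{i,j\}$ and $\{k,l\}$ cluster together, is controlled by two applications of the variance bound (b), yielding $\cO(n^2)$. Finally, for (d), I track the leading constants explicitly: the Green--Kubo-type constant $D_{Q,V}(A)$ emerges naturally when Corollary~\ref{PrDistEq3} is used to evaluate $\sum_{j}\EXP(\hA(x_i)\hA(x_{i+j}))$ on a standard pair, and summing over the three pairings of four indices -- with $\mu_{Q,V}(A^2)$ and $D_{Q,V}(A)$ both bounded by $\fS_A$ -- gives the stated prefactor.

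The main obstacle is the error accumulation: the bounds in Corollary~\ref{PrDistEq3} depend on $\EXP_\ell(\|V_{n-j}\|)$, but $V$ itself is a sum of contributions $\cA/M$ whose moments we are simultaneously trying to control. I will handle this by a bootstrap: first use the crude bound $\|V\|\leq \Const\,M^{-1/2}$ (valid on $\Upsilon_{\delta_2}$) to obtain preliminary estimates; then combine these with the conservation identity \eqref{CR2b} to upgrade to $\EXP(\|V_n\|)=\cO(\sqrt{n}/M)$, which is the right order $\cO(M^{-2/3})$ for $n\leq cM^{2/3}$, and feed this back into the equidistribution error to close the estimates. Secondary technical issues -- showing that the measure of $I_k$ stays negligible, and that the Lipschitz hypothesis on $\mu_{Q,V}(A^k)$ lets us replace $\mu_{Q_j,V_j}(A^k)$ by $\mu_{Q_0,V_0}(A^k)$ up to $\cO(\|Q_j-Q_0\|+\|V_j-V_0\|)$ when summing the leading terms for (d) -- are routine but need bookkeeping.
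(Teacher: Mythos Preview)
Your scheme for part (b) has a genuine gap. You assert that after decomposing $\cF^i(\mes_\ell)$ into standard pairs $\ell_\alpha$, the inner sum $\sum_{j>i}\EXP_{\ell_\alpha}(\hA(x_{j-i}))$ is $\cO(1)$ uniformly in $i$. But your own part (a), applied to $\ell_\alpha$, only gives $\cO(M^\delta(n-i)/\bn)$, which for $n-i\sim M^{2/3}$ and $\bn=M^{1/2-\delta}$ is $\cO(M^{1/6+2\delta})$, not $\cO(1)$. Multiplying by $|\hA(x_i)|\leq\|A\|_\infty$ and summing over $i$ yields a bound $\cO(n\cdot M^{1/6+2\delta})\sim M^{5/6}$, far worse than the target $\cO(n)\sim M^{2/3}$. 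Your bootstrap cannot close this: upgrading to $\EXP(\|V_j\|)=\cO(\sqrt{j}/M)$ requires exactly the second-moment bound (b) you are trying to prove, so the argument is circular. And even if you grant $\|V\|=\cO(M^{-2/3})$, Corollary~\ref{PrDistEq3} still leaves an error floor $\cO(\|V\|\ln^2 M)\sim M^{-2/3}\ln^2M$ per term; summing $\sim n^2$ off-diagonal terms gives $\cO(n\ln^2M)$, which is \emph{still} too large --- see the paper's own discussion of approach (I) in Section~\ref{subsecMom}+1 (``Structure of the proofs''), which identifies precisely this failure. A separate issue: Propositions~\ref{PrDistEq1}--\ref{PrDistEq2} and Corollary~\ref{PrDistEq3} all carry the restriction $n\leq\delta_\diamond\sqrt{M}$, so you cannot apply them directly across an interval of length $M^{2/3}$ without first breaking it into pieces of length $\lesssim\sqrt{M}$.

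What the paper does instead is a \emph{two-scale big--small block argument}. First (Section~\ref{subsecME-ST}) it establishes the analogues of (a)--(b) on short intervals $n\leq\delta_\diamond\sqrt{M}$ via blocks of size $M^{1/3}$. Then (Section~\ref{subsecMEAp}) it partitions $[0,n]$ into big blocks $P_k'$ of length $\bn=M^{1/2-\delta}$ separated by small gaps of length $M^\delta$, and proves $\EXP_\ell([U_k']^2)\leq G_2 k\bn$ by \emph{induction on $k$}. The crux is the cross term: using the multiple-correlation Lemma~\ref{INDEP} and Cauchy--Schwarz,
\[
  \bigl|\EXP_\ell(U_k'P_{k+1}')\bigr|
  \;\leq\; \EXP_\ell|U_k'|\cdot\max_\alpha\bigl|\EXP_{\ell_\alpha}(P_{k+1}')\bigr|
  \;\leq\; \sqrt{G_2 k\bn}\cdot C_1 M^\delta
  \;=\; \cO\bigl(M^{1/12+\delta}\bn\bigr),
\]
which is $o(\bn)$ and therefore negligible compared with $\EXP_\ell([P_{k+1}']^2)=\cO(\bn)$. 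This is the missing idea: the block structure lets you invoke Cauchy--Schwarz on the \emph{partial sum} $U_k'$ (whose second moment is controlled inductively) against the \emph{first moment} of a single block $P_{k+1}'$ (which is small, $\cO(M^\delta)$, by the short-term estimate). Your direct expansion over $(i,j)$ never isolates a factor with small first moment, so Cauchy--Schwarz is unavailable and the off-diagonal contribution cannot be tamed.
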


We remark that part {\rm (d)} will only be used in
Chapter~\ref{ScSLP}, in the proof of Theorem~\ref{ThSmall}.

Note that our functions $\hQ_n$, $\hV_n$, $\hA_n$ and $\hS_n$ are
only defined on the selected standard pair $\ell$ and not on the
entire phase space $\Omega$ yet. Given an auxiliary \index{Auxiliary
measures} measure $m\in\fM$, we can use the corresponding partition
of $\Omega$ into standard pairs $\{\ell=(\gamma,\rho)\}$, see
Section~\ref{subsecMS}, and define our functions on all the pairs
$\{(\gamma,\rho)\}$ with $\length(\gamma)>M^{-100}$, and then simply
set these functions to zero on the shorter standard pairs. Now our
functions are defined on $\Omega$ (but they depend on the measure $m
\in \fM$ and the decomposition (\ref{aux1})). \index{Standard pair}

Next, given $m\in\fM$ and $x\in\Omega$, we define continuous
functions $\tQ(\tau)$, $\tV(\tau)$, and $\tS(\tau)$ on the
interval $(0,\brc)$ by
\begin{equation}
   \label{Rescale}
      \tQ(\tau)=\hQ_{\tau M^{2/3}},\quad
      \tV(\tau)=M^{2/3} \hV_{\tau M^{2/3}},\quad
      \tS(\tau)=M^{-1/3} \hS_{\tau M^{2/3}}
\end{equation}
(these formulas apply whenever $\tau M^{2/3}\in\integers$, and
then we use linear interpolation in between). In a similar way,
let $t_n$ be the time of the $n$th collision, $\hht_n=t_{\min\{n,
\bk\bn\} }$ the modified time, and then we define a continuous
function
\beq
    \ttt(\tau)=  M^{-1/3}\Bigl[\hht_{[\tau M^{2/3}]}
    -\brL\, \min\{\tau M^{2/3},\bk\bn\}\Bigr]
      \label{sigmatilde}
\eeq
where $\brL$ is the mean free path, cf.\ (\ref{FPL}). We note that
our normalization factors in (\ref{Rescale})--(\ref{sigmatilde})
are chosen so that the resulting functions typically take values
of order one, as we prove next.

Let us fix $a>0$ and for each $M>1$ choose an auxiliary
\index{Auxiliary measures} measure $m\in\fM$ such that
\beq
    m\left(\pi_1^{-1}\left(\Upsilon_{\delta_2,a}^\ast\right )
    \right)= 1 ,
       \label{mgood}
\eeq
see (\ref{Upsilon*}). Then each function $\tS(\tau)$, $\tQ(\tau)$,
$\tV(\tau)$, $\ttt(\tau)$ induces a family of probability measures
(parameterized by $M$) on the space of continuous functions
$C[0,\brc]$. We will investigate the tightness of these families.
All our statements and subsequent estimates will be uniform over the
choices of auxiliary \index{Auxiliary measures} measures $m\in\fM$
satisfying (\ref{mgood}).

In Section~\ref{subsecTight} we will prove

\begin{proposition}
\label{PrTight} $\ $ \\ {\rm (a)} For every function $A\in\fR$
satisfying the assumptions of Proposition~\ref{PrMomA}, the family
of
functions $\tS(\tau)$ is tight;\\
{\rm (b)} the families $\tQ(\tau)$, $\tV(\tau)$, and $\ttt(\tau)$
are tight.
\end{proposition}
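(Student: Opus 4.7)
The plan is to verify the Kolmogorov--Chentsov tightness criterion for each of the four processes: the existence of a constant $C$, uniform in $M\geq 2$ and in the auxiliary measure $m\in\fM$ satisfying (\ref{mgood}), such that
\begin{equation*}
    \EXP_m\bigl[|\tX(\tau_2)-\tX(\tau_1)|^4\bigr]\leq C(\tau_2-\tau_1)^2
    \qquad\text{for all } 0\leq\tau_1<\tau_2\leq\brc,
\end{equation*}
where $\tX\in\{\tS,\tV,\tQ,\ttt\}$. Together with the uniform bound on $\tX(0)$ (immediate from the construction and from $\|V_0\|\leq aM^{-2/3}$), this yields tightness in $C[0,\brc]$.

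The bound for $\tS$ is where Proposition~\ref{PrMomA}(c) is directly invoked. Setting $n_i=\lfloor\tau_iM^{2/3}\rfloor$, I would first decompose $m$ into standard pairs via (\ref{aux1})--(\ref{aux3}), push it forward under $\cF^{n_1}$, and re-decompose the resulting measure by Proposition~\ref{PrDistEq1} into standard pairs $\ell'=(\gamma',\rho')$ with $|\gamma'|\geq M^{-100}$, absorbing the tail of short pairs into a $<M^{-50}$ error. On each surviving pair with $\pi_1(\gamma')\subset\Upsilon^\ast_{\delta_2,a}$, Proposition~\ref{PrMomA}(c) gives $\EXP_{\ell'}\bigl[(\hS_{n_2-n_1})^4\bigr]\leq C(n_2-n_1)^2$ (pairs that have already escaped $\Upsilon^\ast_{\delta_2,a}$ contribute nothing because $\hS$ is frozen there). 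After the $M^{-1/3}$ rescaling in (\ref{Rescale}) this is precisely the required bound in the regime $n_2-n_1\geq M^{1/2}$. The bound for $\tV$ reduces to this case via (\ref{CR2b})--(\ref{chi}): summing along the orbit one gets
\begin{equation*}
    \hV_{n_2}-\hV_{n_1}=-\frac{1}{M}\bigl(\hS_{n_2}^{\cA}-\hS_{n_1}^{\cA}\bigr)+E_{n_1,n_2},
\end{equation*}
where $\cA$ is the vector-valued function (\ref{MEx}), lies in $\fR$ componentwise, has $\mu_{Q,V}(\cA)=0$ by the rotational symmetry noted after (\ref{MEx}), and has a Lipschitz second moment (this is where Assumption A3 on the smoothness of $\dcD$ enters); the error satisfies $|E_{n_1,n_2}|\leq C(n_2-n_1)(M^{-2}+\|V\|/M)=\cO(M^{-4/3})$ on $\Upsilon^\ast_{\delta_2,a}$ and is therefore negligible after the $M^{2/3}$ scaling of $\tV$. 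The bound for $\tQ$ reduces to that for $\tV$ and $\ttt$: since $\dot Q(t)=V(t)$ between collisions, one verifies
\begin{equation*}
    |\tQ(\tau_2)-\tQ(\tau_1)|\leq\bigl[\brL(\tau_2-\tau_1)+M^{-1/3}|\ttt(\tau_2)-\ttt(\tau_1)|\bigr]\sup_{[\tau_1,\tau_2]}\|\tV\|,
\end{equation*}
and the right-hand side has a fourth moment of order $(\tau_2-\tau_1)^2$ once tightness of $\tV$ and $\ttt$ is known. Finally, $\hht_n-\brL n$ is (modulo an $\cO(\|V\|)=\cO(M^{-2/3})$ correction absorbed into an error term) the Birkhoff sum of $L(x)/\|v\|-\brL$, where $L$ is the free path length; this function lies in $\fR$ (thanks to our finite horizon and smoothness assumptions) and has zero mean under every $\mu_{Q,V}$, so Proposition~\ref{PrMomA}(c) applies to it exactly as it did to $\cA$.

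The main technical subtlety I expect to confront is the short-time regime $\tau_2-\tau_1<M^{-1/6}$, in which $n_2-n_1<M^{1/2}$ and Proposition~\ref{PrMomA}(c) does not directly apply. The plan here is a two-scale argument: verify the Kolmogorov bound on the dyadic grid of mesh $M^{-1/6}$ using Proposition~\ref{PrMomA}(c) exactly as above, and control the oscillation of $\tS$ inside each grid cell via a maximal inequality derived from Proposition~\ref{PrMomA}(b). Specifically, the $L^2$-bound $\EXP[\hS_n^2]=\cO(n)$ applied to dyadic block sums, together with a Menshov--Rademacher chaining, shows that on any sub-interval $J$ of length $M^{-1/6}$ one has $\EXP\bigl[\sup_{\tau\in J}|\tS(\tau)-\tS(\min J)|^2\bigr]=o(1)$ as $M\to\infty$, which suffices to close Prohorov's criterion for $\tS$. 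The other three families then inherit tightness through the reductions described above, completing the proof of Proposition~\ref{PrTight}.
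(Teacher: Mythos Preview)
Your overall architecture matches the paper's: fourth-moment control for $\tS$, reduction of $\tV$ to the $\cA$-case via \eqref{CR2b}, Lipschitz control for $\tQ$ in terms of $\tV$, and a free-path decomposition for $\ttt$. The substantive difference is in the short-time regime for $\tS$, and the paper's treatment is both simpler and avoids a small gap in yours. The paper does not stop the fourth-moment bound at $n_2-n_1\geq M^{1/2}$: its Lemma~\ref{ShiftMom} extends $\EXP_\ell\bigl[(\hS_{n_2}-\hS_{n_1})^4\bigr]\leq\Const\,(n_2-n_1)^2$ all the way down to $n_2-n_1>\Const\,M^{2/7}$, by rerunning the proof of Lemma~\ref{AP1St}(c) (whose block structure only needs $n\gtrsim M^{1/3}$). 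Below that threshold, i.e.\ for dyadic steps $2^{-m}<\Const\,M^{-8/21}$, the trivial bound $|\tS(\tau_2)-\tS(\tau_1)|\leq\|A\|_\infty M^{1/3}(\tau_2-\tau_1)$ already gives the H\"older-$\tfrac18$ increment estimate, so no Menshov--Rademacher chaining is needed. Your chaining plan would also require $\EXP[(\hS_b-\hS_a)^2]=\cO(b-a)$ at all scales inside the cell, but Proposition~\ref{PrMomA}(b) only covers $n\geq M^{1/2}$; you would have to invoke Proposition~\ref{GenST}(b) instead and still patch the range $n<M^{1/3}$. Two minor simplifications: for $\tQ$ the paper uses the direct Lipschitz bound $\|\tQ(\tau_2)-\tQ(\tau_1)\|\leq\Const\,(\tau_2-\tau_1)\max_\tau\|\tV(\tau)\|$ (intercollision times are uniformly bounded on $\Upsilon_{\delta_2}$), bypassing $\ttt$ entirely; and for $\ttt$ the paper isolates the statement $d\in\fR$, $\mu_{Q,V}(d)=\brL+\cO(\|V\|)$ as a separate proposition and then splits $\ttt=\ttt_1+\ttt_2+\ttt_3$ with $\ttt_1$ handled by part~(a) and $\ttt_2,\ttt_3$ Lipschitz with constants controlled by $\max_\tau\|\tV\|$.
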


\begin{corollary}
\label{CrTight} For every sequence $M_k\to\infty$ there is a
subsequence $M_{k_j}\to\infty$ along which the functions
$\tQ(\tau)$ and $\tV(\tau)$ on the interval $0<\tau<\brc$ weakly
converge to some stochastic processes $\hbQ(\tau)$ and
$\hbV(\tau)$, respectively.
\end{corollary}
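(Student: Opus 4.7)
The plan is to deduce Corollary~\ref{CrTight} as an essentially immediate consequence of Proposition~\ref{PrTight}(b) combined with Prokhorov's theorem. Since $C[0,\brc]$, equipped with the uniform metric, is a Polish space, a family of Borel probability measures on it is relatively compact in the weak topology if and only if it is tight. Thus, once tightness is in hand, extraction of weakly convergent subsequences is standard.

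More precisely, I would proceed as follows. First, for a given sequence $M_k\to\infty$, Proposition~\ref{PrTight}(b) furnishes tightness of the family of distributions of $\tV(\tau)$ on $C[0,\brc]$. By Prokhorov's theorem there exists a subsequence $M_{k_j}$ along which the distributions of $\tV(\tau)$ converge weakly to some Borel probability measure, which we may represent by a stochastic process $\hbV(\tau)$. Next, Proposition~\ref{PrTight}(b) also gives tightness of the distributions of $\tQ(\tau)$; passing to a further subsequence (still denoted $M_{k_j}$) we obtain weak convergence of $\tQ(\tau)$ to some $\hbQ(\tau)$. Since $C[0,\brc]\times C[0,\brc]$ is again Polish and tightness of each marginal implies tightness of the joint distribution, the same diagonal extraction actually delivers joint weak convergence of the pair $(\tQ(\tau),\tV(\tau))$, which is the form that will be used in the sequel when identifying the limit via the martingale problem associated with \eqref{TIBM}.

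The only mildly nontrivial ingredient is the verification that the relevant state space really is Polish and that the functions $\tQ(\tau)$ and $\tV(\tau)$, built from the discrete collision data via the interpolation in (\ref{Rescale}), are genuinely continuous random elements of $C[0,\brc]$; but the piecewise-linear interpolation used there makes this transparent. There is no analytical obstacle here: the heavy lifting — the modulus-of-continuity estimates needed for tightness — has already been absorbed into Proposition~\ref{PrTight}, itself resting on the moment bounds of Proposition~\ref{PrMomA}. Consequently the proof of the corollary should consist of one short paragraph invoking Prokhorov's theorem and the diagonal argument sketched above, with no further estimates required. The real work, identifying $(\hbQ,\hbV)$ with the unique solution of the stochastic differential equations (\ref{TIBM}), is deferred to the proof of Proposition~\ref{Weak}.
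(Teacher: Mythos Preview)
Your proposal is correct and matches the paper's approach: the corollary is stated without proof immediately after Proposition~\ref{PrTight}, as it is indeed a direct consequence of tightness via Prokhorov's theorem. Your elaboration on joint convergence of the pair $(\tQ,\tV)$ is a reasonable addition, since later arguments (e.g.\ Proposition~\ref{PrGen} and Corollary~\ref{CrCTMart}) do use the limiting pair jointly.
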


Our next step is to use the tightness of $\hQ$ and $\hV$ to improve
the estimates of Proposition~\ref{PrMomA}(b). In
Section~\ref{subsecM2} we will establish

\begin{proposition}
\label{PrMom} Let $\varkappa\ll\brc$ be a small positive constant
and $n=\varkappa M^{2/3}$. The following estimates hold uniformly
\index{Standard pair} for all standard pairs $\ell=(\gamma, \rho)$
such that $\pi_1(\gamma)\subset\Upsilon^\ast_{\delta_2,a}$ and
$\length(\gamma)>M^{-100}$, and all $(\brQ,\brV)\in\pi_1(\gamma)$:
$${\rm (a)}\quad \EXP_\ell\bigl(\hV_n-\brV\bigr)=
\cO\left(\frac{1}{M^{1/3-\delta}\bn} \right).\Quad $$
$${\rm (b)}\quad \EXP_\ell\left ( (\hV_n-\brV)(\hV_n-\brV)^T
\right)= \bigl(\brsigma^2_{\brQ}(\cA)+o_{\varkappa\to
0}(1)\bigr)\, \varkappa\, M^{-4/3} \Quad .$$
$${\rm (c)}\quad \EXP_\ell\left(\norm{\hV_n-\brV}^4\right)
=\cO\left(\varkappa^2 M^{-8/3}\right)\Quad$$
$${\rm (d)}\quad \EXP_\ell\left(\hQ_n-\brQ\right) = (1+o_{\varkappa\to 0}(1))\,
\varkappa M^{2/3} \brL \brV +
\cO\left(\varkappa^{3/2}\right),\Quad
$$
$${\rm (e)}\quad \EXP_\ell\left(\norm{\hQ_n-\brQ}^2\right)
=\cO\left(\varkappa^2\right).\Quad $$
\end{proposition}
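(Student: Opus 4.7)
The plan is to reduce all five estimates to Proposition~\ref{PrMomA} combined with the equidistribution machinery of Chapter~\ref{SecSPE}. Telescoping the collision update (\ref{CR2b}) yields
\[
   M(\hV_n-\brV)=\hS_n+E_n,
\]
where $\hS_n$ is the (vector-valued) frozen Birkhoff sum of the observable $\cA$ from (\ref{MEx}) applied componentwise, and $E_n$ gathers the $\delta$-errors from (\ref{chi}). Using the a priori bound $\|V\|\leq M^{-1/2}$ in the unfrozen regime, $|E_n|\leq n(\|V\|/M+M^{-2})=\cO(\varkappa M^{-5/6})$, which is negligible on every scale of interest. The observable $\cA$ satisfies the hypotheses of Proposition~\ref{PrMomA}: its $\mu_{Q,V}$-average vanishes by rotational symmetry, and its tensor moments of orders $2,3,4$ are Lipschitz in $(Q,V)$ by Lemma~\ref{lmAA} together with the closed-form evaluations of Section~\ref{subsecA2b}. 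Parts (a) and (c) then follow immediately upon dividing by $M$ and substituting $n=\varkappa M^{2/3}$ into Proposition~\ref{PrMomA}(a),(c).

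Part (b) is the heart of the proof. Expanding the double sum,
\[
   \EXP_\ell(\hS_n\hS_n^T)=\sum_{|k|<n}\EXP_\ell\Bigl(\sum_{0\leq i,\,i+k<n}\hat{\cA}(x_i)\,\hat{\cA}(x_{i+k})^T\Bigr),
\]
one wants each inner sum to equal $(n-|k|)\,\mu_{\brQ,\brV}\bigl(\cA\,(\cA\circ\cF_{\brQ,\brV}^k)^T\bigr)+o(n)$, so that the outer sum---convergent by the exponential decay of correlations underlying (\ref{EqSigmabar})---produces $n\,\brsigma^2_{\brQ}(\cA)$. Corollary~\ref{PrDistEq3} does not apply directly, since this average depends on $Q$; instead, the plan is to first apply Proposition~\ref{PrDistEq1} to decompose $\cF^i(\mes_\ell)$ into standard pairs $\ell_\alpha$ with nearly constant $(Q_\alpha,V_\alpha)$, then apply Proposition~\ref{PrDistEq2} to each $\ell_\alpha$ with $A=\cA\,(\cA\circ\cF^k)^T$ (fitting (\ref{ABB}) with $n_A=k+1$), and finally invoke Lemma~\ref{lmAA} together with the log-Lipschitz bound of Proposition~\ref{PrRegDif} to replace $\mu_{Q_\alpha,V_\alpha}$ by $\mu_{\brQ,\brV}$, the displacement $\|Q_\alpha-\brQ\|+\|V_\alpha-\brV\|=\cO(\varkappa)$ being absorbed into the $o_{\varkappa\to 0}(1)$ factor. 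The hard part will be controlling the freezing and the length-loss from Proposition~\ref{PrDistEq1} uniformly across the intermediate range $\ln M\lesssim k\lesssim M^b$ required by Proposition~\ref{PrDistEq2}, and keeping the triple bookkeeping (over $i$, $k$, and the induced decomposition $\{\ell_\alpha\}$) within the demanded error.

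For parts (d) and (e), write $\hQ_n-\brQ=\brV\,\hht_n+\sum_{i=0}^{n-1}s_i(\hV_i-\brV)$, where $s_i$ is the $i$-th intercollision time. The mean-free-path identity (\ref{FPL}) combined with Corollary~\ref{PrDistEq3} applied to the centered time $s-\brL$ (whose $\mu_{Q,V}$-average vanishes independently of $Q,V$) yields $\EXP_\ell(\hht_n)=n\brL(1+o_{\varkappa\to 0}(1))$, producing the main term $\varkappa M^{2/3}\brL\,\brV$ in (d). Part (b) applied with $m=i$ gives $\EXP_\ell\|\hV_i-\brV\|=\cO(\sqrt{i}/M)$ via Cauchy--Schwarz, whence $\sum_is_i\,\EXP_\ell\|\hV_i-\brV\|=\cO(n^{3/2}/M)=\cO(\varkappa^{3/2})$, completing (d). For (e), apply Doob's maximal inequality to the near-martingale $\hV_i-\brV$ (its drift from part (a) being far smaller than the fluctuations) to obtain $\EXP_\ell(\max_{i\leq n}\|\hV_i-\brV\|^2)=\cO(\varkappa M^{-4/3})$; then the deterministic bound $\hht_n\leq nL_{\max}$ together with $\|\hQ_n-\brQ\|\leq\hht_n\max_i\|\hV_i\|$ yields $\EXP_\ell\|\hQ_n-\brQ\|^2\leq n^2L_{\max}^2\,\EXP_\ell(\max_i\|\hV_i\|^2)=\cO(\varkappa^2 M^{4/3}\cdot M^{-4/3})=\cO(\varkappa^2)$.
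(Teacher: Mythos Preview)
Your reductions for (a) and (c) match the paper exactly. The real divergence is in (b). The paper does \emph{not} re-expand the double correlation sum and push Proposition~\ref{PrDistEq2} through for each lag $k$; that route forces you to track constants depending on $n_A=k+1$ and to control the $(Q,V)$-drift at every intermediate time, which is precisely the ``triple bookkeeping'' you flag as hard. Instead the paper cashes in the big--small block machinery already built in Section~\ref{subsecMEAp}: equation~(\ref{PreM2}) says $\EXP_\ell(\hS_n^2)$ is, up to $\cO(M^{1/3+2\delta})$, the sum of the block variances $\EXP_\ell([P_k']^2)$; Lemma~\ref{AP1St}(b) evaluates each block variance as $\bn\,\EXP_\ell(\hD_{Q_{k\bn},V_{k\bn}})$; and then Proposition~\ref{PrTight} (tightness, proved \emph{before} Proposition~\ref{PrMom}) is invoked to say that for most initial conditions $(Q_{k\bn},V_{k\bn})$ stays within $o_{\varkappa\to 0}(1)$ of $(\brQ,\brV)$, so one may freeze $\hD$ at $D_{\brQ,\brV}(\cA)$. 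This gives (\ref{CrM2}) in three lines, and (b) follows. Your approach could perhaps be pushed through, but it duplicates work already encapsulated in (\ref{PreM2})--Lemma~\ref{AP1St}(b), and your displacement bound $\|Q_\alpha-\brQ\|=\cO(\varkappa)$ tacitly requires the same tightness you do not cite.

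For (d) and (e) your outline is close to the paper's but with two slips. In (d), the observable $s-\brL$ does \emph{not} have $\mu_{Q,V}$-mean zero independently of $(Q,V)$: intercollision time is $s=d/\|v\|$ with $\|v\|=\sqrt{1-M\|V\|^2}$, so only the free-path $d$ has $(Q,V)$-independent mean $\brL$ (Proposition~\ref{propd}). The paper therefore splits $\sum s_j$ into $\sum(s_j-\hL_j/\|v_j\|)$ (handled by Proposition~\ref{PrMomA}(a)) plus $\sum\hL_j/\|v_j\|$ (handled by (\ref{vvj}) and tightness). In (e), the Doob maximal inequality is not available: $\hV_i-\brV$ is not a martingale for any filtration, and ``near-martingale'' is not a hypothesis Doob accepts. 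The paper avoids the maximum entirely by the crude bound $\|\sum_j s_j(\hV_j-\brV)\|^2\leq n\sum_j s_j^2\|\hV_j-\brV\|^2$ (Cauchy--Schwarz over $j$), then uses part~(b) at each $j$ to get $\sum_j j/M^2=\cO(\varkappa^3)$; the $\brV$-term contributes $\|\brV\|^2(nL_{\max})^2=\cO(\varkappa^2)$ directly.
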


Let us now fix some $\delta_3\in (\delta_2,\delta_0)$. Let
$B(Q,V)$ be a $C^3$ smooth function of $Q$ and $V$ with a compact
support whose projection on the $Q$ space lies within the domain
$\dist(Q,\dcD)>\br+\delta_3$. Define a new function $\cL B(Q,V)$
by
$$
    (\cL B)(Q,V)= \brL \la V, \nabla_Q B\ra +\frac{1}{2}
    \sum_{i,j=1}^2 \left(\brsigma^2_Q(\cA)\right)_{ij}\,
    \partial^2_{V_i,V_j} B
$$
where $V_1$ and $V_2$ denote the components of the vector $V$, and
$\left(\brsigma^2_Q(\cA)\right)_{ij}$ stand for the components of
the matrix $\brsigma^2_Q(\cA)$. As before, for each $M>1$ we choose
a measure $m\in\fM$ satisfying (\ref{mgood}). In
Section~\ref{subsecMart} we will prove

\begin{proposition}
\label{PrGen}
Let $(\hbQ, \hbV)$ be a stochastic process that is a
limit point, as $M\to\infty$, of the family of functions
$(\tQ(\tau),\tV(\tau))$ constructed above. Then the process
$$
       \bM(\tau) = B(\hbQ(\tau), \hbV(\tau))
       -\int_0^\tau (\cL B)
       (\hbQ(s), \hbV(s))\, ds
$$
is a martingale.
\end{proposition}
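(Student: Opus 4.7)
The proof follows the Stroock--Varadhan martingale characterization of diffusions. It suffices to show that for every integer $p\ge 0$, every $0\le s_1\le\cdots\le s_p\le \tau_1<\tau_2\le \brc$, and every bounded continuous $g_1,\dots,g_p$ on the $(Q,V)$-space,
\begin{equation}
\label{eq:MartGoal}
\EXP\!\left(\prod_{i=1}^p g_i\bigl(\hbQ(s_i),\hbV(s_i)\bigr)\cdot \bigl[\bM(\tau_2)-\bM(\tau_1)\bigr]\right)=0.
\end{equation}
Since $B$ is $C^3$ with compact support in $\{\dist(Q,\dcD)>\br+\delta_3\}$ and $\cL B$ is continuous (the matrix $\brsigma^2_Q(\cA)$ is continuous by Proposition~\ref{PrRegDif}), the functional inside the expectation is a bounded continuous map on $C[0,\brc]$. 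Working along a subsequence $M_{k_j}\to\infty$ given by Corollary~\ref{CrTight}, weak convergence $(\tQ,\tV)\Rightarrow(\hbQ,\hbV)$ reduces \eqref{eq:MartGoal} to proving that
\begin{equation}
\label{eq:PreLim}
\EXP_m\!\left(\prod_i g_i(\tQ(s_i),\tV(s_i))\cdot \bigl[B(\tQ(\tau_2),\tV(\tau_2))-B(\tQ(\tau_1),\tV(\tau_1))-\!\int_{\tau_1}^{\tau_2}\!(\cL B)(\tQ,\tV)\,ds\bigr]\right)\to 0
\end{equation}
as $M\to\infty$, uniformly over auxiliary measures $m\in\fM$ satisfying \eqref{mgood}.

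\textbf{Key steps.} Fix a small parameter $\varkappa>0$ (to be sent to $0$ after $M\to\infty$) and partition $[\tau_1,\tau_2]$ into subintervals $[\tau_k,\tau_{k+1}]$ of length $\varkappa$; set $n_k=\tau_k M^{2/3}$. Telescope the increment $B(\tQ(\tau_2),\tV(\tau_2))-B(\tQ(\tau_1),\tV(\tau_1))=\sum_k\Delta_k B$ and perform a second order Taylor expansion around $(\tQ(\tau_k),\tV(\tau_k))$:
\begin{equation*}
\Delta_k B=\nabla_Q B\cdot\Delta_k\tQ+\nabla_V B\cdot\Delta_k\tV+\tfrac12(\Delta_k\tV)^T D^2_V B\,(\Delta_k\tV)+R_k,
\end{equation*}
where $R_k$ collects third order terms in $\Delta_k\tV$, mixed $Q$--$V$ terms, and terms quadratic in $\Delta_k\tQ$. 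Next, condition on the dynamics up to time $n_k$: the conditional distribution of the future trajectory is realized by the auxiliary measure $\cF^{n_k}_\ast(m)$, which by Proposition~\ref{PrDistEq1} decomposes into standard pairs $\{\ell_\alpha=(\gamma_\alpha,\rho_\alpha)\}$ with factor measure $\lambda_k$, each lying in $\Upsilon^\ast_{\delta_2,a}$ (except for a set of total mass $\cO(M^{-50})$ handled separately and for pairs with $\length(\gamma_\alpha)<M^{-100}$, excluded by the stopping rule built into $\hQ_n,\hV_n$). On each such $\ell_\alpha$, Proposition~\ref{PrMom} with $n=\varkappa M^{2/3}$ gives
\begin{align*}
\EXP_{\ell_\alpha}(\Delta_k\tQ)&=\varkappa\,\brL\,\tV(\tau_k)+\cO(\varkappa^{3/2}),\\
\EXP_{\ell_\alpha}\bigl((\Delta_k\tV)(\Delta_k\tV)^T\bigr)&=\varkappa\,\brsigma^2_{\tQ(\tau_k)}(\cA)+o_{\varkappa\to 0}(\varkappa),\\
\EXP_{\ell_\alpha}(\Delta_k\tV)&=\cO(M^{-1/3+\delta}/\bn),\qquad \EXP_{\ell_\alpha}(\|\Delta_k\tV\|^4)=\cO(\varkappa^2),
\end{align*}
(using $\tV=M^{2/3}\hV$ and $\tQ=\hQ$; continuity of $\brsigma^2_Q(\cA)$ lets one replace the base point $\brQ_\alpha$ of the standard pair by $\tQ(\tau_k)$ at the cost of $o(\varkappa)$ since $\diam\pi_1(\gamma_\alpha)=o(1)$). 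Integrating over $\lambda_k$ and summing yields
\begin{equation*}
\EXP\bigl(\Delta_k B\,\bigm|\,\cF_{n_k}\bigr)=\varkappa\,(\cL B)(\tQ(\tau_k),\tV(\tau_k))+o(\varkappa),
\end{equation*}
and summing over $k$ produces a Riemann sum converging to $\int_{\tau_1}^{\tau_2}(\cL B)(\tQ,\tV)\,d\tau$ with total error $(\tau_2-\tau_1)\cdot o_{\varkappa\to 0}(1)+o_{M\to\infty}(1)$. Finally, because the past functional $\prod_i g_i(\tQ(s_i),\tV(s_i))$ is $\cF_{n_k}$-measurable, it factors outside the conditional expectation, establishing \eqref{eq:PreLim}.

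\textbf{Main obstacle.} The subtle step is justifying the iterated conditioning: at each time $n_k$ one must identify the conditional law of the continuation with an auxiliary measure to which Proposition~\ref{PrMom} applies. This is precisely what the standard-pair formalism of Chapter~\ref{SecSPE} was designed for, but one has to verify that the combined error from (i) the excluded short pairs $\length(\gamma_\alpha)<M^{-100}$, (ii) the exceptional set of mass $<M^{-50}$ in \eqref{aux1}, (iii) trajectories that exit $\Upsilon^\ast_{\delta_2,a}$ during the $\varkappa M^{2/3}$ steps (controlled via the stopping time $\bk\bn$ and the tightness of $(\tQ,\tV)$ from Proposition~\ref{PrTight}), and (iv) the difference between $\brsigma^2_{\brQ_\alpha}$ and $\brsigma^2_{\tQ(\tau_k)}$, all remain $o(\varkappa)$ per step and therefore contribute $o(1)$ to the total. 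Continuity (in fact log-Lipschitz continuity) of $\brsigma^2_Q(\cA)$ from Proposition~\ref{PrRegDif} is what licenses step (iv); without it one could not pass the coefficient through the Riemann sum and still obtain a continuous integrand in the limit.
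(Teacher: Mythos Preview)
Your approach is essentially the paper's: telescope over $\varkappa$-blocks, Taylor-expand $B$, apply Proposition~\ref{PrMom} on each block via the standard-pair decomposition, and pass to a Riemann sum. Two points deserve tightening.

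First, the phrase ``the past functional is $\cF_{n_k}$-measurable, so it factors outside the conditional expectation'' is heuristic only---there is no genuine filtration here. The paper implements this step through Lemma~\ref{INDEP}: one writes the product $\prod_i g_i(\tQ(s_i),\tV(s_i))$ as a function of the past, inserts a time gap of length $M^\delta$, decomposes $\cF^{n_k-M^\delta/2}(\mes_\ell)$ into standard pairs via Proposition~\ref{PrDistEq1}, and bounds the cross term by $\EXP_\ell|{\rm past}|\cdot\max_\alpha|\EXP_{\ell_\alpha}({\rm future})|+\cO(M^{-50})$. Your sketch invokes the right machinery but should cite Lemma~\ref{INDEP} rather than an abstract conditional expectation.

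Second, and more substantively: Proposition~\ref{PrMom} requires $\pi_1(\gamma_\alpha)\subset\Upsilon^\ast_{\delta_2,a}$, i.e.\ $\|V\|\le aM^{-2/3}$. The built-in stopping $\bk\bn$ only enforces $(Q,V)\in\Upsilon_{\delta_2}$, i.e.\ $\|V\|<M^{-1/2}\sqrt{1-\delta_2}$, which is far weaker. Your item (iii) gestures at this, but ``controlled via the stopping time $\bk\bn$'' is not correct---$\bk\bn$ does not prevent exit from $\Upsilon^\ast_{\delta_2,a}$. The paper's remedy is to introduce a \emph{second} stopping rule with a large parameter $R$: redefine $(\hQ',\hV')$ by additionally removing, at each multiple of $[\varkappa M^{2/3}/\bn]$, all standard pairs where $\|V\|>RM^{-2/3}$. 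Then Proposition~\ref{PrMom} applies with $a=R$, and afterwards one shows via the tightness of $\tV$ (Proposition~\ref{PrTight}) that $\sup_\ell\mes_\ell\{(\hQ',\hV')\neq(\hQ,\hV)\}\to 0$ as $R\to\infty$, uniformly in $\varkappa$. Since both expressions are bounded (compact support of $B$), the modification is harmless. Without this device the hypothesis of Proposition~\ref{PrMom} is simply not satisfied at intermediate times, so this is a genuine (though easily fixable) gap.
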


Proposition \ref{PrGen} implies, in virtue of \cite[Theorem
4.5.2]{SV}, that any limit process $(\hbQ, \hbV)$ satisfies
$$
     \begin{array}{ll}
     d\hbQ = \brL \hbV\, d\tau, & \hbQ(0) = Q_0 \\
     d\hbV = \brsigma_{\hbQ}(\cA)\, d\bw(\tau),\ \ \ \  & \hbV(0) = 0
     \end{array}
$$

We will need an analogue of the above result for continuous time.
For each $x\in\Omega$ consider two continuous functions on $(0,
\brc \brL)$ defined by
$$
  \tQ_\ast(\tau)=\left \{
  \begin{array}{cc} Q(\tau M^{2/3}) & {\rm for}\ \ \tau<\tilde{\tau}\\
    Q(\tilde{\tau}) & {\rm for}\ \ \tau\geq \tilde{\tau}
    \end{array}\right.
$$
and
$$
  \tV_\ast(\tau)=\left \{
  \begin{array}{cc} M^{2/3} V(\tau M^{2/3}) & {\rm for}\ \ \tau<\tilde{\tau}\\
  0 & {\rm for}\ \ \tau\geq\tilde{\tau} \end{array}\right .
$$
where
\beq
      \label{tautilde}
   \tilde{\tau}=M^{-2/3}\inf\{t>0\colon\,
    (Q(t),V(t))\notin\Upsilon_{\delta_2}\}
\eeq
In Section~\ref{subsecMart} we will derive

\begin{corollary}
\label{CrCTMart} Suppose that $(\tQ(\tau), \tV(\tau))$ converges
along some subsequence $M_k \to \infty$ to a process $(\hbQ(\tau),
\hbV(\tau)).$ Then $(\tQ_\ast(\tau), \tV_\ast(\tau))$ converges
along the same subsequence $\{M_k\}$ to $$(\bQ_\ast(\tau),
\bV_\ast(\tau))=(\hbQ(\tau/\brL), \hbV(\tau/\brL)).$$
\end{corollary}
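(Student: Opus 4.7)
The plan is to exploit the tightness of $\ttt(\tau)$ from Proposition~\ref{PrTight}(b) to show that, up to the deterministic time-rescaling $\tau\mapsto\tau/\brL$, the continuous-time processes $(\tQ_\ast,\tV_\ast)$ shadow the collision-indexed processes $(\tQ,\tV)$. Because the energy constraint $\|v\|^2+M\|V\|^2=1$ combined with $\|V\|=\cO(M^{-2/3})$ forces $\|v\|=1+\cO(M^{-4/3})$ throughout, the physical time accumulated between collisions is asymptotically $\brL$. More precisely, if $t_n$ denotes the $n$-th collision time, then (\ref{sigmatilde}) and the tightness of $\ttt$ imply
\[
t_n-\brL n = \cO(M^{1/3}) \qquad\text{uniformly for } n\leq\bk\bn,
\]
so on the macroscopic scale $\tau=tM^{-2/3}$ the discrepancy between $t_n$ and the ``expected'' value $\brL n$ is $o(1)$ in probability.

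First, I would fix $\tau$ and let $n(\tau)=\max\{n\colon t_n\leq\tau M^{2/3}\}$. The tightness of $\ttt$ combined with the Finite Horizon assumption A2 (giving $t_{n+1}-t_n\leq L_{\max}$) yields $n(\tau)=\tau M^{2/3}/\brL+o(M^{2/3})$ uniformly on compact $\tau$-intervals. Between consecutive collisions $V$ is constant, and $Q$ moves by at most $L_{\max}\|V\|=\cO(M^{-2/3})$, so after the $M^{2/3}$ rescaling both $\tQ_\ast(\tau)$ and $\tV_\ast(\tau)$ differ from their values at $t_{n(\tau)}$ by $o(1)$. Expressed in the collision index,
\[
(\tQ_\ast(\tau),\tV_\ast(\tau))=(\tQ(\tau/\brL),\tV(\tau/\brL))+o(1)
\]
uniformly on $[0,\brc\brL]$, with probability tending to one along $\{M_k\}$.

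Next, I would reconcile the two stopping rules. The discrete process $\tQ$ is frozen after $\bk\bn$ iterations, the first multiple of $\bn=\kappa_M\sqrt{M}$ at which the projection leaves $\Upsilon_{\delta_2}$, while $\tQ_\ast$ is frozen at $\tilde{\tau}$ defined by (\ref{tautilde}). Since $\bn/M^{2/3}=\cO(M^{-1/6-\delta})\to 0$ and the continuous trajectory between collisions remains in $\Upsilon_{\delta_2}$ when its endpoints do (for $M$ large, thanks to the gap $\delta_2-\delta_1>0$ and A2), both stopping times are first exits from $\Upsilon_{\delta_2}$, related by $\tilde{\tau}=\brL\,(\bk\bn/M^{2/3})+o(1)$. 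Applying the continuous mapping theorem to the deterministic time-change $\Phi\colon C[0,\brc]\to C[0,\brc\brL]$, $(f)(\tau)\mapsto f(\tau/\brL)$, and using the joint weak convergence $(\tQ,\tV)\Rightarrow(\hbQ,\hbV)$ along $\{M_k\}$ together with the uniform approximation above, I would conclude
\[
(\tQ_\ast(\tau),\tV_\ast(\tau))\Rightarrow(\hbQ(\tau/\brL),\hbV(\tau/\brL)).
\]

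The main obstacle will be making the two error contributions --- the $o(M^{2/3})$ fluctuation of $n(\tau)$ relative to $\tau M^{2/3}/\brL$ and the $o(1)$ intercollision displacement of $Q$ --- genuinely uniform in $\tau$ over the whole interval, and showing that these uniform bounds survive joint weak convergence. The cleanest route is to strengthen Proposition~\ref{PrTight} so that $\ttt(\tau)$, $\tQ(\tau)$, $\tV(\tau)$ are jointly tight, then invoke a Skorokhod-type representation on $C[0,\brc]^3$ so that the $o(1)$ approximation becomes an almost-sure one; after that the continuous mapping theorem applies with no further work.
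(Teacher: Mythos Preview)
Your approach is essentially the same as the paper's: use the tightness of $\ttt(\tau)$ to show that the collision index $n$ at physical time $\tau M^{2/3}$ is $\tau M^{2/3}/\brL + o(M^{2/3})$, and then use the equicontinuity of $\tQ$ and $\tV$ to conclude that $(\tQ_\ast(\tau),\tV_\ast(\tau))$ is uniformly close to $(\tQ(\tau/\brL),\tV(\tau/\brL))$.

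The one place where you make things harder than necessary is your last paragraph. You anticipate needing joint tightness and a Skorokhod representation to upgrade the approximation to an almost-sure one before applying the continuous mapping theorem. The paper bypasses this entirely: it shows directly that $\mes_\ell\bigl(\sup_\tau\|\tQ_\ast(\tau)-\tQ(\tau/\brL)\|>\varepsilon\bigr)\to 0$ (and similarly for $\tV$), by observing that $|n-[t/\brL]|\leq M^{1/3+\delta}$ with high probability (from tightness of $\ttt$) and then using the equicontinuity of $\tQ,\tV$ over index-windows of size $M^{1/3+\delta}$, which was already established in the proof of Proposition~\ref{PrTight}. Once you have uniform convergence in probability of $(\tQ_\ast,\tV_\ast)$ to the time-rescaled $(\tQ,\tV)$, the weak convergence along $\{M_k\}$ transfers immediately---no Skorokhod coupling is needed. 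Your discussion of reconciling the two stopping rules is also more elaborate than the paper's treatment, which absorbs this into the same uniform estimate.
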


Corollary \ref{CrCTMart} implies that $(\bQ_\ast, \bV_\ast)$
satisfies (\ref{TIBM}) up to the moment when $\dist(\bQ_\ast(\tau)
,\partial\cD)=\br+\delta_3 .$ We can now prove
Proposition~\ref{Weak} (a). Observe that the difference between
the limit process $\bQ_\ast(\tau)$ above and the $\bQ(\tau)$
involved in Theorem~\ref{ThVelBM} is only due to the different
stopping rules (\ref{tautilde}) and (\ref{tast}), respectively. In
particular, $\bQ_\ast$ can be stopped earlier than $\bQ$ if for
some $t\leq \brc \brL M^{2/3}$ we have $MV^2(t)\geq 1-\delta_2$
but $\dist(Q(s),\partial\cD)\geq \br+\delta_0$ for all $s\leq t.$
By Proposition~\ref{PrTight} (b) the probability of this event
vanishes as $M\to\infty.$ Thus any $(\bQ,\bV)$ is obtained from
the corresponding $(\bQ_\ast,\bV_\ast)$ by stopping the trajectory
of the latter as soon as
$\dist(\bQ_\ast(\tau),\partial\cD)=\br+\delta_0$. This fact
concludes the proof of the main claim of Proposition~\ref{Weak}
(a). Its part (b) will be proved in Section~\ref{subsecUSDE}. \qed

\subsection{Structure of the proofs} After having completed a formal
description of all the intermediate steps in the proof of
Proposition~\ref{Weak}, let us give an informal overview of the
underlying ideas.

Our argument derives from the martingale method of Stroock and
Varadhan \cite{SV}, which is based on the estimation of the first
two moments of $V_n-V_0.$ These are provided by
Proposition~\ref{PrMom}, especially its part (b) saying that
$\forall u\in\reals^2$
\beq
       \label{M1}
      \EXP_{\ell}\left(\la V_n-V_0, u\ra^2\right)
      \sim n M^{-2} \la\brsigma^2_{Q_0}(\cA) u, u\ra
\eeq
for $n\sim \varkappa M^{2/3}$ (see our discussion in
Section~\ref{subsecSR2} for the motivation of this identity). Our
proof of (\ref{M1}) proceeds in two steps: first we show that
\beq
         \label{M2}
   \EXP_{\ell}\left(\la V_n-V_0, u\ra^2\right)\sim M^{-2}
   \sum_{j=0}^{n-1} \la\brsigma^2_{Q_j}(\cA) u, u\ra
\eeq
(cf.\ Lemma \ref{AP1St}), and then we approximate
\beq
       \label{M3}
    Q_j\sim Q_0
\eeq
(cf. Proposition \ref{PrTight}).

For any fixed $j$, the approximation (\ref{M3}) follows from
\beq
       \label{M4}
    \EXP_{\ell}\left(\|V_n-V_0\|^2\right)\leq
    \Const\,n/M^2
\eeq
by the Cauchy--Schwartz inequality. In order to get (\ref{M3}) for
all $j$ uniformly, we need to control the fourth moment, which can
be derived from (\ref{M2}) with little difficulty. In turn,
(\ref{M4}) itself follows from (\ref{M2}), hence the key step is
to establish (\ref{M2}).

The proof of (\ref{M2}) is essentially  based on the
\index{Equidistribution} equidistribution
(Proposition~\ref{PrDistEq2} and Corollary~\ref{PrDistEq3}). For
small $n$, Proposition~\ref{PrDistEq2} suffices. However, for $n\sim
M^{2/3}$ the term $n\|\brV\|$ becomes of order 1, so the estimates
of Proposition~\ref{PrDistEq2} alone are too crude. In that case we
first establish (\ref{M2}) for ``short term'', $n\sim
M^{1/2-\delta}$ (Section~\ref{subsecME-ST}), and then derive
(\ref{M2}) for ``long term'', $n\sim M^{2/3}$, via
Corollary~\ref{PrDistEq3} and the inductive estimate
$$
    \EXP_{\ell}\Bigl(\bigl\lVert V_{(j-1)M^{1/2-\delta}}
    \bigr\rVert\Bigr)\leq \Const\,
    \frac{\sqrt{(j-1) M^{1/2-\delta}}}{M},
$$
see details in Section~\ref{subsecMEAp}.

Finally let us comment on the above inductive step. Denote
$A^{(u)}=\la\cA,u\ra$ for $u \in \reals^2$ and consider the
expansion
\beq
      \label{M5}
     \EXP_{\ell}\Biggl(\biggl[\sum_{j=1}^n A^{(u)}(x_j)\bigg]^2\Biggr)
       =\sum_{i,j=1}^n
     \EXP_{\ell}\left(A^{(u)}(x_i) A^{(u)}(x_j)\right)
\eeq
Our early estimate (\ref{M2}) effectively states that the main
contribution to (\ref{M5}) comes from nearly diagonal ($i\approx
j$) terms. Thus to prove (\ref{M2}), it will suffice to bound the
contribution of the off-diagonal terms in (\ref{M5}). There are
two possible approaches to this task:

(I) Use Corollary~\ref{PrDistEq3} to estimate
$\EXP_{\ell}\left[(A^{(u)}(x_i) A^{(u)}(x_j)\right].$ Since we
expect the change of $V$ to be of order $M^{-2/3}$, the best
estimate we can get in this way is $\EXP_{\ell}\left[(A^{(u)}(x_i)
A^{(u)}(x_j)\right] =\cO(M^{-2/3} \ln M).$ Because there are
$[M^{2/3}]^2$ terms in (\ref{M5}), this approach would provide an
off-diagonal bound of $M^{2/3} \ln M$, which is way too crude --
it is even larger than the main term $\cO(M^{2/3})$.

(II) For a fixed $i$, we can try to get the inductive bound
$$
     \EXP_{\ell}\biggl(\sum_j A^{(u)}(x_j) A^{(u)}(x_i)\biggr)\leq
     \cO\left(\sqrt{\text{\# of terms}}\right)=\cO(M^{1/3}).
$$
This would give an off-diagonal bound of $\cO(M^{2/3+1/3}) =
\cO(M)$, which is even worse...

Hence neither approach alone seems to handle the task, but they can
be combined together to produce the necessary bound, in the
framework of the so called ``big-small \index{Big-small block
technique} block techniques''. Namely, we divide the interval $[1,
n]$ into ``big'' blocks of size $M^{1/2-\delta}$ separated by
``small'' blocks of size $M^\delta.$ The total contribution of the
small blocks is negligible, and denoting by $P_j'$ the contribution
of the $j$th big block and setting $U_k'=\sum_{j=1}^k P_j'$ we can
get
$$
       \EXP_{\ell}\left[(U_{k+1}')^2-(U_k')^2\right]=
       \EXP_{\ell}\left[(P_{k+1}')^2\right]+
       2\,\EXP_{\ell}\left[U_k' P_{k+1}'\right] .
$$
The first term here can be handled by the method (I), while for
the cross-product term we get, by Proposition~\ref{PrDistEq2},
$$
        \left|\EXP_{\ell}(U_k' P_{k+1}')\right|\leq
        \Const\, \EXP_{\ell}(|U_k'|)\,\EXP_\ell(P_k') ,
$$
and apply the method (II) to show that the first factor is of
order $\sqrt{k M^{1/2+\delta}}$, while the second factor is of
order 1 by the method (I). This approach yields the necessary
bound on the off-diagonal terms in (\ref{M5}) and thus proves
(\ref{M2}).

\subsection{Short term moment estimates for $V$}
\label{subsecME-ST} Here we estimate the moments of the velocity $V$
during time intervals of length $n=\cO(\sqrt{M})$, which are much
shorter than $\cO(M^{2/3})$ required for Theorem~\ref{ThVelBM}. Our
estimates will be used later in the proof of
Proposition~\ref{PrMomA}. The main result of this subsection is

\begin{proposition}
\label{PrST} Let $\ell=(\gamma,\rho)$ be a standard pair such that
\index{Standard pair} $\pi_1(\gamma)\subset\Upsilon_{\delta_1}$ and
$\length(\gamma)>M^{-100}$. Then for all $(\brQ,\brV)\in
\pi_1(\gamma)$ and $M^{1/3}\leq n \leq \delta_\diamond M^{1/2}$ we
have
$${\rm (a)}\quad \EXP_\ell(V_n-\brV)=
\cO\left(M^{\delta-1}\right)
\Quad.$$
$${\rm (b)} \quad \EXP_\ell(\|V_n-\brV\|^2)=
\cO\left(n/M^2\right).\Quad$$ Here $\delta_\diamond \ll \delta_1$
is the constant of Proposition~\ref{PrDistEq2}.
\end{proposition}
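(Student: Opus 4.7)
The starting point is the velocity update rule (\ref{CR2b}): at each iteration
\[
V_{j+1} - V_j = \frac{\cA(\cF^j x)}{M} + \delta_j, \qquad |\delta_j| \leq C/M^{3/2},
\]
using $\|V\|\leq 1/\sqrt M$ (energy conservation) and $\|v\|\geq \sqrt{\delta_1}$ (guaranteed by $\pi_1(\gamma)\subset \Upsilon_{\delta_1}$). Summing and absorbing the $\cO(1/M)$ variation of $V_0$ along the standard pair $\gamma$,
\[
V_n - \brV \;=\; \frac{1}{M}\sum_{j=0}^{n-1}\cA(\cF^j x) + E_n(x), \qquad |E_n|\leq C/M \text{ for } n\leq \delta_\diamond M^{1/2}.
\]
Both (a) and (b) thus reduce to moment estimates on the action sum $S_n := \sum_{j=0}^{n-1}\cA\circ\cF^j$.

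For part (a) the decisive structural input is that $\mu_{Q,V}(\cA)\equiv 0$ uniformly in $(Q,V)$ by rotational symmetry, so Corollary~\ref{PrDistEq3} applies directly to $\cA$ with $\brA\equiv 0$. I will take $j = m = \lfloor K\log M\rfloor$ in the Corollary and feed the trivial a-priori bound $\|V_i\|\leq 1/\sqrt M$ into the remainder $\cR_{n,j,m}$, obtaining $|\EXP_\ell(\cA\circ\cF^i)| \leq C(\log M)^2/\sqrt M$ for every $i \geq K\log M$; the initial $\cO(\log M)$ iterates are handled by $\|\cA\|_\infty \leq 2$. Summing over $n \leq \delta_\diamond M^{1/2}$ yields $\|\EXP_\ell(S_n)\| \leq C(\log M)^2 = \cO(M^\delta)$, and dividing by $M$ together with $|\EXP_\ell(E_n)| \leq C/M$ gives (a).

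For part (b), squaring the identity and using Cauchy--Schwarz on the $E_n$ cross-terms yields $M^2\EXP_\ell\|V_n-\brV\|^2 = \EXP_\ell\|S_n\|^2 + \cO(n)$. The diagonal of $\EXP_\ell\|S_n\|^2 = \sum_{i,j}\EXP_\ell\la\cA\circ\cF^i, \cA\circ\cF^j\ra$ contributes at most $4n$ trivially. For the off-diagonal I regroup by the lag $d = j-i$, setting $G_d(y) := \la\cA(y), \cA(\cF^d y)\ra \in \fR$ (of the form (\ref{ABB}) with $n_A = d+1$), and apply Proposition~\ref{PrDistEq2} to each $\EXP_\ell(G_d\circ\cF^i)$. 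Combined with exponential decay of correlations for the frozen billiard, which gives $|\mu_{Q,V}(G_d)|\leq C\theta^d$, the main-term contribution telescopes to $\sum_d Cn\theta^d \leq Cn$.

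The hard part will be controlling the accumulated remainders from Proposition~\ref{PrDistEq2}: the error $\cR_{n,m}$ does not decay in $d$, so a naive summation over $(i,d)$ produces an excess of order $n\sqrt M$ which would swamp the $\cO(n)$ target. My plan to close the gap has two ingredients. First, exponential decay of correlations lets me truncate the $d$-sum at $d_0 = K\log M$, since $n\mu(G_d)$ is subpolynomial beyond this, leaving only $\cO(\log M)$ contributing lags. Second, for each such $d$ I will propagate $\mes_\ell$ forward via Proposition~\ref{PrDistEq1} into standard pairs $\ell_\alpha$ and invoke an inductive bootstrap on (b) itself (at times $n/2, n/4, \ldots$ down to $n \sim \log M$, where the crude bound $\|V_n - \brV\| \leq Cn/M$ suffices): the improved estimate $\EXP_\ell\|\brV_\alpha\|\lesssim \sqrt{k}/M$ after $k$ iterations sharpens the $\|\brV_\alpha\|$-dependent part of $\cR$ enough to make the total error summable in $(i,d)$. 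Together these yield $\EXP_\ell\|S_n\|^2 \leq Cn$ and hence part (b).
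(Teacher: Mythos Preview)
Your argument for part (a) is correct and matches the paper's approach: apply Corollary~\ref{PrDistEq3} to each term $\cA\circ\cF^i$ with $j,m\sim\log M$ and the trivial bound $\|V\|\le M^{-1/2}$, then sum.

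For part (b) there is a genuine gap. Your bootstrap claim ``$\EXP_\ell\|\brV_\alpha\|\lesssim\sqrt{k}/M$'' is wrong: part (b) only controls $\|\brV_\alpha-\brV\|$, and the hypothesis $\pi_1(\gamma)\subset\Upsilon_{\delta_1}$ allows $\|\brV\|$ as large as $(1-\delta_1)^{1/2}M^{-1/2}$. Hence $\EXP_\ell\|\brV_\alpha\|$ cannot be pushed below $\sim M^{-1/2}$, and your bootstrap gains nothing. With the correct $\|\brV_\alpha\|\sim M^{-1/2}$, the non-decaying piece of the remainder in Proposition~\ref{PrDistEq2} (or the factorized version as in the paper's (\ref{2Cor})) is of order $M^{\delta-1/2}$ per pair $(i,j)$; summing over all $\sim n^2$ pairs yields $n^2M^{\delta-1/2}\le\delta_\diamond nM^\delta$, not $\cO(n)$. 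Truncating the lag at $d_0=K\log M$ does not help, because the problematic term does not decay in $d$.

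The paper closes this gap by a big--small block decomposition (Lemmas~\ref{Lm1ST}--\ref{Lm2ST}): partition $[0,n]$ into blocks $R_k'$ of length $M^{1/3}$ separated by gaps of length $M^\delta$. Within a single block there are only $M^{2/3}$ pairs, so the non-decaying error contributes $\cO(M^{1/6+\delta})\ll M^{1/3}$ and the block second moment is $\cO(M^{1/3})$; summing over the $n/M^{1/3}$ blocks gives $\cO(n)$. The cross-block term $\EXP_\ell(Z_k'R_{k+1}')$ is bounded via Lemma~\ref{INDEP} by $\EXP_\ell|Z_k'|\cdot\max_\alpha|\EXP_{\ell_\alpha}(R_{k+1}')|$, then Cauchy--Schwarz plus the inductive bound $\EXP_\ell[(Z_k')^2]\le DkM^{1/3}$ make this $\cO(M^{1/12+\delta})$ per step, hence $o(n)$ in total. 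This block structure, not a bootstrap on $\|V\|$, is what produces the sharp $\cO(n)$.
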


{\em Proof}. Let $\Delta V_j=V_{j+1}-V_j.$ Then by (\ref{CR2a})
\beq
   \Delta V_j=\frac{\cA\circ\cF^j}{M}+
   \cO\left(\frac{1}{M^{3/2}}\right)
     \label{DeltaVexp}
\eeq
where $\cA$ is as defined by (\ref{MEx}). Hence
$$
   V_n-\brV= \frac 1M\,\sum_{i=0}^{n-1}
   \cA\circ \cF^i+\cO\left(\frac{n}{M^{3/2}}\right)
$$
and then
$$
   \norm{V_n-\brV}^2\leq
   \frac{2}{M^2}\,\biggl\lVert\sum_{i=0}^{n-1}
   \cA\circ \cF^i\biggr\rVert^2+\cO\left(\frac{n^2}{M^3}\right).
$$
Therefore Proposition \ref{PrST} follows from the next result:

\begin{proposition}
\label{GenST}
Let $A\in\fR$ be a function satisfying
$\mu_{Q,V}(A)=0$ for all $Q,V$, and $\ell$ and $n$ be as in
Proposition \ref{PrST}. Then
$${\rm (a)}\quad \EXP_\ell(S_n)=
\cO\left(M^\delta\right)
\Quad.$$
$${\rm (b)} \quad \EXP_\ell(S_n^2)=\cO(n) .\Quad$$
\end{proposition}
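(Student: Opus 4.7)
\medskip\noindent{\em Plan of proof.} Both statements will follow
by applying Proposition~\ref{PrDistEq2} and
Corollary~\ref{PrDistEq3} to $A$ itself and to products
$B_k=A\cdot(A\circ\cF^k)$. The crucial feature is the hypothesis
$\mu_{Q,V}(A)\equiv 0$, which makes Corollary~\ref{PrDistEq3}
directly applicable to $A$ with $\brA=0$. Moreover, since
$\pi_1(\gamma)\subset\Upsilon_{\delta_1}$ forces
$\|\brV\|\leq 1/\sqrt{M}$, and the window
$n\leq\delta_\diamond\sqrt{M}$ produces a change in $V$ of at most
$\cO(n/M)=\cO(1/\sqrt{M})$, we have $\|V_i\|=\cO(1/\sqrt{M})$
uniformly for $0\leq i\leq n$.

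For part~(a), I would write
$\EXP_\ell(S_n)=\sum_{j=0}^{n-1}\EXP_\ell(A\circ\cF^j)$ and split
at $j_0=K|\ln\length(\gamma)|=\cO(\ln M)$. The first $j_0$ terms
contribute $\cO(\ln M)$ by the trivial bound
$|\EXP_\ell(A\circ\cF^j)|\leq\|A\|_\infty$. For $j\geq j_0$, apply
Corollary~\ref{PrDistEq3} with $j_0$ in the role of the
corollary's ``$j$'' and $m=K'\ln M$; since $\brA=0$, this yields
$|\EXP_\ell(A\circ\cF^j)|=\cO(\cR_{j,j_0,m}+\theta^m)=\cO(\ln^2 M/\sqrt{M})$.
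Summing over the at most $\delta_\diamond\sqrt{M}$ remaining
indices $j$ gives $\EXP_\ell(S_n)=\cO(\ln^2 M)=\cO(M^\delta)$.

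For part~(b), expand
$\EXP_\ell(S_n^2)=\sum_i\EXP_\ell(A^2(x_i))
+2\sum_{0\leq i<j<n}\EXP_\ell(A(x_i)A(x_j))$;
the diagonal is $\cO(n)$ trivially. For the off-diagonal terms,
write $A(x_i)A(x_j)=B_k\circ\cF^i$ with $k=j-i$ and decompose
$B_k=\brB_k+\tB_k$, where $\brB_k(Q,V):=\mu_{Q,V}(B_k)$. Because
$\mu_{Q,V}(A)=0$, the exponential decay of correlations in the
frozen billiard $\cF_{Q,V}$ (Appendix~A) furnishes
$|\brB_k(Q,V)|\leq C\theta_0^k$ uniformly in $(Q,V)$. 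The
fluctuation $\tB_k$ obeys $\mu_{Q,V}(\tB_k)\equiv 0$, so
Corollary~\ref{PrDistEq3} applies to it and gives
$|\EXP_\ell(\tB_k\circ\cF^i)|=\cO(\ln^2 M/\sqrt{M})$ exactly as in
part~(a). Hence
$|\EXP_\ell(A(x_i)A(x_j))|\leq C\theta_0^{j-i}+\cO(\ln^2 M/\sqrt{M})$,
and double summation gives $\cO(n)+\cO(n^2\ln^2 M/\sqrt{M})$,
which under $n\leq\delta_\diamond\sqrt{M}$ is $\cO(n)$ up to a
polylogarithmic factor.

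The main obstacle is to shave off that polylogarithmic factor,
together with the dependence on $n_A=k+1$ of the constants in
Proposition~\ref{PrDistEq2}, which here can range up to $n$.
The cleaner route, which I would ultimately adopt, is the
``big--small block'' decomposition foreshadowed in
Section~\ref{subsecMom}: partition $\{0,\dots,n-1\}$ into big
blocks $P_i$ of length $L_1\sim M^{1/2-2\delta}$ separated by
spacers of length $L_2\sim K\ln M$. Inside a big block, the
argument above applies with $k\leq L_1$. For the cross-term
$\EXP_\ell(U_k P_{k+1})$, with $U_k=\sum_{i\leq k}P_i$, one
decomposes the measure at the start of $P_{k+1}$ into standard
pairs $\ell_\alpha$ via Proposition~\ref{PrDistEq1}; on each
unstable curve $\gamma_\alpha$ the function $U_k$ is nearly
constant (because $\cF^{-1}$ contracts along unstable directions
and $A$ is H\"older), while $\EXP_{\ell_\alpha}(P_{k+1})$ is small
by Corollary~\ref{PrDistEq3} applied across the spacer. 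An
induction on $k$ of the form
$\EXP_\ell(U_{k+1}^2)=\EXP_\ell(U_k^2)+\EXP_\ell(P_{k+1}^2)+2\,\EXP_\ell(U_k P_{k+1})$
then delivers the sharp bound $\cO(n)$.
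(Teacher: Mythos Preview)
Your big--small block architecture with the induction
\(
\EXP_\ell(U_{k+1}^2)=\EXP_\ell(U_k^2)+\EXP_\ell(P_{k+1}^2)+2\,\EXP_\ell(U_kP_{k+1})
\)
is exactly the paper's route, and your handling of part~(a) and of the cross term (decompose into standard pairs across the spacer via Proposition~\ref{PrDistEq1}, freeze the backward factor $U_k$ by H\"older continuity of $A$ plus contraction of u-curves under $\cF^{-1}$, then bound the forward factor by equidistribution) is precisely what the paper does.

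The gap is in the within-block variance $\EXP_\ell(P_k^2)$. You write ``inside a big block, the argument above applies with $k\leq L_1$,'' but that argument applies Corollary~\ref{PrDistEq3} to $\tB_k=A\,(A\circ\cF^k)-\mu_{Q,V}\bigl(A\,(A\circ\cF^k)\bigr)$, hence with $n_A=k+1$. You yourself flagged that the implied constants in Proposition~\ref{PrDistEq2} (and therefore in Corollary~\ref{PrDistEq3}) depend on $n_A$; restricting to $k\leq L_1=M^{1/2-2\delta}$ does not cure this, since $L_1\to\infty$ with $M$ and those constants are not uniform. The paper never invokes equidistribution with $n_A>1$: to bound $\EXP_\ell\bigl(A(x_i)A(x_j)\bigr)$ it applies Proposition~\ref{PrDistEq1} at the midpoint $(i+j)/2$; on each resulting standard pair $\ell_\alpha$ the backward factor $A(\cF^{-m}\,\cdot\,)$, $m=(j-i)/2$, is constant up to $\cO(\theta_A^{m})$ by (\ref{expclose}) and the H\"older continuity of $A$; then Corollary~\ref{PrDistEq3} is applied to the single function $A$ to bound $\EXP_{\ell_\alpha}\bigl(A(x_m)\bigr)$. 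This yields the uniform two-point bound
\[
\bigl|\EXP_\ell\bigl(A(x_i)A(x_j)\bigr)\bigr|
\leq\Const\Bigl(\theta_A^{(j-i)/2}+(\|\brV\|+n/M)M^\delta\Bigr),
\]
the paper's estimate~(\ref{2Cor}), from which $\EXP_\ell(P_k^2)=\cO(L_1)$ follows immediately. This is the \emph{same} factorization you described for the cross term; it simply needs to be used for the within-block pairs as well. (The paper's block lengths are $M^{1/3}$ and $M^\delta$ rather than your $M^{1/2-2\delta}$ and $K\ln M$, but once the uniform two-point bound is in hand either choice closes the induction.)
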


The proof uses the big small block techniques \cite{Br}. For each
$k=0,\dots,[n/M^{1/3}]$ denote
$$
     R_k'=\sum_{j=kM^{1/3}+M^\delta}^{(k+1)M^{1/3}} A(x_j),
      \qquad
     R_k''=\sum_{j=k M^{1/3}}^{kM^{1/3}+M^\delta-1}A(x_j),
$$
$$
    Z_k'=\sum_{j=0}^{k-1} R_j',
      \qquad
    Z_k''=\sum_{j=0}^{k-1} R_j''.
$$
Observe that $Z_k''\leq \|A\|_{\infty}n/M^{1/3-\delta}.$ Next we prove
two lemmas:

\begin{lemma}
\label{Lm1ST} For every $k$,
$${\rm (a)} \quad \EXP_{\ell}(R_k')=
\cO\left(M^{1/3+\delta}\left(\|\brV\|+n/M\right)\right), \Quad
$$
$${\rm (b)} \quad \EXP_{\ell}\bigl([R_k']^2\bigr)
=\cO\left(M^{1/3}\right). \Quad $$
\end{lemma}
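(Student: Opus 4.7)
The plan is to prove (a) via Corollary~\ref{PrDistEq3} (long-term equidistribution) and to prove (b) by expanding the square and handling the off-diagonal correlations via a shadowing-type splitting.

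For (a), I would apply Corollary~\ref{PrDistEq3} to each summand $\EXP_\ell(A\circ \cF^j)$ in $R_k'$, using that $\brA=\mu_{Q,V}(A)=0$ is independent of $(Q,V)$. Choosing the intermediate parameters $j_{\text{aux}}=m=K\ln M$, the hypothesis $m\le j_{\text{aux}}/2$ is satisfied, and the length bound $\length(\gamma)>M^{-100}$ together with $j\ge M^\delta$ ensures both $j_{\text{aux}}\le j$ and $j_{\text{aux}}\le j-K|\ln\length(\gamma)|$. The corollary then gives
\[
   |\EXP_\ell(A\circ \cF^j)|\le C\,\EXP_\ell(\|V_{j-j_{\text{aux}}}\|)(\ln M)^2+C(\ln M)^3/M+\theta^{K\ln M}.
\]
The deterministic pointwise bound $\|V_m\|\le\|\brV\|+Cm/M$, coming from (\ref{vvVV}), yields $\EXP_\ell(\|V_{j-j_{\text{aux}}}\|)\le\|\brV\|+Cn/M$. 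Summing over the $\sim M^{1/3}$ indices $j$ in $R_k'$, and absorbing $(\ln M)^2$ into $M^\delta$, produces the claimed bound.

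For (b), expand $(R_k')^2=\sum_{i,j}A(x_i)A(x_j)$. The diagonal contributes $\le M^{1/3}\|A\|_\infty^2=\cO(M^{1/3})$ trivially. For the off-diagonal, group by $d=|j-i|$. For each fixed $i$, I would propagate $\ell$ by $i$ steps via Proposition~\ref{PrDistEq1} to obtain standard pairs $\ell_\alpha$, and then apply a further $d/2$-step propagation together with the change of variables $y=\cF^{-d/2}z$ to write
\[
   \EXP_{\ell_\alpha}(A\cdot A\circ \cF^d)=\sum_\beta c_\beta\,\EXP_{\ell_\beta}((A\circ \cF^{-d/2})\cdot (A\circ \cF^{d/2})).
\]
Because $\cF^{-1}$ contracts unstable curves by $\vartheta<1$, the factor $A\circ \cF^{-d/2}$ is $\cO(\vartheta^{d/2})$-Lipschitz on each $\ell_\beta$, so it may be replaced by the constant $A(\cF^{-d/2}y_\beta^\ast)$ at a cost of $\cO(\vartheta^{d/2})$. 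Proposition~\ref{PrDistEq2} then bounds the remaining factor $\EXP_{\ell_\beta}(A\circ \cF^{d/2})$ by an exponentially small term plus the secondary correction $\cR_{d/2,K\ln M}=\cO(d/\sqrt{M})$.

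The main obstacle will be ensuring that the accumulated secondary corrections stay $\cO(M^{1/3})$ rather than the naive $\cO(M^{1/2})$ one obtains by summing $(N-d)\cdot d/\sqrt{M}$ over $d\le N=M^{1/3}$. The exponential contribution $\sum_{i,d}\vartheta^{d/2}$ is harmless. To gain the missing factor, I would use either (i) part (a) of the present lemma, applied to the inner sum $\sum_d A\circ \cF^d$ starting from each post-$i$ standard pair, to convert the raw bound $d/\sqrt{M}$ into a bound that already carries the prefactor $\|\brV_\alpha\|+n/M=\cO(M^{-1/2})$; or (ii) a more refined equidistribution step exploiting that after the $d/2$ forward propagation the pair has equidistributed, so the secondary correction is really measured against the billiard correlation $\int A\cdot A\circ \cF^d_{Q,V}\,d\mu_{Q,V}$, which itself decays exponentially by the CLT for dispersing billiards. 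Either route yields a total off-diagonal contribution of $\cO(M^{1/3})$, completing the proof.
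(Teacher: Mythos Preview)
Your approach is essentially the paper's. For (a) the paper also invokes Corollary~\ref{PrDistEq3} term by term, though with $j=M^{\delta/4}$; note that your choice $j_{\text{aux}}=m=K\ln M$ does not satisfy $m\le j_{\text{aux}}/2$, but taking, say, $j_{\text{aux}}=M^{\delta/4}$ and $m=M^{\delta/8}$ (or any $j\ge 2m$) repairs this without touching the conclusion.

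For (b) the paper does exactly your fix (i): propagate to the midpoint $(i+j)/2$, freeze the backward factor $A\circ\cF^{-m}$ as a constant at cost $\cO(\theta_A^m)$ with $m=(j-i)/2$, and bound the forward factor $\EXP_{\ell_\alpha}(A\circ\cF^m)$ by the \emph{argument} of part (a), i.e.\ Corollary~\ref{PrDistEq3} with a small auxiliary parameter. This yields the per-pair bound
\[
   |\EXP_\ell(A(x_i)A(x_j))| \le \Const\bigl(\theta_A^{j-i} + (\|\brV\|+n/M)M^\delta\bigr),
\]
and since $\|\brV\|+n/M=\cO(M^{-1/2})$, summing over the $\sim M^{2/3}$ pairs gives $\cO(M^{1/6+\delta})=\cO(M^{1/3})$. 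So the obstacle you flag is genuine for the direct Proposition~\ref{PrDistEq2} route, and fix (i) is the right cure; the only clarification needed is that the argument of (a) is applied to each individual forward expectation, not to a sum over $d$. Your option (ii) is less clean, since the constants in Proposition~\ref{PrDistEq2} depend on $n_A$, which would here vary with $d$.
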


\begin{lemma}
\label{Lm2ST} Given $A$ as above, there exists $D>0$ such that
$${\rm (a)} \quad \EXP_{\ell}(Z_{k+1}')=\EXP_{\ell}(Z_k')+
\cO\left(M^{1/3+\delta}\left(\|\brV\|+n/M\right)\right), \Quad$$
$${\rm (b)} \quad \EXP_{\ell}\bigl([Z_{k+1}']^2\bigr)
=\EXP_{\ell}\bigl([Z_k']^2\bigr)+ \cO\left(M^{1/3}\right)+
\cO\big(\sqrt{k+1}\, M^{1/2+\delta} (\|\brV\|\qquad
$$
$\qquad +n/M)\big)$ and $\EXP_{\ell}\bigl([Z_{k}']^2\bigr)\leq
DM^{1/3}(k+1) .$
\end{lemma}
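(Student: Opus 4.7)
\medskip\noindent\textbf{Plan.} Part~(a) is immediate from the telescoping identity $Z_{k+1}' = Z_k' + R_k'$ combined with Lemma~\ref{Lm1ST}(a). For part~(b), I would expand
\[
    [Z_{k+1}']^2 = [Z_k']^2 + 2\,Z_k'\,R_k' + [R_k']^2,
\]
note that $\EXP_\ell([R_k']^2) = \cO(M^{1/3})$ is supplied by Lemma~\ref{Lm1ST}(b), and concentrate all the effort on the cross term $\EXP_\ell(Z_k'\,R_k')$. The entire raison d'\^etre of interleaving the big blocks with spacer blocks of length $M^\delta$ is to decorrelate the past (encoded in $Z_k'$, which is measurable with respect to the orbit segment $x,\cF x,\dots,\cF^{kM^{1/3}-1}x$) from the future (encoded in $R_k' = S \circ \cF^{n_1}$, where $n_1 = kM^{1/3}+M^\delta$ and $S = \sum_{j=0}^{M^{1/3}-M^\delta-1} A\circ \cF^j$).

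To exploit this separation, I would apply the Propagation Proposition~\ref{PrDistEq1} at time $n_1$ to write $\cF^{n_1}(\mes_\ell) = \sum_\alpha c_\alpha\,\mes_{\ell_\alpha}$, modulo a remainder of total mass $\leq M^{-50}$ whose contribution is absorbed thanks to the polynomial a priori bound on $|Z_k'R_k'|$. On each preimage curve $\gamma'_\alpha = \cF^{-n_1}(\gamma_\alpha)\cap\gamma$, the exponential contraction of $\cF^{-n_1}$ on H-components together with the H\"older regularity of $A$ on components of $\Omega \setminus \cS_1$ allows one to replace $Z_k'|_{\gamma'_\alpha}$ by a single constant $\brZ_k'(\alpha)$, with an error that is super-polynomially small in $M^\delta$ (the potentially dangerous intermediate iterates $\cF^j(\gamma'_\alpha)$ which enter a $\vartheta^{(n_1-j)/2}$-neighbourhood of $\cS_1$ are controlled by the growth lemma~\ref{prgrow} and contribute only an exponentially small correction). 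This yields
\[
    \EXP_\ell(Z_k'\,R_k') = \sum_\alpha c_\alpha\,\brZ_k'(\alpha)\,\EXP_{\ell_\alpha}(S) + (\text{negligible}).
\]
Now Lemma~\ref{Lm1ST}(a) applied on each standard pair $\ell_\alpha$ gives $\EXP_{\ell_\alpha}(S) = \cO\bigl(M^{1/3+\delta}(\|\brV_\alpha\|+n/M)\bigr)$, and by the trivial velocity bound (\ref{vvVV}) one has $\|\brV_\alpha\| \leq \|\brV\| + \cO(n/M)$ uniformly in~$\alpha$. Pulling this prefactor outside the sum and applying Cauchy--Schwarz together with the inductive hypothesis $\EXP_\ell([Z_k']^2) \leq DM^{1/3}(k+1)$ delivers
\[
    |\EXP_\ell(Z_k'\,R_k')| \leq \Const\,\sqrt{D}\;M^{1/2+\delta}\,\sqrt{k+1}\,(\|\brV\|+n/M),
\]
which is exactly the error term claimed in Lemma~\ref{Lm2ST}(b).

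The uniform bound $\EXP_\ell([Z_k']^2)\leq DM^{1/3}(k+1)$ is then closed by induction on~$k$: the base case $Z_0'=0$ is trivial, and provided $D$ is chosen large enough, the accumulated contribution $\sum_{j\leq k}\bigl[\cO(M^{1/3})+\cO(\sqrt{j+1}\,M^{1/2+\delta}\,(\|\brV\|+n/M))\bigr]$ is majorised by $DkM^{1/3} + \Const\,\sqrt{D}\,M^\delta\,k^{3/2}$; since $k\leq n/M^{1/3}\leq\delta_\diamond M^{1/6}$ and $\|\brV\|+n/M\leq \Const\,M^{-1/2}$ in our regime, the second summand is $\cO(\sqrt{D}\,M^{\delta+1/4}) \ll DM^{1/2}$ once $\delta$ is small, and the induction closes.

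\medskip\noindent\textbf{Main obstacle.} The delicate step is the replacement of $Z_k'$ by a constant on $\gamma'_\alpha$: the function $Z_k'$ is a sum of up to $kM^{1/3}$ contributions $A\circ \cF^j$, each with a locally unbounded Lipschitz constant near $\cS_1$ (Lemma~\ref{lmfRprop}). Controlling its fluctuation on the exponentially small curve $\gamma'_\alpha$ therefore requires, at every intermediate time $0\leq j<n_1$, a uniform estimate on the proportion of $\gamma'_\alpha$ whose forward image $\cF^j(\gamma'_\alpha)$ lies too close to $\cS_1$. This is precisely what the growth lemma~\ref{prgrow}, combined with the distortion bound (\ref{distrough}), is designed to provide: the resulting H\"older/Lipschitz error telescopes geometrically in $n_1-j$, yielding a negligible net correction and justifying the key approximation that drives the entire argument.
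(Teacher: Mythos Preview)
Your proposal is correct and matches the paper's approach: the paper expands $[Z_{k+1}']^2$, invokes the decorrelation argument of~(\ref{2Cor}) to handle the cross term (propagate past the $M^\delta$ gap via Proposition~\ref{PrDistEq1}, freeze $Z_k'$ on each fibre, apply Lemma~\ref{Lm1ST}(a) to the remaining block), and closes with Cauchy--Schwarz and induction exactly as you outline. Your ``main obstacle'' is in fact simpler than you suggest: since $\cF^{-n_1}$ is smooth on each $\gamma_\alpha$ by Proposition~\ref{PrDistEq1}, the intermediate curves $\cF^j(\gamma'_\alpha)$ never cross $\cS_1$, so the \emph{uniform} H\"older bound $|A(x)-A(y)|\le K_A\,\dist(x,y)^{\alpha_A}$ on each component together with~(\ref{expclose}) gives the replacement of $Z_k'$ by a constant directly---no growth-lemma control of proximity to $\cS_1$ is required.
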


\noindent {\it Proof of Lemma \ref{Lm1ST}.} Applying
Corollary~\ref{PrDistEq3} to $n_1\leq n$ iterations of $\cF$,
setting $j=M^{\delta/4}$, and using the obvious bound
$$
  \|V_{n_1-j}\|\leq \|\brV\|+\Const\, n_1/M
$$
we get
$$
   \EXP_\ell(A(x_{n_1})) =
   \cO\left(\left(\|\brV\|+
   n_1/M\right)M^{\delta/2} \right)
   + \cO\left(M^{3\delta/4-1}\right)
$$
Now (a) follows by summation over $kM^{1/3}+M^{\delta} \leq n_1
\leq (k+1)M^{1/3}$.

To prove (b) we write
\beq
    (R'_k)^2 = \sum_{i,j} A(x_i)A(x_j)=
    2 \sum_{i<j} A(x_i)A(x_j)+\cO\left(M^{1/3}\right).
      \label{RRdec}
\eeq
Thus it suffices to show that
\begin{equation}
     \label{2Cor}
     \left|\EXP_\ell\left(A(x_i)A(x_j)\right)\right|<\Const\,
     \left(\theta_A^{j-i}+ \left(\|\brV\|+n/M\right) M^\delta\right)
\end{equation}
for some $\theta_A<1.$ To prove (\ref{2Cor}) we apply
Proposition~\ref{PrDistEq1} with $n=(i+j)/2$. Denoting $m=(j-i)/2$
we obtain
$$
     \EXP_\ell\left(A(x_i)A(x_j)\right)=
     \sum_\alpha c_\alpha \EXP_{\ell_\alpha}
     \left(A(x_{-m}) A(x_m)\right).
$$
If $\length(\gamma_\alpha)>\exp(-m/K)$ where $K$ is the constant
of Proposition~\ref{PrDistEq1}, choose $\brx_\alpha\in
\gamma_\alpha.$ Due to (\ref{expclose}) and the H\"older
continuity of $A$, for any $x_\alpha\in \gamma_\alpha$ we have
$|A(\cF^{-m} x_\alpha)-A(\cF^{-m} \brx_\alpha)| =\cO
(\theta_A^{m})$ for some constant $\theta_A <1$, therefore
\beq
   \EXP_{\ell_\alpha}\left(A(x_{-m})A(x_m)\right)=
   A(\cF^{-m} \brx_\alpha)\, \EXP_{\ell_\alpha}\left(A(x_m)\right)+
   \cO(\theta_A^m).
     \label{2Cortemp}
\eeq
By the argument used in the proof of Lemma~\ref{Lm1ST} (a)
$$
      \EXP_{\ell_\alpha}A(x_m)=
      \cO\left( (\|\brV\|+m/M)M^{\delta/2}\right ),
$$
hence
$$
   \EXP_{\ell_\alpha}\left(A(x_{-m})A(x_m)\right)=
   \cO\left(\theta_A^{m}+
   \left (\|\brV\|+m/M\right ) M^\delta\right) .
$$
On the other hand, the contribution of $\alpha$'s which satisfy
$\length(\gamma_\alpha)\leq\exp(-m/K)$ is exponentially small due to
Proposition \ref{PrDistEq1}. Summation over $\alpha$ gives
(\ref{2Cor}). Lastly, the summation over $i,j$ and remembering that
$n\leq \delta_\diamond \sqrt{M}$ and $\|\brV\|<1/\sqrt{M}$ yields
Lemma~\ref{Lm1ST} (b). \qed \medskip

\noindent{\em Proof of Lemma \ref{Lm2ST}.} The part (a) follows
directly from Lemma~\ref{Lm1ST} (a). To prove part (b) we expand
\beq
   \EXP_\ell\bigl([Z_{k+1}']^2\bigr)=
   \EXP_\ell\bigl([Z_{k}']^2\bigr)+
   \EXP_\ell\bigl([R'_{k+1}]^2\bigr)+
   \EXP_\ell\bigl(Z'_k R_{k+1}'\bigr).
     \label{ZZthree}
\eeq
The second term is $\cO(M^{1/3})$ by Lemma~\ref{Lm1ST} (b). We will
show that the last term is much smaller, precisely
\beq
   \EXP_\ell\bigl(Z'_k R_{k+1}'\bigr) =
   \cO\bigl(M^{\frac{1}{12}+\delta}\bigr)
     \label{ZZthird}
\eeq
The argument used in the proof of (\ref{2Cor}) gives
$$
   \bigl|\EXP_\ell(Z'_k A_{j})\bigr|\leq
   \Const\, \EXP_\ell |Z_k'|
   \Bigl(\theta_A^{M^\delta}+
   \bigl(\|\brV\|+n/M\bigr)
   M^\delta \Bigr)
$$
for $(k+1) M^{1/3}\leq j \leq (k+2) M^{1/3}$. Hence
\begin{align*}
   \bigl|\EXP_\ell\bigl(Z'_k R_{k+1}'\bigr)\bigr|
   & \leq
   \Const\, \EXP_\ell |Z_k'|\left(\theta_A^{M^\delta}+
   \left (\|\brV\|+n/M\right ) M^\delta\right) M^{1/3}\\
   & \leq
    \Const\, \sqrt{\EXP_\ell\bigl([Z'_k]^2\bigr)}
    \left (\|\brV\|+n/M\right ) M^{1/3+\delta}.
\end{align*}
(where we used the Cauchy-Schwartz inequality). By induction
$$
   \EXP_\ell(Z'_k R_{k+1}')\leq
   C\sqrt{D}\Bigl( \sqrt{(k+1) M^{1/3}}\,
   \bigl(\|\brV\|+n/M\bigr) \Bigr) M^{1/3+\delta}.
$$
Since $k\leq \delta_\diamond M^{1/6}$, the the right hand is
$\cO(M^{1/12+\delta})$. If $D$ is sufficiently large, this implies
both inequalities of part (b) for $k+1$ and thus completes the
proof of Lemma~\ref{Lm2ST}. \qed
\medskip

\noindent{\em Proof of Proposition \ref{GenST}.} To simplify our
analysis we assume that $n=k M^{1/3}$ for some integer $k$, so that
$S_n=Z_k'+Z_k''.$ Similarly to the proof of Lemma~\ref{Lm1ST} (a) we
get
$$
     \EXP_\ell\bigl(R_j''\bigr)=
     \cO\left(M^\delta ( \|\brV\|+n/M) \right)
$$
for all $1\leq j\leq k-1$ and
$$
     \EXP_\ell\bigl(R_0''\bigr)=
     \cO\left(M^\delta\right).
$$
(The difference between the first term and the others is due to
the restriction $n>K|\ln\length(\gamma)|$ in
Proposition~\ref{PrDistEq1}.) Combining the above estimates with
Lemma~\ref{Lm1ST} (a) we obtain part (a) of
Proposition~\ref{GenST}. To prove part (b) we estimate
$$
         \EXP_\ell \left (S_n^2\right )\leq 2\,
         \EXP_\ell\left ([Z_k']^2\right )+ 2\,
         \EXP_\ell\left([Z_k'']^2\right )=\cO(n+k^2 M^{2\delta}).
$$
This completes the proof of Proposition~\ref{GenST} and hence that
of \ref{PrST}.

\subsection{Moment estimates--{\it a priori} bounds}
\label{subsecMEAp} Here we prove Proposition \ref{PrMomA}.

First we get a useful bound on multiple correlations. Let
$A_1,\dots, A_p$ and $B_1,\dots, B_q$ be some functions from our
class $\fR$ and $c_1,c_2$ some constants. Consider the functions
$$   A(x) =
      \sum_{*}  A_1(x_{i_1})\cdots
      A_p(x_{i_p})-c_1,\quad
      B(x)=\sum_{**} B_1(x_{j_1})\cdots
      B_q(x_{j_q})-c_2
$$
where the summations $\sum_*$ and $\sum_{**}$ are performed over two
different sets of indices (time moments). Let $m_*$ be the {\em
maximal} index in the first set (denoted by $*$) and $m_{**}$ the {\em
minimal} index in the second set. We suppose that $m_*\leq
m-M^{\delta}<m\leq m_{**}$ for some $m$, i.e.\ there is a ``time gap''
of length $\geq M^{\delta}$ between $m_{*}$ and $m_{**}$.

Now, let $\ell=(\gamma,\rho)$ be a standard pair such that
\index{Standard pair} $\length(\gamma)> M^{-100}$. For any function
$C$, we can decompose the expectation
\begin{equation}
  \label{Middle}
   \EXP_\ell(C\circ\cF^{m-\frac{1}{2} M^\delta})
   =\sum\nolimits_\alpha \EXP_{\ell_\alpha}(C)
\end{equation}
where $\ell_{\alpha}$ denote the components of the image of $\ell$
under $\cF^{m-\frac{1}{2} M^\delta}$.

\begin{lemma}[Multiple correlations]
\label{INDEP} We have
$$
         \bigl|\EXP_\ell(A(x)B(x))\bigr|\leq
         \EXP_\ell \bigl|A(x)\bigr| \,
         \max_\alpha \Bigl|\EXP_{\ell_\alpha}
         \bigl( B(x_{-m+\frac 12 M^{\delta}})
         \bigr)\Bigr|+\cO\left(M^{-50}\right)
$$
where the maximum is taken over $\alpha$'s in (\ref{Middle}) with
$\length(\gamma_\alpha)>M^{-100}$. We note that the remainder term
$\cO\left(M^{-50}\right)$ here depends on the choice of the
functions $A_i,B_j$ and the constants $c_1,c_2$.
\end{lemma}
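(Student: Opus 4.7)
The plan is to pass to a forward decomposition of $\gamma$ at time $n = m - \tfrac{1}{2} M^\delta$, then exploit the fact that the ``past'' functional $A$ becomes essentially constant on each element of the decomposition because the $M^\delta$ time gap forces $A$ to be evaluated at points lying in a super-exponentially contracted image of $\gamma_\alpha$ under $\cF^{-r}$ with $r\geq \tfrac12 M^\delta$. Decay of correlations plays no role at this step; the lemma is a pure factorization/separation of past and future, and the resulting averaged value of $B$ is simply bounded by the stated maximum. Once this is established, $|A|$ can be treated by the same constant-approximation argument to recover $\EXP_\ell|A|$ rather than $\sup_\alpha|\hat A_\alpha|$.

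Concretely, I would first apply Proposition~\ref{PrDistEq1} with $n = m - \tfrac12 M^\delta$. The hypothesis $K|\ln \length(\gamma)|\leq n$ is satisfied since $\length(\gamma) > M^{-100}$ gives $|\ln \length(\gamma)| \leq 100\ln M$, while $n \geq \tfrac12 M^\delta$ (the indices in $A$ force $m\geq M^\delta$). This produces (\ref{Middle}) and, pulling back, a partition $\gamma = \bigsqcup_\alpha \gamma^\alpha$ mod null, with $\gamma^\alpha = \cF^{-n}(\gamma_\alpha)$ carrying weight $c_\alpha$. Applying (\ref{Kvarepsilon}) with $\varepsilon = M^{-100}$, the total weight of $\alpha$'s with $\length(\gamma_\alpha) < M^{-100}$ is $\cO(M^{-100})$; since the cardinalities of the sums $\sum_*, \sum_{**}$ and the constants $c_1, c_2$ are at most polynomial in $M$, one has $\|A\|_\infty \|B\|_\infty = \cO(M^c)$ for some fixed $c$, and the contribution of these short pairs to $\EXP_\ell(AB)$ is absorbed in the allowed error $\cO(M^{-50})$.

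The crucial step is that $A$ is essentially constant on each surviving $\gamma^\alpha$. By Proposition~\ref{PrDistEq1}, $\cF^{-r}$ is smooth on $\gamma_\alpha$ for every $0 \leq r \leq n$, so $\cF^{-r}(\gamma_\alpha)$ lies in a single connected component of $\Omega\setminus \cS_1$, and (\ref{expclose}) provides
\[
\dist\bigl(\cF^{-r}(y'),\cF^{-r}(y'')\bigr) \leq K\vartheta^r \qquad \forall y',y''\in\gamma_\alpha,\ 1\leq r\leq n.
\]
Any index $i_k$ appearing in $A$ satisfies $n - i_k \geq \tfrac12 M^\delta$; writing $x = \cF^{-n}(y)$ for $y \in \gamma_\alpha$, each factor $A_j(x_{i_k}) = A_j(\cF^{i_k - n}(y))$ is the value of an element of $\fR$ at a point confined to a single connected component of $\Omega\setminus\cS_1$ and to a set of diameter $\cO(\vartheta^{M^\delta/2})$. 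The H\"older continuity built into $\fR$ then controls each factor's oscillation by $\cO(\vartheta^{\alpha M^\delta/2})$, and summing over the polynomially many terms of $\sum_*$ yields
\[
\sup_{x',x''\in\gamma^\alpha}|A(x') - A(x'')| = \cO(M^{-500}),
\]
say, which is far smaller than any polynomial tolerance we need.

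Pick any $x_\alpha \in \gamma^\alpha$ and set $\hat A_\alpha = A(x_\alpha)$. Using the identification
\[
(B\circ\cF^{-n})(y) = B(y_{-m + \frac12 M^\delta}) \qquad (y\in\gamma_\alpha),
\]
which follows directly from $\cF^{j_k}(x) = \cF^{j_k - n}(y)$ and the shift-of-indices convention, the oscillation bound above and the polynomial bound on $\|B\|_\infty$ give
\[
\EXP_\ell(AB) = \sum_\alpha c_\alpha\,\hat A_\alpha\, \EXP_{\ell_\alpha}\bigl(B(x_{-m+\frac12 M^\delta})\bigr) + \cO(M^{-50}).
\]
Taking absolute values and pulling the maximum out,
\[
|\EXP_\ell(AB)| \leq \Bigl(\max_\alpha\bigl|\EXP_{\ell_\alpha}\bigl(B(x_{-m+\frac12 M^\delta})\bigr)\bigr|\Bigr)\sum_\alpha c_\alpha|\hat A_\alpha| + \cO(M^{-50}).
\]
Finally, applying the same constant-approximation argument to $|A|$ in place of $A\cdot B$ yields $\sum_\alpha c_\alpha|\hat A_\alpha| = \EXP_\ell|A| + \cO(M^{-50})$, and combining gives the claim.

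The only real obstacle is bookkeeping: one must check that the polynomial-in-$M$ factors hidden in $\|A\|_\infty, \|B\|_\infty$, the number of terms in the sums, and the constants $c_1, c_2$ are swallowed by the super-polynomially small oscillation and by $\cO(M^{-100})$ from (\ref{Kvarepsilon}). This is exactly why the statement explicitly notes that the remainder depends on the choice of $A_i, B_j, c_1, c_2$.
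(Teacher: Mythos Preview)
Your proof is correct and follows exactly the approach the paper intends: the paper's own proof is the single line ``similar to that of (\ref{2Cor}), in which the factorization (\ref{2Cortemp}) plays a key role,'' and what you have written is precisely that argument spelled out --- decompose at time $m-\tfrac12 M^\delta$ via Proposition~\ref{PrDistEq1}, use (\ref{expclose}) and the H\"older continuity of the $A_j\in\fR$ to freeze $A$ on each $\gamma_\alpha$, then factor and bound by the maximum over long $\gamma_\alpha$, discarding short ones via (\ref{Kvarepsilon}). Your bookkeeping remark about the polynomial-in-$M$ size of $\|A\|_\infty,\|B\|_\infty$ being swallowed by the super-polynomial decay $\vartheta^{\alpha M^\delta/2}$ and by $M^{-100}$ is exactly the reason the paper flags the dependence of the $\cO(M^{-50})$ term on the data.
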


The proof of Lemma~\ref{INDEP} is similar to that of (\ref{2Cor}),
in which the factorization (\ref{2Cortemp}) plays a key role, we
omit details. \qed\medskip

We now turn to the proof of Proposition~\ref{PrMomA}. Using big
small blocks again, we put for all $k=0,\dots,[\brc M^{2/3}/\bn]$
$$
      P_k'=\sum_{j=k \bn+M^\delta}^{j=(k+1) \bn} \hA(x_j),
      \qquad P_k''=\sum_{j=k\bn}^{k\bn+M^\delta} \hA(x_j),
$$
and then
$$
   U_k'=\sum_{j=0}^{k-1} P_j', \qquad
   U_k''=\sum_{j=0}^{k-1} P_j'' .
$$
Note that $U_k''=\cO\left((k+1) M^{\delta}\right).$

\begin{lemma}
\label{AP1St} Under the assumptions of Proposition~\ref{PrMomA}
and uniformly in $k$
$${\rm (a)}\quad \EXP_\ell(P_k')=
\cO\left(M^{\delta}\right).\Quad $$
$${\rm (b)}\quad \EXP_\ell\left ( [P_k']^2\right ) =
\Bigl(\EXP_\ell\bigl(\hD_{Q_{k\bn},V_{k\bn}}\bigr) + g \Bigr)\,
\bn \Quad
$$
where
$$
     \hD_{Q_{k\bn}, V_{k\bn}}(x) = \left\{ \begin{array}{cc}
     D_{Q_{k\bn},V_{k\bn}}(A) & {\rm if}\ \ x\notin I_k \\
     0 & {\rm otherwise} \end{array} \right .
$$
see (\ref{DQVA}), and $g\to 0$ as $M\to \infty$ and $\kappa_M\to
0$, see a remark below.
$${\rm (c)}\quad \EXP_\ell\left ([P_k']^4\right )=\cO\left(\bn^2\right) .\Quad$$
{\rm (d)} In the notation of (\ref{AAsmall}), we have
$$
   \EXP_\ell\left ([P_k']^4\right )\leq 2\,\fS_A^2\bn^2
   +\cO\left(\bn^{1.9}\right).
$$
\end{lemma}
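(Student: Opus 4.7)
The plan is to expand $P_k'$, $[P_k']^2$ and $[P_k']^4$ as sums over $j\in[k\bn+M^\delta,(k+1)\bn]$ and to evaluate them by combining three ingredients: Proposition~\ref{PrDistEq1} applied at time $k\bn$, which decomposes $\cF^{k\bn}(\mes_\ell)$ into a convex combination of standard pairs $\{\ell_\alpha\}$ with length control; the short- and long-term equidistribution estimates of Proposition~\ref{PrDistEq2} and Corollary~\ref{PrDistEq3}; and the multiple-correlation Lemma~\ref{INDEP}. The $M^\delta$ offset at the start of each big block provides the burn-in $K|\ln\length(\gamma_\alpha)|\leq j-k\bn$ demanded by those propositions, and the truncation $\hA\equiv 0$ on the exclusion set $I_k$ discards both short components and orbits that have already left $\Upsilon_{\delta_2}$. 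Throughout I will use Proposition~\ref{PrST}(b) together with the initial bound $\|\brV\|\leq aM^{-2/3}$ to get $\EXP_\ell(\|V_n\|)=\cO(M^{-2/3})$ uniformly for $n\leq\brc M^{2/3}$, which in turn controls the error terms $\cR_{n,m}$ and $\cR_{n,j,m}$.

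For part~(a), applying Corollary~\ref{PrDistEq3} to each $\EXP_{\ell_\alpha}(\hA(x_{j}))$ with parameters $j\sim M^\delta$, $m\sim K\ln M$, and using the assumption $\brA=\mu_{Q,V}(A)\equiv 0$, reduces each term to the error $\cR_{j-k\bn,\,M^\delta,\,K\ln M}+\theta^{K\ln M}=\cO(M^{-2/3+\delta})$; summing over the $\bn=M^{1/2-\delta}$ values of $j$ in the block gives the loose bound $\cO(M^{-1/6})\subset\cO(M^\delta)$. For part~(b), I would write $[P_k']^2=2\sum_{i<j}\hA(x_i)\hA(x_j)+\sum_i \hA(x_i)^2$. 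The diagonal sum, evaluated by Proposition~\ref{PrDistEq2} applied to $A^2\in\fR$, contributes $\bn\,\EXP_\ell(\mu_{Q_{k\bn},V_{k\bn}}(A^2))+o(\bn)$. For the off-diagonal pairs with $|i-j|\geq M^\delta$, Lemma~\ref{INDEP} factorizes the expectation over the middle gap, and the exponential decay argument used to prove~\eqref{2Cor} collapses the double sum to the Green-Kubo tail $2\sum_{r\geq 1}\mu_{Q_{k\bn},V_{k\bn}}(A(A\circ\cF_{Q_{k\bn},V_{k\bn}}^{r}))$. The two contributions reassemble into $\bn\,\EXP_\ell(\hD_{Q_{k\bn},V_{k\bn}})$; the hypothesis that $\mu_{Q,V}(A^2)$ is Lipschitz in $(Q,V)$ then lets one freeze the base point along $\cF^{k\bn}(\gamma)$ at a single representative up to a vanishing correction $g(M,\kappa_M)\to 0$.

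Parts~(c) and~(d) follow by expanding $[P_k']^4$ as a quadruple sum and classifying ordered quadruples $i_1\leq i_2\leq i_3\leq i_4$ by the pattern of adjacent gaps. The dominant class consists of quadruples with $i_2-i_1\leq M^\delta$, $i_4-i_3\leq M^\delta$ and $i_3-i_2\geq M^\delta$: Lemma~\ref{INDEP} splits the expectation across the large middle gap into a product of two second moments of sub-block sums, each bounded by $\fS_A\bn+o(\bn)$ via part~(b) applied at the shifted base point. The number of such quadruples is $\cO(\bn^2)$, producing the $\cO(\bn^2)$ bound required in~(c) and the leading constant in~(d). All remaining configurations (three or four indices within $M^\delta$ of each other, or a single close pair without a partner across a large gap) occupy only $\cO(\bn^{1.9})$ quadruples and contribute $\cO(\bn^{1.9})$ after using part~(b) on the close pair together with the trivial $\|A\|_\infty$ bound on the isolated factors. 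The main obstacle I anticipate is the precise identification of the constant~$2$ in part~(d): of the three Wick-type pairings of four indices, only $\{i_1\sim i_2\}\{i_3\sim i_4\}$ is compatible both with the ordering and the large-gap factorization, yielding one factor of $\fS_A^2$, while the second $\fS_A^2$ must come from the adjacency (boundary) contributions in which one of the small pairs degenerates ($i_1=i_2$ or $i_3=i_4$) and the remaining factor is controlled by $\mu_{Q,V}(A^2)\leq\fS_A$ via Cauchy-Schwarz; the combinatorial bookkeeping here parallels the recursive variance estimate of Lemma~\ref{Lm2ST}(b).
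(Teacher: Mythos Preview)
Your reduction to $k=0$ via Proposition~\ref{PrDistEq1} is right, but after that reduction the base pair only lies in $\Upsilon_{\delta_2}$, so $\|\brV\|\leq M^{-1/2}$; the bound $\EXP_\ell(\|V_n\|)=\cO(M^{-2/3})$ for $n\leq\brc M^{2/3}$ that you invoke is exactly what Proposition~\ref{PrMomA} is supposed to deliver and is not yet available. With $\|\brV\|\sim M^{-1/2}$, the non-decaying error term $(\|\brV\|+n/M)M^\delta\sim M^{-1/2+\delta}$ in the two-point bound~\eqref{2Cor} sums over the $\cO(\bn^2)$ off-diagonal pairs to $\cO(\bn)$, not $o(\bn)$, so your direct decay argument in~(b) gives the right order but cannot isolate the coefficient $D_{\brQ,\brV}(A)$. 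The paper handles this by a \emph{nested} big-small block decomposition of $P_0'$ itself (sub-blocks of length $M^{1/3}$, gaps $M^\delta$): inside each short sub-block~\eqref{2Cor} is harmless, and the cross-block terms are killed by the induction of Lemma~\ref{Lm2ST}; this is what yields the key off-diagonal bound~\eqref{AAepsilon}.

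For~(d), your Wick-pairing scheme is structurally off. The quadruples with $i_2-i_1\leq M^\delta$ and $i_4-i_3\leq M^\delta$ number $\cO(\bn^2M^{2\delta})$, not $\cO(\bn^2)$, and Lemma~\ref{INDEP} does not factor the full sum into a product of two second moments: it produces $\EXP_\ell|A|\cdot\max_\alpha|\EXP_{\ell_\alpha}B|$ at a \emph{fixed} split time, while the gap $(i_2,i_3)$ varies across your dominant class. The paper's route is different: it conditions on $m=i_3$ and writes the leading part of $[P_0']^4$ as $\sum_m\EXP_\ell\bigl[\hS_m^2\,\hA(x_m)\sum_{j>m}\hA(x_j)\bigr]$, then uses Lemma~\ref{INDEP} to peel off $[\hS_{m-M^\delta}]^2$ (bounded by $(\fS_A+o(1))m$ via part~(b)), leaving the one-sided correlation $\EXP_{\ell_\alpha}\bigl[\hA(x_{M^\delta/2})\sum_{j\geq 1}\hA(x_{j+M^\delta/2})\bigr]\leq\tfrac12\bigl(D_{\brQ,\brV}(A)-\mu_{\brQ,\brV}(A^2)\bigr)+o(1)\leq\fS_A$. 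The product $\fS_A^2 m$ summed over $m$ gives the main $\fS_A^2\bn^2$; the constant~$2$ comes from this arithmetic together with the case $i_3=i_4$ and the ordering multiplicities, not from counting compatible pairings.
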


\noindent{\em Remark}. In the proof of Theorem~\ref{ThVelBM} we
set $\kappa_M=M^{-\delta}$, hence $\kappa_M\to 0$ follows from
$M\to\infty$, and so we can replace $g$ in (b) by $o(1)$. In the
proof of Theorem~\ref{Root}, however, $\kappa_M$ will be a small
constant (independent of $M$), hence the condition $\kappa_M\to 0$
will be necessary.

\proof We first note that $\hA$ is different from 0 only on
\index{Standard pair} standard pairs where (\ref{Away}) holds, see
the construction of $\hA$ in Section~\ref{subsecMom}. Thus,
Proposition~\ref{PrDistEq2} applies to each standard pair
$\gamma_{\alpha, k}$ where $\hA\neq 0$. Therefore it is enough to
verify Lemma~\ref{AP1St} for $k=0$ (but we need to establish it for
all $\ell=(\gamma,\rho)$ such that
$\pi_1(\gamma)\subset\Upsilon_{\delta_2}$).

Part (a) follows from Proposition \ref{GenST} (a).

The proof of (b) is based on the following claim: for each
$\varepsilon>0$ there exists $K(\varepsilon)$ (it is enough to set
$K(\varepsilon)=\Const\, |\ln\varepsilon|$) such that
\beq
      \EXP_\ell\biggl(\sum_{|i-j|>K(\varepsilon)}
      \hA(x_i) \hA(x_j)\biggr)<\varepsilon \bn
        \label{AAepsilon}
\eeq
(here, of course, $M^\delta\leq i,j\leq\bn$). To prove
(\ref{AAepsilon}), we apply the big small block decomposition, as
in Lemmas~\ref{Lm1ST} and \ref{Lm2ST}, to $[P_k']^2$ (with big
blocks of length $M^{1/3}$ and small blocks of length
$M^{\delta}$), then we use Eqs.\ (\ref{ZZthree})--(\ref{ZZthird}),
the induction on $k$, and finally the estimate (\ref{2Cor})
applied to each big block will yield (\ref{AAepsilon}).

Therefore, to get the asymptotics of $\EXP_\ell\left([P_0']^2\right)$
we need to get the asymptotics of
$$
      \EXP_\ell\biggl(\sum_{i=M^\delta}^{\bn}
      \hA(x_i) \hA(x_{i+m})\biggr)
$$
for each fixed $m$. Applying Proposition~\ref{PrDistEq1} to
$j=i-M^\delta$ iterations of $\cF$ we get
$$
        \EXP_\ell \bigl(\hA(x_i) \hA(x_{i+m})\bigr) =\sum_\alpha c_\alpha
        \EXP_{\ell_\alpha} \bigl(\hA(x_{M^\delta})\hA(x_{M^\delta+m})\bigr).
$$
where $\ell_\alpha=(\gamma_{\alpha},\rho_{\alpha})$ denote the
components of the image of $\ell$ at time $j$.
Proposition~\ref{PrDistEq2} applies to each $\gamma_{\alpha}$
where $\hA(x_{M^\delta})\neq 0$, hence for each $\alpha$ such that
$\length(\gamma_\alpha)>\exp(-M^\delta/K)$ we have
$$
      \EXP_{\ell_\alpha} \bigl(\hA(x_i) \hA(x_{i+m})\bigr)=
      \mu_{\brQ, \brV}\bigl(\hA(x_0) \hA(x_m)\bigr)+
      \cO\left (\|\brV \|M^\delta+M^{2\delta-1}\right )
$$
where $(\brQ,\brV)\in\pi_1(\gamma_\alpha)$ is an arbitrary point.
As before, the contribution of small $\gamma_\alpha$ is well
within the error bounds of our claim (b), hence summing over
$\alpha$ and using the fact that the oscillations of $Q$ and $V$
over $\gamma_\alpha$ are of order $1/M$ we obtain
\begin{align*}
      \EXP_\ell \left(\hA(x_i) \hA(x_{i+m})\right) &=
      \EXP_\ell \left (\mu_{Q_j, V_j} (\hA(x_0) \hA(x_{m}))\right )\\
      &\quad +
      \cO\left (\EXP_\ell(\|\hV_j\|)M^\delta+M^{2\delta-1}\right ).
\end{align*}
(recall that $j=i-M^{\delta}$). By Proposition \ref{PrST}
$$
      \EXP_\ell\bigl(\|\hV_j\|\bigr)=
      \cO\bigl(\|\brV\|+\sqrt i/M\bigr)
      =\cO\bigl(\|\brV\|+M^{-3/4}\bigr)
$$
whereas by Lemma~\ref{lmAA}
\begin{align*}
       \EXP_\ell \Bigl(\mu_{Q_j, V_j} \bigl(\hA(x_0)
       \hA(x_{m})\bigr)\Bigr)
       & =
       \EXP_\ell \Bigl(\mu_{\brQ, \brV} \bigl(\hA\,
       (\hA\circ \cF_{\brQ,\brV}^m)\bigr)\Bigr)\\
       &\quad +\cO\bigl(\EXP_\ell\|\hQ_j-\brQ\|\bigr)\\
       &\quad +\cO\bigl(\EXP_\ell\|\hV_j-\brV\|\bigr)+
       \cO\bigl(\|\brV\|+M^{-1}\bigr)
\end{align*}
Proposition~\ref{PrST} (b) gives
$$
    \EXP_\ell\bigl(\|\hV_j-\brV\|\bigr) \leq
    \Const\,\sqrt{j}/M \leq \Const\, M^{-3/4}
$$
(we note that $\bn=\kappa_M M^{1/2}< \delta_\diamond M^{1/2}$,
hence Proposition~\ref{PrST} indeed applies in our context). Also,
since $M\|V_j\|^2<1-\delta_1$, then $\|v_j\|\geq \Const>0$, hence
intercollision times are uniformly bounded above for all $j\leq
\bk\bn$. Therefore,
\begin{align*}
   \EXP_\ell\bigl(\|\hQ_j-\brQ\|\bigr)
   & \leq  \Const \sum_{p=0}^{j-1}
   \EXP_\ell \bigl(\|\hV_p\| \bigr) \\
   & \leq  \Const\, j\, \|\brV\|+\sum_{p=0}^{j-1}
   \EXP_\ell \bigl(\|\hV_p-\brV\|\bigr) \\
   & \leq \Const \bigl(j\, \|\brV\|+j\, M^{-3/4}\bigr).
\end{align*}
Note that
$$
    j\,\|\brV\|\leq \Const\, \bn/\sqrt{M}=\Const\,\kappa_M,
    \qquad j\, M^{-3/4}\leq \kappa_M M^{-1/4}.
$$
This gives
$$
     \EXP_\ell\bigl(\hA(x_i) \hA(x_{i+m})\bigr)=
     \mu_{\brQ, \brV}\bigl(\hA\,
     (\hA\circ \cF_{\brQ,\brV}^m)\bigr)+
     \cO\left(\kappa_M\right).
$$
We note that all our constants and the $\cO(\cdot)$ terms depend,
implicitly, on $m$ which takes values between $0$ and $K_\varepsilon$.
Summing over $i,m$ we get
$$
         \EXP_\ell\left([P_0']^2\right)=\bn \biggl(\sum_{|m|<K(\varepsilon)}
         \mu_{\brQ, \brV}\left(\hA\, (\hA\circ\cF_{\brQ,\brV}^m)\right)+
         \cR+\cO\left(\kappa_M\right)\biggr),
$$
where $|\cR|\leq \varepsilon$ and the $\cO(\cdot)$ term implicitly
depends on $\varepsilon$. Now for any $\varepsilon> 0$ we can
choose a small enough $\kappa_M$ so that the
$|\cO(\kappa_M)|<\varepsilon$. This concludes the proof of part
(b).

We proceed to the proof of part (d). We write
$$
        \EXP_\ell\left([P'_0]^4\right)=
        \sum_{i_1, i_2, i_3, i_4}
        \EXP_\ell \left(\hA(x_{i_1})\hA(x_{i_2})
        \hA(x_{i_3})\hA(x_{i_4})\right).
$$
For convenience, we order the indices in each term so that
\beq
       i_1\leq i_2 \leq i_3\leq i_4
         \label{Order}
\eeq
There are eight cases depending on the choice of ``$<$'' or
``$=$'' in (\ref{Order}), but we will be able to handle several
cases together. First we separate the terms in which $i_1\leq
i_2<i_3<i_4$ and get
\beq
        \EXP_\ell\left([P'_0]^4\right)=\sum_{m=M^{\delta}+1}^{\bn}
        \EXP_\ell\biggl(\hS_m^2\, \hA(x_m)
        \sum_{j=m+1}^{\bn} \hA(x_j) \biggr)+\cR
           \label{E4mainterms}
\eeq
where we denote, for convenience, $j=i_4$, $m=i_3$ and
$\hS_m=\sum_{i=M^{\delta}}^{m-1}\hA(x_i)$, while $\cR$ correspond
to all the remaining terms. Denote
\begin{align*}
       \hS^{(a)}&=\hS_{m-M^\delta},\quad
       &\hS^{(b)}=\hS_m-\hS_{m-M^\delta},\\
       \hS^{(c)}&=\hS_{m+M^\delta}-\hS_m,\quad
       &\hS^{(d)}=\hS_{\bn}-\hS_{m+M^\delta},
\end{align*}
then we have
$$
    \EXP_\ell\left(\hS_m^2 \hA(x_m)\sum_{j>m} \hA(x_j)\right) =
    \EXP_\ell\left([\hS^{(a)}]^2 \hA(x_m)\sum_{j>m} \hA(x_j)\right)
$$
$$
    + \EXP_\ell\left([\hS^{(b)}]^2 \hA(x_m)\sum_{j>m} \hA(x_j)\right)+
    2\EXP_\ell\left(\hS^{(a)} \hS^{(b)} \hA(x_m)\sum_{j>m} \hA(x_j)\right)
$$
$$
       =I+\RmII+\RmIII.
$$
We can assume here that $m>M^{3\delta}$, because terms with
$m<M^{3\delta}$ make a total contribution of order $\bn
M^{9\delta}$. It is clear that part (b) of Lemma~\ref{AP1St} can
be applied to any number of iterations $M^{3\delta}<m\leq\bn$,
hence $\EXP_\ell\left([\hS^{(a)}]^2\right) < (\fS_A+o(1))\,m.$
Therefore by Lemma~\ref{INDEP}
$$
    |I| <  m (\fS_A+o(1))\max_\alpha \biggl| \EXP_{\ell_\alpha}
    \biggl( A(x_{M^{\delta}/2}) \sum_{j=1}^{\bn-m}
    A(x_{j+M^{\delta}/2})\biggr) \biggr|
    + \cO\left ( M^{-50}\right ),
$$
where $\ell_\alpha$ denote the components of the image of $\ell$ at
time $m-M^{\delta}/2$. Similar to the proof of Lemma~\ref{AP1St} (b),
for each $\alpha$ we have
\beq
      \biggl|\EXP_{\ell_\alpha}\biggl(\hA(x_{M^{\delta}/2})
      \sum_{j=1}^{\bn-m} \hA(x_{j+M^{\delta}/2})\biggr)\biggr|
      \leq
      \sum_{j=1}^\infty \mu_{\brQ, \brV}
      \left(A\, (A\circ \cF_{\brQ,\brV}^j)\right)+o(1).
        \label{fourdiff}
\eeq
We observe that
$$
   \sum_{j=1}^\infty \mu_{\brQ, \brV}
   \left(A\, (A\circ \cF_{\brQ,\brV}^j)
   \right) =
   \frac{D_{\brQ, \brV}(A)-
   \mu_{\brQ,\brV}(A^2)}{2}
   \leq \fS_A,
$$
hence $I=\cO\left(\fS_A^2m\right).$ Next,
\begin{align*}
      \RmIII &=
      \EXP_\ell\left(\hS^{(a)} \hS^{(b)} \hA(x_m)
      [\hS^{(c)}+\hS^{(d)}]\right)\\
      &=\EXP_\ell\left(\hS^{(a)} \hS^{(b)}
      \hA(x_m) \hS^{(c)}\right)+
       \EXP_\ell\left(\hS^{(a)} \hS^{(b)}
       \hA(x_m) \hS^{(d)}\right)\\
       &= \RmIII_c+\RmIII_d.
\end{align*}
Now
$$
     |\RmIII_c|\leq \Const\, M^{2\delta}\,
     \EXP_\ell\left (\left|S^{(a)}\right|\right )\leq
     \Const\, M^{2\delta}\sqrt{m}
$$
where the last inequality is based on Lemma~\ref{AP1St} (b) and
Cauchy-Schwartz. On the other hand, due to Lemma~\ref{INDEP} and
Proposition~\ref{GenST} (a)
\begin{align*}
      |\RmIII_d| &\leq \Const\,M^\delta
        \EXP_\ell\Bigl(\Bigl |\hS^{(a)} \hS^{(b)} \hA(x_m)\Bigr|\Bigr)\\
        & \leq
     \Const\, M^{2\delta} \EXP_\ell(|\hS^{(a)}|)\\
     & \leq
     \Const\, M^{2\delta} \sqrt{m}
\end{align*}
(here again the last inequality follows from Lemma~\ref{AP1St} (b)).
Thus $ |\RmIII|\leq \Const\, M^{2\delta} \sqrt{m}.$ Similar estimates
show that $ |\RmII| \leq \Const\, M^{3\delta}. $ Combining these
results we get
$$
      \biggl|\EXP_\ell \biggl(\hS_m^2 \hA(x_m)
      \sum_{j>m} \hA(x_j)\biggr)\biggr|
      \leq 2\,\fS_A^2m +\cO\left(M^{2\delta}\sqrt{m}+M^{3\delta}\right).
$$
Summation over $m$ gives
$$
   \biggl|\EXP_\ell \biggl(\sum_m \hS_m^2 \hA(x_m)
   \sum_{j>m} \hA(x_j) \biggr)\biggr|\leq
    2\,\fS_A^2\bn^2 +\cO\left(\bn^{3/2+6\delta}\right).
$$
It remains to estimate the term $\cR$ in (\ref{E4mainterms}), which
corresponds to the cases where $i_2=i_3$ or $i_3=i_4$. The cases where
$i_1\leq i_2<i_3=i_4$ can be treated in the same way as above, except
(\ref{fourdiff}) now takes form
$$
     \EXP_{\ell_\alpha}\left(\hA^2(x_{M^{\delta}/2})\right )
     =\mu_{\brQ,\brV}(A^2)+o(1)\leq \fS_A
$$
and the term $\RmIII_d$ is missing altogether.

The case $i_1=i_2=i_3<i_4$ and that of $i_1<i_2=i_3=i_4$ can be handled
as follows:
$$
       \biggl|\EXP_\ell\biggl( \sum_{i\neq k}
       \hA(x_k)^3 \hA(x_i)\biggr)\biggr|
       \leq \Const \sum_k \EXP\left(|\hS_\bn|+1\right)
       \leq \Const\, \bn^{3/2}.
$$
Consider the case $i_1<i_2=i_3<i_4$. Using the same notation as in the
analysis of the first term in (\ref{E4mainterms}) we get
$$
   \sum_m \EXP_\ell\left[\bigl(\hS^{(a)}+\hS^{(b)}\bigr)
   \hA^2(x_m) \bigl(\hS^{(c)}+\hS^{(d)}\bigr)\right]=
   \sum_m \left[I^{ac}+I^{ad}+I^{bc}+I^{bd}\right]
$$
where we denoted $I^{\alpha\beta} =\EXP_\ell\left
(\hS^{(\alpha)}\hS^{(\beta)}\hA^2(x_m)\right).$ The estimation of
each term here is similar to the ones discussed above, and we
obtain
$$
      I^{ad}=\cO\Bigl(\sqrt{\EXP_\ell(\hS^{(a)})^2} M^\delta
      +M^{-50} \Bigr)=\cO\left(\sqrt{m}\, M^\delta\right) ,
$$
$$
     I^{ac}=\cO\left(\sqrt{m}\, M^\delta\right) ,
$$
$$
     I^{bc}=\cO\left(M^{2\delta}\right) ,
$$
$$
     I^{bd}=\cO\left(M^{2\delta}\right).
$$
Hence
$$
     \sum_{i_1<m<i_4}\EXP_\ell\left(\hA(x_{i_1})\hA^2(x_m)\hA(x_{i_4})\right)=
     \cO\left(\bn^{3/2}\right).
$$
The only remaining case $i_1=i_2=i_3=i_4$ is simple:
$$
      \sum_j \EXP_\ell\bigl(\hA^4(x_j)\bigr)\leq \Const\, \bn .
$$
This proves part (d). Obviously, part (c) follows from (d), which
completes the proof of Lemma~\ref{AP1St}. \qed \medskip

We now return to the proof of Proposition~\ref{PrMomA}. Denote
\beq
      \EXP_{\max}(\dots)=\max_\ell \left|\EXP_\ell(\dots)\right|
         \label{bmmax}
\eeq
where the maximum is taken over all standard pairs $\ell=(\gamma,
\rho)$ with $\length(\gamma)>M^{-100}. $ \index{Standard pair}

We are now going to prove by induction that
\begin{equation}
         \label{IndM1}
         \EXP_{\max}(U_k')\leq G_1 k M^{\delta} ,
\end{equation}
\begin{equation}
         \label{IndM2}
         \EXP_{\max}([U_k']^2)\leq G_2 k \bn,
\end{equation}
\begin{equation}
         \label{IndM4}
         \EXP_{\max}([U_k']^4)\leq G_4 k^2 \bn^2
\end{equation}
provided the constants $G_1, G_2, G_4$ are sufficiently large.
%The induction hypothesis is that these inequalities hold both for given
%$A$ and for function $\cA$ given in the introduction up to time $k.$
%In particular it is a part of inductive hypothesis that
%$$ \EXP_\ell(|V_{k\bn}|)\leq
%|\brV|+\sqrt{\EXP_\ell \left(V_{k\bn}-\brV\right)^2}\leq
%|\brV|+C\sqrt{G(\cA)} \frac{\sqrt {k\bn}}{M}$$
%Since $k\bn<M^{2/3}$ we get
%\begin{equation}
%\label{GrowthV}
%\EXP_\ell(|V_{k\bn}|)\leq
%|\brV|+C\sqrt{G(\cA)} M^{-2/3} .
%\end{equation}
Let us rewrite the estimates of Lemma~\ref{AP1St} in a simplified way:
\beq
    \EXP_\ell\left(P_k'\right)\leq C_1 M^\delta,\quad
    \EXP_\ell\left([P_k']^2\right)\leq C_2 \bn,\quad
    \EXP_\ell\left([P_k']^4\right)\leq C_4 \bn^2.
       \label{Lm412simple}
\eeq
Now, by the inductive assumption (\ref{IndM1}) we have
$$
           \left|\EXP_\ell\left (U_{k+1}'\right) \right|
           \leq G_1 k M^{\delta}+C_1 M^{\delta},
$$
hence (\ref{IndM1}) holds for $k+1$ provided that $G_1> C_1$.

Next, by Lemma~\ref{INDEP}, (\ref{Lm412simple}), the inductive
assumption (\ref{IndM2}), and the Cauchy-Schwartz inequality we have
\begin{align}
          \EXP_{\max}\left([U_{k+1}']^2\right) & \leq
          \EXP_{\max}\left([U_{k}']^2\right)+
          2\,\EXP_{\max}\bigl(U_{k}'P_k'\bigr)+
          \EXP_{\max}\left([P_k']^2\right)\nonumber\\
          & \leq
          G_2 k \bn + 2\,C_1 M^\delta\sqrt{G_2 k\bn }+C_2\bn.
              \label{MainContrM2}
\end{align}
Since $k\bn<\brc M^{2/3},$ the second term here is
$\cO(M^{-1/6+2\delta} \bn)$, hence (\ref{IndM2}) holds provided
that $G_2> C_2$.

Lastly, by the inductive assumption (\ref{IndM4}) we have
\begin{align*}
        \EXP_{\max}\left([U_{k+1}']^4\right) &\leq
        \EXP_{\max}\left([U_{k}']^4\right)+
        4 \EXP_{\max}\left([U_{k}']^3 P_k'\right)+
        6\EXP_{\max}\left([U_{k}']^2 [P_k']^2\right)\\
        &\quad +4\EXP_{\max}\left(U_{k}' [P_k']^3\right)
        +\EXP_{\max}\left([P_k']^4\right)\\
        & \leq  G_4 k^2 \bn^2 +I+\RmII+\RmIII+\RmIV.
\end{align*}
Using Lemma~\ref{INDEP}, (\ref{Lm412simple}), and the H\"older
inequality we get
\begin{align*}
     I &\leq 4C_1 G_4^{3/4} [k \bn]^{3/2}
     M^{\delta}=4C_1 G_4^{3/4} k \bn^2
     \sqrt{\frac{k}{\bn}}\, M^{\delta}, \\
     \RmII &\leq 6(G_2 k \bn)\, (C_2 \bn)
     =6(G_2 C_2) k \bn^2, \\
     \RmIII &\leq 4\sqrt{G_2 k\bn} \,
     (C_4^{3/4} \bn^{3/2})=
     \sqrt{G_2} C_4^{3/4} \sqrt{k}\, \bn^2, \\
     \RmIV &\leq C_4 \bn^2 .
\end{align*}
Hence, (\ref{IndM4}) holds provided that $G_4 > 6C_2 G_2$. This
completes the proof of (\ref{IndM1})--(\ref{IndM4}) establishing
the parts (a)--(c) of Proposition \ref{PrMomA} (the contribution
from $U_k''$ is well within our error bounds). The proof of (d) is
similar to (c), but we have to use Lemma~\ref{AP1St} (d) in place
of Lemma~\ref{AP1St} (c). Proposition~\ref{PrMomA} is proved. \qed
\medskip

Let us also note, for future reference, that by
(\ref{MainContrM2}) the main difference between
$\EXP_\ell\left([U_{k+1}']^2\right)$ and
$\EXP_\ell\left([U_{k}']^2\right)$ comes from the $[P_k']^2$ term.
Hence we have
\begin{equation}
      \label{PreM2}
      \EXP_\ell\left(\hS^2_{\brc M^{2/3}}\right)=
      \sum_{k\leq \brc M^{2/3}/\bn}
      \EXP_\ell \left([P_k']^2\right)+
      \cO\left(M^{1/3+2\delta}\right) .
\end{equation}

\subsection{Tightness} \label{subsecTight}
We precede the proof of Proposition~\ref{PrTight} with a few
general remarks.

To establish the tightness of a family of probability measures
$\{P_M\}$ on the space of continuous functions $C[0,\brc]$ we need
to show that for any $\varepsilon>0$ there exists a compact subset
$K_\varepsilon\subset C[0,\brc]$ such that $P_M(K_{\varepsilon})>
1-\varepsilon$ for all $M$. The compactness of $K_\varepsilon$
means that the functions $\{F\in K_\varepsilon\}$ are uniformly
bounded at $\tau=0$ and equicontinuous on $[0,\brc]$. All our
families of functions in Proposition~\ref{PrTight} are obviously
uniformly bounded at $\tau = 0$, hence we only need to worry about
the equicontinuity. For any $M_0>0$ all our functions $\tS(\tau)$,
$\tQ(\tau)$, $\tV(\tau)$, and $\ttt(\tau)$ corresponding to
$M<M_0$ have uniformly bounded derivatives (with a bound depending
on $M_0$), hence they trivially make a compact set. Thus, to prove
the tightness for these functions, it is enough to construct a
compact set $K_{\varepsilon}$ such that $P_M(K_{\varepsilon})>
1-\varepsilon$ for all $M>M_\varepsilon$ with some
$M_{\varepsilon}>1$, hence in our proofs we can (and will) assume
that $M$ is large enough.

Lastly, recall that each $m\in\fM$ satisfies
(\ref{aux1})--(\ref{aux3}). Since now we can assume that
$M_{\varepsilon}^{-50}<\varepsilon/2$, it will be enough to prove
all necessary measure estimates for measures $\mes_{\ell}$ on
\index{Standard pair} individual standard pairs $\ell=(\gamma,\rho)$
with $\length(\gamma)>M^{-100}$ (but our estimates must be uniform
over all such standard pairs).

First we prove the part (a) of Proposition~\ref{PrTight}. Let
$\cC_N$ be the space of continuous functions $S(\tau)$ on
$[0,\brc]$ such that
\begin{equation}
      \label{FSpace}
       \left|S\left(\frac{k+1}{2^m}\right) -S\left(\frac{k}{2^m}\right)\right|
\leq 2^{-\frac m8}
\end{equation}
for all $m\geq N$ and $k<2^m\brc$. Observe that functions in
$\cC_N$ are equicontinuous since they are uniformly H\"older on a
dense set (of binary rationals) and by continuity they are
globally H\"older continuous. We claim that for each $\cE>0$ there
exists $N$ such that for all $\ell = (\gamma, \rho)$ with
$\length(\gamma)>M^{-100}$
$$
      \mes_\ell(\tS \in \cC_N)>1-\cE
$$
uniformly in $M$, where $\tS$ is defined by (\ref{Rescale}). Note
that
\beq
        \label{smallint}
    \left|\tS \left(\frac{k+1}{2^m}\right) -
    \tS \left(\frac{k}{2^m}\right)\right|
    \leq\frac{\|A\|_{\infty} M^{1/3}}{2^m}
\eeq
so (\ref{FSpace}) holds for all $m$ such that $2^{-m}< \Const\,
M^{-8/21}.$ Assume now that $2^{-m}\geq \Const\, M^{-8/21}$.
Equivalently, we need to estimate $|\hS_{n_2}-\hS_{n_1}|$ for
$|n_2-n_1|\geq \Const\, M^{2/7}.$

\begin{lemma}
\label{ShiftMom} For all $n_1, n_2$ such that $|n_2-n_1|>\Const\,
M^{2/7}$ and for all $\ell = (\gamma, \rho)$ with
$\length(\gamma)>M^{-100}$
$$
      \EXP_\ell\Bigl(\bigl[\hS_{n_2}-\hS_{n_1}\bigr]^4\Bigr)
      \leq \Const\, (n_2-n_1)^2.
$$
\end{lemma}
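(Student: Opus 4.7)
The plan is to reduce the increment $\hS_{n_2} - \hS_{n_1}$ to a fourth moment of $\hS_n$ (with $n = n_2 - n_1$) starting from a freshly decomposed collection of standard pairs at time $n_1$, and then invoke Proposition~\ref{PrMomA}(c). First I would apply the propagation Proposition~\ref{PrDistEq1} at time $n_1$: since $\length(\gamma) > M^{-100}$ we have $K|\ln\length(\gamma)|\leq \Const\,\ln M$, which is dominated by $n_1$ (in the regime we care about, $n_1$ is at least of order $M^{2/7}\gg \ln M$), so the image $\cF^{n_1}(\mes_\ell)$ decomposes as $\sum_\alpha c_\alpha\,\mes_{\ell_\alpha}$ where $\ell_\alpha=(\gamma_\alpha,\rho_\alpha)$ are standard pairs with $\length(\gamma_\alpha)>M^{-100}$ up to a remainder of $\mes_\ell$-mass $\cO(M^{-50})$. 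Because the freeze-up indicator $x\in I_{[j/\bn]}$ is defined through the same dynamics, the restriction of the increment $\hS_{n_2}-\hS_{n_1}$ to each component corresponds under $\cF^{n_1}$ to the random variable $\hS_n$ computed on $\ell_\alpha$. Thus
\begin{equation*}
\EXP_\ell\bigl([\hS_{n_2}-\hS_{n_1}]^4\bigr)
=\sum_\alpha c_\alpha\,\EXP_{\ell_\alpha}\bigl(\hS_n^4\bigr)+\cO(M^{-40}),
\end{equation*}
since $\|A\|_\infty$ and the total number of summands are both at most polynomial in $M$.

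When $n\geq M^{1/2}$, Proposition~\ref{PrMomA}(c) applies directly on each $\ell_\alpha$ and yields $\EXP_{\ell_\alpha}(\hS_n^4)\leq \Const\,n^2$, which after summation against $\sum_\alpha c_\alpha \leq 1$ gives the desired bound. The remaining task is therefore to cover the transitional range $\Const\,M^{2/7}\leq n<M^{1/2}$, where Proposition~\ref{PrMomA} does not apply off the shelf. For this range I would rerun the big/small block induction of Section~\ref{subsecMEAp} with modified block sizes: replace $\bn=\kappa_M\sqrt{M}$ by a smaller $\bn^\ast\in[\ln M,\, n^{1/2}]$ (for instance $\bn^\ast=n^{1/2}$), and keep small blocks of length $M^\delta$. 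All ingredients of Lemma~\ref{AP1St}—namely Propositions~\ref{PrDistEq1} and \ref{PrDistEq2} and Corollary~\ref{PrDistEq3}—continue to apply at these shorter scales since the lower bound $K|\ln\length(\gamma_\alpha)|\leq \bn^\ast$ remains valid and $\bn^\ast\leq \delta_\diamond\sqrt{M}$. The inductive estimates (\ref{IndM1})--(\ref{IndM4}) then reproduce $\EXP_{\ell_\alpha}(\hS_n^4)\leq G_4(n/\bn^\ast)^2(\bn^\ast)^2=G_4 n^2$, which is the desired bound.

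The main obstacle I anticipate is verifying that the error terms produced by the shorter big blocks $\bn^\ast$ are still controlled—in particular, in the analogue of (\ref{MainContrM2}) the cross term $\EXP_{\max}(U_k'P_k')$ must remain subdominant, which requires $\bn^\ast$ large enough for the exponential decorrelation in Lemma~\ref{INDEP} to beat the power-growth of $\EXP_{\max}(|U_k'|)$; a choice such as $\bn^\ast=n^{1/2}\gg M^{1/7}\gg \ln M$ comfortably accommodates this. Once these bookkeeping checks are made, summing over $\alpha$ in the decomposition yields $\EXP_\ell([\hS_{n_2}-\hS_{n_1}]^4)\leq \Const\,(n_2-n_1)^2$ uniformly in $n_1,n_2$ satisfying the hypothesis of the lemma.
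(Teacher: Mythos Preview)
Your approach differs from the paper's and contains a real (though repairable) gap in the identification step. The paper does not first propagate to time $n_1$ and then restart; instead it works directly with the existing big-block decomposition of Section~\ref{subsecMEAp}. For $n_2-n_1\geq c\sqrt{M}$ the increment $\hS_{n_2}-\hS_{n_1}$ is, up to at most two partial boundary blocks, a sum of consecutive $P_k'$'s and $P_k''$'s already defined on $\ell$, and the induction \eqref{IndM1}--\eqref{IndM4} is simply rerun over those blocks; for $n_2-n_1<c\sqrt{M}$ the interval lies inside one or two big blocks and the direct fourth-moment computation of Lemma~\ref{AP1St}(c) applies verbatim with $n_2-n_1$ in place of $\bn$. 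No new block scale $\bn^\ast$ is needed.

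The gap in your argument is the sentence ``the restriction of the increment $\hS_{n_2}-\hS_{n_1}$ to each component corresponds under $\cF^{n_1}$ to the random variable $\hS_n$ computed on $\ell_\alpha$.'' This is not correct as written. The function $\hA$ is defined via the sets $I_k$ of Section~\ref{subsecMom}, which are built from the block decomposition of the \emph{original} pair $\ell$ at the specific times $k\bn$. The standard pairs $\ell_\alpha$ produced by Proposition~\ref{PrDistEq1} at an arbitrary time $n_1$ are \emph{not} the curves $\gamma_{\alpha,k}$ of that construction, and a fresh stopping construction on $\ell_\alpha$ would check the $\Upsilon_{\delta_2}$ condition at times $n_1+k\bn$ rather than at the original $k\bn$. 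So the two versions of $\hA$ disagree on a set you have not controlled. Relatedly, some $\ell_\alpha$ may already have exited $\Upsilon_{\delta_2}^\ast$ (so Proposition~\ref{PrMomA} does not apply to them as stated), while on $\ell$ those points have been frozen and contribute zero; you need to separate these out explicitly. The clean fix is to decompose at the block boundary $\lfloor n_1/\bn\rfloor\bn$ using the curves $\gamma_{\alpha,\lfloor n_1/\bn\rfloor}$ themselves (then the stopping constructions align exactly), and handle the leftover partial block of length $<\bn$ by the Lemma~\ref{AP1St}(c) argument---which is precisely the paper's route. A minor additional point: you assume $n_1\gtrsim M^{2/7}$, but the lemma only assumes $|n_2-n_1|>\Const\,M^{2/7}$; the case of small $n_1$ needs a separate (trivial) remark.
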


\proof For $n_2-n_1\geq cM^{1/2}$, our estimate follows from
Lemma~\ref{AP1St} (c) and the argument used in the proof of
Proposition~\ref{PrMomA} (c). For smaller $n_2-n_1$, the proof is
similar to that of Lemma~\ref{AP1St} (c). \qed
\medskip

Lemma~\ref{ShiftMom} implies that for fixed $k, m$
\begin{align*}
    \Delta_m & \colon =
    \mes_\ell\left(\left|\tS\left(\frac{k+1}{2^m}\right)
    -\tS\left(\frac{k}{2^m}\right)\right| > 2^{-m/8} \right) \\
    &= \mes_\ell\left(\left|\tS\left(\frac{k+1}{2^m}\right)
    -\tS\left(\frac{k}{2^m}\right)\right|^4> 2^{-m/2} \right) \\
    &\leq 2^{m/2}\, \EXP_\ell\left( \left|\tS\left(\frac{k+1}{2^m}\right)
    -\tS\left(\frac{k}{2^m}\right)\right|^4 \right) \\
    &\leq \Const\, \frac{2^{m/2}}{2^{2m}}=\Const\, 2^{-3m/2} .
\end{align*}
Summation over $k$ and $m$ completes the proof of part (a) of
Proposition~\ref{PrTight}.

We now prove part (b). The tightness of $\tV(\tau)$ follows from
(\ref{CR2a}), (\ref{DeltaVexp}) and Proposition~\ref{PrTight} (a)
applied to the function $\cA$ (the contribution of the correction
term $\cO\left(M^{-3/2}\right)$ in (\ref{DeltaVexp}) is well
within our error bounds).

The equicontinuity of $\tQ(\tau)$ follows from a simple estimate:
$$
   \|\tQ(\tau_2)-\tQ(\tau_1)\|
   \leq (\tau_2-\tau_1)\,\max_\tau \|\tV(\tau)\|
$$
Hence the function $\tQ(\tau)$ is Lipschitz continuous with
Lipschitz constant $\max_{[0,T]}\|\tV(\tau)\|^2$ that can be
bounded by using the tightness of $\tV(\tau)$. Hence the tightness
of $\tQ(\tau)$.

To prove the tightness of $\ttt(\tau)$ we consider intercollision
times
$$s_j=\hht_{j+1}-\hht_j=\hd_j/\|v_j\|, $$
where $d_j$ is the
distance between the points of the $j$th and $(j+1)$st collisions
and $\hd_j=d_j 1_{j\leq \bk\bn}$, for all $0\leq j\leq
M^{2/3}\brc$. Note that $\|v_j\|\geq \Const>0$ for all $j<\bk\bn$,
hence $s_j\leq \Const$. Let $\hL_j$ equal $\brL$ if $j<\bk\bn$ and
0 otherwise.

Consider the function $d(x),$ $x\in\Omega$, equal to the distance
between the positions of the light particle at the points $x$ and
$\cF(x)$ (the distance between its successive collisions). In
Section~\ref{subsecA3} we prove the following:

\begin{proposition}
The function $d$ belongs in our space $\fR$. The average
$\mu_{Q,V}(d^k)$ is a Lipschitz continuous function of $Q,V$ for
$k\in\naturals.$ In particular, we have
$$
    \mu_{Q,V}(d) = \brL + \cO(\|V\|)
$$
where $\brL$ is the mean free path defined by (\ref{FPL}). \label{propd}
\end{proposition}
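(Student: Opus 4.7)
The plan is to verify the three claims in sequence: membership $d \in \fR$, Lipschitz continuity of the averages $\mu_{Q,V}(d^k)$, and the asymptotic $\mu_{Q,V}(d) = \brL + \cO(\|V\|)$. To establish $d \in \fR$, I would argue geometrically. For $x = (Q,V,q,v) \in \Omega$, the value $d(x)$ is the distance travelled by the light particle before its next collision with $\dcD$ or with the slowly moving heavy disk, so $d$ is continuous on $\Omega \setminus \cS_1$ and extends continuously to the closure of each component of $\Omega \setminus \cS_1$, where by construction $\cS_1 \setminus \partial\Omega = \cF^{-1}(\partial\Omega)$ is the set of grazing next collisions. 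From the derivative estimate (\ref{DxcF}) and the Remark after Proposition \ref{prhyper}, the local Lipschitz constant of $d$ is of order $1/\cos\varphi_+$ where $\varphi_+$ is the angle at $\cF(x)$, and $\dist(x,\cS_1) = \cO(\cos^2\varphi_+)$. Hence $\Lip_x(d) \leq \Const\,[\dist(x,\cS_1)]^{-1/2}$ with $\beta_d = 1/2$; integrating this bound along an unstable curve yields H\"older continuity with $\alpha_d = 1/2$, just as (\ref{cFHolder}) follows from (\ref{DxcF}).

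Next, to show Lipschitz continuity of $(Q,V) \mapsto \mu_{Q,V}(d^k)$, I would decouple the $Q$- and $V$-dependencies. Using $\pi_0$ to identify every $\Omega_{Q,V}$ with $\Omega_0$, the pushforward of $\mu_{Q,V}$ is the \emph{fixed} measure $c^{-1}\cos\varphi\,dr\,d\varphi$, since $\length(\dcP(Q)) = 2\pi\br$ is $Q$-independent. Let $d^{(0)}_Q$ denote the free-flight distance in the frozen-disk billiard with disk centre at $Q$; during the bounded free time the real heavy disk moves by $\cO(\|V\|)$, so $d - d^{(0)}_Q$ is pointwise $\cO(\|V\|)$ and is Lipschitz in $V$ with $V$-derivative of order one. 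It remains to show that $Q \mapsto \int_{\Omega_0} (d^{(0)}_Q)^k \, d\mu_0$ is Lipschitz. By Lemma \ref{lmclose0}(a) and Corollary \ref{crclose2}, the singular set of $d^{(0)}_Q$ on $\Omega_0$ shifts by $\cO(\|Q_1 - Q_2\|)$ in Hausdorff distance, so the strip it sweeps has area $\cO(\|Q_1 - Q_2\|)$ and contributes $\cO(\|Q_1 - Q_2\|)$ to the integral. Away from this strip, $d^{(0)}_Q$ is Lipschitz in $Q$ with $Q$-derivative bounded by $\Const\,[\dist(\cdot, \cS_1)]^{-1/2}$, which is integrable since $\beta_d = 1/2 < 1$. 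Crucially, because we work with the one-step function $d$ (i.e.\ $n_A = 1$) in the frozen-disk ($M = \infty$) setting, the $n_A(\|V\|+\|V'\|)$ and $n_A^2/M$ error terms of Lemma \ref{lmAA} are absent, so we obtain true Lipschitz continuity rather than merely H\"older.

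Finally, the asymptotic formula follows from Step 2: $\mu_{Q,V}(d) = \int (d^{(0)}_Q)\,d\mu_0 + \cO(\|V\|)$, and by Santal\'o's mean-free-path formula for dispersing billiards, $\int d^{(0)}_Q\,d\mu_0 = \pi\,\Area(\cD\setminus\cP(Q))/\length(\partial(\cD\setminus\cP(Q)))$, which equals $\brL$ by (\ref{FPL}) and is manifestly $Q$-independent (neither $\Area(\cD) - \pi\br^2$ nor $\length(\dcD) + 2\pi\br$ changes under a translation of the disk). The principal difficulty I anticipate is the strip estimate in Step 2: bounding the contribution of the moving singularity set in a genuinely Lipschitz (not just H\"older) fashion requires carefully combining the area bound $\cO(\|Q_1-Q_2\|)$ with the integrable Lipschitz weight $[\dist(\cdot,\cS_1)]^{-1/2}$. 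Fortunately, the absence of $\cF^n$-iteration prevents the logarithmic accumulation of singularities that appeared in Proposition \ref{PrRegDif}, so a direct one-step calculation should suffice.
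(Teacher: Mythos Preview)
Your proposal is essentially correct but takes a different route from the paper. The paper's proof (in Section~\ref{subsecA3}) is much terser: for $d\in\fR$ it simply appeals to the fact that the configuration space is a four-dimensional domain bounded by cylindrical surfaces; for Lipschitz continuity it uses a \emph{moving-frame trick}: pass to the frame moving with velocity $V$, so that the disk $(Q,V)$ is at rest, the light particle travels with velocity $v-V$, and the competing disk $(Q',V')$ moves with velocity $V'-V$ and hence sits at distance $\cO(h)$, $h=\|Q-Q'\|+\|V-V'\|$, at the moment of the next collision. Both the $Q$- and $V$-perturbations are thus reduced in one stroke to a spatial displacement of the obstacle by $\cO(h)$, after which the paper says the average difference is $\cO(h)$ ``by direct inspection''. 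Your approach instead decouples $V$ and $Q$: first freeze the disk to reduce to $d^{(0)}_Q$, then invoke the Lemma~\ref{lmclose0}/Lemma~\ref{lmAA}-type strip argument for the $Q$-dependence. This is more systematic and makes explicit the singularity-strip estimate that the paper hides behind ``direct inspection''; the paper's frame change buys brevity by treating $Q$ and $V$ symmetrically.

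One genuine imprecision to flag: your claim that $d-d^{(0)}_Q$ is \emph{pointwise} $\cO(\|V\|)$ is false on the thin set (of $\mu_0$-measure $\cO(\|V\|)$) where moving the disk switches the identity of the next-collision obstacle; there the difference is $\cO(1)$. The repair is exactly the strip argument you already use for $Q$: the singularity set of $d$ on $\Omega_0$ also shifts by $\cO(\|V\|)$ under the $V$-perturbation, the swept strip has area $\cO(\|V\|)$ and contributes $\cO(\|V\|)$ to the integral, while off the strip your pointwise bound is valid. With that fix, your argument goes through and yields the same Lipschitz estimate as the paper, together with the Santal\'o identity $\mu_{Q,0}(d)=\brL$ for the final asymptotic.
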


Let $A(x)=d(x)-\mu_{Q,V}(d)$ and $B(x)=\mu_{Q,V}(d)-\brL$. Then
$d(x) = A(x)+ \brL +B(x)$ and, accordingly, $\hd(x) = \hA(x) +
\hL(x) + \hB(x)$. Therefore
\begin{align*}
   \ttt(\tau) &= \frac{1}{M^{1/3}} \sum_{j=0}^{n} \frac{\hA(x_j)}{\|v_j\|}\\
   &\quad +
   \frac{1}{M^{1/3}} \sum_{j=0}^{n} \hL(x_j) \left(\frac{1}{\|v_j\|}-1\right)
   + \frac{1}{M^{1/3}} \sum_{j=0}^{n} \frac{\hB(x_j)}{\|v_j\|}\\
   &= \ttt_1(\tau)+\ttt_2(\tau)+\ttt_3(\tau)
\end{align*}
where $n=M^{2/3}\tau$. The function $A(x)/\|v(x)\|$ satisfies the
conditions of Proposition~\ref{PrMomA}, in particular
$\mu_{Q,V}(A/\|v\|)=0$, hence $\ttt_1(\tau)$ is tight due to
Proposition~\ref{PrTight} (a). Next
\beq
   \frac{1}{\|v_j\|}-1 =
   \frac{1-\sqrt{1-M\|V_j\|^2}}{\|v_j\|}=\cO\left (M\|V_j\|^2\right )
      \label{vvj}
\eeq
To prove the equicontinuity of $\ttt_2(\tau)$ we observe that
\begin{align*}
   \left|\ttt_2(\tau_2)-\ttt_2(\tau_1)\right| & \leq
   \Const\, \frac{M^{2/3}\left|\tau_2-\tau_1\right|}{M^{1/3}}
   \max_{n\leq \brc M^{2/3}} \left(M\|V_n\|^2\right)\\
   & =  |\tau_2-\tau_1|\,
   \max_{\tau\leq\brc} \|\tV(\tau)\|^2 .
\end{align*}
Hence, as before, the function $\ttt_2(\tau)$ is Lipschitz
continuous with Lipschitz constant $\max_{[0,T]}\|\tV(\tau)\|^2$
that can be bounded due to the tightness of $\tV(\tau)$. To prove
the equicontinuity of $\ttt_3(\tau)$ we use
Proposition~\ref{propd} and write
$$
   \left|\mu_{Q,V}(d)- \brL \right|=
   \left|\mu_{Q,V}(d)-\mu_{Q,0}(d)\right|\leq
   \Const\,\|V\| ,
$$
hence
$$
   \left|\ttt_3(\tau_2)-\ttt_3(\tau_1)\right| \leq
   \frac{\Const}{M^{1/3}}\,|\tau_2-\tau_1|\,
   \max_{\tau\leq\brc} \|\tV(\tau)\|,
$$
which is not only bounded due to the tightness of $\tV$, but can
be made arbitrarily small.

Proposition~\ref{PrTight} is proved. \qed

\subsection{Second moment}
\label{subsecM2} Here we prove Proposition~\ref{PrMom}. We work in
the context of Theorem~\ref{ThVelBM}, hence $\kappa_M=M^{-\delta}$
and $\bn=M^{1/2-\delta}$. The context of Theorem~\ref{Root} will be
discussed in the next section.

Recall that $n=\varkappa M^{2/3}.$ Our first step is to show that
under the conditions of Proposition~\ref{PrMomA}
\beq
         \EXP_\ell \bigl(\hS_n^2\bigr)= n
         \left[D_{\brQ,\brV}(A)+g\right] ,
           \label{CrM2}
\eeq
where $g\to 0$ as $M\to\infty$ and $\varkappa\to 0$ uniformly over
all standard pairs with \index{Standard pair}
$\pi_1(\gamma)\subset\Upsilon_{\delta_2,a}^{\ast}$ and
$\length(\gamma)>M^{-100}.$

Indeed, by (\ref{PreM2}) and Lemma~\ref{AP1St} (b) we have
$$
    \EXP_\ell\bigl(\hS_{n}^2\bigr) =
    \bn \sum_{k=0}^{n/\bn}
    \EXP_\ell(\hD_{Q_{k\bn}, V_{k\bn}}) +
    \cO\left(M^{1/3+2\delta}\right) + o(\bn).
$$
By Proposition~\ref{PrTight}, for most of the initial conditions
the quantity
$$
     \max_{k<n/\bn} \left\{
     \|Q_{k\bn}-\brQ\|, M^{2/3} \|V_{k\bn}-\brV\|\right\}
$$
is small if $\varkappa$ is small, hence for most of the initial
conditions $x\in\gamma$ we have $\bk(x)\geq n/\bn$ thus
$\hD_{Q_{k\bn}, V_{k\bn}}(x)=D_{Q_{k\bn}, V_{k\bn}}(A)$, and so we
would only make small error if we replace $\hD_{Q_{k\tau},
V_{k\tau}}$ by $D_{\brQ,\brV}(A)$ (note that $D_{Q,V}(A)$ is a
bounded and continuous function of $Q,V$ on the domain
$\dist(Q,\dcD)>\br+\delta_1$). Thus we obtain (\ref{CrM2}).

Now the parts (a)--(c) of Proposition \ref{PrMom} easily follow from
Proposition \ref{PrMomA} (a), (c), and (\ref{CrM2}). To prove (d), we
write
\begin{align*}
      \EXP_\ell \bigl(\hQ_n-\brQ\bigr)
      &=\EXP_\ell\biggl(\sum_{j=0}^{n-1}
      s_j \hV_j\biggr)\\
      &= \brV\EXP_\ell\biggl(\sum_{j=0}^{n-1}
      s_j\biggr)+\EXP_\ell\biggl(\sum_{j=0}^{n-1} s_j
      (\hV_j-\brV)\biggr)=I+\RmII
\end{align*}
where $s_j=\hht_{j+1}-\hht_j$ is the intercollision time. To
estimate $I$ we use the notation of the proof of
Proposition~\ref{PrTight} (b) and write:
$$
  \EXP_\ell\left (\sum s_j\right)=\EXP_\ell\left(s_j-\hL_j/\|v_j\|\right)
  +\EXP_\ell\left(\hL_j/\|v_j\|\right)=I_a+I_b
$$
As we noted earlier, the function $s_j-\hL_j/\|v_j\|$ satisfies
the assumptions of Proposition~\ref{PrMomA}, hence its part (a)
implies $I_a=\cO\left (M^{1/6+ \delta}\right )$. By using
(\ref{vvj}) and Proposition~\ref{PrTight} (b) we get
$$
     I_b = (1+o_{\varkappa\to 0}(1))\, \varkappa \brL M^{2/3}.
$$
Next, by the Cauchy-Schwartz inequality and Proposition~\ref{PrMom} (b)
\begin{align*}
      |\RmII| &\leq  \Const\, \sum_j
      \sqrt{\EXP_\ell\bigl(\|\hV_j-\brV\|^2\bigr)}\\
      &\leq  \Const\, \sum_j \frac{j^{1/2}}{M}\\
      &\leq  \Const\, \frac{(\varkappa M^{2/3})^{3/2}} M
      \leq \Const\, \varkappa^{3/2}.
\end{align*}
This implies (d). To prove (e), we write, in a similar manner,
\begin{align*}
        \EXP_\ell\left(\|\brQ_n-\brQ\|^2\right) & \leq
        2\, \|\brV\|^2\, \EXP_\ell\left (\left [\sum
        s_j\right ]^2\right )+2\,\EXP_\ell\left(
        \left\|\sum s_j(\hV_j-\brV)\right\|^2\right) \\
        &= I+\RmII.
\end{align*}
Then we have
$$
   |I| \leq \Const\, \varkappa^2 \|\brV\|^2 M^{4/3}=\cO(\varkappa^2)
$$
and
\begin{align*}
      |\RmII| &\leq  \Const\, \varkappa M^{2/3} \sum
      \EXP_\ell\Bigl(s_j^2\bigl\|\hV_j-\brV\bigr\|^2\Bigr)\\
      &\leq  \Const\, \varkappa M^{2/3} \sum
      \frac{j}{M^2}\\
      &\leq \Const\, \varkappa M^{2/3}
      \frac{(\varkappa M^{2/3})^2}{M^2}=\cO\left(\varkappa^3\right) .
\end{align*}
Proposition~\ref{PrMom} is proven. \qed

\subsection{Martingale property}
\label{subsecMart} To prove Proposition~\ref{PrGen} we need to show
that for every $m\geq 1$, all bounded and Lipschitz continuous
functions $B_1,\dots, B_m$ on the $(Q,V)$ space, and all times $s_1
< s_2 < \dots < s_m\leq \tau_1<\tau_2$ we have
$$
   \EXP\left( \biggl[\prod_{i=1}^m B_i(\hbQ(s_i), \hbV(s_i))\biggr]
   \Bigl[\bM(\tau_2)-\bM(\tau_1)\Bigr]\right)= 0.
$$
where $\EXP$ denotes the expectation and
\begin{align*}
    \bM(\tau_2)-\bM(\tau_1)
    &= B(\hbQ(\tau_2), \hbV(\tau_2))-
     B(\hbQ(\tau_1), \hbV(\tau_1))\\
     &\quad -\int_{\tau_1}^{\tau_2} (\cL B)
       (\hbQ(s), \hbV(s))\, ds
\end{align*}
In other words, we have to show that
\beq
       \EXP_{\max}\left(\biggl[\prod_{i=1}^m
       B_i\left(\tQ(s_i), \tV(s_i)\right)\biggr]
       \biggl[\beta_{J_2}-\beta_{J_1}-
       M^{-2/3}\sum_{j=J_1}^{J_2}
       \zeta_j\biggr]\right)\to 0
         \label{bigEXP}
\eeq
as $M\to \infty$, where
$$
    \beta_j=B\bigl(\hQ_j, M^{2/3}\hV_j\bigr),\qquad
    \zeta_j=\cL B\bigl(\hQ_j, M^{2/3}\hV_j\bigr).
$$
and
$$
    J_1= M^{2/3}\tau_1,\qquad
    J_2= M^{2/3}\tau_2
$$
(see (\ref{bmmax}) for the definition of $\EXP_{\max}(\cdot)$ and note
that $B$, $\cL B$, and $B_i$ are bounded and continuous functions).
Lemma~\ref{INDEP} allows us to eliminate the first factor in
(\ref{bigEXP}) and reduce it to
\beq
            \label{bmone}
      \EXP_{\max}\biggl(
      \beta_{J}-\beta_0-M^{-2/3}
      \sum_{j=0}^{J} \zeta_j\biggr)\to 0
      \qquad {\rm as}\ \ M\to\infty
\eeq
where $J= M^{2/3}(\tau_2-\tau_1)$
(note that even if $s_m=\tau_1$, we can approximate
$$ B_m((\tQ(s_m), \tV(s_m))\approx
B_m(\hQ_{s_m M^{2/3}-M^\delta}, \hV_{s_m M^{2/3}-M^\delta}),$$
$$ B((\tQ(\tau_1), \tV(\tau_1))\approx
B(\hQ_{\tau_1 M^{2/3}}, \hV_{\tau_1 M^{2/3}}),$$
so Lemma~\ref{INDEP} applies).

%the factor $B_m(\tQ(s_m), \tV(s_m))$ will be almost constant on
%every standard pair taken at time $\tau_1$, so it can be dropped
%from (\ref{bigEXP})).
We will denote $\tau_2-\tau_1$ by $\tau$.

Next we prove (\ref{bmone}). Given a small constant $\varkappa>0$
and a large constant $R>0$, we define $\hQ', \tV'$ similarly to
$\hQ, \hV$ but with an additional stopping rule, defined in in the
notation of Section~\ref{subsecMom}: at any time moment $k$ that is
a multiple of $[\varkappa M^{2/3}/\bn]$, we ``remove from the
\index{Standard pair} circulation'' all the standard pairs
$\ell_{\alpha,k} = (\gamma_{\alpha,k} ,\rho_{\alpha,k})$ where
$\|V\| > M^{-2/3}R$ for some point $(Q,V) \in \pi_1
(\gamma_{\alpha,k})$ (technically, we add the corresponding curve
$\cF^{-k\bn}(\gamma_{\alpha,k})$ to the set $I_k$, see
\ref{subsecMom}), and we do not change the construction of
Section~\ref{subsecMom} for any time $k$ that is not a multiple of
$[\varkappa M^{2/3}/\bn]$. Thus, the set $I_k$ may get larger and
$\bk(x)$ may decrease, respectively. However, by
Proposition~\ref{PrTight} (b) we have, uniformly in $\varkappa$,
$$
     \sup_\ell \mes_\ell\left \{
     (\hQ', \hV')\neq (\hQ, \hV)\right\}\to 0
$$
as $R\to\infty, M\to\infty$, where the supremum is taken over all
\index{Standard pair} standard pairs $\ell=(\gamma, \rho)$ with
$\length(\gamma)>M^{-100}$. Hence it is enough to show that for all
large enough $R$
\begin{equation}
\label{StopMart}
      \lim_{\varkappa\to 0}\lim_{M\to\infty}
      \EXP_{\max}\biggl(
      \beta_{J}'-\beta_0'-M^{-2/3}
      \sum_{j=0}^{J} \zeta_j'\biggr)\to 0 ,
\end{equation}
where
\begin{equation*}
    \beta_j'=B\bigl(\hQ_j', M^{2/3}\hV_j'\bigr), \quad
    \beta_0'=B\bigl(\brQ, M^{2/3}\brV\bigr),
\end{equation*}
\begin{equation*}
\zeta_j'=
\begin{cases} \cL B\bigl(\hQ_j', M^{2/3}\hV_j'\bigr), &
\text{if } j\leq \bk \bn \cr
              0                                       &
\text{otherwise} \cr
\end{cases}
\end{equation*}
\begin{equation*}
    J=M^{2/3}\tau.
\end{equation*}
(note that both expressions in parentheses in Eqs.\ (\ref{bmone})
and (\ref{StopMart}) are uniformly bounded by a constant
independent of $R$, because $B$ has a compact support). To
establish (\ref{StopMart}) it is enough to check that for all
large $R$ and uniformly in $k\leq\tau/\varkappa$
\begin{equation}
         \label{InfMart}
      \lim_{M\to\infty}
      \EXP_{\max}\biggl(
      \beta_{(k+1)\bL}'-\beta_{k\bL}'-M^{-2/3}
      \sum_{j=k\bL}^{(k+1)\bL} \zeta_j'\biggr)
        = o(\varkappa) ,
\end{equation}
where $\bL= \varkappa M^{2/3}$. To verify (\ref{InfMart}), we can
assume, without loss of generality, that $k=0.$ Next we expand the
function $B$ into Taylor series about the point
$(\brQ,M^{2/3}\brV)$:
\begin{align}
\label{MT}
     \beta_{\bL}'-\beta_0'
     & = \la \nabla_Q B, dQ\ra + \la \nabla_V B, dV\ra +
     \tfrac 12\, (dV)^TB_{VV}\, dV \\
     & +\cO\left(\|dQ\|^2+\|dV\|^3
     +\|dQ\| \|dV\|\right) \label{RT}
\end{align}
where $dQ=\hQ_{\bL}'-\brQ$ and $dV=M^{2/3}(\hV_{\bL}'-\brV)$, and
$B_{VV}$ is a $2\times 2$ matrix with components
$\partial^2_{V_i,V_j}B$, $1\leq i,j \leq 2$.
We claim that
%Applying Proposition~\ref{PrMom} one can easily derive
\begin{align}
     \EXP_\ell(\beta_{\bL}'-\beta_0') &=
     M^{2/3} \brL \la \brV, \nabla_Q B\ra +\tfrac{1}{2}
    \sum_{i,j=1}^2 \bigl(\brsigma^2_{\brQ}(\cA)\bigr)_{ij}\,
    \partial^2_{V_i,V_j} B+o(\varkappa)\nonumber\\
    &= (\cL B)(\brQ, M^{2/3}\brV)\, \varkappa
     +o(\varkappa)
    \label{Bry}
\end{align}
for each standard pair $\ell= (\gamma ,\rho)$ with \index{Standard
pair} $\length(\gamma)>M^{-100}$ Indeed the terms in (\ref{MT}) are
handled by Proposition~\ref{PrMom}(a), (b) and (d) whereas the terms
in (\ref{RT}) are bounded as follows
$$
   \EXP(||dQ||^2) =\cO(\varkappa^2)
$$
by Proposition \ref{PrMom}(e),
$$
   \EXP(||dV||^3)=\cO(\varkappa^{3/2})
$$
by Proposition \ref{PrMom}(c) and H\"older inequality,
$$
   \EXP(||dQ||||dV||) =\cO(\sqrt{\varkappa^2
   \varkappa})=\cO(\varkappa^{3/2})
$$
by Proposition \ref{PrMom} and Cauchy-Schwartz (note that
$\|\brV\|<M^{-2/3}R$ due to our modified construction of $\hQ'$ and
$\hV'$, hence Proposition~\ref{PrMom} applies). On the other hand,
by Proposition~\ref{PrTight} (b)
$$
       \max_{j\leq \bL} \left|
       \EXP_\ell\left[\cL B(\hQ_j',
       M^{2/3}\hV_j')-\cL B(\brQ, M^{2/3}\brV)
       \right]\right|=o_{M\to\infty, \varkappa\to 0} (1) ,
$$
hence
\begin{equation}
    \label{Int}
    \EXP_\ell\biggl(M^{-2/3} \sum_{j=0}^{\bL}
    \zeta_j'\biggr)=\cL B(\brQ, \brV)\,
    \varkappa\, (1+o_{M\to\infty, \varkappa\to 0}(1)) .
\end{equation}
Now (\ref{Bry}) and (\ref{Int}) imply (\ref{InfMart}).
Proposition~\ref{PrGen} is proved. \qed \medskip

\subsection{Transition to continuous time} \label{subsecTCT}
Here we prove Corollary~\ref{CrCTMart}. Pick a $\tau\in (0, \brc
\brL)$ and denote $t=M^{2/3}\tau$. For every $x\in\Omega$ choose
$n$ so that $t_n\leq t< t_{n+1}$. Then
\begin{align*}
   \tQ_*(\tau) &= \hQ_n+\cO\bigl(1/\sqrt{M}\bigr)\\
   &= \hQ_{[t/\brL]}+
   \bigl(\hQ_n-\hQ_{[t/\brL]}\bigr)
   +\cO\bigl(1/\sqrt{M}\bigr)
\end{align*}
By Proposition~\ref{PrTight} (b)
$$
   \mes_\ell \left(\|\hQ_n-\hQ_{[t_n/\brL]}\|>
   \max_{|n_1-n_2|<M^{1/3+\delta}}
   \|\hQ_{n_1}-\hQ_{n_2}\| \right)\to 0
$$
as $M\to\infty.$ By the tightness of $\tV(\tau)$
$$
   \mes_\ell \left(\max_{|n_1-n_2|<M^{1/3+\delta}}
   \|\hQ_{n_1}-\hQ_{n_2}\|>M^{-1/3+2\delta}
   \right)\to 0.
$$
Combining these estimates gives
$$
   \Delta_Q\colon=\mes_\ell \left(\sup_\tau
   \|\tQ_*(\tau)-\tQ(\tau/\brL)\|>\varepsilon\right)\to 0
$$
as $M\to\infty$. We also claim that
\beq
   \Delta_V\colon=\mes_\ell \left(\sup_\tau
   \|\tV_*(\tau)-\tV(\tau/\brL)\|>\varepsilon\right)\to 0
      \label{velDC}
\eeq
but this requires a slightly different argument. The tightness of
$\tV(\tau)$ means that for any $\varepsilon>0$ and $\varepsilon'>0$
there is $\varepsilon''>0$ such that
$$
   \mes_{\ell}\biggl(
   \sup_{|n_1-n_2|<M^{2/3}\varepsilon''}
   \|\hV_{n_1}-\hV_{n_2}\|>M^{-2/3}\varepsilon\biggr)
   < \varepsilon' .
$$
Hence, as before,
\begin{align*}
   \Delta_V
%&=\mes_\ell \Bigl(\sup_\tau \|\tV_*(\tau)-\tV(\tau/l)\|>\varepsilon\Bigr)  \\
    &<  \mes_\ell \biggl(\sup_{|n_1-n_2|<M^{1/3+\delta}}
   \|\hV_{n_1}-\hV_{n_2}\|>M^{-2/3}\varepsilon\biggr)+o(1) \\
    &<\varepsilon'+o(1)
\end{align*}
as $M\to\infty$. The arbitrariness of $\varepsilon'$ implies
(\ref{velDC}).

Thus each $(\bQ_\ast, \bV_\ast)$ can be obtained form the
corresponding $(\hbQ, \hbV)$ by the time change $\tau\to
\tau/\brL.$ \qed \medskip

\noindent{\em Remark}. In the proof of Corollary~\ref{CrCTMart} we used
the tightness of $\ttt(\tau)$, but it would be enough if the following
function
\beq
    \ttt_{\Diamond}(\tau)=  M^{-1/3-\delta/2}
    \left[\hht_{[\tau M^{2/3}]}
    -\brL\, \min\{\tau M^{2/3},\bk\bn\}\right]
      \label{sigmatildeD}
\eeq
was tight for some $\delta>0$. We will refer to this observation in
Chapter~\ref{ScSLP}.

\subsection{Uniqueness for stochastic differential equations}
\label{subsecUSDE} Here we establish the uniqueness of solutions of
(\ref{TIBM}) under the assumption that $\sigma_Q(\cA)$ satisfies
(\ref{sigQ1Q2}).

There are two types of uniqueness for stochastic differential
equations. \emph{Pathwise uniqueness} means, in our terms, that
given a Brownian motion $\bw(\tau)$, any two solutions
$(\bQ_1(\tau), \bV_1(\tau))$ and $(\bQ_2(\tau), \bV_2(\tau))$ such
that $(\bQ_1,\bV_1)(0)= (\bQ_2,\bV_2)(0)$ coincide almost surely.
\emph{Uniqueness in distribution} means that any two solutions of
the SDE have equal distributions provided their initial
distributions coincide. We need the uniqueness in distribution, but
according to \cite[Section IX.1]{RY} it follows from the pathwise
uniqueness, so we shall establish the later.

Our argument follows \cite[Section III]{H}. Let
$(\bQ_1(\tau),\bV_1(\tau))$ and $(\bQ_2(\tau),\bV_2(\tau))$ be two
solutions with the same initial conditions. Denote
$$
   \Delta\bQ(\tau)=\bQ_1(\tau)-\bQ_2(\tau), \qquad
   \Delta\bV(\tau)=\bV_1(\tau)-\bV_2(\tau)
$$
We need to show that $(\Delta\bQ,\Delta\bV)(\tau)\equiv 0$ with
probability one. Given $k>0$ let
$$
  \brtau_k=\sup\bigl\{\tau\colon\, \|\Delta\bQ(\tau)\|<0.1,\
  \|\bV_j(\tau)\|<k,\ j=1,2\bigr\}
$$
and for every $\tau\geq 0$ we set $\tau_k=\min\{\tau,\brtau_k\}$. Let
$$
  a(\tau)=\EXP\Bigl(\max_{s\leq\tau_k} \|\Delta\bV(s)\|^2\Bigr),
  \qquad
  b(\tau)=\EXP\Bigl(\max_{s\leq\tau_k} \|\Delta\bQ(s)\|^2\Bigr)
$$
where $\EXP$ denotes the mean value. Since the coefficients of
(\ref{TIBM}) are bounded due to our cutoffs, the functions
$a(\tau)$ and $b(\tau)$ are continuous. Our goal is to establish
that $a(\tau)=b(\tau)\equiv 0$ for each $k>0$.
% For brevity, we will drop the subscript $k$.

Observe that
$$
   \Delta\bV(\tau) =
   \int_0^{\tau}\left[\sigma_{\bQ_1(s)}(\cA)
   -\sigma_{\bQ_2(s)}(\cA)\right]\, d\bw(s)
$$
due to (\ref{TIBM}), hence $\Delta\bV(\tau)$ is a martingale. By Doob's
maximal inequality
$$
    a(\tau)\leq C_1\, \EXP[\Delta\bV(\tau_k)]^2
$$
(here and on $C_i>0$ are independent of $(\bQ_1,\bV_1)$ and
$(\bQ_2, \bV_2)$). By $L^2$--isomorphism
property of stochastic integration
$$
   a(\tau)\leq C_2\,\EXP\int_0^{\tau_k}\left\|\sigma_{\bQ_1(s)}(\cA)
   -\sigma_{\bQ_2(s)}(\cA)\right\|^2\, ds
$$
Now (\ref{sigQ1Q2}) yields
$$
   a(\tau) \leq C_3\, \int_0^{\tau_k}
   \EXP\left( \left\|\Delta\bQ(s)\right\|^2
   \ln^2 \left\|\Delta\bQ(s)\right\|^2\right)\,ds
$$
Observe that the function $G(s)=s\ln^2 s$ is convex on the interval
$0\leq s\leq 0.1$, and $\|\Delta\bQ(s)\|\leq 0.1$ for all
$s\leq\tau_k$. Thus, Jensen's inequality yields
\beq
        \label{a<b}
   a(\tau) \leq C_3 \int_0^{\tau_k} b(s) \ln^2 b(s)\, ds
\eeq
On the other hand,
$$
   \|\Delta\bQ(\tau)\|^2=\left\|\int_0^{\tau_k}\Delta\bV(s)\,
   ds \right\|^2
   \leq C_4\tau \int_0^{\tau_k} \|\Delta\bV(s)\|^2\, ds
$$
hence
\beq
       \label{b<a}
   b(\tau) \leq C_5 \int_0^{\tau_k} a(s)\, ds
\eeq
Our next goal is to show that (\ref{a<b}) and (\ref{b<a}), along
with initial conditions $a(0)=b(0)=0$, imply $a(\tau)=b(\tau)\equiv
0$. We use the following form of Gronwell inequality (see e.g.\
\cite[Chapter III]{H} for the proof of such results):

\begin{lemma}
\label{Gronwell} Let $f$ and $g$ be monotone functions on a rectangle
$R=[a_1, a_2]\times [b_1, b_2]$ and continuous functions $a(t)$ and
$b(t)$ satisfy
$$
   a(t)\leq \int_0^t f(a(s), b(s))\, ds,\qquad
   b(t)\leq \int_0^t g(a(s), b(s))\, ds
$$
Let $A$ and $B$ be solutions of the differential equations
$$
      A'=f(A,B), \qquad B'=g(A,B)
$$
If $(a(s),b(s))\in R$ and $(A(s),B(s))\in R$ for $0\leq s\leq t$ and
$$
       a(0)\leq A(0), \qquad b(0)\leq B(0)
$$
then
$$
       a(s)\leq A(s), \qquad b(s)\leq B(s)
$$
for all $0\leq s \leq t.$
\end{lemma}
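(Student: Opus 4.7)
The plan is a standard comparison argument for systems of integral/differential inequalities of quasi-monotone type: compare $(a,b)$ against a strictly super-optimal perturbation of $(A,B)$, derive a strict inequality by a first-contact (maximum principle) argument, and then pass to the limit as the perturbation vanishes. The monotonicity of $f,g$ (which I read as componentwise nondecreasing in each variable, consistent with the usage in \eqref{a<b}--\eqref{b<a}) is what legitimizes such a comparison.

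First, for each small $\varepsilon>0$ I would introduce the perturbed solution $(A_\varepsilon, B_\varepsilon)$ of
\begin{equation*}
A_\varepsilon' = f(A_\varepsilon, B_\varepsilon) + \varepsilon, \qquad
B_\varepsilon' = g(A_\varepsilon, B_\varepsilon) + \varepsilon,
\end{equation*}
with initial data $A_\varepsilon(0) = A(0) + \varepsilon$ and $B_\varepsilon(0) = B(0) + \varepsilon$. Continuous dependence of ODEs on data and parameters gives $(A_\varepsilon, B_\varepsilon) \to (A,B)$ uniformly on $[0,t]$; for $\varepsilon$ small the perturbed trajectory stays in a slight enlargement of $R$, so $f,g$ remain defined and monotone along it. Let $T$ be the infimum of those $s\in[0,t]$ at which one of the strict inequalities $a(s) < A_\varepsilon(s)$, $b(s) < B_\varepsilon(s)$ fails; the $+\varepsilon$ cushion at $s=0$ together with continuity gives $T>0$. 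If $T\le t$, say with $a(T) = A_\varepsilon(T)$, I would subtract the integral inequality for $a$ from the integrated ODE for $A_\varepsilon$ and exploit monotonicity of $f$ on $[0,T)$, where $(a,b)$ lies strictly below $(A_\varepsilon, B_\varepsilon)$, to obtain
\begin{equation*}
A_\varepsilon(T) - a(T) \;\ge\; \bigl(A(0)-a(0)\bigr) + \varepsilon + \int_0^T \bigl[f(A_\varepsilon,B_\varepsilon)-f(a,b)\bigr]\, ds + \varepsilon T \;\ge\; \varepsilon > 0,
\end{equation*}
contradicting $a(T)=A_\varepsilon(T)$ (the case $b(T)=B_\varepsilon(T)$ is symmetric via $g$). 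Hence the strict inequalities persist throughout $[0,t]$, and letting $\varepsilon \to 0$ delivers $a\le A$ and $b\le B$.

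The main obstacle is interpreting ``monotone'': for the first-contact step to close, what is really needed is that $f$ and $g$ be nondecreasing in \emph{both} arguments simultaneously (quasi-monotonicity in the sense of Kamke), not merely monotone in one variable. I would verify this for the concrete $f(a,b)=C_3 b\ln^2 b$ and $g(a,b)=C_5 a$ appearing in \eqref{a<b}--\eqref{b<a}: the map $b\mapsto b\ln^2 b$ is indeed nondecreasing on the relevant range $0<b<e^{-2}$, which is precisely why the truncations $\|\Delta\bQ\|<0.1$ and $\|\bV_j\|<k$ built into the stopping time $\brtau_k$ are imposed. A secondary technical point is keeping $(A_\varepsilon, B_\varepsilon)$ inside $R$ on all of $[0,t]$; this follows from uniform convergence provided $(A,B)$ lies in the interior of $R$, and otherwise one extends $f,g$ monotonically slightly beyond $\partial R$ and applies the conclusion up to the first exit time.
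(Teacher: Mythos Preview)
The paper does not actually prove this lemma: it states it and then refers the reader to Hartman's ODE textbook (\cite[Chapter III]{H}) for proofs of comparison results of this type. Your perturbation plus first-contact argument is precisely the standard proof of such Kamke--Wa\.zewski comparison theorems, and is essentially what one finds in Hartman. So your approach is correct and is the canonical one.

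Two small remarks. First, in your displayed inequality the term $(A(0)-a(0))$ is not quite what the subtraction yields: from $a(T)\le\int_0^T f(a,b)\,ds$ and the integrated ODE for $A_\varepsilon$ one gets $A_\varepsilon(T)-a(T)\ge A(0)+\varepsilon+\int_0^T[f(A_\varepsilon,B_\varepsilon)-f(a,b)]\,ds+\varepsilon T$, with $A(0)$ rather than $A(0)-a(0)$. This is harmless here because in the application $a(0)=b(0)=A(0)=B(0)=0$; but as literally stated the lemma needs either $A(0)\ge 0$ or the hypothesis rewritten as $a(t)-a(0)\le\int_0^t f$, which is the form the paper implicitly uses. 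Second, your reading of ``monotone'' as componentwise nondecreasing is exactly right for the first-contact step; strictly speaking only the off-diagonal (Kamke) monotonicity is needed, but the specific $f(a,b)=C_3\,b\ln^2 b$ and $g(a,b)=C_5 a$ in \eqref{a<b}--\eqref{b<a} are nondecreasing in both variables on the relevant range, as you note.
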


This lemma (and the fact that $b<0.01$) allows us to compare the
functions $a(\tau)$ and $b(\tau)$ with the solutions of the
differential equations
\begin{equation}
   \label{Ham}
        A'=C_4 B \ln^2 B, \qquad B'=C_5 A
\end{equation}
with initial conditions $A(0)=B(0)=0$. Our goal is to show that
$A(\tau)=B(\tau)\equiv 0$ is the only nonnegative solution of the
above initial value problem, i.e.\ there is no branching at
$\tau=0$.

Observe that (\ref{Ham}) is a Hamiltonian-type system whose Hamiltonian
\begin{align*}
   H &= \tfrac 12\, C_5A^2
   -C_4\int_0^Bu\ln^2 u\, du\\
    &= \tfrac 12\, C_5A^2
   -C_4\bigl(\tfrac 12\, B^2\ln^2B - \tfrac 12\, B^2\ln B
   +\tfrac 14\, B^2 \bigr)
\end{align*}
remains constant on all solutions (i.e.\ $H'\equiv 0$). On every
solution originating at $(0,0)$, we have $H(\tau)\equiv 0$.
Therefore, for small $A,B$ we have $A\sim B|\ln B|$, hence
$$
       |B'| \leq C_6 B|\ln B|
$$
It remains to show that any such function $B$ must be identically
zero. Indeed, if $B_0=B(\tau_0)>0$ for some $\tau_0>0$, then
$$
  \tau_0 \geq C_6^{-1}\int_0^{B_0}\frac{dB}{B|\ln B|}
$$
which is impossible because this integral diverges.
\newpage

\chapter{Fast slow particle}
\label{ScFSP} \setcounter{section}{7}\setcounter{subsection}{0}

Here we prove Theorem~\ref{Root}, which allows the slow particle
(the disk) to move faster than Theorem~\ref{ThVelBM} does. Our
arguments are similar to those presented in Chapter~\ref{ScME}, in
fact now they are easier, because we only need to control the
dynamics during time intervals $\cO\left(M^{1/2}\right)$, instead of
$\cO\left(M^{2/3}\right)$.

Recall that for the proof of Theorem~\ref{Root} we set $\kappa_M$ to
a small constant independent of $M.$ Observe that
Propositions~\ref{PrST} and \ref{GenST}, as well as
Lemma~\ref{AP1St}, are applicable in the context of
Theorem~\ref{Root}, but in the rest of Chapter~\ref{ScME} we assumed
$\|\brV\|\leq a M^{-2/3}$, which is not the case anymore. Instead of
that, we will now assume that $M\|\brV\|^2\leq 1-\delta_2$ (and
$1-\delta_2>\chi$). We consider the dynamics up to $n\leq
\brc\sqrt{M}$ collisions, where $\brc=c\sqrt{1-\chi^2}/\brL$ and $c$
is defined in Theorem~\ref{Root} (note that $\sqrt{1-\chi^2}$ is the
initial speed of the light particle, hence $\brL /\sqrt{1-\chi^2}$
will approximate the mean intercollision time).

The following statement is analogous to Proposition~\ref{PrMomA}.

\begin{proposition}
\label{Pr2Tight} Assume the conditions of Proposition \ref{PrMomA}
but with a modified bound on the initial velocity:
$M\|\brV\|^2\leq 1-\delta_2.$ Then, uniformly for $n\leq
\brc\sqrt{M}$, we have
$$ {\rm (a)} \quad \EXP_\ell\bigl(\hS_n\bigr)=
\cO\left(M^\delta \right). \Quad $$
$${\rm (b)} \quad \EXP_\ell\bigl(\hS_n^2\bigr)=\cO(n).\Quad $$
$${\rm (c)} \quad \EXP_\ell\bigl(\hS_n^4\bigr)=\cO(n^2).\Quad $$
\end{proposition}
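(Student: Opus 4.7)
\proof The plan is to adapt the big/small block scheme from the proof of Proposition \ref{PrMomA} to the new regime, where $\bn=\kappa_M\sqrt{M}$ with $\kappa_M$ a small positive constant (independent of $M$), $\|\brV\|=\cO(M^{-1/2})$, and the total number of iterations is bounded by $\brc\sqrt{M}$. The crucial observation is that with these scalings the number of big blocks in $[0,n]$ is $n/\bn\leq \brc/\kappa_M$, which is bounded uniformly in $M$. Hence the inductive argument of Section \ref{subsecMEAp} has only a bounded number of steps, so the inductive constants $G_1,G_2,G_4$ automatically remain bounded.

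First I would verify that Proposition \ref{PrST}, Proposition \ref{GenST} and Lemma \ref{AP1St} continue to hold under the weaker assumption $M\|\brV\|^2\leq 1-\delta_2$ (with $\bn\leq \delta_\diamond\sqrt{M}$). Inspection of the proofs shows that they use only the two facts that $\pi_1(\gamma)\subset\Upsilon_{\delta_1}$ (to apply the equidistribution Propositions \ref{PrDistEq1}--\ref{PrDistEq2}) and that $\|\brV\|\leq 1/\sqrt{M}$, so they apply verbatim. In particular, for any standard pair $\ell$ with $\pi_1(\gamma)\subset\Upsilon_{\delta_2}$ (so also in $\Upsilon_{\delta_1}$) and $\length(\gamma)>M^{-100}$, and for each big block $P_k'$ with $k\bn+M^\delta\leq j\leq (k+1)\bn$, one has
\begin{equation*}
\EXP_\ell(P_k')=\cO(M^\delta),\qquad \EXP_\ell\bigl([P_k']^2\bigr)=\cO(\bn),\qquad \EXP_\ell\bigl([P_k']^4\bigr)=\cO(\bn^2),
\end{equation*}
exactly as in Lemma \ref{AP1St}. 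The error term $\cR_{n,m}=\|\brV\|(n+m^2)+(n^2+m^3)/M$ coming from Proposition \ref{PrDistEq2} now contributes $\|\brV\|\bn+\bn^2/M=\cO(\kappa_M)$, which is harmless since $\kappa_M$ is small but fixed. As in Section \ref{subsecMEAp}, the $\hA$--modification ensures that these estimates are preserved through the freezing mechanism because every surviving standard pair at stage $k$ satisfies $\pi_1(\gamma_{\alpha,k})\subset\Upsilon_{\delta_2}$.

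Next I would run the induction (\ref{IndM1})--(\ref{IndM4}) of Section \ref{subsecMEAp} on $U_k'=\sum_{j<k}P_j'$ for $0\leq k\leq \brc/\kappa_M$. The inductive step uses Lemma \ref{INDEP} to handle cross-terms $\EXP_{\max}(U_k'P_k')$ via $|\EXP_\ell(U_k'P_k')|\leq \EXP_\ell|U_k'|\max_\alpha|\EXP_{\ell_\alpha}(P_{k}')|$; the second factor is $\cO(M^\delta)$ by Lemma \ref{AP1St}(a), and the first is $\cO(\sqrt{k\bn})$ by Cauchy--Schwarz and the inductive bound on the second moment. Summing over the $\cO(1/\kappa_M)$ blocks, the cross-terms contribute $\cO(\sqrt{\bn\cdot\bn/\kappa_M}\cdot M^\delta/\kappa_M)=\cO(\kappa_M^{-3/2}\bn M^\delta)$, which is absorbed into $\cO(n)$ once the inductive constants are chosen large (depending on $\kappa_M$, but $\kappa_M$ is fixed). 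The small blocks $U_k''$ contribute at most $\|A\|_\infty\cdot(\brc/\kappa_M)\cdot M^\delta=\cO(M^\delta)$, within the error bounds. The fourth-moment induction proceeds as in Section \ref{subsecMEAp}, with the five terms $I$--$\RmIV$ there; every factor of $k\bn$ is now bounded by $\brc\sqrt{M}$, so $k/\bn$ stays bounded and the estimates close after a bounded number of inductive steps.

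The step I expect to require the most care is controlling the dependence on $\|\brV\|$: unlike in Theorem \ref{ThVelBM}, where $\|\brV\|\leq aM^{-2/3}$ made error terms of the type $\|\brV\|(n+m^2)$ negligible, here $\|\brV\|\sim M^{-1/2}$ yields $\|\brV\|\bn\asymp\kappa_M$, which is small but not vanishing. This is exactly why $\kappa_M$ must be taken a small (rather than vanishing) constant, and why the statement of Lemma \ref{AP1St}(b) is stated with a term $g$ that tends to zero only as both $M\to\infty$ and $\kappa_M\to 0$; here we only need the weaker bound $\cO(\bn)$, so choosing $\kappa_M$ small but fixed suffices. Once the inductive constants $G_1,G_2,G_4$ are chosen (depending on this fixed $\kappa_M$), the three estimates (a)--(c) follow, which completes the argument. \qed
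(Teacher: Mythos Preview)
Your proposal is correct and follows essentially the same approach as the paper. The paper's proof is a one-liner: divide $[0,\brc]$ into intervals of length $\varkappa$ and apply Lemma~\ref{AP1St} to each, which works precisely because the number of blocks $\brc/\kappa_M$ is bounded uniformly in $M$. You reach the same conclusion but by rerunning the full induction \eqref{IndM1}--\eqref{IndM4}; this is unnecessary overhead, since with $K=\cO(1)$ blocks one can simply use $\bigl(\sum_k P_k'\bigr)^{2p}\leq K^{2p-1}\sum_k (P_k')^{2p}$ and the per-block bounds of Lemma~\ref{AP1St} directly, without any cross-term analysis via Lemma~\ref{INDEP}.
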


\proof It is enough to divide $[0,\brc]$ into intervals of length
$\varkappa$ and apply Lemma~\ref{AP1St} to each of them. \qed
\medskip

Next we define certain continuous functions on the interval
$[0,\brc]$, in a way similar to
(\ref{Rescale})--(\ref{sigmatilde}), but with scaling factors
specific to Theorem~\ref{Root}:
\begin{align*}
      \tQ(\tau)&=M^{1/4}\biggl[
      \hQ_{\tau M^{1/2}} - Q_0 - \frac{\brL\, \min\{\tau M^{1/2},\bk\bn\}}
      {\sqrt{1-\chi^2}}\, V_0\biggr], \\
      \tV(\tau)&=M^{3/4} \left [\hV_{\tau M^{1/2}}
      -V_0\right ],\\
      \tS(\tau)&=M^{-1/4} \hS_{\tau M^{1/2}}, \\
    \ttt(\tau)&=  M^{-1/4}\biggl[\hht_{[\tau M^{1/2}]}
    -\frac{ \brL\, \min\{\tau M^{1/2},\bk\bn\}}
    {\sqrt{1-\chi^2}}\biggr] .
\end{align*}

The next result is analogous to Proposition~\ref{PrTight}, and the
proof only requires obvious modifications:

\begin{proposition}
\label{Pr3Tight} {\rm (a)} For every function $A\in\fR$ satisfying
the assumptions of Proposition~\ref{PrMomA}, the family of
functions $\tS(\tau)$ is tight;\\
{\rm (b)} the families $\tQ(\tau)$, $\tV(\tau)$, and $\ttt(\tau)$
are tight.
\end{proposition}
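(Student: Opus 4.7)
The plan is to follow the proof of Proposition~\ref{PrTight} almost verbatim, with Proposition~\ref{Pr2Tight} replacing Proposition~\ref{PrMomA} and the scaling exponents adjusted to the $M^{1/2}$-time, $M^{1/4}$-space, $M^{3/4}$-velocity regime of Theorem~\ref{Root}. Because Proposition~\ref{Pr2Tight} supplies moment bounds of exactly the same shape as Proposition~\ref{PrMomA} on the shorter admissible time window $n\leq\brc\sqrt M$, no new probabilistic input is needed; the argument is a bookkeeping update.

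For part~(a), the trivial Lipschitz bound is now $|\tS(\tfrac{k+1}{2^m})-\tS(\tfrac{k}{2^m})|\leq \|A\|_\infty M^{1/4}/2^m$, which already dominates the H\"older-type threshold $2^{-m/8}$ once $2^{-m}\lesssim M^{-2/7}$ (equivalently, once $n_2-n_1=M^{1/2}/2^m\lesssim M^{3/14}$). For larger increments I would extract the analogue of Lemma~\ref{ShiftMom}, namely $\EXP_\ell([\hS_{n_2}-\hS_{n_1}]^4)\leq \Const(n_2-n_1)^2$, by rerunning the inductive proof of Proposition~\ref{Pr2Tight}(c) on a shifted window. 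Markov's inequality then gives $\mes_\ell(|\tS(\tfrac{k+1}{2^m})-\tS(\tfrac{k}{2^m})|>2^{-m/8})\leq \Const\cdot 2^{-3m/2}$, and summation over $k$ and $m\geq N$ yields equicontinuity in the space defined by~(\ref{FSpace}), exactly as in Proposition~\ref{PrTight}(a).

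For part~(b), tightness of $\tV$ follows from (a) applied componentwise to $\cA$, since (\ref{DeltaVexp}) gives $\tV(\tau)=M^{-1/4}\sum_{j<\tau M^{1/2}}\hA(x_j)+\cO(M^{-1/4})$ uniformly on $\tau\leq\brc$. For $\tQ$, I use $\hQ_n-Q_0=\sum_{j<n} s_j\hV_j$ together with the identity $M^{1/2}V_0=\chi u_0$ to write
$$
\tQ(\tau) = M^{1/4}\sum_{j<n}s_j(\hV_j-V_0)+\chi u_0\cdot\ttt(\tau),
$$
where the first summand is Lipschitz in $\tau$ with constant bounded by $(\brL/\sqrt{1-\chi^2})\max_\tau\|\tV(\tau)\|$ (intercollision times $s_j$ are uniformly bounded on $\Upsilon_{\delta_2}$), and the second is tight as soon as $\ttt$ is. Lastly, writing $s_j=A_d(x_j)/\|v_j\|+\mu_{Q_j,V_j}(d)/\|v_j\|$ with $A_d=d-\mu_{Q,V}(d)$ (Proposition~\ref{propd}), part~(a) applied to the zero-mean function $A_d/\|v\|\in\fR$ handles the first summand, while the remaining deterministic summand differs from $\brL/\sqrt{1-\chi^2}$ at each step by $\cO(\|V_j\|)+\cO(M^{1/2}\|V_j-V_0\|)$ (using the Lipschitz continuity from Proposition~\ref{propd} and Taylor expansion of $\|v\|^{-1}$ at $V_0$), whose $M^{-1/4}$-rescaled accumulation is Lipschitz in $\tau$ with constant $\cO(1+\|\tV\|_\infty)$.

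No conceptual obstacle arises. The only subtlety worth flagging is the nontrivial deterministic drift $V_0=M^{-1/2}\chi u_0$ and the adjusted mean speed $\sqrt{1-\chi^2}$ of the light particle, which force the coupling $\tQ=M^{1/4}\sum s_j(\hV_j-V_0)+\chi u_0\cdot\ttt$ above and explain why the centering of $\ttt$ uses $\brL/\sqrt{1-\chi^2}$ rather than $\brL$; every occurrence of $\brL$ in the proof of Proposition~\ref{PrTight}(b) must be rescaled accordingly.
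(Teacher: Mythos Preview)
Your proposal is correct and matches the paper's approach: the paper itself simply states that the proof ``only requires obvious modifications'' of the proof of Proposition~\ref{PrTight}, and you have worked those modifications out accurately. In particular, your handling of the nonzero drift $V_0=M^{-1/2}\chi u_0$ via the decomposition $\tQ(\tau)=M^{1/4}\sum_j s_j(\hV_j-V_0)+\chi u_0\,\ttt(\tau)$ and the recentering of the intercollision times around $\brL/\sqrt{1-\chi^2}$ are exactly the ``obvious'' but nontrivial adjustments the paper has in mind.
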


The following result is similar to Proposition~\ref{PrMom}:

\begin{proposition}
\label{Pr2Mom}  Let $\varkappa$ be a small positive constant and
$n=\varkappa \sqrt{M}.$ The following estimates hold uniformly for
\index{Standard pair} all standard pairs $\ell=(\gamma,\rho)$ with
$\length(\gamma)>M^{-100}$ and
$\pi_1(\gamma)\subset\Upsilon_{\delta_2}$, and all
$(\brQ,\brV)\in\pi_1(\gamma)$:
$${\rm (a)}\quad \EXP_\ell\bigl(\hV_n-\brV\bigr)=
\cO\bigl(M^{-1+\delta}\bigr).\Quad $$
$${\rm (b)}\quad \EXP_\ell\bigl((\hV_n-\brV)(\hV_n-\brV)^T\bigr)=
\bigl(\brsigma^2_{\brQ,\brV}(\cA)+o_{\varkappa\to
0}(1)\bigr)\,\varkappa\, M^{-3/2}.\Quad$$
$${\rm (c)}\quad \EXP_\ell\bigl(\|\hV_n-\brV\|^4\bigr)=
\cO\left(\varkappa^2 M^{-3} \right) . \Quad $$
$${\rm (d)}\quad \EXP_\ell\bigl(\hQ_n-\brQ-\hht_n\brV\bigr)
=\cO\left(\varkappa^{3/2} M^{-1/4}\right). \Quad $$ In particular,
if $\brV=V_0+u M^{-3/4}$, for a $u\in\reals^2$, then
$$ \EXP_\ell\left(\hQ_n-\brQ-\hht_n V_0\right)
= \frac{(1+o_{\varkappa\to 0}(1))\,\brL nu}{\sqrt{1-\chi^2}\,
M^{3/4}} +\cO\left(\varkappa^{3/2} M^{-1/4}\right).$$
$${\rm (e)}\quad \EXP_\ell\left(\|\hQ_n-\brQ-\hht_n\brV\|^2\right)=
\cO\left(\varkappa^3 M^{-1/2}\right).\Quad $$
\end{proposition}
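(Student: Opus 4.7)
The plan is to mirror the proof of Proposition~\ref{PrMom} from Section~\ref{subsecM2}, now using the weaker moment bounds of Proposition~\ref{Pr2Tight} (which are available for $M\|\brV\|^2 \leq 1-\delta_2$ and $n \leq \brc\sqrt M$) in place of Proposition~\ref{PrMomA}, and the tightness from Proposition~\ref{Pr3Tight} in place of Proposition~\ref{PrTight}. First I would start from (\ref{CR2a})--(\ref{DeltaVexp}) to write
$$
\hV_n - \brV = \frac{1}{M}\sum_{j=0}^{n-1}\hat{\cA}(x_j) + \cO\bigl(n/M^{3/2}\bigr),
$$
with $\hat\cA$ the truncated version of $\cA$ in the sense of Section~\ref{subsecMom}. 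Since each component of $\cA$ is mean-zero against $\mu_{Q,V}$ by rotational symmetry, and its higher moments depend Lipschitz-continuously on $(Q,V)$, parts (a) and (c) then drop out of Proposition~\ref{Pr2Tight}(a) and (c) after dividing by $M$ and $M^4$ respectively: indeed $\cO(M^\delta)/M$ gives (a), and $\cO(n^2)/M^4 = \cO(\varkappa^2 M^{-3})$ gives (c).

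For (b) I would prove the matrix analogue of (\ref{CrM2}),
$$
\EXP_\ell\!\Bigl(\hat S_n \hat S_n^{\,T}\Bigr) = n\bigl[\brsigma^2_{\brQ,\brV}(\cA) + g\bigr], \qquad g \to 0,
$$
by the same big--small block decomposition as in Section~\ref{subsecMEAp} applied block-by-block via Lemma~\ref{AP1St}(b) (which is already in a form valid for our $\brV$). The new point is that one must keep the full matrix $\brsigma^2_{Q_{k\bn},V_{k\bn}}(\cA)$ rather than its value at $V=0$, because here $M\|\brV\|^2 \approx \chi^2$ is a nontrivial constant. Using the factorization (\ref{EqSigmabar2}) this amounts to tracking the scalar factor $1 - M\|V\|^2$ along the orbit; by Proposition~\ref{Pr3Tight}(b) we have $\|\hV_{k\bn}-\brV\| \lesssim M^{-3/4}$ and $\|\hQ_{k\bn}-\brQ\| \lesssim M^{-1/4}$ with high probability for all $k\bn \leq n$, so $M\|\hV_{k\bn}\|^2 - M\|\brV\|^2 = \cO(M^{-1/4})$ and the substitution $(\hQ_{k\bn},\hV_{k\bn})\to(\brQ,\brV)$ inside $\brsigma^2$ contributes only to the vanishing error $g$.

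Parts (d) and (e) come from the identity
$$
\hQ_n - \brQ - \hat t_n\brV = \sum_{j=0}^{n-1} s_j(\hV_j - \brV),
$$
which follows because the disk moves at constant velocity $\hV_j$ on the $j$-th intercollision interval of length $s_j$, and $s_j$ is uniformly bounded on the truncated set since $\|v_j\| \geq \sqrt{\delta_2}$ there. Cauchy--Schwartz combined with the already-proved part (b) gives
$$
\Bigl|\EXP_\ell\!\sum s_j(\hV_j{-}\brV)\Bigr|
\leq \Const\sum_{j=0}^{n-1}\sqrt{\EXP_\ell\|\hV_j-\brV\|^2}
\leq \Const\sum_{j=0}^{n-1}\frac{\sqrt j}{M} = \cO(n^{3/2}/M),
$$
which equals $\cO(\varkappa^{3/2} M^{-1/4})$ and yields (d); an entirely analogous computation gives (e) from $\EXP_\ell\|\sum s_j(\hV_j-\brV)\|^2 \leq n\sum s_j^2 \EXP_\ell\|\hV_j-\brV\|^2 = \cO(n^3/M^2)$. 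The refined version of (d) when $\brV = V_0 + u M^{-3/4}$ follows by adding $uM^{-3/4}\EXP_\ell(\hat t_n)$ and computing $\EXP_\ell(\hat t_n) = (1+o_{\varkappa\to 0}(1))\,n\brL/\sqrt{1-\chi^2}$, which is obtained by decomposing $d(x) = \brL/\sqrt{1-\chi^2}\cdot\|v(x)\| + [\,d(x) - \mu_{Q,V}(d)\,] + [\,\mu_{Q,V}(d) - \brL\,]$ exactly as in the tightness proof for $\ttt(\tau)$ in Section~\ref{subsecTight}, applying Proposition~\ref{Pr2Tight}(a) to the fluctuation term and Proposition~\ref{propd} together with the tightness of $\tV$ to the bias term. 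The main technical subtlety is the execution of (b): unlike in Chapter~\ref{ScME}, where $\brV \approx 0$ eliminated the $V$-dependence of $\brsigma^2_{Q,V}$, here we must justify the substitution in the full matrix, but this is precisely what the $M^{-3/4}$ tightness window provides.
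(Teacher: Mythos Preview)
Your proposal is correct and follows the paper's own route: the paper simply says the proof ``goes along the same lines as that of Proposition~\ref{PrMom},'' noting only that in (d)--(e) the term $\sum_j s_j\brV=\hht_n\brV$ is already subtracted (your identity $\hQ_n-\brQ-\hht_n\brV=\sum_j s_j(\hV_j-\brV)$ is exactly this observation) and that the refined (d) follows from $\EXP_\ell(\hht_n)\sim n\brL/\sqrt{1-\chi^2}$ via Lemma~\ref{AP1St}. Your elaboration of (b), keeping the full $\brsigma^2_{\brQ,\brV}(\cA)$ and using Proposition~\ref{Pr3Tight} to control $M\|\hV_{k\bn}\|^2-M\|\brV\|^2$, is the correct adaptation; the only slip is that your displayed ``decomposition of $d(x)$'' is not an identity as written---what you want is the decomposition of $s_j=\hat d_j/\|v_j\|$ from Section~\ref{subsecTight}, with leading term $\brL/\|v_j\|\approx\brL/\sqrt{1-\chi^2}$.
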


The proof goes along the same lines as that of
Proposition~\ref{PrMom}. We only note that the proofs of parts (d)
and (e) do not have to deal with the term $\sum_j s_j \brV$ since
it is included in $\hht_n \brV$, whereas the bound on $\sum_j s_j
(V_j-\brV)$ is obtained exactly as before. Also note that the
second estimate of part (d) follows from the first one and the
fact that, by Lemma~\ref{AP1St}, $\EXP_\ell(\hht_n)\sim
n\brL/\sqrt{1-\chi^2}.$ \qed \medskip

The next statement is an analogue of Proposition~\ref{PrGen}:

\begin{proposition}
\label{Pr2Gen} The function $\tV(\tau)$ weakly converges, as
$M\to\infty$, to a Gaussian stochastic process $\tilde{\bV}(\tau)$
with independent increments, zero mean, and the covariance matrix
$$
   {\rm Cov}\,\tilde{\bV}(\tau)=(1-\chi^2)\int_0^\tau
   \brsigma^2_{Q^\dag\bigl(s\brL/\sqrt{1-\chi^2}
   \bigr)}(\cA)\, ds.
$$
\end{proposition}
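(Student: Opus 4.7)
The plan is to follow the martingale characterization developed in Section \ref{subsecMart} for Theorem \ref{ThVelBM}, but exploit the fact that in the present scaling regime the position process is asymptotically deterministic, so the limit diffusion has independent Gaussian increments and uniqueness is essentially trivial. First I would invoke Proposition \ref{Pr3Tight}(b) to extract, along any sequence $M_k\to\infty$, a subsequential weak limit $\tilde{\bV}(\tau)$ of $\tV(\tau)$. By tightness of $\tQ(\tau)$ and Proposition \ref{Pr2Mom}(d)--(e), the joint limit of $(\tQ(\tau),\tV(\tau))$ on the subsequence has its $Q$-component equal almost surely to the deterministic curve $\bar Q(\tau)\colon=Q^\dag\bigl(\tau\brL/\sqrt{1-\chi^2}\bigr)$, obtained from the approximation $Q(\tau M^{1/2})\approx Q_0+\tau\chi u_0\sqrt{M}\cdot M^{-1/2}=Q^\dag(\tau\brL/\sqrt{1-\chi^2})$ after passing from discrete to continuous time via the mean intercollision length $\brL/\sqrt{1-\chi^2}$.

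Next I would identify the law of $\tilde{\bV}$. For $B\in C_c^3(\reals^2)$, define the candidate martingale
\[
   \tM(\tau)=B(\tV(\tau))-\int_0^\tau (\cL_s B)(\tV(s))\,ds,
\]
where
\[
  (\cL_s B)(V)=\tfrac12 (1-\chi^2)\sum_{i,j=1}^2 \bigl(\brsigma^2_{\bar Q(s)}(\cA)\bigr)_{ij}\,\partial^2_{V_i V_j}B(V).
\]
Imitating the derivation of (\ref{bmone})--(\ref{InfMart}), it suffices to prove that for each small $\varkappa>0$,
\[
  \EXP_{\max}\Bigl(B(\tV(\tau_1+\varkappa))-B(\tV(\tau_1))-\varkappa\,(\cL_{\tau_1} B)(\tV(\tau_1))\Bigr)=o(\varkappa)
\]
as $M\to\infty$ and then $\varkappa\to 0$, and that the time integral of $\cL_s B$ over $[\tau_1,\tau_1+\varkappa]$ differs from $\varkappa(\cL_{\tau_1}B)(\tV(\tau_1))$ by $o(\varkappa)$ (which follows from the continuity of $s\mapsto \brsigma^2_{\bar Q(s)}(\cA)$, guaranteed by the results of Chapter \ref{SecRDM} or, more simply here, by the fact that $\bar Q(s)$ varies smoothly and $\brsigma^2_Q(\cA)$ is continuous in $Q$). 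The increment identity is verified by Taylor-expanding $B$ to second order in $dV=\tV(\tau_1+\varkappa)-\tV(\tau_1)$: the first-order term has expectation $\cO(M^{\delta-1/4})$ by Proposition \ref{Pr2Mom}(a), the second-order term has expectation $\tfrac12\sum_{i,j}\partial^2_{V_iV_j}B\cdot(1-\chi^2)\varkappa\bigl(\brsigma^2_{\bar Q(\tau_1)}(\cA)\bigr)_{ij}+o(\varkappa)$ by Proposition \ref{Pr2Mom}(b) combined with (\ref{EqSigmabar2}) and the identity $\brsigma^2_{\brQ,\brV}(\cA)=(1-M\|\brV\|^2)\brsigma^2_{\brQ}(\cA)\approx(1-\chi^2)\brsigma^2_{\bar Q(\tau_1)}(\cA)$, and the cubic remainder is $\cO(\varkappa^{3/2})$ by Proposition \ref{Pr2Mom}(c) and H\"older. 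Multiplying by bounded continuous test functions $B_i(\tV(s_i))$ of the past and applying the multiple correlation bound of Lemma \ref{INDEP} promotes this increment estimate to the martingale property of $\tM(\tau)$ in the limit.

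Once $\tM(\tau)$ is shown to be a martingale for every $B\in C_c^3(\reals^2)$, Theorem 4.5.2 of \cite{SV} identifies $\tilde{\bV}$ as a (time-inhomogeneous) diffusion satisfying
\[
  d\tilde{\bV}(\tau)=\sqrt{1-\chi^2}\,\brsigma_{\bar Q(\tau)}(\cA)\,d\bw(\tau),\qquad\tilde{\bV}(0)=0.
\]
Since the diffusion coefficient here depends on $\tau$ only (not on $\tilde{\bV}$) and is continuous and bounded, the SDE admits a unique-in-law solution---namely the mean-zero Gaussian process with independent increments and covariance $(1-\chi^2)\int_0^\tau\brsigma^2_{\bar Q(s)}(\cA)\,ds$. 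This matches the claimed covariance and forces every subsequential limit to coincide, so $\tV(\tau)\Rightarrow\tilde{\bV}(\tau)$ in full generality.

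The main obstacle is the verification of the increment identity when the initial standard pair has $M\|\brV\|^2$ close to $\chi^2$ but not exactly $\chi^2$, since the deterministic approximation $\bar Q(s)$ must be reconciled with the actual (random) position $\hQ_j$ along the orbit; this is handled by the tightness of $\tQ(\tau)$ (Proposition \ref{Pr3Tight}) together with Proposition \ref{Pr2Mom}(e), which ensures the oscillations of $\hQ_j$ around $\bar Q(\tau_1)$ on the interval of length $\varkappa M^{1/2}$ collisions are $\cO(\varkappa^{3/2}M^{-1/4})$, so the replacement of $\brsigma^2_{\hQ_j}(\cA)$ by $\brsigma^2_{\bar Q(\tau_1)}(\cA)$ in the variance computation costs only $o(\varkappa)$ after averaging. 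Crucially, because the error is controlled through continuity rather than log-Lipschitz regularity, Assumptions A3' and A4 (and therefore Chapter \ref{SecRDM}) play no role here, which is why Theorem \ref{Root} requires only A1--A3.
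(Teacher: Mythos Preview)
Your proof is correct, but it takes a different route from the paper. You adapt the Stroock--Varadhan martingale-problem machinery of Section~\ref{subsecMart}, setting up a time-inhomogeneous generator $\cL_s$ with deterministic coefficients and then invoking uniqueness for the resulting SDE. The paper instead works directly with characteristic functions: it decomposes $\tV(\tau)$ into block increments $R_k'=\hV_{(k+1)\varkappa\sqrt M-M^\delta}-\hV_{k\varkappa\sqrt M}$, Taylor-expands $\exp(iM^{3/4}\langle\bz,R_k'\rangle)$, applies Proposition~\ref{Pr2Mom}(a)--(c) to each block, and uses Lemma~\ref{INDEP} to factor the joint characteristic function of several increments. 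This yields the Gaussian law with independent increments and the stated covariance in one stroke, with no appeal to \cite{SV} or any uniqueness argument.

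Both approaches rest on the same moment estimates (Proposition~\ref{Pr2Mom}) and the same decoupling device (Lemma~\ref{INDEP}). Yours has the virtue of paralleling the proof of Theorem~\ref{ThVelBM} and would extend more readily to non-Gaussian limits; the paper's is more elementary here precisely because the limiting diffusion coefficient is a deterministic function of $\tau$, so one can read off the Gaussian structure without passing through a martingale characterization or a well-posedness step.
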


\proof Since $\tV(\tau)$ is tight, we only need to prove the
convergence of finite dimensional distributions. Fix a $\tau<\brc$
and choose $\varkappa\ll\tau$ so that
$\tau/\varkappa\in\naturals$. Denote
$$
    R_k'=\hV_{(k+1)\varkappa\sqrt{M}-M^\delta}
    -\hV_{k\varkappa\sqrt{M}},
$$
and
$$
    \tV'(\tau)=\sum_{k=0}^{\tau/\varkappa} R_k'.
$$
Note that $\tV(\tau)-\tV'(\tau) =\cO(M^{\delta-1/4})\to 0$ as
$M\to\infty$, hence the random processes $\tV'(\tau)$ and
$\tV(\tau)$ must have the same finite dimensional limit
distributions. By the continuity theorem, it is enough to prove
the pointwise convergence of the corresponding characteristic
functions, which we do next.

For every vector $\bz\in\reals^2$ we write Taylor expansion
\begin{align}
    \Phi_k(\bz) \colon &=
    \exp\left(iM^{3/4} \la \bz,R_k'\ra\right) \nonumber\\
    &=1+iM^{3/4}\la \bz,R_k'\ra-
    \tfrac 12\, M^{3/2}\, \la \bz,R_k'\ra^2
    +\cO\left(M^{9/4}\la \bz,R_k'\ra^3\right).
       \label{charfunct}
\end{align}

\begin{lemma}
\label{lmPhiTaylor} For any standard pair $\ell=(\gamma,\rho)$
\index{Standard pair} satisfying the conditions of
Proposition~\ref{Pr2Mom} we have
$$
     \EXP_\ell\bigl(\Phi_k(\bz)\bigr)=
     1-\tfrac 12\,(1-\chi^2)\,\varkappa\,\bz^T D_k\,\bz+o(\varkappa).
$$
where
$$
    D_k =
    \brsigma^2_{Q^\dag\bigl(k\varkappa\brL
    /\sqrt{1-\chi^2}\bigr)}(\cA)
$$
\end{lemma}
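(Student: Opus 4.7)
The plan is to expand the right-hand side of (\ref{charfunct}) and compute $\EXP_\ell$ of each term, verifying that only the quadratic contribution survives at order $\varkappa$ and matches $-\tfrac12(1-\chi^2)\,\varkappa\,\bz^T D_k\bz$. The moment bounds of Proposition~\ref{Pr2Mom} are stated for a standard pair at the initial time, whereas $R_k'$ depends on the dynamics over the window $[k\varkappa\sqrt M,\,(k+1)\varkappa\sqrt M-M^\delta]$. To bridge this, I first propagate $\ell$ by $\cF^{k\varkappa\sqrt M}$ and use Proposition~\ref{PrDistEq1} to decompose $\cF^{k\varkappa\sqrt M}\mes_\ell=\sum_\alpha c_\alpha\mes_{\ell_\alpha}$ into standard pairs $\ell_\alpha=(\gamma_\alpha,\rho_\alpha)$, discarding those with $\length(\gamma_\alpha)<M^{-100}$ (total mass $\cO(M^{-100})$). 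After this reduction, evaluating $\EXP_{\ell_\alpha}(\Phi_k(\bz))$ for each $\ell_\alpha$ is a ``time-zero'' problem with starting position $\brQ_\alpha$ and velocity $\brV_\alpha$, and the extra $M^\delta$ in the window length is negligible.

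For a fixed $\ell_\alpha$, Proposition~\ref{Pr2Mom}(a) gives $\|\EXP_{\ell_\alpha}(R_k')\|=\cO(M^{-1+\delta})$, so the linear term contributes $iM^{3/4}\la\bz,\EXP_{\ell_\alpha}(R_k')\ra=\cO(M^{-1/4+\delta})=o(\varkappa)$. Proposition~\ref{Pr2Mom}(b) yields
\[
   -\tfrac12 M^{3/2}\EXP_{\ell_\alpha}\la\bz,R_k'\ra^2
   = -\tfrac12\,\varkappa\,\bz^T\brsigma^2_{\brQ_\alpha,\brV_\alpha}(\cA)\,\bz
   + o_{\varkappa\to0}(\varkappa).
\]
For the cubic remainder, Cauchy--Schwartz together with parts (b) and (c) of Proposition~\ref{Pr2Mom} gives
\[
   \EXP_{\ell_\alpha}|\la\bz,R_k'\ra|^3
   \leq \|\bz\|^3\sqrt{\EXP_{\ell_\alpha}\|R_k'\|^2\cdot\EXP_{\ell_\alpha}\|R_k'\|^4}
   = \cO\bigl(\varkappa^{3/2}M^{-9/4}\bigr),
\]
so multiplying by the $M^{9/4}$ prefactor in (\ref{charfunct}) leaves $\cO(\varkappa^{3/2})=o(\varkappa)$.

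It remains to replace $\brsigma^2_{\brQ_\alpha,\brV_\alpha}(\cA)$ by $(1-\chi^2)D_k$ uniformly in $\alpha$. By the identity (\ref{EqSigmabar2}), $\brsigma^2_{\brQ_\alpha,\brV_\alpha}(\cA)=(1-M\|\brV_\alpha\|^2)\brsigma^2_{\brQ_\alpha}(\cA)$, so I need (i) $M\|\brV_\alpha\|^2=\chi^2+o(1)$ and (ii) $\brsigma^2_{\brQ_\alpha}(\cA)=D_k+o(1)$ for a set of $\alpha$'s of total $c_\alpha$-mass $1-o(1)$. Both come from the tightness in Proposition~\ref{Pr3Tight}(b): with probability $1-o(1)$ one has $\brV_\alpha=V_0+\cO(M^{-3/4})$, giving (i), and simultaneously $\brQ_\alpha$ is $o(1)$-close to $Q_0+t_\alpha V_0$, where $t_\alpha\approx k\varkappa\sqrt M\cdot\brL/\sqrt{1-\chi^2}$ is the physical time after $k\varkappa\sqrt M$ collisions (the mean intercollision time at speed $\sqrt{1-\chi^2}$ being $\brL/\sqrt{1-\chi^2}$); substituting $V_0=M^{-1/2}\chi u_0$ identifies this limit with $Q^\dag(k\varkappa\brL/\sqrt{1-\chi^2})$, and continuity of $Q\mapsto\brsigma^2_Q(\cA)$ (guaranteed by Proposition~\ref{PrRegDif}, though mere continuity would suffice) yields (ii). The principal technical delicacy is ensuring that all error terms are $o(\varkappa)$ \emph{uniformly} over the decomposition $\{\ell_\alpha\}$; this is exactly the content of the tightness statement in Proposition~\ref{Pr3Tight}(b) combined with the uniform form of Proposition~\ref{Pr2Mom}.
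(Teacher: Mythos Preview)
Your proof is correct and follows essentially the same approach as the paper: expand the characteristic function, control the linear term via Proposition~\ref{Pr2Mom}(a), the quadratic term via (b), bound the cubic remainder by moments, and then identify $\brsigma^2_{\brQ_\alpha,\brV_\alpha}(\cA)$ with $(1-\chi^2)D_k$ using tightness (Proposition~\ref{Pr3Tight}) together with \eqref{EqSigmabar2} and continuity of $Q\mapsto\brsigma^2_Q(\cA)$. Two minor differences: the paper bounds the cubic term by the H\"older inequality $\EXP_\ell|\la\bz,R_k'\ra|^3\leq(\EXP_\ell\la\bz,R_k'\ra^4)^{3/4}$ (using only part~(c)), whereas you use Cauchy--Schwartz with parts~(b) and~(c); both yield the same $\cO(\varkappa^{3/2})$. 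Also, you make explicit the propagation step (decomposing $\cF^{k\varkappa\sqrt M}\mes_\ell$ via Proposition~\ref{PrDistEq1}), which the paper handles more implicitly by speaking of ``most of the standard pairs'' and citing Lemma~\ref{lmdiffcont} for continuity rather than Proposition~\ref{PrRegDif}.
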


\proof We apply Proposition~\ref{Pr2Mom} (a) and (b) to the linear
and quadratic terms of (\ref{charfunct}), respectively, and bound
the remainder term by the H\"older inequality:
$$
  \EXP_\ell\left(M^{9/4}\left| \la \bz,R_k'\ra\right|^3\right)\leq
  M^{9/4}\left [\EXP_\ell\left(
  \la \bz,R_k'\ra^4\right)\right ]^{3/4}
$$
and then use Proposition~\ref{Pr2Mom} (c). A delicate point here is
to deal with the matrix $\brsigma^2_{\brQ,\brV}(\cA)$ that comes
from Proposition~\ref{Pr2Mom} (b). According to
Proposition~\ref{Pr3Tight}, for most of the standard pairs
\index{Standard pair} $\ell=(\gamma,\rho)$
\begin{align*}
        \brQ &=Q^\dag\bigl(k\varkappa\brL/\sqrt{1-\chi^2}\bigr)
        +\cO\left(M^{-1/4+\delta}\right), \\
        \brV &=V_0+\cO\left(M^{-3/4+\delta}\right),
\end{align*}
where $k\varkappa$ is the time moment at which
Proposition~\ref{Pr2Mom} (b) was applied. Since
$\brsigma^2_{\brQ,\brV}(\cA)$ is a bounded continuous function on
the domain $\{(Q,V)\colon\,\dist(\brQ,\partial\cD)>\br
+\delta_2\}$, see Lemma~\ref{lmdiffcont}, we can replace
$\brsigma^2_{\brQ,\brV}(\cA)$ with
$$
   \brsigma^2_{Q^\dag\left(k\varkappa\brL/\sqrt{1-\chi^2}\right),V_0}
   (\cA)=(1-\chi^2)D_k
$$
the last equation follows from (\ref{EqSigmabar2}). \qed \medskip

Now by (\ref{charfunct})
\begin{align}
   E_k(\bz) \colon &=\ln \EXP_\ell\bigl(\Phi_k(\bz)\bigr)\nonumber\\
    &= -\tfrac 12\,(1-\chi^2)\,\varkappa\,\bz^TD_k\,\bz+o(\varkappa).
    \label{LocGauss}
\end{align}
Let $0\leq\tau'<\tau''\leq\brc$ be two moments of time such that
$k'=\tau'/\varkappa\in\naturals$ and
$k''=\tau''/\varkappa\in\naturals$. Then
\begin{align*}
    E_{\tau',\tau''}\colon &=
    \ln\EXP_\ell\left(\exp\left(iM^{3/4}
    \bigl\la\bz, \tV'(\tau'')-\tV'(\tau')\bigr\ra
    \right)\right)\\
    &= \ln \EXP_\ell\biggl(
    \prod_{k=k'}^{k''}\Phi_k(\bz)\biggr) \\
    &= \sum_{k=k'}^{k''}
    E_k(\bz) + o_{\varkappa\to 0}(1),
\end{align*}
where we used the same trick as in the proof of Lemma~\ref{INDEP}.
By using (\ref{LocGauss}) and letting $\varkappa\to 0$ we prove
that for any $0\leq\tau'<\tau''\leq\brc$
$$
   \lim_{M\to\infty} E_{\tau',\tau''} =
   -\frac{1-\chi^2}{2} \int_{\tau'}^{\tau''}
    \bz^T\,\brsigma^2_{Q^\dag\bigl(s
    \brL /\sqrt{1-\chi^2}\bigr)}(\cA)\, \bz\, ds.
$$
This shows that the increments of the limit process are Gaussian.

Next, let $0\leq \tau_1<\dots<\tau_{m+1}\leq\brc$ be arbitrary
time moments and $\bz_1,\dots,\bz_m\in\reals^2$ arbitrary vectors.
A similar computation as in Lemma~\ref{lmPhiTaylor} shows that the
joint characteristic function of several increments
$$
    \EXP_\ell\biggl(\exp \biggl(i  M^{3/4}
    \sum_{j=1}^{m} \left\la\bz_j,\tV'(\tau_{j+1})
    -\tV'(\tau_j)\right\ra\biggr)\biggr)
$$
converges to
$$
   \exp\biggl(-\frac{1-\chi^2}{2} \sum_{j=1}^{m}
   \int_{\tau_{j}}^{\tau_{j+1}}
    \bz_j^T\,\brsigma^2_{Q^\dag\bigl(s
    \brL / \sqrt{1-\chi^2}\bigr)}(\cA)\, \bz_j\, ds\biggr).
$$
as $M\to\infty$, hence the increments of the limiting process are
independent. This completes the proof of
Proposition~\ref{Pr2Gen}.\qed
\medskip

Lastly, the same argument as in the proof of
Corollary~\ref{CrCTMart} shows that the velocity function
$\cV(\tau)$ defined in Section~\ref{subsecSR1} converges to the
stochastic process $\bV(\tau)=\tilde{\bV}(\tau \sqrt{1-\chi^2}/
\brL)$. The properties of $\bV$ listed in Theorem~\ref{Root}
immediately follow from those of $\tilde{\bV}$, which we proved
above. The convergence of $\cQ(\tau)$ to
$\bQ(\tau)=\int_0^{\tau}\bV(s)\, ds$ follows from the fact that the
integration is a continuous map on $C[0,\brc\brL].$
Theorem~\ref{Root} is proved. \qed
\newpage

\chapter{Small large particle}
\label{ScSLP} \setcounter{section}{8}\setcounter{subsection}{0}

Here we prove Theorem~\ref{ThSmall}, which requires the larger
particle (the disk) shrink as $M\to\infty$.

First of all, the results of Chapter~\ref{ScME} apply to every
$\br\in(0,\br_0)$, where $\br_0$ is a sufficiently small constant,
and every time interval $(0,\brc)$. We now fix $\brc_0>0$ and for
each $\br\in(0,\br_0)$ apply those results to the time interval
$(0,\brc)$ with
\beq
        \brc = \brc_{\br} = \br^{-1/3}\brc_0
          \label{bTr}
\eeq
In other words, we consider a family of systems
$\cF_{\br}\colon\Omega_{\br}\to\Omega_{\br}$ (parameterized by
$\br$), and for each of them obtain the results of
Chapter~\ref{ScME} on the corresponding interval $(0,\brc_{\br})$
with $\brc_{\br}$ given by (\ref{bTr}). Of course, all the
$\cO(\cdot)$ estimates in Chapter~\ref{ScME} will now implicitly
depend on $\br$.

Next, for each $\br$ we define continuous functions on the
interval $[0,\brc_0]$, by the following rules that modify
(\ref{Rescale}) and (\ref{sigmatildeD}):
\begin{equation}
   \label{Rescale1}
      \tQ(\tau)=\hQ_{\tau \br^{-1/3}M^{2/3}},\qquad
      \tV(\tau)=\br^{-1/3}M^{2/3} \hV_{\tau \br^{-1/3}M^{2/3}},
\end{equation}
and
\beq
    \ttt_{\Diamond}(\tau)= \br^{1/6}M^{-1/3-\delta/2}
    \Bigl[\hht_{[\tau \br^{-1/3}M^{2/3}]}
    -\brL\, \min\{\tau \br^{-1/3}M^{2/3},\bk\bn\}\Bigr]
      \label{sigmatilde1}
\eeq
Now for each $\br\in(0,\br_0)$ we pick a function
$A_{\br}\in\fR_{\br}$ (where $\fR_{\br}$ denotes the space $\fR$
defined in Section~\ref{subsecMS} corresponding to $\br>0$),
satisfying the assumptions of Proposition~\ref{PrMomA} with $\brc
= \br^{-1/3}\brc_0$. Denote by $A = \{A_{\br}\}$ the family of
just selected functions. Assume, additionally, that
\beq
            \label{Aunif}
            \fS_{A} \colon =
        \sup_{0<\br<\br_0}
        \max\{\|A_{\br}\|_{\infty},\fS_{A_{\br}}\} < \infty
\eeq
where $\fS_{A_\br}$ is computed according to (\ref{AAsmall}). Now
we define
\begin{equation}
   \label{Rescale2}
      \tS(\tau)=\br^{1/6}M^{-1/3} \hS_{\tau \br^{-1/3}M^{2/3}}.
\end{equation}

\begin{proposition}
\label{PrTight1} $\ $ \\
{\rm (a)} Given a family of functions $A=\{A_{\br}\}$ as above,
there is a function $M_{A}(\br)$ such that for $\br<\br_0$,
$M>M_{A}(\br)$, the family $\tS(\tau)$ is tight;\\ {\rm (b)} There
is a function $M_0(\br)$ such that for $\br<\br_0$, $M>M_0(\br)$,
the families $\tQ(\tau)$, $\tV(\tau)$, and $\ttt_{\Diamond}(\tau)$
are tight.
\end{proposition}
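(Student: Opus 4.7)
The plan is to follow the blueprint of Proposition~\ref{PrTight}, only now keeping careful track of how the rescaling factors $\br^{1/6}M^{-1/3}$ and $\br^{-1/3}M^{2/3}$ in (\ref{Rescale1})--(\ref{Rescale2}) interact with the moment bounds from Proposition~\ref{PrMomA}. The key observation is that Proposition~\ref{PrMomA}, whose proof is valid throughout any window $n\leq \brc\,M^{2/3}$, applies here with $\brc = \brc_{\br} = \br^{-1/3}\brc_0$, provided we allow the implicit constants (which ultimately stem from the inductions producing $G_1,G_2,G_4$ in Section~\ref{subsecMEAp}) to depend polynomially on $\brc_{\br}$. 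This $\br$-dependence will be absorbed by choosing $M_{A}(\br)$ and $M_0(\br)$ large enough.

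For part (a), it suffices to run the Kolmogorov-type argument used in the proof of Proposition~\ref{PrTight}(a) with the sharpened bound from Proposition~\ref{PrMomA}(d). For $n_1<n_2$ in $[0,\brc_{\br}M^{2/3}]$, the rescaling gives
\begin{equation*}
   \EXP_\ell\bigl[(\tS(\tau_2)-\tS(\tau_1))^4\bigr]
   = \br^{2/3}M^{-4/3}\,\EXP_\ell\bigl[(\hS_{n_2}-\hS_{n_1})^4\bigr]
   \leq 2\,\fS_A^2\,(\tau_2-\tau_1)^2+E(\br,M,\tau_2-\tau_1),
\end{equation*}
where $E$ comes from the $\cO(n^{1.9})$ remainder in Proposition~\ref{PrMomA}(d) and works out, after substitution, to $\cO(\br^{-0.1}M^{-0.07}(\tau_2-\tau_1)^{1.9})$. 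The uniform bound (\ref{Aunif}) on $\fS_A$ makes the leading term $\br$-independent, and the remainder can be made arbitrarily small by imposing $M\geq M_A(\br)$ with $M_A(\br)\geq \br^{-a}$ for a fixed $a$. On very short scales the trivial step bound $|\tS((k+1)/2^m)-\tS(k/2^m)|\leq \|A\|_\infty\,\br^{-1/6}M^{1/3}/2^m$ replaces (\ref{smallint}); as in the proof of Proposition~\ref{PrTight}(a), this handles $m$ with $2^m\lesssim \br^{-4/21}M^{8/21}$, and the Chebyshev/Markov argument covers the rest. Summing over $k,m$ gives the desired concentration on the H\"older-type compact $\cC_N$ of (\ref{FSpace}).

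For part (b), the tightness of $\tV$ follows from part (a) applied to $A=\cA$: by (\ref{DeltaVexp}),
\begin{equation*}
   \tV(\tau) = \br^{-1/3}M^{2/3}(\hV_n-\brV)
   = \br^{-1/3}M^{-1/3}\,\hS_n(\cA) + \cO\bigl(\br^{-2/3}M^{-1/6}\bigr),
\end{equation*}
and the assumption (\ref{Aunif}) is verified for the family $A_{\br}\equiv\cA_{\br}$ because $\|\cA\|_\infty\leq 2$ and $\fS_{\cA_{\br}}=\cO(\br)$ by (\ref{rsmall}); in fact the latter gives an extra $\br$ which precisely matches the $\br^{-1/3}$ rescaling to yield $\cO(1)$ bounds. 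Tightness of $\tQ$ is then immediate from Lipschitz continuity with Lipschitz constant $\max_\tau \|\tV(\tau)\|^2$ (noting that the subtracted term $\tau\br^{-1/3}M^{2/3}V_0$ is zero in the Theorem~\ref{ThSmall} setting $V_0=0$). Tightness of $\ttt_{\Diamond}$ is obtained by the decomposition $d(x)=A(x)+\brL+B(x)$ with $A(x)=d(x)-\mu_{Q,V}(d)$ and $B(x)=\mu_{Q,V}(d)-\brL$, used in the proof of Proposition~\ref{PrTight}(b): the first piece is handled by part (a) applied to $A\in\fR$ (with the extra $\br^{\delta/2}$ factor in (\ref{sigmatilde1}) providing slack), and the other two pieces are controlled by the already-established tightness of $\tV$ together with $|\mu_{Q,V}(d)-\brL|=\cO(\|V\|)$ from Proposition~\ref{propd}.

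The main obstacle is bookkeeping: the constants appearing in Proposition~\ref{PrMomA} (through $G_1,G_2,G_4$ in the induction of Section~\ref{subsecMEAp}) were not tracked as functions of $\brc$, and here we need to know that at worst they are polynomial in $\brc_{\br}=\br^{-1/3}\brc_0$. A careful inspection of Lemma~\ref{AP1St} and the inductive step (\ref{IndM4}) shows that all dependence on $\brc$ enters multiplicatively through the total number of big blocks $k\leq \brc\,M^{2/3}/\bn$, so only polynomial growth in $\br^{-1}$ arises; this is then more than compensated by taking $M_A(\br)$ and $M_0(\br)$ to be any power of $\br^{-1}$, e.g.\ $M_0(\br)=\br^{-10}$, giving the required tightness uniformly in the regime $M>M_0(\br)$.
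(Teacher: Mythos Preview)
Your approach is essentially identical to the paper's: the paper packages the key fourth-moment input as a sharpened Lemma~\ref{ShiftMom} (their Lemma~\ref{ShiftMom1}) giving the clean bound $3\fS_A^2(n_2-n_1)^2$ by absorbing the $C(\br)\bn^{1.9}$ remainder into $\fS_A^2\bn^2$ via the choice of $M_A(\br)$, and then runs the Kolmogorov argument, the $\fS_{\cA}=\cO(\br)$ cancellation for $\tV$, and the $d=A+\brL+B$ decomposition for $\ttt_\Diamond$ exactly as you outline. Two small slips to fix: the slack factor in (\ref{sigmatilde1}) is $M^{-\delta/2}$, not $\br^{\delta/2}$, and the paper's bypass is to verify (\ref{Aunif}) for $A_\Diamond=M^{-\delta/2}A$ (so $\fS_{A_\Diamond}=M^{-\delta}\fS_A<1$ once $M_0(\br)^\delta>\fS_A$) rather than for $A$ itself; also, the trivial increment bound handles \emph{large} $m$ (i.e.\ $2^{-m}\lesssim\br^{4/21}M^{-8/21}$), so your inequality on $2^m$ is reversed.
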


The proof follows the same lines as that of Proposition~\ref{PrTight},
and we only discuss steps which require nontrivial modifications. The
inequality (\ref{smallint}) now implies (\ref{FSpace}) whenever
$2^{-m}<\br^{4/21}M^{-8/21}$, cf.\ (\ref{Aunif}). For the case
$2^{-m}<\br^{4/21}M^{-8/21}$ we need the following sharpened version of
Lemma~\ref{ShiftMom}:

\begin{lemma}
\label{ShiftMom1} Given a family $A=\{A_{\br}\}$ as above, there is
a function $M_{A}(\br)$ such that for all $\br<\br_0$,
$M>M_{A}(\br)$, all $n_1, n_2$ such that $|n_2-n_1|>\br^{-1/7}
M^{2/7}$ and all standard pairs $\ell=(\gamma,\rho)$ with
\index{Standard pair} $\length(\gamma)>M^{-100}$ and
$\pi_1(\gamma)\subset\Upsilon^{\ast}_{\delta_2,a}$ we have
$$
      \EXP_\ell\Bigl(\bigl[\hS_{n_2}-\hS_{n_1}\bigr]^4\Bigr)
      \leq 3\,\fS_{A}^2 (n_2-n_1)^2.
$$
\end{lemma}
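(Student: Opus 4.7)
The plan is to reduce Lemma~\ref{ShiftMom1} to Proposition~\ref{PrMomA}(d), applied to the sum starting at time $n_1$, and then absorb the subleading remainder into the main term using the hypothesis $|n_2-n_1| > \br^{-1/7}M^{2/7}$. Since $\hS_{n_2} - \hS_{n_1} = \sum_{j=n_1}^{n_2-1} \hA(x_j)$ vanishes on orbits already stopped by time $n_1$, it depends only on the future dynamics from $\cF^{n_1}(\mes_\ell)$. First I would apply Proposition~\ref{PrDistEq1} to propagate $\ell$ forward by $n_1$ iterations, yielding
$$\cF^{n_1}(\mes_\ell) \;=\; \sum_\alpha c_\alpha\,\mes_{\ell_\alpha} \;+\; \text{(remainder of total mass } \le M^{-50}\text{)},$$
where each $\ell_\alpha = (\gamma_\alpha,\rho_\alpha)$ is a standard pair with $\length(\gamma_\alpha) > M^{-100}$. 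Components whose $\pi_1$-projection has already left $\Upsilon_{\delta_2}$ contribute nothing (there $\hA$ vanishes identically), while for the surviving components the velocity drift over $n_1 \le \brc_{\br}\br^{-1/3}M^{2/3}$ collisions, estimated via Proposition~\ref{PrMomA}(a)--(b) (or the easier short-term Proposition~\ref{PrST}), shows that $\pi_1(\gamma_\alpha) \subset \Upsilon^\ast_{\delta_2, a'}$ with $a' = a + \cO(\br^{1/6})$, still within the hypotheses of Proposition~\ref{PrMomA}(d).

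Next, applying Fubini and Proposition~\ref{PrMomA}(d) to each surviving $\ell_\alpha$, and using (\ref{Aunif}) to replace $\fS_{A_\br}$ with $\fS_A$, one obtains
$$\EXP_\ell\bigl[(\hS_{n_2} - \hS_{n_1})^4\bigr] \;\le\; 2\,\fS_A^2\,(n_2-n_1)^2 \;+\; C_\br\,(n_2-n_1)^{1.9} \;+\; \cO(M^{-40}),$$
where $C_\br$ is the implicit constant in the $\cO(n^{1.9})$ remainder of Proposition~\ref{PrMomA}(d), and the trivial contribution $\|A\|_\infty^4 (n_2-n_1)^4 M^{-50}$ from the propagation remainder is absorbed into the $\cO(M^{-40})$ term. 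The hypothesis $n_2 - n_1 > \br^{-1/7} M^{2/7}$ yields $(n_2-n_1)^{0.1} \ge \br^{-1/70} M^{1/35}$, so choosing $M_A(\br) = (C_\br/\fS_A^2)^{35}\,\br^{1/2}$ forces $C_\br(n_2-n_1)^{1.9} \le \fS_A^2 (n_2-n_1)^2$ for all $M > M_A(\br)$, giving the claimed bound $3\,\fS_A^2(n_2-n_1)^2$.

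The hard part will be verifying that $C_\br$ depends at worst polynomially on $\br^{-1}$, so that $M_A(\br)$ is a well-defined, explicit function. This requires tracking the $\br$-dependence through the big--small block argument of Lemma~\ref{AP1St}(d) and the inductive upgrade in the proof of Proposition~\ref{PrMomA}(d). The scatterers $\BAN_i$ remain fixed as $\br \to 0$; only the moving disk $\cP(Q)$ has curvature $1/\br$, which enters through the mean free path $\brL$ and through the uniform constants in the one-step expansion estimate (\ref{bthetabound}) and the decay-of-correlations and distortion bounds of Chapter~\ref{SecSPE}. An inspection of those bounds shows that all constants are polynomially controlled in $\br^{-1}$, which suffices.
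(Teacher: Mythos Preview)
Your approach matches the paper's strategy: both reduce to the fourth-moment estimate of Lemma~\ref{AP1St}(d) / Proposition~\ref{PrMomA}(d) and then absorb the $\br$-dependent $\cO(n^{1.9})$ remainder by taking $M$ large. Two points are worth noting.

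First, your ``hard part'' paragraph is unnecessary. You do not need any control on how $C_\br$ depends on $\br$; you only need it to be finite for each fixed $\br$, which is automatic since Proposition~\ref{PrMomA} is proved for each fixed billiard table. The paper's proof makes exactly this move: the $\cO(\bn^{1.9})$ term in Lemma~\ref{AP1St}(d) is $<C(\br)\bn^{1.9}$ for some $C(\br)<\infty$, and one simply chooses $M_A(\br)$ so that $\bn^{0.1}>M^{0.04}>C(\br)/\fS_A^2$, whence $C(\br)\bn^{1.9}<\fS_A^2\bn^2$. No polynomial (or any other explicit) bound on $C(\br)$ is required for $M_A(\br)$ to be well defined.

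Second, there is a small gap in invoking Proposition~\ref{PrMomA}(d) as a black box: that proposition is stated for $M^{1/2}\le n\le \brc M^{2/3}$, whereas the hypothesis here only gives $n_2-n_1>\br^{-1/7}M^{2/7}$, which can be much smaller than $M^{1/2}$. The paper sidesteps this by pointing not to Proposition~\ref{PrMomA}(d) itself but to ``Lemma~\ref{AP1St}(d) and the argument used in the proof of Proposition~\ref{PrMomA}(d)'': the quadruple-sum expansion in the proof of Lemma~\ref{AP1St}(d) works for a block of any length $m\gg M^{3\delta}$ and yields $2\fS_A^2 m^2+C(\br)m^{1.9}$ directly, and the inductive combination of blocks (when $n_2-n_1\ge cM^{1/2}$) proceeds as in the proof of Proposition~\ref{PrMomA}. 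This also avoids the bookkeeping you would otherwise need to reconcile the stopping-time construction $\hat A$ defined from time $0$ with a fresh construction started at time $n_1$.
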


\proof This bound follows from Lemma~\ref{AP1St} (d) and the
argument used in the proof of Proposition~\ref{PrMomA} (d). Note
that the term $\cO\left(\bn^{1.9}\right)$ in Lemma~\ref{AP1St} (d)
implicitly depends on $\br$, i.e.\ it is $<C(\br)\,\bn^{1.9}$, but
we can always increase $M_0(\br)$ so that $\bn^{0.1} > M^{0.04} >
C(\br) / \fS_{A}^2$, hence $C(\br)\,\bn^{1.9} < \fS_{A}^2 \bn^2$,
as desired. \qed
\medskip

Now the tightness of $\tS(\tau)$ follows due to (\ref{Aunif}).

To prove the tightness of $\tV(\tau)$, we need to modify the above
argument slightly. Due to (\ref{CR2a}), (\ref{DeltaVexp}) and
Lemma~\ref{ShiftMom1}
$$
      \EXP_\ell\Bigl(\bigl[\hV_{n_2}-\hV_{n_1}\bigr]^4\Bigr)
      \leq \fS_{\cA}^2M^{-4}(n_2-n_1)^2
      \leq \Const\,\br^2 M^{-4}(n_2-n_1)^2,
$$
where we used (\ref{rsmall}).
Therefore,
$$
      \EXP_\ell\Bigl(\bigl[\tV(\tau_2)-\tV(\tau_1)\bigr]^4\Bigr)
      \leq \Const\,(\tau_2-\tau_1)^2 ,
$$
which is sufficient to prove the equicontinuity of $\tV(\tau)$.

The tightness of $\tQ(\tau)$ and $\ttt_{\diamond} (\tau)$ follows
by the same argument as the one in the proof of
Proposition~\ref{PrTight}. This involves the verification of
(\ref{Aunif}) for the function $A(x)=d(x)-\mu_{Q,V}(d)$, which
requires some effort. Fortunately, we can bypass this step by
using the extra factor $M^{-\delta}$ included in the formula for
$\ttt_{\Diamond}(\tau)$ and only verifying (\ref{Aunif}) for the
function $A_{\Diamond}(x)=M^{-\delta/2}A(x)$, which is much
easier: it suffices to observe that $\fS_{A_{\Diamond}}=
M^{-\delta}\fS_A$ and choose $M_0(\br)$ so that
$M_0^{\delta}(\br)>\fS_A$ for every $\br<\br_0$. This gives
$\fS_{A_{\Diamond}}<1$. \qed
\medskip

Next, Lemma~\ref{AP1St} (b) still holds, with some $g\to 0$,
because we can use the same trick as above -- increase $M_0(\br)$,
if necessary, to suppress the terms depending on $\br$. Having
proved the tightness and Lemma~\ref{AP1St} (b), we can derive
estimates similar to those of Proposition~\ref{PrMom}.

The following statement is analogous to Proposition~\ref{Pr2Gen}:

\begin{proposition}
\label{Pr2Gen1} The function $\tV(\tau)$ weakly converges, as
$\br\to 0$ and $M\to\infty$, $M>M_0(\br)$, to the random process
$\sigma_0 w_{\cD}(\brL \tau)$, in the notation of
(\ref{Vlimsmall}).
\end{proposition}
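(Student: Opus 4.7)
The strategy mirrors that of Proposition~\ref{Pr2Gen}, adapted to the scaling regime of Theorem~\ref{ThSmall}. Tightness is already provided by Proposition~\ref{PrTight1}(b), so it suffices to establish convergence of finite-dimensional distributions, and by the continuity theorem this reduces to pointwise convergence of characteristic functions. The essentially new ingredient is the asymptotic formula (\ref{rsmall}): $\br^{-1}\sigma^2_Q(\cA) = \sigma_0^2\,I + o(1)$ uniformly in $Q$ on the relevant domain $\{\dist(Q,\dcD) \geq \br+\delta_0\}$. This replaces the $Q$-dependent covariance of Proposition~\ref{Pr2Gen} by a constant one, which is exactly what makes the limit a genuine (time-homogeneous) Brownian motion up to the stopping time.

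First I will establish the analog of Proposition~\ref{Pr2Mom} in the present scaling. For a standard pair $\ell=(\gamma,\rho)$ with $\length(\gamma)>M^{-100}$, $\pi_1(\gamma)\subset\Upsilon_{\delta_2}$, $\|\brV\|\leq a\br^{1/3}M^{-2/3}$, and $n=\varkappa\,\br^{-1/3}M^{2/3}$, one expects
\begin{align*}
   \EXP_\ell(\hV_n-\brV) &= o\bigl(\varkappa\br^{1/3}M^{-2/3}\bigr),\\
   \EXP_\ell\bigl((\hV_n-\brV)(\hV_n-\brV)^T\bigr) &= \bigl(\brsigma^2_{\brQ}(\cA)/\brL + o(1)\bigr)\,\varkappa\,\br^{-1/3}M^{-4/3},\\
   \EXP_\ell\bigl(\|\hV_n-\brV\|^4\bigr) &= \cO\bigl(\varkappa^2\br^{-2/3}M^{-8/3}\bigr).
\end{align*}
These follow by the big-small block argument of Chapter~\ref{ScME}, using Lemma~\ref{AP1St} applied to $\cA = \cA_{\br}$, with the $\br$-dependence of implicit constants absorbed by enlarging $M_0(\br)$, exactly as in the proof of Proposition~\ref{PrTight1}. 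Crucially, invoking (\ref{rsmall}) in the second line replaces $\brsigma^2_{\brQ}(\cA)/\brL$ by $\br\,\sigma_0^2\,I + o(\br)$ uniformly in $\brQ$, so the normalized covariance becomes $\bigl(\sigma_0^2 I + o(1)\bigr)\,\varkappa$.

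Next, fix $\tau\in(0,\brc_0)$ and $\varkappa$ with $\tau/\varkappa\in\naturals$, and set
$$
   R_k' = \hV_{(k+1)\varkappa\br^{-1/3}M^{2/3}-M^\delta} - \hV_{k\varkappa\br^{-1/3}M^{2/3}},\qquad \tV'(\tau)=\br^{-1/3}M^{2/3}\sum_{k=0}^{\tau/\varkappa-1} R_k',
$$
so that $\tV(\tau)-\tV'(\tau)\to 0$. For $\bz\in\reals^2$ the Taylor expansion of $\exp(i\br^{-1/3}M^{2/3}\la\bz,R_k'\ra)$ combined with the moment bounds above (using H\"older on the cubic remainder) gives
$$
   \ln\EXP_\ell\bigl(\exp(i\br^{-1/3}M^{2/3}\la\bz,R_k'\ra)\bigr) = -\tfrac{1}{2}\sigma_0^2\,\varkappa\,\|\bz\|^2 + o(\varkappa).
$$
Telescoping across $k$ via the decoupling lemma~\ref{INDEP} (as in Proposition~\ref{Pr2Gen}), and the analogous multi-increment computation, yields the joint characteristic function
$$
   \EXP_\ell\!\biggl(\exp\!\Bigl(i\br^{-1/3}M^{2/3}\sum_{j=1}^m\la\bz_j,\tV'(\tau_{j+1})-\tV'(\tau_j)\ra\Bigr)\biggr) \longrightarrow \exp\!\Bigl(-\tfrac{1}{2}\sigma_0^2\sum_{j=1}^m (\tau_{j+1}-\tau_j)\|\bz_j\|^2\Bigr),
$$
identifying the limit of $\tV(\tau)$ as a time-homogeneous Brownian motion with covariance $\sigma_0^2 I$, stopped at the first time $\tau_\ast$ when the integral of the velocity (equivalently $\tQ(\tau)$) reaches the $\delta_0$-neighborhood of $\dcD$; this stopping follows from the definition of $\hV_n$ via (\ref{hQhVn}) together with the argument at the end of Section~\ref{subsecMom}. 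A final passage from discrete to continuous time via the analog of Corollary~\ref{CrCTMart} (using the tightness of $\ttt_{\Diamond}$) converts the discrete-step Brownian motion into $\sigma_0\,\bw_{\cD}(\brL\tau)$.

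The main obstacle is keeping all error terms uniform in $\br\to 0$: many constants in Chapter~\ref{ScME} depend implicitly on the geometry through $\br$ (for instance through the minimal free path $L_{\min}$ and the size of the obstacle $\cP(Q)$). The mechanism for handling this is the same trick used in Proposition~\ref{PrTight1}: choose $M_0(\br)$ large enough that any $\br$-dependent prefactor appearing in Lemma~\ref{AP1St} and its descendants is dominated by a power of $M$. A secondary subtlety is that (\ref{rsmall}) must be applied uniformly in $Q$ along the (random) trajectory, which is precisely ensured by the freezing of the disk once $\dist(Q,\dcD)<\br+\delta_0$, together with the fact that $\|v\|\approx 1$ throughout (so that $\brsigma^2_{Q,V}(\cA)\sim\brsigma^2_Q(\cA)$).
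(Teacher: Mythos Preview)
Your overall strategy---tightness plus identification of finite-dimensional distributions via characteristic functions, following Proposition~\ref{Pr2Gen}---is the same as the paper's. However, you have glossed over the one genuine complication that distinguishes this setting from Chapter~\ref{ScFSP}, and the paper's proof is essentially devoted to that single point.

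The issue is the \emph{random} stopping time. In Theorem~\ref{Root} the disk follows an essentially deterministic straight path $Q^\dag(\tau)$, so by choosing $c<c_0$ the stopping rule in~(\ref{hQhVn}) is asymptotically never triggered, and the characteristic-function product factorizes cleanly. Here the disk starts from rest and wanders randomly, so the stopping index $\bk$ is genuinely random. Consequently, when you condition on the history at step $k\varkappa\br^{-1/3}M^{2/3}$ via Lemma~\ref{INDEP}, you will encounter standard pairs that have already been removed (so $R_k'=0$ and $\Phi_k(\bz)=1$) as well as active ones (giving the Gaussian factor). The telescoped product is therefore over a \emph{random} number of nontrivial factors, and it does \emph{not} equal $\exp\bigl(-\tfrac12\sigma_0^2\sum_j(\tau_{j+1}-\tau_j)\|\bz_j\|^2\bigr)$. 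Your displayed limit is the characteristic function of an unstopped Brownian motion, which contradicts your subsequent claim that the limit is stopped at $\tau_\ast$. The paper resolves this by replacing $\hV_n$ with an extended process $V_n^\Diamond$ that, after the stopping time $\bk\bn$, is continued by an \emph{auxiliary, independent} Brownian motion with increments $\br^{1/3}M^{-2/3}\sigma_0[\bw(\brL n)-\bw(\brL\bk\bn)]$. For $V_n^\Diamond$ the block increments are (asymptotically) independent Gaussian with the same covariance throughout, so the Pr2Gen argument applies directly and yields convergence to an unstopped Brownian motion; one then recovers $\tV(\tau)$ by stopping.

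Two smaller points. First, your second-moment formula carries $\brsigma^2_{\brQ}(\cA)/\brL$ where it should carry $\brsigma^2_{\brQ}(\cA)$; tracing the constants through gives block covariance $\varkappa\,\brL\,\sigma_0^2 I$, which matches the target $\sigma_0\bw_{\cD}(\brL\tau)$ without any ``continuous time'' correction. Second, the passage to continuous time via Corollary~\ref{CrCTMart} is \emph{not} part of Proposition~\ref{Pr2Gen1}; it is a separate step afterward that converts $\tV$ (collision time) into $\cV$ (flow time) by $\tau\mapsto\tau/\brL$, yielding the final statement (\ref{Vlimsmall}).
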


The proof is similar to that of Proposition~\ref{Pr2Gen}. A slight
complication comes from the fact that, unlike Theorem~\ref{Root},
we have to stop the heavy particle when it comes too close to the
border $\dcD$. Thus we cannot argue the independence as before,
since the increments depend on whether we have already stopped our
particle or not. To overcome this complication, we let $\bw(\tau)$
be the standard two dimensional Brownian motion (independent of
our dynamical system) and define
$$
      V_n^\Diamond=\begin{cases}
      V_n & \text {if } n\leq \bk\bn \cr
      V_{\bk\bn}+\br^{1/3} M^{-2/3} \sigma_0
      [\bw(\brL n)-\bw(\brL \bk\bn)] & \text{otherwise} \cr
\end{cases} $$
(in other words, rather than terminating the velocity process once the
particle comes too close to the border, we switch to an auxiliary
Brownian motion). After this modification, the limiting process will
have independent increments, and we can proceed as in the proof of
Proposition~\ref{Pr2Gen}. \qed
\medskip

Lastly, the same argument as in the proof of
Corollary~\ref{CrCTMart} (see also the remark after it) shows that
the limit of the functions $\cV(\tau)$ defined in
Section~\ref{subsecSR3} and that of $\tV(\tau)$ above only differ by
a time rescaling, $\tau\mapsto\tau/\brL$, hence $\cV(\tau)$
converges to the stochastic process $\sigma_0 w_{\cD}(\tau)$, as
claimed by Theorem~\ref{ThSmall}. Finally, (\ref{Qlimsmall}) follows
by the fact that the integration is a continuous map on
$C[0,\brc\brL].$ Theorem~\ref{ThSmall} is proved. \qed
\newpage

\chapter{Open problems}
\label{SecOP} \setcounter{section}{9}\setcounter{subsection}{0}

Here we mention possible extensions of our results. More detailed discussion
can be found in our survey \cite{CDol}.

%\subsection{Averaging Theory.}
%One important question is to extend Theorem \ref{Root} to
%a more general class of systems with hyperbolic fast motion.
%For the discrete time case some results are obtained in
%\cite{Ba2}, however the continuous time case is more complicated.
%In our proof we use several times the fact that the mean free path
%does not depend on the position of the heavy particle
%(see (\ref{FPL})). In the general situation it is not even clear
%that one could choose a section of the flow in such a way that
%the mean free path depends smoothly on values of the slow
%variables.

\subsection{Collisions of the massive disk with the wall}
An important problem is to understand the behavior of
$\brsigma^2_Q(\cA)$ as the disk $\cP(Q)$ approaches the boundary of
$\cD$, since this would allow one to extend our results beyond the
moment of the first collision of the heavy disk with the wall. It is
natural to assume that this behavior should be controlled by the
billiard dynamics in the domain where the heavy particle just
touches $\dcD$ at some point. This domain is still a dispersing
\index{Dispersing billiards} billiard table, but two of its boundary
components are tangent to each other (make two cusps). Therefore,
one has to understand the mixing properties of dispersing billiards
with cusps, which is a long standing open problem in billiard
theory. There is a heuristic argument \cite{Ma} that leads us to
believe that discrete time correlations should decays as $\cO(1/n)$,
hence the diffusion \index{Diffusion matrix} matrix
$\brsigma^2_Q(\cA)$ might be infinite or behave very irregularly.
%On the
%other hand, the continuous time correlations in dispersing billiards
%with cusps may still decay very fast, making the corresponding real
%time diffusion matrix finite.
In any case, the dynamics in billiard tables with cusps appears to be
quite delicate and requires further investigation. See \cite{CM1} for
recent results.

\subsection{Longer time scales}
\label{subsLTS}
In all the results of our paper,
the velocity $v$ of the light particle does not change
significantly during the time intervals we consider, in fact its
fluctuations converge to zero in probability as $M\to\infty$. On
the basis of heuristic analysis of Section~\ref{subsecBA}, we
expect that $v$ would experience changes of order one after
$\cO(M)$ collisions with the heavy disk. However, we are currently
unable to treat such long intervals, since the error bounds we
have in Proposition~\ref{PrDistEq2} and Corollary~\ref{PrDistEq3}
would accumulate beyond $\cO(1)$, so we need to improve upon this
proposition in order to proceed further. We note that as the
velocity of the light particle experiences changes of order one,
the system starts approaching its natural equilibrium (its
behavior is described by the invariant ergodic measure).

\subsection{Stadia and the piston problem} The question of
approaching a thermal equilibrium was recently considered by
several authors for the piston model \cite{CL}. In that model a
cubic container is divided into two compartments by a heavy
insulating piston, and these compartments contain ideal gases at
different temperatures. If the piston were infinitely heavy, it
would not move and the temperature in each compartment would
remain constant. However, if the mass of the piston is finite the
temperatures would change slowly due to the energy and momenta
exchanges between the particles and the piston. So far not much is
known about the thermalization time needed for the temperatures to
converge to a common limit value.

There is an obvious analogy between the motion of the piston and
that of the heavy disk in our model. The dynamics of ideal gas
particles in each compartment of in the piston model can be made
hyperbolic by appropriate boundary conditions (say, let the
container have a form of the Bunimovich stadium \cite{Bu79}). Then
the methods of our paper could be used. Let us point out, however,
that in our case the fluctuations about the averaged dynamics are
diffusive, while in the piston case nondiffusive fluctuations may
develop as follows. Some particles may move almost parallel to the
piston bouncing back and forth between the flat walls of the
container for a long time. If that happens on one side of the
piston but not the other, the pressure balance will be broken, and
the piston may be forced to move on a macroscopic scale.

\subsection{Finitely many particles}
\label{subsecFMP} The analysis
of our paper extends without changes to systems with several heavy
disks and one light particle. Of course, we need to prevent the disks
from colliding with each other or the boundary of the table by
restricting our analysis to a sufficiently short interval of time. Let
us, for example, formulate an analogue of Theorem~\ref{ThVelBM} in this
situation (similar generalizations are possible for Theorem~\ref{Root}
and \ref{ThSmall}). Let $k$ be the number of heavy disks which are
initially at rest. Then after rescaling time by $M^{2/3}$, the velocity
of the limiting process satisfy
$$
         d\left(\begin{array}{c}
         V_1\cr \vdots\cr V_k\cr \end{array}\right)
         = \sigma_{Q_1\dots Q_k}\, d\bw
$$
where $\bw$ is a standard $2k$-dimensional Brownian motion. Notice
that even though the heavy disks are not allowed to approach each
other, each one ``feels'' the presence of the others through the
diffusion \index{Diffusion matrix} matrix $\sigma_{Q_1\dots Q_k}$
which depends on the positions of all the disks.

In order to extend our results to systems with several light
particles, one needs to generalize Proposition~\ref{PrDistEq2}.
Here we have two possibilities. One is to work with a discrete
time dynamics, then the multiparticle system is a semidispersing
billiard in a higher dimensional space. Very little is known about
mixing rates in such systems, see some results in \cite{C2}.
Alternatively, we may work directly with a continuous time system,
and in this case we get a direct product of 2D billiards. This
would require obtaining the bounds on continuous time correlation
functions, which should be possible in view of recent results
\cite{C1.5, L}.

\subsection{Growing number of particles}
\label{subsecGNP} A more realistic model of Brownian motion consists
of one heavy disk and many light particles, whose number grows with
$M.$ It is also quite reasonable to make the size of the heavy disk
decrease as $M$ grows. Let the diameter of the disk be
$\br=M^{-\alpha}$ for a small $\alpha>0$ and the number of light
particles $N=M^\beta$ for a small $\beta>0$. Since, in view of
(\ref{CR2}), the heavy disk ``remembers'' only the last $\cO(M)$
collisions, it is natural to assume that its velocity will be of
order $\sqrt{M}/M=1/\sqrt M$. Hence it covers a distance of order
one during a time interval of order $\sqrt{M}.$ Let
$\tau=t\sqrt{M}.$ According to the calculations of
Chapter~\ref{ScSLP}, the expected number of collisions during this
time interval is of order
$$ N\sqrt{M} \br=M^{1/2+\alpha-\beta}.$$
On the other hand, (\ref{CR2}) tells that $\cO(M)$ is a critical number
of collisions. Hence the following conjecture seems reasonable:

\begin{conjecture}
Suppose that the initial state of each light particle is chosen
independently, so that the position and velocity direction are
uniformly distributed and the speed has an initial distribution with
smooth density $\rho_0(v).$ Denote $a_j(\rho)=\int |v|^j \rho(v)\, dv.
$ Then the limiting process $\bQ(\tau)$ is

{\rm (a)} straight motion if $\beta=\alpha+1/2-\epsilon,$

{\rm (b)} the integral of an Ornstein-Uhlenbeck process
\beq
   \label{OU}
   d\bQ=\bV\, d\tau, \qquad
   d\bV=-\nu \bV\, d\tau+\sigma\, d\bw
\eeq
where
$$ \nu=c_1 a_1(\rho_0), \qquad
\sigma^2=c_2 a_3(\rho_0) $$
if $\beta=\alpha+1/2$

{\rm (c)} a Brownian motion if $\beta=\alpha+1/2+\epsilon.$
\end{conjecture}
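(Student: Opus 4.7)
The plan is to import the two-part strategy used for Theorems~\ref{Root}--\ref{ThSmall}: a dynamical part that establishes equidistribution for the frozen multiparticle billiard, and a probabilistic part that identifies the limit by computing infinitesimal moments of $V$ and invoking the martingale formulation of Section~\ref{subsecMart}. The key nondimensional parameter is the collision rate $\lambda_M = \mathrm{const}\cdot N\brL^{-1}\br = \mathrm{const}\cdot M^{\beta-\alpha}$ per unit physical time, so the disk collides with some light particle $\sim M^{\beta-\alpha+1/2}$ times per unit of rescaled time $\tau = t/\sqrt{M}$. The three regimes in the conjecture correspond precisely to this exponent being less than $1$, equal to $1$, or greater than $1$, which in turn controls how much $V$ can drift and fluctuate before time $\tau=\cO(1)$.

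The first step is to extend the equidistribution machinery of Chapter~\ref{SecSPE} to the $(1+N)$-particle phase space. At frozen $(Q,V)$ the dynamics is the $N$-fold direct product of the planar dispersing billiard $\cF_Q\colon\Omega_Q\to\Omega_Q$, so each factor enjoys the full strength of \cite{BSC2,Y}. Standard pairs would be defined as tensor products of a single homogeneous unstable curve for the ``active'' particle (the one that is about to collide with the disk) with a smooth conditional density for the $N-1$ passive particles, supported on configurations in which no passive particle is imminently entering the $\delta$-neighborhood of the disk. The growth lemma (Lemma~\ref{prgrow}) and propagation Proposition~\ref{PrDistEq1} extend factor by factor, with essentially only bookkeeping needed to track when a passive particle becomes active. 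Since two light particles never interact directly (they are points in $\cD$), the only coupling between them is through sequential collisions with the disk, and this coupling can be controlled provided $\lambda_M\ll M^{1/2}$.

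The second step is a direct adaptation of Lemma~\ref{AP1St} and Proposition~\ref{PrMom}. Using the collision law (\ref{CR2a}) and averaging $n=(q-Q)/\br$ over the outgoing hemisphere against the invariant billiard measure $\mu_{Q,V}$, one finds that a single collision with a particle of speed $\|v\|$ contributes a mean drift $-(c_1\|v\|/M)V$ and a covariance $(c_2\|v\|^2/M^2)I$ on $V$, up to lower-order corrections in $\|V\|/\|v\|$ and $1/M$. Averaging over the light-particle speeds (which stay close to the initial distribution $\rho_0$, since each $\|v\|$ drifts only by $\cO(1/\sqrt{M})$ per collision) and multiplying by $\lambda_M$, one obtains an effective SDE for the rescaled velocity $\bV = \sqrt{M}\,V$ on the time scale $\tau=t/\sqrt{M}$:
$$
   d\bV = -c_1 a_1(\rho_0)\,M^{\beta-\alpha-1/2}\,\bV\,d\tau
    + \sqrt{c_2 a_3(\rho_0)}\,M^{(\beta-\alpha-1/2)/2}\,d\bw(\tau) + o(1).
$$
In case (a) both coefficients vanish, so $\bV\to\bV_0$ is constant and $\bQ$ is linear; in case (b) both are of order one and the equation is exactly (\ref{OU}); in case (c) the drift diverges, forcing fast relaxation of $\bV$ to its Gaussian stationary measure with covariance $\frac{c_2 a_3(\rho_0)}{2c_1 a_1(\rho_0)}I$, whereupon $\bQ=\int \bV\,d\tau$ converges to a Brownian motion by a standard averaging argument.

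The hard part will be the dynamical estimates in regimes close to and beyond critical, $\beta\geq\alpha+1/2$. In regime (c) the total number of collisions per unit $\tau$ diverges with $M$, so the error terms $n\|\brV\|+(n^2+m^3)/M$ in Proposition~\ref{PrDistEq2} and Corollary~\ref{PrDistEq3} saturate well before the effective SDE has time to act. Sharpening these bounds (essentially to the precision of a particlewise CLT that can be combined across the $N$ factors without losing a factor of $N$) is the same kind of difficulty discussed in Section~\ref{subsLTS}: it requires controlling the averaged dynamics on time scales comparable to the relaxation time of $V$, rather than the $n\ll M$ scale to which the present estimates are tuned. A secondary obstacle is to justify that the conditional distribution of light-particle speeds stays close to $\rho_0$ throughout the evolution, so that $a_1$ and $a_3$ can be evaluated at the fixed input $\rho_0$ rather than at a self-consistently evolving distribution; this is the same ``approach-to-equilibrium'' issue discussed in Section~\ref{subsecCPW}, and real progress on it would be a substantive advance in its own right.
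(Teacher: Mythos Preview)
This statement is a \emph{conjecture}, not a theorem: it appears in Chapter~\ref{SecOP} (Open Problems), and the paper does not prove it. What follows the conjecture in the paper is explicitly a heuristic justification, not a proof. So there is no ``paper's own proof'' to compare against in the usual sense; the appropriate comparison is between your strategy and the authors' heuristic reasoning and their remarks on what a proof would require.

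Your proposal is broadly consonant with the paper's heuristics. Both identify the collision count $N\sqrt{M}\,\br = M^{\beta-\alpha+1/2}$ as the governing parameter, attribute the drift in (b) to the factor $(M-1)/(M+1)$ in (\ref{CR2}), explain the $a_1$ and $a_3$ coefficients via the speed-dependence of the collision rate, and view (c) as a fast-relaxation/CLT consequence of the Ornstein--Uhlenbeck dynamics. You also correctly flag the main obstruction: the error bounds in Proposition~\ref{PrDistEq2} and Corollary~\ref{PrDistEq3} are tuned to $n\ll M$ collisions and would saturate in regime (c), which is exactly the long-time-scale issue the paper raises in Section~\ref{subsLTS}.

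There are two points where the paper's outlook differs from yours and is worth noting. First, for the equidistribution step the paper does not propose tensor-product standard pairs with an active/passive split; instead it observes (end of Section~\ref{subsecGNP}) that since the whole argument rests on fourth-moment estimates, and any $4$-tuple of collisions involves at most four light particles, an extension of Proposition~\ref{PrDistEq2} to a \emph{four}-particle system should suffice for arbitrarily many particles. This is a sharper reduction than your general $N$-fold product and is likely the more tractable route. Second, the paper singles out an obstacle you do not mention: with $N\to\infty$ there are inevitably \emph{slow} light particles for which mixing has not had time to act, and one must argue their contribution is negligible. Your concern about $\rho_0$ remaining frozen is related but distinct---the slow-particle issue is about the failure of equidistribution for a subpopulation, not about drift of the speed distribution.

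In summary: your strategy is a reasonable blueprint and matches the paper's heuristics at the level of scaling and limiting SDE identification, but the paper offers a more economical reduction for the dynamical part (four-particle equidistribution via fourth moments) and flags the slow-particle complication. Since the statement is genuinely open, the ``hard part'' you identify is not a gap in your argument so much as an honest acknowledgment that the conjecture remains unproved.
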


The justification of this conjecture is straightforward. In fact, part
(a) is in direct analogy with Theorem~\ref{Root}. There are too few
collisions to produce significant changes of the velocity of the
massive disk. Part (b) is similar to Theorem~\ref{ThVelBM}, with one
notable difference: in Theorem~\ref{ThVelBM}, there is no drift for the
velocity of the disk since the number of collisions was too small for
the factor $\frac{M-1}{M+1}$ in (\ref{CR2}) to take effect. Under the
setting of the above conjecture, it is this factor that determines the
drift of the Ornstein-Uhlenbeck process. The factor $a_1$ in the drift
term comes from the fact that the number of collisions of the massive
disk with any given particle is proportional to the speed of that
particle. The reason for the factor $a_3$ in the diffusion term is
explained before Theorem \ref{Root} (see also \cite{DGL1}). Also,
observe that in the two particle model treated in this paper, the
velocity of the massive disk has a maximal value, $\frac{1}{\sqrt{M}}$,
hence when it gets close to this value it is more likely to decrease
than to increase. In this sense, we have a ``superdrift'' in the two
particle model. Finally, in the case (c) the Ornstein-Uhlenbeck regime
should take effect on time intervals which are much shorter than
$\tau$, hence (c) is quite natural in view of the fact that
Ornstein-Uhlenbeck process satisfies the central limit theorem.

We believe that the cases of several light particles of the
previous subsection and a growing number of particles discussed
here are similar, on a technical level. Indeed, our arguments are
based on the estimation of the first four moments. For arbitrary
many particles, the computation of the fourth moment contains only
the contribution of all 4-tuples of collisions, but each 4-tuple
involves at most four different light particles, hence an
extension of Proposition~\ref{PrDistEq2} to only four light
particles should be enough for the study of systems with arbitrary
many particles. In the case of a growing number of particles,
there is also an additional complication because there are,
inevitably, slow particles for which there is not enough time for
mixing to take effect. However, we expect the contribution of
those particles be small, since their collisions with the heavy
disk will result in relatively small changes of the velocity of
the latter (cf.\ also \cite{DGL1}).

\subsection{Particles of positive size} The results of our paper
obviously remain valid if the light particle has a positive
diameter, $2r_0$, which is smaller than the shortest distance
between scatterers \index{Scatterer} $\BAN_i$. Indeed, that model
can be reduced to ours by enlarging the massive disk and all the
scatters by $r_0$. However, a model of several light particles of
positive diameter becomes more interesting, since the particles can
interact with each other. To fix our ideas, consider the situation
of the previous subsection with $\beta=\alpha+1/2$ but now let us
assume that instead of $r_0=0$ we have $r_0=M^{-\gamma}.$ Then each
light particle is expected to collide with $N r_0 \sqrt{M}$ other
particles. Since momentum transferred during each collision is of
order 1, now an interesting scaling regime is $N r_0 \sqrt{M}\sim
1.$ In this case we can expect $\rho$ to change according to the
kinetic theory. Thus the following statement seems reasonable.

\begin{conjecture}
The limiting process satisfies
\beq
d\bQ=\bV\, d\tau, \qquad d\bV=-\nu(\tau)\, \bV\,
d\tau+\sigma(\tau)\, d\bw
\eeq
where \medskip

\noindent {\rm (a)} $\quad \nu=c_1 a_1(\rho_0), \quad \sigma^2=c_2
a_3(\rho_0)\ \ $ if $\gamma>\alpha+1$, \medskip

\noindent {\rm (b)} $\quad \nu=c_1 a_1(\rho_\tau), \quad
\sigma^2=c_2 a_3(\rho_\tau),\ \ $ and $\rho_t$ satisfies the
homogeneous Boltzmann equation
$$
   \frac{d\rho_\tau}{d\tau}=Q(\rho_\tau, \rho_\tau),
$$
where $Q$ is the Boltzmann collision kernel, if
$\gamma=\alpha+1$,\medskip

\noindent{\rm (c)} $\quad \nu=c_1 a_1(\rho_{\rm Max}(a_2(\rho_0)))\
\ $ and $\ \ \sigma^2=c_2 a_3(\rho_{\rm Max}(a_2(\rho_0))),\ \ $
where $\rho_{\rm Max}(a_2(\rho_0))$ is the Maxwellian distribution
with the same second moment as $\rho_0$, if $\gamma<\alpha+1.$
\end{conjecture}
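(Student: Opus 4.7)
The plan is to extend the martingale approach of Chapter~\ref{ScME} to the present $N$-particle situation. In all three regimes the limiting SDE for the heavy disk should read
$$
   d\bQ = \bV\, d\tau, \qquad d\bV = -\nu(\tau)\bV\, d\tau + \sigma(\tau)\, d\bw,
$$
with $\nu(\tau) = c_1 a_1(\rho_\tau)$, $\sigma^2(\tau) = c_2 a_3(\rho_\tau)$, where $\rho_\tau$ is the one-particle velocity density at time $\tau$. The heuristic is identical to Section~\ref{subsecGNP}: a light particle of speed $|v|$ collides with the heavy disk at a rate $\propto|v|$ and transfers momentum of order one, so summing the per-collision kicks (\ref{MEx}) against $\rho_\tau$ and using the collision rate produces $c_1 a_1$ as the linear-response drift and $c_2 a_3$ as the Green--Kubo--type variance, cf.\ (\ref{EqSigmabar}). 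Thus the whole problem reduces to analyzing how $\rho_\tau$ itself evolves in each of the three regimes, a task essentially decoupled from the heavy disk because a single collision with it changes a light particle's velocity by only $\cO(1/M)$.

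For case (a), $\gamma>\alpha+1$, the expected number of inter-particle collisions per light particle on the macroscopic interval is $N r_0 \sqrt M \sim M^{\alpha+1-\gamma}\to 0$. By a union bound using the finite-horizon Assumption A2, with probability tending to one no inter-particle collision occurs before the relevant stopping time, and conditional on this event the $N$-particle system factorizes into independent two-particle systems of the kind treated in Section~\ref{subsecGNP}. Then $\rho_\tau\equiv\rho_0$, and a multivariate CLT applied to the aggregated per-particle velocity increments, each controlled by the moment estimates of Chapter~\ref{ScME}, yields the constant coefficients $\nu=c_1a_1(\rho_0)$, $\sigma^2=c_2 a_3(\rho_0)$ via Proposition~\ref{PrGen}.

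For case (b), $\gamma=\alpha+1$, each light particle undergoes $\cO(1)$ inter-particle collisions on the macroscopic scale, which is precisely the Boltzmann--Grad regime. The plan is a Lanford-type derivation of the homogeneous Boltzmann equation: write the BBGKY hierarchy for the empirical marginals $f_k^{(M)}$, take the $k$-particle dispersing-billiard flow in $\cD\setminus\cP(Q)$ as the free evolution, and establish propagation of chaos $f_k^{(M)}\to(\rho_\tau)^{\otimes k}$ for every fixed $k$, with $\rho_\tau$ solving $\partial_\tau\rho=Q(\rho,\rho)$. Given such a $\rho_\tau$, one replaces the generator $\cL$ of Proposition~\ref{PrGen} by a time-dependent $\cL_\tau$ built from $\nu(\tau),\sigma(\tau)$ above and runs the Stroock--Varadhan characterization; propagation of chaos guarantees that the empirical averages of $|v|$ and $|v|^3$ along successive collisions concentrate on $a_1(\rho_\tau)$ and $a_3(\rho_\tau)$, supplying the fourth-moment control required in Chapter~\ref{ScME}. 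Case (c), $\gamma<\alpha+1$, then follows by a two-scale argument: on the fast scale $\tau_{\mathrm{fast}}=M^{\gamma-\alpha-1}\ll 1$ Boltzmann's $H$-theorem and the uniqueness of equilibria at fixed second moment (conserved by elastic collisions) force $\rho_\tau\to\rho_{\mathrm{Max}}(a_2(\rho_0))$ instantaneously in the macroscopic time, after which the analysis of case (b) applies with constant coefficients.

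The main obstacle is clearly case (b): a rigorous derivation of the Boltzmann equation is currently available, via Lanford, only for short times in infinite space, and our setting adds three further difficulties --- the gas is confined in a bounded dispersing billiard, the tagged disk is itself moving (and shrinking with $M$), and convergence is demanded on the full macroscopic interval $[0,c]$. The technical core would be a multi-particle analogue of Proposition~\ref{PrDistEq2}, controlling joint distributions of $k$-tuples of tagged light particles between successive collisions, coupled with a dispersion estimate showing that recollisions are statistically negligible on the Boltzmann--Grad scale. Such a $k$-particle equidistribution statement would simultaneously resolve the conjecture of Section~\ref{subsecGNP}, so the two problems are naturally attacked in tandem; by contrast, in cases (a) and (c) the corresponding step degenerates into routine bookkeeping, respectively into a quantitative $H$-theorem in the spatially homogeneous setting.
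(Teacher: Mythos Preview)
The statement you are attempting to prove is labeled a \emph{Conjecture} in the paper and appears in the chapter on open problems; the paper offers no proof, only the sentence ``Thus the following statement seems reasonable'' followed by the conjecture itself. So there is no proof in the paper to compare against, and what you have written is not a proof either but a research outline---which you implicitly acknowledge when you list the obstacles in your final paragraph.

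Your heuristic sketch is broadly reasonable and aligns with the informal motivation the paper gives elsewhere (e.g.\ the discussion before the companion conjecture in Section~\ref{subsecGNP}). In particular, your identification of case~(b) as the crux is correct: a Lanford-type derivation of the homogeneous Boltzmann equation for hard disks in a bounded dispersing billiard, valid on a macroscopic time interval, is well beyond current technology, and case~(c) additionally requires a quantitative relaxation-to-Maxwellian statement on a scale faster than the macroscopic one. Even case~(a), which you describe as ``routine bookkeeping,'' would require a genuine multi-particle extension of Proposition~\ref{PrDistEq2} (the paper itself flags this as open in Section~\ref{subsecFMP}), together with control of the rare inter-particle collisions that do occur. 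So while nothing in your outline is wrong as a plan, none of the three cases is currently a theorem, and your writeup should be read as a program rather than a proof.
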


\medskip\noindent{\bf Acknowledgment.}
We are deeply indebted to Ya.~G.~Sinai who proposed to us many of
the problems discussed in this paper.
%, as a continuation of earlier studies of the dynamics of a
% massive piston in an ideal gas \cite{CLSlong}.
He outlined a general strategy of the proof of Theorem~\ref{Root},
suggested the main idea of the proof of Proposition~\ref{PrTwoSide},
and we benefited from long conversations with him about the entire
work. This work was done when the authors, in turn, stayed at the
Institute for Advanced Study. We are grateful to the institute staff
and especially to our host T. Spencer for the excellent working
conditions. D.~Dolgopyat would like to thank D.~Ruelle for
explaining to him the results of \cite{R2}. A local version of these
results constitute an important tool in this paper. We thank Peter
Balint for his comments on the preliminary version of this paper.
N.~Chernov was partially supported by NSF grants DMS-9729992,
DMS-0098788. D.~Dolgopyat was partially supported by NSF grant
DMS-0245359, the Sloan Fellowship and IPST.
\newpage

%\chapter*{Appendices}
\appendix
%\addcontentsline{toc}{section}{Appendices}

\renewcommand{\thechapter}{\Alph{chapter}}
\renewcommand{\thesection}{\Alph{section}}
\setcounter{chapter}{0}

\chapter[Statistical properties]{Statistical
properties of dispersing billiards}
%\addcontentsline{toc}{section}{Appendix A. Dispersing billiards}
%\renewcommand{\theequation}{A.\arabic{equation}}
%\renewcommand{\thelemma}{A.\arabic{lemma}}
%\renewcommand{\thesubsection}{A.\arabic{subsection}}
%\setcounter{equation}{0}
%%\setcounter{lemma}{0}
\setcounter{section}{1} \setcounter{subsection}{0}

Throughout the paper, we have made an extensive use of statistical
properties of dispersing \index{Dispersing billiards} billiards
obtained recently in \cite{BSC2,C2,Y}. On several occasions, though,
those results were insufficient for our purposes, and we needed to
extend or sharpen them. Here we adjust the arguments of
\cite{BSC2,C2,Y} to obtain the results we need. The reader is
advised to consult those papers and a recent book \cite{CM2} for relevant details.

\subsection{Decay of correlations: overview}
\label{subsecA1} To fix our notation, let $\cD=\Tor^2\setminus
\cup_{i=0}^r \BAN_i$ be a dispersing \index{Dispersing billiards}
billiard table, where $\BAN_0,\BAN_1,\dots,\BAN_r$ are open convex
scatterers \index{Scatterer} with $C^3$ smooth boundaries and
disjoint closures (the scatterer $\BAN_0$ will play a special role,
it corresponds to the disk $\cP(Q)$ in our main model). Denote by
$\Omega_{\cD}=\dcD\times [-\pi/2,\pi/2]$ the collision space,
$\cF_{\cD}\colon\Omega_{\cD}\to\Omega_{\cD}$ the collision map, and
$\mu_{\cD}$ the corresponding invariant measure.

Assume that the horizon \index{Finite horizon} is finite, i.e.\ the
free path between collisions is bounded by $L_{\max}<\infty$. In
this case for every $k\geq 1$ the map $\cF_{\cD}^k$ is discontinuous
on a set $\cS_k\subset\Omega_{\cD}$, which is a finite union of
smooth compact curves. The complement $\Omega_{\cD}\setminus\cS_k$
is a finite union of open domains which we denote by
$\Omega_{\cD,k,j}$, $1\leq j\leq J_k$.

Now let $\cH_{k,\eta}$ denote the space of functions on
$\Omega_{\cD}$ which are H\"older continuous with exponent $\eta$
on each domain $\Omega_{\cD,k,j}$, $1\leq j\leq J_k$:
\begin{align*}
   f\in \cH_{k,\eta} \ \ \Leftrightarrow \ \ &\exists K_f: \;
   \forall j\in [1,J_k]\ \forall x,y\in\Omega_{\cD,k,j}\\
   &\quad |f(x)-f(y)|\leq K_f\,[\dist(x,y)]^{\eta}.
\end{align*}
One of the central results in the theory of dispersing
\index{Dispersing billiards} billiards is

\begin{proposition}[Exponential decay of correlations \cite{Y}]
\label{PrDecCor}
For every $\eta\in(0,1]$ and $k\geq 1$ there is a
$\theta_{k,\eta}\in(0,1)$ such that for all $f,g\in\cH_{k,\eta}$ and
$n\in\integers$
\beq
        \bigl|\mu_{\cD}(f\cdot (g\circ\cF_{\cD}^n))
        -\mu_{\cD}(f)\mu_{\cD}(g)\bigr|
        \leq C_{f,g}\, \theta_{k,\eta}^{|n|}
          \label{expbound}
\eeq
where
\beq
    C_{f,g} = C_0 \bigl(K_f+\|f\|_{\infty}\bigr)
    \bigl(K_g+\|g\|_{\infty}\bigr)
    \label{Cfg}
\eeq
and $C_0 = C_0 (\cD) >0$ is a constant.
\end{proposition}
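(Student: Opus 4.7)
My plan is to deduce this proposition from the standard pair machinery of Chapter~\ref{SecSPE}, specialized to the fixed billiard $\cF_{\cD}$ via the $M=\infty$ limit noted at the end of Section~\ref{subsecSUC}. After replacing $f$ and $g$ by their centered versions, it suffices to bound $|\mu_{\cD}(f\cdot(g\circ\cF_{\cD}^n))|$ by $C_{f,g}\theta^{|n|}$ when $\mu_{\cD}(f)=\mu_{\cD}(g)=0$, and by time-reversibility it is enough to treat $n\geq 1$.

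First I would disintegrate the signed measure $f\,d\mu_{\cD}$ along a smooth unstable foliation of $\Omega_{\cD}$, yielding a one-parameter family of signed measures carried by H-curves with densities of the form $\rho\cdot(f|_\gamma)$. Since $f\in\cH_{k,\eta}$, each leaf is cut by $\cS_k$ into at most $N_k=N_k(\cD)$ pieces (finite by the finite-horizon assumption), on each of which $f|_\gamma$ is H\"older with exponent $\eta$ and constant $\leq K_f$; the densities inherit this regularity after a routine subdivision and normalization to enforce the length control from the definition of standard pairs in Section~\ref{subsecSP}. Thus, modulo a tail of short pieces handled by the Growth Lemma~\ref{prgrow}, one has
$$
     \int f\cdot(g\circ\cF_{\cD}^n)\,d\mu_{\cD}
     =\sum_{\pm} c_\pm\int\EXP_{\ell_\alpha^\pm}(g\circ\cF_{\cD}^n)\,d\lambda_\pm(\alpha),
     \qquad |c_\pm|\leq \Const\,(K_f+\|f\|_\infty).
$$

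Second, I would apply (the $M=\infty$ specialization of) Proposition~\ref{PrDistEq2} to each standard pair $\ell_\alpha$ with observable~$g$. In the stationary billiard setting the drift term $\cR_{n,m}$ is absent, so one obtains
$$
     \bigl|\EXP_{\ell_\alpha}(g\circ\cF_{\cD}^n)-\mu_{\cD}(g)\bigr|
     \leq C_g\,\theta_0^{m}
$$
for all $n\geq K|\ln|\gamma_\alpha||$ and $m\leq n/2$. Choosing $m=\lfloor n/2\rfloor$ and using Lemma~\ref{prgrow} to dismiss standard pairs with $|\gamma_\alpha|<\theta_0^{n/(2K)}$ (whose total weight is exponentially small in $n$) yields $|\mu_{\cD}(f(g\circ\cF_{\cD}^n))|\leq C_{f,g}\,\theta_0^{n/2}$, with $C_{f,g}$ of the product form (\ref{Cfg}). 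Setting $\theta_{k,\eta}=\theta_0^{1/2}$ completes the bound for $n\geq 1$; small $n$ is trivial by $|\mu_{\cD}(f(g\circ\cF_{\cD}^n))|\leq\|f\|_\infty\|g\|_\infty$ after enlarging $C_0$.

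The main technical hurdle is verifying that Proposition~\ref{PrDistEq2} genuinely applies to an arbitrary $g\in\cH_{k,\eta}$, rather than only to observables of the special form (\ref{ABB}). Inspecting the proof in Sections~\ref{subsecPA}--\ref{subsEP}, the only properties of the test function that are actually used are (i) H\"older continuity on each connected component of $\Omega\setminus\cS_{n_A}$ with exponent $\alpha_A\in(0,1]$, and (ii) the summability bound $\sum_{W^\diamond\subset\cF^i(\gamma)}\mes_i(W^\diamond)/|W^\diamond|^{\beta_A}\leq\Const$ from (\ref{sumbetaA}), which via the Growth Lemma holds whenever $\beta_A<1$. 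For $g\in\cH_{k,\eta}$ these hold with $n_A=k$, $\alpha_A=\eta$, and $\beta_A=0$ (since $g$ is globally bounded with no singular blow-up of the local Lipschitz constant), so the argument of Section~\ref{subsEP} goes through verbatim. The resulting $\theta_{k,\eta}$ depends on $k$ only through the combinatorial constant $N_k$ governing the number of singularity crossings per unstable leaf, and on $\eta$ through the H\"older exponent used in the H\"older-approximation step of Section~\ref{subsecHA}.
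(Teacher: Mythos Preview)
Your argument follows essentially the same route as the paper: foliate $\Omega_\cD$ by H-curves, reduce the correlation integral to a family of standard-pair expectations of $g\circ\cF_\cD^n$, and invoke the billiard equidistribution result (what you call the $M=\infty$ limit of Proposition~\ref{PrDistEq2} is exactly Proposition~\ref{PrDistEq0}, which the paper proves independently via coupling and then uses to derive Proposition~\ref{PrDecCor}).

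The one substantive difference is in how $f$ is handled. You absorb $f$ into the leaf densities at time $0$, splitting into $\pm$ parts with normalizing constants $c_\pm=\cO(K_f+\|f\|_\infty)$. The paper instead runs forward $n/2$ steps, partitions into H-components $\gamma_\beta'$ of $\cF_\cD^{n/2}(\gamma_\alpha)$, and freezes $f$ to a constant on each preimage $\cF_\cD^{-n/2}(\gamma_\beta')$ (which has length $\cO(\vartheta^{n/2})$), incurring an error $\cO(\|g\|_\infty K_f\vartheta^{n\eta/2})$. The paper's trick is cleaner because it sidesteps a genuine technical point in your version: a standard pair, as defined in Section~\ref{subsecSP}, requires a \emph{homogeneous} density satisfying (\ref{densbound}), which is a log-Lipschitz type bound; multiplying a homogeneous density by a merely $\eta$-H\"older function $f$ (or $f+\|f\|_\infty+K_f$) with $\eta<1$ does not in general produce such a density, so your $\ell_\alpha^\pm$ are not standard pairs in the sense needed to quote Proposition~\ref{PrDistEq2}/\ref{PrDistEq0} as a black box. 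This is fixable---the coupling proof of Proposition~\ref{PrDistEq0} in fact tolerates dynamically H\"older densities (see the remark in Step~5 of the proof of Lemma~\ref{LmCoupl})---but it is not ``routine subdivision and normalization''.

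One minor correction: your final sentence refers to Section~\ref{subsecHA}, but that section concerns H\"older approximation of vector fields in the diffusion-matrix analysis of Chapter~\ref{SecRDM} and plays no role in the proof of equidistribution; the H\"older exponent $\eta$ enters only through the oscillation bound on $g$ along stable manifolds in the coupling argument.
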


The exponential bound (\ref{expbound}) is stated and proved in
\cite{C2,Y}. The formula (\ref{Cfg}), which we also need for our
purposes, is not explicitly derived there, but it follows from the
estimates on pages 608--609 of \cite{Y}.

The arguments in \cite{Y} can be used to derive the following
analogue of our Proposition~\ref{PrDistEq2}:

\begin{proposition}[Equidistribution for billiards]
\label{PrDistEq0} For every $\eta\in(0,1]$ and $k\geq 1$ there is a
$\theta_{k,\eta}\in(0,1)$ such that for any $f\in\cH_{k,\eta}$ and
\index{Equidistribution} \index{Standard pair} any standard pair
consisting of an \index{H-curves} H-curve $W \subset \Omega_\cD$ and
a smooth probability measure $\nu$ on it we have
\begin{equation}
\label{BEqEq}
         \left |\int_W f\circ \cF^n_{\cD}\, d\nu
         - \mu_{\cD}(f)\,\right |
         \leq C_f \theta_{k,\eta}^n
         \qquad\forall n\geq K\big |\ln |W|\,\big |
\end{equation}
where $C_f = C_0 (K_f + \|f\|_{\infty})$ and $C_0,K>0$ are
constants. In addition, by time reversibility, a similar property
holds for stable curves and negative iterations of $\cF_{\cD}$.
\end{proposition}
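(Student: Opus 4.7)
The plan is to deduce Proposition~\ref{PrDistEq0} from a coupling (shadowing) argument that is essentially a simplified, perturbation-free version of the proof of Proposition~\ref{PrDistEq2} in Chapter~\ref{SecSPE}. Since the map $\cF_\cD$ does not depend on any extra parameter, there is no need for the approximations of Section~\ref{subsecPA}, and the whole argument collapses to the geometric content of Section~\ref{subsEP}.

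First I would normalize the starting curve. Applying the growth lemma (Lemma~\ref{prgrow}) in its pure-billiard incarnation together with Proposition~\ref{prspair}, after $m_0 = K|\ln|W|\,|$ iterations the image $\cF_\cD^{m_0}(\nu)$ decomposes into standard pairs $\{(\gamma_\alpha,\rho_\alpha)\}$ with $\sum_{|\gamma_\alpha|<\varepsilon} c_\alpha \leq C\varepsilon$. Discarding the short components costs only an additive error of size $C\vartheta^{m_0} = \cO(|W|^{c})$, which is absorbed by the threshold $n \geq K|\ln|W|\,|$. Thus it suffices to prove (\ref{BEqEq}) for a standard pair $(W,\nu)$ whose curve satisfies $|W|\geq \varepsilon_0$, on the shifted time interval $n-m_0$, with the extra discount $\theta^{-m_0}$ absorbed into $C_f$.

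Second, I would write the invariant measure in the same form, $\mu_\cD = \int \mes_{\ell'}\, d\lambda'(\ell')$, using a fixed smooth foliation of $\Omega_\cD$ by long unstable curves carrying the conditional densities of $\mu_\cD$; this is elementary since $\mu_\cD$ has a smooth density. The statement then reduces to the symmetric claim that for any two standard pairs $\ell=(\gamma,\rho)$ and $\ell'=(\gamma',\rho')$ with $|\gamma|,|\gamma'|\geq \varepsilon_0$,
\[
    \bigl|\EXP_\ell(f\circ\cF_\cD^n)-\EXP_{\ell'}(f\circ\cF_\cD^n)\bigr|
    \leq C_f\, \theta_{k,\eta}^n.
\]
The core step is a coupling along stable manifolds, exactly as in the proof of (H2)–(H3) of Proposition~\ref{prEE}: construct subsets $\gamma^\ast\subset\gamma$, $(\gamma')^\ast\subset\gamma'$ of measure $1-\cO(\theta^n)$ and an absolutely continuous holonomy $h^\ast\colon \cF_\cD^n(\gamma^\ast)\to \cF_\cD^n((\gamma')^\ast)$ whose Jacobian satisfies $|\ln\cJ_{h^\ast}|\leq \Const\,\theta^n$ via the distortion and curvature bounds of Propositions~\ref{prdist}–\ref{prcurv} and the uniform contraction of $\cF_\cD^{-1}$ on stable manifolds. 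Matched points $z, h^\ast(z)$ lie at distance $\cO(\vartheta^n)$ on a common stable manifold, so if neither of them is $\vartheta^n$-close to the singularity set $\cS_k$ of $f$, Hölder continuity on each piece $\Omega_{\cD,k,j}$ yields $|f(z)-f(h^\ast(z))|\leq K_f\vartheta^{\eta n}$. Summing the Jacobian and the Hölder contributions gives a geometric-series bound of the desired form $\cO(C_f\theta^n)$ with $C_f$ of the form (\ref{Cfg}).

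The main obstacle, as in the treatment of (H2) in Section~\ref{subsEP}, is handling the singularity set $\cS_k$: a matched pair $(z,h^\ast(z))$ may be separated by a curve of $\cS_k$, in which case the Hölder bound is useless and we must replace it by $2\|f\|_\infty$. The pairs for which this happens are controlled by the growth lemma applied to $\cS_k$: the $\mu_\cD$-measure of points landing in the $\vartheta^{n}$-neighborhood of $\cS_k$ is $\cO(J_k\vartheta^n)$, and the computation (\ref{AAWd})--(\ref{sumbetaA}) shows that, after summing over H-components and weighting by the local density, this contributes an acceptable error $\cO(\|f\|_\infty\, J_k\, \theta^n)$. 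This is precisely where the exponent $\theta_{k,\eta}$ picks up its dependence on $k$ (through $J_k$) and on $\eta$ (through the exponent in the Hölder/shadowing trade-off), and where the constant $C_0$ in (\ref{Cfg}) absorbs the combinatorics of $\cS_k$. Once this singularity estimate is in place, the argument proceeds exactly as in Propositions~\ref{prEE} and \ref{prexpell}, and the time-reversibility statement for stable curves follows by running the same proof for $\cF_\cD^{-1}$.
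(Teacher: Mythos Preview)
Your reduction (normalizing via the growth lemma, disintegrating $\mu_\cD$ over a fixed unstable foliation, and reducing to a comparison of two long standard pairs) matches the paper's Step~1. The gap is in your ``core step.'' You propose to couple two \emph{arbitrary} long H-curves $\gamma,\gamma'$ by building a stable holonomy $h^\ast\colon\cF_\cD^n(\gamma^\ast)\to\cF_\cD^n((\gamma')^\ast)$ on sets of measure $1-\cO(\theta^n)$, citing Proposition~\ref{prEE}. But in Proposition~\ref{prEE} the two curves $W',W''$ are already $c_1\varepsilon_\gamma$-close (they arise from perturbing the \emph{same} curve by two nearby maps), so short stable manifolds from $W'$ reach $W''$ on a large set. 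For two arbitrary long H-curves in $\Omega_\cD$, the images $\cF_\cD^n(\gamma)$ and $\cF_\cD^n(\gamma')$ are each a union of many H-components scattered over the whole space, with no canonical pairing and no reason for corresponding pieces to be within stable-manifold reach of each other. A one-shot holonomy coupling of this kind simply does not exist; asserting it is essentially assuming the equidistribution you are trying to prove.

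The paper's actual mechanism (Steps~2--6) is the Young--Bressaud--Liverani \emph{iterative} coupling: fix once and for all a ``magnet'' $\Lambda^s$ (a Cantor family of long stable manifolds over a short H-curve $\tW$), use mixing to show that a definite fraction $d_0$ of \emph{every} long standard pair lands in H-components fully crossing $\Lambda^s$ after $n_0$ steps, couple a fraction $d_0/2$ of each measure through $\Lambda^s$ using the rectangle structure $Y=W\times[0,1]$ to absorb Jacobian mismatches, and recurse on the uncoupled remainder with a stopping-time bookkeeping that yields an exponential tail for the coupling time $R$. The exponential bound in Proposition~\ref{PrDistEq0} comes from this exponential tail, not from a single holonomy estimate. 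Your claim that $|\ln\cJ_{h^\ast}|\leq\Const\,\theta^n$ is also wrong in this setting: the holonomy Jacobian between distinct long curves is only bounded, not close to~$1$; the paper circumvents this by making the coupling map $\xi$ measure-preserving by construction via the auxiliary $[0,1]$-fiber.
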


Below we sketch alternative proofs of both
Propositions~\ref{PrDecCor} and \ref{PrDistEq0} using a
\index{Coupling} `coupling method' \cite{BL,Y2}. This is done in
order to make our presentation self-contained, as well as to
emphasize the central role played by shadowing-type
\index{Shadowing} arguments in the whole theory.

Since the rest of this subsection deals with a fixed domain, we
drop $\cD$ in $\cF_\cD.$ First we derive
Proposition~\ref{PrDecCor} from \ref{PrDistEq0}. We may assume
that $\mu(g)=0$ (otherwise we replace $g$ with $g-\mu(g)$).

Let $\cG = \{ \gamma_\alpha \}$ be a smooth foliation of
\index{Standard pair} $\Omega_\cD$ by \index{H-curves} H-curves on
which standard pairs can be defined. (Sufficiently smooth H-curves
will do, alternatively such foliations are constructed in
\cite{C4}.) Denote by $\cG' = \{ \gamma_\beta' \}$ the foliation of
$\Omega_\cD$ into the \index{H-components} H-components of the sets
$\cF^{n/2} (\gamma_\alpha)$, $\gamma_\alpha \in \cG$. For every
curve $\gamma_{\beta}' \in \cG'$ its preimage $\cF^{-n/2}
(\gamma_{\beta}')$ has length smaller than $C\vartheta^{n/2}$, where
$\vartheta^{-1} >1$ denotes the minimal expansion factor of unstable
curve, cf.\ (\ref{thetamin}). Hence we can approximate the function
$f$ by a constant function on every curve $\cF^{-n/2}
(\gamma_{\beta}')$, $\gamma_{\beta}' \in \cG'$, and this
approximation results in an error term $\cO \bigl( \|g\|_{\infty}
K_f \vartheta^{n\eta/2} \bigr)$ (for all $n/2 > k$). Then we apply
(\ref{BEqEq}) to $n/2$ iterations of $\cF$, the function $g$ and
every curve $\gamma_{\beta}' \in \cG'$ whose length is at least
$e^{-n/2K}$, and obtain a bound $\|f\|_{\infty}
(K_g+\|g\|_{\infty})\, \theta_{k,\eta}^{n/2}$. Lastly, the total
measure of the curves $\gamma_{\beta}' \in \cG'$ whose length is
shorter than $e^{-n/2K}$ is $\cO(e^{-n/2K})$ due to
Lemma~\ref{prgrow} (b), so their contribution will be bounded by
$\cO \bigl( \|f\|_{\infty} \|g\|_{\infty} e^{-n/2K} \bigr)$. Thus
Proposition~\ref{PrDecCor} follows. \qed
\medskip

Next we prove Proposition~\ref{PrDistEq0} in several steps.
\medskip

\noindent{\bf Step 1}. We may assume that $W$ is long enough, i.e.\
$|W|$ is bounded away from zero, otherwise we apply
Lemma~\ref{prgrow} (c) to transform $W$ into \index{H-components}
H-components of length $\geq \eps_0$. Hence we assume that $|W| \geq
\eps_0$ (in this case (\ref{BEqEq}) will hold for all $n\geq 1$). We
will say that an \index{H-curves} H-curve $W$ is \emph{long} if
$|W|\geq \eps_0$.

Next, to establish (\ref{BEqEq}) it is enough to show that the
distribution of the image of $\cF^n W$ is almost independent of
$W$, that is
\begin{equation}
\label{A1Ind}
         \left|\int_{W_1} f\circ \cF^n\, d\nu_1
         - \int_{W_2} f\circ \cF^n\, d\nu_2 \right|
         \leq C_f \theta_{k,\eta}^n
\end{equation}
where $(W_i, \nu_i)$ satisfy the assumptions of
Proposition~\ref{PrDistEq0}, and both $W_1,W_2$ are long. We will
prove a (slightly) more general fact:
\begin{equation}
\label{A1IndAux}
         \left|\int_{M} f\circ \cF^n\, d\mu_1
         - \int_{M} f\circ \cF^n\, d\mu_2 \right|
         \leq C_f \theta_{k,\eta}^n
\end{equation}
where $\mu_1,\mu_2$ are measures of the form
$$
    \mu_i=\int \mes_{\ell_\alpha} d\lambda_i(\alpha)
$$
with $\cG= \{\ell_\alpha\}$ being some family of standard pairs
\index{Standard pair} and $\lambda_i$ factor measures on $\cG$
satisfying
\begin{equation}
\label{A1Short}
    \lambda_i\bigl(\length(\gamma_\alpha)\leq\eps
    \bigr)\leq\Const\, \eps.
\end{equation}
We will say that a family of standard pairs with a factor measure
$\lambda_i$ is \emph{proper} if it satisfies (\ref{A1Short}).

Note that (\ref{A1IndAux}) implies Proposition~\ref{PrDistEq0} if we
set $\lambda_1$ to an atomic measure (concentrated on a single long
\index{H-curves} H-curve) and $\mu_2=\mu_{\cD}$, as $\mu_{\cD}$
satisfies (\ref{A1Short}) by our discussion in
Section~\ref{subsecMS} and \cite{C4}.
\medskip

\noindent{\bf Step 2}. The proof of (\ref{A1Ind}) will be
accomplished by the so called \index{Coupling} coupling algorithm
developed in \cite{Y2}. Its main idea is to divide $\cF^n W_1$ and
$\cF^n W_2$ into pieces, which can be paired so that the elements of
each pair are close to each other (we used a similar idea to prove
Proposition~\ref{PrDistEq1}, but there we coupled the images of the
same curve under different maps). However, since the expansion is
not uniform in different regions of $\Omega_\cD$, some pieces of
$\cF^n W_i$ may carry more weight than others, so we may have to
couple a heavy piece with several light ones. This can be done by
splitting a heavy piece into several `thinner' curves, each coupled
to a different partner. It is actually convenient to split each
curve $W_i$ into uncountable many `fibers'. Namely, given a standard
\index{Standard pair} pair $(W, \nu)$, we consider $Y=W \times
[0,1]$ and equip $Y$ with a probability measure
\begin{equation}
\label{RecMes}
    dm(x,t)=d\nu(x) \, dt = \rho(x)\, dx\, dt
\end{equation}
where $\rho(x)$ is the density of $\nu$ and $0\leq t\leq 1$. We
call $Y$ a {\it rectangle} with {\it base} $W$. The map $\cF^n$
can be naturally defined on $Y$ by $\cF^n(x,t)=(\cF^n x,t)$ and
the function $f$ by $f(x,t)=f(x)$.

%\noindent{\bf Step 2}.
The \index{Coupling} coupling method developed in \cite{Y2} will
give us the following:

\begin{lemma}
\label{LmCoupl} Let $W_1$ and $W_2$ be two long \index{H-curves}
H-curves, and $Y_1$ and $Y_2$ the corresponding rectangles. Then
there exist a measure preserving map (coupling map) $\xi \colon
Y_1\to Y_2$ and a function $R: Y_1\to \naturals$ such that

{\rm (A)} For all $(x,t) \in Y_1$ and $\xi(x,t) = (y,s) \in Y_2$
and all $n>R(x,t)$ the points $\cF^n(x)$ and $\cF^n(y)$ lie on the
same stable manifold in the same connected component of
$\Omega_\cD \setminus \cS_{-n+R(x,t)}$; in particular
$$
   \dist (\cF^n(x), \cF^n(y))\leq C \theta^{n-R(x,t)}
$$
where $C>0$ and $\theta<1$ are constants.

{\rm (B)} For all $n$ we have
$m_1\bigl( (x,t)\colon R(x,t)>n\bigr)\leq C \theta^n$.
\end{lemma}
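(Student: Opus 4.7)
\medskip\noindent\emph{Proof proposal for Lemma \ref{LmCoupl}.} My plan is to implement Young's coupling method, adapted to the billiard setting as in \cite{Y2}. The first step is to construct a \emph{magnet rectangle} $\fR \subset \Omega_{\cD}$, namely a Cantor-like set foliated by stable and unstable manifolds whose \index{u-curves (unstable curves)} u-curves have length of order one, whose holonomy \index{Holonomy map} map between any two u-curves is absolutely continuous with uniformly bounded log-Jacobian, and such that $\mu_{\cD}(\fR) > 0$. Such rectangles exist in dispersing \index{Dispersing billiards} billiards because stable and unstable manifolds of positive length are abundant (the complement of their union has zero measure by the absolute continuity theorem), and standard bookkeeping yields $\fR$ with the required product structure.

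The second step is the key one-step coupling estimate. Given the rectangles $Y_1,Y_2$ over long \index{H-curves} H-curves $W_1,W_2$, I iterate forward and decompose each $\cF^n(W_i)$ into \index{H-components} H-components. By the growth \index{Growth lemma} lemma~\ref{prgrow}(c), after $n \geq \beta_5|\ln|W_i||$ iterations, a uniformly positive fraction of the measure $\mes_{\ell_i}$ lies in H-components of length $\geq \eps_0$, and by ergodicity together with the mixing property of the billiard flow on long u-curves (a consequence of Proposition~\ref{PrDistEq0} applied to indicator-like functions, or a direct consequence of the Sinai--Chernov growth analysis), a uniformly positive fraction $p_0 > 0$ of the mass of $\cF^n(\mes_{\ell_i})$ lands on H-components that \emph{u-cross} $\fR$, meaning they contain a sub-u-curve connecting the two stable boundaries of $\fR$. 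This is where I will need to choose $n = n_0$ large (but fixed, depending only on the geometry) so that the fraction $p_0$ is bounded below independently of $W_1,W_2$.

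Once we have such u-crossing components in both $\cF^{n_0}(W_1)$ and $\cF^{n_0}(W_2)$, I couple them across $\fR$ via the stable holonomy: for each pair of u-crossing H-components $U_1 \subset \cF^{n_0}(W_1)$ and $U_2 \subset \cF^{n_0}(W_2)$, I pair a point $x' \in U_1 \cap \fR$ with the point $y' \in U_2 \cap \fR$ on the same stable manifold of $\fR$. The resulting holonomy map has bounded log-Jacobian with respect to the conditional measures, and I absorb the Jacobian mismatch by using the auxiliary $t$-coordinate in the rectangle $Y_i$: specifically, for each $x \in U_1$ I couple the $t$-slab $\{(x,t)\colon 0 \leq t \leq \tau(x)\}$ (of appropriately chosen height $\tau(x) \leq 1$) with the corresponding slab over $y'$, thereby making the coupling exactly measure-preserving on the coupled portion. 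For every coupled pair $(x,t)\mapsto(y,s)$ I set $R(x,t) = n_0$, and by construction $\cF^n(x)$ and $\cF^n(y)$ lie on the same stable manifold for all $n \geq n_0$, which gives the exponential contraction in (A). A positive fraction $p_1 > 0$ (proportional to $p_0$) of the total mass in $Y_1$ is thereby coupled in this first round.

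The fourth step is iteration. The uncoupled mass in $Y_1$ after the first round is supported on a proper (in the sense of (\ref{A1Short})) family of standard pairs, because the uncoupled portions of the H-components retain the density \index{Distortion bounds} bounds and the growth lemma estimate (\ref{aux3}) is preserved under restriction. Applying the one-step coupling estimate inductively to the uncoupled remainder, I set $R(x,t) = kn_0$ on the portion coupled in the $k$-th round, and the mass coupled in round $k$ is bounded by $(1-p_1)^{k-1}$, which yields $m_1(R > n) \leq C\theta^n$ with $\theta = (1-p_1)^{1/n_0}$, proving (B). The only mass that is never coupled has measure zero, and on this null set $R$ may be set to $+\infty$ (or the construction can be tweaked to assign arbitrary coupling partners there). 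The main technical obstacle is ensuring that after restricting to the uncoupled piece, the family of standard pairs remains \emph{proper} and is supported on long enough H-curves so that the one-step estimate applies uniformly in $k$; this is handled by a further application of Lemma~\ref{prgrow}(c) inside each round to guarantee that the uncoupled mass at the start of round $k+1$ is again mostly on H-components of length $\geq \eps_0$, at the cost of a bounded waiting period that can be absorbed into $n_0$.
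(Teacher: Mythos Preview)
Your overall framework is correct and matches the paper's strategy (Young's coupling via a fixed ``magnet'' set of stable manifolds, use of the $t$-coordinate to absorb the holonomy Jacobian, and geometric decay of the uncoupled mass).  However, the final sentence hides a genuine gap.

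The claim that the recovery of the uncoupled family costs only ``a bounded waiting period that can be absorbed into $n_0$'' is false.  After the first round, the uncoupled mass over each H-component that $u$-crossed the magnet sits in the \emph{gaps} of the Cantor set $W\cap\Lambda^s$; these gaps have arbitrarily small length, and a gap of rank $r$ needs $\sim r$ iterations before its image becomes long again.  No single $n_0$ works uniformly.  The paper handles this with a \emph{variable} stopping time $s_k(x)$ whose tail is exponential (via the rank bound~(\ref{lWtheta})), and---this is the point you are missing---with Lemma~\ref{LmStop}, which adjusts $s_k$ (by further splitting in the $t$-direction) so that its distribution $\{q_n\}$ is \emph{identical} on the $Y_1$ side and the $Y_2$ side.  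Without this matching, the masses that stop and become eligible for coupling at time $n$ differ between the two sides, and you cannot couple a fixed fraction $d_0/2$ of each in a measure-preserving way.  The final tail bound (B) then comes not from a simple $(1-p_1)^k$ estimate but from a renewal/generating-function identity~(\ref{PQ}) built on the matched distribution $\{q_n\}$.

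A secondary issue you should address: after removing the coupled Cantor set, the induced densities on the remaining rectangles involve the holonomy Jacobian and are only \emph{dynamically H\"older continuous} in the sense of~(\ref{dynHc}), not Lipschitz.  The paper explicitly enlarges the class of admissible densities to accommodate this; your assertion that ``the uncoupled portions retain the density bounds'' does not hold for the original notion of standard pair.
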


We postpone the proof untill step 3 and first derive (\ref{A1Ind})
from Lemma~\ref{LmCoupl}:
\begin{align*}
         \Delta\colon &=
         \int_{W_1} f\circ \cF^n\, d\nu_1
         - \int_{W_2} f\circ \cF^n\, d\nu_2\\
         &= \int_{Y_1} f(\cF^n(x,t))\, dm_1
         - \int_{Y_2} f(\cF^n(y,s))\, dm_2\\
         &= \int_{Y_1} \bigl[f(\cF^n(x,t))-
         f(\cF^n(\xi(x,t)))\bigr]\, dm_1.
\end{align*}
The last integral can be decomposed as
$$
   \int_{Y_1} [\dots]=\int_{R>n/2} [\dots]
   +\int_{R \leq n/2} [\dots]=I+\RmII,
$$
and it is easy to see that $|I|\leq 2C\|f\|_\infty \theta^{n/2}$
and $|\RmII|\leq \Const\, K_f \theta^{n\eta/2}$. \qed \medskip

\noindent{\bf Step 3}. Here we begin the proof of
Lemma~\ref{LmCoupl}. It is fairly long and technical; we describe
all the major steps here, but a little more detailed presentation
can be found in \cite[Appendix]{C4}.

First we construct a special family of stable manifolds that will be
used to `couple' points of $Y_1$ and $Y_2$. Let $\tW \subset
\Omega_\cD$ be an \index{H-curves} H-curve and $\kappa>0$; define
$$
    \tW_{\kappa} = \tW \setminus
    \cup_{n\geq 0} \cF^{-n} \cU_{\kappa \vartheta^n}(\cS_1)
$$
where $\cU_{\eps}(\cS_1)$ denotes the $\eps$-neighborhood of
$\cS_1$. It is standard that through every point $x\in
\tW_{\kappa}$ there is a stable manifold $W^s_x$ extending at
least the distance $\kappa$ on both sides of $\tW$. We denote this
family of stable manifolds by $\cG^s_\kappa(\tW)$.

Furthermore, $\bigl|\tW \setminus \cup_{\kappa>0} \tW_\kappa
\bigr|=0$. Hence by reducing $\tW$ we can ensure that, given any
$D,\delta>0$, we can find a curve $\tW$ and $\kappa >0$ such that
\beq \label{tWkappa}
  \kappa > D|\tW|
  \qquad\text{and}\qquad
  |\tW_{\kappa}|/|\tW|>1-\delta.
\eeq
Moreover, for every $x\in \tW_{\kappa}$ the set of points $y\in
W^s_x$ such that the unstable manifold $W^u_y$ intersects all the
stable manifolds $W^s \in \cG_\kappa^s (\tW)$ has positive Lebesgue
measure on $W^s_x$. For the rest of this section, we fix a small
$\delta>0$, such a curve $\tW$, the family $\cG^s = \cG^s_\kappa
(\tW)$, and denote their union by $\Lambda^s = \cup_{\cG^s} W^s$. We
will say that an \index{H-curves} H-curve $W$ \emph{fully crosses}
$\Lambda^s$ if it intersects \emph{all} the stable manifolds $W^s
\in \cG^s$. We note that if $D$ is large enough then the first
inequality in (\ref{tWkappa}) guarantees that any sufficiently long
\index{H-curves} H-curve $W$ that satisfies dist$(W,\tW) < |\tW|$
will  fully cross $\Lambda^s$ (because the `height' of $\Lambda^s$
is much larger than its `length'). Observe that $\tW_\kappa = \tW
\cap \Lambda^s$. For any \index{H-curves} H-curve $W$ fully crossing
$\Lambda^s$ we set $W_\kappa \colon = W\cap \Lambda^s$.

Next, for any standard pair $\ell =(\gamma, \rho)$ and any $n \geq
0$ denote by $\gamma_{n,i}$ the \index{H-components} H-components of
$\cF^n (\gamma)$ \index{Standard pair} that fully cross $\Lambda^s$
and put
\beq \label{Wstar}
    \gamma_{n,\ast} = \cup_i \cF^{-n} (\gamma_{n,i} \cap \Lambda^s).
\eeq
We claim that there are constants $n_0 \geq 1$ and $d_0>0$ such
that for any long standard pair (i.e.\ $|\gamma|\geq
\varepsilon_0$) and any $n\geq n_0$ we have
\beq \label{mWstar}
    \mes_\ell (\gamma_{n,\ast}) \geq d_0.
\eeq
This follows from the mixing property of $\cF$ and the compactness
of the set of long \index{H-curves} H-curves, the proof of
(\ref{mWstar}) is essentially given in \cite[Theorem 3.13]{BSC2}.

Now let $\ell = (\gamma,\rho)$ be a standard pair such that
\index{Standard pair} $\gamma$ fully crosses $\Lambda^s$, then
$\gamma_\kappa = \gamma\cap \Lambda^s$ is a Cantor set on $\gamma$,
and its complement $\gamma \setminus \gamma_{\kappa}$ consists of
infinitely many intervals; we call them \emph{gaps in}
$\gamma_{\kappa}$. These gaps naturally correspond to the intervals
of $\tW \setminus \tW_{\kappa}$ (gaps in $\tW_{\kappa}$), which are
created by the removal of the $\cF^{-n}$-images of the
$c\vartheta^n$-neighborhoods of $\cS_1$ from $\tW$. We call $n$ the
\emph{rank} of the corresponding gap (if a gap is made by several
overlapping intervals with different $n$'s, then its rank is the
smallest such $n$).

If a gap $\tilde{V} \subset \tW \setminus \tW_{\kappa}$ has rank
$n$, then $\cF^n(\tilde{V})$ will have length $\geq c\vartheta^n$.
It corresponds to a gap $V \subset \gamma \setminus
\gamma_{\kappa}$, to which we also assign rank $n$; observe that
$\cF^n(V)$ lies in the $\eps$-vicinity of $\cF^n(\tilde{V})$ with
some $\eps \ll \vartheta^n$, hence $\cF^n(V)$ has length $\geq \frac
12 c\vartheta^n$. Then the set $\cF^{n(1+ \beta_3|\ln\vartheta|)}
(V)$, equipped with the image of the conditional measure $\mes_V =
\mes_\ell (\cdot|_V)$ on $V$, will be a proper family of standard
pairs, in the sense of \index{Standard pair} (\ref{A1Short}), as it
follows from Lemma~\ref{prgrow} (b). Accordingly, we define a
`recovery time' function $r_\gamma (x)$ on $\gamma \setminus
\gamma_{\kappa}$ by setting $r_\gamma(x) =
n(1+\beta_3|\ln\vartheta|)$, where $n$ is the rank of the gap
containing the point $x$ (note that the function $r_\gamma (x)$ is
constant on every gap). Lemma~\ref{prgrow} (b) implies that for some
$\theta<1$ and all $n>0$
\beq \label{lWtheta}
   \mes_\ell \bigl(x\in\gamma \setminus \gamma_{\kappa}\colon
   r_\gamma (x) > n\bigr)/\mes_\ell (\gamma \setminus \gamma_{\kappa})
   \leq \Const\, \theta^n.
\eeq
Next, let $s_\ell(x)$ be another function on $\gamma \setminus
\gamma_{\kappa}$ that is constant on every gap and such that
$s_\ell (x) \geq r_\gamma (x) + n_0$. Then $\mes_V
(V_{s_\ell(V),\ast}) \geq d_0$ for each gap $V \subset \gamma
\setminus \gamma_{\kappa}$, in the notation of (\ref{Wstar}). We
call $s_\ell$ a `stopping time' function.

\begin{lemma} \label{LmStop}
We can define the stopping time function $s_\ell (x)$ on $\gamma
\setminus \gamma_{\kappa}$ so that for all $n \geq 1$
\beq \label{stop1}
  \mes_\ell (x\in\gamma \setminus \gamma_{\kappa}\colon
   s_\ell (x)=n)/\mes_\ell(\gamma \setminus \gamma_{\kappa})=q_n,
\eeq
where $\{ q_n \}$ is a sequence satisfying
\beq \label{stop2}
   \sum q_n =1\quad{\rm and}\quad
   q_n < \Const\, \theta^n.
\eeq
Furthermore, the sequence $\{q_n\}$ is independent of $\ell$,
\index{Standard pair} i.e.\ it is the same for all standard pairs
$\ell = (\gamma, \rho)$ that fully cross $\Lambda^s$.
\end{lemma}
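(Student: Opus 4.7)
The plan is to construct $s_\ell$ in two stages: first build a "natural" stopping time $s_\ell^{\mathrm{nat}}$ by iterating the recovery-and-return procedure that underlies (\ref{lWtheta}) and (\ref{mWstar}), then reshape its distribution to a universal $\{q_n\}$ using the extra freedom provided by the vertical coordinate in the rectangle $Y_1 = \gamma \times [0,1]$.

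For the first stage, proceed inductively over rounds $k = 0, 1, 2, \ldots$. At round $k$ I maintain a proper family $\mathcal{U}_k$ of standard pairs (in the sense of (\ref{A1Short})) that cover the still-unstopped mass. Initialize $\mathcal{U}_0$ by iterating each original gap $V\subset\gamma\setminus\gamma_\kappa$ forward by $r_\gamma(V)+n_0$ steps; by the definition of $r_\gamma$ via the $(1+\beta_3|\ln\vartheta|)$-factor together with Lemma~\ref{prgrow}(b), the resulting family is proper. For each $\ell' = (\gamma',\rho') \in \mathcal{U}_k$, apply (\ref{mWstar}): the subset $\gamma'_{n_0,\ast}$ has mass at least $d_0$, and on its preimage I set $s_\ell^{\mathrm{nat}}$ equal to the cumulative time elapsed. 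The complementary mass, of fraction at most $1-d_0$, is further decomposed into gaps via their ranks and re-iterated to produce $\mathcal{U}_{k+1}$. Controlling the total time increment during each round by (\ref{lWtheta}) gives universal constants $C_1, \theta_1\in(0,1)$ with $\mes_\ell(s_\ell^{\mathrm{nat}} > n) \leq C_1 \theta_1^n \, \mes_\ell(\gamma\setminus\gamma_\kappa)$ for every $\ell$.

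For the second stage, fix a universal target sequence $q_n = c_0 \theta_0^n$ (normalized so $\sum q_n = 1$) with $\theta_0 \in (\theta_1,1)$ and $c_0$ large enough that
$$
   \sum_{n\geq N} q_n \;\geq\; C_1 \theta_1^N \qquad \forall N\geq 1.
$$
Then $\{q_n\}$ stochastically dominates the distribution of $s_\ell^{\mathrm{nat}}$ uniformly in $\ell$. A standard greedy construction produces $s_\ell \geq s_\ell^{\mathrm{nat}}$ with $\Pr(s_\ell = n) = q_n$: processing $n=1,2,\ldots$ in order, one assigns exactly mass $q_n$ drawn from $\{s_\ell^{\mathrm{nat}}\leq n\}$ minus what has already been used. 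Since individual gaps can carry more mass than a single $q_n$, I use the vertical coordinate $t\in[0,1]$ of $Y_1$ to split such atoms into horizontal substrips assigned to different values of $n$; this is exactly the place where the rectangle formalism from (\ref{RecMes}) is needed. Because $s_\ell \geq s_\ell^{\mathrm{nat}} \geq r_\gamma + n_0$, the required inequality is preserved, and the bound $q_n < \Const\,\theta_0^n$ is built into the construction with $\theta := \theta_0$.

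The main obstacle is keeping all constants independent of $\ell$: one must verify that at every round $k$ the pieces produced after stopping still form a proper family to which (\ref{mWstar}) applies with the same $d_0$, so that the geometric-tail bound on $s_\ell^{\mathrm{nat}}$ has universal constants. This reduces to checking that the recovery mechanism (iterating by $r_\gamma$ and then by $n_0$) converts any subinterval of a standard pair back into a proper family, which is the content of (\ref{lWtheta}) combined with Lemma~\ref{prgrow}(b); the universality in (\ref{mWstar}) then propagates through every round and yields the $\ell$-independence asserted in the lemma.
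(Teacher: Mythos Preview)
Your core mechanism---stage~2, reshaping the tail distribution to a universal $\{q_n\}$ via stochastic dominance and the rectangle splitting $V\times[0,1]\to V\times I_j$---is exactly the paper's approach, and is the only real content of the lemma.

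Your stage~1, however, is substantially over-engineered. The paper does not build any multi-round $s_\ell^{\mathrm{nat}}$; it simply observes that any $s_\ell\geq r_\gamma+n_0$ constant on gaps already satisfies the crossing requirement (by (\ref{mWstar}), which holds for \emph{every} $n\geq n_0$), and that the choice $s_\ell=r_\gamma+n_0$ already has the universal exponential tail directly from (\ref{lWtheta}). Your iterative construction---recover, cross, peel off the $d_0$-fraction, repeat---is essentially the full coupling procedure of Steps~4--6, not the single-step stopping time needed here. It is not wrong (the resulting function still has exponential tail and exceeds $r_\gamma+n_0$), just an unnecessary detour.

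One small correction in stage~2: your explicit choice $q_n=c_0\theta_0^n$ for $n\geq 1$ cannot stochastically dominate the distribution of $s_\ell^{\mathrm{nat}}$, since the latter is deterministically $\geq r_\gamma+n_0>1$ while your geometric puts positive mass at $n=1$. You need $q_n=0$ for $n$ below the deterministic lower bound on $r_\gamma+n_0$, with geometric decay thereafter; this is a trivial fix but worth noting.
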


\proof Due to (\ref{lWtheta}), it is easy to define $s_\ell$ so
that that for all $n>0$
\beq \label{selltheta}
   \mes_\ell(x\in\gamma \setminus \gamma_{\kappa}\colon
   s_\ell (x)>n)/  \mes_{\ell} (\gamma \setminus
   \gamma_{\kappa}) \leq \Const\, \theta^n.
\eeq
We still have a considerable flexibility in defining $s_\ell$, and
we want to adjust it so that it will satisfy (\ref{stop1}) with a
sequence $\{q_n\}$ independent of $\ell$. This seems to be a rigid
requirement, but it can be fulfilled by splitting gaps $V$ into
`thinner' curves with the help of rectangles $V \times [0,1]$
described in Step~2: precisely, we can replace each gap $V$ with a
rectangle $V \times [0,1]$, divide the latter into subrectangles
$V \times I_j$, where $I_j \subset [0,1]$ are some subintervals,
and define $s_\ell$ differently on each subrectangle $I_j$. The
sizes of the subintervals $I_j \subset [0,1]$ must be selected to
ensure (\ref{stop1}), as well as (\ref{stop2}). \qed
\medskip

\noindent {\bf Step 4}. We now turn to the construction of the
coupling map $\xi \colon Y_1\to Y_2$ for Lemma~\ref{LmCoupl}, which
will be done recurrently. Given two rectangles $Y_1,Y_2$ with long
bases $W_1,W_2$, we define the first stopping time to be constant
$s_0(x)=n_0$ on both rectangles. At the time $s_0=n_0$ some of the
\index{H-components} H-components of each curve $W_i$ will fully
cross $\Lambda^s$. For every \index{H-components} H-component
$W_{1,s_0,i}$ of $\cF^{s_0} (W_1)$ that fully crosses $\Lambda^s$ we
consider the corresponding rectangle $Y_{1,s_0,i}= W_{1,s_0,i}
\times [0,1]$. We will  split off a subrectangle $W_{1,s_0,i}\times
[0,\btau_{1,i}]$ with some $\btau_{1,i} \leq 1/2$ so that
$m_1(\tY_{1,1}) = d_0/2$, where
$$
   \tY_{1,1} = \bigl\{(x,t) \in Y_1
   \colon \cF^{s_0} (x) \in W_{1,s_0,i} \cap \Lambda^s
   \ \&\ t\in [0,\btau_{1,i}]\ \ \text{for some}\ i\bigr\}
$$
(this is possible due to (\ref{mWstar})).

Suppose we define, similarly, the set $\tY_{2,1} \subset Y_2$. Then
the sets $\tY_{1,1}$ and $\tY_{2,1}$ will have the same overall
measure ($=d_0/2$), and their $\cF^{s_0}$-images will intersect the
same stable manifolds $W^s \in \cG^s$, but for every $W^s \in \cG^s$
the intersections $W^s \cap \cF^{s_0} (\tY_{1,1})$ and $W^s \cap
\cF^{s_0} (\tY_{2,1})$ may carry different `amount' of measures
$m_1$ and $m_2$, respectively. This happens for two reasons: (i) the
densities of our measures may vary along \index{H-components}
H-components and (ii) the Jacobian of the holonomy \index{Holonomy
map} map may also vary and differ from one. To deal with these
problems, we need to assume that the diameter of $\Lambda^s$ is
small, so that the corresponding oscillations of the densities are
small (say, the ratio of the densities at different points on the
same \index{H-components} H-component is between 0.99 and 1.01), and
the Jacobian takes values in a narrow interval, say, $[0.99,1.01]$.

Now we define the set $\tY_{2,1}$ as follows. For every
\index{H-components} H-component $W_{2,s_0,j} \subset \cF^{s_0}
(W_2)$ that fully crosses $\Lambda^s$ we will construct a function
$\btau_{2,j}(y) \leq 0.6$ on $W_{2,s_0,j} \cap \Lambda^s$ and then
put
\begin{align*}
   \tY_{2,1} &= \bigl\{(y,t)\in Y_2\colon
   \cF^{s_0} (y) \in W_{2,s_0,j} \cap \Lambda^s
   \\ &\qquad \&\ t\in [0,\btau_{2,j}(\cF^{s_0}y)]\ \ \text{for some}\ j\bigr\}
\end{align*}
The functions $\btau_{2,j}$ can be constructed so that for every
$W^s \in \cG^s$ the intersections $W^s \cap \cF^{s_0} (\tY_{1,1})$
and $W^s \cap \cF^{s_0} (\tY_{2,1})$ carry the same `amount' of
measures $m_1$ and $m_2$ (this is why we allow $\btau_{2,j}$ to
take values up to $0.6$). Now we naturally define the coupling map
$\xi \colon \tY_{1,1} \to \tY_{2,1}$ that preserves measures and
couples points whose $\cF^{s_0}$-images lie on the same stable
manifold of the $\cG^s$ family. Note that
\beq \label{d0over2}
   m_{r} (\tY_{r,1}) = d_0/2
   \qquad \text{for}\ \ r=1,2.
\eeq
Lastly we set $R(x,t)=s_0$ on $\tY_{1,1}$. This concludes the
first round of our recurrent construction of $\xi$. \medskip

\noindent {\bf Step 5}. Before we start the second round, we need to
`inventory' the remaining parts of $Y_r$, $r=1,2$, and represent
each of them as a countable union of rectangles. To this end we
define a function $\btau_{r,i}$ on every \index{H-components}
H-component $W_{r,s_0,i}$ of $\cF^{s_0} (W_r)$ that fully crosses
$\Lambda^s$: for $r=1$ we set $\btau_{1,i}(x)$ to be constant equal
to $\btau_{1,i}$ defined in Step~4, and for $r=2$ we extend the
function $\btau_{2,i} (x)$ defined in Step~4 on $W_{2,s_0,i} \cap
\Lambda^s$ continuously and linearly to every gap $V_{2,s_0,i,j}
\subset W_{2,s_0,i} \setminus \Lambda^s$.
% and overall continuous on $W_{2,s_0,i}$.
The graph of $\btau_{r,i}$ divides the rectangle $W_{r,s_0,i}
\times [0,1]$ into two parts (`subrectangles' whose one side may
be curvilinear).

\begin{figure}[htb]
    \centering
    \psfrag{t}{$t$}
    \psfrag{j}{$\btau_{2,i}(x)$}
    \psfrag{0}{$0$}
    \psfrag{1}{$1$}
    \includegraphics{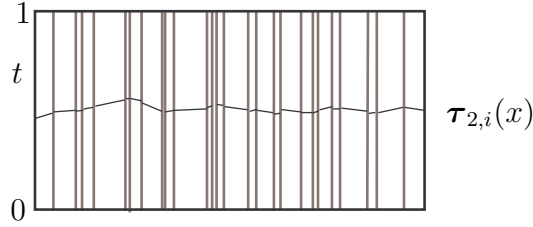}
    \caption{The partition of a rectangle over an H-component
    $W_{2,s_0,i}$: the irregular line in the middle is
    the graph of the function  $\btau_{2,i}(x)$; it separates
    the `upper subrectangle' (of the second type) from
    lower trapezoids (of the third type).}
    \label{FigHolder}
\end{figure}

Now the set $\cF^{s_0} (Y_r \setminus \tY_{r,1})$ consists of
connected components of three types. First, there are rectangles
corresponding to the \index{H-components} H-components of $\cF^{s_0}
(W_r)$ that do not fully cross $\Lambda^s$. Second, the `upper
subrectangles'
$$
 \{(x,t)\colon x\in W_{r,s_0,i}\ \&\ t\in [\btau_{r,i}(x),1]\}.
$$
These are genuine rectangles for $r=1$ and figures with one
`jagged' side for $r=2$, see Fig.~\ref{FigHolder}. All of them
have sufficiently long bases (longer than the size of $\Lambda^s$
in the unstable direction). Third, the `lower subrectangles'
$$
 \{(x,t)\colon x\in V_{r,s_0,i,j}\ \&\ t\in [0,\btau_{r,i}(x)]\}.
$$
constructed over gaps $V_{r,s_0,i,j} \subset W_{r,s_0,i} \setminus
\Lambda^s$. These are true rectangles for $r=1$ and trapezoids for
$r=2$, see Fig.~\ref{FigHolder}.

The shape of the functions $\btau_{2,i}$ is determined by the
densities on our \index{H-components} H-components, which are
Lipschitz continuous, see (\ref{densbound}), and the Jacobian of the
holonomy \index{Holonomy map} map, which is only weakly regular in
the following sense. For any two nearby \index{H-curves} H-curves
$W',W''$ and $x,y\in W'$ that belong to one connected component of
$\Omega_\cD \setminus \cS_n$, the Jacobian of the holonomy map
$h\colon W'\to W''$ satisfies
$$
   |\ln \cJ_{W'}h(x)-\ln \cJ_{W'}h(y)| \leq \Const\, \theta^n
$$
for some $\theta<1$, see \cite[Theorem 3.6]{BSC2} (this property
is sometimes called `dynamically defined H\"older continuity'
\cite[p.\ 597]{Y}). Thus, the function $\btau_{2,i}$ will be
dynamically H\"older continuous, i.e.\ it will satisfy
\beq \label{dynHc}
   |\ln \btau_{2,i}(x)-\ln \btau_{2,i}(y)| \leq
   C\theta^n
\eeq
whenever $x$ and $y$ belong to the same connected component of
$\Omega_\cD \setminus \cS_n$.

Next we rectify the rectangles of the second and third type as
follows. Given a `rectangle' $Y=\{(x,t)\colon x\in W\ \&\ t\in
[0,\btau(x)]\}$, where $\btau(x)$ is a dynamically H\"older
continuous function on $W$, equipped with a probability measure
$dm(x,t) = \rho(x)\, dx\, dt$, we transform $[0,\btau(x)]$ onto
$[0,1]$ linearly at every point $x\in W$, and thus obtain a
full-height rectangle $\hat{Y}=W\times [0,1]$ with measure
$$
   d\hat{m}(x,t)= \hat{\rho}(t)\, dx\, dt, \qquad
   \hat{\rho}(x) = \btau(x)\rho(x).
$$
Since $\hat{\rho}(x)$ is dynamically H\"older continuous (rather
than Lipschitz), we have to generalize our notion of standard pairs
\index{Standard pair} (for the proof of Lemma~\ref{LmCoupl} only!)
to include such densities. This will not do any harm, though, since
it is only oscillations of these densities that matters in our
proof, and the oscillations are well controlled by the dynamical
H\"older continuity; observe also that our densities will smooth out
before the next stopping time, thus they will always remain
uniformly H\"older continuous -- with the same $C$ and $\theta$ in
(\ref{dynHc}); this is intuitively clear, but see \cite[page
1089]{C4} for the exact argument.

Thus the remaining set $Y_{r,1} \colon = \cF^{s_0} (Y_r \setminus
\tY_{r,1})$ for $r=1,2$ is a (countable) union of rectangles of
the full (unit) height, which we denote by $\{Y_{r,1,i}\}$; it
carries a probability measure $m_{r,1}$ induced by the
$\cF^{s_0}$-image of the measure $m_r$. The family $\{Y_{r,1,i}\}$
may not be proper, i.e.\ it may fail to satisfy (\ref{A1Short}).
However, if we condition the measure $m_{r,1}$ onto the union of
rectangles of the first and second type, it will obviously recover
and become a proper family in just a few iterations of $\cF$. On
the rectangles of the third type, the recovery time may vary
greatly, see Step~3, and we define the stopping time function
$s_1(x,t)$ on the rectangles of the third type as described in
Lemma~\ref{LmStop}. We can clearly define the stopping time $s_1$
on the rectangles of the first and the second types as well, so
that its overall distribution matches that described in Step~3,
i.e.\
\beq \label{ddqn}
   m_{r,1}\bigl(Y_{r,1}^{(n)}\bigr)= q_n,
   \qquad
   Y_{r,1}^{(n)} \colon = \bigl\{ \cup_i Y_{r,1,i} \colon
   s_1 = n\bigr \}
\eeq
with the same sequence $\{q_n\}$ as in
(\ref{stop1})--(\ref{stop2}). Of course, $s_1$ must be constant on
every rectangle, so to ensure (\ref{ddqn}) we may need to split
rectangles $Y_{r,1,i}$ of the first and second type into `thinner'
subrectangles, as we did in the end of Step~3, and define $s_1$
separately on every subrectangle.

Now for every rectangle $Y_{r,1,i}$ the set $\cF^{s_1} (Y_{r,1,i})$
will contain \index{H-components} H-components fully crossing
$\Lambda^s$, and in the notation of (\ref{Wstar}) we have $m_{r,1}
(Y_{r,1,i,s_1,\ast})/ m_{r,1}(Y_{r,1,i}) \geq d_0$, due to
(\ref{mWstar}), hence
\beq \label{mdYd0}
     m_{r,1} \bigl( Y^{(n)}_{r,1,s_1,\ast} \bigr) \geq
     d_0\, m_{r,1}\bigl( Y^{(n)}_{r,1} \bigr) = d_0q_n,
\eeq
where $Y^{(n)}_{r,1,s_1,\ast} = \cup_i Y_{r,1,i,s_1,\ast}$.

Based on (\ref{ddqn}) and (\ref{mdYd0}), for every $n \geq 1$ we can
apply our \index{Coupling} coupling procedure (Step 4) to the sets
$Y_{1,1}^{(n)}$ and $Y_{2,1}^{(n)}$ and define the coupling map
$\xi$ on a subset of relative measure $d_0/2$, see (\ref{d0over2}),
i.e.
\beq \label{qnd02}
   m_{r,1} \bigl((x,t)\in Y_{r,1}^{(n)}
   \ \&\ \cF^n (x,t)\ \text{is coupled}\bigr)
%   {m_{d,1} \bigl(\cup_i Y_{d,1,i}\colon s_1=n\bigr)}
    = d_0q_n/2
\eeq
We denote by $\tY_{r,2} \subset Y_r$ the set of the preimages of
just `coupled' points and put $R(x) =s_0(x) +s_1(\cF^{s_0}x)$ on
$\tY_{1,2}$. We do this for every $n\geq 1$, and this concludes
the second round of our construction. It then proceeds
recursively, by repeating Steps 4 and 5 alternatively.

At the $k$th round, we define a stopping time function $s_{k-1}$
on the set $Y_{r,k-1}$ of yet uncoupled points for $r=1,2$,
%(that carries a probability measure $m_{r,k-1}$)
then we `couple' some points of the images $\cF^{s_{k-1}}
(Y_{r,k-1})$, denote by $\tY_{r,k} \subset Y_r$ the set of the
preimages of just `coupled' points, and define
$$
  R(x)=s_0(x) + \cdots +
  s_{k-1}(\cF^{s_0+\cdots+s_{k-2}}x)
$$
on $\tY_{1,k}$. Observe that the point $\cF^{R(x)} (x)$ and its
partner $\cF^{R(x)} \bigl(\xi(x)\bigr)$ lie on the same stable
manifold, which proves the claim (A) of Lemma~\ref{LmCoupl}.
\medskip

\noindent{\bf Step 6}. It remains to prove the claim (B), which
will also imply that the coupling map $\xi$ is defined almost
everywhere on $Y_1$. For brevity, we identify the set $Y_r$ (for
each $r=1,2$) with its images, i.e.\ we consider all our stopping
time functions as defined on $Y_r$. We then have two conditional
probability formulas:
\beq \label{msss1}
   m_{r}(s_{k}=n\vert s_{k-1}=n_{k-1},
   \ldots,s_1=n_1,s_0=n_0)=q_n
\eeq
due to (\ref{ddqn}) and
\beq \label{msss2}
   m_{r} \bigl(\tY_{r,k}\vert s_{k-1}=n_{k-1},\ldots,s_1=n_1,
   s_0=n_0\bigr) = \delta\colon = d_0/2
\eeq
due to (\ref{qnd02}). The following argument is standard in the
\index{Standard pair} studies of random walks. Let $\bar{p}_n =
m_1\bigl( (x,t)\in Y_1\colon R(x,t) =n\bigr)$ denote the fraction of
points coupled exactly at time $n$ (i.e., at the $n$th iteration of
$\cF$, rather than at the $n$th round). Note that $\bar{p}_i=0$ for
$i<n_0$ and $\bar{p}_{n_0}=\delta$. Now $p_n = \bar{p}_n/\delta$ is
the fraction of points \emph{stopped} at time $n$, i.e. \
$$
   p_n = m_1\bigl( (x,t) \in Y_1 \colon
   s_0+s_1+\cdots+s_k=n\ \text{for some}\ k\bigr).
$$
Due to (\ref{msss1}) and (\ref{msss2}) we have the following
`convolution law':
\beq \label{convolution}
  p_{n+n_0} = (1-\delta)\Bigl( q_{n}+ (1-\delta)
  \sum_{i=1}^{n-1} q_{n-i}p_{n_0+i}\Bigr)
  \qquad \forall n\geq 1.
\eeq
Now consider two complex analytic functions
$$
  P(z)=\sum_{n=1}^\infty p_{n_0+n}z^n
  \quad\text{and}\quad
  Q(z)=\sum_{n=1}^\infty q_{n}z^n,
$$
then (\ref{convolution}) implies $P(z)=(1-\delta)\, Q(z)+
(1-\delta)^2\,P(z)\,Q(z)$, hence
\beq \label{PQ}
   P(z)=\frac{(1-\delta)\,Q(z)}{1-(1-\delta)^2\,Q(z)}
\eeq
Due to (\ref{stop2}), we have $|Q(z)| \leq 1$ for all $|z|\leq 1$,
and the function $Q(z)$ is analytic in the complex disk $\{z\colon
|z| < 1+\eps\}$ for some $\eps>0$. Hence $P(z)$ is also analytic
in a complex disk of radius greater than one, which implies an
exponential tail bound on $p_n$. A similar bound then follows for
$\bar{p}_n = \delta p_n$. \qed

\subsection{Decay of correlations: extensions}
In this section we will extend the mixing results in several ways:

\medskip\noindent{\bf Extension 1}.
Suppose one scatterer \index{Scatterer} (specifically, $\BAN_0$) is
removed from the construction of $\Omega_{\cD}$, i.e.\ we redefine
$\tOmega_{\cD}=\cup_{i=1}^r\partial \BAN_i\times [-\pi/2,\pi/2]$,
and respectively the return map
$\tcF_{\cD}\colon\tOmega_{\cD}\to\tOmega_{\cD}$ and the invariant
measure $\tilde{\mu}_{\cD}$. Note that we do not change the dynamics
-- the billiard particle still collides with the scatterer $\BAN_0$,
we simply skip those collisions in the construction of the collision
map. Assume, additionally, that the billiard particle cannot
experience two successive collisions with $\BAN_0$ without colliding
with some other scatterer(s) \index{Scatterer} in between (this
follows from our finite horizon \index{Finite horizon} assumption in
Section~\ref{subsecDass}, provided $\br$ is small enough). In this
case the analysis done in Section~\ref{subsecA1} (as well as the
earlier one \cite{Y}) goes through and Propositions~\ref{PrDistEq0}
and \ref{PrDecCor} now hold for the dynamical system
$(\tOmega_{\cD},\tcF_{\cD} ,\tilde{\mu}_{\cD})$ and functions $f,g$
defined on $\tOmega_{\cD}$.
\medskip

The value of $\theta_{k,\eta}$ in (\ref{expbound}) depends on the
following quantities characterizing the given billiard table:
\begin{itemize} \item[(a)] the minimal and maximal
free path (called $L_{\min}$ and $L_{\max}$), \item[(b)] the minimal
and maximal curvature of the boundary of the scatterers,
\index{Scatterer}
\item[(c)] the upper bound on the derivative of the curvature of
the scatterers, \item[(d)] the value of $\btheta_1$ in the one-step
expansion estimate (\ref{alpha01}).
\end{itemize}

According to (\ref{bthetabound}), the value of $\btheta_1$ will be
bounded away from one because $L_{\max}/L_{\min}$ remains bounded.

We note that the earlier work \cite{Y} does not use $\btheta_1$.
Instead, it uses the \emph{complexity bound}, i.e.\ the smallest
$n\geq 1$ for which
\beq  \label{complexboundF}
     K_n \vartheta^n < 1,
\eeq
where $K_n$ denotes the maximal number of pieces into which $\cS_n$
can partition arbitrary short unstable curves. It is known
\cite[Section 8]{BSC1} and \cite[p.\ 634]{Y} that $K_n\leq
C_1n+C_2$, where $C_1$ and $C_2$ are constants determined by the
number of possible tangencies between successive collisions, i.e.\
by the maximal number of points at which a straight line segment
$I\subset\cD$ can touch some scatterers \index{Scatterer} $\BAN_i$.
We note that this number does not exceed $L_{\max}/L_{\min}$, thus
$C_1$, $C_2$, and $n$ in (\ref{complexboundF}) are effectively
determined by $L_{\max}/L_{\min}$ which remains bounded.

\medskip\noindent{\bf Extension 2}.
Consider a family of dispersing \index{Dispersing billiards}
billiard tables obtained by changing the position of one of the
scatterers \index{Scatterer} (specifically, $\BAN_0$) continuously
on the original dispersing billiard table. We only allow such
changes that the maximal free path $L_{\max}$ remains bounded away
from infinity, and the minimal free path $L_{\min}$ remains bounded
away from zero. Then all the characteristic values (a)--(d) of the
billiard tables in our family will effectively remain unchanged, and
therefore the bound (\ref{expbound}) will be {\em uniform}. (Note
that the space $\Omega_{\cD}$ does not depend on the position of the
movable scatterer $\BAN_0$, hence the functions $f,g$ in
(\ref{expbound}) do not have to change with the position of
$\BAN_0$).

\medskip\noindent{\bf Extension 3}.
Suppose we not only change the position of the scatterer
\index{Scatterer} $\BAN_0$, but also reduce its size homotetically
(namely, suppose $\BAN_0$ is a disk of radius $\br_0$, and we
replace it with a disk of radius $\br<\br_0$). Hence we consider a
larger family of dispersing billiard tables than in Extension~2. Now
the collision space $\Omega_{\cD}$ depends on the size of $\BAN_0$,
but we restrict the analysis to the space $\tOmega_{\cD}$
constructed exactly as we did in Extension~1, by skipping collisions
with $\BAN_0$. Then the space $\tOmega_{\cD}$ will be the same for
all billiard tables in our family, so we can speak about the
uniformity of the exponential bound on correlations for the map
$\tcF$. Again, we assume a uniform upper bound on $L_{\max}$ and a
uniform positive lower bound on $L_{\min}$. There are several new
problems now:

The curvature of $\partial \BAN_0$ will not be uniformly bounded
anymore, it will be proportional to $1/\br$. The upper bound on the
curvature is used to prove a uniform transversality of stable and
unstable cones, see \cite[pp.\ 534--535]{C2}. Those cones are not
uniformly transversal anymore, the angle between them is $\cO(\br)$
on the part of the phase space $\Omega_{\cD}$ corresponding to the
boundary of the scatterer \index{Scatterer} $\BAN_0$, but this part
is specifically excluded from the construction of $\tOmega_{\cD}$,
hence the cones are still uniformly transversal on $\tOmega_{\cD}$.
The upper bound on the curvature is also used in the distortion
\index{Distortion bounds} and curvature estimates, similar to those
in Appendix~C, but we will show that those estimates remain uniform
over all $\br>0$, see a remark after the proof of Lemma~\ref{lm0}.
Next, the curvature of the disk $\BAN_0$ is constant, so its
derivative is zero.

Lastly, the complexity $K_n$ of the singularity set $\cS_n$ will be
affected by $\br$, too, if $\br$ is allowed to be arbitrarily small.
Indeed, if all the scatterers \index{Scatterer} had fixed size, one
considers \cite{C2,Y} short enough unstable curves that can only
break into two pieces at any collision (one piece collides, the
other passes by, as it is explained in the proof of
Lemma~\ref{prgrow}). But now, no matter how small an unstable curve
is, the scatterer $\BAN_0$ may be even smaller, and then the
unstable curve may be torn by $\BAN_0$ into \emph{three} pieces. The
middle piece hits $\BAN_0$, gets reflected, and by the next
collision its image will be of length $\cO(1)$. It is not hard to
see then that the sequence $K_n$ will grow exponentially fast, and
therefore the complexity bound (\ref{complexboundF}) may easily fail
for all $n\geq 1$.

The complexity bound is only used in the proof of the growth
\index{Growth lemma} lemma \cite[Theorem 3.1]{C2}, which is
analogous to our Lemma~\ref{prgrow}. We have seen in
Section~\ref{subsecSP} that the growth lemma follows from the
one-step expansion estimate (\ref{alpha01}). In fact, it suffices to
establish the one-step expansion estimate for any iteration of the
given map, see \cite[Proposition 10.1]{C2} and \cite[Theorem
10]{CZ}, i.e.\ in our case it is enough to prove that
\beq
     \exists n\geq 1\colon\ \
   \btheta_n \colon = \liminf_{\delta\to 0}\
   \sup_{W\colon\, \length(W)<\delta} \sum_i \vartheta_{i,n} <1
      \label{alpha00}
\eeq
Here $W\subset\tOmega_{\cD}$ denotes an \index{H-curves} H-curve and
$\vartheta_{i,n}^{-1}$ the smallest local factor of expansion of
$\tcF_{\cD}^{-n}(W_{i,n})$ under the map $\tcF_{\cD}^n$, where
$W_{i,n}$, $i\geq 1$, denote the \index{H-components} H-components
of $\tcF_{\cD}^n(W)$.

Next we prove (\ref{alpha00}). First we consider the case $n=1$.
Collisions of $W$ with the fixed scatterers \index{Scatterer}
$\BAN_j$, $j\geq 1$, are described in the proof of
Lemma~\ref{prgrow}. Now if $W$ collides with the disk $\BAN_0$ of a
very small radius $\br$, say $\br<\cO(\length(W))$, then $W$ may be
torn into three pieces as described above. The middle piece
(reflecting off $\BAN_0$) will be further subdivided into countably
many \index{H-components} H-components lying in all the homogeneity
\index{Homogeneity strips} strips $\bbH_{\pm k}$ for $k\geq k_0$, as
well as $\bbH_0$. Those \index{H-components} H-components will be
expanded by factors $\geq ck^2/\br$ and $>c/\br$, respectively, see
our estimates in Section~\ref{subsecSUV}. Hence the contribution of
all these \index{H-components} H-components to the sum $\sum
\vartheta_{i,1}$ will be $\br/c+2\br\sum_{k\geq k_0}(ck^2)^{-1}\leq
\Const\, \br$.

Thus, the image $\tcF_{\cD}(W)$ may consist, generally, of the
following \index{H-components} H-components $W_{i,1}$: countably
many $W_{i,1}$'s produced by a collision with $\BAN_0$, at most
$L_{\max}/L_{\min}$ countable sets of $W_{i,1}$'s produced by almost
tangential reflections off some fixed scatterers \index{Scatterer}
(cf.\ the proof of Lemma~\ref{prgrow}), and at most two
\index{H-components} H-components that miss the collision with
$\BAN_0$ and all the grazing collisions -- these land somewhere else
on $\dcD$. The last two \index{H-components} H-components are only
guaranteed to expand by a moderate factor of $\vartheta^{-1}$, which
gives an estimate
\beq
     \btheta_1 \leq 2\vartheta + \frac{L_{\max}}{L_{\min}}\,
     \frac{\Const}{k_0}+\Const\,\br
        \label{alpha01a}
\eeq
Note that if $\br > \length(W) = \cO(\delta)$, then there is at most
one (not two) \index{H-components} H-component expanding by
$\vartheta^{-1}$, and then (\ref{alpha01a}) could be easily handled
as in the proof of Lemma~\ref{prgrow} (a). Thus, we may assume that
$\br = \cO(\delta)$, and taking $\limsup_{\delta\to 0}$ we can
simplify (\ref{alpha01a}) as
$$
     \btheta_1 \leq 2\vartheta + \Const/k_0
$$
The last term can be made arbitrarily small by selecting $k_0$
large, as in the proof of Lemma~\ref{prgrow} (a), but the first
term may already exceed one, hence the estimate (\ref{alpha00})
would fail for $n=1$.

Therefore, we have to consider the case $n\geq 2$. Our previous
analysis shows that the image $\tcF_{\cD}^n(W)$ will consist of
\index{H-components} H-components $W_{i,n}$ of two general types:
(a) countably many \index{H-components} H-components that have
either collided with $\BAN_0$ at least once or got reflected almost
tangentially off some fixed scatterer \index{Scatterer} at least
once, and (b) all the other \index{H-components} H-components.
Respectively, we decompose $\sum_i\ \vartheta_{i,n} =
\sideset{}{^{(a)}}{\textstyle \sum} + \sideset{}{^{(b)}}{\textstyle
\sum}$.

First, we estimate $\sideset{}{^{(a)}}{\textstyle \sum}$. The above
estimate (\ref{alpha01a}) can be easily extended to a more general
bound:
$$
   \Theta\colon=\sup_{W} {\textstyle \sum}_i\
   \vartheta_{i,1} < \Const
$$
where the supremum is taken over all \index{H-curves} H-curves
$W\subset\tOmega_{\cD}$. Now the chain rule and the induction on $n$
gives
\beq
         \sideset{}{^{(a)}}{\textstyle \sum}
         \leq \Const\, \Theta^n(\br+1/k_0)
              \label{Sigmaa}
\eeq

We now turn to $\sideset{}{^{(b)}}{\textstyle \sum}$. First, we need
to estimate the maximal number of \index{H-components} H-components
of type (b), we call it $\tilde{K}_n$. Suppose for a moment that
$\BAN_0$ is removed from the billiard table. Then any short
\index{u-curves (unstable curves)} u-curve will be cut into at most
$K_n'\leq C_1n+C_2$ pieces by the singularities of the corresponding
collision map during the first $n$ collisions, see above, where
$C_1$ and $C_2$ only depend on the fixed scatterers
\index{Scatterer} $\BAN_i$, $i\geq 1$. Now we put $\BAN_0$ back on
the table. As we have seen, each unstable curve during a free flight
between successive collisions with the fixed scatterers can be cut
by $\BAN_0$ into three pieces, of which only two (the middle one
excluded) can produce \index{H-components} H-components of type (b),
thus adding one more piece to our count. Therefore the total number
of pieces of type (b), after $n$ reflections, will not exceed
$\tilde{K}_n\leq nK_n'\leq C_1n^2+C_2n$. This gives a quadratic
bound on $\tilde{K}_n$, and it is important that this bound is
independent of the location or the size of the variable scatterer
$\BAN_0$, i.e.\ our bound is uniform over all the billiard tables in
our family.

Now, since $\vartheta_{i,n}\leq \vartheta^n$ for every
\index{H-components} H-component of type (b), then
$$
         \sideset{}{^{(b)}}{\textstyle \sum}
         \leq \tilde{K}_n \vartheta^n
         \leq (C_1n^2+C_2n)\,\vartheta^n
$$
thus
$$
     \btheta_n \leq (C_1n^2+C_2n)\,\vartheta^n
     + \Const\,\Theta^n/k_0
$$
where the $\limsup_{\delta\to 0}$ is already taken to eliminate
$\br$ from (\ref{Sigmaa}). Clearly the first term here is less
than one for some $n\geq 1$, and then the second term can be made
arbitrarily small by choosing $k_0$ large, hence we obtain
(\ref{alpha00}).

This proves that the exponential bound on correlations for the map
$\tcF_{\cD}$ will be {\em uniform} for all the billiard tables in
the family constructed in Extension~3.

We need to make yet another remark: the one-step expansion estimate
(\ref{alpha00}) implies the analogue of the growth \index{Growth
lemma} lemma~\ref{prgrow} for the map $\tcF$, with all the constants
$\beta_1,\dots,\beta_6$ and $q$ independent of the location or the
size of $\BAN_0$.

\subsection{Large deviations}
\label{subsecLD} Consider an unstable curve $W$ with the Lebesgue
measure $d\nu$ on it. Denote by $\cJ_{W}\cF^n(x)$ the Jacobian
(the expansion factor) of the map $\cF^n$ restricted to $W$ at the
point $x\in W$.

\begin{proposition}[Large deviations]
\label{PrLD}
There are constants $K>0$ and $\theta<1$ such that
uniformly in $W$ and $n\geq 1$
$$
   \nu\bigl(x\in W\colon\ \ln\cJ_W\cF^n(x)>K n\bigr)
   \leq \Const\, \theta^{n}.
$$
\end{proposition}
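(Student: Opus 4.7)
The plan is to reduce the large-deviation estimate for $\cJ_W\cF^n$ to a combinatorial count over trajectories labeled by their homogeneity-strip history, and then apply an exponential Markov inequality. First, by the chain rule and Proposition~\ref{prJexp},
\beq
\ln\cJ_W\cF^n(x)=\sum_{i=0}^{n-1}\ln\cJ_{W_i}\cF(x_i)\leq C_1 n+C_2\sum_{i=1}^n|\ln\cos\varphi_i|,
\eeq
so the set $\{\ln\cJ_W\cF^n>Kn\}$ is contained in $\{\sum_{i=1}^n|\ln\cos\varphi_i|>K'n\}$ for $K'=(K-C_1)/C_2$, and it suffices to show the latter has $\nu$-measure at most $\Const\cdot\theta^n$ for $K'$ large.

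Next, I would label every H-component $\gamma$ of $\cF^n(W)$ by its strip history $\vec k(\gamma)=(k_1,\dots,k_n)$, where $\cF^i$ of the component lies in $\bbH_{k_i}$ (with $k_i=0$ or $|k_i|\geq k_0$). Since $\gamma\subset\bbH_{k_n}$, (\ref{cos23}) gives $|\gamma|\leq C(|k_n|+1)^{-3}$; by Proposition~\ref{prJexp} the cumulative expansion satisfies $\cJ_W\cF^n\geq c\prod_{i=1}^n(|k_i|+1)^2$ on $\gamma$, so the preimage has length at most $C\prod_{i=1}^n(|k_i|+1)^{-2}$ (the $|k_n|^{-3}$ factor is absorbed). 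Different H-components may share the same $\vec k$, but by the complexity bound (\ref{complexboundF}), $K_n\leq C_1'n+C_2'$, so for fixed $\vec k$ there are at most $\cO(n)$ such components, giving
\beq
\nu\bigl(\{x\in W:\vec k(x)=\vec k\}\bigr)\leq C\,n\prod_{i=1}^n(|k_i|+1)^{-2}.
\label{PlanVk}
\eeq

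Then I would apply an exponential Chebyshev inequality. Since $|\ln\cos\varphi_i|\leq C$ when $k_i=0$ and $|\ln\cos\varphi_i|\leq 2\ln(|k_i|+1)+C$ otherwise, by enlarging $K'$ the event $\{\sum|\ln\cos\varphi_i|>K'n\}$ forces $\sum_{i=1}^n\ln(|k_i|+1)>K''n$ for some $K''$ growing with $K'$. Summing (\ref{PlanVk}) over such $\vec k$ with a small $\lambda\in(0,1)$,
\beq
\nu\Bigl\{\sum_{i=1}^n|\ln\cos\varphi_i|>K'n\Bigr\}\leq C\,n\,e^{-\lambda K''n}\sum_{\vec k}\prod_{i=1}^n(|k_i|+1)^{\lambda-2}= C\,n\,\bigl(B_\lambda e^{-\lambda K''}\bigr)^n,
\eeq
where $B_\lambda=1+2\sum_{k\geq k_0}(k+1)^{\lambda-2}$ is finite for $\lambda<1$. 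Taking $K$ (and hence $K''$) large enough that $B_\lambda e^{-\lambda K''}<1$ yields the required decay, after absorbing the polynomial prefactor into a slightly weaker $\theta$.

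The main obstacle will be ensuring the bound is uniform in $W$. The estimate (\ref{PlanVk}) is intrinsic to the H-component geometry: it depends only on the strip history and not on the starting length $|W|$, so very short curves do not degrade it. The delicate technical point is the multiplicity of H-components sharing the same strip history $\vec k$, produced by intervening tangential singularities of $\cF^n$; controlling it requires the complexity bound $K_n=\cO(n)$, which contributes only the harmless polynomial prefactor $n$. A secondary subtlety is that the distortion and one-sided bound $\cos\varphi_i\asymp(|k_i|+1)^{-2}$ hold within each H-component rather than pointwise, but this is precisely why the decomposition into H-components (and not just into preimages of singularity atoms) is the right object to sum over.
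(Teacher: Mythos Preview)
Your approach is essentially the paper's: decompose by homogeneity-strip itinerary $(k_1,\dots,k_n)$, bound the $\nu$-measure of each itinerary class by $(\text{multiplicity})\times C^n\prod_i(|k_i|+1)^{-2}$, and conclude via an exponential Markov/Chebyshev inequality. The paper packages the last two steps as the bound $\int_W|\cJ_W\cF^n|^\zeta\,d\nu\le C_\zeta A^n$ for $\zeta\in(0,1/2)$ (Lemma~\ref{LmExpJac}), which is exactly your sum $\sum_{\vec k}\prod(|k_i|+1)^{2\zeta-2}$ with $\lambda=2\zeta$.

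The one genuine gap is your multiplicity bound. You invoke the complexity $K_n=\cO(n)$ to say that at most $\cO(n)$ H-components share a given itinerary $\vec k$, but $K_n$ in (\ref{complexboundF}) is the complexity for \emph{arbitrarily short} u-curves; here $W$ has length up to $\tilde\delta$ and, more to the point, its forward images $\cF^i(W)$ are expanded and can be long, so the linear bound is not available. The paper instead uses a one-step induction: an H-component $\gamma$ has its image $\cF(\gamma)$ cut by singularities into at most $B=L_{\max}/L_{\min}$ u-curves, each meeting a fixed strip $\bbH_m$ in at most one interval, so the multiplicity for a given itinerary is at most $B^n$. Replacing your $\cO(n)$ by $B^n$ (and, correspondingly, your single constant $c$ in $\cJ_W\cF^n\ge c\prod(|k_i|+1)^2$ by $c^n$, which is what Proposition~\ref{prJexp} actually gives) only contributes an extra exponential factor $(B/c)^n$ inside the parentheses; this is absorbed by choosing $K$ (hence $K''$) larger, and the rest of your argument goes through verbatim.
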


Note: by time reversibility, a similar estimate holds for stable curves
and negative iterations of $\cF$.

\begin{lemma}
\label{LmExpJac} There is $A>1$ such that for any $\zeta
\in(0,1/2)$ there is $C_{\zeta}>0$ such that uniformly in $W$ and
$n$

$$
     \int_W \bigl|\cJ_W\cF^n(x)\bigr|^{\zeta}\, d\nu
     \leq C_{\zeta} A^n.
$$
\end{lemma}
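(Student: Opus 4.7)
The plan is an induction on $n$ for H-curves, extended to arbitrary unstable curves by partitioning. Set $I_n(W)\colon =\int_W |\cJ_W\cF^n|^{\zeta}\, d\nu$; the base case reads $I_0(W) = |W| \leq \diam(\Omega_0) =: C_\zeta$. Since any unstable curve decomposes into H-curves (by cutting at homogeneity strip boundaries and at length $\tdelta$) of bounded total length, it suffices to prove the bound uniformly over H-curves.

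For the inductive step, I would decompose $\cF(W) = \bigcup_j W_j$ into its H-components and set $W_j' = \cF^{-1}(W_j)\cap W$. Combining the chain rule $\cJ_W\cF^n(x) = \cJ_{W_j'}\cF(x) \cdot \cJ_{W_j}\cF^{n-1}(\cF x)$, the change of variables $y=\cF(x)$ on each $W_j'$, and the distortion estimate of Proposition~\ref{prdist} (which gives $\cJ_{W_j'}\cF(x) \approx c_j\colon=|W_j|/|W_j'|$ up to a uniform factor $e^{\tbeta}$), one obtains the recursion
$$
  I_n(W) \leq e^{\tbeta(1-\zeta)} \sum_j \vartheta_{j,1}^{1-\zeta}\, I_{n-1}(W_j),
$$
where $\vartheta_{j,1} = c_j^{-1}$ is the local contraction factor of $\cF^{-1}$ on $W_j$. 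Under the inductive hypothesis $I_{n-1}(W_j) \leq C_\zeta A^{n-1}$ uniformly over H-curves $W_j$, the problem collapses to showing $\Theta_\zeta\colon=\sup_W \sum_j \vartheta_{j,1}^{1-\zeta} < \infty$; then $A = e^{\tbeta(1-\zeta)}\Theta_\zeta$ closes the induction.

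The heart of the matter is verifying $\Theta_\zeta < \infty$, and this is precisely where the hypothesis $\zeta < 1/2$ enters. Echoing the one-step expansion estimate (\ref{alpha01a}) used in the proof of Lemma~\ref{prgrow}, the H-components of $\cF(W)$ split into finitely many bulk components (each with $\vartheta_{j,1} \leq \vartheta < 1$, with their number controlled by $L_{\max}/L_{\min}$) together with countably many near-grazing components indexed by homogeneity strips $\bbH_{\pm k}$, $k \geq k_0$. On $\bbH_{\pm k}$, Proposition~\ref{prJexp} yields expansion at least of order $k^2$, so $\vartheta_{j,1}^{1-\zeta}$ is of order $k^{-2(1-\zeta)}$, and the series $\sum_k k^{-2(1-\zeta)}$ converges iff $\zeta < 1/2$. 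The only conceptual subtlety I anticipate is that the resulting $A$ depends on $\zeta$ and diverges as $\zeta \nearrow 1/2$ (since $\sum k^{-1}$ diverges), whereas the statement is phrased as if $A$ were universal. Since the only application (Proposition~\ref{PrLD}) fixes a single $\zeta$ and absorbs $A$ into the final exponential rate, the natural reading is that $A$ and $C_\zeta$ are chosen jointly per $\zeta$; alternatively, H\"older interpolation from a fixed endpoint $\zeta_0 < 1/2$ yields a uniform constant $A_{\zeta_0}$ valid for all $\zeta \in (0, \zeta_0]$.
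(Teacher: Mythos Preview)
Your inductive argument is correct, but the paper takes a different, more combinatorial route: it tracks the \emph{itinerary} $(m_1,\dots,m_n)$ of homogeneity-strip indices visited by $\cF^i(x)$, observes that $\cJ_W\cF^n(x)\asymp C^n\prod_i m_i^2$, and shows that for each fixed itinerary there are at most $B^n$ H-components of $\cF^n(W)$ realizing it (proved by a one-step induction: each H-component has at most $B=L_{\max}/L_{\min}$ image H-components in any single strip $\bbH_m$). This gives
\[
   \int_W |\cJ_W\cF^n|^\zeta\,d\nu
   \le \sum_{m_1,\dots,m_n} (BC_2^\zeta/C_1)^n\,
   m_1^{-2+2\zeta}\cdots m_n^{-2+2\zeta}
   = \Bigl[(BC_2^\zeta/C_1)\textstyle\sum_m m^{-2+2\zeta}\Bigr]^n,
\]
with the series converging exactly for $\zeta<1/2$. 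Your recursion $I_n(W)\le e^{\tbeta(1-\zeta)}\Theta_\zeta\cdot\sup_j I_{n-1}(W_j)$ packages the same mechanism step by step; your one-step quantity $\Theta_\zeta=\sup_W\sum_j\vartheta_{j,1}^{1-\zeta}$ plays the role of the per-factor term $B\sum_m m^{-2+2\zeta}$ in the paper's product. The paper's argument avoids distortion bounds entirely and works directly at the level of measure and counting; yours is closer in spirit to the growth lemma machinery already set up in Section~\ref{subsecSP}. Both yield $A=A(\zeta)\to\infty$ as $\zeta\nearrow 1/2$, confirming your reading of the statement. One minor point: your claim that the number of bulk H-components is ``controlled by $L_{\max}/L_{\min}$'' is the short-curve estimate from~(\ref{bthetabound}); for H-curves of length up to $\tdelta$ the count is still uniformly finite but via a larger constant depending on the total number of singularity curves of $\cF$.
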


\proof For every $x\in W$ let $m_1,\dots,m_{n}$ be the indices of
the homogeneity \index{Homogeneity strips} strips where the first
$n$ images of $x$ belong, i.e.\ let $\cF^i(x)\in\bbH_{m_i}$ for
$1\leq i\leq n$. To avoid zeroes, let us relabel the set $\bbH_0$ by
$\bbH_1$ here. Now, as we mentioned in Section~\ref{subsecA1}, the
expansion factor of $\cF$ on \index{u-curves (unstable curves)}
u-curves $W \subset \cF^{-1}(\bbH_m)$ is $\cO(m^2)$, hence
$$
      C_1^n\, m_1^2\cdots m_n^2<
      \cJ_W\cF^n(x) < C_2^n\, m_1^2\cdots m_n^2
$$
for some constants $C_2>C_1>0$. On the other hand, there is a
constant $B>1$ such that for any given sequence $m_1,\dots,m_{n}$
(``itinerary''), there is at most $B^n$ \index{H-components}
H-components $W_k\subset\cF^n(W)$ so that the points $x\in
\cF^{-n}(W_k)$ have exactly this itinerary. This fact can be proved
by induction on $n$: given an \index{H-components} H-component
$W_k$, its image has at most $B$ \index{H-components} H-components
in every single homogeneous strip $\bbH_m$, cf.\
Section~\ref{subsecA1}, where $B=L_{\max}/L_{\min}$.

Therefore, denoting by $W_{m_1\dots m_n}$ the set of points $x\in
W$ with the given itinerary $m_1,\dots,m_n$ we obtain
$$
     \nu\bigl(W_{m_1\dots m_n}\bigr) <
     (B/C_1)^n\, m_1^{-2}\cdots m_n^{-2}
$$
Hence
$$
     \int_W \bigl|\cJ_W\cF^n(x)\bigr|^{\zeta}\, d\nu
     \leq \sum_{m_1,\dots, m_n} (BC_2^{\zeta}/C_1)^n\,
      m_1^{-2+2\zeta}\cdots m_n^{-2+2\zeta}
$$
and the series converges for any $\zeta<1/2$. \qed \medskip

\noindent{\em Proof of Proposition~\ref{PrLD}} is based on
Lemma~\ref{LmExpJac} and Markov inequality:
\begin{align*}
    \nu\bigl(x\in W\colon\ \ln\cJ_W\cF^n(x)>K n\bigr)
    &=\nu\Bigl(x\in W\colon\ \bigl|\cJ_W\cF^n(x)\bigr|^\zeta
    >e^{\zeta K n}\Bigr)\\
   &\leq C_{\zeta}\bigl[A\exp(-\zeta K)\bigr]^n.
\end{align*}
It remains to choose $K$ so large that $A\exp(-\zeta K)<1$. \qed
\medskip

\subsection{Moderate deviations}
\label{subsecMD}

We use the notation of the previous section and
denote by $\bchi$ the positive Lyapunov exponent of the map $\cF$.

\begin{proposition}[Moderate deviations]
\label{PrMD} Given $\delta>0$, there are constants $C,a>0$ such that
$$
   \nu\Bigl(x\in W\colon \bigl|
   \ln \cJ_W\cF^n(x)-n\bchi\bigr|>k \Bigr)
   \leq C \exp(-a k^2/n)
$$
uniformly in $W$, $n>0$ and $\sqrt{n}\leq k\leq n^{2/3-\delta}$
\end{proposition}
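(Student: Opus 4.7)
The plan is to combine the decomposition of $\ln \cJ_W\cF^n$ into a Birkhoff sum with a moment method and the exponential Markov inequality, optimized over the moment order. First, using the cocycle identity and the distortion estimate (\ref{densbound}) (applied to the expansion rather than the density), I would write
$$
   \ln\cJ_W\cF^n(x) = \sum_{j=0}^{n-1}\phi(\cF^j x) + R_n(x),
$$
where $\phi(x)$ is the local expansion factor of $\cF$ in the unstable direction through $x$ and $|R_n|\leq C$ uniformly. The function $\phi$ is piecewise smooth on the homogeneity sections $\bbH_k$, with $\phi=\cO(\ln|k|)$ on $\bbH_{\pm k}$, so $\phi\in\fR$ modulo an $L^1$ singularity at $\{\cos\varphi=0\}$; moreover $\mu(\phi)=\bchi$ by Pesin's entropy formula, and an elementary computation shows $\mu(\phi)$ is finite. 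Next, I would reduce Lebesgue measure $\nu$ on the u-curve $W$ to a proper family of standard pairs: by Lemma~\ref{prgrow}(b) and the definitions in Section~\ref{subsecSHUC}, after $m_0\sim |\ln|W||$ iterations $\cF^{m_0}(W)$ is a union of H-components carrying an induced density satisfying (\ref{densbound}), and the contribution of the first $m_0$ steps to $\ln\cJ_W\cF^n$ is swamped by the deviation $k\geq\sqrt n$ provided $n$ is bounded below (small $n$ follows trivially by enlarging $C$).

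Second, because $\phi$ is unbounded I would truncate: set $\phi_N(x)=\phi(x)\cdot\mathbf 1_{|\phi(x)|\leq N}$ with $N=C_0\ln n$. A short computation using Proposition~\ref{PrLD} (or directly the bound $\mu(|\phi|>N)=\cO(e^{-cN})$ coming from the geometry of the homogeneity strips) shows that, outside a set of $\nu$-measure smaller than $e^{-c'n}$, we have $\phi=\phi_N$ along the orbit for $0\leq j<n$; hence one may freely replace $\phi$ by $\phi_N$ in the analysis of the moderate deviations range, since the excluded set contributes much less than $e^{-ak^2/n}$. I would then establish the Gaussian-type moment bound
$$
   \EXP_\ell\bigl(|S_n\phi_N - n\EXP_\mu\phi_N|^{2p}\bigr)
   \leq (C p n)^{p}
$$
for all standard pairs $\ell$ and all integers $1\leq p\leq n^{1/3-\delta/3}$. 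The proof of this moment bound is the heart of the argument and follows the big-block/small-block scheme already used in Chapter~\ref{ScME} (with big blocks of length $\sim n^{1/2}$ and small blocks of length $\sim (\ln n)^2$): writing the $2p$-th moment as a sum of multiple correlations $\EXP_\ell\phi_N(x_{i_1})\cdots\phi_N(x_{i_{2p}})$, one couples nearby indices using the bounded variance contribution and applies Proposition~\ref{PrDecCor} (with the cost $\theta^{|i_j-i_{j+1}|}$) to decouple distant indices, producing a Wick-type combinatorial sum whose leading term is $(2p-1)!!\cdot(Cn)^p\leq (Cpn)^p$, with corrections absorbed into the constant $C$. The extra $\ln n$ factors coming from $N$ are negligible at any fixed polynomial scale.

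Finally, the moderate deviation estimate is obtained by Markov's inequality:
$$
   \nu\bigl(|S_n\phi-n\bchi|>k\bigr)
   \leq \frac{(Cpn)^p}{k^{2p}}
   =\biggl(\frac{Cpn}{k^2}\biggr)^p.
$$
Choosing $p=\lfloor k^2/(2eCn)\rfloor$ yields the desired bound $C\exp(-ak^2/n)$, provided this choice is compatible with the moment constraint $p\leq n^{1/3-\delta/3}$; since $k\leq n^{2/3-\delta}$ forces $k^2/n\leq n^{1/3-2\delta}$, this is satisfied. The main obstacle will be verifying the Gaussian scaling $(Cpn)^p$ in the moment bound uniformly in $p$ up to $n^{1/3-\delta/3}$: the combinatorial pairing argument must be carried out with enough care that the error terms from decay of correlations, from the $\ln n$ truncation, and from the distortion terms $R_n$ do not inflate the bound to $(Cp^{1+\varepsilon}n)^p$, which would shrink the admissible range of $k$. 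Once this uniform moment bound is in hand, the remainder of the proof is mechanical.
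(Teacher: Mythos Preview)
Your approach is a valid alternative, but it differs from the paper's in one key respect: rather than bounding high polynomial moments $\EXP_\ell(S_n^{2p})$ and optimizing over $p$, the paper goes straight to the exponential moment generating function. Specifically, the paper chooses a block length $m\asymp n^{1/3}$ (so that $k^2/n\ll m\ll n/k$), splits $[0,n]$ into blocks of length $m$, and proves inductively over blocks that, after removing a small bad set, $\int e^{tZ_L}\,d\mes_\ell\leq e^{Dt^2n}$ for $|t|\lesssim m^{-1}$; the Chernoff bound with $t=k/(2Dn)$ then gives the result directly. The inductive step is clean because on each block one Taylor-expands $e^{t\tilde R_{r+1}}\leq 1+t\tilde R_{r+1}+At^2\tilde R_{r+1}^2$ (valid since $|t\tilde R_{r+1}|\leq Km|t|=\cO(1)$ on the good set) and uses only $\EXP(\tilde R_{r+1})=\cO(\theta^m)$ and $\EXP(\tilde R_{r+1}^2)=\cO(m)$.

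Your moment route would also succeed, but establishing $(Cpn)^p$ uniformly for $p$ up to $n^{1/3-\delta/3}$ via Wick combinatorics is genuinely more work than the MGF argument: the big--small block machinery in Chapter~\ref{ScME} only handles $p\leq 2$, and the full combinatorial pairing with control of higher cumulants (including the $\ln n$ factors from truncation) is precisely the obstacle you flag. The MGF approach sidesteps this entirely by never expanding $(S_n)^{2p}$. The paper also handles the unboundedness of $\phi$ differently: instead of truncating the function, it removes at each block step the set where the block sum exceeds $Km$, controlled by Proposition~\ref{PrLD}.
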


Note: by time reversibility, $\bchi$ is also the positive Lyapunov
exponent of the map $\cF^{-1}$, and the above estimate holds for
stable curves and negative iterations of $\cF$.

\proof We can assume that $W$ is long enough (for example, $|W| \geq
\varepsilon_0$, see Section~\ref{subsecA1}) and replace $\nu$ with a
smooth probability measure on $W$; i.e.\ we replace $(W,\nu)$ with a
standard pair $\ell = (\gamma, \rho)$. Let $W^u_x$ denote the
\index{Standard pair} unstable manifold through $x \in \Omega$.
Since the tangent lines $\cT_{\cF^ix} (\cF^i \gamma)$ and
$\cT_{\cF^ix} \bigl(W^u_{\cF^ix} \bigr)$ are getting exponentially
close to each other as $i\to\infty$, the difference between $ \ln
\cJ_\gamma \cF^n(x) = \sum_{i=0}^{n-1} \ln \cJ_{\cF^i \gamma} \cF
(\cF^ix)$ and $\sum_{i=0}^{n-1} \ln \cJ_{W^u_{\cF^ix}} \cF (\cF^ix)$
is bounded uniformly in $n$; so it is enough to prove
\beq
   \mes_\ell \bigl(x\in \gamma \colon |S_n|>k\bigr)
   \leq C \exp(-a k^2/n),
\eeq
where
\beq \label{SnA}
   S_n = \sum_{i=0}^{n-1} A\circ \cF^i,
   \qquad A (x) = \ln \cJ_{W^u_x} \cF (x) - \bchi.
\eeq
Next we pick $m = m(n)$ such that
\beq \label{m13}
     k^2/n \ll m \ll n/k \ll m^{100}
\eeq
where $P \ll Q$ means that $P/Q = \cO (n^{-\eps})$ for some
$\eps>0$. For example, $m= n^{1/3}$ will suffice.

Next we divide the time interval $[0,n]$ into segments of length
$m$; we will estimate the sums over odd-numbered intervals and those
over even-numbered intervals separately\footnote{Thus our method
resembles the big-small \index{Big-small block technique} block
technique of probability theory, except our blocks have the same
length. It seems that using blocks of variable lengths may help to
optimize the value of $a$ in Proposition~\ref{PrMD}, but we do not
pursue this goal.}. Accordingly, we define
$$
   R_j^{(1)} \colon = \sum_{i=2jm-m}^{2jm-1} A \circ \cF^i,
   \qquad
   R_j^{(2)} \colon = \sum_{i=2jm}^{2jm+m-1} A \circ \cF^i,
$$
for $1\leq j<L \colon = \frac{n}{2m}$. Then we denote
$Z_r^{(1)}=\sum_{j=1}^{r} R_{j}^{(1)}$ and
$Z_r^{(2)}=\sum_{j=1}^{r} R_{j}^{(2)}$ for $r\ \leq L$, and obtain
$S_n = S_m + Z_L^{(1)} + Z_L^{(2)}$, so
\beq \label{ModDev3}
   \mes_\ell \bigl(|S_n|>k\bigr)
   \leq\mes_\ell \bigl(|S_m|>k/3\bigr)+
   \sum_{j=1,2}\mes_\ell \bigl(|Z_L^{(j)}|>k/3\bigr).
\eeq
The first term with $S_m$ will be handled later. Our analysis of
$Z_L^{(1)}$ and $Z_L^{(2)}$ is completely similar, so we will do
it for $Z_L^{(1)}$ only (and omit the superscript $(1)$ for
brevity).

\begin{lemma} \label{LmLaplace}
There exists a subset $\hgamma \subset \gamma$ such that
\beq \label{meshgamma}
   \mes_\ell (\hgamma) \leq \Const\,
   e^{-k^2/n}
\eeq
and for every $m^{-100} < |t| < m^{-1}$
\beq \label{Lapl}
    \int_{\gamma\setminus\hgamma}
    e^{t Z_L}\, d\mes_\ell
    \leq e^{Dt^2n}
\eeq
where $D>1$ is a constant.
\end{lemma}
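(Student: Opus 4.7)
The plan is to prove Lemma~\ref{LmLaplace} by combining a truncation argument with a block-by-block Bernstein-type induction, exploiting the fact that the $L\approx n/(2m)$ blocks $R_j^{(1)}$ occupy disjoint time windows separated by gaps of length $m$. Exponential decay of correlations on scale $m$ then allows us to treat them as almost independent, while the a priori truncation keeps each block bounded so that the usual Taylor expansion of the exponential lies in the sub-Gaussian regime.

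\emph{Step 1 (Construction of $\hgamma$).} Because $A(x)=\ln\cJ_{W^u_x}\cF(x)-\bchi$ is unbounded near $\cS_1$, I would first pick a truncation level $K=C_0\,k^2/n$ (with $C_0$ large) and set
$$
   \hgamma = \bigcup_{i=0}^{n-1}\bigl\{x\in\gamma\colon |A\circ\cF^i(x)|>K\bigr\}.
$$
Proposition~\ref{PrLD} (applied along the stable direction of each $\cF^i\gamma$, using time reversibility) gives $\mes_\ell\{|A\circ\cF^i|>K\}\leq \Const\,\theta^K$ for some $\theta<1$; summing over $i$ and using $k^2/n\geq 1$ yields $\mes_\ell(\hgamma)\leq \Const\, e^{-k^2/n}$ for $C_0$ large enough. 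On $\gamma\setminus\hgamma$ one then has the pointwise bound $|R_j^{(1)}|\leq Km$, and crucially $|t|Km\leq K/m=\cO(k^2/(nm))=o(1)$ by the scaling \eqref{m13}.

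\emph{Step 2 (Single-block Laplace bound).} For each block $R=R_j^{(1)}$ and each long-enough standard pair $\ell'$, I would prove $\EXP_{\ell'}(e^{tR})\leq e^{Dt^2m}$. The inputs are: $\EXP_{\ell'}(R)=\cO(1)$ from the billiard equidistribution (Proposition~\ref{PrDistEq0}) together with the zero-mean property $\int A\,d\mu_\cD=0$; $\EXP_{\ell'}(R^2)\leq Cm$ from Proposition~\ref{PrDecCor} applied to pairs $A\circ\cF^i\cdot A\circ\cF^j$; and the truncation $|R|\leq Km$ on $\gamma'\setminus\hgamma$, which lets us estimate the Taylor tail by $\sum_{p\geq 3}|tR|^p/p!\leq Ct^2R^2\,e^{|t|Km}$, whose exponential factor is $\cO(1)$ by Step~1.

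\emph{Step 3 (Iteration).} I would induct on the number of blocks, writing $Z_L=Z_{L-1}+R_L^{(1)}$, where the supports of the two summands are separated by the omitted even-indexed block of length $m$. Applying an analogue of Lemma~\ref{INDEP} for the pure billiard map $\cF$ (directly derivable from Proposition~\ref{PrDecCor} or from the coupling scheme of Section~A.1) to the bounded piecewise-H\"older functions $e^{tZ_{L-1}}\mathbf{1}_{\gamma\setminus\hgamma}$ and $e^{tR_L^{(1)}}$ gives
$$
   \int_{\gamma\setminus\hgamma}e^{tZ_L}\,d\mes_\ell
   \leq \Bigl(\sup_{\ell'}\EXP_{\ell'}(e^{tR_L^{(1)}})\Bigr)\cdot
   \int_{\gamma\setminus\hgamma}e^{tZ_{L-1}}\,d\mes_\ell + \cE_L,
$$
and combining with Step~2 produces $e^{Dt^2m}$ per step, so after $L$ iterations one arrives at $e^{Dt^2mL}=e^{Dt^2n/2}$.

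The main obstacle is controlling the error $\sum_L\cE_L$ in Step~3. The functions $e^{tZ_{L-1}}$ are only piecewise H\"older, their H\"older norms may swell with $L$ and $m$, and the truncation boundary $\partial\hgamma$ introduces extra discontinuity surfaces not aligned with the dynamical singularities. The standard-pair machinery of Chapter~\ref{SecSPE} — in particular the growth lemma~\ref{prgrow} — is the right tool: short H-components created by intersecting with $\gamma\setminus\hgamma$ can be absorbed into a slightly enlarged exceptional set, and decay of correlations supplies a factorization error of size $\cO(\theta_0^m)$ per step. The lower bound $|t|>m^{-100}$ enters exactly here, guaranteeing that the polynomially small per-step errors do not spoil the $e^{Dt^2m}$ estimate of Step~2; since the total error is $\cO(L\,\theta_0^m)$ and $m\gg k^2/n$ by \eqref{m13}, it is far smaller than $e^{-k^2/n}$ and is absorbed into the bound on $\mes_\ell(\hgamma)$.
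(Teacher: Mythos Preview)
Your Step~1 contains an arithmetic slip that undermines the scheme. With $|t|<m^{-1}$ and $|R_j^{(1)}|\le Km$ you only get $|t\,R_j^{(1)}|\le K$, not $K/m$. Since your $K=C_0k^2/n$ can be as large as $C_0n^{1/3-2\delta}$ (for $k$ near its upper bound $n^{2/3-\delta}$), the factor $e^{|t|Km}$ in your Taylor tail is unbounded, and the single-block Laplace bound of Step~2 collapses. The per-collision truncation also misses the measure target: summing $\mes_\ell\{|A\circ\cF^i|>K\}\le Ce^{-K}$ over $n$ collisions gives $ne^{-C_0k^2/n}$, which is not $\cO(e^{-k^2/n})$ at the lower end $k=\sqrt n$. (Also, Proposition~\ref{PrLD} bounds the cumulative Jacobian, not single-step values of $A$; the single-step tail does hold via the growth lemma, but that is a different statement.)

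The paper repairs both problems by truncating at the \emph{block} level and building $\hgamma$ inductively rather than fixing it in advance. At step $r$ one discards (into $\hgamma$) the short H-components of $\cF^{2mr}\gamma$ and, via Proposition~\ref{PrLD} applied to $m$ iterations, the set where $|R_{r+1}^{(1)}|>Km$ with the \emph{fixed} large-deviation constant $K$; this costs $\cO(\theta^m)$ per step (hence $L\theta^m\ll e^{-k^2/n}$ by $m\gg k^2/n$) and forces $|tR_{r+1}^{(1)}|\le K=\cO(1)$, so the quadratic Taylor bound is legitimate. The induction is then carried out not by a Lemma~\ref{INDEP}-style factorization but by the Cram\'er tilt: one absorbs $e^{tZ_r\circ\cF^{-2mr}}$ into the density on each long H-component of $\cF^{2mr}\gamma$, obtaining a new standard pair $(\gamma_{r,\alpha},\rho_{r,\alpha,t})$ --- this is still a standard pair because $A$ is smooth along unstable curves and $|t|<m^{-1}$ --- and applies equidistribution to this tilted pair to get $\int e^{t\tR_{r+1}}\rho_{r,\alpha,t}\,dx\le e^{Bt^2m}$. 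The lower bound $|t|>m^{-100}$ enters only to ensure $|t|\theta^m\ll t^2m$, i.e.\ that the first-moment contribution of the tilted block is dominated by the variance term.
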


This lemma implies
$$
  \mes_\ell (Z_L>k) \leq \mes_\ell (\hgamma)
  +e^{Dt^2n-tk}.
$$
Substitution $t=\frac{k}{2Dn}$ (which is between $m^{-100}$ and
$m^{-1}$ due to (\ref{m13})) gives $\mes_\ell (Z_L>k) \leq
\Const\, e^{-ak^2/n}$ with $a=1/4D$. Similarly we obtain
$\mes_\ell (Z_L<-k) \leq \Const\, e^{-ak^2/n}$, and combining we
get
\begin{equation}
\label{MDBB}
   \mes_\ell (|Z_L|>k) \leq
   \Const\, e^{-ak^2/n},
\end{equation}
which takes care of $Z_L = Z_L^{(1)}$ in (\ref{ModDev3}).

\medskip\noindent \emph{Proof of Lemma~\ref{LmLaplace}}.
We construct, inductively, sets $\emptyset = \hgamma_0 \subset
\hgamma_1 \subset \cdots \subset \hgamma_L =\colon \hgamma$ such
that (i) the image $\cF^{2mr} (\hgamma_r)$ is a union of some
\index{H-components} H-components of the set $\cF^{2mr} (\gamma)$,
(ii) $\mes_\ell (\hgamma_r \setminus \hgamma_{r-1})\leq \Const\,
\theta^m$ for some constant $\theta<1$, and (iii) we have
\begin{equation}
\label{LaplI}
    \int_{\gamma\setminus\hgamma_r}
    e^{t Z_r}\, d\mes_\ell
    \leq e^{Dt^2mr}.
\end{equation}
Then (ii) implies (\ref{meshgamma}), since $m(\hgamma) =\cO(
L\theta^m) =\cO\bigl (e^{-k^2/n} \bigr)$ due to (\ref{m13}).

Suppose $\hgamma_r$ is constructed. Let $\gamma_{r,\alpha}$ denote
all the \index{H-components} H-components of the set $\cF^{2mr}
(\gamma \setminus \hgamma_r)$ and $c>0$ a small constant. We put
\beq \label{gammarc}
  \gamma_r^{(c)}=\cup_{\alpha}
  \{\gamma_{r,\alpha} \colon
  |\gamma_{r,\alpha}|<e^{-cm}\},
  \qquad
  \hgamma_r^{(1)} = \cF^{-2mr}
  \bigl(\gamma_r^{(c)}\bigr).
\eeq
By Lemma~\ref{prgrow} (b), $\mes_\ell (\hgamma_r^{(1)}) =\cO
\bigl( e^{-cm} \bigr)$.

Next let $\gamma_{r,\alpha} \nsubseteq \gamma_r^{(c)}$ be one of
the `longer' components, denote by $\trho_{r,\alpha}$ the induced
density on $\gamma_{r,\alpha}$ and put
\beq \label{newdens}
   \rho_{r,\alpha,t}=
   \frac{\trho_{r,\alpha}\,
   e^{t Z_r \circ \cF^{-2mr}}}
   {\int_{\gamma_{r,\alpha}}
   \trho_{r,\alpha}\,e^{t Z_r \circ \cF^{-2mr}}\,dx}.
\eeq
The function $A(x)$ defined by (\ref{SnA}) is smooth along unstable
manifolds, hence $\ell_{r,\alpha,t} = (\gamma_{r,\alpha},
\rho_{r,\alpha,t})$ is a standard pair, and the regularity of
\index{Standard pair} $\rho_{r,\alpha,t}$ is uniform in $r$,
$\alpha$, and $|t|< 1/m$. Even though $A(x)$ is not smooth over
$\Omega$, it is `dynamically H\"older continuous' in the sense of
(\ref{dynHc}), see \cite[Theorem 3.6]{BSC2}. Now the same argument
as in the proof of Proposition~\ref{PrDistEq0}, which is based on
Lemma~\ref{LmCoupl}, implies $| \EXP_{\ell_{r,\alpha,t}} (A \circ
\cF^i) | \leq \Const\, \theta^i$ for some $\theta<1$ and all $i \geq
m$, provided $c$ in (\ref{gammarc}) is small enough, namely we need
$(1-c)/c > K$, where $K$ is the constant from
Proposition~\ref{PrDistEq0} (observe that $\int_{\Omega} A\, d \mu
=0$). Hence we have
\beq \label{EXPtR2}
   \bigl|  \EXP_{\ell_{r,\alpha,t}} (\tR_{r+1}) \bigr|
   \leq \Const\, \theta^{m},
   \qquad
   \bigl|  \EXP_{\ell_{r,\alpha,t}} (\tR_{r+1}^2) \bigr|
   \leq \Const\, m
\eeq
where $\tR_{r+1} = R_{r+1} \circ \cF^{-2mr}$; the second bound
follows by the same argument as in Chapter~\ref{ScME}.

Next let $\tgamma_{\beta}$ denote all the \index{H-components}
H-components of $\cF^{2m(r+1)} (\gamma \setminus \hgamma_r)$ and
$$
   \gamma_{r}^{(K)} \colon =
   \cup_{\beta} \{ \cF^{-2m} (\tgamma_{\beta})\colon
   \ \max_{x\in \cF^{-2m} (\tgamma_{\beta})}
   |\tR_{r+1}(x)| \geq Km\},
$$
where $K>0$ is the constant from the Proposition~\ref{PrLD} on
large deviations. Put $\hgamma^{(2)} = \cF^{-2mr}
(\gamma_r^{(K)})$. Since the oscillations of $\tR_{r+1}$ on each
curve $\cF^{-2m} (\tgamma_{\beta})$ are $\cO(1)$, it easily
follows from Proposition~\ref{PrLD} that $\mes_\ell
(\hgamma_{r}^{(2)}) = \cO(\theta^m)$. Now the set $\hgamma_{r+1}
\colon = \hgamma_r \cup \hgamma_r^{(1)} \cup \hgamma_r^{(2)}$ will
satisfy the requirements (i) and (ii), so it remains to prove
(iii).

Let $\gamma_{r,\alpha} \nsubseteq \gamma_r^{(c)}$. For brevity,
denote $\gamma' = \gamma_{r,\alpha} \setminus \gamma_r^{(K)}$ and
$\gamma'' = \gamma_{r,\alpha} \cap \gamma_r^{(K)}$, as well as
$\rho = \rho_{r,\alpha,t}$. At every point $x\in \gamma'$ we have
$|\tR_{r+1}| < Km$, hence
$$
   e^{t\tR_{r+1}} \leq 1 + t\tR_{r+1} + At^2\tR_{r+1}^2
$$
with a constant $A>1$, uniformly in $|t| < 1/m$. Thus
\begin{align*}
   \int_{\gamma'} e^{t\tR_{r+1}} \rho\, dx
   &\leq
   \int_{\gamma'} (1 + t\tR_{r+1} + At^2\tR_{r+1}^2) \rho\, dx\\
   &\leq
   \int_{\gamma'\cup\gamma''} (1 + t\tR_{r+1} + At^2\tR_{r+1}^2) \rho\, dx\\
   &\leq
   1 + Bt^2m \leq e^{Bt^2m}
\end{align*}
with some constant $B>0$. To obtain the second line, we used
$$\int_{\gamma''} |t\tR_{r+1}| \rho\, dx \leq \int_{\gamma''}
(1+t^2\tR_{r+1}^2)\rho \, dx,$$ and for the third line we used
(\ref{EXPtR2}) (note that $|t|\theta^m \ll t^2m$ since $t \geq
m^{-100}$). Now using (\ref{newdens}) gives
\begin{align*}
  \int_{\gamma'} \trho_{r,\alpha}\,
  e^{tZ_{r+1} \circ \cF^{-2mr}}dx &=
  \int_{\gamma'} e^{t\tR_{r+1}} \rho\, dx \times
  \int_{\gamma_{r,\alpha}}
   \trho_{r,\alpha}\,e^{t Z_r \circ \cF^{-2mr}}\,dx\\
  &\leq e^{Bt^2m}\,\int_{\gamma_{r,\alpha}}
   \trho_{r,\alpha}\,e^{t Z_r \circ \cF^{-2mr}}\,dx
\end{align*}
Summation over $\alpha$ and using (\ref{LaplI}) implies
$$\int_{\gamma\setminus\hgamma_{r+1}} e^{t Z_{r+1}}\, d\mes_\ell
\leq e^{Dt^2m(r+1)}$$ (provided $D\geq B$), which proves
(\ref{LaplI}) inductively. \qed \medskip

It remains to handle the first term in (\ref{ModDev3}). Note that
$k \gg m$, and due to the uniform hyperbolicity $A \geq 0$, hence
$S_m \geq -\bchi m$. The necessary upper bound on $S_m$ will
follow from the next lemma, which is similar to
Proposition~\ref{PrLD} on large deviations, but it controls ``very
large deviations'':

\begin{lemma}
\label{LmDevn} We have $\mes_\ell (S_m > k)\leq \Const\,m\,
e^{-k/m}$ for all $k>0$.
\end{lemma}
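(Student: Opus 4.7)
\emph{Plan of proof.} The plan is a pigeonhole reduction of the large deviation for $S_m$ to a one-step large deviation for $A$, followed by the geometric translation of $\{A>L\}$ into proximity of the next collision to the singular set $\{\cos\varphi=0\}$, and then an application of the growth lemma~\ref{prgrow}.

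For the reduction, if $A(\cF^j x)\leq k/m$ for every $j=0,\dots,m-1$, then $S_m(x)\leq m\cdot (k/m)=k$; hence $\{S_m>k\}\subset \bigcup_{j=0}^{m-1}\{A\circ\cF^j>k/m\}$, and the union bound reduces matters to the uniform one-step estimate
$$
   \mes_\ell\bigl(A\circ\cF^j>L\bigr)\leq \Const\, e^{-L},
   \qquad L>0,\ j\geq 0.
$$
Summing this over $j=0,\dots,m-1$ with $L=k/m$ then yields exactly $\mes_\ell(S_m>k)\leq \Const\, m\, e^{-k/m}$. To prove the one-step estimate I would use the bound $\cJ_{W^u_y}\cF(y)\leq \Const/\cos\varphi(\cF y)$ coming from Proposition~\ref{prJexp} (cf.\ also Section~\ref{subsecSUV}): the event $A(\cF^j x)>L$ forces $\cos\varphi(\cF^{j+1}x)<\Const\, e^{-L}$. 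Any point $y\in \Omega_\cD$ with $\cos\varphi(y)<\eps$ must lie in a homogeneity strip $\bbH_{\pm k}$ with $k\gtrsim \eps^{-1/2}$, and by \eqref{cos23} every H-component contained in such a strip has $|\cdot|$-length at most $\Const\,\eps^{3/2}\leq \Const\,\eps$; consequently $r_{j+1}(x)\leq \Const\, e^{-L}$ whenever $A(\cF^j x)>L$. The growth lemma~\ref{prgrow} then delivers the required one-step estimate.

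I expect the main technical obstacle to be the uniformity of the constant in $j$ and in the initial standard pair $\ell=(\gamma,\rho)$. The first term in Lemma~\ref{prgrow}(a), namely $(\beta_1/\vartheta)^{j+1}\mes_\ell\bigl(r_0<\Const\, e^{-L}\vartheta^{j+1}\bigr)$, is bounded by $\beta_1^{j+1}\cdot \Const\, e^{-L}/|\gamma|$ as soon as $\Const\, e^{-L}\vartheta^{j+1}<|\gamma|$, i.e.\ after a waiting period $j\geq \cO\bigl(\big|\ln|\gamma|\big|\bigr)$; the finitely many earlier indices contribute an additive constant that can be absorbed into the global constant. In the present context (the proof of Proposition~\ref{PrMD}) one may assume without loss of generality that $|\gamma|\geq \eps_0$, so the waiting period is $\cO(1)$ and no genuine difficulty arises; for small $k$ (say $k\leq Cm$ with $C$ large) the claimed bound $\Const\, m\, e^{-k/m}$ exceeds $1$ and is vacuous.
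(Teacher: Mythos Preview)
Your proof is correct and follows essentially the same route as the paper: the pigeonhole reduction $\{S_m>k\}\subset\bigcup_{j<m}\{A\circ\cF^j>k/m\}$, the translation of $A\circ\cF^j>L$ into $\cos\varphi(\cF^{j+1}x)<\Const\,e^{-L}$ via the Jacobian bound, and the appeal to the growth lemma for the one-step tail. The paper states this in two lines, simply saying that $\cF^{i+1}(x)$ lies in the $e^{-k/m}$-neighborhood of $\cS_0=\partial\Omega$ and invoking Lemma~\ref{prgrow}; your version is more explicit about the passage through homogeneity strips and about the uniformity in $j$ (which, as you note, is harmless here because one may take $|\gamma|\geq\eps_0$ from the outset of the proof of Proposition~\ref{PrMD}).
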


\proof If $S_m(x) > k$, then $A\circ \cF^i(x) >k/m$ for some $0\leq
i<m$, therefore $\cF^{i+1}(x)$ lies in the $(e^{-k/m})$-neighborhood
of $\cS_0=\partial\Omega$, but for each $i$ the probability of this
event is $\leq\Const\,e^{-k/m}$ due to the growth \index{Growth
lemma} lemma~\ref{prgrow}. This completes the proof of the lemma and
that of Proposition~\ref{PrMD}. \qed

\subsection{Nonsingularity of diffusion matrix}\label{subsecA2a}
\index{Diffusion matrix}
Here we discuss the properties of the matrix $\brsigma^2_Q(\cA)$
defined by the Green-Kubo formula \index{Green-Kubo formula} (\ref{EqSigmabar}).

\begin{lemma}
The matrix $\brsigma^2_Q(\cA)$ depends on $Q$ continuously.
\label{lmdiffcont}
\end{lemma}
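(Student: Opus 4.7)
The plan is to establish continuity by combining uniform exponential decay of correlations with termwise continuity in the Green--Kubo series.

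First, I would invoke the uniform exponential decay of correlations. Since $Q$ varies in the compact region where $\dist(Q,\dcD)\geq\br+\delta$, the family of billiard tables $\cD\setminus\cP(Q)$ satisfies the hypotheses of Extension~2 in Section~\ref{subsecA1}: the minimal and maximal free paths, curvature bounds, and the complexity/one-step expansion constant are all uniformly controlled. Applying Proposition~\ref{PrDecCor} (in the form covering piecewise H\"older functions) to $\cA$, which is $C^1$ on $\Omega_0$ away from the curves $\dcP(Q)\times\{\pm\pi/2\}$, yields
\beq
    \biggl|\int_{\Omega_Q} \cA\,(\cA\circ\cF_Q^j)^T\, d\mu_Q\biggr|
    \leq C_0\,\theta^{|j|}
       \label{PlanDecay}
\eeq
with constants $C_0$ and $\theta<1$ independent of $Q$. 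Consequently the Green--Kubo series (\ref{EqSigmabar}) converges uniformly in $Q$, so it suffices to show continuity of each partial sum, i.e.\ of each correlation coefficient $C_j(Q)=\int_{\Omega_Q}\cA\,(\cA\circ\cF_Q^j)^T\,d\mu_Q$.

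Next I would prove that each $C_j(Q)$ is continuous in $Q$. Using the identification $\pi_0\colon\Omega_Q\to\Omega_0$, transport everything to the common space $\Omega_0$ with the fixed measure $\mu_0$ and write
\[
   C_j(Q)=\int_{\Omega_0}\cA_Q\,(\cA_Q\circ\cF_Q^j)^T\,d\mu_0,
\]
where $\cA_Q$ is $\cA$ pulled back through $\pi_{Q,0}$. For $Q_1,Q_2$ with $\varepsilon\colon=\|Q_1-Q_2\|$ small, Corollary~\ref{crclose2} tells us that the singularity sets of $\cF_{Q_1}^j$ and $\cF_{Q_2}^j$ lie within Hausdorff distance $c_4\varepsilon$ (with $c_4$ depending only on $j$), and on the complement of an $O(\varepsilon)$-neighborhood $G_0$ of these singularities, the maps $\cF_{Q_1}^j$ and $\cF_{Q_2}^j$ are smooth and differ by $O(\varepsilon)$. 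This is precisely the setup of Lemma~\ref{lmAA}: away from $G_0$ the integrand varies by $O(\varepsilon)$ times a locally bounded factor, while on $G_0$ we use $\mu_0(G_0)\leq C(j)\varepsilon$ to control the contribution by $2\|\cA\|_\infty^2\,\mu_0(G_0)$. Together these give
\beq
   |C_j(Q_1)-C_j(Q_2)|\leq C(j)\,\varepsilon^{1-\beta_{\cA}}
     \label{PlanTerm}
\eeq
(with $\beta_{\cA}<1$ encoding how the local Lipschitz constant of the integrand blows up near the singularities, as in Lemma~\ref{lmfRprop}). The constant $C(j)$ may grow with $j$, but this is harmless for the pointwise continuity of each summand.

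Combining (\ref{PlanDecay}) and (\ref{PlanTerm}), an $\varepsilon/3$-argument concludes: given $\eta>0$, choose $N$ so that $\sum_{|j|>N}C_0\theta^{|j|}<\eta/3$ uniformly in $Q$, then choose $\|Q_1-Q_2\|$ small enough that $\sum_{|j|\leq N}|C_j(Q_1)-C_j(Q_2)|<\eta/3$. This yields $\|\brsigma^2_{Q_1}(\cA)-\brsigma^2_{Q_2}(\cA)\|<\eta$, proving continuity.

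The main obstacle is verifying the uniformity of constants in the exponential decay bound (\ref{PlanDecay}) across the whole admissible range of $Q$. Assumption A3 alone does not immediately give this, but the discussion in Section~\ref{subsecA1} (Extensions~1 and 2) shows that all the relevant geometric quantities---the free path bounds, the one-step expansion constant $\btheta_1$, and the complexity---remain controlled as $Q$ varies over the compact set $\{\dist(Q,\dcD)\geq\br+\delta\}$. Once this uniformity is in place, the termwise continuity step is routine via the Hausdorff closeness of singularity sets established in Corollary~\ref{crclose2}.
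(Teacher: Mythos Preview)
Your approach is essentially identical to the paper's: prove termwise continuity of the Green--Kubo series and combine with the uniform exponential correlation bound (from the Extensions in Section~\ref{subsecA1}) to conclude continuity of the sum. The paper's proof is a two-line sketch of exactly this argument, whereas you have spelled out the details of the termwise continuity step and the uniformity of the decay rate.
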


\proof Every term in the series (\ref{EqSigmabar}) depends on $Q$
continuously, and the claim now follows from a uniform bound proved in
Extension~1 of Section~\ref{subsecA1}. \qed\medskip

Next we describe the conditions under which the matrix
$\brsigma^2_Q(\cA)$ is nonsingular. For any vector $u \in
\reals^2$ we have
\begin{align*}
   u^T \brsigma^2_Q(\cA)\, u &=
   \sum_{n=-\infty}^{\infty}\int_{\Omega_Q}
   (u^T\cA)\bigl[(\cA\circ\cF_Q^n)^Tu\bigr]\, d\mu_Q \\
   &= \sum_{n=-\infty}^{\infty}\int_{\Omega_Q}
   g_u\, (g_u\circ\cF_Q^n)\, d\mu_Q
\end{align*}
where $g_u = \la \cA, u\ra$ is a smooth function on $\Omega_Q$.

\medskip\noindent{\bf Fact}.
For any smooth function $g\colon \Omega_Q \to \reals$, the
following three conditions are equivalent: \begin{itemize}
\item[1.] $\sum_{n=-\infty}^{\infty} \int g_u\, (g_u\circ\cF_Q^n)\, d\mu_Q = 0$;
\item[2.] $g = h\circ\cF_Q - h$ for some $h\in L^2(\Omega_Q)$;
\item[3.] For any periodic point $x\in \Omega_Q$ with period $k \geq 1$,
such that $g$ is smooth at $x, \cF(x), \ldots, \cF^{k-1}(x)$ we
have
$$
    S_g(x) \colon = \sum_{i=0}^{k-1} g(\cF_Q^ix) = 0
$$
\end{itemize}
The equivalence of 1 and 2 is a standard fact of ergodic theory (see
e.g.\ \cite[Lemma 2.2]{CLB}); for the equivalence of 2 and 3 see
\cite{E}, \cite[Section~7]{BSC2}, and \cite[Section~5]{BSp}.

Now, if the matrix $\brsigma^2_Q(\cA)$ is singular, it has an
eigenvector $u$ corresponding to the zero eigenvalue, so that
$\brsigma^2_Q(\cA) = 0$. Equivalently, for any periodic point
$x\in \Omega_Q$ of period $k\geq 1$,
$$
   \la S_{\cA}(x), u \ra = 0,\qquad
    S_{\cA}(x) \colon = \sum_{i=0}^{k-1} \cA(\cF_Q^ix) = 0
$$
Therefore, we obtain the following:

\medskip\noindent{\bf Criterion for nonsingularity of $\brsigma^2_Q(\cA)$}.
Suppose there are two periodic points, $x_1,x_2\in\Omega$ with
periods $k_1$ and $k_2$, respectively, such that the vectors
$S_{\cA}(x_1)$ and $S_{\cA}(x_2)$ are nonzero and noncollinear.
Then $\brsigma^2_Q(\cA)$ is nonsingular.
\medskip

Observe that there is at least one periodic point $x$ such that
$S_{\cA}(x) \neq 0$, it is made by an orbit running between $\cP(Q)$
and the closest scatterer \index{Scatterer} $\BAN_i$. On many
billiard tables, one can easily find several such trajectories,
which would guarantee the nondegeneracy of $\brsigma^2_Q(\cA)$.
%It is also known that periodic points
%are dense in $\Omega_Q$ \cite{BSC1}. By a perturbation argument,
%it is easy to show that the billiard tables for which
%$\brsigma^2_Q(\cA)$ is nonsingular make an open and dense set.

%Other orbits may be found by a standard variational argument, as
%local minima of the function $L(r)=\dist(r,\dcD)$ for
%$r\in\dcP(Q)$. It is clear that for most billiard tables $\cD$
%there are at least two nonparallel periodic orbits of that kind,
%and then the matrix $\brsigma^2_Q(\cA)$ will not be singular. One
%might find it surprising, however, that this is not always true --
%there are billiard tables where every scatterer $\BAN_i$ (except
%the one closest to $\cP(Q)$) is partially blocked from $\cP(Q)$ by
%another scatterer, so that the function $L(r)$ has no local
%minima! Such examples, though, are very special (we will not
%present them here).

\subsection{Asymptotics of diffusion matrix}\label{subsecA2b}
 \index{Diffusion matrix}
Here we discuss the asymptotics of $\sigma^2_Q(\cA)$ as $\br\to 0$
and prove (\ref{rsmall}), in fact a stronger version of it:
$$
   \sigma^2_Q(\cA)=\frac{8\br}{3\, \Area(\cD)}\, I
   +Z_Q\br^2 + o(\br^2),
$$
where $Z_Q$ is a $2\times 2$ matrix (independent of $\br$). By
virtue of (\ref{FPL}), this is equivalent to
\begin{equation}
   \label{rsmall2}
   \brsigma^2_Q(\cA)=\frac{8\pi\br}{3\, \length(\dcD)}\,
   I+\biggl(Z_Q - \frac{16\pi^2}{3\,
   [\length(\dcD)]^2}I\biggr)\br^2 + o(\br^2).
\end{equation}
We also provide an explicit algorithm for computing $Z_Q$.

First we fix our notation. To emphasize the dependence of our
dynamics on $\br$ we denote by $\Omega_{Q,\br}$ the collision space,
$\cF_{Q,\br}$ the collision map and $\mu_{Q,\br}$ the invariant
measure. We also use notation of Extension~3 of
Section~\ref{subsecA1}, after identifying our disk $\cP(Q)$ with the
variable scatterer \index{Scatterer} $\BAN_0$: thus we get the
collision space $\tOmega_{\cD}$, the collision map $\tcF_{\cD}$ on
it (however, we will denote this map by $\tcF_{Q,\br}$ to emphasize
its dependence on $Q$ and $\br$), and the corresponding invariant
measure $\tilde{\mu}_{\cD}$. Note that $\tilde{\mu}_{\cD}$ is
obtained by conditioning the measure $\mu_{Q,\br}$ on
$\tOmega_{\cD}$, the ratio of their densities is
\beq \label{Lbr}
    L_{\br}\colon=\frac{\length(\dcD)+2\pi \br}{\length(\dcD)},
\eeq
and $\tilde{\mu}_{\cD}$ is in fact independent of $Q$ and $\br$.

Consider the function $\tcA(x)\colon =\cA(\cF_Q(x))$ on
$\tOmega_{\cD}$ and the matrix
\begin{equation}
  \label{EqSigmatilde}
  \tilde{\sigma}^2_Q(\tcA)\colon= \sum_{n=-\infty}^{\infty}
  \int_{\tOmega_{\cD}}
  \tcA\, \left (\tcA\circ\tcF_{Q,\br}^n\right )^T\,d\tilde{\mu}_{\cD}.
\end{equation}

It follows from \cite[Theorem~1.3]{MT} that
$\tilde{\sigma}^2_Q(\tA)=L_{\br}\, \brsigma^2_Q(A)$. Hence it is
enough to prove that
\begin{equation}
   \label{rsmall33}
   \tilde{\sigma}^2_Q(\tcA)=
   \frac{8\pi\br}{3\, \length(\dcD)}\, I+
   Z_Q\br^2 + o(\br^2).
\end{equation}
First we will establish a weaker formula
\begin{equation}
   \label{rsmall3}
   \tilde{\sigma}^2_Q(\tcA)=
   \frac{8\pi\br}{3\, \length(\dcD)}\, I+
   \cO(\br^2\ln\br),
\end{equation}
which is, by the way, sufficient for our main purpose of proving
(\ref{rsmall}), and then outline a proof of the sharp estimate
(\ref{rsmall2}).

Due to the invariance of the measure $\mu_{Q,\br}$ under the map
$\cF_{Q,\br}$, we have $\tilde{\mu}_{\cD}(\tcA)=\mu_{Q,\br}(\cA)=0$. It
is easy to check that $\tcA$ is H\"older continuous with exponent
$\eta=1/2$ and coefficient $K_{\tcA}=\Const/\br$, hence
$\tcA\in\cH_{1,1/2}$, in the notation of Section~\ref{subsecA1}.
Therefore, the uniform bound on correlations proved in Extension~3
gives
\beq \label{Ithetar2}
  \left|\int_{\tOmega_{\cD}}
  \tcA\, \left (\tcA\circ\tcF_{Q,\br}^n\right )^T\,
  d\tilde{\mu}_{\cD}\right| \leq  \Const \, \br^{-2}
  \theta_{1,1/2}^{|n|}
\eeq
where $\Const$ is independent of $Q$ and $\br$. Let $K$ be such
that $\theta_{1,1/2}^{K|\ln\br|}=\br^5.$ Then
$$
   \tilde{\sigma}^2_Q(\tcA) =
   \sum_{|n|\leq K|\ln\br|} \int_{\tOmega_{\cD}}
  \tcA\, \left (\tcA\circ\tcF_{Q,\br}^n\right )^T\,
    d\tilde{\mu}_{\cD} + \cO(\br^3).
$$
Next we prove that for each $n\neq 0$
\beq
  \left|\int_{\tOmega_{\cD}}
  \tcA\, \left (\tcA\circ\tcF_{Q,\br}^n\right )^T\,
  d\tilde{\mu}_{\cD}\right| \leq  \Const \, \br^{2}
     \label{nbrbr}
\eeq
By the time symmetry it is enough to consider $n>0.$ Let
$\ttcA=\tcA\circ \tF_Q^{-1}.$ Then (\ref{nbrbr}) is equivalent
to
\beq
  \left|\int_{\tOmega_{\cD}}
  \ttcA\, \left (\tcA\circ\tcF_{Q,\br}^{n-1}\right )^T\,
  d\tilde{\mu}_{\cD}\right| \leq  \Const \, \br^{2}.
     \label{nbrbr1}
\eeq

Consider domains
$$
     \tPi_\br \colon = \{x\colon \tcA\neq 0\},
     \quad\text{and}\quad
     \ttPi_\br \colon = \{x\colon \ttcA\neq 0\}
$$
in the space $\tOmega_{\cD}$. Observe that $\tPi$ consists of points
that are about to collide with $\BAN_0$, and $\ttPi$ consists of
points that just collided with $\BAN_0$. Thus $\tPi$ is a finite
union of narrow strips of width $\cO(\br)$ stretching along some
s-curves, while $\ttPi$ is a finite union of strips of width
$\cO(\br)$ stretching along some \index{u-curves (unstable curves)}
u-curves, see Fig.~\ref{FigPiPi}. Observe also that $\cap_{\br>0}
\tPi_\br$ is a finite union of s-curves $\tgamma_i \subset
\tOmega_{\cD}$ (consisting of points $x \in \tOmega$ whose
trajectories run straight into the point $Q$), and $\cap_{\br>0}
\ttPi_\br$ is a finite union of \index{u-curves (unstable curves)}
u-curves $\ttgamma_i \subset \tOmega_{\cD}$ whose trajectories come
straight from the point $Q$.

\begin{figure}[htb]
    \centering
    \psfrag{O}{$\tOmega$}
    \psfrag{1}{$\tPi_{\br}$}
    \psfrag{2}{$\ttPi_{\br}$}
    \includegraphics{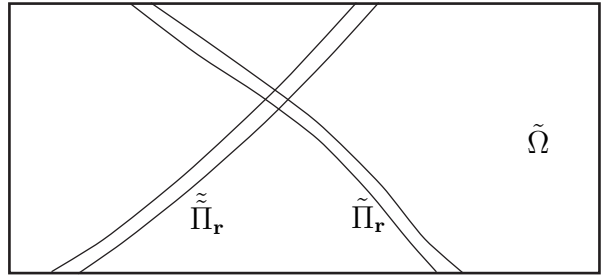}
    \caption{Two narrow strips in $\tOmega$}
    \label{FigPiPi}
\end{figure}

Now the estimate (\ref{nbrbr1}) is obvious for $n=1$. For $n>1$ we
apply the growth \index{Growth lemma} lemma~\ref{prgrow}. The domain
$\ttPi$ can be easily foliated by \index{H-curves} H-curves of
length $\cO(1)$ (independent of $\br$). If the foliation is smooth
enough, the conditional measures on its fibers will have homogeneous
densities, cf.\ Section~\ref{subsecSHUC}, thus they become standard
pairs.
\index{Standard pair} Then Lemma~\ref{prgrow} implies that at any
time $n>1$ the images of those fibers will consist, on average, of
curves of length $\cO(1)$. Thus the fraction of that image
intersecting $\tPi$ will be $<\Const\,\br$. Integrating over all the
fibers we obtain
$$
     \tilde{\mu}_{\cD}\Bigl( x\in\ttPi\colon\
     \tcF_{Q,\br}^{n-1}(x)\in\tPi \Bigr)
     \leq\Const\,\br\,\tilde{\mu}_{\cD}\bigl(\ttPi\bigr)
     \leq\Const\,\br^2
$$
This implies (\ref{nbrbr}).

It remains to compute the $n=0$ term
\beq
   \int_{\tilde{\Omega}_{\cD}}
    \tcA\, \tcA^T\,
    d\tilde{\mu}_{\cD} =
   L_{\br}\int_{\Omega^{\ast}_{Q,\br}}
    \cA\, \cA^T\,
    d\mu_{Q,\br}
      \label{n=0term}
\eeq
where $\Omega^{\ast}_{Q,\br}=\dcP(Q)\times [-\pi/2,\pi/2]$ is the
collision space of the disk $\cP(Q)$. The measure $\mu_{Q,\br}$
has density $c^{-1}\cos\varphi\, dr\, d\varphi$ in the coordinates
$r,\varphi$ introduced in Section~\ref{subsecSUC}, where
$c=2\,\length(\dcD)+4\pi\br$ is the normalization factor. For
convenience, we replace the arclength parameter $r$ on $\dcP(Q)$
with the angular coordinate $\psi\in [0,2\pi)$, then we get the
$\psi,\varphi$ coordinates on $\Omega^{\ast}_{Q,\br}$ and
$d\mu_{Q,\br}=c^{-1}\br\cos\varphi\, d\psi\, d\varphi$.

Due to an obvious rotational symmetry, the matrix (\ref{n=0term}) is a
scalar multiple of the identity matrix, so it is enough to compute its
first diagonal entry. The first component of the vector function $\cA$
is $2\cos\psi\cos\varphi$, hence the first diagonal entry of
(\ref{n=0term}) is
$$
  \frac{\br}{2\,\length(\dcD)}\,
  \int_{-\pi/2}^{\pi/2}\int_{0}^{2\pi}4\cos^2\psi
  \cos^3\varphi\, d\psi\, d\varphi=
  \frac{8\pi\br}{3\,\length(\dcD)}
$$
This completes the proof of (\ref{rsmall3}), and hence that of
(\ref{rsmall}). \qed \medskip

We have established the necessary result (\ref{rsmall}), but, as
T.~Spencer pointed out to us, the importance of the diffusion
\index{Diffusion matrix} matrix in physics justified further
analysis to obtain the more refined formula (\ref{rsmall33}), which
we do next.

The proof of (\ref{rsmall33}) requires more accurate calculation
of the integral (\ref{nbrbr1}). A crucial observation is that
for fixed $n$ the
intersection $\ttPi_\br \cap \tcF^{-n+1} \tPi_\br$ tends to
concentrate around finitely many points, which we call \emph{core
points} and denote by
$$
   \bigcap_{\br>0}
   \text{clos}\Bigl(\ttPi_\br \cap \tcF^{-n+1} \tPi_\br\Bigr)
   =\bigl\{x_1^{(n)},\ldots,x_{k_n}^{(n)}\bigr\}
$$
(here clos$(A)$ means the closure of $A$). These points corresponds
to billiard trajectories under the map $\cF_{Q,0}$ (on the table
with a ``degenerate'' scatterer \index{Scatterer} $\BAN_0$ of radius
$\br=0$) starting and ending at $\BAN_0=\{Q\}$. We distinguish
\emph{non-singular core points} (which do not experience tangential
collisions between their visits to $Q$) and other (\emph{singular})
core points. The values of $Z_Q$ in (\ref{rsmall33}) can be computed
by using the trajectories of the core points.

Our proof of (\ref{rsmall33}) consists of five steps. \medskip

\noindent{\bf Step 1}. We show that
$$
  \tilde{\cI}_n\colon= \int_{\tOmega_{\cD}}
  \ttcA\, \Bigl(\tcA\circ\tcF_{Q,\br}^{n}\Bigr)^T\,
  d\tilde{\mu}_{\cD} =
  a_n(Q)\,\br^2+b_n(Q,\br)+ \cO(\br^{2+\delta})
$$
for some $\delta>0.$ Here the first term, $a_n(Q)\,\br^2$,
corresponds to the contribution of non-singular core points, the
second term describes the contribution of singular core points,
and the third term accounts for the trajectories hitting $\BAN_0$
more than once before time $n$ and for non-linear effects.
\medskip

\noindent{\bf Step 2}. We establish an {\it a priori} bound
$b_n(Q,\br)\leq \Const\, \theta^n\, \br^2$ where $\theta<1$ for
the contribution of the singular core points.\medskip

\noindent{\bf Step 3}. From Steps 1 and 2 and the estimate
(\ref{Ithetar2}) we conclude that
$$
   a_n(Q) = \frac{\tilde{\cI}_n}{\br^2}
   +\cO\bigl(\theta^n+\br^\delta\bigr)=
   \cO\biggl(\frac{\theta^n}{\br^4}+\br^\delta\biggr) .
$$
Since the left hand side does not depend on $\br$, we can optimize
our bound in $\br$ to get $a_n(Q)=\cO(\ttheta^n)$ for some
$\ttheta<1$. \medskip

\noindent{\bf Step 4}. We fix $n$ and show that
$$
   \frac{b_n(Q,\br)}{\br^2}\to b_n(Q)
   \quad\text{ as } n\to\infty.
$$
We should note that $b_n(Q)$ describes the contribution of
singular core points, and the existence of such points is a
``codimension one event'' (there are only countably many orbits
starting from $Q$ and making a tangential collision), and there is
no reason for them to pass through $Q$ again). Thus for most $Q$
we expect $b_n(Q) = 0.$ However, since $Q$ varies over a
two-dimensional domain, we do expect a non-zero contribution for
some exceptional values of $Q$.
\medskip

\noindent{\bf Step 5}. The estimates of steps 2--4 and the
dominated convergence theorem imply
$$
   \sum_n \frac{\tilde{\cI}_n}{\br^2}
   \to \sum_n \left[a_n(Q)+b_n(Q)\right].
$$

Steps 3 and 5 are self-explanatory. We now describe estimates in
Steps 1, 2 and 4 in more detail. First we compute $a_n(Q)$. For
every point $q \in \dcD$ with coordinate $r$ we denote by
$\be_Q(r)$ the unit vector pointing from $q$ to $Q$, and by
$d_Q(r)$ the distance from $q$ to $Q$. Let $x_i^{(n)} =
\bigl(r_i^{(n)}, \varphi_i^{(n)}\bigr)$ be a nonsingular core
point. In its vicinity, i.e. for $\bigl|r-r_i^{(n)}\bigr|
<\varepsilon$, we have
$$
    \ttPi_{\br}=\left\{\left|\varphi-
    \varphi^*(r)\right| <
    \sin^{-1} \left(\frac{\br}{d(r)}
    \right)\right\},
$$
where $\varphi^*(r)$ denotes the reflection angle of the unique
trajectory arriving at $r$ straight from the point $Q$. For small
$\br$ we can approximate
$$
   \sin^{-1} \left(\frac{\br}{d(r)}\right)
   =\frac{\br}{d(r)}+\cO(\br^3)
$$
and so $\ttcA=\ttu\bigl(r,\frac{\varphi-\varphi^*(r)}{\br}\bigr)
+\cO(\br^2)$, where $ \left\Vert
\ttu(r,s)\right\Vert=2\sqrt{1-[d(r) s]^2} $, and $\ttu$ makes
angle $\pi-\sin^{-1}\left(d(r) s\right) $ with the vector $\be_Q
(r)$. Similar formulas apply to $\tcA$.

Observe that the set $\tcF_Q^{-n} \tPi_{\br} \cap \ttPi_{\br}$
consists of three (not necessarily disjoint) parts:

\begin{enumerate}
\item Vicinities of nonsingular core points that come back to $Q$
for the first time in exactly $n$ collisions;
\item Vicinities of nonsingular core points that come back to $Q$
more than once in the course of $n$ collisions;
\item Vicinities of singular core points, more precisely,
orbits passing in the $\br$--neighborhood of tangential collisions
before returning to $Q$.
\end{enumerate}

We claim that, at least for large $n,$ the main contribution to the
integral $\tilde{\cI}_n$ comes from orbits of the first type. To
make this statement precise we denote by $I_2(n)$ and $I_3(n)$ the
contribution of type 2 and type 3 orbits, respectively. To estimate
$I_2(n)$ we observe that for fixed $k<n$ the set of points having
collision with $\BAN_0$ immediately before the $k$-th return has
measure at most $\Const\, \br^2.$ The images of these points can be
foliated by \index{u-curves (unstable curves)} u-curves of length
$\cO(1)$, so the contribution of such points is bounded by
$$
   \left| I_2(k,n) \right| \leq \Const\, \br^3
$$
Summation over $k=1,\ldots,n$ gives
$$
    \left| I_2(n) \right| \leq \Const\, \br^3 |\ln\br| .
$$
Next we turn to the type 3 orbits. Let $x$ be such an orbit and
$k\in [1,n]$ denote the first moment of time when its image passes
in the $\br$--neighborhood of a tangential collision. Assume first
that $k\leq n/2.$ Again, the measure of the set of all such orbits
is $\Const\, \br^2$, and it can be foliated by \index{u-curves
(unstable curves)} u-curves of length $\cO(\br)$. Denote by
$\gamma_k(x)$ the u-curve containing the image of our point $x$.
Consider now the images of $\gamma_k(x)$ at time $\frac{3n}{4}$ and
denote by $\tr_{3n/4}(x)$ the distance from the corresponding image
of our point $x$ to the nearest endpoint of the u-curve it belongs
to. Pick $\lambda$ slightly larger than 1, then we have two cases:
\medskip

\noindent (1) $\tr_{3n/4}(x) < \lambda^n \br$. By
Lemma~\ref{prgrow}, if $\lambda$ is sufficiently close to 1, then
the measure of all such points is less than $\Const\, \theta^n
\br^2$ with some $\theta <1$. \medskip

\noindent (2) $\tr_{3n/4}(x)\geq\lambda^n \br.$ In this case we can
again use the growth \index{Growth lemma} lemma~\ref{prgrow} to
conclude that the conditional probability of later hitting $\BAN_0$
(i.e.\ coming to $\tPi_{\br}$) is at most
$$
   \Const\,\frac{\br}{\lambda^n \br}
   = \frac{\Const}{\lambda^n},
$$
and so the total contribution of such orbits is at most
$\Const\,\br^2/\lambda^n.$ Summation over $k\leq n/2$ gives the
combined contribution that can be expressed as $\Const\, n\,
\br^2\left(\theta^n+1/\lambda^n\right) .$

The case $k\geq n/2$ can be reduced to the previous one by using
the time reversal property of billiard dynamics. Hence
$\left|I_3(n)\right|\leq \Const\,\ttheta^n$ for some $\ttheta<1$,
thus establishing the estimate claimed in Step 2.

Let us now compute the contribution of type 1 orbits. We only
outline the argument leaving the (elementary but lengthy)
estimates of some higher-order terms out. Let $x = x_i^{(n)} =
\tcF^{-n}_Q\tgamma_{j'} \cap \ttgamma_{j''}$ be a nonsingular core
point. Choose a frame in $\cT_x\Omega$ consisting of a unit vector
tangent to $\ttgamma_{j''}$ and $\frac{\partial}
{\partial\varphi}.$ Consider a frame in $\cT_{\tcF^n_Q x}\Omega$
consisting of a unit vector tangent to $\tgamma_{j'}$ and
$\frac{\partial}{\partial\varphi}.$ Denote by $\zeta(x) = D_x
\cF^n_Q$ the $2\times 2$ matrix of the derivative of the map
$\cF_Q^n$ in these frames. Foliate a neighborhood of $x$ by curves
$$
   \sigma_c=\left\{\frac{\varphi-\varphi^{\ast}(r)}
   {\br}=c\right\}.
$$
Then $\ttcA$ is approximately constant on each such curve. When
$\br$ is sufficiently small the image $\tcF^n_Q\sigma_c$
intersects $\tPi_{\br}$ in a curve which is close to the straight
line $D_x \tcF^n_Q (\cT_x\ttgamma)$ and the length of its preimage
is about $ 2\br/\zeta_{21}(x)$. On the curve $\sigma_c$, we have
$\ttcA=\ttu(r,c)+\cO(\br) $ so the average value of $\ttcA \tcA$
over the above intersection is
$$
   \ttu(r(x),c) \int \tu\bigl(r(F^n x), \tc\bigr)
   \,\, d\tc+\cO(\br).
$$
Observe that the integral here is a vector parallel to
$\be_Q(r(\cF_Q^n x))$. To compute the magnitude of this vector
consider an angular coordinate $\psi$ on $\BAN_0$ such that its
value $\psi=0$ corresponds to the direction toward the point
$r(\cF^n_Qx).$ In this coordinate, possible angles of collision
range from $-\frac{\pi}{2}+\cO(\br)$ to $\frac{\pi}{2}+\cO(\br).$
The incoming vector is close to $(-1,0)$, so the outgoing vector
is close to $\bigl(\cos (2\psi), \sin (2\psi)\bigr)$, hence
$$
   \tu\approx \bigl(\cos (2\psi)+1, \sin (2\psi)\bigr)
   =\bigl(2(1-\sin^2 \psi), \sin (2\psi)\bigr) .
$$
Finally the incoming vector makes angle close to $\br \sin \psi /
d(r(\cF^n x)).$ Hence the length of the average momentum change is
close to
$$
    \frac{\int_{-\pi/2}^{\pi/2} (1-\sin^2\psi)\, d \sin \psi}
     {\int_{-\pi/2}^{\pi/2}\, d \sin \psi}
      =\frac{4}{3}.
$$
Next, averaging over $c$ and using the density of the invariant
measure $d\mu = [\length(\cD)]^{-1} \cos\varphi\, dr\, d\varphi$
gives the total contribution of the nonsingular core point $x$,
which we denote by $Z_Q(x)\,\br^2+\cO(\br^3)$, where
$$
   Z_Q(x)=-\;\;\frac{64 \cos \varphi^*(x)}
   {9 d(x) d(\cF^n x)\zeta_{21}(x)\, \length(\partial \cD)}
     \,\be_Q(r(x))\otimes \be_Q\bigl(r(\cF^n x)\bigr).
$$
Thus
$$
    \sum_n a_n(Q)=\sum_{x} Z_Q(x),
$$
where the sum is taken over all nonsingular core points of type 1.
This completes estimate claimed in Step 1.

It remains to compute the contribution of type 3 orbits around
singular core points (which only occur for some exceptional values
of $Q$, as we explained above).
%For a fixed $n$, we see that for sufficiently small $\br$ there is no
%contribution except there is a point $x$ such that $x\in\ttgamma,$
%$\cF^n x\in \tgamma$ and $\cF^k x\in \cS$ for some $k<n.$
This can be done similarly to $a_n(Q)$, except now the two parts
of $\tcF_Q^{-n} \tPi_{\br} \cap \ttPi_{\br}$ separated by the
discontinuity curve have to be treated differently. The part
experiencing an almost grazing collision makes no contribution to
$b_n(Q)$, since $\zeta_{21}=\infty$. The contribution of the part
avoiding the grazing collision is computed similarly to the type 1
orbits, but now the image of $\cF_Q^n \sigma_c$ will be cut by the
singularity curve, which needs to be approximated by its tangent
line. The resulting expression for $b_n(Q)$ is not very useful, so
we do not include it here. \qed
\newpage

\chapter[Growth and distortion]{Growth and distortion in
dispersing billiards} \setcounter{section}{2}
\setcounter{subsection}{0}
%\addcontentsline{toc}{section}{Appendix B. Growth and distortion properties of dispersing billiards}
%\renewcommand{\theequation}{B.\arabic{equation}}
%\renewcommand{\thelemma}{B.\arabic{lemma}}
%\renewcommand{\thesubsection}{B.\arabic{subsection}}
%\setcounter{equation}{0} \setcounter{subsection}{0}

\index{Dispersing billiards}

\subsection{Regularity of
\index{H-curves} H-curves} It is known that for $C^r$ smooth
uniformly hyperbolic maps, such as Anosov diffeomorphisms, unstable
manifolds are uniformly $C^r$ smooth and the conditional densities
of the SRB measure on unstable manifolds are uniformly $C^{r-1}$
smooth.

In the case of billiards, the collision map $T\colon
\Omega\to\Omega$ is $C^r$ smooth whenever the table border $\dcD$
is $C^{r+1}$ smooth. Then we also have the $C^r$ smoothness of
unstable manifolds and the $C^{r-1}$ smoothness of SRB densities,
but not uniformly over the space $\Omega$, since the corresponding
derivatives explode near the singularities. Here we establish
certain uniform bounds on the corresponding first and second
derivatives.

%Homogeneous unstable curves
%(H-curves) in dispersing billiards have been an object of study in
%many papers, but we need to improve and extend the existing
%results.
Let $W_0\subset\Omega$ be an \index{H-curves} H-curve, $x_0=(r_0
,\varphi_0)\in W_0$, and for every $n\geq 1$ denote by $W_n$ the
\index{H-components} H-component of $\cF^n(W_0)$ containing the
point $x_n=(r_n ,\varphi_n) =\cF^n (x_0)$. In the $r,\varphi$
coordinates, the curve $W_n$ is a function $\varphi(r)$ and we
denote its slope at the point $x_n$ by $\Gamma_n= d\varphi/dr$.
Recall that we use the metric (\ref{dxnormQV}) on \index{u-curves
(unstable curves)} u-curves, in which the norm of tangent vectors
$dx=(dr,d\varphi)$ to u-curves satisfies
\beq
     \|dx\|^2 = (dr\cos\varphi)^2 + (d\varphi+\cK\, dr)^2
       \label{dxnormQV0}
\eeq
Recall that by (\ref{CCs}) $0<c_1\leq |dx/dr| \leq c_2<\infty$ for
some constants $c_1,c_2$. Let
$$
  \cJ_{W_i}\cF^{-1}(x_{i})=
  \bigl[\cJ_{W_{i-1}}\cF(x_{i-1})\bigr]^{-1}=|dx_{i-1}|/|dx_{i}|
$$
denote the Jacobian (the contraction factor) of the map $\cF^{-1}\colon
W_{i}\to W_{i-1}$ at the point $x_i$.

\begin{proposition}
\label{PrDistCurv} Suppose the boundary $\dcD$ is of class $C^3$ and
$|d\Gamma_0/dx_0|\leq C_0$ for some $C_0>0$ and all $x_0\in W_0$. Then
there is a constant $C>0$ such that for all $n\geq 1$
\beq
   \left|\frac{d\Gamma_n}{dx_n}\right|\leq C
     \label{curvC3}
\eeq
and
\beq
 \left |\frac{d\ln \cJ_{W_n}\cF^{-1}(x_n)}{dx_{n}}\right |
 \leq \frac{C}{|W_{n}|^{2/3}}
   \label{distC3}
\eeq
Suppose, in addition, that the boundary $\dcD$ is of class $C^4$ and
moreover $|d^2\Gamma_0/dx_0^2|\leq C_0$. Then for all $n\geq 1$
\beq
   \left|{d^2\Gamma_n}/{dx_n^2}\right|\leq C
     \label{curvC4}
\eeq
and
\beq
 \left |\frac{d^2\ln \cJ_{W_n}\cF^{-1}(x_n)}{dx_{n}^2}\right |
 \leq \frac{C}{|W_{n}|^{4/3}}
   \label{distC4}
\eeq
\end{proposition}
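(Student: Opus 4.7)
The plan is to use the Sinai mirror equation to express the geometry of $W_n$ at $x_n$ recursively in terms of data at $x_{n-1}$, and then differentiate the recursion, separating a contracting ``propagator'' from a ``forcing'' whose singular behaviour is controlled by the homogeneity of $W_n$. In the notation of Section~\ref{subsecSUC}, let $\cB_n$ denote the curvature of the outgoing (post-collision) unstable wavefront at $x_n$, so that $\Gamma_n = \cB_n\cos\varphi_n-\cK_n$ by Proposition~\ref{prdrdp}. The standard mirror and free-flight formulas of billiard theory yield the continued-fraction recursion
\[
\cB_n \;=\; \frac{2\cK(r_n)}{\cos\varphi_n} \;+\; \frac{\cB_{n-1}}{1+\tau_n\,\cB_{n-1}},
\]
where $\tau_n$ is the free path between $x_{n-1}$ and $x_n$. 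The Jacobian $\cJ_{W_{n-1}}\cF(x_{n-1})$ in the $|\cdot|$-metric of (\ref{dxnormQV}) admits an explicit factorisation involving $(1+\tau_n\cB_{n-1})$, the ratio $\cos\varphi_{n-1}/\cos\varphi_n$, and a smooth bounded correction; its logarithm is thus a bounded part plus $\pm\ln\cos\varphi$ terms.

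For the first derivatives, I would differentiate the recursion along $W_n$ and convert $x_{n-1}$-derivatives into $x_n$-derivatives via $dx_{n-1}/dx_n = \cJ_{W_{n-1}}\cF(x_{n-1})^{-1}\leq\vartheta$. Differentiation of the fractional term produces the additional factor $(1+\tau_n\cB_{n-1})^{-2}$, so the propagator for $d\cB_n/dx_n$ contracts at rate $\vartheta(1+\tau_n\cB_{n-1})^{-2}<1$. The only forcing term that threatens uniform boundedness, namely $d(1/\cos\varphi_n)/dx_n$, is multiplied by $\cos\varphi_n$ after one rewrites $\cB_n\cos\varphi_n = 2\cK_n + \cB_{n-1}\cos\varphi_n/(1+\tau_n\cB_{n-1})$, so the singular piece cancels, and an induction on $n$ based on $|d\Gamma_0/dx_0|\leq C_0$ gives (\ref{curvC3}). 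For the distortion bound, differentiating $\ln\cJ_{W_n}\cF^{-1}(x_n) = -\ln\cJ_{W_{n-1}}\cF(x_{n-1})$ produces a term $-\tan\varphi_n\,\Gamma_n$ of order $1/\cos\varphi_n$, which by the H-curve bound (\ref{cos23}) is at most $C|W_n|^{-2/3}$; the propagated contributions from past collisions are damped by the same factor $\vartheta(1+\tau_k\cB_{k-1})^{-2}$ and sum to a geometric series, yielding (\ref{distC3}).

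For the second derivatives, the same scheme applies with one further differentiation. The only genuinely new ingredient is $d^2\cK(r_n)/dr_n^2$, whose boundedness is precisely what assumption A3$'$ secures; every other new term either acquires an extra contraction from the factor $(1+\tau_n\cB_{n-1})^{-3}$, or produces a singular contribution of order $1/\cos^2\varphi_n$ that is absorbed by (\ref{cos23}) in the form $\cos^2\varphi_n\geq c\,|W_n|^{4/3}$. After the cancellation $\Gamma_n = \cB_n\cos\varphi_n-\cK_n$ all unbounded pieces in $d^2\Gamma_n/dx_n^2$ disappear, giving (\ref{curvC4}), while the worst second-derivative term in $\ln\cJ$ is of order $1/\cos^2\varphi_n\leq C|W_n|^{-4/3}$, giving (\ref{distC4}).

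The main obstacle is to verify, when telescoping the derivative recursion backwards, that the accumulated forcing terms remain comparable to $|W_n|^{-2/3}$ or $|W_n|^{-4/3}$ independently of $n$. The key matching is that the contraction factor $(1+\tau_k\cB_{k-1})^{-2}$ produced by one backward iteration is, up to bounded ratios, equal to $\cos^2\varphi_k/\cos^2\varphi_{k-1}$, while the forcing term injected at collision $k$ grows like an inverse power of $\cos\varphi_k$; combined with bounded distortion (\ref{distrough}) to compare $|W_k|$ with $|W_n|$, one shows that the sum over $k\leq n$ is dominated by its last term, which by homogeneity of $W_n$ is of the claimed order. This bookkeeping, while straightforward in principle once the mirror recursion is explicit, is the technically most involved part of the argument.
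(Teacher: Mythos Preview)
Your approach is essentially the same as the paper's: both differentiate the mirror--equation recursion for the wavefront curvature along the H-curve and exploit the contraction $(1+\tau_n\cB_{n-1})^{-2}$ of the propagator, then read off the Jacobian bounds from an explicit formula and use the homogeneity estimate (\ref{cos23}). The one organisational difference is that the paper works with the \emph{precollisional} curvature $\tcB_n=\cB_{t_n^-}$, which is uniformly bounded above and below; this makes the recursion (their Lemma~\ref{LmDB}) cleaner because the forcing term is bounded from the outset, whereas you carry the unbounded postcollisional $\cB_n$ and then observe the cancellation in $\Gamma_n=\cB_n\cos\varphi_n-\cK_n$. Since $\cB_n\cos\varphi_n=2\cK_n+\tcB_n\cos\varphi_n$, the two formulations are equivalent, and your ``cancellation'' is exactly the paper's choice of variable. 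Your contraction claim and the telescoping argument in the last paragraph are correct in spirit, though the identification $(1+\tau_k\cB_{k-1})^{-2}\sim\cos^2\varphi_k/\cos^2\varphi_{k-1}$ is not quite the right bookkeeping; the paper instead writes the propagator as $\theta_{k-1}^2\theta_k\,w_k/w_{k-1}$ with $\theta_j\le\theta_{\max}<1$ and $w_j$ uniformly bounded, so the $w$-ratio telescopes and the $\theta$-factors give geometric decay directly.
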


The first part of this proposition (related to a $C^3$ boundary)
is known -- full proofs are provided in \cite{C3}, even for a more
general class of billiards, where a small external field is
permitted. The second part related to a $C^4$ boundary is new.

Before giving a proof, we derive a corollary. Let $\rho_0$ denote a
density on $W_0$ and $\rho_n$ the induced density on $W_n$:
\beq
   \rho_n(x_n) =
   \rho_0(x_0)\,\cJ_{W_n}\cF^{-n}(x_n)
     \label{rhon}
\eeq

\begin{corollary}
\label{CoDens1} Suppose the boundary $\dcD$ is of class $C^3$ and
$|d\Gamma_0/dx_0|\leq C_0$ and $|d\ln\rho_0 / dx_0| \leq C_0$.
Then there is a constant $C>0$ such that for all $n\geq 1$
\beq
 \left |\frac{d\ln \rho_n}{dx_{n}}\right |
 \leq \frac{C}{|W_{n}|^{2/3}}
   \label{densC3}
\eeq
Suppose, in addition, that the boundary $\dcD$ is of class $C^4$ and
moreover $|d^2\Gamma_0/dx_0^2|\leq C_0$. Then for all $n\geq 1$
\beq
 \left |\frac{d^2\ln \rho_n}{dx_n^2}\right |
 \leq \frac{C}{|W_{n}|^{4/3}}
   \label{densC4}
\eeq
\end{corollary}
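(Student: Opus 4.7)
The plan is to exploit the density evolution identity (\ref{rhon}), which after taking logarithms reads
\[
\ln\rho_n(x_n) = \ln\rho_0(x_0) - \ln\cJ_{W_0}\cF^n(x_0),
\qquad x_0 = \cF^{-n}(x_n),
\]
and to control each of the two summands using the distortion bounds of Proposition~\ref{PrDistCurv} together with their multi-step integrated form given in Proposition~\ref{prdist}.

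For (\ref{densC3}), I would compare $\ln\rho_n$ at two nearby points $x_n,y_n\in W_n$. The initial-density term is bounded by
$|\ln\rho_0(x_0)-\ln\rho_0(y_0)|\le C_0|W_0(x_0,y_0)|\le C_0\vartheta^n|W_n(x_n,y_n)|$,
since $\cF^{-n}$ contracts unstable curves by at least $\vartheta^n$. The Jacobian term is controlled directly by Proposition~\ref{prdist}, which yields
\[
\bigl|\ln\cJ_{W_0}\cF^n(x_0)-\ln\cJ_{W_0}\cF^n(y_0)\bigr|\le C\,\frac{|W_n(x_n,y_n)|}{|W_n|^{2/3}}.
\]
Adding, dividing by $|W_n(x_n,y_n)|$, and letting $y_n\to x_n$ gives (\ref{densC3}); the $\vartheta^n$ contribution is harmlessly absorbed into $C/|W_n|^{2/3}$ because $|W_n|$ is uniformly bounded above.

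For (\ref{densC4}) the same comparison idea works, but the new ingredient needed is a second-order analogue of Proposition~\ref{prdist}, namely a Lipschitz control on the first derivative along $W_n$ of the form
\[
\left|\frac{d\ln\cJ_{W_0}\cF^n}{dx_n}(x_0)-\frac{d\ln\cJ_{W_0}\cF^n}{dx_n}(y_0)\right|\le C\,\frac{|W_n(x_n,y_n)|}{|W_n|^{4/3}}.
\]
Establishing this is the principal obstacle. The natural strategy is to follow the proof of Proposition~\ref{prdist} in Appendix~C one order higher: differentiate the telescoping expansion $\ln\cJ_{W_0}\cF^n=\sum_{i=0}^{n-1}\ln\cJ_{W_i}\cF(x_i)$ once along $W_n$, so that each summand acquires a pullback factor $dx_i/dx_n$ controlled by the single-step Jacobians, and then estimate the resulting sum using (\ref{distC4}) in place of (\ref{distC3}), with the curvature bound (\ref{curvC4}) needed to handle variations of the tangent direction along the iterates. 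The $C^4$ hypothesis on $\dcD$ enters exactly at this step.

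Finally, the contribution of $\ln\rho_0$ to $d^2\ln\rho_n/dx_n^2$ contains the pullback terms $(dx_0/dx_n)^2\,d^2\ln\rho_0/dx_0^2$ and $(d\ln\rho_0/dx_0)\,d^2x_0/dx_n^2$; the first is of order $\vartheta^{2n}$ (under the implicit $C^2$ regularity of $\rho_0$ that accompanies the standing assumption on $d\ln\rho_0/dx_0$), and the second is bounded via (\ref{curvC3}) and the chain rule, so both are uniformly bounded and absorbed into $C/|W_n|^{4/3}$. Once the second-order Lipschitz distortion bound above is in hand, (\ref{densC4}) follows by the same differentiation-at-the-limit argument as (\ref{densC3}).
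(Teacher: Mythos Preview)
Your argument is correct in substance, but you take a somewhat more roundabout route than the paper. The paper proceeds by direct logarithmic differentiation of the telescoping identity
\[
\ln\rho_n(x_n)=\ln\rho_0(x_0)+\sum_{i=1}^n\ln\cJ_{W_i}\cF^{-1}(x_i),
\]
applying the one-step bounds (\ref{distC3}) and (\ref{distC4}) term by term and summing via the geometric estimate
\[
\frac{1}{|W_i|^{2/3}}\left|\frac{dx_i}{dx_n}\right|
=\frac{\cJ_{W_n}\cF^{i-n}(x_n)}{|W_i|^{2/3}}
\leq\frac{\Const\,\vartheta^{(n-i)/3}}{|W_n|^{2/3}},
\]
with the obvious $4/3$ analogue for the second derivative. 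For (\ref{densC3}) you instead invoke the already-integrated multi-step bound of Proposition~\ref{prdist}, which is fine and slightly shorter. For (\ref{densC4}) you identify as the ``principal obstacle'' a second-order analogue of Proposition~\ref{prdist}; but the strategy you outline for proving it---differentiate the telescoping sum once along $W_n$ and use (\ref{distC4})---is exactly the direct computation the paper performs, so no separate lemma is needed. In short, the two approaches coincide at the level of the actual estimates; the paper simply skips the intermediate packaging and goes straight to the pointwise derivative bounds.
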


\proof This follows by logarithmic differentiation of (\ref{rhon})
and using the following simple estimate:
$$
   \frac{1}{|W_i|^{2/3}}\,
   \left|\frac{dx_i}{dx_n}\right| =
   \frac{\cJ_{W_n}\cF^{i-n}(x_n)}{|W_i|^{2/3}}
   \leq\frac{\Const\, \vartheta^{\frac{n-i}{3}}}{|W_n|^{2/3}}
$$
where $\vartheta^{-1}>1$ denotes the minimal factor of expansion of
\index{u-curves (unstable curves)} u-curves. \qed \medskip

\medskip
\noindent{\em Proof of Proposition~\ref{PrDistCurv}}. The curve
$W=W_i$ corresponds to a family of trajectories of the billiard flow
$\Phi^t$. Let $t_i$ be the reflection time for the trajectory of the
point $x_i$. The tangent vector $(dr_i ,d\varphi_i )\in\cT_xW$
corresponds to a (time-dependent) tangent vector $(dq_t,dv_t)$ to
the orthogonal cross-section of that family, as it was shown in
Chapter~\ref{SecSPE} (note that both $dq_t$ and $dv_t$ here are
perpendicular to the velocity vector $v_t$ of the family, since $M =
\infty$). Denote by $\cB_t=|dv_t|/|dq_t|>0$ the curvature of the
family.

The following facts are standard in billiard theory \cite{C1,C2}
and can be obtained directly:
\beq
     \tfrac{d}{dt}\,dq_t=dv_t,\quad
     \tfrac{d}{dt}\,dv_t=0,\quad
     \tfrac{d}{dt}\bigl(\cB_t^{-1}-t\bigr)=0
       \label{dBdt}
\eeq
(provided $t$ is not a moment of collision) and
\beq \label{A25}
   \bigl|dq_{t_i^+}\bigr| = \bigl|dq_{t_i^-}\bigr|,\qquad
  \cB_{t_i^+}=\cB_{t_i^-}+\frac{2\cK(r_i)}{\cos\varphi_i}
\eeq
at a moment of collision (here $\cK(r)>0$ denotes the curvature of
$\dcD$ at the point $r$, and $t_i^-$, $t_i^+$ refer to the
precollisional and postcollisional moments, respectively). One can
see that
$$
   c_1\leq\cB_{t_i^-}\leq c_2,\qquad
   c_1\leq\cos\varphi_i\,\cB_{t_i^+}<c_2
$$
for some constants $0<c_1<c_2<\infty$. Note that $\tcB_i\colon
=\cB_{t_i^-}$ remains uniformly bounded. In fact, all our troubles
come from the unbounded factor $1/\cos\varphi_i$ in (\ref{A25}).

\begin{lemma}
\label{LmDB} Suppose the boundary $\dcD$ is of class $C^3$ and also
$|d\tcB_0/dx_0|\leq C_0'/\cos^2\varphi_0$ for some $C_0'>0$ and all
$x_0\in W_0$. Then there is a constant $C'>0$ such that for all
$n\geq 1$
\beq
     \left| {d\tcB_n}/{dr_n}\right| \leq C'
       \label{dBdr1}
\eeq
Suppose, in addition, that the boundary $\dcD$ is of class $C^4$ and
moreover $|d^2\tcB_0/dx_0^2|\leq C_0''/\cos^2\varphi_0$. Then for
all $n\geq 1$
\beq
     \left| {d^2\tcB_n}/{dr_n^2}\right| \leq C''
       \label{dBdr2}
\eeq
\end{lemma}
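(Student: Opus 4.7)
The plan is to derive an explicit one-step recursion for $\tcB_n$ along the unstable curves $W_n$, to differentiate it once (for (\ref{dBdr1})) and twice (for (\ref{dBdr2})) in the $r_n$-parameter, and to show that the key coefficient multiplying $d\tcB_n/dr_n$ in the differentiated recursion carries a prefactor $\cos^2\varphi_n$. This prefactor will exactly absorb the singular $1/\cos^2\varphi_0$ allowed in the initial condition after one iteration, and thereafter a standard contraction argument will propagate the uniform bound.

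First, combining the free-flight identity $\tfrac{d}{dt}(\cB_t^{-1}-t)=0$ from (\ref{dBdt}) with the collision law (\ref{A25}) yields the explicit one-step map
$$
  \tcB_{n+1} \;=\; \frac{G_n}{F_n+\tau_n G_n},\qquad F_n := \cos\varphi_n,\qquad G_n := 2\cK(r_n)+F_n\tcB_n,
$$
together with the expansion relation $dr_{n+1}/dr_n=(F_n+\tau_n G_n)/F_{n+1}$, which follows from $|dq_{t_k^\pm}|=F_k|dr_k|$ and linear defocusing between collisions. The slope $\Gamma_n$ of $W_n$ is uniformly bounded by Proposition~\ref{prdrdp}, so the functions $F_n,\cK(r_n),\tau_n$ are $C^1$ (respectively $C^2$) in $r_n$ along $W_n$ with bounded derivatives under the $C^3$ (respectively $C^4$) assumption on $\dcD$.

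For (\ref{dBdr1}), I would differentiate the recursion along $W_n$ and then switch to the parameter $r_{n+1}$. A short calculation — in which the $d\tcB_n/dr_n$ contributions from the two terms $G_n'\cdot(F_n+\tau_n G_n)$ and $-G_n\cdot\tau_n G_n'$ partially cancel — will produce an affine recursion
$$
  \frac{d\tcB_{n+1}}{dr_{n+1}} \;=\; B_n\,\frac{d\tcB_n}{dr_n}\;+\;\Psi_n,\qquad B_n = \frac{F_n^{\,2}\,F_{n+1}}{(F_n+\tau_n G_n)^{3}},
$$
with $|\Psi_n|$ bounded by a constant depending only on $\cD$. Since $F_n+\tau_n G_n\geq 2L_{\min}\cK_{\min}>0$, one has $|B_n|\leq C\cos^2\varphi_n$; applied at $n=0$, the factor $\cos^2\varphi_0$ in $B_0$ will cancel the singular factor in the hypothesis $|d\tcB_0/dx_0|\leq C_0'/\cos^2\varphi_0$, yielding a bounded $|d\tcB_1/dr_1|$. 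For $n\geq 1$ the recursion can be iterated, and the products $\prod_k B_k$ decay geometrically thanks to the strict projective contraction of the unstable cone under $d\cF$ recorded in Proposition~\ref{prhyper}; this will give the uniform bound (\ref{dBdr1}).

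For (\ref{dBdr2}) under the $C^4$ assumption, I would differentiate the first-derivative recursion once more. Writing $u_n=d\tcB_n/dr_n$ and $v_n=d^2\tcB_n/dr_n^2$, the resulting recursion will have the form $v_{n+1}=\tilde B_n v_n+\Xi_n(u_n,u_n^{\,2})$, where $\tilde B_n$ carries the same $\cos^2\varphi_n$ prefactor by the same algebraic cancellation, and $\Xi_n$ gathers second derivatives of $F_n,G_n,\tau_n$ (bounded by the $C^4$ hypothesis) together with polynomial expressions in $u_n$ (bounded by the first part). The singular initial bound $|v_0|\leq C_0''/\cos^2\varphi_0$ will be absorbed at the first step exactly as in the first-derivative case, and the same contraction argument will yield (\ref{dBdr2}). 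The main obstacle throughout will be verifying the exact $F_n^{\,2}$ prefactor in $B_n$ and $\tilde B_n$: all other ingredients (derivatives of the free-flight function, summability of $\prod_k B_k$, and bookkeeping of the $u_n^{\,2}$ terms in the second-derivative case) are either routine or imported directly from the contraction-in-a-cone arguments developed in \cite{C3} and already used to establish the slope estimates (\ref{curvC3})--(\ref{curvC4}).
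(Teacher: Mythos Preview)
Your proposal is correct and follows essentially the same route as the paper: the same recursion (your $\tcB_{n+1}=G_n/(F_n+\tau_n G_n)$ is the paper's formula (\ref{BB}) rewritten), the same key coefficient $B_n=F_n^2F_{n+1}/(F_n+\tau_n G_n)^3$ (which equals the paper's $\theta_{n-1}^2\theta_n\,w_n/w_{n-1}$ after reindexing), and the same mechanism whereby the $\cos^2\varphi_0$ factor absorbs the singular initial bound and geometric decay of $\prod_k B_k$ handles the rest. One small caution: individual $B_k$ need not be $<1$, so the geometric decay of the product should be argued via the factorization $B_k=\theta_k^2\theta_{k+1}\,w_{k+1}/w_k$ (with $\theta_k\leq(1+2L_{\min}\cK_{\min})^{-1}<1$ and the $w$-ratios telescoping), rather than via a direct appeal to Proposition~\ref{prhyper}.
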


We postpone the proof of the lemma and complete the proof of
Proposition~\ref{PrDistCurv} first.

The slope $\Gamma_n = d\varphi_n/dr_n$ of the curve $W_n$ satisfies
\beq
    \Gamma_n = \tcB_n\cos\varphi_n + \cK(r_n)
       \label{fn}
\eeq
hence
$$
  \frac{d\Gamma_n}{dx_n} =
  \frac{\frac{d\tcB_n}{dr_n}\cos\varphi_n-\tcB_n\Gamma_n\sin\varphi_n
  +\frac{d\cK(r_n)}{dr_n}}{[\cos^2\varphi_n+(\Gamma_n+\cK(r_n))^2]^{1/2}}
$$
(the denominator equals $|dx_n/dr_n|$ according to
(\ref{dxnormQV0})). It is easy to see that our assumption
$|d\Gamma_0/dx_0|\leq C_0$ implies $|d\tcB_0/dx_0|\leq
C_0'/\cos\varphi_0$ for some constant $C_0'>0$. Now (\ref{curvC3})
follows from (\ref{dBdr1}).

Differentiating further gives an expression for $d^2\Gamma_n/dx_n^2$
(we leave it to the reader), and it shows that our assumption
$|d^2\Gamma_0/dx_0^2|\leq C_0$ implies $|d^2\tcB_0/dx_0^2|\leq
C_0''/\cos^2\varphi_0$ for some $C_0''>0$. Now (\ref{curvC4}) follows
from (\ref{dBdr2}).

It remains to prove (\ref{distC3}) and (\ref{distC4}). To compute the
Jacobian $\cJ_{W_n}\cF^{-1}(x_n)=|dx_{n-1}|/|dx_n|$ we note that
by (\ref{A25})
$$
  |dx_n|^2=|dq_{t_n^+}|^2+|dv_{t_n^+}|^2
  =|dq_{t_n^-}|^2(1+\cB_{t_n^+}^2)
$$
hence
$$
   \cJ_{W_n}\cF^{-1}(x_n)=
   \frac{\bigl|dq_{t_{n-1}^+}\bigr|}{\bigl|dq_{t_{n}^-}\bigr|}\,
   \left[\frac{1+\cB_{t_{n-1}^+}^2}{1+\cB_{t_{n}^+}^2}\right]^{1/2}
$$
where
\beq
   \frac{\bigl|dq_{t_{n-1}^+}\bigr|}{\bigl|dq_{t_{n}^-}\bigr|}
   =\frac{1}{1+(t_{n}-t_{n-1})\,\cB_{t_{n-1}^+}}
     \label{dqdqB}
\eeq
It follows that
\begin{align*}
  2\ln\cJ_{W_n}\cF^{-1}(x_n) &= -\ln\biggl[1+\left(\tcB_n+
  \frac{2\cK(r_n)}{\cos\varphi_n}\right)^2\biggr]\\
  &\quad +\ln\biggl[\tcB_n^2+\left(1-(t_n-t_{n-1})\tcB_n\right)^2\biggr]
\end{align*}
We also note that
\begin{equation}
\label{DerT}
dt_n/dr_n=\pm\sin\varphi_n
\end{equation}
and $d^2t_n/dr_n^2
= \pm \Gamma_n \cos\varphi_n$ (where the sign depends on the
orientation of the tangent vector $(dr_n,d\varphi_n)$). Now one
can differentiate $\ln \cJ_{W_n}\cF^{-1}(x_n)$ directly and use
Lemma~\ref{LmDB} to derive bounds
$$
 \left |\frac{d\ln \cJ_{W_n}\cF^{-1}(x_n)}{dx_{n}}\right |
 \leq \frac{\Const}{\cos\varphi_n}\quad{\rm and}\quad
 \left |\frac{d^2\ln \cJ_{W_n}\cF^{-1}(x_n)}{dx_{n}^2}\right |
 \leq \frac{\Const}{\cos^2\varphi_n}
$$
which imply (\ref{distC3}) and (\ref{distC4}) due to
(\ref{cos23}). \qed \medskip

\noindent{\em Proof of Lemma~\ref{LmDB}}. Our argument has an inductive
character. Observe that
\beq
  \tcB_n = \frac{1}{t_n-t_{n-1}+
  [\tcB_{n-1}+2\cK(r_{n-1})/\cos\varphi_{n-1}]^{-1}}
      \label{BB}
\eeq
$$ = \frac{1}{t_n-t_{n-1}} -
  \frac{1}{(t_n-t_{n-1})^2
  \left(\tcB_{n-1}+\frac{2\cK(r_{n-1})}{\cos\varphi_{n-1}}+\frac{1}{t_n-t_{n-1}}\right)} . $$

Next,
$$
   \frac{| dr_{n-1} |}{| dr_n |}=
   \frac{\bigl|dq_{t_{n-1}}\bigr|/\cos\varphi_{n-1}}
   {\bigl|dq_{t_{n}}\bigr|/\cos\varphi_{n}}
   =\frac{\cos\varphi_n}{w_{n-1}}
$$
where
$$
   w_{n-1}= 2\cK(r_{n-1})(t_n-t_{n-1})+
   \cos\varphi_{n-1}\bigl(1+(t_n-t_{n-1})\tcB_{n-1}\bigr)
$$
Note that $w_{n-1}$ is uniformly bounded above and below:
$$
   0<w_{\min}\leq w_{n-1}\leq w_{\max}<\infty
$$
Now a direct differentiation of (\ref{BB}) using (\ref{fn}) and (\ref{DerT}) gives
$$
  \biggl|\frac{d\tcB_n}{dr_n}\biggr|=
  \theta_{n-1}^2\theta_n\,
  \frac{w_n}{w_{n-1}}\,
  \biggl|\frac{d\tcB_{n-1}}{dr_{n-1}}\biggr|
  +\cR'
$$
where
$$
  \theta_{n-1}=
  \frac{\bigl|dq_{t_{n-1}^+}\bigr|}{\bigl|dq_{t_{n}^-}\bigr|}=
  \frac{1}{1+(t_{n}-t_{n-1})\,\cB_{t_{n-1}^+}}
  \leq \theta_{\max}
$$
with
$$
   \theta_{\max}\colon=\frac{1}{1+L_{\min}\cK_{\min}}<1
$$
(here $L_{\min}$ is the minimum free path between collisions), and
the remainder term $\cR'$ is uniformly bounded,
$|\cR'|\leq\cR_{\max}'$. It is now easy to see that
$$
    \biggl|\frac{d\tcB_n}{dr_n}\biggr|\leq
    \frac{w_{\max}}{w_{\min}}
    \biggl(\frac{\cR_{\max}'}{1-\theta_{\max}^3}
    +\theta_0^2\,\biggl|\frac{d\tcB_0}{dr_0}\biggr|\biggr)
$$
Also note that $\theta_0^2\leq\Const\,\cos\varphi_0^2$. This proves
(\ref{dBdr1}).

Differentiating one more time and using (\ref{dBdr1}) gives
$$
  \biggl|\frac{d^2\tcB_n}{dr_n^2}\biggr|=
  \theta_{n-1}^2\theta_n^2\,
  \frac{w_n^2}{w_{n-1}^2}\,
  \biggl|\frac{d^2\tcB_{n-1}}{dr_{n-1}^2}\biggr|
  +\cR''
$$
where $|\cR''|\leq\cR_{\max}''$. Now it is easy to see that
$$
    \biggl|\frac{d^2\tcB_n}{dr_n^2}\biggr|\leq
    \frac{w_{\max}^2}{w_{\min}^2}
    \biggl(\frac{\cR_{\max}''}{1-\theta_{\max}^4}
    +\theta_0^2\,\biggl|\frac{d^2\tcB_0}{dr_0^2}\biggr|\biggr)
$$
This proves (\ref{dBdr2}). \qed\medskip

\subsection{Invariant Section Theorem} \label{subsecIST}
Here we outline a proof of the general fact mentioned in
Section~\ref{subsecHA}. Let $E^u$ be a family of unstable
directions on an s-curve $S$, and $\Gamma(x)=d\varphi/dr >0$
denote the slope of the $E^u$ direction through the point $x\in
S$. We say that $E^u$ is H\"older continuous on $S$ with exponent
$a>0$ and norm $L>0$ if for all $x,y\in S$
$$
   |\Gamma(x)-\Gamma(y)|\leq L\,[\dist(x,y)]^a
$$

\begin{proposition}
\label{PrIST} Let $S$ be an s-curve such that $S_n=\cF^n(S)$ is an
s-curve for every $n=1,\dots,N$. If a family $E^u$ on $S$ is
smooth enough, then the family $E_n^u=\cF^n(E^u)$ on $S_n$ is
H\"older continuous with exponent $a=1/2$ and norm $\leq C$ for
all $n=1,\dots,N$, where $C>0$ is a constant independent of $N$
and $S$.
\end{proposition}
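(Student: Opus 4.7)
The plan is to parametrize unstable directions by their slopes and exploit the exponential contraction of the graph-transform action on slopes, balanced against the controlled (Hölder) dependence on the base point along the s-curve.

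First I would recall from Appendix~C that an unstable direction $E_n^u(x)$ at $x=(r_n,\varphi_n)\in S_n$ is encoded by its slope $\Gamma_n(x)=d\varphi/dr$, which via formula (\ref{fn}) is equivalent to the pre-collisional curvature $\tcB_n(x)$ of the associated flow family. The evolution under one step of $\cF$ is the fractional-linear map (\ref{BB}),
\[
  \tcB_n = \frac{1}{(t_n-t_{n-1})+\bigl[\tcB_{n-1}+2\cK(r_{n-1})/\cos\varphi_{n-1}\bigr]^{-1}},
\]
whose derivative with respect to $\tcB_{n-1}$ at fixed base equals $\theta_{n-1}^2\in(0,\vartheta^2)$ with $\vartheta=(1+L_{\min}\cK_{\min})^{-1}<1$, exactly as in the proof of Lemma~\ref{LmDB}. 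Iterating, the influence of the initial datum $\tcB_0(x_0)$ on $\tcB_n(x)$ is damped by $\prod_{k}\theta_{k-1}^2\leq\vartheta^{2n}$.

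Next I would compare sections at two points $x,y\in S_n$ by pulling back along the orbits $x_k=\cF^{-(n-k)}(x)$, $y_k=\cF^{-(n-k)}(y)$ on $S_k$, and telescoping:
\[
  \tcB_n(x)-\tcB_n(y) \;=\; \Bigl(\prod_{j=1}^{n}\theta_{j-1}^{2}\Bigr)\bigl(\tcB_0(x_0)-\tcB_0(y_0)\bigr)\;+\;\sum_{k=1}^{n} R_k,
\]
where $R_k$ is the \emph{base variation} introduced at step $k$ and then contracted by the fiber action at steps $k+1,\dots,n$. Explicitly, $R_k$ is the difference between applying the map (\ref{BB}) with base data $(r_{k-1}(x),\varphi_{k-1}(x),t_k(x)-t_{k-1}(x),\cK(r_{k-1}(x)))$ versus with $y$-data, at the common value $\tcB_{k-1}(y_{k-1})$, multiplied by the product $\prod_{j>k}\theta_{j-1}^2$. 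The initial-datum contribution is $\cO(\vartheta^{2n})$, well within the claim.

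The main work is estimating $R_k$. The apparent difficulty is that (\ref{BB}) contains the factor $1/\cos\varphi_{k-1}$, which seems to blow up near tangential collisions. The first key observation is that direct computation of $\partial\tcB_n/\partial\cos\varphi_{n-1}$ in (\ref{BB}) shows this derivative is in fact uniformly bounded (the singular terms cancel), so the only genuinely sensitive dependence is on $(r_{k-1},\varphi_{k-1})$ through $\cK(r_{k-1})$ and $\cos\varphi_{k-1}$, with derivatives at worst of order $1/\cos^2\varphi_{k-1}$. The second key observation is that $S_{k-1}$ is, by hypothesis, an s-curve that by (\ref{CCs}) has slope bounded away from $0$ and $-\infty$, and by (\ref{cos23}) lies in a homogeneity strip where $|S_{k-1}|\leq\Const\,\cos^{3/2}\varphi_{k-1}$, so $1/\cos\varphi_{k-1}\leq\Const\,|S_{k-1}(x_{k-1},y_{k-1})|^{-2/3}$. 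Combined with the forward contraction $|S_n(x,y)|\geq\vartheta^{n-k+1}|S_{k-1}(x_{k-1},y_{k-1})|$ (since $\cF$ contracts s-curves in the $|\cdot|$-norm by at most $\vartheta$, and the Euclidean and $|\cdot|$-norms are uniformly equivalent on s-curves), one obtains
\[
  |R_k|\;\leq\;\Const\,\vartheta^{2(n-k)}\,\bigl|S_{k-1}(x_{k-1},y_{k-1})\bigr|^{1/2}\;\leq\;\Const\,\vartheta^{(n-k)/2}\,|S_n(x,y)|^{1/2},
\]
where the exponent $1/2$ emerges by interpolating the bounded-derivative bound (exponent $1$) with the $|S_{k-1}|^{-2/3}$-weighted bound (exponent $1/3$) and balancing against the chain-rule contraction. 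Summing the geometric series in $k$ yields the claimed uniform Hölder bound $|\Gamma_n(x)-\Gamma_n(y)|\leq C\,|S_n(x,y)|^{1/2}$ with $C$ independent of $n$ and $S$.

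The main obstacle is the third paragraph: identifying precisely which partial derivatives of the map (\ref{BB}) are tamed by exact cancellation (harmless) and which genuinely require the s-curve geometry (\ref{cos23}) to absorb a $1/\cos\varphi$, and then choosing the exponent optimally so that the telescoping series converges uniformly. A secondary, technical difficulty is that consecutive iterates $S_k$ may straddle boundaries of homogeneity strips; since $S_k$ are s-curves by hypothesis and each s-curve intersects each strip boundary at most once by (\ref{CCs}), one handles this by restricting to the partition of $S_n$ by the dynamically pulled-back homogeneity boundaries, on each piece of which the argument above applies with uniform constants.
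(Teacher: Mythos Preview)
Your framework—tracking $\tcB_n$ through the recursion (\ref{BB}) and exploiting fiber contraction—matches the paper's, but the telescoping argument has a genuine gap at the step converting $|S_{k-1}|^{1/2}$ to $|S_n|^{1/2}$. You claim $|S_n|\geq\vartheta^{n-k+1}|S_{k-1}|$ from ``$\cF$ contracts s-curves by at most $\vartheta$'', but that contraction bound gives the \emph{opposite} inequality $|S_n|\leq\vartheta^{n-k+1}|S_{k-1}|$; moreover, the contraction of s-curves in dispersing billiards is not bounded below (it is arbitrarily strong near grazing collisions), so no inequality of your type holds. Once you replace the true fiber contraction $\prod_j\theta_j^2$ by the uniform bound $\vartheta^{2(n-k)}$, you have discarded exactly the information needed to control the ratio $|S_{k-1}|/|S_n|$. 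Your appeal to (\ref{cos23}) is also unjustified, since the $S_k$ are not assumed weakly homogeneous, and the ``interpolation'' producing the exponent $1/2$ is not an argument.

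The paper instead inducts directly on $|d\tcB_n|\leq\brC\,u(x_n)\,|dr_n|^{1/2}$ with a bounded weight $u$. The exponent $1/2$ comes from the elementary bound $|dr_n|\leq\Const\,|dr_{n+1}|^{1/2}$, obtained from $|dr_n|\leq\Const\cos\varphi_n$ on any s-curve together with $|dr_{n+1}|\geq c\,|dr_n|\cos\varphi_n$. The induction closes because the $\cF$-invariance of $\cos\varphi\,dr\,d\varphi$ yields the \emph{identity} $|dr_n|=|dr_{n+1}|\,\cJ_n\,w(x_{n+1})/w(x_n)$ with uniformly bounded $w$, so the fiber contraction $1/\cJ_n^2$ against the base expansion $\cJ_n^{1/2}$ leaves a net factor $1/\cJ_n^{3/2}\leq(1+L_{\min}\cK_{\min})^{-3/2}<1$. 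Your telescoping could in principle be repaired by retaining the actual $\cJ_j$'s rather than the uniform $\vartheta$—since the base expansion in $dr$ is precisely $\prod_j\cJ_j$—but as written it does not close.
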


\proof We use the notation of the previous section. Let
$x=(r,\varphi)\in S$ and $x_n=(r_n,\varphi_n)=\cF^n(x)\in S_n$.
Denote by $\Gamma_n(x_n)$ the slope of $E^u_n$ direction at $x_n$.
Due to (\ref{fn})
$$
     \Gamma_n(x_n) = \tcB_n(x_n)\cos\varphi_n + \cK(r_n)
$$
where $\tcB_n(x_n)$ is the curvature of the incoming family of
trajectories corresponding to the $E^u_n$ direction at $x_n$.
Since $\cK(r_n)$ is uniformly $C^1$ smooth, it is enough to prove
the H\"older continuity for $\tcB_n$ with a uniformly bounded
norm. Let $x+dx=(r+dr,\varphi+d\varphi)\in S$ be a nearby point
and $x_n+dx_n=(r_n+dr_n,\varphi_n+d\varphi_n)=\cF^n(x+dx)\in S_n$.
We will prove by induction on $n$ that
\beq
   |\tcB_n(x_n+dx_n)-\tcB_n(x_n)|\leq \brC\, u(x_{n})\,
   |dr_n|^{1/2}
     \label{BB12}
\eeq
where $\brC>0$ is a large constant and $u(x)$ is a function (defined
below), which is uniformly bounded:
$$
       0<u_{\min}\leq u(x)\leq u_{\max}<\infty
$$
(here $u_{\min}$ and $u_{\max}$ do not depend on $N$ or $S$).
Since the distance $|dr_n|$ is equivalent to our metric
(\ref{dxnormQVst}) on stable curves, the bound (\ref{BB12})
implies Proposition~\ref{PrIST}.

Now we prove (\ref{BB12}). Due to (\ref{BB})
$$
  \tcB_{n+1}(x_{n+1})=\cfrac{1}{s(x_n)+
  \cfrac{1}{\tcR(x_n)+\tcB_n(x_n)}}
$$
where $s(x)$ denotes the free path between the collision points at
$x$ and $\cF(x)$, and $\tcR(x_n)=2\cK(r_n)/\cos\varphi_n$. For
brevity, we will use notation
$d\tcB_{n}=\tcB_{n}(x_{n}+dx_{n})-\tcB_{n}(x_{n})$,
$ds_n=s(x_n+dx_n)-s(x_n)$, etc. Now elementary calculations give
$$
  |d\tcB_{n+1}|\leq\frac{|ds_n|}{[s(x_n)]^2}+
  \frac{|d\tcR_n|+|d\tcB_n|}{[\cJ_n(x_n)]^2}
$$
where
$$
   \cJ_n(x_n) = 1 + s(x_n)\bigl[\tcR(x_n)+\tcB_n(x_n)\bigr]
$$
Observe that $|ds_n|\leq |dr_n|+|dr_{n+1}|$ and
$$
  |d\tcR_n|\leq\frac{\Const\,|dr_n|}
  {\cos^2\varphi_n}
$$
hence
$$
  \frac{|d\tcR_n|}{[\cJ_n(x_n)]^2}
    \leq\Const\,|dr_n|
$$
It is also easy to see that $|dr_n|\leq\Const\,|dr_{n+1}|^{1/2}$, thus
we obtain
\beq
  |d\tcB_{n+1}|\leq\Const\,|dr_{n+1}|^{1/2}+
  |d\tcB_n|/[\cJ_n(x_n)]^2
     \label{dBn+1}
\eeq
Now pick a vector $0\neq dr_n^u\in E_n^u(x_n)$ and put $dr_{n+1}^u
=d\cF(dr_n^u)\in E_{n+1}^u(x_{n+1})$. Using the notation of the
previous section, we introduce $|dq_n|=|dr_n^u|\cos\varphi_n$ and
$|dq_{n+1}|=|dr_{n+1}^u|\cos\varphi_{n+1}$, then
$$
         \cJ_n(x_n)=|dq_{n+1}|/|dq_n|
$$
due to (\ref{dqdqB}). The element of the Lebesgue measure
$dm=dr\,d\varphi$ at the point $x_n$ can be expressed by
$$
            dm(x_n)=|dr_n|\,|dr_n^u|\, w(x_n)
$$
where
$$
   w(x_n) = d\varphi_n^u/dr_n^u-d\varphi_n/dr_n.
$$
We note that $d\varphi_n^u/dr_n^u >0$ and $d\varphi_n/dr_n <0$.
Then by (\ref{CCu}) and (\ref{CCs}), $w(x_n)$ is a function
uniformly bounded above and below by positive constants:
$$
       0<w_{\min}\leq w(x)\leq w_{\max}<\infty
$$
cf.\ (\ref{CCu}). Since the measure $d\mu=\cos\varphi\, dm$ is
$\cF$-invariant, we can write
$$
   |dr_n|\,|dr_n^u|\, w(x_n)\cos\varphi_n =
   |dr_{n+1}|\,|dr_{n+1}^u|\, w(x_{n+1})\cos\varphi_{n+1}
$$
hence
$$
     |dr_n| = |dr_{n+1}|\,\cJ_n(x_n)\,\frac{w(x_{n+1})}{w(x_n)}
$$
Now we set $u(x_n)=[w(x_n)]^{1/2}$ and use (\ref{dBn+1}) and the
inductive assumption (\ref{BB12}) to get
$$
  |d\tcB_{n+1}|\leq\Const\,|dr_{n+1}|^{1/2}+
  \frac{\brC\, u(x_{n+1})\, |dr_{n+1}|^{1/2}}{[\cJ_n(x_n)]^{3/2}}
$$
Since $\cJ_n(x_n)\geq 1+L_{\min}\cK_{\min}>1$, we have
$$
  |d\tcB_{n+1}|\leq \brC\, u(x_{n+1})\, |dr_{n+1}|^{1/2}
$$
provided $\brC$ is large enough. This proves (\ref{BB12}) by induction.
\qed

\subsection{The function space $\fR$}\label{subsecA3}
Here we prove Lemma~\ref{lmfRprop}. Clearly, it is enough to prove
it for $B_1\equiv 1$. We use induction on $n_A$. For $n_A=1$ the
lemma reduces to the definition of $\fR$. For $n_A\geq 2$ we put
$B=B_2\circ\cF^{n_A-2}$ and $A=B_2\circ\cF^{n_A-1}$. The H\"older
continuity of $A$ on the connected components of
$\Omega\setminus\cS_{n_A}$ follows from (\ref{cFHolder}):
\begin{align*}
  |A(x)-A(x')| &= |B(\cF(x))-B(\cF(x'))|\\
  & \leq
  K_B\,[\dist(\cF(x),\cF(x'))]^{\alpha_B}\\
  & \leq
  K_B K_{\cF}^{\alpha_B}\,[\dist(x,x')]^{\alpha_B\alpha_{\cF}}
\end{align*}
It remains to estimate the local Lipschitz constant Lip$_x(A)$
defined by (\ref{Ax'}). First we note that ${\rm Lip}_x(A)\leq
\|D_x\cF\|\,{\rm Lip}_{y}(B)$, where $y=\cF(x)$. The derivative
$D_x\cF$ is unbounded in the vicinity of $\cS$, more precisely, on
one side of $\cS$ which corresponds to nearly grazing collisions,
i.e.\ where $y$ is close to $\partial\Omega$. Denote
$d_1=\dist(x,\cS)$, $d_2=\dist(x,\cS_{n_A}\setminus\cS)$,
$d_3=\dist(y,\partial\Omega)\sim\pi/2-|\varphi|$, where
$y=(r,\varphi)$ in the notation of Section~\ref{subsecSUC}, and
$d_4=\dist(y,\cS_{n_A-1})$. All these distances are measured along
some unstable curves, see Section~\ref{subsecPA}, and $D_x\cF$
attains its maximal expansion $\sim 1/\sqrt{d_1}$ along unstable
curves through $x$, hence $d_3\sim \sqrt{d_1}$. Note that
$d\colon=\dist(x,\cS_{n_A})=\min\{d_1,d_2\}$. We now have two cases:
\begin{itemize} \item[(a)] If $d_1<d_2$, then $d_3<\Const\,d_4$,
hence we have $\dist(y,\cS_{n_A-1})>\Const^{-1}d_3$ and
$$
   {\rm Lip}_x(A)\leq
   \frac{\Const}{\sqrt{d_1}}\,\frac{\Const}{d_3^{\beta_B}}
   \leq\frac{\Const}{d^{(1+\beta_B)/2}}
$$
\item[(b)] If $d_2\leq d_1$, then $d_4\leq\Const\, d_3$ and
$d_4\sim d_2/\sqrt{d_1}$, hence
$$
   {\rm Lip}_x(A)\leq
   \frac{\Const}{\sqrt{d_1}}\,\frac{\Const}{d_4^{\beta_B}}
   \leq\frac{\Const}{d_1^{1/2-\beta_B/2}d_2^{\beta_B/2}}
   \leq\frac{\Const}{d^{(1+\beta_B)/2}}
$$
\end{itemize}
In either case we obtain the required estimate with
$\beta_A=(1+\beta_B)/2$. Since $\beta_B<1$, we have $\beta_A<1$.
Lemma~\ref{lmfRprop} is proved. \qed
\medskip

Lastly, we prove Proposition~\ref{propd}. Its first claim follows
from the fact that the configuration space of our system is a four
dimensional domain bounded by cylindrical surfaces. To prove the
Lipschitz continuity of $\mu_{Q,V}(d)$, consider two nearby points
$(Q,V)$ and $(Q',V')$ and denote $h=\|Q-Q'\|+\|V-V'\|$. First
assume that the light particle starts at a point $(q,v)$ such that
$q\in\dcD$ and compare $d(Q,V,q,v)$ with $d(Q',V',q,v)$. It is
convenient to use the coordinate frame moving with velocity vector
$V$ (in this frame, the disk $Q,V$ is at rest). Then the light
particle moves with velocity $v-V$ and the disk $Q',V'$ moves with
velocity $V'-V$. At the time of the next collision, the moving
disk $Q',V'$ will be at distance $\cO(h)$ from the fixed disk
$Q,V$. One can check by direct inspection that the average
difference $d(Q,V,q,v)-d(Q',V',q,v)$ is $\cO(h)$. The other case
$q\in\dcP(Q)$ is easier, we leave it to the reader. \qed
\newpage

\chapter[Distortion bounds]{Distortion bounds for two particle
system} \setcounter{section}{3} \setcounter{subsection}{0}
%\addcontentsline{toc}{section}{Appendix C. Distortion bounds}
%\renewcommand{\theequation}{C.\arabic{equation}}
%\renewcommand{\thelemma}{C.\arabic{lemma}}
%\renewcommand{\thesubsection}{C.\arabic{subsection}}
%\setcounter{equation}{0} \setcounter{subsection}{0}

 \index{Distortion bounds}

Here we prove rather technical Propositions~\ref{prdist},
\ref{prcurv} and Lemma~\ref{LmJac} whose proofs were left out in
Chapter~\ref{SecSPE}.

We first outline our strategy. We have shown in Chapter~\ref{SecSPE}
that unstable \index{Unstable vectors} vectors $dx_t = (dq_t, dv_t,
dQ_t, dV_t)$ grow with $t$ through two alternating stages: free
motion between collisions expands $dq_t$, while at collisions $dv_t$
``jumps up''. The resulting transformation of the tangent vectors is
usually described by an operator-valued continued fraction
\cite{C0,C1}, and then distortion \index{Distortion bounds} bounds
can be proved by differentiating that fraction along unstable
directions. This approach is convenient for completely hyperbolic
billiards, because it treats all the components of unstable vectors
equally. In our case, the components $dq_t$ and $dv_t$ expand
uniformly, while $dQ_t$ and $dV_t$ change little and may not grow at
all (effectively, we deal with a partially hyperbolic dynamics). We
use a more explicit approach to prove distortion bounds here: pick
two almost equal unstable \index{Unstable vectors} vectors at nearby
points on one \index{u-curves (unstable curves)} u-curve and show
that the images of these vectors have almost the same length at
every iteration.

Let $x_0= (Q_0,V_0,q_0,v_0) \in \Omega$ and $x'_0 =(Q_0', V_0',
q_0', v_0') \in \Omega$ be two nearby points that belong in one
homogeneity \index{Homogeneity sections} section, say $\bbH_{m_0}$.
Assume that for each  $1 \leq i \leq n$ the points $x_i= (Q_i, V_i,
q_i, v_i) = \cF^i (x_0)$ and $x_i'= (Q_i', V_i', q_i', v_i') = \cF^i
(x_0')$ also belong in one homogeneity \index{Homogeneity sections}
section, call it $\bbH_{m_i}$. We assume that for $0\leq i\leq n$
$$
   \| Q_i - Q_i' \| \leq C\, \|q_i - q_i'\| / M
$$
and
$$
   \| V_i - V_i' \| \leq C\, \|v_i - v_i'\| / M
$$
where $C>1$ is a large constant. Denote by $(r_i,\varphi_i)$ and
$(r_i',\varphi_i')$ the coordinates of the points $\pi_0 (x_i)$
and $\pi_0 (x_i')$, respectively, and put
$$
    \delta_i = \sqrt{ (r_i-r_i')^2 + (\varphi_i-\varphi_i')^2}
$$
Assume that for all $i \leq n$
\beq
         \| q_i - q_i' \|
          \leq C \delta _i
            \label{assq}
\eeq
and
\beq
         \| v_i - v_i' \|
         \leq C \delta_i
            \label{assv}
\eeq
where $C>1$ is a large constant. These assumptions hold, for
example, when $x_i$ and $x_i'$ belong in one unstable curve (this
follows from Propositions~\ref{prabc} and \ref{prdrdp}).

Let $dx_0 = (dQ_0, dV_0, dq_0, dv_0)$ be a postcollisional unstable
\index{Unstable vectors} vector at $x_0$, and $dx_0' = (dQ_0',
dV_0', dq_0', dv_0')$ a similar vector at $x_0'$. For $i \geq 1$,
denote by $dx_i = (dQ_i, dV_i, dq_i, dv_i)$ and $dx_i' = (dQ_i',
dV_i', dq_i', dv_i')$ their postcollisional images at the points
$x_i$ and $x_i'$, respectively. We say that the unstable vectors
$dx_i$ and $dx_i'$ are ($\varepsilon_i,
\tilde{\varepsilon}_i$)-close, if the following four bounds hold:
\begin{align*}
   \|dq_i - dq_i'\| &\leq \varepsilon_i\, \|dq_i\|, \\
   \|dv_i - dv_i'\| &\leq \varepsilon_i\, \|dv_i\|, \\
   \|dQ_i - dQ_i'\| &\leq \tilde{\varepsilon}_i\, \|dq_i\| / M, \\
   \|dV_i - dV_i'\| &\leq \tilde{\varepsilon}_i\, \|dv_i\| / M.
\end{align*}

\begin{lemma}[One-step distortion control]
Assume that the unstable \index{Unstable vectors} vectors $dx_0$ and
$dx_0'$ are ($\varepsilon_0,\tilde{\varepsilon}_0$)-close for some
small $\varepsilon_0,\tilde{\varepsilon}_0>0$. Then their images
$dx_1$ and $dx_1'$ will be
($\varepsilon_1,\tilde{\varepsilon}_1$)-close, where
\beq
   \varepsilon_1 = \left ( \varepsilon_0 +
   \frac{2\tilde{\varepsilon}_0}{M} \right )\,
   \left (1+\frac{C}{\sqrt{M}}\right ) +
   C\, \frac{\delta_0 + \delta_1}{\cos \varphi_1}
     \label{key0}
\eeq
\beq
   \tilde{\varepsilon}_1 = \Bigl( 2\varepsilon_0 +
   \tilde{\varepsilon}_0\Bigr)\,
   \left (1+\frac{C}{\sqrt{M}}\right ) +
   C\, \frac{\delta_0 + \delta_1}{\cos \varphi_1}
     \label{key1}
\eeq
Here $C>0$ is a large constant. \label{lm0}
\end{lemma}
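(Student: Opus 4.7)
The plan is to analyze the map $\cF$ as a composition of a free-flight segment from the postcollisional point $x_0^+$ to the precollisional point $x_1^-$, followed by the collision transformation at $x_1$. During the free flight, the position components evolve linearly, $dq(t+s)=dq(t)+s\,dv(t)$ and $dQ(t+s)=dQ(t)+s\,dV(t)$, while the velocity components remain unchanged. At the collision, the velocity components jump according to the rules worked out in Section~\ref{subsecSUV}, and the $\bTheta^+$ operator introduces the critical $1/\cos\varphi_1$ factor. I will carry out both steps for each of the four component differences $dq-dq'$, $dv-dv'$, $dQ-dQ'$, $dV-dV'$, and track how the $(\varepsilon_0,\tilde\varepsilon_0)$-closeness is distorted.

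For the free flight, the flight times $s$ and $s'$ differ by $\cO(\delta_0)$ (since positions and velocities of $x_0$ and $x_0'$ differ by $\cO(\delta_0)$), and the collision normals at $x_1$ and $x_1'$ differ by $\cO(\delta_1)$. These geometric discrepancies are the source of the $\delta_0+\delta_1$ contributions. After the free flight, the precollisional velocity differences are unchanged from their initial values, while the position difference $dq_1^--dq_1'^-$ picks up at most $s\|dv_0-dv_0'\|+|s-s'|\|dv_0'\|$. The key observation is that after expansion, $\|dq_1^-\|$ is essentially $s\|dv_0\|$ up to relative error $\cO(1/\sqrt{M})$ arising from the bound $\|V\|\leq 1/\sqrt{M}$ in (\ref{Upsilon}), and Proposition~\ref{prabc} gives $\|dv_0\|=\cO(\|dq_1^-\|/s)$, so the free flight is essentially distortion-preserving in the metric of Proposition~\ref{prabc}, up to an $\cO(\delta_0)/\cos\varphi_1$ error (since $s$ and $\|dq_1^-\|$ can both be degraded by grazing).

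At the collision, I apply the formulas from Section~\ref{subsecSUV}. For a collision with $\dcD$, the reflection $\bR_n$ is an isometry, so $\|dq_1^+-dq_1'^+\|$ differs from $\|dq_1^--dq_1'^-\|$ only by an $\cO(\delta_1)$ rotation of the normal; the term $\bTheta^+(dq^+)$ added to $dv^+$ has operator norm $\cO(1/\cos\varphi_1)$, and the sensitivity of $\bTheta^+$ itself to $x_1$ versus $x_1'$ contributes an additional $\cO(\delta_1/\cos\varphi_1)\|dq_1\|$ error; combined with $\|dq_1\|\sim\|dv_1\|$ (up to factors absorbed in $C$), this gives the $C(\delta_0+\delta_1)/\cos\varphi_1$ term in (\ref{key0}). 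For a collision with $\dcP(Q)$, the analogous analysis applies, but now the formulas (\ref{Rdq})--(\ref{RdQ}) show that $dq^+$ picks up $\frac{2M}{M+1}\bP_n(dQ^-)$, which means errors in $dQ-dQ'$ propagate into $dq-dq'$; since $\|dQ\|\leq \tilde\varepsilon_0\|dq\|/M$, this contributes the $2\tilde\varepsilon_0/M$ term in (\ref{key0}). Symmetrically, (\ref{RdQ}) shows $dQ^+-dQ'^+$ picks up contributions from $dq-dq'$ scaled by $2/(M+1)$, but since $dQ$ is measured in units of $\|dq\|/M$, this appears as $2\varepsilon_0$ in (\ref{key1}); finally, the $\bTheta^+$ contribution to $dV^+$ carries the $1/(M+1)$ prefactor from (\ref{RdQ}), which explains the $\delta_0+\delta_1$ dependence in (\ref{key1}) with no loss of the $1/M$ scaling for $\tilde\varepsilon_1$. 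The $(1+C/\sqrt{M})$ factor throughout arises from Taylor-expanding the $\frac{M-1}{M+1}$ and $\frac{2M}{M+1}$ coefficients and from the $\|V\|\leq 1/\sqrt{M}$ corrections in the relation between $w=v-V$ and $v$.

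The main technical obstacle will be bookkeeping: there are four components, two collision types, and we must simultaneously maintain both the $\varepsilon$-bounds and the $\tilde\varepsilon$-bounds while absorbing all the geometric error terms (varying flight times, varying normal vectors, varying curvatures) into the single $C(\delta_0+\delta_1)/\cos\varphi_1$ term. The delicate point is that errors in $dQ,dV$ cross-contaminate $dq,dv$ at collisions with $\dcP(Q)$ — but only with the saving $1/M$ factor — while the reverse cross-contamination, from $dq,dv$ into $dQ,dV$, is what forces the symmetric structure with the factor $2$ in both (\ref{key0}) and (\ref{key1}). Once these inter-component couplings are correctly accounted for, the estimates (\ref{key0}) and (\ref{key1}) follow from elementary linear algebra applied to the explicit collision formulas, with no delicate cancellations required.
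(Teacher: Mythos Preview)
Your overall plan---decompose $\cF$ into free flight plus collision and track each of the four component differences through both stages---is exactly the paper's approach, and your identification of the $2\tilde\varepsilon_0/M$ and $2\varepsilon_0$ cross-terms from (\ref{Rdq})--(\ref{RdQ}) is correct. But you misidentify the origin of the crucial $(1+C/\sqrt{M})$ factor, and this is where the real work lies.

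Taylor-expanding $\tfrac{M-1}{M+1}$ or $\tfrac{2M}{M+1}$ gives corrections of order $1/M$, not $1/\sqrt{M}$. The actual source is Proposition~\ref{prabc}(d)--(e): both $dq_0$ and $dv_0$ are perpendicular to $v$ up to error $\cO(1/\sqrt{M})$, hence (in two dimensions) nearly parallel to each other. This near-parallelism is what lets you pass from the triangle-inequality bound $\varepsilon_0\|dq_0\|+s\,\varepsilon_0\|dv_0\|$ to $\varepsilon_0(1+C/\sqrt{M})\|dq_1^-\|$: since $dq_1^-=dq_0+s\,dv_0$, you need $\|dq_0\|+s\|dv_0\|\leq(1+C/\sqrt{M})\|dq_0+s\,dv_0\|$, which is a \emph{reverse} triangle inequality and only holds because the two summands are nearly aligned. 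Your claim that ``$\|dq_1^-\|$ is essentially $s\|dv_0\|$'' is not correct in general---both terms contribute comparably---and without the parallelism argument the recursive estimate would not close. The same mechanism reappears in the $dv$ bound: the paper uses that $dq_1^-$ is nearly perpendicular to $v_1^-$ to get $\|\bTheta^-\|\,\|dq_1^-\|=(1+\cO(1/\sqrt{M}))\|\bTheta^-(dq_1^-)\|$, and then combines $\|\bR_n(dv_0)\|$ with $\|\bTheta^+(dq_1^+)\|$ into $(1+C/\sqrt{M})\|dv_1\|$ by the same near-alignment trick.

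Two smaller corrections. First, the flight-time discrepancy satisfies $|s-s'|\leq C(\delta_0+\delta_1)$, not $\cO(\delta_0)$: the landing point $x_1$ also moves, and the paper proves this via the geometric inequality $\|sv-s'v'\|\leq\delta_0+\delta_1+\|Q_0-Q_0'\|+\|Q_1-Q_1'\|$. Second, the free flight itself contributes only $C(\delta_0+\delta_1)\|dq_1^-\|$ to the $dq$-difference, with no $1/\cos\varphi_1$; that factor enters exclusively through the $\bTheta$ operator at the collision, in the $dv$ and $dV$ estimates.
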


\noindent{\em Proof}. We first compare the precollisional vectors
$$
  dx_1^- = (dQ_1^-, dV_1^-, dq_1^-, dv_1^-)
$$
and
$$
  (dx_1')^- = \bigl((dQ_1')^-, (dV_1')^-, (dq_1')^-, (dv_1')^-\bigr)
$$
at the points $x_1$ and $x_1'$, respectively. Equation
(\ref{Qqts}) and the triangle inequality imply
\begin{align*}
    \|dq_{1}^- - (dq_{1}')^-\|
    &\leq \varepsilon_0\, \|dq_0\|
    +s\, \varepsilon_0 \, \|dv_0\| \\
    &\quad +|s-s'|\, \|dv_0'\|
\end{align*}
where $s$ (resp., $s'$) is the time between collisions at the
points $x_0$ and $x_1$ (resp., $x_0'$ and $x_1'$). Due to
Proposition~\ref{prabc} (d)--(e), the vectors $dq_0$ and $dv_0$
are almost parallel, for large $M$, hence we can combine the first
two terms in the above bound:
$$
  \varepsilon_0\, \|dq_0\|
    + s\, \varepsilon_0\, \|dv_0\|
    \leq \varepsilon_0 (1+C/\sqrt{M})\, \|dq_{1}^-\|
$$
Here and below we denote by $C=C(\dcD,\br)>0$ various constants.
Next, it is a simple geometric fact that
$$
    \| s v - s' v' \|
    \leq \delta_0 + \delta_1
    + \| Q_0 - Q_0' \|
    + \| Q_1 - Q_1' \|
$$
and by the assumptions (\ref{assq})--(\ref{assv}) we get
$$
    |s-s'| \leq C ( \delta_0 + \delta_1 )
$$
Using Proposition~\ref{prabc} (g) gives
\beq
    \|dq_{1}^- - (dq_{1}')^-\|
    \leq \varepsilon_0 (1+C/\sqrt{M})\,\|dq_{1}^-\|
    + C(\delta_0 + \delta_1)\, \|dq_1^-\|
      \label{dq1-}
\eeq
Similarly,
\begin{align}
    \|dQ_{1}^- - (dQ_{1}')^-\|
    &\leq \tilde{\varepsilon}_0 (1+C/\sqrt{M})\,
    \|dq_{1}^-\| / M\nonumber\\
    &\quad + C(\delta_0 + \delta_1)\, \|dq_1^-\| / M
       \label{dQ1-}
\end{align}
where the estimation of the last term involves Proposition~\ref{prabc}
(c).

Now the postcollisional vectors $dq_{1}$ and $dQ_{1}$ depend on
the precollisional vectors $dq_{1}^-$ and $dQ_{1}^-$ through
certain reflection operators defined in terms of the normal vector
$n$, see (\ref{Rdq}), (\ref{RdQ}). Since $\dcD$ and $\dcP (Q)$ are
$C^3$ smooth, those reflection operators depend smoothly on
$x_{1}$, with uniformly bounded derivatives, hence
\begin{align*}
    \|dq_{1} - dq_{1}'\|
    & \leq  \|dq_1^- - (dq_1')^-\|
    + 2\|dQ_1^- - (dQ_1')^-\|\\
    &\quad +C\delta_1\, \|dq_1^-\| +
    C\delta_1\, \|dQ_1^-\|
\end{align*}
Applying (\ref{dq1-})--(\ref{dQ1-}) and Proposition~\ref{prabc} (b)
gives
\begin{align*}
    \|dq_{1} - dq_{1}'\|
    &\leq  (\varepsilon_0 + 2\tilde{\varepsilon}_0/M)
    (1+C/\sqrt{M})\,\|dq_{1}\| \\
    &\quad + C(\delta_0 + \delta_1)\, \|dq_1\|
\end{align*}
Similarly,
\begin{align*}
    \|dQ_{1} - dQ_{1}'\|
    & \leq  \|dQ_1^- - (dQ_1')^-\|
    + 2\|dq_1^- - (dq_1')^-\|/M\\
    &\quad +C\delta_1\, \|dQ_1^-\| +
    C\delta_1\, \|dq_1^-\|/M
\end{align*}
Applying (\ref{dq1-})--(\ref{dQ1-}) and Proposition~\ref{prabc} (b)
gives
\begin{align*}
    \|dQ_{1} - dQ_{1}'\|
    & \leq   (2\varepsilon_0 + \tilde{\varepsilon}_0)\,
    (1+C/\sqrt{M})\,\|dq_{1}\| / M \\
    &\quad + C(\delta_0 + \delta_1)\, \|dq_1\| / M
\end{align*}

We now consider the velocity components $dv$ and $dV$. They do not
change between collisions. At collisions, these vectors are formed
by certain reflection operators defined in terms of the normal $n$
and acquire an addition involving the operator $\bTheta^-$, see
Section~\ref{subsecSUV}. In those equations all the operators and
vectors smoothly change with the point $x_1$ with bounded
derivatives (see also (\ref{assv})), except for the unbounded
factor $\|w^+\|/ \la w^+, n\ra$, which we later denoted by $1/\cos
\varphi$. In what follows, we apply an elementary estimate for
$\varphi$, $\varphi'$ in the same homogeneous section:
\beq
    \left |\frac{1}{\cos\varphi_1}-\frac{1}{\cos\varphi_1'}\right |
    \leq \frac{|\varphi_1-\varphi_1'|}{\cos\varphi_1\cos\varphi_1'}
    \leq \frac{C\delta_1}{\cos^2\varphi_1}
       \label{phi13}
\eeq
Consider first the (simpler) case of a collision of the light particle with
$\dcD.$
It follows from (\ref{Theta-0}) that
$$
  \|\bTheta^-\| = \frac{2\cK\|v^+\|^2}{\la v^+,n\ra}
$$
where all the vectors are taken at the point $x_1$.
Thus, we obtain
\begin{align}
    \|dv_{1} - dv_{1}'\|
    & \leq \|dv_{0} - dv_0'\|
    + \|\bTheta^-\|\, \|dq_1^- - (dq_1')^-\| \nonumber\\
    &\quad + C\,\delta_1 \, \|dv_{1}\|
    + C \|\bTheta^-\|\, \|dq_1^- \|\,\delta_1/\cos\varphi_1 .
      \label{vv1}
\end{align}
By using (\ref{dq1-}), the sum of the first two terms on
the right hand side in the above inequality can be bounded as
follows:
\begin{align*}
  A\colon &= \|dv_{0} - dv_0'\| + \|\bTheta^-\|\, \|dq_1^- -
  (dq_1')^-\|\\
  &\leq  \varepsilon_0\, \|dv_{0}\|
   + \varepsilon_0(1+C/\sqrt{M})\, \|\bTheta^-\|\, \|dq_1^-\|
  +C(\delta_0 + \delta_1)\, \|\bTheta^-\|\, \|dq_1^-\|
\end{align*}
It is also
clear that the operator $\bTheta^-$ attains its norm on the
vectors perpendicular to $v^- = v_1^-$. By Proposition~\ref{prabc}
the vector $dq_1^-$ is almost perpendicular to $v_1^-$, thus
$\|\bTheta^-\|\, \|dq_1^-\| = (1+\varkappa) \| \bTheta^-(dq_1^-)
\|$ with some $\varkappa = {\cO} (1/\sqrt{M})$. Now we can combine
the first two terms on the right hand side of the previous
inequality as follows:
\begin{align*}
  A'\colon &= \varepsilon_0\, \|dv_{0}\|
   + \varepsilon_0(1+C/\sqrt{M})\, \|\bTheta^-\|\,\|dq_1^-\|\\
   &\leq \varepsilon_0\, \|\bR_n (dv_{0})\|
   + \varepsilon_0(1+C/\sqrt{M})\, \|\bTheta^+(dq_1^+)\|\\
   &\leq
   \varepsilon_0(1+C/\sqrt{M})\, \|dv_1\|
\end{align*}
Finally, combining all our estimates gives
\beq
    \|dv_{1} - dv_{1}'\|
     \leq  \varepsilon_0(1+C/\sqrt{M})\, \|dv_1\|
    + C(\delta_0 + \delta_1)\, \|dv_1\| / \cos\varphi_1
      \label{vv1last}
\eeq
In the (more difficult) case of an interparticle collision we have
a few extra terms in the main bound (\ref{vv1}):
\begin{align*}
    \|dv_{1} - dv_{1}'\| &\leq
    \cdots + 2\| dV_0 - dV_0'\| + C\delta_1
    \|dV_0\|\\ &\quad
    + C\delta_1\|\bTheta^-\|\,\|dQ_1^-\|
     + \|\bTheta^-\|\,\|dQ_1^- - (dQ_1')^-\|
\end{align*}
where $\cdots$ denote the terms already shown in (\ref{vv1}). Now,
the first new term above is bounded by
\beq
    2\| dV_0 - dV_0'\| \leq
    2\tilde{\varepsilon}_0\,\|dv_0\| / M
      \label{1new}
\eeq
The following two terms can be easily bounded and incorporated
into the previous estimate (\ref{vv1last}). The last term
$\|\bTheta^-\|\,\|dQ_1^- - (dQ_1')^-\|$ can be bounded, with the
help of (\ref{dQ1-}), by
$$
  \tilde{\varepsilon}_0 (1+C/\sqrt{M})\,
  \|\bTheta^-\|\, \|dq_{1}^-\| / M
    + C(\delta_0 + \delta_1)\, \|\bTheta^-\|\, \|dq_1^-\| / M
$$
The second term in this expression can be easily incorporated into
the previous estimate (\ref{vv1last}). To the first term we apply
the same analysis of the operator $\bTheta^-$ as was made in the
case of a collision of the light particle with $\dcD$, and then
combine it with (\ref{1new}) and obtain the bound
\begin{align*}
   2\tilde{\varepsilon}_0\,\|dv_0\| / M
   +\tilde{\varepsilon}_0 &(1+C/\sqrt{M})\,
  \|\bTheta^-(dq_{1}^-)\| / M
  \\ &\leq  2\tilde{\varepsilon}_0 (1+C/\sqrt{M})\,
  \| dv_1\| / M
\end{align*}
Combining all these bounds gives
$$
    \|dv_{1} - dv_{1}'\|
     \leq \left ( \varepsilon_0 +
     2\tilde{\varepsilon}_0 / M \right )\,
     (1+C/\sqrt{M})\, \|dv_1\|
    + C(\delta_0 + \delta_1)\, \|dv_1\| / \cos\varphi_1
$$

Lastly, we consider the vectors $dV$ and $dV'$. These do not change
between collisions or due to collisions of the light particle with
$\dcD$. At an interparticle collision, we have, in a way similar to the
previous estimates
\begin{align}
    \|dV_{1} - dV_{1}'\|
    & \leq \|dV_0 - dV_0'\|
    + 2\|dv_0 - dv_0'\|/M\nonumber\\
    &\quad +C\delta_1\, \|dv_0\|/M +
    C\delta_1\, \|dq_1^-\|/M\nonumber\\
    &\quad + \|\bTheta^-\|\,\|dq_1^- - (dq_1')^-\|/M\nonumber\\
    &\quad + \|\bTheta^-\|\,\|dQ_1^- - (dQ_1')^-\|/M\nonumber\\
    &\quad +C\delta_1\|\bTheta^-\|\,\|dq_1^-\|/
    (M\cos\varphi_1)
      \label{dVdV}
\end{align}
Applying (\ref{dq1-}) and the same analysis of the operator
$\bTheta^-$ as before gives
\begin{align*}
    \|\bTheta^-\|\,\|dq_1^- - (dq_1')^-\|/M
    & \leq  \varepsilon_0
    (1+C/\sqrt{M})\,\|\bTheta^-(dq_{1}^-)\| / M\\
    &\quad + C(\delta_0 + \delta_1)\,
    \|dq_1^-\|/(M\cos\varphi_1)
\end{align*}
The first term on the right hand side can be combined with the
term $2\|dv_0 - dv_0'\|/M$ in (\ref{dVdV}), and we get
\begin{align*}
   A''\colon & =
   2\|dv_0 - dv_0'\|/M +\varepsilon_0
    (1+C/\sqrt{M})\,\|\bTheta^-(dq_{1}^-)\| / M \\
   & \leq  2\varepsilon_0 \|dv_0\|/M + 2\varepsilon_0
    (1+C/\sqrt{M})\,\|\bTheta^-(dq_{1}^-)\| / M \\
   & \leq  2\varepsilon_0
    (1+C/\sqrt{M})\,\|dv_{1}\| / M \\
\end{align*}
We collect the above estimates and obtain
\begin{align*}
    \|dV_{1} - dV_{1}'\|
    & \leq    (2\varepsilon_0 + \tilde{\varepsilon}_0)\,
    (1+C/\sqrt{M})\,\|dv_{1}\| / M \\
    &\quad + C(\delta_0 + \delta_1)\, \|dv_1\|
    / (M \cos\varphi_1)
\end{align*}
Lemma~\ref{lm0} is proved. \qed \medskip

\noindent{\em Remark}. By fixing $Q$ and setting $M = \infty$ we
obtain a version of the above lemma for the billiard map $\cF_Q$.
It becomes much simpler, of course, since $dQ_i=dV_i=0$ and
$\tilde{\varepsilon}_i=0$, so (\ref{key0}) reduces to
$$
   \varepsilon_1 = \varepsilon_0 +
   C\,\frac{\delta_0+\delta_1}{\cos\varphi_1}
$$
and (\ref{key1}) becomes obsolete. It is important to note also that
the constant $C$ is uniform over all $\br>0$, as long as the points
$x_i$ and $x_i'$ belong to the same \index{u-curves (unstable
curves)} u-curve or s-curve (see Section~\ref{subsecSUC}). To verify
the uniformity of $C$, assume that $q_1\in\dcP(Q)$, then
$|r_1-r_1'|\leq c\br|\varphi_1 -\varphi_1'|$ for some $c>0$, hence
$|n(q_1)-n(q_2)|\leq \br^{-1}|r_1-r_1'|\leq c|\varphi_1
-\varphi_1'|<c\delta_1$, which allows us to suppress the large
factor $\br^{-1}$. Hence, the resulting distortion and curvature
\index{Distortion bounds} bounds will be uniform over all $\br>0$.

\begin{corollary}
Suppose that the point $x_1$ in Lemma~\ref{lm0} belongs to an
\index{H-curves} H-curve $W_1$. In that case the key estimates
{\rm (\ref{key0})--(\ref{key1})} of Lemma~\ref{lm0} can be modified
as follows:
\beq
   \varepsilon_1 = \left ( \varepsilon_0 +
   \frac{2\tilde{\varepsilon}_0}{M} \right )\,
   \left (1+\frac{C}{\sqrt{M}}\right ) +
   C\, \frac{\delta_0 + \delta_1}{|W_1|^{2/3}}
     \label{key2}
\eeq
\beq
   \tilde{\varepsilon}_1 = \Bigl( 2\varepsilon_0 +
   \tilde{\varepsilon}_0\Bigr)\,
   \left (1+\frac{C}{\sqrt{M}}\right ) +
   C\, \frac{\delta_0 + \delta_1}{|W_1|^{2/3}}
     \label{key3}
\eeq
with some constant $C>1$ {\textup (}possibly different from the
constant in Lemma~\ref{lm0}{\textup )}. \label{cr0}
\end{corollary}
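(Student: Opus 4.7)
The plan is that the corollary follows essentially by direct substitution into Lemma~\ref{lm0}, using the fundamental relationship between the length of an H-curve and the angle $\varphi$ at any of its points. Since $x_1$ lies on the H-curve $W_1$, by the very definition of H-curve (Section~\ref{subsecSHUC}) the curve $W_1$ is weakly homogeneous, i.e.\ it is contained in a single homogeneity section $\bbH_{m_1}$. Applied to the projection of $W_1$ onto the $r,\varphi$ chart $\Omega_0$, the estimate (\ref{cos23}) then gives
$$|W_1| \leq \Const\, \cos^{3/2}\varphi_1,$$
which is equivalent to
$$\frac{1}{\cos\varphi_1} \leq \frac{\Const}{|W_1|^{2/3}}.$$

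With this one-line bound in hand, the passage from (\ref{key0})--(\ref{key1}) to (\ref{key2})--(\ref{key3}) is immediate: the error term $C(\delta_0+\delta_1)/\cos\varphi_1$ appearing in Lemma~\ref{lm0} is majorized by $C'(\delta_0+\delta_1)/|W_1|^{2/3}$ with a (possibly larger) constant $C'$, while the leading contributions involving $\varepsilon_0$ and $\tilde\varepsilon_0$ in (\ref{key0})--(\ref{key1}) are already identical to those in (\ref{key2})--(\ref{key3}). No new estimates on tangent vectors or on the geometry of collisions are needed.

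I do not anticipate any real obstacle here, since the entire difficult work --- the one-step tracking of how two nearby unstable vectors drift apart through a collision --- has already been carried out in Lemma~\ref{lm0}, and the only source of the $1/\cos\varphi_1$ factor was the geometric bound (\ref{phi13}) on the variation of $1/\cos\varphi$ along $W_1$. The only subtlety worth flagging is a choice of metric: $|W_1|$ denotes the length of the H-curve in the $|\cdot|$-norm, which agrees with the metric induced from the $(r,\varphi)$ coordinates up to uniform constants (cf.\ Proposition~\ref{prdrdp}), so the substitution is consistent with the norm conventions used throughout Chapter~\ref{SecSPE} and does not introduce any $\br$- or $M$-dependent factors.
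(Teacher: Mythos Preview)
Your proposal is correct and follows essentially the same approach as the paper: the paper's proof is a one-liner invoking (\ref{cos23}) to get $|W_1|\leq\Const\,(\cos\varphi_1)^{3/2}$, hence $1/\cos\varphi_1\leq\Const/|W_1|^{2/3}$, and then substituting into (\ref{key0})--(\ref{key1}). Your additional remarks on metric conventions are accurate but not needed for the argument.
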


\noindent{\em Proof}. Indeed, if the points $x_1$ and $x_1'$ lie in
a homogeneity \index{Homogeneity sections} section $\bbH_k$, then
$|W_1| \leq \Const\, (\cos\varphi_1)^{3/2}$, see (\ref{cos23}). This
proves Corollary~\ref{cr0}. \qed
\medskip

We now extend the estimates of Lemma~\ref{lm0} and
Corollary~\ref{cr0} to an arbitrary iteration of $\cF$. We will
show that the tangent vectors $dx_n$ and $dx_n'$ are
($\varepsilon_n, \tilde{\varepsilon}_n$)-close with some
$\varepsilon_n$ and $\tilde{\varepsilon}_n$ that we will estimate.
For brevity, put $\bvarepsilon_i = (\varepsilon_i,
\tilde{\varepsilon}_i)^T$ for $i \leq n$. The bounds in
Lemma~\ref{lm0} and Corollary~\ref{cr0} can be rewritten in a
matrix form
\beq
    \bvarepsilon_{i} = \bA \bvarepsilon_{i-1} + \bb_{i}
      \label{beA}
\eeq
where $\bA$ is a fixed matrix
$$
    \bA=(1+C/\sqrt{M})\,\bB,\qquad
    \bB =  \begin{pmatrix}
     1   &   2/M \\
    2 &  1 \end{pmatrix}
$$
and $\bb_i = (b_i,b_i)^T$, where $b_i = C\, (\delta_{i-1} +
\delta_i) /\cos \varphi_i$ or $b_i = C\, (\delta_{i-1} + \delta_i)
/|W_i|^{2/3}$ depending on whether we are applying
(\ref{key0})--(\ref{key1}) or (\ref{key2})--(\ref{key3}).

Iterating (\ref{beA}) gives
\beq
   \bvarepsilon_n = \bA^n\, \bvarepsilon_0
   + \sum_{i=1}^n \bA^{n-i}\,\bb_i
      \label{enAn}
\eeq
The matrix $\bB$ has eigenvalues $\lambda_1=1+2/\sqrt{M}$ and
$\lambda_2=1-2/\sqrt{M}$. By using its eigenvectors, we find
$$
   \bB^k =\tfrac 12\,
   \left ( \begin{array}{cc}\lambda_1^k+\lambda_2^k &
   (\lambda_1^k-\lambda_2^k)/\sqrt{M}\\ (\lambda_1^k-\lambda_2^k)\sqrt{M} &
   \lambda_1^k+\lambda_2^k \end{array}\right)
$$
It is easy to see that $\|B^k\| \leq \Const\, k\, (1 +
2/\sqrt{M})^k$, thus
$$
   \|A^k\| \leq C_1\, k\,
   \biggl(1+ \frac{C_2}{\sqrt{M}} \biggr)^k.
$$
For some constants $C_1,C_2>0$. This gives us

\begin{lemma}[$n$ step distortion control]
Assume that the standard unstable \index{Unstable vectors} vectors
$dx_0$ and $dx_0'$ are ($\varepsilon_0, \varepsilon_0$)-close for
some small $\varepsilon_0 >0$. Then their images $dx_n$ and $dx_n'$
are ($\varepsilon_n, \varepsilon_n$)-close with
$$
     \varepsilon_n = C_1n\,
     \biggl(1+ \frac{C_2}{\sqrt{M}} \biggr)^n
     \varepsilon_0
     +  C_1
     \sum_{i=1}^n (n-i+1)
     \biggl(1+ \frac{C_2}{\sqrt{M}} \biggr)^{n-i+1} b_i
$$
for all $n\leq 1$. \label{lmn}
\end{lemma}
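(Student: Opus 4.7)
The plan is to iterate the one-step estimate $\bvarepsilon_i = \bA \bvarepsilon_{i-1} + \bb_i$ supplied by Lemma~\ref{lm0} (or by Corollary~\ref{cr0} in the H-curve setting), and to reduce everything to a uniform bound on the operator norm of $\bA^k$. Iterating the recursion gives the closed form
$$
   \bvarepsilon_n = \bA^n\, \bvarepsilon_0 + \sum_{i=1}^n \bA^{n-i}\bb_i,
$$
which is exactly the identity (\ref{enAn}) already displayed before the statement. Since the hypothesis gives $\bvarepsilon_0=(\varepsilon_0,\varepsilon_0)^T$ and, by construction, $\bb_i=(b_i,b_i)^T$, the conclusion will follow at once from a bound of the form $\|\bA^k\| \leq C_1(k+1)(1+C_2/\sqrt{M})^k$ applied termwise to each of the $n+1$ terms on the right hand side.

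The main work is thus to establish that matrix norm bound, for which I would use the explicit diagonalization of $\bB$ already carried out in the paragraph preceding the lemma:
$$
   \bB^k = \tfrac12\begin{pmatrix} \lambda_1^k+\lambda_2^k & (\lambda_1^k-\lambda_2^k)/\sqrt{M} \\ (\lambda_1^k-\lambda_2^k)\sqrt{M} & \lambda_1^k+\lambda_2^k \end{pmatrix},
$$
with $\lambda_{1,2} = 1 \pm 2/\sqrt{M}$. The diagonal entries are trivially at most $\lambda_1^k=(1+2/\sqrt{M})^k$. For the off-diagonal entries, the mean value theorem applied to $t\mapsto t^k$ on $[\lambda_2,\lambda_1]$ yields $\lambda_1^k-\lambda_2^k \leq k\lambda_1^{k-1}(\lambda_1-\lambda_2)=(4k/\sqrt{M})\lambda_1^{k-1}$, so that the potentially dangerous factor $\sqrt{M}$ in the bottom-left entry of $\bB^k$ is absorbed and that entry is bounded by $2k\lambda_1^{k-1}$. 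Multiplying by the scalar prefactor $(1+C/\sqrt{M})^k$ that turns $\bB$ into $\bA$ gives $\|\bA^k\|\leq C_1(k+1)(1+C_2/\sqrt{M})^k$ with suitable $C_1,C_2>0$.

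Substituting this bound into the iterated formula, both components of $\bA^n\bvarepsilon_0$ are controlled by $C_1 n(1+C_2/\sqrt{M})^n\varepsilon_0$, and each summand $\bA^{n-i}\bb_i$ contributes at most $C_1(n-i+1)(1+C_2/\sqrt{M})^{n-i+1}b_i$ to either coordinate (the shift from the exponent $n-i$ coming from $\|\bA^{n-i}\|$ to the exponent $n-i+1$ of the statement absorbs the constant $(1+C_2/\sqrt{M})$ and conveniently handles the trivial endpoint $i=n$ where $\bA^0=I$). Both coordinates of $\bvarepsilon_n$ receive the same upper bound, so $dx_n$ and $dx_n'$ are indeed $(\varepsilon_n,\varepsilon_n)$-close with the $\varepsilon_n$ written in the lemma.

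The one place where care is really required is the cancellation of $\sqrt{M}$ in the bottom-left entry of $\bB^k$: a naive estimate $\|\bB^k\|\lesssim \sqrt{M}\,\lambda_1^k$ would be fatal, inserting a factor $\sqrt{M}$ in front of $\varepsilon_0$ that cannot be tolerated when $n\leq \delta_\diamond\sqrt{M}$. The mean value bound on $\lambda_1^k-\lambda_2^k$, which trades the dangerous $\sqrt{M}$ for the tolerable polynomial factor $k$, is the mechanism that keeps the estimate effective over the full time scale needed in the rest of the paper.
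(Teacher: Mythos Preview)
Your proposal is correct and follows essentially the same approach as the paper: the paper derives the iterated formula (\ref{enAn}), diagonalizes $\bB$ to compute $\bB^k$ explicitly, states that ``It is easy to see that $\|\bB^k\|\leq\Const\,k\,(1+2/\sqrt{M})^k$'', and then presents Lemma~\ref{lmn} as an immediate consequence. You reproduce this exactly, and in fact supply the missing justification for the ``easy to see'' step via the mean value bound $\lambda_1^k-\lambda_2^k\leq (4k/\sqrt{M})\lambda_1^{k-1}$, correctly identifying that the cancellation of the $\sqrt{M}$ factor in the lower-left entry of $\bB^k$ is the only point requiring care.
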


We see that the sequence $\varepsilon_n$ effectively grows
linearly with $n$.

Now we are ready to prove Propositions~\ref{prdist} and \ref{prcurv}.
For brevity, we will say $\varepsilon$-close instead of
$(\varepsilon,\varepsilon)$-close.

\begin{lemma}
Under the assumptions of Proposition~\ref{prdist}, for any $c>0$
there is a $C>0$ such that whenever the tangent vectors $dx_0$ and
$dx_0'$ are $\varepsilon_0$-close with $\varepsilon_0 =
c|W_0(x_0,x_0')|/ |W_0|^{2/3}$, then the tangent vectors $dx_i$
and $dx_i'$ are $\varepsilon_i$-close with $\varepsilon_i =
C|W_i(x_i,x_i')|/ |W_i|^{2/3}$ for all $i=1,\dots,n$.
\label{lmdist}
\end{lemma}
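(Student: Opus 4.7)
The plan is induction on $i$, using Corollary \ref{cr0} as the one-step propagation mechanism. I would take the inductive hypothesis that both $\varepsilon_i$ and $\tilde\varepsilon_i$ are bounded by $C\mu_i$, where $\mu_i := |W_i(x_i,x_i')|/|W_i|^{2/3}$ and $C\ge c$ is a constant to be fixed at the end. The base case $i=0$ is immediate from the stated hypothesis, the bound on $\tilde\varepsilon_0$ being automatic for standard unstable vectors on an H-curve by Proposition \ref{prabc}, which forces $\|dQ_0\|\lesssim\|dq_0\|/M$ and similarly for $dV$.

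For the inductive step, Corollary \ref{cr0} yields the matrix recursion $\bvarepsilon_{i+1}\le(1+C_*/\sqrt{M})\bB\,\bvarepsilon_i+\bb_{i+1}$, where $\bb_{i+1}=\hat C(\delta_i+\delta_{i+1})/|W_{i+1}|^{2/3}\cdot(1,1)^T$ and $\bB$ is the matrix with entries $\bB_{11}=\bB_{22}=1$, $\bB_{12}=2/M$, $\bB_{21}=2$. Two facts control the inhomogeneous term: on an H-curve $\delta_j$ is comparable to $|W_j(x_j,x_j')|$ by Proposition \ref{prdrdp} (norm equivalence), and u-curves expand by factor $\ge\vartheta^{-1}$ (Proposition \ref{prJexp}), giving $|W_i(x_i,x_i')|\le\vartheta|W_{i+1}(x_{i+1},x_{i+1}')|$; together these bound $\bb_{i+1}$ componentwise by $C_1\mu_{i+1}\cdot(1,1)^T$. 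For the propagated term $\bA\bvarepsilon_i$, I would exploit the spectral structure of $\bB$: its eigenvalues are $1\pm 2/\sqrt{M}$ with eigenvectors $(1,\pm\sqrt{M})$, so in the rescaled coordinates $(\varepsilon,\tilde\varepsilon/\sqrt{M})$ (which reflect the natural scale $1/\sqrt{M}$ of the slow variables in the partially hyperbolic structure), $\bA$ becomes a near-isometry with spectral radius $1+O(1/\sqrt{M})$, whose accumulated factor over $n\le\delta_\diamond\sqrt{M}$ iterations is bounded by $e^{C\delta_\diamond}$, absorbable into $C$.

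The main obstacle is comparing $\mu_i$ with $\mu_{i+1}$, since the ratio $|W_i|/|W_{i+1}|$ of H-component lengths can fluctuate when $\cF(W_i)$ is cut by singularities or boundaries of homogeneity strips. I would resolve this by observing that cutting at step $i+1$ shrinks $|W_{i+1}|$ but not $|W_{i+1}(x_{i+1},x_{i+1}')|$, since $x_{i+1},x_{i+1}'$ lie in the same surviving H-component by assumption, so a smaller $|W_{i+1}|$ only enlarges $\mu_{i+1}$, which is favorable for the induction. Combining the expansion estimate $|W_i(x_i,x_i')|\le\vartheta|W_{i+1}(x_{i+1},x_{i+1}')|$ with the bound $|\cF^{-1}(W_{i+1})|\le\vartheta|W_{i+1}|$ on the surviving subcurve $W_i^*:=\cF^{-1}(W_{i+1})\subseteq W_i$, and case-analyzing according to whether $|W_i|$ is comparable to $|W_i^*|$ or substantially larger, one ultimately obtains $\mu_i\le K\vartheta^{1/3}\mu_{i+1}$ for some fixed constant $K$. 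Since the contribution $(1+C_*/\sqrt{M})K\vartheta^{1/3}$ to the propagated term is then strictly less than $1$ for sufficiently large $M$ (after the rescaling that evens out $\bB$'s row sums), the inductive inequality closes for any $C$ large enough depending only on $c$, $C_1$, $K$, $\delta_\diamond$, and $\vartheta$ — in particular, $C$ is independent of $i$ and $n$, as required.
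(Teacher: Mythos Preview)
Your one-pass induction has a real gap at the step you yourself flag as the ``main obstacle'': the comparison $\mu_i\le K\vartheta^{1/3}\mu_{i+1}$. The two ingredients you list---$\delta_i\le\vartheta\,\delta_{i+1}$ and $|W_i^\ast|\le\vartheta\,|W_{i+1}|$---are both \emph{lower} bounds on expansion, while
\[
  \frac{\mu_i}{\mu_{i+1}}
  =\frac{\delta_i}{\delta_{i+1}}\,
  \Bigl(\frac{|W_{i+1}|}{|W_i|}\Bigr)^{2/3}
\]
puts one expansion ratio in the numerator and the other in the denominator. Without knowing that these two ratios are \emph{comparable} (i.e.\ that the one-step Jacobian is roughly constant across the curve), no upper bound on $\mu_i/\mu_{i+1}$ follows, and your case analysis on $|W_i|$ versus $|W_i^\ast|$ does not supply it. The missing fact is one-step bounded distortion on H-curves, which does hold (from Proposition~\ref{prJexp} and the fact that $W_{i+1}$ lies in a single homogeneity strip, so $\cos\varphi$ varies only by a bounded factor). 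But even granting this, you only get $\mu_i/\mu_{i+1}\le R^{2/3}\vartheta^{1/3}$ with $R$ the one-step distortion constant, and there is no reason for $R^{2/3}\vartheta^{1/3}<1$ in general---so the induction need not close.

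The paper sidesteps this with a two-pass bootstrap. In the first pass it uses only the crude bound $b_i\le \Const\,\delta_i^{1/3}$ (from $\delta_i\le|W_i|$), which requires no comparison of the lengths $|W_i|$ at all; combined with $\delta_i\le\vartheta^{\,n-i}\delta_n$ and Lemma~\ref{lmn}, this yields the weaker estimate $\varepsilon_n\le C\delta_n^{1/3}$ and hence the uniform distortion bound~(\ref{distrough}). In the second pass, (\ref{distrough}) is used to compare $\mu_i$ \emph{directly to $\mu_n$}, giving $\mu_i\le C\vartheta^{(n-i)/3}\mu_n$ with a single constant $C$ that does not accumulate over steps; feeding this back into Lemma~\ref{lmn} closes the argument. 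The bootstrap is the essential idea you are missing.
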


\proof Since the points $x_i$ and $x_i'$ belong to one
\index{H-curves} H-curve, we can redefine $\delta_i$ to be
$|W_i(x_i,x_i')|$, and all our previous estimates will hold (with
maybe different values of the constants). Next, since $\delta_0
\simeq |W_0|$, the initial tangent vectors $dx_0$ and $dx_0'$ are
$(c\delta_0^{1/3})$-close. Similarly, we have $\delta_{i-1}
+\delta_i\leq\,\Const\,\delta_i$, hence
$b_i\leq\,\Const\,\delta_i^{1/3}$ in the notation of
Lemma~\ref{lmn}. Since \index{H-curves} H-curves grow by a factor
$\vartheta^{-1} > 1$, cf.\ (\ref{theta}), we have $\delta_i \leq
\vartheta^{n-i} \delta_n$ for all $i<n$. We now employ
Lemma~\ref{lmn} and easily obtain that the tangent vectors $dx_n$
and $dx_n'$ are $(C\delta_n^{1/3})$-close with some $C>0$. Therefore
\beq
    \left | \ln \frac{\cJ_{W_0}\cF^n(x_0)}
     {\cJ_{W_0}\cF^n(x_0')} \right |
     \leq C \delta_n^{1/3}
       \label{dist0}
\eeq
for some $C>0$. This estimate is weaker than the distortion bound
\index{Distortion bounds} claimed in Proposition~\ref{prdist}, but
it provides us, at least, with a uniform bound on distortions in the
sense of (\ref{distrough}) with some $\tbeta>0$.

The exponential growth of \index{H-curves} H-curves (\ref{theta})
and the uniform bound (\ref{distrough}) imply that
$$
   \frac{\delta_i}{|W_i|^{2/3}}
   \leq C \vartheta^{\frac{n-i}{3}}\,
   \frac{\delta_n}{|W_n|^{2/3}}
$$
for all $i < n$ and some constant $C>0$. Now we apply
(\ref{beA})--(\ref{enAn}) with $b_i =\,\Const\,\delta_i/
|W_i|^{2/3}$ and easily obtain that the tangent vectors $dx_n$ and
$dx_n'$ are $(C\delta_n/|W_n|^{2/3})$-close with some $C>0$.
Lemma~\ref{lmdist} is proved. \qed \medskip

Now Propositions~\ref{prdist} and \ref{prcurv} follow directly.
\qed \medskip

\medskip\noindent{\em Remark}. It is clear that for a sufficiently
smooth unstable curve one can always choose tangent vectors at any
two points that are $\varepsilon$-close for arbitrarily small
$\varepsilon>0$, thus they will satisfy the assumptions of
Lemma~\ref{lmdist}. \medskip

Lastly, we prove Lemma~\ref{LmJac}. Let $dy'$ and $dy''$ be tangent
vectors to the curve $\gamma$ at the points $y'$ and $y''$,
respectively. According to the definition of standard
\index{Standard pair} pairs, we can assume that they are
$\varepsilon$-close with $\varepsilon = C\, \dist (y', y'')
/|\gamma|^{2/3}$. Then by Proposition~\ref{prdist}, which we just
proved, the vectors $dx_-' = D\cF^i (dy')$ and $dx_-'' = D\cF^i
(dy'')$ are $\varepsilon_-$-close with
$$
   \varepsilon_- = C\, \dist (x_-', x_-'')
   / |W|^{2/3} \leq \Const\,\varepsilon_{\gamma}
   / |W|^{2/3}
$$
We now compare the tangent vectors $dx' = (D\cF_{Q} \circ D \pi_0)
(dx_-')$ and $dx'' = (D\pi_0 \circ D\cF) (dx_-'')$.

\medskip\noindent{\bf Claim}. $dx'$ and $dx''$ are
$\varepsilon_0$-close with
\beq
   \varepsilon_0 = \frac{C\varepsilon_\gamma}{|W|^{2/3}}
     +  \frac{C\varepsilon_\gamma}{|W_0'|^{2/3}}
        \label{claim}
\eeq
where $C>0$ is a large constant.\medskip

\proof Our argument follows the same lines as the proofs of
Lemma~\ref{lm0} and Corollary~\ref{cr0}, and we only focus on the
novelty of the present situation. First, since dist$(x_-', x_-'')
=\cO (\varepsilon_\gamma )$ and dist$(x', x'') =\cO
(\varepsilon_\gamma )$, then both $\delta_0$ and $\delta_1$ in
(\ref{key2})--(\ref {key3}) will be $\cO (\varepsilon_\gamma)$. In
addition, we apply $D\pi_0$ to both vectors. Recall that the
projection $\pi_0\colon \Omega \to \Omega_0$, fixes the position
of the heavy particle, sets its velocity to zero, and normalizes
the vector $w$ defined by (\ref{w}). Accordingly, $D \pi_0$ sets
the components $dQ$ and $dV$ of the tangent vector to zero and
rescales the component $dw$ by the same factor as it rescales $w$,
i.e.\ it divides $dw$ by $\|w\|$. In addition, we need to project
both components $dq$ and $dw$ onto the line perpendicular to $w$,
so that the basic equations (\ref{Vort})--(\ref{Qort}) would hold.
Therefore, the map $D \pi_Q\colon (dQ,dV, dq,dw) \mapsto
(dQ_1,dV_1, dq_1,dw_1)$ acts according to the following rules:
$dQ_1 = dV_1 =0$, and
$$
    dw_1 = dw/\|w\| - \la dw,w\ra/\|w\|^3,\ \ \ \ \
    dq_1 = dq - \la dq,w\ra/\|w\|^2,\ \ \ \ \
$$
As we noted in Section~\ref{subsecSUV}, the estimates (a)--(g) of
Proposition~\ref{prabc} apply to the vectors $w$ and $dw$, just as
well as to $v$ and $dv$. Hence
$$
   \| dq_1 - dq \| \leq C\|V\|\, \|dq\|
   \leq C\varepsilon_\gamma \|dq\|
$$
and
$$
   \Big\|\,\|w\|\, dw_1 - dw\,\Big \| \leq C\|V\|\, \|dw\|
   \leq C\varepsilon_\gamma \|dw\|
$$
Such a difference can be incorporated into the right hand side of
(\ref{claim}). The division of $dw$ by $\|w\|$ results in a change
of order one, in general, but this will be matched by the
corresponding division by $\|w\|$ when $D \pi_0$ is applied to the
other vector, as one can easily verify. This completes the proof
of the claim. \qed \medskip

We now finish the proof of Lemma~\ref{LmJac}. For each $r\geq 1$ we
need to compare the tangent vectors $dx'_r = D\cF_{Q}^r (dx')$ and
$dx''_r = D\cF_{Q}^r (dx'')$. The map $\cF_{Q}$ on $\Omega_{Q}$
corresponds to the motion of the light particle when the heavy one
is fixed at $Q$, which is the limit case of our two-particle
dynamics as $M\to \infty$. Thus, our analysis in Appendix~B, in
particular Lemma~\ref{lm0}, Corollary~\ref{cr0}, and Lemma~\ref{lmn}
apply to the map $\cF_{Q}$ as well. In order to use them, though, we
need to verify the conditions they are based on. First, since for
each $r\geq 1$ the points $\cF_Q^r (x')$ and $\cF_Q^r (x'')$ belong
to one homogeneous stable manifold, they lie in one homogeneity
\index{Homogeneity sections} section. Second,
(\ref{assq})--(\ref{assv}) hold due to (\ref{CCs}). Now
Corollary~\ref{cr0} and Lemma~\ref{lmn} can be used, indeed, and
they directly imply Lemma~\ref{LmJac}. \qed
\newpage

\backmatter

\printindex

\end{document}